\newcommand{\ep}{\varepsilon}
\newcommand{\A}{\mathbb{A}}
\newcommand{\B}{\mathbb B}
\newcommand{\sB}{\mathcal B}
\newcommand{\C}{\mathbb C}
\newcommand{\sC}{\mathcal C}
\newcommand{\D}{\mathbb D}
\newcommand{\fD}{\mathfrak D}
\newcommand{\sD}{\mathcal D}
\newcommand{\sF}{\mathcal F}
\newcommand{\K}{\mathbb K}
\newcommand{\sM}{\mathcal M}
\newcommand{\N}{\mathbb N}
\newcommand{\sN}{\mathcal N}
\newcommand{\sO}{\mathcal O}
\newcommand{\Q}{\mathbb Q}
\newcommand{\sP}{\mathcal P}
\newcommand{\PP}{\mathbb P}
\newcommand{\fP}{\mathfrak P}
\newcommand{\sQ}{\mathcal Q}
\newcommand{\R}{\mathbb R}
\newcommand{\Z}{\mathbb Z}
\newcommand{\fA}{\mathfrak A}
\newcommand{\fR}{\mathfrak R}
\newcommand{\1}{^{-1}}
\newcommand{\frestbar}
{{\scriptscriptstyle\genfrac{.}{.}{0pt}{10}\shortmid\shortmid}}
\newcommand{\brest}[1]{{}_{\frestbar #1}} 
\newcommand{\dv}{_{\rm div}}
\newcommand{\hor}{{}^{\mathrm{h}}}
\newcommand{\pt}{\mathrm{pt.}}
\newcommand{\ld}{a} 
\newcommand{\linsys}[1]{\left|{#1}\right|} 
\newcommand{\logb}{{}^{\mathrm{log}}}
\newcommand{\md}{_{\rm mod}}
\newcommand{\norm}[1]{\left\|{#1}\right\|} 
\newcommand{\rdn}[2]{{#1}_{[{#2}]}} 
\newcommand{\rddown}[1]{\left\lfloor{#1}\right\rfloor} 
\newcommand{\rdup}[1]{\left\lceil{#1}\right\rceil} 
\newcommand{\rest}[1]{{}_{{\textstyle{|}}#1}} 
\newcommand{\simp}[1]{[#1]} 
\newcommand{\spn}[1]{\left\langle{#1}\right\rangle} 
\newcommand{\ver}{{}^{\mathrm{v}}}
\DeclareMathOperator{\act}{{act}}
\DeclareMathOperator{\Amp}{Amp}
\DeclareMathOperator{\ainv}{\text{\"{a}}}
\DeclareMathOperator{\shAmp}{\bold{Amp}}
\DeclareMathOperator{\CDiv}{CDiv}
\DeclareMathOperator{\Card}{\frak {Car}}
\DeclareMathOperator{\cent}{center}
\DeclareMathOperator{\Cl}{Cl}
\DeclareMathOperator{\shCl}{\mathbf {Cl}}
\DeclareMathOperator{\Compd}{\frak C}
\DeclareMathOperator{\Diff}{{Diff}}
\DeclareMathOperator{\Eff}{Eff}
\DeclareMathOperator{\shEff}{\bold {Eff}}
\DeclareMathOperator{\Effd}{\mathfrak E}
\DeclareMathOperator{\EffWDiv}{EffWDiv}
\DeclareMathOperator{\Exc}{Exc}
\DeclareMathOperator{\shExc}{\bold {Exc}}
\DeclareMathOperator{\Excd}{\mathfrak {Exc}}
\DeclareMathOperator{\Fix}{Fix}
\DeclareMathOperator{\Fixd}{\mathfrak F}
\DeclareMathOperator{\shFix}{\bold {Fix}}
\DeclareMathOperator{\glct}{{glct}}
\DeclareMathOperator{\Id}{Id}
\DeclareMathOperator{\LCS}{{LCS}}
\DeclareMathOperator{\lcd}{\mathfrak {lc}}
\DeclareMathOperator{\lct}{{lct}}
\DeclareMathOperator{\Mob}{Mob}
\DeclareMathOperator{\shMob}{\bold {Mob}}
\DeclareMathOperator{\Mobd}{\mathfrak M}
\DeclareMathOperator{\mult}{mult}
\DeclareMathOperator{\NE}{NE}
\DeclareMathOperator{\Nef}{Nef}
\DeclareMathOperator{\shNef}{\bold {Nef}}
\DeclareMathOperator{\Nefd}{\mathfrak {Nef}}
\DeclareMathOperator{\Nc}{N}
\DeclareMathOperator{\Nuld}{\frak N} 
\DeclareMathOperator{\Pd}{\mathfrak P} 
\DeclareMathOperator{\Pic}{Pic}
\DeclareMathOperator{\pr}{pr}
\DeclareMathOperator{\shPic}{\bold {Pic}}
\DeclareMathOperator{\Reg}{R}
\DeclareMathOperator{\reg}{reg}
\DeclareMathOperator{\Rct}{{\mathbb R}-clct}
\DeclareMathOperator{\sAmp}{sAmp}
\DeclareMathOperator{\shsAmp}{\bold {sAmp}}
\DeclareMathOperator{\sAmpd}{\mathfrak {sA}}
\DeclareMathOperator{\Spec}{Spec}
\DeclareMathOperator{\Supp}{Supp}
\DeclareMathOperator{\Tor}{Tor}
\DeclareMathOperator{\shTor}{\bold {Tor}}
\DeclareMathOperator{\WDiv}{WDiv}
\numberwithin{equation}{subsection}
\theoremstyle{definition}
 \newtheorem{defn}{Definition}
 \newtheorem{defn-prop}{Definition-Proposition}
 \newtheorem{conj}{Conjecture}
 \newtheorem{const}{Construction}
\theoremstyle{plain}
 \newtheorem{add}{Addendum}
 \newtheorem{cor-conj}{Corollary-Conjecture}
 \newtheorem{cor}{Corollary}
 \newtheorem{lemma}{Lemma}
 \newtheorem{prop}{Proposition}
 \newtheorem{thm}{Theorem}
\theoremstyle{remark}
 \newtheorem{exa}{Example}
 \newtheorem{rem}{Remark}
 \newtheorem{war}{Warning}
\title{Existence and boundedness of $n$-complements}
\date{December~4, 2020; Baltimore quarantine}
\author{V.V. Shokurov
\thanks{Partially supported
by NSF grant DMS-1400943 and
the MPIM grant.}
}
\begin{document}

\maketitle

\begin{abstract}
Theory of $n$-complements with applications is presented.
\end{abstract}

\section {Introduction} \label{intro}

Recall two key concepts for us.

\begin{defn}\label{r_comp}
Let $(X/Z\ni o,D)$ be a pair
with a [proper] local morphism $X/Z\ni o$ and
with an $\R$-divisor $D$ on $X$.
Another pair $(X/Z\ni o,D^+)$ with the same local morphism
and with an $\R$-divisor $D^+$ on $X$
is called an $\R$-{\em complement\/} of $(X/Z\ni o,D)$ if
\begin{description}
\item[\rm (1)]
$D^+\ge D$;

\item[\rm (2)]
$(X,D^+)$ is lc; and

\item[\rm (3)]
$K+D^+\sim_\R 0/Z\ni o$.

\end{description}
In particular, $(X/Z\ni o,D^+)$ is
a {\em [local relative]\/} $0$-{\em pair}.
[Note that the neighborhood of $o$ for another pair can be
different from the original one.]

The complement is {\em klt} if
$(X,D^+)$ is klt.

\end{defn}

\begin{defn}[{\cite[Definition~5.1]{Sh92}}]\label{n_comp}
Let $n$ be a positive integer, and
$(X/Z\ni o,D)$ be a pair with a local morphism $X/Z\ni o$ and
with an $\R$-divisor $D=\sum d_iD_i$ on $X$.
A pair $(X/Z\ni o,D^+)$, with the same local morphism and
with a $\Q$-divisor $D^+=\sum d_i^+D_i$ on $X$,
is an $n$-{\em complement\/} of $(X/Z\ni o ,D)$ if
\begin{description}
\item[\rm (1)]
for every prime divisor $D_i$ on $X$,
$$
d_i^+\ge
\begin{cases}
1, \text{ if } &d_i=1;\\
\rddown{(n+1)d_i}/n &\text{ otherwise};
\end{cases}
$$

\item[\rm (2)]
$(X,D^+)$ is lc; and

\item[\rm (3)]
$K+D^+\sim_n 0/Z\ni o$.

\end{description}
The number $n$ is called a {\em complementary index\/}.
[Note that the neighborhood of $o$ for another pair
can be different from the original one.]

The $n$-complement is {\em monotonic\/} if
$D^+\ge D$.

\end{defn}

\begin{rem} \label{remark_def_complements}

(1)
By (2) of Definitions~\ref{r_comp} and \ref{n_comp},
$D^+$ is a subboundary.
So, by (1) of both definitions,
$D$ is a subboundary too.
By (1) of both definitions, $d_i^+=1$ if $d_i=1$.

Again by (1) of both definitions $D^+$ is a boundary
if so does $D$.

(2)
Immediate by definition an $n$-complement is monotonic if and only if
it is an $\R$-complement.
In general, an $n$-complement is not monotonic.
However, the latter property can happen
for certain $n$-complements or multiplicities, boundaries,
e.g., for hyperstandard ones \cite[Theorem~1.4 and Lemma~3.5]{PSh08} \cite[Theorem~1.7]{B}.
(Cf. \cite[Theorem~1.6]{HLSh} vs Example~\ref{rddown_(n+1)_m_m}, (4) below.)

It is not surprising that
the existence of an $n$-compliment does not
imply the existence of an $\R$-compliment.
But by (1) of Definition~\ref{n_comp} and
since $\rddown{(n+1)d_i}/n$ is very close to $d_i$
for every $d_i\in [0,1)$ and
sufficiently large $n$, the multiplicity
$d_i^+$ for an $n$-compliment become
$\ge d_i$ or very close to $d_i$.
This observation and the remark (1) will be used in
a proof of the easy part of
Theorem~\ref{R-vs-n-complements} below,
one of our main results.

However, if $d_i\ge 1$ then
$\rddown{(n+1)d_i}/n>1$ and typically
$>d_i$.
On the other hand, if $d_i<0$ then
$\rddown{(n+1)d_i}/n<0$ and typically
$<d_i$.

(3)
By (3) of Definition~\ref{n_comp}, the (Cartier and log canonical) index of $K+D^+$ divides $n$,
in particular, $nD^+$ is integral.
Thus $D^+$ is automatically $\Q$-divisor.

(4) If $X/Z\ni o$ is proper then
$\sim_\R$ in (3) of Definition~\ref{r_comp}
can be replaced by the numerical equivalence $\equiv$
in the following two cases:
\begin{description}

  \item[]
$X/Z\ni o$ has weak Fano type \cite[Corollary~4.5]{ShCh}; or

  \item[]
$D^+$ is a boundary \cite[Theorem~0.1,(1)]{A05} \cite[Theorem~1.2]{G}.

\end{description}

(5) We can define also $\R$- and $n$-complements for pairs $(X/Z,D)$
with not necessarily local morphisms.
But they have more complicated behaviour.
In this situation it is better to use the relative version
$\sim_Z,\sim_{n,Z}$ and $\sim_{\R,Z}$ of linear equivalences
instead of the usual one
$\sim,\sim_n$ and $\sim_\R$ respectively (see~\ref{notation_terminology} below).
Then the local over $Z$ existence of $\R$-complements usually
implies the existence of an $\R$-complement over $Z$
(see Addendum~\ref{R_complement_criterion}, \cite[Corollary~4.5]{ShCh}  and
cf. Proposition~\ref{local_compl}).
However, for local over $Z$ $n$-complements, the index $n$ can depend on
a point $o\in Z$.
To find a universal $n$ is a real challenge (cf. Addendum~\ref{bounded_compon}).

For both cases of (4), $\equiv$ over $Z$ is equal to $\sim_{\R,Z}$.

\end{rem}

\begin{exa} \label{1stexe}

(1)
Every relative log Fano pair $(X/Z,B)$ over a quasiprojective variety $Z$
with a boundary $B$
has an $\R$-complement $(X/Z,B^+)$.
However, $\sim_\R$ should be replaced by its relative
version $\sim_{\R,Z}$ (cf. Remark~\ref{remark_def_complements}, (5)).
In other words, in this situation (3) of Definition~\ref{r_comp}
has the following form:
$$
K+B^+\sim_\R \varphi^*H
\text{ and } K+B^+\equiv 0/Z,
$$
where $\varphi\colon X\to Z$ and $H$ an $\R$-ample divisor on $Z$.
[Recall that] we suppose that $Z$ is quasiprojective.

(2) By definition every pair $(X/Z,0)$ with wFt $X/Z$
has a klt $\R$-complement (see Fano and weak Fano types
in~\ref{notation_terminology} below
and cf.~\cite[Lemma-Definition~2.6, (ii)]{PSh08}).

(3)
Every complete $0$-pair $(X,D)$ has an $\R$-complement and $D^+=D$.
Moreover, if $D^+=D=B$ is a boundary
then $K+D^+=K+B\sim_\R 0\Leftrightarrow \equiv 0$
\cite[Theorem~0.1,(1)]{A05} \cite[Theorem~1.2]{G}.

Every relative proper $0$-pair $(X/Z,D)$ also has an $\R$-complement, e.g., $D^+=D$.
However, $D^+=D$ always if the pair is local and nonklt over $o$
near every connected component of $X_o$, the central fiber.
(Cf. with Maximal lc $0$-pairs in Section~\ref{lc_type_compl}.)

(4)
Let $(X,D)$ be a pair with a toric variety $X$ and
with torus invariant $D$.
Then, for any morphism $X/Z$ and any positive integer $n$,
$(X/Z,D)$ has an $n$-compliment if $D$ is a subboundary.
If $K+D$ is $\R$-Cartier the last assumption is equivalent to
the lc property of $(X,D)$.
For instance, we can take $D^+$ equal to the sum of invariant
divisors.
The complement is torus invariant too.
This is a rear case where we can use $\sim$ instead of
its relative version $\sim_Z$.

For complete toric $X$, $D$ is a subboundary if and only if
$(X,D)$ has an $\R$-compliment.
In this situation, the invariant complement is unique.

(5) Every $n$-complement $(\PP^1,B^+)$ of $(\PP^1,0)$ corresponds
to a polynomial $f\in k[x]$ of the degree $2n$:
$$
B^+=(f)_0/n,
$$
such that every root of $f$ has multiplicity $\le n=(\deg f)/2$
(cf. the semistability of polynomials).
Indeed, for an appropriate affine chart $\A^1=\PP^1\setminus \infty$
with a coordinate $x$, we can assume that
$B^+$ is supported in $\A^1$ and $nB^+$ is given by the zeros of some polynomial
$f$ in $x$.
Notice that $B^+$ is effective by (1) of Definition~\ref{n_comp}.
By (3) of the definition $(\deg f)/n=-\deg K_{\PP^1}=2$ and
$f$ has the required degree.
The multiplicities of roots $\le n$ by (2) of Definition~\ref{n_comp}:
$$
\mult B^+=\frac 1n \mult_a f\le 1, a\in k.
$$

If $\PP^1$ is defined over algebraically nonclosed field $k$,
we can construct a $n$-complement $(\PP^1,B^+)$ over $k$
taking a sufficiently general polynomial $f$ in $k[x]$.
(Its roots belong now to $\overline{k}$.)
Such a polynomial exists: take a polynomial with simple roots.
E.g., for $n=2$, $f$ is a quadratic polynomial with nonzero
discriminant.

\end{exa}

\begin{thm} \label{R-vs-n-complements}
Let $(X/Z\ni o,B)$ be a pair with a wFt morphism $X/Z\ni o$
and a boundary $B$ on $X$.
Then $(X/Z\ni o,B)$ has an $\R$-complement if and only if
$(X/Z\ni o,B)$ has $n$-complements for
infinitely many positive integers $n$.

\end{thm}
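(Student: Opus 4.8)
The plan is to treat the two implications separately, since they are of quite different depth. The implication ``$n$-complements for infinitely many $n$ $\Rightarrow$ an $\R$-complement'' is the easy part flagged after Remark~\ref{remark_def_complements}, and I would argue it as follows. Suppose $(X/Z\ni o,B)$ has $n_k$-complements $(X/Z\ni o,B^+_k)$ with $n_1<n_2<\dots\to\infty$, and set $\Sigma=\Supp B$. By Remark~\ref{remark_def_complements}, (1), each $B^+_k$ is a boundary with $(X,B^+_k)$ lc and $K+B^+_k\sim_{n_k}0/Z\ni o$; in particular $K+B^+_k\sim_\R 0/Z\ni o$, and from part~(1) of Definition~\ref{n_comp} together with $\rddown{(n+1)d_i}/n\ge d_i-1/n$ for $d_i\in[0,1)$ (and $d^+_i\ge 1=d_i$ when $d_i=1$) one gets $B^+_k\ge B-\tfrac1{n_k}\Sigma$. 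So each $(X/Z\ni o,B-\tfrac1{n_k}\Sigma)$ has an $\R$-complement. Using that a wFt morphism is (locally over $Z$) a relative Mori dream space, with the resulting control on the complexity of complements, I would take these $\R$-complements with support in a single fixed finite set $T$ of prime divisors; then the polytopes $\sP_k:=\{D:\Supp D\subseteq T,\ D\ge B-\tfrac1{n_k}\Sigma,\ (X,D)\text{ lc},\ K+D\equiv 0/Z\ni o\}$ are nonempty, compact, rational and nested decreasing in $k$, so $\bigcap_k\sP_k\neq\emptyset$; any $B^+$ in the intersection satisfies $B^+\ge B$, makes $(X,B^+)$ lc, and has $K+B^+\equiv 0/Z\ni o$, whence $K+B^+\sim_\R 0/Z\ni o$ by Remark~\ref{remark_def_complements}, (4) (the wFt case). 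The only nontrivial point is the uniform control on supports (equivalently, keeping a limit of the $B^+_k$ log canonical), and that is exactly where the weak Fano type hypothesis enters.

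For the converse I would first dispose of the case when $B$ is a $\Q$-divisor, which is elementary. Given an $\R$-complement $(X/Z\ni o,B^+)$ and assuming (harmlessly) that $X$ is $\Q$-factorial, the set
\[
\sP=\{D:\ \Supp D\subseteq\Supp B^+\cup\Supp B,\ D\ge B,\ (X,D)\text{ lc},\ K+D\equiv 0/Z\ni o\}
\]
is a rational polytope — all three conditions are rational-polyhedral because $B$ is rational — and it is nonempty, as $B^+\in\sP$ by Remark~\ref{remark_def_complements}, (4). Pick a rational point $B^+_1\in\sP$. Since $X/Z\ni o$ is wFt, Remark~\ref{remark_def_complements}, (4), upgrades $K+B^+_1\equiv 0/Z\ni o$ to $K+B^+_1\sim_\R 0/Z\ni o$, hence $m_0(K+B^+_1)\sim 0/Z\ni o$ for some $m_0$. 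Let $q$ clear the denominators of $B$. Then for every positive integer $n$ divisible by $\mathrm{lcm}(m_0,q)$ the pair $(X/Z\ni o,B^+_1)$ is an $n$-complement of $(X/Z\ni o,B)$: condition~(3) of Definition~\ref{n_comp} holds because $m_0\mid n$, and condition~(1) holds because $q\mid n$ makes $\rddown{(n+1)d_i}/n=d_i$ for $d_i\in[0,1)$, while $B^+_1\ge B$ and the lc property force $d^+_{i,1}\ge d_i$, with $d^+_{i,1}=1$ when $d_i=1$. This yields $n$-complements for infinitely many $n$.

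The irrational case is where the real work lies, and I expect it to be the main obstacle: when $d_i$ is irrational, $\rddown{(n+1)d_i}/n$ oscillates around $d_i$ and can strictly exceed it, so neither an $\R$-complement nor an $n$-complement of a downward rational perturbation of $B$ need be an $n$-complement of $B$ itself. Handling this is the \emph{boundedness of $n$-complements} that the paper develops, and the scheme I would follow is the one of Shokurov and Birkar: $\Q$-factorialize; reduce by perturbation to the case of boundaries with coefficients in a fixed hyperstandard set; induct on $\dim X$; pass to a general member of an anti-log-canonical-type linear system on $X/Z\ni o$ and use adjunction to a log canonical centre, or to a divisor; lift a bounded complement from there to $X$ via Kawamata--Viehweg-type vanishing; and settle the remaining exceptional, bounded-index case by boundedness of Fano varieties. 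Making this induction work, and in particular perturbing back from hyperstandard to arbitrary real coefficients, is the heart of the matter and occupies most of the paper.
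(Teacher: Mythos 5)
Your argument for the converse direction (``infinitely many $n$-complements $\Rightarrow$ an $\R$-complement'') is the right idea and is in the same spirit as the paper's, but it hinges on an assertion you do not prove: that the $\R$-complements of all the perturbed pairs $(X/Z\ni o, B-\tfrac1{n_k}\Sigma)$ can be chosen with support in one fixed finite set $T$. Finite generation coming from the Mori-dream-space structure controls effective classes, but not the log canonicity of a replacement complement, so this is not a formality; it is essentially the closed convex rational polyhedral property of $\Compd_\R$, i.e.\ Theorem~\ref{R_compl_polyhedral}, whose proof needs maximal models and the geography of log models. The paper sidesteps your support problem entirely: each $(X/Z\ni o,B^+_{n_k})$ is already an $\R$-complement of $(X/Z\ni o,\rdn{B}{n_k})$ (Corollary~\ref{monotonic_n_compl}), $\rdn{B}{n_k}\to B$ (Lemma~\ref{approximation}), these boundaries are all supported on $\Supp B$, and closedness of $\Compd_\R$ for that fixed collection finishes. (Also, as written your $\sP_k$ is not compact, since coefficients on $T\setminus\Supp B$ have no lower bound; this is fixable.)

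The genuine gap is in the direct implication. You establish it only for rational $B$, and for irrational $B$ you point to the full boundedness-of-$n$-complements program (hyperstandard reduction, induction on dimension, adjunction and lifting, BBAB) without carrying it out, so the implication is not proved; moreover that program is not how the paper proves this theorem. Theorem~\ref{exist_n_compl} is deduced, for the fixed pair, from Theorem~\ref{R_compl_polyhedral} together with directional Diophantine approximation \cite[Corollary~1.3]{BSh} (cf.\ Step~5 of the proof of Theorem~\ref{excep_comp}). Since $\Compd_\R$ is a closed convex rational polyhedron containing $B$, rational points of $\spn{B}$ sufficiently close to $B$ lie in it; choosing $B_n\in\spn{B}$ with $nB_n$ integral and $\norm{B_n-B}<\ep/n$ forces $\rddown{(n+1)B}/n=B_n$ up to the usual care at multiplicities $0$ and $1$ (Lemma~\ref{inequal}), and then essentially your own rational-case construction, applied to a denominator-$n$ approximation of the pair $(B,B^+)$ inside the corresponding rational polyhedron and with $n$ additionally divisible by finitely many integers depending only on the fixed $X/Z\ni o$ (to kill torsion and match indices), returns an $n$-complement of $(X/Z\ni o,B)$ for infinitely many $n$. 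So the missing idea is precisely rational polyhedrality of the $\R$-complement locus combined with denominator-$n$ approximation inside $\spn{B}$: no induction on dimension and no boundedness of Fano varieties enter Theorem~\ref{R-vs-n-complements}; the heavy machinery of Theorem~\ref{bndc} is needed only to make the set of complementary indices finite and uniform, not for this statement.
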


The hard part of the theorem about existence of
$n$-complements will be stated more precisely
in Theorem~\ref{exist_n_compl} below and proved in Section~\ref{constant_sheaves}.
The converse statement follows
from the closed property for $\R$-complements in Theorem~\ref{R_compl_polyhedral} below
and proved in Section~\ref{constant_sheaves}.

\paragraph{Restrictions on complementary indices.}
Let $I$ be positive integers,
$\ep$ be a positive real number,
$v$ be a nonrational vector in a finite dimensional $\R$-linear space $\R^l$, and $e$ be a direction in the rational affine span $\spn{v}$ of $v$.
Usually we are looking for $n$-complements with
$n$ satisfying the following properties:
for $n$ there exists a rational vector $v_n\in\spn{v}$ such that
\begin{description}

\item[\em Divisibility\/:]
$I$ divides $n$;

\item[\em Denominators\/:]
$nv_n$ is integral, that is,
$nv_n\in\Z^l$;

\item[\em Approximation\/:]
$\norm{v_n-v}<\ep/n$.

\item[\em Anisotropic approximation\/:]
$$
\norm{\frac{v_n-v}{\norm{v_n-v}}-e}<\ep.
$$

\end{description}
If $\ep\le 1/2$, $v_n$ is unique and
is the best approximation with denominator $n$.
Note that $\norm{\ }$ denotes the {\em maximal absolute value\/} norm \cite[Notation~5.16]{Sh03}.

Most of applications of the theory of complements
are based on these restrictions.
The choice of $n$ in applications depend on
$I,\ep,v,e$ but also on the dimension $d$.

\begin{thm}[Existence of $n$-complements] \label{exist_n_compl}
Let $I,\ep,v,e$ be the data as
in Restrictions on complementary indices above and
$(X/Z\ni o, B)$ be a pair with a wFt morphism $X/Z\ni o$
and a boundary $B$ on $X$.
Suppose also that $(X/Z\ni o,B)$ has an $\R$-complement.
Then $(X/Z\ni o,B)$ has $n$-complements for
infinitely many positive integers $n$
under Restrictions on complementary indices with the given data.
\end{thm}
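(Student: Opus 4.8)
The plan is to reduce the existence of $n$-complements for a pair with an $\R$-complement to a statement about ``lifting'' complements through adjunction and running suitable minimal model programs, so that the whole question is transported to a base of bounded dimension where one controls denominators and approximation.

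First I would set up the standard apparatus. Given the $\R$-complement $(X/Z\ni o,B^+)$, the $0$-pair $(X,B^+)$ has $K+B^+\sim_\R 0/Z\ni o$; writing $B^+ = B + \sum a_i \Delta_i$ with the $a_i$ the ``free'' coefficients of $B^+-B$, the relation $K+B^+\sim_\R 0$ together with $K+B\sim_\R -\sum a_i\Delta_i$ means the vector $a=(a_i)$ lies in a rational affine subspace $\sV\subseteq\R^l$ cut out by the $\R$-linear-equivalence conditions (these are rational because $\Pic$ is finitely generated in the wFt setting, after passing to a small $\Q$-factorialization). The lc condition $(X,B^+)$ lc defines a rational polytope $\sP$, so the $\R$-complements with prescribed non-free part correspond to $\sP\cap\sV$, a nonempty rational polytope. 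This is the ``$\R$-complements are polyhedral'' phenomenon referenced as Theorem~\ref{R_compl_polyhedral} in the excerpt; I would invoke it directly. Now the data $v,e$ and $I,\ep$ are used to select, inside this rational polytope, an appropriate rational point $v_n$ — a simultaneous-Diophantine-approximation argument: for infinitely many $n$ divisible by $I$ there is a rational point $v_n$ of denominator $n$ in a prescribed rational affine span, within $\ep/n$ of $v$, with the anisotropic direction controlled; this is elementary Dirichlet-type approximation once the span is rational, and I would isolate it as a lemma about approximating a nonrational vector inside a rational polytope.

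The heart of the matter is the genuine existence part: producing, for each such large $n$, an \emph{honest} $n$-complement — that is, a $\Q$-divisor $B^+_n$ with $K+B^+_n\sim_n 0/Z\ni o$ and $(X,B^+_n)$ lc and coefficients satisfying condition~(1) of Definition~\ref{n_comp} — rather than merely a rational point of the polytope. The approach here is the classical inductive machine for complements: reduce to the case of a plt (or lc) blow-up / run a $(K+B)$-MMP over $Z\ni o$ to make the pair have a Mori fiber structure or a minimal model where $-(K+B)$ is relatively semiample, then use that wFt is preserved; in the Fano-fibration case restrict to a fiber and apply adjunction to a lower-dimensional pair, where by the inductive hypothesis on dimension an $n$-complement exists with hyperstandard-type coefficients (the set of coefficients of $B$ plus the ``different'' being controlled), and then lift it back using a vanishing theorem (Kawamata–Viehweg / the standard ``lifting of sections'' argument, exactly as in \cite{Sh92} and \cite{PSh08}). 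The point of the $\R$-complement hypothesis is that it guarantees the MMP terminates with the right kind of output (no obstruction at the end, i.e. the relevant $(K+B)$-nonpositive contraction exists), so the inductive step always applies. Boundedness of the index $n$ comes from boundedness of the exceptional/base objects produced — one needs that the lower-dimensional pairs that arise lie in a bounded family with coefficients in a fixed DCC (indeed finite, after the approximation step) set, which is where results like effective adjunction / boundedness of complements in lower dimension feed in.

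The main obstacle, as I see it, is precisely this last point: controlling the index $n$ uniformly, i.e. showing that one can take the \emph{same} infinite set of $n$ (compatible with Divisibility, Denominators, Approximation, Anisotropic approximation for the fixed data $I,\ep,v,e$) to work in all the inductive steps simultaneously. One has to interleave the number-theoretic approximation of $v$ with the geometric induction: each descent to a fiber changes the coefficient vector by a rational-linear transformation (adjunction/different) and by the moduli part, so one must track how the span $\spn{v}$ and the direction $e$ transform and ensure the approximation property is inherited. I expect the bulk of the real work — deferred in the excerpt to Section~\ref{constant_sheaves} — to be organizing this bookkeeping: proving a version of the statement with a whole package of data $(I,\ep,v,e)$ that is stable under the operations (restriction to fibers, lc blow-ups, MMP) used in the induction, so that the induction on dimension closes. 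The semiampleness/base-point-freeness input needed to pass from $\sim_\R 0$ to $\sim_n 0$ on the base, and the finiteness of the relevant coefficient set (via DCC and the $\R$-complement hypothesis pinning $v$ into a bounded rational polytope), are the technical lemmas I would prove first before attempting the induction.
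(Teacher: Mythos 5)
Your opening paragraph is, in essence, the paper's entire proof of Theorem~\ref{exist_n_compl} --- and you abandon it exactly one step before it closes. The genuine gap is your claim that producing a rational point of the polytope is not yet an ``honest'' $n$-complement and that the heart of the matter is an inductive MMP/adjunction/lifting machine. For this statement no uniformity in $n$ is asserted: the infinite set of complementary indices is allowed to depend on the fixed pair $(X/Z\ni o,B)$, and because of that the approximation argument already finishes. Concretely, after a $\Q$-factorialization (Proposition~\ref{small_transform_compl}) apply the directional approximation of \cite[Corollary~1.3]{BSh} not to $B$ but to the $\R$-complement $B^+$, jointly with $v$, inside the rational polyhedron of boundaries $D$ with $(X,D)$ lc and $K+D\sim_\R 0/Z\ni o$ (rational by the same circle of ideas as Theorem~\ref{R_compl_polyhedral}). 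For $n=Nq$ with $N$ sufficiently divisible --- permissible since only $I\mid n$ is imposed and $n$ may depend on the pair --- the approximant $B^+_n$ with $nB^+_n$ integral lies in that polyhedron, hence $(X,B^+_n)$ is lc; rationality of the affine subspace cut out by $\sim_\R 0$ together with finite generation of the relevant class group of the fixed wFt morphism gives $K+B^+_n\sim_n 0/Z\ni o$; and condition (1) of Definition~\ref{n_comp} with respect to $B$ follows from Lemma~\ref{inequal} once $\delta$ is chosen smaller than $\min\{b,1-b\}$ over the finitely many multiplicities $b\in(0,1)$ of $B$ and $B^+$ (multiplicities equal to $1$ stay equal to $1$ because $nB^+_n$ is integral and $\delta<1$). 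No MMP, adjunction, vanishing theorem, or boundedness of families enters; the Restrictions (Divisibility, Denominators, Approximation, Anisotropic approximation) are arranged exactly as in Step~5 of the proof of Theorem~\ref{excep_comp}, which is why the paper's proof is two lines.

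Consequently the obstacle you single out --- carrying the data $I,\ep,v,e$ uniformly through restrictions to fibers, differents and moduli parts --- is the obstacle of the boundedness Theorem~\ref{bndc}, not of this theorem, and importing that machinery here is both unnecessary and, as written, circular: organizing precisely that induction is the content of Sections~\ref{excep_n_comp}--\ref{lc_type_compl}, and your sketch defers it wholesale. The paper does record the alternative route you gesture at (Theorem~\ref{exist_n_compl} also follows from Theorem~\ref{bndc} under its hypothesis (2) together with Addendum~\ref{bounded_compon}), but to use it you would have to prove that much harder theorem first; as a standalone argument your proposal is incomplete exactly where you locate ``the heart of the matter,'' while the elementary completion you set aside is what the paper actually uses.
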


Originally, it was expected that, in a given dimension $d$,
there exists a finite set of complementary indices $n$
such that
the existence of an $\R$-complement implies the existence
of an $n$-complement.
This is not true if $d\ge 3$
(see Examples~\ref{unbounded_lc_compl}, (1-3)).
We have only the following slightly weaker form
of the expectation.

\begin{thm}[Boundedness of $n$-complements] \label{bndc}
Let $d$ be nonnegative integer,
$\Delta\subseteq (\R^+)^r,\R^+=\{a\in\R\mid a\ge 0\}$, be a compact
subset (e.g., a polyhedron) and
$\Gamma$ be a subset in the unite segment $[0,1]$ such that
$\Gamma\cap\Q$ satisfies the dcc.
Let $I,\ep,v,e$ be the data as
in Restrictions on complementary indices.
Then there exists a finite set
$\sN=\sN(d,I,\ep,v,e,\Gamma,\Delta)$ of positive integers
({\em complementary indices\/})
such that
\begin{description}

\item[\rm Restrictions:\/]
every $n\in\sN$ satisfies
Restrictions on complementary indices with the given data;

\item[\rm Existence of $n$-complement:\/]
if $(X/Z\ni o,B)$ is a pair with wFt $X/Z\ni o$,
$\dim X\le d$,
connected $X_o$ and  with a boundary $B$,
then $(X/Z\ni o,B)$ has an $n$-complement for some $n\in\sN$
under either of the following assumptions:

\item[\rm (1)\/]
$(X/Z\ni o,B)$ has a klt $\R$-complement;
or

\item[\rm (2)\/] $B=\sum_{i=1}^r b_iD_i$ with $(b_1,\dots,b_r)\in \Delta$ and additionally,
for every $\R$-divisor $D=\sum_{i=1}^rd_iD_i$ with $(d_1,\dots,d_r)\in \Delta$,
the pair $(X/Z\ni o,D)$ has an $\R$-complement, where
$D_i$ are
effective Weil divisors (not necessarily prime); or

\item[\rm (3)\/]
$(X/Z\ni o,B)$ has an $\R$-complement and,
additionally, $B\in\Gamma$.

\end{description}

\end{thm}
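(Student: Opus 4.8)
The plan is to treat this as the uniform form of Theorem~\ref{exist_n_compl}. That theorem already gives, for every admissible pair, infinitely many complementary indices $n$ meeting the Restrictions, so the new content is purely the uniformity: one finite $\sN$ serving all pairs permitted by (1), (2) or (3). I would obtain this by a Noetherian/compactness argument whose real input is a boundedness statement for the $\R$-complements involved.

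First I would make the standard reductions. Passing to a $\Q$-factorial dlt crepant model does not change whether an $n$-complement exists, so I may assume $(X,B^+)$ is dlt; and since $X/Z\ni o$ is wFt with connected $X_o$, running a $(K+B)$-MMP and using adjunction for fibre spaces (the canonical bundle formula, with generalized pairs on the base) reduces the search for an $n$-complement either to pairs of Fano type or, through a fibration, to their lower-dimensional bases, carrying the same index data, with the connectedness of $X_o$ guaranteeing a single global complement. The heart of the matter is then to prove that, under each of (1), (2), (3) and after these reductions, the pairs $(X,B^+)$ — with $B^+$ a \emph{klt} $\R$-complement in case (1) — belong to a log bounded family, i.e. to a finite-type family $\mathfrak{X}\to S$ with a boundary $\mathcal{B}/S$ whose fibres are these pairs, the coefficients of $\mathcal{B}$ allowed to range over a bounded set. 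In case (3) the dcc hypothesis on $\Gamma\cap\Q$, combined with global ACC, confines the relevant coefficients of $B^+$ to a set satisfying the ACC, and Birkar-type boundedness of Fano-type log Calabi–Yau pairs with such coefficients supplies the family. In case (2) the polyhedral structure of $\R$-complements (Theorem~\ref{R_compl_polyhedral}) lets me move $(b_1,\dots,b_r)$ to the vertices of the polytope $\Delta$, which have rational coordinates, reducing to a finite coefficient set and hence to the same boundedness. In case (1), klt-ness together with wFt and $\dim X\le d$ yields, via effective birationality and BAB-type bounds, uniform control of $X$ and of $B^+$ up to a bounded birational modification, again placing the pairs in a bounded family.

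Granting the boundedness, I would conclude by contradiction. Enumerate the (infinitely many) positive integers meeting the Restrictions with the given data as $N_1<N_2<\cdots$; if no finite subset works as $\sN$, choose for each $j$ an admissible pair $(X_j/Z_j\ni o_j,B_j)$ with no $n$-complement for any $n\in\{N_1,\dots,N_j\}$. By the reductions the pairs $(X_j,B_j^+)$ lie in one bounded family; stratifying $S$ and passing to a subsequence I may assume a fixed base, a fixed underlying divisor and convergent coefficient vectors, with limit pair $(X_\infty,B_\infty)$. By closedness of the $\R$-complementary condition (Theorem~\ref{R_compl_polyhedral}), $(X_\infty,B_\infty)$ still has an $\R$-complement, so Theorem~\ref{exist_n_compl} furnishes an $N_k$-complement of $(X_\infty,B_\infty)$ for some $k$, meeting the Restrictions. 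After a further finite stratification, the relation $\sim_{N_k}0$ is cut out by the vanishing of a section of a line bundle of locally constant rank and lc-ness is an open condition in the family, so this fixed-index complement deforms to $(X_j,B_j^+)$, and hence — pushing forward through the reductions — to $(X_j/Z_j\ni o_j,B_j)$ for all large $j$, contradicting the choice once $j\ge k$. Thus a finite $\sN$ exists, and it automatically satisfies the Restrictions clause since it was drawn from the $N_i$'s.

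I expect the boundedness of the $\R$-complements to be the main obstacle — especially case (1), where there is no a priori bound on the coefficients of $B$ or on the variety $X$, so one must use the full force of boundedness of Fano varieties and of effective birationality, coupled with a careful descent through fibrations. The limiting argument is comparatively formal, but still needs the anisotropic direction $e$ and the nonrational vector $v$ to be respected uniformly; this is arranged by holding $I,\ep,v,e$ fixed throughout, so that they enter only through the index-only Restrictions already supplied by Theorem~\ref{exist_n_compl}.
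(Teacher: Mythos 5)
Your plan stands or falls on the claim that, under (1) or (3), the pairs $(X,B^+)$ lie in a log bounded family, and that claim is false. A klt $\R$-complement gives log discrepancies $>0$ but bounded away from $0$ by nothing, so BBAB does not apply: already in dimension $2$ the klt weighted projective planes, or the cones over rational normal curves of arbitrary degree with their invariant boundaries (coefficients in the dcc set $\{1\}$, cf.\ Example~\ref{act_exa}), are Fano-type $0$-pairs satisfying (1) resp.\ (3) and form an unbounded family. Hence the compactness/limit argument over a single family $\mathfrak{X}\to S$ never gets started. In the paper boundedness is invoked only for \emph{exceptional} pairs with hyperstandard multiplicities (Corollary~\ref{bound_ex_pairs}, resting on \cite[Theorem~1.7]{B} and BBAB), and everything else is reached by a dimensional induction rather than by boundedness of the total space: one lifts $n$-complements from the adjoint bd-pair on the base of a $0$-contraction using the adjunction index (Theorem~\ref{adjunction_index}) and the inverse statement Theorem~\ref{invers_b_n_comp}, and one extends $n$-complements from lc centres via plt models (Theorem~\ref{extension_n_complement}, Corollary~\ref{plt_b_n_complement}); the compatibility of these operations with the integrality constraints is exactly what the low approximations $B_{n\_\Phi}$ and b-$n$-complements are for. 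Case (1) is then Theorem~\ref{b_n_comp_klt}; case (3) is deduced from (1) by the maximal lc $0$-pair analysis, boundedness of lc index (Corollary~\ref{bounded_lc_index}) and a perturbation to the klt case (Theorem~\ref{lc_compl}); case (2) follows from (3) by covering the compact $\Delta$ with finitely many approximation neighbourhoods (Theorem~\ref{lc_compl_A}, Lemma~\ref{approximation_lc_complements}).

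Two secondary points would also need repair even if boundedness were available. First, $\Delta$ is only assumed compact, not a polytope, so ``moving $(b_1,\dots,b_r)$ to the vertices'' is not defined; and even for a polytope, an $n$-complement at a vertex does not by itself yield one for the original boundary without the monotonicity/approximation step (Proposition~\ref{1_of def_2 for B}, Lemma~\ref{approximation_lc_complements}) and the divisibility of $n$. Second, spreading a fixed-index complement from a limit fibre to nearby fibres is not a matter of ``openness of lc-ness'': lc-ness and the effectivity of $-nK-\rddown{(n+1)B}$ are made constant in a family only after the careful reparametrization and constant-sheaf arguments of Section~\ref{constant_sheaves} (Proposition~\ref{bounded_rank}, Addendum~\ref{const_lc}, Proposition~\ref{const_comp}), which is precisely how the paper uses boundedness in the one place where it holds, namely the exceptional case (Theorem~\ref{excep_comp}).
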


\begin{add} \label{bounded_compon}
We can relax the connectedness assumption on $X_o$ and
suppose that the number of connected components of $X_o$
is bounded.

Moreover, it is enough to suppose the boundedness
up to local (even formal) isomorphisms over $Z\ni o$
of corresponding neighborhoods.

\end{add}

\begin{add} \label{nonclosed}
We can relax also the assumption that the base field $k$ is algebraically closed
and suppose only that $\mathrm{char}k=0$.

Similarly, the theorem holds for $G$-pairs where $G$ is a finite group.

\end{add}

\begin{rem} \label{N_with_parameters}

(1)
The set $\sN(d,I,\ep,v,e,\Gamma,r)$ is not unique
by Divisibility and Approximation
in Restrictions on complementary indices and
depends on the data $d,I,\ep,v,e,\Gamma,r$.
We use for $\sN$ notation $\sN(d,I,\ep,v,e,\Gamma,r)$
only to show parameters $d,I,\ep,v,e,\Gamma,r$, on which
depend the choice of $\sN$.
Actually, we need only the partial data
$d,I,\ep,v,e$ and $\sN=\sN(d,I,\ep,v,e)$
under the assumption (1) of the theorem.

Below we use also some other sets for
boundary multiplicities instead of $\Gamma$ and
with a different meaning (cf.~\ref{direct_dcc}).

(2)
If $v$ is nonrational then $\spn{v}$ is not a point
and has a direction.
Otherwise there are no directions and
Anisotropic approximation is void.
Nonetheless the other properties hold if we
take $v_n=v$ for every $n\in \sN$ and
suppose that $I$ is sufficiently divisible.
In this situation we get $\sN=\sN(d,I,\ep,v)$.

(3)
Denominators property means that $v_n$
has rational coordinates with
denominators (dividing) $n$.
We already noticed that
by Approximation property, if $\ep\le 1/2$ then $v_n$ is the best
rational approximation of $v$ with those denominators.

(4) Our main results and applications work for any base field $k$
of characteristic $0$ and $G$-pairs.

\end{rem}

For certain boundaries it is enough a single complementary index,
e.g., for boundaries with only hyperstandard multiplicities
\cite[Theorem~1.4]{PSh08} \cite[Theorem~1.7]{B}.
Alas, it is not true in general.

\begin{exa}[Nonsingle index] \label{nonsingle}
Let $d,n$ be two positive integers.
Take the projective space $X=\PP^d$ of dimension $d$ with
a boundary
$$
B=\sum \frac1{n+1} D_i,
$$
where $D_i$ runs through $(d+1)(n+1)$ distinct rather
general hyperplanes.
Then $(X,B)$ is a klt $0$-pair, has a klt $\R$-complement and it is actually
$(X,B)$ itself by Example~\ref{1stexe}, (3).
However, $(X,B)$ doesn't have $n$-complements because
$$
B^+\ge \frac{\rddown{(n+1)B}}n=\frac{n+1}n B>B
$$
by (1) of Definition~\ref{n_comp}.
Hence, for $d\ge 1$,
every set $\sN(d,I,\ep,v,e)$
under (1-2) of Theorem~\ref{bndc} has at least two indices.
Similar examples exist under (3) of the theorem.

Similarly, for any finite set of positive integers $\sN$,
we can find a pair $(X,B)$ of dimension $d$ which has a klt $\R$-complement
but doesn't have $n$-complements for all $n\in\sN$.
However $X$ of such an example should be reducible or not connected!
This is why in Theorem~\ref{bndc} above and
Conjecture~\ref{conjecture_bndc} below
we suppose that $X/Z\ni o$ is
local with connected $X_o$, e.g.,
with a contraction $X/Z\ni o$ (cf. Remark~\ref{bcconj}, (2)).
(Cf. also Examples~\ref{unbounded_lc_compl}, (1-3).)

\end{exa}

\begin{rem} \label{bcconj}

(1)
Taking general hyperplane sections of $Z$, if they are needed,
we can assume in Theorems~\ref{R-vs-n-complements}-\ref{bndc} and
in similar statements that
$o$ is a closed point of $Z$.
However we give a more rigorous treatment below for all points $o$
avoiding taking such sections.

(2)
Under the assumption that $k$ is algebraically closed,
another assumption that $X/Z\ni o$ is a local morphism of [normal] varieties
with {\em connected\/} $X_o$
in Theorem~\ref{bndc} is necessary for the finiteness
of $\sN$, the boundedness of $n$-complements,
even in the klt case (1) (cf. Addendum~\ref{nonclosed}).
In particular, for algebraically closed $k$ and
normal complete $X/k$, the natural morphism $X/o=\Spec k$ is always local and
$X_o=X$ is connected if and only if $X$ is irreducible.
By Example~\ref{nonsingle} the irreducibility is important
for boundedness.

According to Addendum~\ref{bounded_compon}
we can replace the connectedness of $X_o$ by
the {\em boundedness of connected components\/}:
for a natural number $N$, Theorem~\ref{bndc} still holds
if instead of the connectedness of $X_o$
the fiber $X_o$ has at most $N$ connected components.
In particular, for any given pair $(X/Z,B)$ with a wFt morphism
into a quasiprojective variety $Z$ or
a local one $Z\ni o$ (resp. in \'etale sense),
with a boundary $B$ and
over any field $k$, by Theorem~\ref{exist_n_compl} and Proposition~\ref{local_compl}
there exists
an $n$-complements for infinitely many $n$
if some $\R$-complement exists even locally over $Z$.
Actually, the same results expected without
the wFt assumption for $X/Z$ and $X/Z\ni o$
(see Conjecture~\ref{conjecture_bndc}).

Cf. Example~\ref{nonsingle} above.

Complements in a nonnormal case are
a real challenge.

(3) Another challenge is construction of
$\ep$-$n$-complements \cite{Sh04b}
(see Conjecture~\ref{a_n_compl} below).

\end{rem}

Main Theorems~\ref{R-vs-n-complements}-\ref{bndc} have
many important applications.
Some of them are already known but some new.
Certain special applications we meet
in the course of the proof of our main theorems.
Among other applications -- Acc of log canonical thresholds, etc --
see in Section~\ref{applic}.

A few words about the proof of
Theorems~\ref{R-vs-n-complements}-\ref{bndc}.
The key part of the paper is
the existence and boundedness of Theorem~\ref{bndc}
under the existence of klt $\R$-complements, that is,
under (1) of the theorem.
See the proof in Section~\ref{klt_nongeneric}
and also at the end of Section~\ref{lc_type_compl}.
The theorem under (1) implies the theorem under (3) from which
follows the theorem under (2).
See the proof in Section~\ref{lc_type_compl}.
In its turn, Theorem~\ref{exist_n_compl} is
immediate by Theorem~\ref{bndc} under (2) and Addendum~\ref{bounded_compon}
or by \cite[Corollary~1.3]{BSh}.
See the proof in Section~\ref{constant_sheaves}.
Finally, Theorem~1 follows from Theorem~\ref{exist_n_compl}
and the closed property for $\R$-complements.
See the proof also in Section~\ref{constant_sheaves}.

All $n$-complements of Theorem~\ref{bndc} under (1)
can be obtained from $n$-complements for
exceptional pairs by two standard construction:
lifting of $n$-complements from lower dimensional bases
of fibrations and
extension of $n$-complements from lower dimensional lc centers.
These constructions are treated respectively in~\ref{adjunction_div}
and~\ref{adjunction_on_divisor}.

\paragraph{Generalizations and technicalities.}
Both construction are applied to appropriate birational
models of pairs, in particular, we use modifications
which blow up divisors.
Blowdowns  of divisors preserves $n$-complements
but blowups do not so.
To overcome this difficulty we introduce b-$n$-complements
in Definition~\ref{b_n_comp}, a birational version of $n$-complements.
Note that every monotonic $n$-complement is
automatically a b-$n$-complement for log pairs
(cf. Remarks~\ref{remark_def_complements}, (2) and~\ref{rem_def_b_n_compl}, (1-2)).
In particular, this holds for $B$ with hyperstandard
multiplicities and sufficiently divisible $n$.
This is why we do not need such a sophistication
for hyperstandard boundaries.
But usually, $n$-complements
for a boundary $B$ with arbitrary multiplicities
are not b-$n$-complements by Example~\ref{b_n_comp_of_itself}
(cf. Proposition~\ref{b_n_comp-nc}).
Fortunately, it is true for an appropriate
low approximation of $B$ which are introduced in Hyperstandard sets
in Section~\ref{technical}.
This leads to reformulation of our main result
in terms of b-$n$-complements and of
appropriate approximations of boundaries in Theorem~\ref{b_n_comp_klt}
and Addendum~\ref{standard_klt_complements}.
Respectively, existence of a (klt) $\R$-complement is
transform into existence of an $\R$-complement for
approximations of boundaries and hidden in the proof
of main results.
Additionally we generalize our results for
pairs with b-boundaries, including,
the Birkar-Zhang pairs.
Again this is not our caprice or aspiration for
generalization but rather a necessity related
to appearance of moduli part of adjunction divisors
and the adjunction itself that will be recalled in Section~\ref{adj}.
So, we try to use these technical staff only
where they are actually needed and left
to more advanced readers other possible
generalizations.

In most of results we assume that $X/Z\ni o$ has wFt.
Expected generalizations in the nonwFt case see in
Conjecture~\ref{conjecture_bndc}.

\paragraph{What do we use in the proof?}
This is a reasonable question, especially, if
we would like to understand foundations in
the theory of complements and its relation to
the LMMP.
Actually, we use and very essentially two Birkar's
results and the $D$-MMP for Ft $X/Z$:
\begin{description}

  \item[\rm (1)]
boundedness of $n$-complements in the hyperstandard case:
$B\in\Phi=\Phi(\fR)$, where $\fR$ is a finite set of
rational numbers in $[0,1]$ \cite[Theorem~1.7]{B};

  \item[\rm (2)] (BBAB)
boundedness of $\ep$-lc Fano varieties \cite[Theorem~1.1]{B16};
and

 \item[\rm (3)]
the $D$-MMP holds for every $\R$-Cartier
$\R$-divisor $D$ on every Ft $X/Z$ \cite[Section~5]{Sh96}.

\end{description}
Actually, we use~(1) only for exceptional pairs
$(X,B)$ with a boundary $B\in \Phi$ and $\Q$-factorial Ft $X$.
Moreover, arguments as in \cite[Section~6]{PSh08} and \cite{HX} allow
to suppose additionally that $B$ has multiplicities
only in a finite subset of $\Phi$ and $\rho(X)=1$.
In this situation~(1) is equivalent to an effective nonvanishing:
\begin{description}

  \item[\rm (4)]
let $d$ be a nonnegative integer and $\fR$ be
a finite set of rational numbers in $[0,1]$ then
there exists a positive integer $N$ such that
for every exceptional pair $(X,B)$ with $B\in\fR$ and
$\Q$-factorial Ft $X$ with $\dim X=d,\rho(X)=1$,
$$
\linsys{-N(K+B)}\not=\emptyset.
$$

\end{description}
Moreover, we need the boundedness (2) only
in this situation, that is, for $X$ as in~(4).
This boundedness follows from (4) and a birational boundedness in
the log general case \cite[Section ~4]{PSh08}
\cite{HX} \cite{B}.
The $D$-MMP of (3) is well-known and essentially follows
form finiteness of (bi)rational $1$-contractions of $X/Z$  \cite[Corollary~5.5]{ShCh}
and extension results \cite[Theorem~0.1]{S} \cite[Theorem~4.1]{T} \cite[Theorem~5.4]{HM}.
Thus~(4) is exactly what we need.
This key result for us is nontrivial and
a fundamental one.

Boundedness of lc index in Corollary~\ref{bounded_lc_index}
will be established in two steps: first, for hyperstandard
boundary multiplicities and then the general case based
on $n$-complements for arbitrary boundary multiplicities
(including nonrational).
For hyperstandard multiplicities, we use
dimensional induction in the local case and semiexceptional lc type of
Theorem~\ref{semiexcep_compl_with_filtration}
in the global case or again \cite[Theorem~1.7]{B}.

\subsection{Notation and terminology} \label{notation_terminology}

By $X/Z\ni o$ we mean a {\em local morphism over\/} $Z$, where
$o$ is a point in $Z$.
That is, $X/Z$ is a morphism $\varphi\colon X\to Z$
into a neighborhood of the point $o$ in $Z$.
We consider such morphisms as germs, that is, they are {\em equal\/} if
so they do over some (possibly smaller) neighborhood.
Similarly, we understand statements about local morphisms:
properties, conditions, constructions, etc.
For instance, a prime Weil divisor $D$ on $X$
in this situation is such a divisor that
$D$ exists over every neighborhood of $o$,
equivalently, $\varphi(D)$ contains $o$.
Note that $o$ is not necessarily closed.
Another example of this kind the log canonical property (lc)
in (2) of Definitions~\ref{r_comp} and~\ref{n_comp}:
it means lc of $(X,D^+)$ over some neighborhood of $o$ in $Z$,
that is, every nonlc center of $(X,D^+)$ does not
intersect $X_o$, the central fiber.

Usually, we suppose that $Z$ is quasiprojective.

Sometimes we use notation $(X,D)\to Z$ instead of a pair $(X/Z,D)$.
A local pair $(X/Z\ni o,D)$ is {\em global\/} if $X=X_o$.
Usually, we denote such a pair by $(X,D)$.
Similarly for bd-pairs.

We suppose always that $X$ is
a normal irreducible algebraic space or variety
over an algebraically closed field $k$ of characteristic $0$.
Respectively, $Z$ is a quasiprojective algebraic variety or scheme over $k$ and
the neighborhoods are in the Zariski topology.
Usually, we suppose that the central fiber
$X_o=\varphi\1o$ is connected.
E.g., this holds when $X/Z\ni o$ is a contraction.

An $\R$-divisor $D$ on $X$ is an element of
$\WDiv_\R X$, the group of Weil $\R$-divisors on $X$.
Every $\R$-divisor $D$ has a linear presentation
in terms of distinct prime Weil divisors $D_i$,
the standard basis of $\WDiv_\R X$:
$D=\sum d_i D_i$, where
$d_i=\mult_{D_i} D\in \R$ is the multiplicity of $D$
in $D_i$.

For a pair $(X/Z\ni o,D)$
with a local morphism $X/Z\ni o$,
$D=\sum d_i D_i$ is an $\R$-divisor on $X$,
defined locally over $Z\ni o$[, that is,
the prime divisors $D_i$ intersect
the central fiber $X_o$].

An $\R$-divisor $D=\sum d_iD_i$ on $X$ is a {\em subboundary\/} if
all $d_i\le 1$.
Respectively, $D$ is a {\em boundary\/} if all $0\le d_i\le 1$.

For an $\R$-divisor $D=\sum d_iD_i$ on $X$ and
a subset $\Gamma\subseteq \R$,
$D\in\Gamma$ denotes that every $d_i\in\Gamma$.
E.g., $D$ is a subboundary if and only if $D\in(-\infty,1]$.
Respectively, $D$ is a boundary if and only if
$D\in[0,1]$, the unite segment.
Tacite agreement that $0\in \Gamma$ always.

$K=K_X$ denotes a canonical divisor on $X$.
$K_Y$ ditto on any other $Y$.

For a natural number $n$,
two $\R$-divisors $D$ and $D'$ on $X$ are
$n$-{\em linear equivalent} if $nD\sim nD'$
where $\sim$ denotes the linear equivalence.
Respectively, $\sim_n$ will denote $n$-linear
equivalence.

Two $\R$-divisors $D$ and $D'$ are $\R$-linear equivalent if
$D-D'$ is $\R$-principal, that is, an $\R$-linear combination
of principal divisors.
We denote the equivalence by $\sim_\R$.

In general, a linear equivalence $D\sim_Z D'$ or $D\sim D'/Z$ on $X$ over $Z$ is
a local linear equivalences over $Z$, that is, $D-D'\sim 0$ on
some (open) neighborhood of any fiber of $X/Z$.
That is, $D-D'$ is locally principal over $Z$.
This does not imply that $D-D'\sim 0$ or principal on whole $X$.
However, if  $X/Z$ is proper and
$Z$ is quasiprojective then this is true modulo
vertical (base point) free divisors on $X$:
$$
D-D'\sim V-\varphi^*H,
$$
where $V$ is a vertical (base point) free divisor on $X$ and
$H$ is a very ample divisor on $Z$.
The difference is an integral linear combination of
vertical free divisors.
Note that $V$ is automatically vertical, if it is free, because $V$ is numerically trivial
on every fiber of $X/Z$.
Moreover, if $X/Z$ is a contraction then
$V=\varphi^*C$, where $C$ is a free divisor on $Z$ \cite[Proposition~3]{Sh19}.
Thus $V-\varphi^*H=\varphi^*(C-H)$, an inverse image of
a Cartier divisor from $Z$.
The same holds for $\sim_{\Q,Z}$ with vertical $\Q$-free or $\Q$-semiample divisors.
The relative $\R$-linear equivalence $\sim_{\R,Z}$ is more subtle.
We usually, use this relation in the local sense, that is,
$D-D'$ is $\R$-principal locally over $Z$.
However, if $X/Z$ has wFt then
$$
D-D'\sim_\R V-\varphi^*H,
$$
for some vertical $\R$-free divisor $V$ on $X$ and
some ample divisor $H$ on $Z$.
Moreover, if $X/Z$ is a contraction then
$V=\varphi^*C$ for an $\R$-free divisor $C$ on $Z$.

A linear equivalences on $X$ over $Z\ni o$ are local
by definition:
linear equivalences on $X$ over a neighborhood of $o$.
Respectively, a {\em numerical\/} equivalence $\equiv/Z\ni o$
is the numerical equivalence with respect to
a (proper) local morphism $X/Z\ni o$
(or ,more generally, $\equiv$ on every complete curve of $X$;
cf. Nef below).

A proper local pair $(X/Z\ni o,D)$ is
a $0$-{\em pair\/} if $(X,D)$ is lc and
$K+D\sim_\R 0/Z\ni o$.
If $D$ is a boundary we can replace $\sim_\R$ by $\equiv/Z$
(see Remark~\ref{remark_def_complements}, (4) above).

\paragraph{Nef.}
In the paper, if it is not stated otherwise,
we suppose that the (b-)nef property of a divisor means
nonnegative of its intersection with every complete curve \cite[Remark~1]{Sh17}:
an $\R$-divisor $D$ on $X$ is nef if
it is $\R$-Cartier and $(C.D)\ge 0$
for every complete curve $C$ in $X$ (respectively, on a stable model of $X$).
It is the usual nefness if $X$ is complete and
respectively the relative one over a scheme or a space $S$
if $X/S$ is proper.
Notice that if $D$ is nef then $D$ is nef over $S$
for every proper $X\to S$.
That nefness is typical for the b-nef property of $\sD\md$
(cf. Theorem~\ref{b_nef} and Conjecture~\ref{mod_part_b-semiample}).

\paragraph{Fano and weak Fano types.}
Both types can be defined in terms of $\R$-complements.

A {\em weak Fano type\/} (wFt) morphism $X/Z\ni o$
is such a proper local morphism that the pair $(X/Z\ni o,0)$
has a klt $\R$-complement $(X/Z\ni o,B)$ with
big $B$.
Equivalently, there exists a big boundary $B$ on $X$
such that $(X/Z\ni o,B)$ is a klt $0$-pair.
Similarly, we can define a wFt morphism $X/Z$.
Actually, for quasiprojective $Z$,
the latter morphism has wFt if and only if
it has wFt locally over $Z$.

Respectively, a {\em Fano type\/} (Ft) morphism
is such a morphism that the pair $(X/Z\ni o,B)$
is a klt log Fano for some boundary $B$ on $X$.
(In particular, $X/Z\ni o$ is proper.)
Equivalently, $X/Z\ni o$ has Ft if and only if
$X/Z\ni o$ has wFt and projective (cf. Example~\ref{1stexe}, (1)).
The same works for $X/Z$.

For example, every toric morphism has wFt and
Ft if it is projective.

\begin{lemma} \label{wTt_vs_Ft}
Proper $X/Z$ has wFt if and only if $X/Z$ is a small birational modification
of Ft $Y/Z$.
\end{lemma}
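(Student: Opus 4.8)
The claim is an "if and only if." The easy direction is that a small birational modification of an Ft morphism has wFt; the content is the converse. I would prove both by exhibiting the right boundaries and moving them around via the $D$-MMP for Ft morphisms, which is available to us by item (3) in "What do we use in the proof?".

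First the easy direction. Suppose $X/Z$ is a small birational modification of $Y/Z$ with $Y/Z$ of Ft, say $f\colon Y\dashrightarrow X$ small over $Z$. By definition there is a boundary $B_Y$ on $Y$ with $(Y/Z,B_Y)$ a klt log Fano, so in particular $(Y/Z,B_Y)$ is a klt $0$-pair with big $B_Y$. Since $f$ is small, it is an isomorphism in codimension one, so the strict transform $B_X=f_*B_Y$ has the same multiplicities; discrepancies are preserved, so $(X,B_X)$ is klt, $B_X$ is big (bigness is a birational-in-codimension-one notion once we know $-(K+B)$ is relatively semiample on $Y$), and $K_X+B_X\sim_\R 0/Z$ because $K_Y+B_Y\sim_\R 0/Z$ pulls back. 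Hence $(X/Z,B_X)$ is a klt $0$-pair with big boundary, i.e. $X/Z$ has wFt. (One should be mildly careful that "small birational modification" is meant over $Z$ with $X$ still normal and $X/Z$ proper, which is part of the hypothesis.)

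Now the converse, which is the main point. Assume $X/Z$ has wFt: there is a big boundary $B$ with $(X/Z,B)$ a klt $0$-pair. The strategy is to run a suitable MMP to reach a model on which the analogue of $-(K+B)$ becomes relatively ample, i.e. a genuine log Fano, and to arrange that the map back to $X$ is small. Write $B=A+B'$ with $A$ an ample (or relatively ample over $Z$) $\R$-divisor and $B'\ge 0$, using bigness of $B$ and klt-ness to perturb (Kodaira-type decomposition plus a small push into the klt locus). Then $(X/Z,B')$ is klt and $-(K_X+B')\sim_\R A/Z$ is big over $Z$; moreover $X/Z$ is Ft in the sense that after a further small perturbation we can take $B'$ with $-(K+B')$ big and $(X,B')$ klt — but it need not yet be relatively ample, so we run the $(K_X+B')$-MMP over $Z$. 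By (3) this MMP exists for Ft $X/Z$; by Ft-ness it terminates with a Mori fibre space or a minimal model, and since $-(K_X+B')$ is big over $Z$ it must terminate with a model $Y/Z$ on which $-(K_Y+B'_Y)$ is relatively nef and big, and then relatively semiample, and in fact we choose the MMP (e.g. with scaling of $A$) so that $-(K_Y+B'_Y)$ becomes relatively ample — that is, $(Y/Z,B'_Y)$ is a klt log Fano, so $Y/Z$ has Ft. Finally one checks the birational map $X\dashrightarrow Y$ produced this way is small: steps of a $(K+B')$-MMP with $-(K+B')$ big and starting from a pair with $K+B'$ already "$0$-pair–like up to an ample twist" contract only divisors with negative discrepancy, but here there are none to contract because $(X,B)$ is a $0$-pair (all log discrepancies are $\ge 0$ and nonnegativity is preserved), so every step is a flip — i.e. the whole map is small.

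**Main obstacle.** The genuinely delicate step is making the map $X\dashrightarrow Y$ \emph{small}, not merely birational: a priori a $(K_X+B')$-MMP can contract divisors. The fix is to track log discrepancies: since $(X/Z,B)$ is a klt $0$-pair, for the perturbed $B'$ the pair has all log discrepancies positive and, crucially, $-(K+B')$ being $\R$-linearly equivalent over $Z$ to the ample part $A$ means any divisor that the MMP would contract would have to be $(K+B')$-positive relative to a contraction, contradicting the numerical setup once $A$ is chosen generically and small; so no divisorial contractions occur. Equivalently, one invokes that $X$ is already a minimal model in codimension one for $K+B'$ up to the ample perturbation and only flips are needed. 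Handling this cleanly — choosing $A$ and the MMP (scaling, or a careful choice of the nef threshold) so that divisorial contractions are provably impossible — is where the real work lies; everything else is the standard dictionary between Ft/wFt, $0$-pairs, and klt log Fanos together with the cited $D$-MMP.
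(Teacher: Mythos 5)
Your "if" direction is fine and matches the paper (transfer the klt $\R$-complement with big boundary across the small modification, as in Proposition~\ref{small_transform_compl}). The converse, however, has a genuine gap, and it starts at the very first step: you decompose $B=A+B'$ with $A$ (relatively) ample on $X$. A wFt morphism $X/Z$ is only assumed proper, not projective, so there may be no ample divisors on $X$ over $Z$ at all; the existence of such an $A$ is essentially equivalent to $X/Z$ being Ft, which is what the lemma is trying to produce. The paper avoids this by passing to a $\Q$-factorialization and using a Zariski decomposition $B=M+F$ into mobile and fixed parts, then taking the lc (ample) model $Y/Z$ of $(X/Z,(1+\ep)B)$ for small $\ep>0$; since $K+B\sim_\R 0/Z$, on that model $B_Y$ becomes ample over $Z$ and $Y/Z$ is Ft. Your MMP is also run in the wrong direction: with $K+B'\sim_\R -A$ over $Z$, the divisor $K+B'$ is not pseudo-effective over $Z$, so a $(K+B')$-MMP ends in a Mori fibre space rather than a model where $-(K+B')$ is nef; to make the anticanonical side ample one must minimize $K+(1+\ep)B$, i.e.\ $\ep B$, not $K+B'$.

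The second, and decisive, gap is the smallness claim. Positivity of log discrepancies does not prevent divisorial contractions: in passing to the lc model of $(X,(1+\ep)B)$ the components of the fixed part $F$ of $B$ are in general contracted, so $X\dashrightarrow Y$ is only a birational $1$-contraction, not small, and your argument that "no divisorial contractions occur" has no basis. The missing idea in the paper's proof is the re-extraction step: the contracted divisors $D$ are $K+B$-trivial with $\ld(X,B,D)=\ld(Y,B_Y,D)\in(0,1]$, and by \cite[Theorem~3.1]{Sh96} they can be blown up \emph{projectively} on the Ft model $Y$, yielding a projective (still Ft) model over $Z$ for which the map from $X$ is an isomorphism in codimension one. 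Without some such extraction argument your construction only shows that $X/Z$ admits a birational $1$-contraction to an Ft model, which is weaker than the statement of the lemma.
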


\begin{proof}
Almost by definition, Ft $Y/Z$ has a klt $\R$-complement with big $B$.
By Proposition~\ref{small_transform_compl} below
every small birational modification $X/Z$ has a klt $\R$-complement
with big $B$.
Thus $X/Z$ has wFt.
Notice also that the big property is invariant under
small birational modifications.

Conversely, let $(X/Z,B)$ be a klt $0$-pair with big $B$.
A required model $Y/Z$ is an lc model of $(X/Z,(1+\ep)B)$
up to possible resolution of some divisors, where $\ep$
is a sufficiently small positive real number.
We can suppose that $X$ is $\Q$-factorial.
Use a Zariski decomposition $B=M+F$, $M$ is big.
By construction $(Y/Z,B_Y)$ is also a klt $0$-pair where
$B_Y$ is the image of $B$ on $Y$.
Moreover, $X\dashrightarrow Y$ is a $1$-contraction and
contracts only prime Weil divisors $D$ of $X$ trivial with
respect to $K+B$.
They have the log discrepancy $\ld(X,B,D)=\ld(Y,B_Y,D)\in (0,1]$.
By \cite[Theorem~3.1]{Sh96} we can blow up them projectively!
\end{proof}

\begin{cor} \label{semiample}
Let $X/Z$ be of wFt and $D$ be an $\R$-Cartier divisor on $X$.
Then $D$ is semiample over $Z$ if and only if
$D$ is nef over $Z$.
\end{cor}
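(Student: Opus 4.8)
The plan is to reduce the statement to the corresponding semiampleness criterion for Fano type morphisms via Lemma~\ref{wTt_vs_Ft}. Recall that for a (projective) Ft morphism $Y/Z$, the base-point-free-type theorems apply: a nef $\R$-Cartier divisor on a $\Q$-factorial Ft $Y/Z$ is semiample over $Z$ (this is the standard consequence of the relative base point free theorem together with running a $D$-MMP, which terminates for Ft morphisms by item (3) in ``What do we use in the proof?''). The nontrivial direction is ``nef $\Rightarrow$ semiample'', since semiample always implies nef; so I focus on that.

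First I would use Lemma~\ref{wTt_vs_Ft} to write $X/Z$ as a small birational modification $\pi\colon X\dashrightarrow Y$ of a Ft morphism $Y/Z$. The subtlety is that $Y$ need not be $\Q$-factorial; but a small $\Q$-factorialization of a Ft variety is again Ft (it has a klt $\R$-complement with big boundary, being a small modification, by Proposition~\ref{small_transform_compl}), so after replacing $Y$ I may assume $Y$ is $\Q$-factorial Ft over $Z$ and $X\dashrightarrow Y$ is small. Next, since $D$ is $\R$-Cartier on $X$ and $\pi$ is small, let $D_Y$ be the strict transform of $D$ on $Y$; because $Y$ is $\Q$-factorial, $D_Y$ is $\R$-Cartier. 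The key point to check is that $D_Y$ is nef over $Z$. This is where I expect the main obstacle: nefness is not a priori preserved under a small birational map, because curves on $X$ and curves on $Y$ need not match up. To handle this I would pass to a common resolution $p\colon W\to X$, $q\colon W\to Y$; then $p^*D$ and $q^*D_Y$ agree outside the $p$-exceptional locus, and since $\pi$ is small one can compare them—the discrepancy is supported on $q$-exceptional divisors that are also $p$-exceptional. In favorable cases $p^*D = q^*D_Y$ (true when $D$ is the pullback of $D_Y$, e.g.\ when both are movable), giving that nefness of $D$ over $Z$ is equivalent to nefness of $D_Y$ over $Z$. More robustly, I would instead argue directly on $X$: run a $(K_X + \Delta + tD)$-MMP for suitable auxiliary klt boundary $\Delta$ (using that $X/Z$ is wFt, hence a Mori dream space relatively over $Z$, so every such MMP terminates by \cite[Section~5]{Sh96}), and observe that a $D$-nef divisor cannot be touched by the steps of this MMP, so the output is a model on which $D$ becomes semiample, whence $D$ itself is semiample over $Z$.

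Concretely, here is the cleaner route I would ultimately write up. Since $X/Z$ is wFt, fix a klt $0$-pair $(X/Z, B)$ with big $B$; then $(X, B)$ is klt and $K_X + B \sim_{\R} 0/Z$. For $D$ nef over $Z$, consider, for $t \geq 0$ large, the pair $(X, B)$ together with the nef class $tD$; the relevant MMP to run is the $D$-MMP of item (3), which for Ft (hence for the small modification $Y$, and then transported to $X$) terminates with a minimal model on which $D$ is semiample over $Z$. Because each step of a $D$-MMP is $D$-nonpositive and here $D$ is already nef, no flip or divisorial contraction occurs that is strictly $D$-negative; a short argument shows the MMP is an isomorphism in codimension one and the end model carries $D$ as a semiample divisor, so $D$ is semiample over $Z$ on $X$ as well. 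The converse is immediate. The main obstacle, as noted, is the bookkeeping that transports nefness and the termination of the $D$-MMP between $X$ and its Ft model $Y$ across the small map; once that is in place, the relative base point free theorem on $Y$ finishes the argument.
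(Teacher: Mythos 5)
Your reduction to a $\Q$-factorial Ft model $Y/Z$ via Lemma~\ref{wTt_vs_Ft} is the same first step as the paper, and you correctly flag the real obstacle (small modifications need not respect nef or semiample), but neither of your two attempted fixes closes it, so there is a genuine gap. First, you cannot ``argue directly on $X$'' by running a $(K_X+\Delta+tD)$- or $D$-MMP: the $D$-MMP of item~(3) is only available for Ft, i.e.\ projective, morphisms, while a wFt $X/Z$ is merely proper, and ``nef'' here is only the weak notion (nonnegativity against complete curves), so there is no relative MMP machinery on $X$ itself. Second, on the Ft model $Y$ the birational transform $D_Y$ need \emph{not} be nef over $Z$, so your assertion that ``no flip or divisorial contraction occurs that is strictly $D$-negative'' and that the MMP is an isomorphism in codimension one is exactly what fails a priori; likewise the common-resolution comparison $p^*D=q^*D_Y$ is false in general for a small modification (the difference is a nontrivial exceptional divisor, as for a flop), and neither divisor is assumed movable. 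Third, even if you ran the $D_Y$-MMP on $Y$ and reached a model where the transform of $D$ is semiample over $Z$, you would still have to transport semiampleness back across a small map to $X$ -- which is precisely the property the corollary is asserting and which, as you yourself note, small modifications do not preserve; your write-up simply asserts ``so $D$ itself is semiample over $Z$'' at this point.

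The idea your proposal is missing is the paper's conversion of $D$ into an adjoint divisor. Since $X/Z$ is wFt there is a klt $0$-pair $(X/Z,B)$ with big $B$, and using this bigness one produces a boundary $B'$ with $(X,B')$ klt and $K+B'\sim_{\R,Z}\ep D$ for some $\ep>0$. Semiampleness of an adjoint divisor of a klt pair is birationally robust: it reduces to the existence of a minimal model (a b-contraction over $Z$) for $(Y/Z,B')$, and \emph{that} can be checked on the Ft model because the input one actually has from weak nefness of $D$ on the non-projective $X$ -- namely $(C.D)\ge 0$ for every curve $C/Z$ not passing through a fixed subset of codimension $\ge 2$ -- is preserved under the small modification and suffices for the existence of a minimal model of $(Y/Z,K_Y+B')\sim_{\R,Z}\ep D_Y$. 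Without replacing $D$ by $K+B'$, the transport of nefness/semiampleness between $X$ and $Y$ that your argument needs is not available.
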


\begin{proof}
It is sufficient to verify that if $D$ is nef over $Z$
then $D$ is semiample over $Z$.
If $X/Z$ has Ft it well-known.
In general, by Lemma~\ref{wTt_vs_Ft} there exists a birational model $Y/Z$ of $X/Z$
such that $X\dashrightarrow Y$ is a small modification (isomorphism in
codimension $1$) and $Y/Z$ has Ft.
We can suppose also that $Y$ is $\Q$-factorial.
Thus $D$ on $Y$ is also $\R$-Cartier.
However, the small modification may not respect nef and semiample properties.

By definition there exists a big over $Z$ boundary $B$ on $X$ such that
$(X/Z,B)$ is a klt $0$-pair.
This implies that there exists a boundary $B'$ on $X$ and some positive $\ep$
such that $(X,B')$ is klt and
$K+B'\sim_{\R,Z} \ep D$.
The semiampleness conjecture implies the required statement.
The conjecture holds for Ft $X$ and for wFt $X$ too.
Indeed it is enough to verify that $K_Y+B'$ gives
a b-contraction over $Z$ or $(Y/Z,B')$ has a minimal model.
We do not know the pleudoeffective over $Z$ property of $D$
because $X/Z$ is not projective.
Instead we know that $(C.D)\ge 0$ for every curve $C$ on $X$ over $Z$ not
passing through a subset of codimension $\ge 2$.
The same property holds for $K_Y+B'\sim_{\R,Z} \ep D$ on $Y/Z$.
It is sufficient for existence of a minimal model.

\end{proof}

\begin{cor} \label{free}
Let $d$ be a nonnegative integer.
There exists a positive integer $N$ depending only on $d$
such that if
$X/Z$ has wFt with $\dim X=d$ and
$D$ is a nef over $Z$ Cartier divisor on $X$ then
$ND$ is base free over $Z$.

\end{cor}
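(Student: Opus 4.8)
The plan is to use Corollary~\ref{semiample} to descend $D$ to an ample \emph{Cartier} divisor on a lower dimensional contracted base, and then to quote an effective base point freeness theorem there, with bound depending only on the dimension.

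First I would apply Corollary~\ref{semiample}: since $X/Z$ is of wFt and $D$ is nef over $Z$, the divisor $D$ is semiample over $Z$ and thus defines a contraction $\pi\colon X\to T$ over $Z$ with $D\sim_Z\pi^{*}H$ for an ample over $Z$ divisor $H$ on $T$; here $T/Z$ is projective (it is built as a $\mathrm{Proj}$ over $Z$) and $\dim T\le\dim X=d$. Because $\pi$ is a contraction of the wFt morphism $X/Z$ (so the relative cone theorem and the $D$-MMP apply, cf.\ the introduction) and $D=\pi^{*}H$ is Cartier and $\pi$-trivial, the divisor $H$ is in fact Cartier on $T$. Since $\pi_{*}\sO_{X}=\sO_{T}$ we get $\pi_{*}\sO_{X}(ND)=\sO_{T}(NH)$, so $ND=\pi^{*}(NH)$ is base free over $Z$ if and only if $NH$ is; it therefore suffices to find $N=N(d)$ making $\linsys{NH}$ base free over $Z$.

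Next I would equip $T/Z$ with the structure needed for vanishing. The image of the wFt morphism $X/Z$ under a contraction is again of wFt, and $T/Z$ is moreover projective, so $T/Z$ is of Ft; explicitly, applying the canonical bundle formula to a klt $\R$-complement $(X/Z,B)$ of $(X/Z,0)$ produces a klt pair $(T,\Theta)$ --- respectively, if $\pi$ is of fibre type, a bd-pair $(T,\Theta+M)$ with $M$ nef --- such that $-(K_{T}+\Theta)$ is ample over $Z$. Hence $H-(K_{T}+\Theta)$ is ample over $Z$, in particular nef and big over $Z$, with ``twist'' equal to $1$. Now I would invoke Koll\'ar's effective base point freeness theorem in its relative, klt (and, where needed, generalized-pair) form: for a klt (bd-)pair $(T,\Theta)$ of dimension $\le d$ and a nef Cartier divisor $H$ with $H-(K_{T}+\Theta)$ nef and big over $Z$, the system $\linsys{NH}$ is base free over $Z$ for every $N$ exceeding an explicit function of $\dim T\le d$ only. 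Pulling back along $\pi$ yields $ND=\pi^{*}(NH)$ base free over $Z$ with $N=N(d)$.

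The parts I expect to be the real obstacle are, first, the honest-Cartier descent of $H$ and the transfer of base freeness across $\pi$ --- which rests on $\pi$ being a contraction of a Fano-type morphism, so that the relative N\'eron--Severi picture and descent of divisors trivial on a contracted face are well behaved --- and, second, having an effective base point freeness statement whose bound depends only on the dimension in the generalized-pair setting, since a fibre-type $\pi$ equips $T$ with a nef moduli part coming from adjunction. If one prefers to avoid generalized pairs altogether, one can first pass, via Lemma~\ref{wTt_vs_Ft}, to a small $\Q$-factorial model and factor $\pi$ into divisorial contractions followed by a single Mori fibre space, keeping every intermediate pair an honest klt pair and applying the classical klt effective base point freeness at each step.
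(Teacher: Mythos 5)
The crux of your argument is the assertion that $H$ is honestly Cartier on $T$, and this is precisely the difficulty of the corollary rather than a routine descent. The contraction-theorem descent of Cartier divisors (a Cartier divisor trivial on a contracted $(K+\Delta)$-negative extremal face is the pullback of a Cartier divisor) is a statement about projective morphisms; here $X/Z$ has only wFt, so $X$ is proper but not projective over $Z$, and the $D$-MMP/cone machinery you invoke from the introduction is stated there for Ft, i.e.\ projective, morphisms. You also cannot repair this by first passing to the projective Ft small model of Lemma~\ref{wTt_vs_Ft}: a small modification need not preserve either the nefness or the Cartier property of $D$ (the paper warns about exactly this in the proof of Corollary~\ref{semiample}), and once $D$ is only $\Q$-Cartier the whole point of the statement --- controlling a multiple $N$ depending only on $d$ --- is what is at stake. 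Tellingly, the paper's own proof never claims index-one descent: from Corollary~\ref{semiample} it only gets a contraction $f\colon X\to V/Z$ with $f^*A\sim_{\Q,Z}D$ for a $\Q$-ample $A$, and it explicitly formulates the needed statement as an $m$-ample descent with $m=m(d)$, not $m=1$.

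What the paper then does, and what is missing from your fallback paragraph, is the bounding of that index: it runs the relative LMMP over $V$ on a $\Q$-factorial Ft model to reach a Mori klt fibration $g\colon T\to W/V$, transports $D$ through the $D$-trivial birational $1$-contraction (which preserves nef/semiample/freeness of the transform $D_T$ but not Cartier-ness), and descends along $g$ so that $g^*D_W\sim D_T$ with $mD_W$ Cartier, where $m=m(d)$ comes from boundedness of torsions in the relative case; since $D_W$ is nef and big over $Z$ and $W/Z$ has wFt, the big case of the corollary (Koll\'ar's effective freeness, treated separately at the start of the paper's proof, together with the numerically trivial case) finishes, and freeness is pulled back and transferred back to $X$. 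Your closing sketch ``factor $\pi$ through a small $\Q$-factorial model and a Mori fibre space'' follows the same geometry but omits the two points that carry the content: the transform of $D$ on that model is no longer Cartier, and the descended divisor on the Mori-fibration base is only $\Q$-Cartier with an index that must be bounded purely in terms of $d$ --- this is where boundedness of torsions enters and it is absent from your argument. A smaller slip: applying the canonical bundle formula to a klt $\R$-complement of $(X/Z,0)$ gives $K_T+\Theta+M\sim_{\R}0$ over $Z$, not $-(K_T+\Theta)$ ample over $Z$; this is harmless (since $T/Z$ has Ft one can choose a klt log Fano boundary on $T$ directly to feed Koll\'ar's twist), but as written the positivity you use on $T$ is not what the formula produces.
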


\begin{proof}
If $D$ is big over $Z$ the corollary follows the relative version of \cite[Thorem~1.1]{K93}.
The numerically trivial over $Z$ case can be reduced to Ft by Lemma~\ref{wTt_vs_Ft}
and holds by ibid.
Otherwise by Corollary~\ref{semiample} there exists a contraction
$f\colon X\to V/Z$ and an $\Q$-ample divisor $A$ on $V$ such that
$f^*A\sim_{\Q,Z} D$.
If we can replace the $\Q$-ample property and respectively
$\sim_{\Q,Z}$ by $m$-ample (that is, $mA$ is ample Cartier) and
$\sim_{m,Z}$, there $m$ is a positive integer depending only on
$d$, then we get a required effective semiampleness for $D$
because $V$ has also Ft.
Below we slightly modify this idea.
It is easy by the (relative) LMMP over $V$ to get a Mori klt fibration
$g\colon T\to W/V$ such that
\begin{description}

\item[]
$X\dashrightarrow T$ is a birational $1$-contraction, composition of
extremal divisorial contractions and flips;
they preserve nef, semiample and freeness property
under the birational transformation of $D$;

\item[]
$D_T$, the birational transform of $D$ on $T$, is nef over $Z$ and
numerically trivial over $W$; it is enough to verify that
$D_T$ is free over $Z$;

\item[]
there exists a nef and big over $Z$
$\Q$-Cartier divisor $D_W$ on $W$  and a positive integer $m$,
depending only on $d$,
such that
$g^*D_W\sim D_T$ and $mD_W$ is Cartier;
here we use the boundedness of torsions for the relative case;
$W/V,V$ have wFt.

\end{description}
Thus the freeness of $N'mD_T$ over $Z$ follows from that of $N'mD_W$
in the big over $Z$ case.

\end{proof}

For a natural number $l$,
$\R^l$ denotes the arithmetic $\R$-linear space
of dimension $l$.
It contains a lattice of {\em integral\/} vectors $\Z^l$,
a $\Q$-linear subspace $\Q^l$ of {\em rational\/} vectors, and is defined
over $\Z$ and over $\Q$:
$$
\R^l=\R\otimes_\Z \Z^l=\R\otimes_\Q \Q^l
\supset \Q^l=\Q\otimes_\Z\Z^l
\supset \Z^l.
$$
In particular, we know whether an affine
subspaces of $\R^l$ is {\em rational\/}, that is,
defined by linear (not necessarily homogeneous)
equations with rational or integral coefficients
(rational hyperplanes).
A vector $v=(v_1,\dots,v_l)\in \R^l$ is {\em nonrational\/} if $v\not\in\Q^l$,
equivalently, one of its coordinates $v_i$ in the standard basis
$[e_1=](1,0,\dots,0), [e_2=](0,1,\dots,0),\dots,
[e_l=](0,0,\dots,1)$ of $\R^l$ is nonrational.

The rational affine span $\spn{v}$ of a vector $v\in \R^l$ is
the smallest rational affine subspace of $\R^l$ containing $v$.
Hence $v$ is rational if and only if $\spn{v}$ is $v$ itself.

A {\em direction\/} of an affine $\R$-space is a unite vector
in its $\R$-linear (tangent) space of translations.

We use a standard norm on $\R^l$, e.g.,
the {\em maximal absolute value\/} norm
$$
\norm{v}=\norm{(v_1,\dots,v_l)}=\max{\norm{v_i}},
$$
where $\norm{v_i}$ denotes the absolute value
of $i$-th coordinate.
The norm on $\spn{v}$ is restricted from $\R^l$.
So, a direction in $\spn{v}$ is a vector $e\in\R^l$ such that
$$
e+\spn{v}=\spn{v} \text{ and }
\norm{e}=1.
$$
A direction in $\spn{v}$ exists exactly when $v$ is nonrational
because then $\dim_\R\spn{v}\ge 1$.

\paragraph{b-Codiscrepancy \cite[p.~88]{Sh03}.}
Let $(X,D)$ be a {\em log\/} pair, that is,
$K+D$ is $\R$-Cartier.
Then $\D=\D(X,D)$ will denote its b-$\R$-divisor of
{\em codiscrepancy\/} or {\em pseudo-boundary\/}:
by definition
$$
\overline{K+D}=\K+\D,
$$
where $\K$ is a canonical b-divisor of $X$.

\section{Elementary}

\subsection{
Two basic inequalities for the Gauss integral part.}
For any two real numbers $a,b$,
\begin{equation}\label{basic_upper_bound}
\rddown{a+b}\le \rddown{a}+\rddown{b}+1.
\end{equation}

\begin{proof}
Any integral shift $a\mapsto a+n,n\in\Z$,
gives an equivalent inequality.
The same holds for $b$.
So, after appropriate shifts, we can suppose that
$\rddown{a}=\rddown{b}=0$.
That is, $0\le a,b<1$.
Hence $a+b<2$ and
$$
\rddown{a+b}\le 1=\rddown{a}+\rddown{b}+1.
$$
\end{proof}

For any two real numbers $a,b$,
\begin{equation}\label{basic_low_bound}
\rddown{a+b}\ge \rddown{a}+\rddown{b}.
\end{equation}

\begin{proof}
As in the proof of the inequality~(\ref{basic_upper_bound})
we can suppose that $\rddown{a}=\rddown{b}=0$.
Then $a+b\ge 0$ and
$$
\rddown{a+b}\ge 0=\rddown{a}+\rddown{b}.
$$
\end{proof}

\begin{exa}[$n$-complement with $\PP^1$; cf. Example~\ref{n_comp_dim_1}]
\label{R_comp_dim 1}
The main case for existence and boundedness of $n$-complements in dimension $1$
concerns $n$-complements for (global) pairs $(\PP^1,B)$,
where $B=\sum b_i P_i$ is a boundary on $\PP^1$:
$b_i\in [0,1]$ and $P_i$ are distinct (closed) points on $\PP^1$.
The assumption~(1) in Theorem~\ref{bndc} in this case
can be strengthen to existence on an $\R$-complement,
equivalently,
\begin{equation}\label{R_comp_inequality}
\sum b_i\le 2=-\deg K_{\PP^1} \text{ or }
\sum{'} b_i\le c=2-l,
\end{equation}
where $0\le l\le 2$ is the number of points $P_i$ with $b_i=1$
(lc singularities) and $\sum'$ runs for $b_i\in [0,1)$.
On the other hand, existence of an $n$-complement
means that
\begin{equation} \label{n_comp_inequality}
\sum{'} \rddown{(n+1)b_i}/n\le c=2-l.
\end{equation}
Indeed, under the inequality~(\ref{n_comp_inequality}), we can take
$$
B^+=\frac{\Delta}n+\sum{'}\frac{\rddown{(n+1)b_i}}n P_i+
\sum{''} P_i,
$$
where $\Delta$ is a reduced divisor supported outside of $\Supp B$
with
$$
2n-ln -\sum{'} \rddown{(n+1)b_i}=2c-\sum{'} \rddown{(n+1)b_i}
$$
points and
$\sum{''}$ runs for $b_i=1$.
To satisfy the inequality~(\ref{n_comp_inequality}) we can joint together small $b_i$
in larger multiplicities because, for any two real numbers $a$ and $b$,
$$
\rddown{(n+1)(a+b)}/n\ge \rddown{(n+1)a}/n+\rddown{(n+1)b}/n
$$
by~(\ref{basic_low_bound}).
The inequality~(\ref{R_comp_inequality}) holds again.
However, we need to keep our assumption that the new multiplicities
also belong to $[0,1)$.
This allows to suppose that all $b_i\ge 1/2$
with at most one exception $0<b_i<1/2$ and we have
at most $4$ nonzero multiplicities $b_i$.
In this case $n$-complements exits and bounded by \cite[Example~1.11]{Sh95}.
Moreover, we can suppose that Restrictions on complementary
indices hold.

Notice that $c$ in inequalities~(\ref{R_comp_inequality}-\ref{n_comp_inequality})
can be replaced by any rational (and even real number if
omit Restrictions on complementary indices).
In particular, for $c=d+1$ this implies existence and boundedness
of $n$-complements for $(\PP^d,B)$, where $B=\sum b_iH_i,b_i\in[0,1]$,
$H_i$ are hypersurfaces in general position and $(\PP^d,B)$
has an $\R$-complement (cf. \cite[Example~1.11]{Sh95}).

\end{exa}

\subsection{Inductive bounds} \label{induc_bound}
For any positive integer $n$  and
any real numbers $d_1,\dots,d_n$,
\begin{equation}\label{induc_upper_bound}
\rddown{\sum_{i=1}^n d_i}\le n-1+
\sum_{i=1}^n\rddown{d_i}.
\end{equation}

\begin{proof}
For $n=1$, the inequality actually become the equality
$$
\rddown{d_1}=1-1+\rddown{d_1}.
$$

For $n\ge 2$, we use the upper bound~(\ref{basic_upper_bound}) and induction:
$$
\rddown{\sum_{i=1}^n d_i}\le 1+\rddown{d_1}+
\rddown{\sum_{i=2}^n d_i}\le
1+n-2+\sum_{i=1}^n\rddown{d_i}=n-1+\sum_{i=1}^n\rddown{d_i}.
$$
\end{proof}

\begin{exa} \label{induc_upper_bound_exa}
$$
1-l+\sum_{i=1}^l \rddown{(n+1)d_i}/n\ge
\rddown{(n+1)(1-l+\sum_{i=1}^l d_i)}/n,
$$
for any positive integers $n,l$ and
real numbers $d_i$.
Indeed, the inequality is equivalent to
$$
l-1+\sum_{i=1}^l \rddown{(n+1)d_i}\ge
\rddown{\sum_{i=1}^l (n+1)d_i},
$$
which follows from~(\ref{induc_upper_bound}).

\end{exa}

Another application of~(\ref{induc_upper_bound}).
For any positive integer $n$ and any real number $d$,
\begin{equation}\label{1st_cor_uuper_bound}
\rddown{nd}\le n-1 +n\rddown{d}.
\end{equation}

\begin{proof}
By the inequality~(\ref{induc_upper_bound}) with $d_1=\dots=d_n=d$.
\end{proof}

Using the low bound~(\ref{basic_low_bound}) and induction we get,
for any nonnegative integer $n$  and
any real numbers $d_1,\dots,d_n$,
\begin{equation} \label{induc_low_bound}
\rddown{\sum_{i=1}^n d_i}\ge
\sum_{i=1}^n\rddown{d_i}
\end{equation}
and
\begin{equation} \label{1st_cor_low_bound}
\rddown{nd}\ge n\rddown{d}.
\end{equation}

\subsection{Special upper bound}
For any real numbers $r,s$ such that $r<1$ and $sr$ is integral,
\begin{equation}\label{special_upper_bound}
\rddown{(s+1)r}\le sr.
\end{equation}

\begin{proof}
Since $sr\in \Z$ and $r<1$,
$$
\rddown{(s+1)r}=\rddown{sr+r}=sr+\rddown{r}\le sr.
$$
\end{proof}

[Even] if $s=n$ is a positive integer then the inequality~(\ref{special_upper_bound}) is
equivalent to
$$
\rddown{(n+1)r}/n\le r
$$
and may not hold unless $nr$ is integral.

\begin{exa} \label{rddown_(n+1)_m_m}
(1)
For $s=1$ and $r=1/2$,
$$
\rddown{(1+1)1/2}=1\not\le 1(1/2)=1/2.
$$

(2) However, if $d\in \Z/n$ and $<1$
then
$$
\rddown{(n+1)d}/n\le d<1
$$
and $=d$ if and only if $d\ge 0$.

(3)
For any real number $d< 1$ and any positive integer $n$,
$$
\rddown{(n+1)d}/n\le 1.
$$
The low bound
$$
\rddown{(n+1)d}/n\ge d
$$
does not work in general (cf. \cite[Lemma~3.5]{PSh08}
and see the next example).

(4)
There exists a closed dcc subset $\Gamma\subset[0,1]\cap\Q$ with
the only accumulation point $1$ and such that,
for every positive integer $n$, there exists
$b\in\Gamma$ with $\rddown{(n+1)b}/n<b$.
For given $n$,
consider a rational number $b(n)<n/(n+1)$ which is very close to $n/(n+1)$.
Then
$$
\frac {n-1}n=\rddown{(n+1)b(n)}/n<b(n).
$$
So, $\Gamma=\{b(n)\mid n\in \N\}$ is a required set.

\end{exa}

\subsection{Approximations and round down}

\begin{lemma} \label{approximation_I}
Let $I,n$ be two positive integers, $l$ be a nonnegative integer
and $b$ be a real number such that $I|n$,
$b,l/I\in[0,1)$ and $\norm{b-l/I}<1/I(n+1)$.
Then
$$
\rddown{(n+1)b}/n=\rddown{(n+1)\frac lI}/n=\frac l I.
$$

\end{lemma}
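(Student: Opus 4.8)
The plan is to show that the hypotheses force $(n+1)b$ and $(n+1)l/I$ to have the same integer part, and then to compute that integer part exactly. First I would observe that since $I\mid n$, the number $l/I\in[0,1)$ has denominator dividing $n$, so $(n+1)l/I = nl/I + l/I$ with $nl/I\in\Z$; by the special upper bound~(\ref{special_upper_bound}) applied with $s=n$, $r=l/I$ (legitimate since $l/I<1$ and $n\cdot l/I$ is integral), we get $\rddown{(n+1)l/I}\le n l/I$, and in fact $\rddown{(n+1)l/I}=\rddown{nl/I+l/I}=nl/I+\rddown{l/I}=nl/I$ because $0\le l/I<1$. Dividing by $n$ gives $\rddown{(n+1)l/I}/n = l/I$, which is the second claimed equality.

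For the first equality I would estimate $(n+1)b$ against $(n+1)l/I = nl/I$. From $\norm{b-l/I}<1/I(n+1)$ we get $\norm{(n+1)b-(n+1)l/I}<(n+1)/I(n+1)=1/I\le 1$, and more precisely $(n+1)b$ lies in the open interval $\bigl(nl/I-1/I,\ nl/I+1/I\bigr)$. Now $nl/I$ is an integer, and I claim $(n+1)b$ cannot reach the next integer $nl/I+1$ nor drop to $nl/I-1$: indeed the distance from $(n+1)b$ to $nl/I$ is strictly less than $1/I\le 1$. The one subtlety is the boundary case: I must rule out that $(n+1)b$ is an integer strictly less than $nl/I$, i.e. equal to $nl/I-1$ or smaller — but that is impossible since $(n+1)b>nl/I-1/I\ge nl/I-1$. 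Likewise $(n+1)b<nl/I+1/I\le nl/I+1$. Hence $(n+1)b\in[nl/I,\ nl/I+1)$ when $b\ge l/I$, and $(n+1)b\in(nl/I-1,\ nl/I]$ — wait, I need $\rddown{(n+1)b}=nl/I$ in all cases, so the clean way is: $nl/I-1 < (n+1)b < nl/I+1$, and $(n+1)b$ equals $nl/I$ exactly when $b=l/I$; otherwise $(n+1)b$ lies strictly between two consecutive integers one of which is $nl/I$. To pin down which, note $b\ge 0$ and $b<1$ so $(n+1)b\ge 0$; combined with the interval bound this gives $\rddown{(n+1)b}=nl/I$. Dividing by $n$ yields $\rddown{(n+1)b}/n = l/I = \rddown{(n+1)l/I}/n$, completing the proof.

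The main obstacle is purely the boundary bookkeeping around the floor function: the inequality $\norm{b-l/I}<1/I(n+1)$ is sharp enough that $(n+1)b$ can fall on either side of $nl/I$, so I must confirm that in both the case $b\ge l/I$ and the case $b<l/I$ the integer part is still exactly $nl/I$ — i.e. that the perturbation is too small to cross below $nl/I$ down to $nl/I-1$ even when $b<l/I$. This follows because the perturbation of $(n+1)b$ away from the integer $nl/I$ is strictly less than $1/I$, which is at most $1$, so no integer boundary other than $nl/I$ itself is crossed; I would also note that equality $\rddown{(n+1)b}/n = b$ need not hold (cf. Example~\ref{rddown_(n+1)_m_m}), which is exactly why the statement is phrased in terms of $l/I$ rather than $b$. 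Everything else is the elementary identity $\rddown{nl/I+l/I}=nl/I$ from~(\ref{special_upper_bound}).
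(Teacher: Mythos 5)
Your handling of the second equality, $\rddown{(n+1)\frac lI}/n=\frac lI$, is correct and is essentially the paper's computation: since $I|n$ one has $\frac{nl}I\in\Z$, so $\rddown{\frac{nl}I+\frac lI}=\frac{nl}I$ because $0\le \frac lI<1$.

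The first equality, however, contains a genuine gap. You write ``estimate $(n+1)b$ against $(n+1)l/I=nl/I$'' and conclude that $(n+1)b$ lies in $(\frac{nl}I-\frac1I,\ \frac{nl}I+\frac1I)$; but $(n+1)\frac lI=\frac{nl}I+\frac lI$, and dropping the fractional part $\frac lI$ is not legitimate. What the hypothesis actually gives is that $(n+1)b$ lies within $\frac1I$ of $\frac{nl}I+\frac lI$, a point that can sit anywhere in $[\frac{nl}I,\frac{nl}I+1)$, possibly near either end. In particular your stated upper bound $(n+1)b<\frac{nl}I+\frac1I$ is false for $l\ge1$ (the true bound $(n+1)b<\frac{nl}I+1$ needs $\frac lI+\frac1I=\frac{l+1}I\le1$, i.e.\ $l\le I-1$, which you never invoke), and your ``clean way'' two\--sided bound $\frac{nl}I-1<(n+1)b<\frac{nl}I+1$ does not determine the floor: when $b<\frac lI$ you must exclude $\rddown{(n+1)b}=\frac{nl}I-1$, and the observation $(n+1)b\ge0$ does this only when $l=0$. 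The missing step is exactly the case split the paper makes with $b=\frac lI\pm\delta$, $(n+1)\delta<\frac1I$: if $b<\frac lI$ then $b\ge0$ forces $l\ge1$, whence $(n+1)b=\frac{nl}I+\frac lI-(n+1)\delta>\frac{nl}I+\frac{l-1}I\ge\frac{nl}I$; and on the other side $\frac lI+(n+1)\delta<\frac{l+1}I\le1$. With these two inequalities inserted the proof closes; as written, the case $b<\frac lI$ with $l\ge1$ is unproved and the upper estimate is misjustified.
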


\begin{proof}
$$
\rddown{(n+1)\frac lI}/n=\rddown{\frac {nl}I+\frac lI}/n=
\frac {nl}I\frac 1n=\frac lI
$$
because $I|n$. Let $\delta=\norm{b-l/I}$. Then $\delta<1/I(n+1)$ and $b=l/I\pm\delta$.
For $b=l/I+\delta$,
$$
\rddown{(n+1)(\frac lI+\delta)}/n=\rddown{\frac {nl}I+\frac lI+(n+1)\delta}/n=
\frac {nl}I\frac 1n=\frac lI.
$$
For $b=l/I-\delta$, $l\ge 1$ and
$$
\rddown{(n+1)(\frac lI-\delta)}/n=\rddown{\frac {nl}I+\frac lI-(n+1)\delta}/n=
\frac {nl}I\frac 1n=\frac lI.
$$

\end{proof}

\begin{lemma} \label{approximation_II}
Let $n$ be a positive integer
and $b$ be a real number such that
$b\in[0,1)$ and $\norm{b-1}<1/(n+1)$.
Then
$$
\rddown{(n+1)b}/n=1.
$$

\end{lemma}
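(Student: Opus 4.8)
The plan is to reduce to an explicit round-down computation as in Lemma~\ref{approximation_I}. Write $b=1-\delta$ with $\delta>0$ and, by hypothesis, $\delta=\norm{b-1}<1/(n+1)$; also $b\in[0,1)$ forces $\delta\le 1$, so in fact $0<\delta<1/(n+1)$. Then
$$
(n+1)b=(n+1)(1-\delta)=(n+1)-(n+1)\delta=n+\bigl(1-(n+1)\delta\bigr),
$$
and since $0<(n+1)\delta<1$ we have $1-(n+1)\delta\in(0,1)$. Hence $\rddown{(n+1)b}=n$, and dividing by $n$ gives $\rddown{(n+1)b}/n=1$, as claimed.

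The key steps, in order, are: (i) substitute $b=1-\delta$ and record $0<\delta<1/(n+1)$ from the two hypotheses $b\in[0,1)$ and $\norm{b-1}<1/(n+1)$; (ii) expand $(n+1)b$ and isolate the integer part $n$ plus a remainder $1-(n+1)\delta$; (iii) observe the remainder lies strictly between $0$ and $1$, so the Gauss bracket of $(n+1)b$ is exactly $n$; (iv) divide by $n$. This mirrors the last display in the proof of Lemma~\ref{approximation_I} (the case $b=l/I-\delta$ with $l=I$), so one could alternatively just invoke that lemma with $I=1$, $l=1$, noting $1/I(n+1)=1/(n+1)$; but the self-contained two-line computation above is cleaner.

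There is essentially no obstacle here: the only thing to be slightly careful about is that the inequality on $\delta$ is strict, which is exactly what keeps the remainder $1-(n+1)\delta$ away from $0$ (so we do not accidentally land on the integer $n+1$ when $\delta=0$, i.e.\ $b=1$, which is excluded by $b\in[0,1)$ anyway). So the strictness of the hypothesis $\norm{b-1}<1/(n+1)$ and the half-open interval $[0,1)$ are both used, and the statement follows immediately.
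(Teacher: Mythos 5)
Your proof is correct and is essentially the paper's own argument: the paper simply says to repeat the computation from the proof of Lemma~\ref{approximation_I} with $I=l=1$ and $b=1-\delta$, which is exactly the two-line expansion $(n+1)b=n+\bigl(1-(n+1)\delta\bigr)$ you wrote out. One small caution on your aside: Lemma~\ref{approximation_I} cannot be \emph{invoked} directly with $l/I=1$ since its hypotheses require $l/I\in[0,1)$ — which is why the paper (and you, in your main argument) reuse its proof rather than its statement.
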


\begin{proof}
Use the proof of Lemma~\ref{approximation_I}
with $I=l=1$ and $b=1-\delta$.

\end{proof}

Similarly we can verify the following estimations.

\begin{lemma}[{\cite[Lemma~4]{Sh06}}] \label{inequal}
Let $n$ be a positive integer,
$b$ be a real number and
$b_n=m/n,m\in \Z$, be a rational number with the denominator $n$
such that
$$
\mid b-b_n\mid=\varepsilon/n.
$$
Then
\begin{description}

\item[\rm (1)\/]
 $b^+=\rddown{(n+1)b}/n\le b_n$ if
 $\varepsilon< 1-b$;

\item[\rm (2)\/]
  $b^+\ge b_n$ if
 $\varepsilon\le b$;
 and
\item[\rm (3)\/]
  $b^+ =b_n$ if
 $\varepsilon< \min\{b,1-b\}$.

\end{description}

\end{lemma}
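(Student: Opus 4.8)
The plan is to mimic the proofs of Lemmas~\ref{approximation_I} and~\ref{approximation_II}: isolate the integer contribution to $(n+1)b$ coming from $b_n$ and reduce the whole statement to the sign of a single floor. I would write $b=b_n+\eta$ with $|\eta|=\varepsilon/n$, so that $|n\eta|=\varepsilon$. Since $nb_n=m\in\Z$,
$$
(n+1)b=(n+1)b_n+(n+1)\eta=m+b_n+(n+1)\eta=m+\bigl(b+n\eta\bigr),
$$
where the last equality uses $b_n+(n+1)\eta=(b-\eta)+(n+1)\eta=b+n\eta$. Pulling the integer $m$ out of $\rddown{\ \cdot\ }$ then gives
$$
b^+=\frac{\rddown{(n+1)b}}{n}=b_n+\frac{\rddown{b+n\eta}}{n},
$$
so everything comes down to the sign of $\rddown{b+n\eta}$.

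Next I would use that $|n\eta|=\varepsilon$ yields the two-sided bound $b-\varepsilon\le b+n\eta\le b+\varepsilon$, valid regardless of the (unspecified) sign of $\eta$. Under the hypothesis of~(1), namely $\varepsilon<1-b$, this gives $b+n\eta\le b+\varepsilon<1$, hence $\rddown{b+n\eta}\le 0$ and $b^+\le b_n$. Under the hypothesis of~(2), namely $\varepsilon\le b$, it gives $b+n\eta\ge b-\varepsilon\ge 0$, hence $\rddown{b+n\eta}\ge 0$ and $b^+\ge b_n$. Part~(3) is the conjunction of the hypotheses of~(1) and~(2), so both conclusions hold simultaneously and $b^+=b_n$.

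I do not expect a genuine obstacle here: the argument is a short computation. The only things to watch are that we are given the \emph{unsigned} quantity $|b-b_n|=\varepsilon/n$, which is why the middle step goes through the two-sided estimate for $b+n\eta$ rather than a case split on the sign of $\eta$; and that $\varepsilon\ge 0$ together with each stated hypothesis automatically forces $b$ into the relevant range ($b<1$ in~(1), $b\ge 0$ in~(2)), so no separate assumption on $b$ is needed.
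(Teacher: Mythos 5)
Your proof is correct, and it is essentially the argument the paper intends: the paper only says the lemma is verified "similarly" to Lemmas~\ref{approximation_I} and~\ref{approximation_II}, and your reduction — pulling the integer $m=nb_n$ out of $\rddown{(n+1)b}$ and deciding the sign of $\rddown{b+n\eta}$ via the two-sided bound $b-\varepsilon\le b+n\eta\le b+\varepsilon$ — is exactly that computation carried out cleanly for both signs of the error at once.
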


\section{Technical} \label{technical}

We introduce now a b-version of Definition~\ref{n_comp}.
It will be crucial for our induction.

\begin{defn} \label{b_n_comp}
Let $n$ be a positive integer and
$(X/Z\ni o,D)$ be a {\em log\/} pair with a local morphism $X/Z\ni o$ and
with an $\R$-divisor $D$.
A pair $(X/Z\ni o,D^+)$, with the same local morphism and
with a $\Q$-divisor $D^+$ on $X$,
is a b-$n$-{\em complement\/} of $(X/Z\ni o ,D)$ if
$(X/Z\ni o,D^+)$ is an $n$-complement of $(X/Z\ni o,D)$ and
the same hold for all crepant models of $(X/Z\ni o,D)$ and
of $(X/Z\ni o,D^+)$.
Equivalently, instead of (1) in Definition~\ref{n_comp}
the following b-version holds:
\begin{description}
\item[\rm (1-b)\/]
for every prime b-divisor $P$ of $X$,
$$
d^+\ge
\begin{cases}
1, \text{ if } &d=1;\\
\rddown{(n+1)d}/n &\text{ otherwise,}
\end{cases}
$$
\end{description}
where $d=\mult_P\D$ and
$d^+=\mult_P\D^+$ are multiplicities at $P$ of codiscrepancy b-divisors
$\D$ and $\D^+$ of $(X,D)$ and of $(X,D^+)$ respectively.

The b-$n$-complement is {\em monotonic\/} if
$\D^+\ge \D$.

\end{defn}

\begin{rem} \label{rem_def_b_n_compl}
(1) $K+D,K+D^+$ are $\R$-Cartier because
respectively $(X/Z\ni o,D)$ is a log pair and
by Definition~\ref{n_comp}, (3).
So, b-divisors $\D$ and $\D^+$ are well-defined.
Actually $\D$ is well-defined and the definition
is meaningful {\em only if\/} $(X,D)$ is a log pair
(cf. the remark~(2) below).
Moreover, the definition is {\em birational\/}:
we can replace $(X/Z\ni o,D)$ by a log pair $(X'/Z\ni o,D')$
with a model $X'/Z\ni o$ of $X/Z\ni o$.
Indeed, then $(X/Z\ni o,D^+)$ is a b-$n$-complement of $(X'/Z\ni o,D')$ if
$(X/Z,\D^+)$ is (b-)$n$-complement of $(X'/Z,\D')$,
where $\D'=\D(X',D')$
(cf. (10) in~\ref{adjunction_div}).
In this situation we have induced b-$n$-complement
$(X'/Z\ni o,\D^+{}_{X'})$ of $(X'/Z\ni o,D')$.

If $(X/Z\ni o,D)$ is not necessarily a log pair
then we can also consider b-$n$-complements
taking its small log pair model when such a model
exists.
For instance we can take a $\Q$-factorialization.
In general, it is not expected that
b-$n$-complements are birationally invariant,
even for small birational modifications, if
they are noncrepant
(cf. Proposition~\ref{small_transform_compl} below).
However, if $(X/Z\ni o,D)$ has a b-$n$-complement for a maximal (small) model
then the b-$n$-complements are well-defined
under small birational modifications
(see Construction~\ref{sharp_construction}
and Statement~\ref{monotonicity_I}).

(2) There are no reason to introduce b-$\R$-complements
because usual $\R$-complements already do so.
Indeed, if $(X/Z\ni o,D)$ is a log pair then
(1-3) of Definition~\ref{r_comp} are equivalent
respectively to
\begin{description}
\item[\rm (1)\/]
$\D^+\ge \D$;

\item[\rm (2)\/]
$(X,\D^+)$ is lc; and

\item[\rm (3)\/]
$\K+\D^+\sim_\R 0$.

\end{description}
We can use $\equiv$ over $Z\ni o$ instead of $\sim_\R$ if
$X/Z\ni o$ has wFt (see Remark~\ref{remark_def_complements}, (4)).
Since by Proposition~\ref{small_transform_compl} below small birational modifications preserve
$\R$-complement
we get (3) for every small log model of $(X/Z\ni o,D)$
(when it exists).
These models may have different b-codiscrepancies.
In particular, the inequality (1) also holds for
the largest one $\D^\sharp$ in Construction~\ref{sharp_construction}
(see Statement~\ref{monotonicity_I}).

\end{rem}

\begin{exa} \label{b_n_comp_of_itself}
(1)
Let $(X/Z,D^+)$ be an $n$-complement of $(X/Z,D)$.
Then $(X/Z,D^+)$ is a monotonic b-$n$-complement of itself.
It is enough to verify (1-b) of Definition~\ref{b_n_comp}.
That is, for every $d^+$,
$$
d^+\ge
\begin{cases}
1, \text{ if } &d^+=1;\\
\rddown{(n+1)d^+}/n &\text{ otherwise.}
\end{cases}
$$
It follows from the inequality in Example~\ref{rddown_(n+1)_m_m}, (2).
Indeed, $d^+\in\Z/n$ and $\le 1$
by Definition~\ref{n_comp}, (2-3).

The complement is monotonic: $\D^+\ge \D^+$.

(2)
Let $(X/Z,D^+)$ be a monotonic $n$-complement of
a log pair $(X/Z,D)$.
Then $(X/Z,D^+)$ is a monotonic b-$n$-complement of $(X/Z,D)$.
This follows from Proposition~\ref{D_D'_complement} below.
This is why we do not need to state this property explicitly
for $n$-complements of pairs with hyperstandard boundary
multiplicities \cite{PSh08} \cite{B} even we use it
in the course of proof.

(3) Let
$$
(\A^2\ni P,\frac 13 (L_1+L_2+L_3))
$$
be a local pair on the affine plane $\A^2$ with
three distinct lines $L_1,L_2,L_3$ passing through
a point $P$.
Then $(\A^2,0)$ with $D^+=0$ is an $1$-complement of this pair
but $(A^2,0)$ is not a b-$1$-complement of the pair.
Indeed, let $E$ be the exceptional divisor of
the usual blowup of $P$.
Then $d^+=\mult_P\D^+=-1$ but
$$
d=\mult_P\D(A^2,\frac 13 (L_1+L_2+L_3))=0
$$
and $\rddown{2\cdot 0}/1=0>-1=d^+$.

\end{exa}

The next important for us technic were developed by
Birkar and Zhang \cite{BZ}.

\paragraph{bd-Pairs.}
In general, it is a pair $(X/Z,D+\sP)$
with an $\R$-divisor $D$ and another data $\sP$
\begin{description}
  \item[\rm (Birkar-Zhang)\/]
$\sP$ is a b-$\R$-Cartier divisor of $X$,
defined up to $\sim_\R$;

  \item[\rm (Alexeev)\/]
$\sP=\sum r_i L_i$, where $L_i$ are mobile linear systems on $X$ and
$r_i$ are real numbers (the systems are not necessarily finite dimensional
if $X$ is not complete);

  \item[\rm (b-sheaf)\/]
$\sP=\sum r_i \sF_i$, where $\sF_i$ are invertible sheaves
on a proper birational model $Y/Z$ of $X/Z$ and
$r_i$ are real numbers; sheaves are defined up to isomorphism.

\end{description}
It is easy to covert the b-sheaf data into the Birkar-Zhang data:
to replace every sheaf $\sF_i$ by a b-divisor $\overline{H_i}$,
where $H_i$ is a divisor on $Y$ with $\sO_Y(H_i)\simeq \sF_i$.
Similarly, we can convert the Alexeev data into the Birkar-Zhang data:
to replace every linear system $L_i$ by $\overline{H_i}$ for
its (sufficiently general)
element $H_i$ on a model $Y$, where the linear system is free.
However, a converse does not hold in general.
In the paper usually a bd-pair means a Birkar-Zhang one or
a translation into it of an Alexeev or b-sheaf one (cf. \cite[Corollary~7.18, (ii)]{PSh08}).
In particular, for every bd-pair $(X,D+\sP)$,
a pair $(X,D+\sP_X)$ with the trace $\sP_X$ is
well-defined.
So, $\sP_X$ is an $\R$-divisor defined up to $\sim_\R$.

A pair $(X/Z,D+\sP)$ is a {\em log\/} bd-pair if
$(X/Z,D+\sP_X)$ is a log pair, that is,
$K+D+\sP_X$ is $\R$-Cartier.
So, we can control log singularities of $(X/Z,D+\sP)$
but not in a usual sense because $\sP_X$ is
defined up to $\sim_\R$.
Respectively, the b-divisor of codiscrepancy
$\D=\D(X,D+\sP_X)$ is not unique but
does so up to $\sP$:
$\D\dv=\D-\sP$ is a b-divisor which depend only
on $(X/Z,D+\sP)$ and
is the same for every $\sP$ up to $\sim_\R$.
Note also that $\D\dv{}_{,X}=D$ is the trace.
So, we can defined lc, klt, etc $(X/Z,D+\sP)$ (cf.
subboundaries in \cite{Sh92} and \cite[1.1.2]{Sh96}).
Similarly, numerical properties also can be
imposed on $(X/Z,D+\sP_X)$ and thus on $(X/Z,D+\sP)$.
After that we can introduce a lot of concepts from the LMMP for
those pairs.
E.g., a log bd-pair $(X/Z,D+\sP)$ is (bd-){\em minimal\/} if
\begin{description}

\item[]
$(X,D+\sP)$ is lc, or equivalently,  $\D\dv$ is a b-subboundary, that is,
for every prime b-divisor $P$, $\mult_P \D\dv\le 1$;
and

\item[]
$K+D+\sP_X$ is nef over $Z$, or equivalently,
$\K+\D$ is b-nef over $Z$, e.g.,
$\sP$ has associated b-divisor $\PP$
($=\sP$ for Birkar-Zhang pairs) and
$X/Z$ has a proper birational model $Y/Z$ with
a nef $\R$-divisor $H$ such that
$\K+\D=\K+\D\dv+\PP=\overline{H}$.

\end{description}
In other words, singularities are controlled by
the divisorial part $\D\dv$ and positivity properties
by whole $\D$.
It is not a new phenomenon.
Since $\sP$ has no a fixed support for the Alexeev and b-sheaf data it
does not effect singularities: they only
related to $\D\dv$.
A corresponding theory for pairs with nonnegative
multiplicities $r_i$ is well-known as the MMP for
Alexeev or mobile pairs \cite[Defenition~1.3.1 and Section~1.3]{C}.
The Birkar-Zhang data, defined up to an $\R$-linear even
numerical equivalence, shares the same property of
singularities and a version of the LMMP.

\paragraph{Crepant bd-models.}
(Cf. crepant $0$-contractions in~\ref{adjunction_0_contr}.)
We say that two bd-pairs $(X'/Z',D'+\sP),(X/Z,D+\sP)$
are {\em birationally equivalent\/} or {\em crepant\/},
if both pairs are log bd-pairs and
there exists a commutative diagram
$$
\begin{array}{ccc}
(X',D'+\sP)&\dashrightarrow & (X,D+\sP)\\
\downarrow&&\downarrow\\
Z'&\dashrightarrow & Z
\end{array},
$$
where the horizontal arrow $(X',D'+\sP)\dashrightarrow (X,D+\sP)$
is a crepant proper birational isomorphism,
another horizontal arrow $Z'\dashrightarrow Z$ is
a proper birational isomorphism.
The crepant property for the horizontal arrow is similar
to the case of usual pairs and means
that $D'=D_{X'}=D\dv{}_{,X'}=(\D\dv)_{X'},
\D\dv'=\D(X',D'+\sP_{X'})=\D(X,D+\sP_X)=\D\dv$.
We say also that $(X'/Z',D'+\sP)$ is
a ({\em crepant\/}) {\em model\/} of $(X/Z,D+\sP)$.
Notice that crepant bd-pairs have the same b-$\R$-divisor $\sP$.

\begin{exa}
Let $\varphi\colon X'\to X$ be a birational contraction.
Then the crepant bd-pair $(X',D'+\sP)$ of $(X,D+\sP)$
has $D'$ given uniquely by equation
$$
\varphi^*(K+D+\sP_X)=D'+\sP_{X'}.
$$
Indeed, $\varphi^*(K+D+\sP_X)=\D(X,D+\sP_X)_{X'}=(\D\dv+\sP)_{X'}=
D'+\sP_{X'}$ and $D'=D_{X'}$.
\end{exa}

The typical example of bd-pairs is related to log adjunction,
where $D,\D\dv$ are respectively divisorial and b-divisorial parts of adjunction, and
$\sP$ is the (b-)moduli part of adjunction (see~\ref{adjunction_0_contr}).

However, to have a good theory even for log bd-pairs
we need more restrictions \cite{BZ}.
We follow Birkar and Zhang, and always
suppose that
every bd-pair satisfies
\begin{description}
  \item[\rm (index $m$)\/]
$m\sP$ is a Cartier b-divisor for the Birkar-Zhang data
and $\sP$ is defined up to $\sim_m$, where $m$
is the positive integer;
respectively, for the Alexeev and b-sheaf data
every $mr_i\in\Z$;

 \item[\rm (positivity)\/]
$\sP$ is a b-nef $\R$-divisor of $X$
for the Birkar-Zhang data;
respectively for the Alexeev and b-sheaf data
every $r_i$ is a nonnegative real number,
every $\sF_i$ is a nef invertible sheaf.
In particular, the b-nef property over $Z$ can be applied
to proper $X/Z$ (cf. Nef in Section~\ref{intro}).

\end{description}
Such a bd-pair $(X/Z,D+\sP)$ will be called a {\em bd-pair
of index\/} $m$, where $m$ is a positive integer.
So, for bd-pairs of index $m$: $\sP$ is b-nef, in particular,
$\sP$ is b-nef over $S$ in the usual sense for every proper $X/S$.

The sum $D+\sP$ is formal.
Hence really the bd-pair is a triple $(X/Z,D,\sP)$.

If $(X/Z,D+\sP_X)$ is not a log pair then we can take
its small birational log model $(Y/Z,D+\sP_Y)$ with
the birational transform of $D$.
b-Codiscrepancy $\D=\D(Y,D_Y+\sP)$ and $\D\dv$ depend on
the model but $\sP$ is same.
So, in statements, where we use pairs $(X/Z,D+\sP)$,
in the results which concern birational concepts, e.g.,
b-$n$-complements, we add the assumption that
$(X/Z,D+\sP_X)$ is a log pair
(cf. Addendum~\ref{bd_exceptional_compl} for exceptional $n$-complements
and similar addenda for other type of $n$-complements).

Note for applications that if $\sP$ is a b-sheaf than
$$
m\sP=\otimes\sF_i^{\otimes mr_i}\simeq\sO_Y(H)
$$
is invertible nef and
$\sP_X=\overline{H}_X/m$, where $H$ is a nef Cartier divisor on $Y$.
Thus in this situation the b-sheaf data implies
the Birkar-Zhang one.
Respectively, for the Alexeev data, $H\in m\sP=L$ and
$\sP_X=H$, the birational image of a sufficiently general $H$,
where $L$ is a free linear system on $Y$.
The Alexeev data is more restrictive and
actually is required for general applications
(cf. Conjecture~\ref{conjecture_bndc} and
Corollary-Conjecture~\ref{conj_bounded_lc_index}).
However, for wFt or Ft $X/Z$ the Birkar-Zhang data
works very well (cf. Example~\ref{bd_elliptic} and
Corollary~\ref{Alexeev_index_m}).

For bd-pairs it is better to join $\sP$ with
the canonical divisor and consider the b-divisor $\K+\sP$,
respectively, $\sP_X$ with $K$ in the $\R$-divisor $K+\sP_X$.
The b-divisor $K+\sP$ for bd-pairs of index $m$ is defined up to $\sim_m$ and
behave as $\K$ but not as b-Cartier divisor $\sP$.

If $\sP=0$ then we get usual pairs or log pairs $(X/Z,D)$
with $\D\dv=\D=\D(X,D)$.
The same holds for models $Y/Z$ of $X/Z$ over which
$\sP$ is {\em stabilized\/}: $\sP=\overline{\sP_Y}$.
In this case $(Y,D_Y+\sP)$ is lc (klt, etc) if and only if
$(Y,D_Y)$ is lc (respectively, klt, etc).

Complements can be defined for bd-pairs too.
We consider as Birkar and Zhang \cite[Theorem~1.10 and Definition 2.18, (2)]{B} usually
bd-pairs $(X/Z,D+\sP)$ of index $m$ and their complements
$(X/Z,D^++\sP)$ with other required birational models, e.g.,
$(X^\sharp/Z,D_{X^\sharp}^\sharp+\sP)$, such that the models will have the same
birational part $\sP$.

\begin{defn}
Let $(X/Z\ni o,D+\sP)$ be a bd-pair
with a [proper] local morphism $X/Z\ni o$ and
with an $\R$-divisor $D$ on $X$.
Another bd-pair $(X/Z\ni o,D^++\sP)$ with the same local morphism,
same $\sP$ and with an $\R$-divisor $D^+$ on $X$
is called an $\R$-{\em complement\/} of $(X/Z\ni o,D+\sP)$ if
\begin{description}
\item[\rm (1)\/]
$D^+\ge D$;

\item[\rm (2)\/]
$(X,D^++\sP_X)$ is lc, or equivalently,
$\D\dv^+$ is a b-subboundary; and

\item[\rm (3)\/]
$K+D^++\sP_X\sim_\R 0/Z\ni o$, or equivalently,
$\K+\D^+=\K+\D\dv^++\sP\sim_\R 0/Z$.
\end{description}
In particular, $(X/Z\ni o,D^++\sP)$ is
a {\em [local relative]\/} $0$-{\em bd-pair}.

The complement is {\em klt} if
$(X,D^+)$ is klt.

If $(X/Z\ni o,D+\sP)$ is a log bd-pair then
(1) can be also restated in terms of b-divisors:
\begin{description}
\item[\rm (1)\/]
$\D\dv^+\ge\D\dv$ or $\D^+\ge \D$.
\end{description}

\end{defn}

\begin{defn} \label{bd_n_comp}
Let $n$ be a positive integer, and
$(X/Z\ni o,D+\sP)$ be a bd-pair with a local morphism $X/Z\ni o$ and
with an $\R$-divisor $D=\sum d_iD_i$ on $X$.
A bd-pair $(X/Z\ni o,D^++\sP)$ with the same local morphism,
same $\sP$ and
with an $\R$-divisor $D^+=\sum d_i^+D_i$ on $X$,
is an $n$-{\em complement\/} of $(X/Z\ni o ,D+\sP)$ if
\begin{description}
\item[\rm (1)\/]
for all prime divisors $D_i$ on $X$,
$$
d_i^+\ge
\begin{cases}
1, \text{ if } &d_i=1;\\
\rddown{(n+1)d_i}/n &\text{ otherwise;}
\end{cases}
$$

\item[\rm (2)\/]
$(X,D^++\sP)$ is lc or equivalently,
$\D\dv^+$ is a b-subboundary; and

\item[\rm (3)\/]
$K+D^++\sP_X\sim_n 0/Z\ni o$, or equivalently,
$\K+\D^+=\K+\D\dv^++\sP\sim_n 0/Z$.
\end{description}
The $n$-complement is {\em monotonic\/} if
$D^+\ge D$, or equivalently, $\D^+_X\ge \D_X$ or $\D\dv^+\ge\D\dv$,
when $(X/Z\ni o,D+\sP)$ is a log bd-pair.

A log bd-pair $(X/Z\ni o,D^++\sP)$ is
a b-$n$-complement of $(X/Z\ni o,D+\sP)$ if
instead of (1) in Definition~\ref{n_comp}
the following b-version holds:
\begin{description}
\item[\rm (1-b)\/]
for all prime b-divisors $P$ of $X$,
$$
d^+\ge
\begin{cases}
1, \text{ if } &d=1;\\
\rddown{(n+1)d}/n &\text{ otherwise,}
\end{cases}
$$
\end{description}
where $d=\mult_P\D\dv$ and
$d^+=\mult_P\D\dv^+$ are multiplicities of
(the divisorial part of codiscrepancy) b-divisors
$\D\dv=\D-\sP$ and $\D\dv^+=\D^+-\sP$ of
$(X,D+\sP)$ and of $(X,D^++\sP)$ respectively.

The b-$n$-complement is {\em monotonic\/} if
$\D^+\ge \D$ or $\D\dv^+\ge\D\dv$.

\end{defn}

\begin{rem}[{Cf. \cite[Theorem~1.10 and Definition~2.18, (2)]{B}}]
\label{m_div_n}
Both complements and many other constructions
meaningful for arbitrary pair $(X/Z\ni o,\D)$
with a b-divisor $\D$.
We use and develop these concepts only for
bd-pairs $(X/Z\ni o,D+\sP)$ of index $m$ and actually
for a boundary $D$.
Moreover, usually for $n$-complements we
suppose that $m|n$, e.g., as in Corollary~\ref{B_B_+B_sN_Phi}.
\end{rem}

Now we can state a bd-version of Theorem~\ref{bndc}.

\begin{thm}[Boundedness of $n$-complements for bd-pairs] \label{bd_bndc}
Let $d$ be a nonnegative integer,
$m$ be a positive integer,
$\Delta\subseteq (\R^+)^r,\R^+=\{a\in\R\mid a\ge 0\}$, be a compact
subset (e.g., a polyhedron) and
$\Gamma$ be a subset in the unite segment $[0,1]$ such that
$\Gamma\cap\Q$ satisfies the dcc.
Let $I,\ep,v,e$ be the data as
in Restrictions on complementary indices.
Then there exists a finite set
$\sN=\sN(d,I,\ep,v,e,\Gamma,\Delta,m)$  of positive integers
({\em complementary indices\/})
such that
\begin{description}

\item[\rm Restrictions:\/]
every $n\in\sN$ satisfies
Restrictions on complementary indices with the given data;

\item[\rm Existence of $n$-complement:\/]
if $(X/Z\ni o,B+\sP)$ is a bd-pair of index $m$ with wFt $X/Z\ni o$,
$\dim X\le d$,
connected $X_o$ and  with a boundary $B$,
then $(X/Z\ni o,B+\sP)$ has an $n$-complement for some $n\in\sN$
under either of the following assumptions:

\item[\rm (1-bd)\/]
$(X/Z\ni o,B+\sP)$ has a klt $\R$-complement;
or

\item[\rm (2-bd)\/]
$B=\sum_{i=1}^r b_iD_i$ with $(b_1,\dots,b_r)\in \Delta$ and additionally,
for every $\R$-divisor $D=\sum_{i=1}^rd_iD_i$ with $(d_1,\dots,d_r)\in \Delta$,
the pair $(X/Z\ni o,D+\sP)$ has an $\R$-complement, where
$D_i$ are
effective Weil divisors (not necessarily prime);
or

\item[\rm (3-bd)\/]
$(X/Z\ni o,B+\sP)$ has an $\R$-complement and,
additionally, $B\in\Gamma$.

\end{description}

\end{thm}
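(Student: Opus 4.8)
The plan is to prove Theorem~\ref{bd_bndc} by running the same argument that establishes Theorem~\ref{bndc}; in fact the dimension induction behind Theorem~\ref{bndc} is already forced into the category of bd-pairs, since adjunction for fibrations and divisorial adjunction both produce a moduli b-divisor, which is exactly the datum $\sP$ of Definition~\ref{bd_n_comp}. Thus Theorem~\ref{bndc} is the special case $\sP=0$, $m=1$, and there is no extra work in stating the conclusion at this level of generality, only extra bookkeeping of the index $m$. As in the non-bd case I would first reduce the three hypotheses to one: \textbf{(1-bd)} $\Rightarrow$ \textbf{(3-bd)} $\Rightarrow$ \textbf{(2-bd)}. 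For \textbf{(3-bd)}, compactness of the closure of the $\R$-complement locus together with the dcc property of $\Gamma\cap\Q$ lets one replace $B$ by finitely many boundaries with hyperstandard-type multiplicities, and on the relevant faces of the boundary polytope the $\R$-complement ceases to be klt; those lc-type situations are then fed into the lc-centre extension below. For \textbf{(2-bd)}, the hypothesis that every $D$ with coefficients in the compact set $\Delta$ has an $\R$-complement lets one shrink $\Delta$ to a rational polytope whose vertices all admit klt or lc-type $\R$-complements, and then quote \textbf{(3-bd)}.

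\textbf{Klt case, setup.} Work on a $\Q$-factorialization of $X$ (legitimate for wFt morphisms by Lemma~\ref{wTt_vs_Ft}), and replace $B$ by a suitable low approximation $B'\le B$ with multiplicities in a fixed hyperstandard set (Hyperstandard sets in Section~\ref{technical}), so that $(X/Z\ni o,B'+\sP)$ still has a klt $\R$-complement. By Lemma~\ref{inequal} and the elementary round-down estimates, for $n$ in a suitable arithmetic progression a monotonic $n$-complement of $(X/Z\ni o,B'+\sP)$ is automatically a b-$n$-complement (in the sense of Definition~\ref{b_n_comp} and~\ref{bd_n_comp}) and dominates the $n$-complement of $(X/Z\ni o,B+\sP)$ that we want. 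Passing to b-$n$-complements is precisely what makes the complement stable under the birational operations used in the induction.

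\textbf{Dimension induction.} Induct on $d$. Run a $(K+B'+\sP_X)$-MMP for the wFt (hence, after a small modification, Ft) morphism, available by the $D$-MMP of \cite{Sh96}, and arrive at a Mori fibration $X'\to T$ of relative Picard number one. If $\dim T<\dim X'$, the general fibre is a lower-dimensional klt log Fano bd-pair; produce an $n$-complement there and lift it to $T$ via the canonical bundle formula, which equips $T$ with a new bd-pair $(T,B_T+\sP')$ of index $m'$ bounded in terms of $d$ and $m$ (Birkar--Zhang \cite{BZ}); conclude on $T$ by the inductive hypothesis and pull back. If $\dim T=0$ we are in the global Fano case: either the pair is exceptional, and Birkar's boundedness of complements for hyperstandard boundaries together with BBAB \cite{B,B16} yields a bounded complementary index, or a non-klt $\R$-complement exists, and divisorial adjunction to a non-klt centre $S$ gives a bd-pair $(S,B_S+\sP_S)$ of bounded index, handled by induction and then extended back to $X'$ by the extension/lifting-of-sections results cited in the introduction (\cite{HX} and the extension theorems there). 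Finally push the resulting b-$n$-complement back through the MMP and the $\Q$-factorialization, using that divisorial contractions and flips preserve b-$n$-complements. Collecting the finitely many constraints on $n$ — divisibility by $I$, by $m$, and by the bounded indices $m'$ of all moduli parts and lc indices of the exceptional pairs, together with Denominators, Approximation and Anisotropic approximation for $v,e,\ep$, which hold along a suitable arithmetic progression of large $n$ — produces the finite set $\sN=\sN(d,I,\ep,v,e,\Gamma,\Delta,m)$.

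\textbf{Main obstacle.} The genuinely hard inputs are external, namely BBAB \cite{B16} and Birkar's boundedness of hyperstandard complements \cite{B} (equivalently the effective nonvanishing recalled in the introduction); granting these, the principal internal difficulty is closing the dimension induction, i.e.\ proving that the moduli b-divisor produced by the canonical bundle formula and by divisorial adjunction always has index bounded in terms of $d$ and $m$, uniformly over the bounded family of fibrations and lc centres that occur, so that one never leaves the class of bd-pairs of bounded index. A secondary point requiring care is that the low hyperstandard approximation $B'$ of the setup step retains a klt $\R$-complement (under \textbf{(1-bd)}) or an $\R$-complement (under \textbf{(3-bd)}); this is where the $\R$-complement hypothesis is actually consumed.
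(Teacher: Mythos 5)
Your global architecture — dimension induction, exceptional pairs handled by boundedness plus BBAB, lifting from fibrations with a bounded index for the moduli part, extension from lc centres, and the observation that the bd-version is the same induction with the extra divisibility $m|n$ — is indeed the paper's (every intermediate statement there carries a bd-addendum, and under (1-bd) the theorem is quoted directly from Theorem~\ref{b_n_comp_klt} with Addendum~\ref{bd_klt_compl}). The genuine gaps are in your two non-klt reductions, which are precisely the content of deducing (3-bd) and (2-bd). For (3-bd), the reduction to the klt case is not achieved by "replacing $B$ by finitely many boundaries with hyperstandard-type multiplicities": a dcc set $\Gamma\cap\Q$ is not hyperstandard, and for a pair of lc type no klt $\R$-complement exists at all, so case (1-bd) cannot be invoked after any decrease of $B$. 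What the dcc hypothesis actually buys is the finiteness of the rational maximal lc multiplicities and the bounded rational maximal lc index $I=I(d,\Gamma\cap\Q)$ (Corollary~\ref{bounded_lc_index}, Example~\ref{max_lc_index}); on the maximal model the boundary is then forced to lie in $\Z/I$ over the lc locus of the adjoint bd-pair, and one perturbs $B$ to a boundary $B'$ which is of klt type away from that locus and whose multiplicities over it are within $1/I(n+1)$ of fractions $l/I$, so that the klt-case $n$-complement of $B'$ is automatically an $n$-complement of $B$ by Lemmas~\ref{approximation_I}--\ref{approximation_II}; this is Theorem~\ref{lc_compl}, and it is exactly the step where the dcc hypothesis is consumed — it is absent from your sketch.

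For (2-bd), the reduction "shrink $\Delta$ to a rational polytope whose vertices admit $\R$-complements and quote (3-bd)" does not work: existence of an $n$-complement is not a convex condition in the boundary (condition (1) of Definition~\ref{bd_n_comp} involves round-downs), and a general point of $\Delta$ produces multiplicities lying in no dcc set, so (3-bd) simply does not apply to it. The paper instead covers the compact $\Delta$ by finitely many neighborhoods of reference points $x^j$ whose coefficient sets $\Gamma(x^j)$ are finite, produces $n$-complements for the reference boundaries $A(x^j)$ by the lc-type theorem, and transfers them to every nearby boundary by the stability lemma for $n$-complements (Theorem~\ref{lc_compl_A}, Lemma~\ref{approximation_lc_complements}). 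A smaller but real discrepancy in your klt induction: you descend along a Mori fibration of a $(K+B')$-MMP and invoke the canonical bundle formula, but that formula requires $K+B\sim_\R 0$ over the base, so you would first have to complete the boundary relatively over $T$; the paper avoids this by fibering with the $0$-contraction defined by the semiample divisor $-(K+B^\sharp)$ on the maximal model (Constructions~\ref{sharp_construction} and~\ref{const_b_contraction}), for which the adjunction index theorem (Theorem~\ref{adjunction_index}) — the difficulty you correctly flag — is proved and then fed into the lifting statement Theorem~\ref{invers_b_n_comp}.
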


\begin{add}
We can relax the connectedness assumption on $X_o$ and
suppose that the number of connected components of $X_o$
is bounded.
\end{add}

This theorem with Theorem~\ref{bndc} will be proven in Section~\ref{lc_type_compl}.
However, we start from some generalities about complements.

\begin{prop}[{Cf. \cite[Lemma~5.3]{Sh92}}] \label{D_D'_complement}
Let $(X/Z,D),(X/Z,D')$ be two (log) pairs
with divisors $D\ge D'$ and
$(X/Z,D^+)$ be a (respectively {\em b-\/})$n$-complement of $(X/Z,D)$.
Then $(X/Z,D^+)$ is a (respectively {\em b-\/})$n$-complement
of $(X/Z,D')$ too.

The same holds for monotonic (respectively {\em b-\/})$n$-complements.

\end{prop}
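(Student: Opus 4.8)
The plan is to verify the three defining properties of an $n$-complement (or b-$n$-complement) for $(X/Z,D')$ directly, using that they already hold for $(X/Z,D^+)$ relative to $(X/Z,D)$, and that $D\ge D'$. Properties (2) and (3) of Definition~\ref{n_comp} make no reference to $D$ at all --- they are conditions on $(X,D^+)$ alone: lc-ness of $(X,D^+)$ and $K+D^+\sim_n 0/Z$. So these carry over verbatim. The only thing to check is condition (1), the lower bound on the multiplicities $d_i^+$, which is the sole place where $D$ (through its multiplicities $d_i$) enters.

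So the main point is: writing $D=\sum d_iD_i$ and $D'=\sum d_i'D_i$ with $d_i\ge d_i'$ for every $i$, I must show that the lower bound $d_i^+$ satisfies for $(X/Z,D')$ follows from the one it satisfies for $(X/Z,D)$. If $d_i'=1$ then $d_i=1$ too (since $d_i\ge d_i'$ and $D$ is a subboundary, so $d_i\le 1$), hence $d_i^+\ge 1$ as required. If $d_i'<1$, I must show $d_i^+\ge\rddown{(n+1)d_i'}/n$. If also $d_i<1$, then $d_i^+\ge\rddown{(n+1)d_i}/n\ge\rddown{(n+1)d_i'}/n$ because $x\mapsto\rddown{(n+1)x}$ is nondecreasing and $d_i\ge d_i'$. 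The remaining subcase is $d_i=1$ (so $d_i^+\ge 1$) while $d_i'<1$: then $\rddown{(n+1)d_i'}/n<\rddown{(n+1)}/n=(n+1)/n$, and since $nd_i^+$ is integral (Remark~\ref{remark_def_complements},(3)) and $d_i^+\ge 1$, we get $d_i^+\ge 1\ge\rddown{(n+1)d_i'}/n$ --- indeed $\rddown{(n+1)d_i'}\le n$ here because $d_i'<1$ forces $(n+1)d_i'<n+1$, so $\rddown{(n+1)d_i'}\le n$. This settles (1).

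For the \emph{b-}$n$-complement version the argument is identical but applied to codiscrepancy b-divisors: property (1-b) of Definition~\ref{b_n_comp} is a condition relating $\mult_P\D^+$ to $\mult_P\D$ for every prime b-divisor $P$, and since $D\ge D'$ implies $\D=\D(X,D)\ge\D(X,D')=\D'$ at every b-divisor (both being log pairs, codiscrepancy is monotone in the boundary, as $\overline{K+D}-\overline{K+D'}=\overline{D-D'}$ is effective hence has nonnegative multiplicity at every prime b-divisor), the same monotonicity of $x\mapsto\rddown{(n+1)x}$ together with the boundary-edge cases gives the bound for $\D'$ from the bound for $\D$. The lc-ness and $n$-linear-equivalence conditions again involve only $\D^+$ (equivalently $(X,D^+)$), so they transfer unchanged.

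For the monotonic addendum: if $D^+\ge D$ then $D^+\ge D\ge D'$, so $(X/Z,D^+)$ is a fortiori monotonic over $(X/Z,D')$; and in the b-version, $\D^+\ge\D\ge\D'$ likewise. The main (and essentially only) obstacle is the bookkeeping in the mixed edge case $d_i=1>d_i'$ described above, where one must invoke integrality of $nd_i^+$ rather than pure monotonicity of the round-down; everything else is immediate. One can alternatively dispatch this uniformly by noting $\rddown{(n+1)d}/n\le 1$ for all $d<1$ (Example~\ref{rddown_(n+1)_m_m},(3)) and $\ge 1$ is exactly what property (1) guarantees when the larger multiplicity equals $1$, so the two cases glue with no further computation.
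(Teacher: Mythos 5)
Your proof is correct and follows essentially the same route as the paper: conditions (2)–(3) of Definition~\ref{n_comp} do not involve $D$, and condition (1) transfers from $D$ to $D'\le D$ by monotonicity of $\rddown{(n+1)\,\cdot\,}$ in the case $d_i'\le d_i<1$ together with the bound $\rddown{(n+1)d_i'}/n\le 1$ when $d_i=1>d_i'$ (the paper cites Example~\ref{rddown_(n+1)_m_m},~(3) for exactly this, which is the ``uniform'' variant you note at the end; your appeal to integrality of $nd_i^+$ is superfluous). The b-version via $\D(X,D)\ge\D(X,D')$ and the monotonic addendum via $D^+\ge D\ge D'$ match the paper's intent as well.
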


\begin{add} \label{R_D_D'_complement}
The same holds for $\R$-complements of
pairs and bd-pairs.

\end{add}

\begin{add} \label{bd_D_D'_complement}
The same holds for bd-pairs $(X/Z,D+\sP),(X/Z,D'+\sP)$
with the same b-part $\sP$ and with a ({\em b-\/})$n$-complement
$(X/Z,D^++\sP)$ of $(X/Z,D)$ respectively.

\end{add}

\begin{proof}
Immediate by the monotonicity of $\rddown{\ }$:
if $d'\le d<1$ then
$$
\rddown{(n+1)d'}/n\le \rddown{(n+1)d}/n \le 1
$$
and Example~\ref{rddown_(n+1)_m_m}, (3).

Similarly we treat bd-pairs of Addendum~\ref{bd_D_D'_complement}.

Addendum~\ref{R_D_D'_complement} is immediate by definition.

\end{proof}

\begin{cor} \label{monotonic_n_compl}
Let $(X/Z,D^+)$ be an $n$-complement of $(X/Z,D)$.
Then $(X/Z,D^+)$ is an $\R$- and monotonic $n$-complement of
$(X/Z,\rdn{D}{n})$ where $\rdn{D}{n}$ has the following multiplicities,
for every prime divisor $P$ on $X$,
$$
\mult_P \rdn{D}{n}=
\begin{cases}
1, \text{ if } &d=1;\\
\rddown{(n+1)d}/n &\text{ otherwise}.
\end{cases}
$$

\end{cor}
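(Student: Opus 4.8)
The plan is to verify, straight from Definitions~\ref{r_comp} and~\ref{n_comp}, that $(X/Z,D^+)$ has the three asserted properties relative to $(X/Z,\rdn{D}{n})$; the only work is an elementary floor-function case analysis, and all the arithmetic needed is already packaged in Example~\ref{rddown_(n+1)_m_m}. Throughout write $d_i=\mult_{D_i}D$, $d_i^+=\mult_{D_i}D^+$ and $\delta_i=\mult_{D_i}\rdn{D}{n}$, so that $\delta_i=1$ if $d_i=1$ and $\delta_i=\rddown{(n+1)d_i}/n$ otherwise.

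First I would record that $\rdn{D}{n}$ is a genuine subboundary $\Q$-divisor. Since $(X/Z,D^+)$ is an $n$-complement of $(X/Z,D)$, the divisor $D$ is a subboundary by Remark~\ref{remark_def_complements}, (1), so every $d_i\le 1$. When $d_i<1$ we have $\delta_i=\rddown{(n+1)d_i}/n\in\Z/n$ and $\delta_i\le 1$ by Example~\ref{rddown_(n+1)_m_m}, (3); when $d_i=1$ we have $\delta_i=1$. Hence $\delta_i\in\Z/n$ and $\delta_i\le 1$ for every $i$, so $\rdn{D}{n}$ is a subboundary and $n\rdn{D}{n}$ is integral. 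The key observation is that the numbers $\delta_i$ are \emph{precisely} the lower bounds imposed by clause (1) of Definition~\ref{n_comp} on an $n$-complement of $(X/Z,D)$; thus the hypothesis already gives $d_i^+\ge\delta_i$ for every prime divisor $D_i$, that is, $D^+\ge\rdn{D}{n}$. This is the claimed monotonicity and at the same time clause (1) of an $\R$-complement of $(X/Z,\rdn{D}{n})$.

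Next I would check that $(X/Z,D^+)$ is an $n$-complement of $(X/Z,\rdn{D}{n})$. Clauses (2) and (3) of Definition~\ref{n_comp} --- log canonicity of $(X,D^+)$ and $K+D^+\sim_n 0/Z$ --- involve only $D^+$ and the morphism, hence hold verbatim. For clause (1) relative to $\rdn{D}{n}$, fix $D_i$ and split on the value of $\delta_i$ (not of $d_i$): if $\delta_i=1$ the required inequality is $d_i^+\ge 1=\delta_i$; if $\delta_i\ne 1$, hence $\delta_i<1$ by the previous paragraph, the required inequality is $d_i^+\ge\rddown{(n+1)\delta_i}/n$, and since $\delta_i\in\Z/n$ and $\delta_i<1$ we have $\rddown{(n+1)\delta_i}/n\le\delta_i$ by Example~\ref{rddown_(n+1)_m_m}, (2). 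Either way the inequality follows from $D^+\ge\rdn{D}{n}$. So $(X/Z,D^+)$ is a monotonic $n$-complement of $(X/Z,\rdn{D}{n})$, and being monotonic it is an $\R$-complement of $(X/Z,\rdn{D}{n})$ by Remark~\ref{remark_def_complements}, (2) --- equivalently, $\sim_n$ in clause (3) of Definition~\ref{n_comp} implies $\sim_\R$ in clause (3) of Definition~\ref{r_comp}.

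I do not expect a genuine obstacle here: the proof is pure bookkeeping. The one point needing care --- and the reason the case split is on $\delta_i$ rather than on $d_i$ --- is that $d_i<1$ can nonetheless force $\delta_i=1$, exactly when $\rddown{(n+1)d_i}=n$; and the only inputs that are not immediate are parts (2) and (3) of Example~\ref{rddown_(n+1)_m_m}.
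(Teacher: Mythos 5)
Your proof is correct and is essentially the paper's argument: the paper simply packages your two elementary steps as citations, namely Example~\ref{b_n_comp_of_itself}, (1) (which is exactly your use of Example~\ref{rddown_(n+1)_m_m}, (2) for multiplicities in $\Z/n$ that are $\le 1$) together with Proposition~\ref{D_D'_complement} applied to $D^+\ge \rdn{D}{n}$, which is your monotonicity observation from clause (1) of Definition~\ref{n_comp}. Your unfolding of these citations, including the care to split cases on $\delta_i$ rather than $d_i$, matches the intended proof.
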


\begin{proof}
Immediate by Definition~\ref{n_comp}, Proposition~\ref{D_D'_complement}
and Example~\ref{b_n_comp_of_itself}, (1).
\end{proof}

\begin{prop} \label{b_n_comp-nc}
Let $n$ be a positive integer,
$D$ be an $\R$-divisor with normal crossing on
nonsingular $X$ and
$(X/Z,D^+)$ be an $n$-complement of $(X/Z,D)$.
Then $(X/Z,D^+)$ is also a b-$n$-complement of $(X/Z,D)$.

The same holds for a bd-pair $(X/Z\ni o,D+\sP)$
with normal crossings only for $\Supp D$
and stable $\sP$ over $X$.

\end{prop}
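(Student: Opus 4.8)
The plan is to verify condition (1-b) of Definition~\ref{b_n_comp} directly, i.e.\ to check that for every prime b-divisor $P$ of $X$ the multiplicity $d^+=\mult_P\D^+$ satisfies $d^+\ge 1$ when $d=\mult_P\D=1$, and $d^+\ge\rddown{(n+1)d}/n$ otherwise, where $\D=\D(X,D)$ and $\D^+=\D(X,D^+)$. Since conditions (2) and (3) of Definition~\ref{n_comp} for all crepant models are automatic from the fact that $(X,D^+)$ is lc and $K+D^+\sim_n 0/Z\ni o$ (these are birationally stable), the whole issue is the b-divisor inequality on multiplicities. First I would reduce to a single prime b-divisor $P$ and pass to a log resolution $f\colon Y\to X$ on which $P$ appears as a prime divisor, $\Supp D\cup\Supp D^+$ has normal crossings, and all $f$-exceptional divisors together with the strict transforms form a normal crossing configuration. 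Writing $D_Y=\D_X$ pulled back, i.e.\ $K_Y+D_Y=f^*(K+D)$ and likewise $K_Y+D_Y^+=f^*(K+D^+)$, the numbers $d=\mult_P D_Y$ and $d^+=\mult_P D_Y^+$ are exactly what we must compare.

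The key computation is a local one at the generic point of $P$ on $Y$, which I would do via the standard discrepancy formula for a divisor obtained by a single toroidal blow-up (or, more generally, a monomial valuation) with respect to the normal crossing divisor $\Supp D+\sum E_j$. If $P$ has weights $a_i$ along the components $D_i$ (with multiplicities $d_i$ in $D$, resp.\ $d_i^+$ in $D^+$) and weights along the exceptional $E_j$, then $\mult_P D_Y=\sum_i a_i d_i+(\text{exceptional contribution})-(\text{discrepancy term})$, and the same formula with $d_i^+$ in place of $d_i$; crucially the exceptional and discrepancy contributions are identical for $D$ and $D^+$ because they depend only on $X$, $Y$, and the valuation, not on the boundary. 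Hence $d^+-d=\sum_i a_i(d_i^+-d_i)$ with all $a_i\ge 0$ integers. Now I invoke Definition~\ref{n_comp}(1): if $d_i=1$ then $d_i^+\ge 1=d_i$; if $d_i<1$ then $d_i^+\ge\rddown{(n+1)d_i}/n$. Using inequality~(\ref{basic_low_bound}) in the form $\rddown{(n+1)(a+b)}\ge\rddown{(n+1)a}+\rddown{(n+1)b}$ together with the identity $\rddown{(n+1)k}/n=k$ when $nk\in\Z$ (Example~\ref{rddown_(n+1)_m_m}, (2)), I bound $\sum_i a_i d_i^+$ from below in the pattern $\rddown{(n+1)(\sum_i a_i d_i)}/n$ plus integral correction, exactly yielding $d^+\ge 1$ when $d=1$ and $d^+\ge\rddown{(n+1)d}/n$ when $d<1$; the case $d=1$ forces at least one $d_i=1$ entering with weight $1$ while the rest contribute integral amounts, so the round-down collapses to the integer $1$.

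The main obstacle I anticipate is the bookkeeping in the mixed case where $P$ sees several components $D_i$, some with $d_i=1$ and some with $d_i<1$, simultaneously with exceptional directions: one must make sure the splitting $\rddown{(n+1)d}=\rddown{(n+1)\sum a_i d_i}$ is controlled by~(\ref{basic_low_bound}) and~(\ref{induc_low_bound}) without losing a unit, and that the terms with $d_i=1$ do not spoil subadditivity. The point is that since $\D^+$ is a subboundary and $n\D^+$ is integral (Remark~\ref{remark_def_complements}, (3)), $d^+\le 1$ and $nd^+\in\Z$, so whenever $d=1$ the target inequality is simply $d^+\ge 1$, which is forced because along $P$ at least one $d_i^+=1$ contributes and the remaining contributions are $\ge\rddown{(n+1)d_i}/n$, already $\le 1$ and hence not forcing $d^+$ below $1$ when combined with the integrality of $nd^+$. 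For the bd-pair statement, the argument is identical after replacing $\D,\D^+$ by $\D\dv=\D-\sP$, $\D\dv^+=\D^+-\sP$: since $\sP$ is stabilized over $X$, it contributes the \emph{same} b-divisor to both, so $d^+-d=\mult_P\D\dv^+-\mult_P\D\dv=\sum_i a_i(d_i^+-d_i)$ exactly as before, and condition (2) in its equivalent form ``$\D\dv^+$ is a b-subboundary'' together with (3) in the form $\K+\D^+\sim_n 0/Z$ supply the remaining hypotheses of Definition~\ref{bd_n_comp}, (1-b)–(3).
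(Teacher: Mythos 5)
Your reduction to the multiplicity inequality at a prime b-divisor $P$ is the right target, and the identity $d^+-d=\sum_i a_i(d_i^+-d_i)$ with $a_i=\mult_P\overline{D_i}\ge 0$ integral is correct; but the combination step is run with the wrong floor inequality, and it misses the one place where the normal crossing hypothesis actually enters. To pass from $d_i^+\ge\rddown{(n+1)d_i}/n$ to $d^+\ge\rddown{(n+1)d}/n$ you must bound $\rddown{(n+1)d}$ from \emph{above} by $\sum_i a_i\rddown{(n+1)d_i}$ plus an integral correction; that is the superadditivity-with-loss estimate $\rddown{\sum x_i}\le m-1+\sum\rddown{x_i}$ of~(\ref{induc_upper_bound}), i.e.\ exactly the inequality of Example~\ref{induc_upper_bound_exa}, and \emph{not} the lower bound~(\ref{basic_low_bound}) you invoke: the latter gives $\rddown{(n+1)\sum a_id_i}\ge\sum a_i\rddown{(n+1)d_i}$, which is the reverse of what is needed and proves nothing here. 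Moreover, the loss of $m-1$ (with $m=\sum a_i$ counted with multiplicity) has to be absorbed by the constant term in $d=1-A+\sum_i a_id_i$, where $A=\ld(P;X,0)$, and this requires $A\ge\sum_i a_i$ — precisely log canonicity of the reduced pair $(X,\Supp D)$, i.e.\ the normal crossing assumption. Your write-up never uses this; you only record that the exceptional/discrepancy terms cancel in the difference $d^+-d$. An argument not using $A\ge\sum a_i$ cannot be complete, since without normal crossings the proposition is false: Example~\ref{b_n_comp_of_itself}, (3) (three lines through a point with coefficients $1/3$, where $A=2<3=\sum a_i$) is exactly a counterexample. The $d=1$ case is also mishandled: $d=1$ forces $A=\sum a_i$ and $d_i=1$ for every $i$ with $a_i>0$ (again via $A\ge\sum a_i$), whence $d^+\ge d=1$; "contributions at most $1$ plus integrality of $nd^+$" does not by itself prevent $d^+<1$.

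There is also a coverage issue: a general prime b-divisor of $X$ is not a toroidal or monomial valuation with respect to $\Supp D$, so the single toroidal blow-up computation does not literally apply to every $P$. Two repairs are available. The paper first reduces, via $D^+\ge\rdn{D}{n}$, to the statement $\D(X,\rdn{D}{n})\ge\rdn{(\D(X,D))}{n}$, which involves only $D$, and proves it by induction along a chain of blowups, the one-blowup case being precisely Example~\ref{induc_upper_bound_exa}; the bd-case is then identical because $\sP$ is stable over $X$. Alternatively, your direct computation can be salvaged for arbitrary $P$ by using the general formulas $d=1-A+\sum a_id_i$, $d^+=1-A+\sum a_id_i^+$ together with $A\ge\sum a_i$ and the upper bound~(\ref{induc_upper_bound}). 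As written, however — with~(\ref{basic_low_bound}) in place of~(\ref{induc_upper_bound}) and no use of $A\ge\sum a_i$ — the proof has a genuine gap.
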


\begin{proof}
(Local verification.)
The statement is meaningful because $X$ is nonsingular and
$(X,D)$ is a log pair.
We need to verify only Definition~\ref{b_n_comp}, (1-b).

Step~1. It is enough to verify that
\begin{equation} \label{rddown_nc}
\D(X,\rdn{D}{n})\ge \rdn{(\D(X,D))}{n},
\end{equation}
where, for an $\R$-divisor $D=\sum d_i D_i$
and/or a b-$\R$-divisor $\D=\sum d_i D_i$,
$$
\rdn{D}{n}=\sum \rdn{d_i}{n}D_i
$$
and the rounding $\rdn{x}{n}$ is defined in~\ref{n+1_n_monotonicity}.
Indeed, by Definition~\ref{n_comp}, (1) and~(\ref{rddown_nc}),
$$
D^+\ge\rdn{D}{n} \text{ and }
\D^+=\D(X,D^+)\ge
\D(X,\rdn{D}{n})\ge \rdn{(\D(X,D))}{n}.
$$
Hence Definition~\ref{b_n_comp}, (1-b)
holds for $(X,D^+)$ with respect to $(X,D)$.

Step~2. It is enough to verify that,
for every crepant blowups (monoidal transformations)
$\varphi\colon(Y,D_Y),(Y,\rdn{D}{n}{}_{,Y})\to(X,D),(X,\rdn{D}{n})$
of a point $o\in X$,
\begin{equation}\label{rddown_nc_Y}
\rdn{D}{n}{}_{,Y}\ge \rdn{D_{Y,}}{n}
\end{equation}
holds.
The blowups can be considered birationally, that is,
over a neighborhood of (scheme) point $o$.
Indeed, (\ref{rddown_nc}) for b-$\R$-divisors
follows from the corresponding inequality
for every prime b-divisor $E$.
Since $E$ can be obtained by
a sequence of blowups,
the required inequality in $E$ follows
from~(\ref{rddown_nc_Y}) by induction.

Step~3. It is enough to verify that
\begin{equation}\label{rddown_E}
1-m+\sum_{i=1}^m \rddown{(n+1)d_i}/n\ge
\rddown{(n+1)(1-m+\sum_{i=1}^m d_i)}/n,
\end{equation}
where $m$ is a positive integer and
$d_i$ are real numbers $<1$.
Indeed, let $l\ge 1$ be the codimension of $o$ in $X$ and
$D=\sum_{i=1}^l d_iD_i$ near $o$, where $d_i$ are real numbers $\le 1$,
the prime divisors $D_1,\dots,D_l$ are with normal crossings and
$o$ is the generic point of their intersection $\cap_{i=1}^l D_i$.
It is enough to verify
the inequality~(\ref{rddown_nc_Y}) in the only exceptional
divisor $E$ of $\varphi$.
For this, we can suppose that $d_i=1$ only for $i> m$ and
$d_i<1$ otherwise.

Then
$$
\mult_E D_Y=1-l+\sum_{i=1}^l d_i=
1-l+l-m+\sum_{i=1}^m d_i=1-m+\sum_{i=1}^m d_i.
$$
So, $\mult_E \rdn{D_{Y,}}{n}$ is the right side of~(\ref{rddown_E}),
except for the case, when $\mult_E D_Y=\mult_E \rdn{D_{Y,}}{n}=1$.
In the last case $m=0$ and all $d_i=1$.
Note that in this case the left side is also $1$ and
(\ref{rddown_E}) holds.
Thus we can suppose that $m\ge 1$.

On the other hand,
$$
\rdn{D}{n}=\sum_{i=1}^m \frac{\rddown{(n+1)d_i}}n D_i
+\sum_{i=m+1}^l D_i
$$
near $o$.
Hence
$$
\mult_E \rdn{D}{n}{}_{,Y}=
1-l+l-m+\sum_{i=1}^m \rddown{(n+1)d_i}/n=
1-m+\sum_{i=1}^m \rddown{(n+1)d_i}/n,
$$
the left side of~(\ref{rddown_E}).

Finally, (\ref{rddown_E}) is the inequality of
Example~\ref{induc_upper_bound_exa} in~\ref{induc_bound}.

The bd-pairs can be treated ditto because
$\sP$ is stable over $X$.

\end{proof}

\begin{prop} \label{small_transform_compl}
Every small birational modification preserves $\R$- and
$n$-complements for pairs and bd-pairs.

The same holds for b-$\R$-complements if $X/Z$ has wFt and
the small modification preserves the log pair property.

\end{prop}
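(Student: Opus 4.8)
The plan is to handle the two assertions separately. The second one (for $\R$- and $n$-complements of ordinary pairs) is the heart of the matter; the first one reduces to it after a small bookkeeping step, and the bd-pair versions follow the same template with $K+\sP_X$ replacing $K$ throughout.

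For the main statement, let $\varphi\colon X'/Z\to X/Z$ be a small birational modification, i.e.\ an isomorphism in codimension $1$, and let $(X/Z\ni o,D^+)$ be an $n$-complement (resp.\ $\R$-complement) of $(X/Z\ni o,D)$. The first observation is that a small modification does not create or destroy prime divisors, so $D$ and $D^+$ have well-defined birational transforms $D'=\varphi^{-1}_*D$ and $D'^+=\varphi^{-1}_*D^+$ on $X'$, with exactly the same multiplicities; in particular condition (1) of Definition~\ref{n_comp} (resp.\ (1) of Definition~\ref{r_comp}) is preserved verbatim, and monotonicity $D^+\ge D$ transfers to $D'^+\ge D'$. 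Next, because the b-divisors of codiscrepancy $\D(X,D^+)$ and $\D(X',D'^+)$ agree — a small modification is crepant for the pair once we use the birational transform, since no divisor is extracted and the generic points of all prime divisors are unchanged — the lc property in condition (2) is a property of that common b-divisor and is therefore preserved. Finally, for condition (3): $K+D^+\sim_n 0/Z\ni o$ means $n(K_X+D^+)$ is linearly trivial over a neighbourhood of $o$; pulling back (or more precisely, using that $\varphi$ is an isomorphism outside codimension $2$, so Weil-divisor classes and the relation $\sim$ over $Z$ match up under $\varphi^{-1}_*$) gives $n(K_{X'}+D'^+)\sim 0/Z\ni o$, i.e.\ $K_{X'}+D'^+\sim_n 0/Z\ni o$; the same argument with $\sim$ replaced by $\sim_\R$ handles the $\R$-complement case. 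Thus $(X'/Z\ni o,D'^+)$ is an $n$-complement (resp.\ $\R$-complement) of $(X'/Z\ni o,D')$, and symmetrically in the other direction, which is what ``preserves'' means. For bd-pairs one repeats this with the same $\sP$ (which is a b-divisor and hence literally unchanged), replacing $K$ by $K+\sP_X$ and using that $\sP$ is stabilised on no particular model, so $\D^{\mathrm{div}}$ behaves exactly as $\D$ did above.

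The first sentence of the proposition — small modifications preserve $\R$- and $n$-complements — is now a special case. For the b-$\R$-complement claim under the wFt hypothesis, the issue is that the b-codiscrepancy $\D$ is computed on whatever small log model we pick, and different $\Q$-factorialisations may a priori give different $\D$'s; here one invokes Lemma~\ref{wTt_vs_Ft} to reduce to a Ft model and uses that, under the stated hypothesis that the small modification preserves the log pair property, conditions (1)--(3) in the b-reformulation of Definition~\ref{r_comp} given in Remark~\ref{rem_def_b_n_compl}(2) are all conditions on the b-divisor $\K+\D^+$, which is insensitive to small modifications. The key point to be careful about — and the step I expect to be the real obstacle — is justifying that the \emph{linear} equivalence $\sim_n$ over $Z\ni o$ (not merely $\sim_\R$ or $\equiv$) is preserved: linear equivalence of Weil divisors is not automatically a codimension-$1$ notion on a non-$\Q$-factorial variety, so one must argue that $n(K+D^+)$ being \emph{Cartier} and trivial over $o$ on $X$ forces the same on $X'$, which uses that $nD^+$ is integral (Remark~\ref{remark_def_complements}(3)), that $\varphi$ is small, and the exact sequence relating $\operatorname{Pic}$ and $\operatorname{Cl}$ under removal of a codimension-$\ge 2$ locus. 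Everything else is the routine bookkeeping sketched above.
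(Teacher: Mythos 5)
Your treatment of the first assertion is essentially the paper's proof (which is just ``immediate by definition''), but two of your justifications are off even though the conclusion stands. First, crepancy of the two complement pairs does not follow from ``no divisor is extracted'': for a small modification (e.g.\ a flop) an $\R$-Cartier divisor and its birational transform, though equal in codimension one, need not have equal pullbacks on a common resolution. What makes $(X,D^+)$ and $(X',D'^+)$ crepant over $Z\ni o$ is condition (3): $n(K+D^+)$ (resp.\ $K+D^+$ in the $\R$-case) is principal (resp.\ an $\R$-combination of principal divisors) over a neighborhood of $o$, and the \emph{same} rational functions compute $n(K_{X'}+D'^+)$ on $X'$, so the pullbacks to a common resolution literally coincide; this simultaneously shows $(X',D'^+)$ is a log pair and transfers (2). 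Second, your closing worry about $\sim_n$ is misplaced: $\sim_n$ is defined through linear equivalence of Weil divisors, which is a codimension-one notion (principal divisors are determined by valuations), so it transfers verbatim under an isomorphism in codimension one; no $\Pic$/$\Cl$ comparison is needed.

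The genuine gap is in the b-$\R$-complement part. Your claim that conditions (1)--(3) of the b-reformulation in Remark~\ref{rem_def_b_n_compl}, (2) ``are all conditions on the b-divisor $\K+\D^+$'' is false: condition (1) is $\D^+\ge\D$, and $\D=\D(X,D)$ depends on the chosen small log model of $(X/Z,D)$ --- different small log pair models of the \emph{uncomplemented} pair are in general not crepant (only the complement pairs are, by the triviality argument above), which is exactly the phenomenon flagged in Remark~\ref{rem_def_b_n_compl}, (1). So ``insensitivity to small modifications'' is precisely what needs proof, and passing to a Ft model by Lemma~\ref{wTt_vs_Ft} does not by itself supply it. The paper's argument is different: under wFt one has the maximal model of Construction~\ref{sharp_construction}, whose b-divisor $\D^\sharp$ dominates every $\D$ of a small log model and is itself invariant under small birational modifications, and Addendum~\ref{R_complement_criterion} (with Proposition~\ref{monotonicity_I}) characterizes existence of (b-)$\R$-complements by the lc property of $(X^\sharp,D^\sharp_{X^\sharp})$; preservation then follows because this criterion is a small-modification invariant. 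This is where the wFt hypothesis actually enters (existence of the Zariski decomposition and of the maximal model), and your proposal as written does not bridge this step.
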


See Remark~\ref{rem_def_b_n_compl}, (1)
about b-$n$-complements.

\begin{proof}
Immediate by definition.

The statement about b-$\R$-complements holds
by Addendum~\ref{R_complement_criterion}.
Indeed, if $(X/Z,D)$ has a b-$\R$-complement then
so does $(X^\sharp,D_{X^\sharp}^\sharp)$ by the addendum.
On the other hand, a maximal model exists
under the assumptions of Construction~\ref{sharp_construction}.
Moreover, by the construction, the assumptions and
maximal models are invariant of small birational
modifications of pairs $(X/Z,D)$ or bd-pairs $(X/Z,D+\sP)$.
(Cf. Remarks~\ref{rem_def_b_n_compl}, (2).)

\end{proof}

\paragraph{Hyperstandard sets.}
By definition \cite[3.2]{PSh08} every such set has the form
$$
\Phi=\Phi(\fR)=\{1-\frac rl\mid r\in\fR
\text { and } l \text{ is a positive integer}\},
$$
where $\fR$ is a set of real numbers.
Usually, we take for $\fR$ a finite subset
of rational numbers in $[0,1]$.
Additionally, we suppose that $1\in\fR$.
So, $0=1-1/1\in\Phi$.
(Every standard multiplicity $1-1/l\in\Phi$ too.)
We say that $\Phi$ is {\em associated\/} with $\fR$.

Such a presentation for $\Phi$ is not unique.
The following presentation of hyperstandard sets
is crucial for us (see Proposition~\ref{every_sN_Phi_hyperstandard} below).
It has the form $\frak{G}(\sN,\fR)=\Gamma(\sN,\Phi)$,
the set {\em associated\/} with $\sN$ and $\fR$ or $\Phi$
(see Proposition~\ref{dependence_on_Phi} below),
with the elements
$$
b=1-\frac rl +\frac 1l
(\sum_{n\in\sN} \frac {m_n}{n+1})=
\varphi+\frac 1l
(\sum_{n\in\sN} \frac {m_n}{n+1}),
$$
where $r\in \fR$, $l$ is a positive integer,
$\varphi=1-r/l\in\Phi$, and
$m_n$ are nonnegative integers.
Usually, we suppose that $\sN$ is a finite set of
positive integers.
By construction every $b\ge 0$.
Additionally we suppose that $b\le 1$.
Thus $b\in[0,1]\cap\Q$.

The set $\Gamma(\sN,\Phi)$ is
a dcc set of rational numbers in $[0,1]$
with the only accumulation point $1$.
Actually, the set is a hyperstandard by
Proposition~\ref{every_sN_Phi_hyperstandard} and
by Proposition~\ref{dependence_on_Phi} below depends only on $\Phi$.
Conversely, every hyperstandard set has this form for
$\sN=\emptyset$.

For $\Phi\subseteq \Phi'$ and $\sN\subseteq\sN'$,
$\Gamma(\sN,\Phi)\subseteq \Gamma(\sN',\Phi')$ holds.
By our assumptions, the minimal $\fR=\{1\}$ and
$\sN=\emptyset$.
Thus $\Gamma(\emptyset,\{1\})=\Phi(\{1\})$ is the standard set
without $1$ and
it is contained in every $\Gamma(\sN,\Phi)$.
Put $\Gamma(\sN)=\Gamma(\sN,\{1\})$.
The last set contains $0$ and the fractions $m/(n+1),n\in\sN,
m\in\Z,0\le m\le n+1$ of the great importance for us.
This is why we suppose that $1\in\fR$.

\begin{prop} \label{every_sN_Phi_hyperstandard}
Let $\sN$ be a finite set of positive integers and
$\fR$ be a finite set of rational numbers in $[0,1]$.
Then there exists a finite set of rational numbers $\fR'$ in $[0,1]$
such that
$$
\Gamma(\sN,\Phi)=\Phi(\fR').
$$
More precisely,
$$
\fR'=\{r-\sum_{n\in\sN} \frac {m_n}{n+1}\mid
r\in\fR\text{ \rm and every }m_n\in\Z^{\ge 0}\}\cap [0,+\infty).
$$

\end{prop}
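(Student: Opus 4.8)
The plan is to prove the displayed set equality by a one-line algebraic rewriting, the only real work being to first check that the proposed $\fR'$ is an admissible generating set, i.e. a \emph{finite} set of rational numbers in $[0,1]$.

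First I would dispose of the admissibility of $\fR'$. Every element of it has the form $r-\sum_{n\in\sN}m_n/(n+1)$ with $r\in\fR\subseteq[0,1]$ and the $m_n$ nonnegative integers, and it is retained only if it is $\ge 0$; hence it lies in $[0,1]$ and is rational. For finiteness, if $r-\sum_{n\in\sN}m_n/(n+1)\ge 0$ then for each $n\in\sN$ we have $m_n/(n+1)\le r\le 1$, so $0\le m_n\le n+1$; since $\fR$ and $\sN$ are finite, only finitely many data $(r,(m_n)_{n\in\sN})$ occur, so $\fR'$ is finite. (In particular $1\in\fR'$ as soon as $1\in\fR$, taking all $m_n=0$, consistently with our standing conventions on generating sets.)

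Next I would record the identity underlying everything: whenever $\varphi=1-r/l$,
\[
\varphi+\frac1l\Bigl(\sum_{n\in\sN}\frac{m_n}{n+1}\Bigr)
=1-\frac1l\Bigl(r-\sum_{n\in\sN}\frac{m_n}{n+1}\Bigr).
\]
With this, the inclusion $\Gamma(\sN,\Phi)\subseteq\Phi(\fR')$ is immediate: an element $b\in\Gamma(\sN,\Phi)$ is by definition of the left-hand form with the extra condition $b\le 1$; setting $r'=r-\sum_{n\in\sN}m_n/(n+1)$, the identity gives $b=1-r'/l$, and $b\le 1$ forces $r'\ge 0$ while $r'\le r\le 1$, so $r'\in\fR'$ and $b\in\Phi(\fR')$. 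For the reverse inclusion, an element $b=1-r'/l\in\Phi(\fR')$ has $r'\in\fR'$, so $r'=r-\sum_{n\in\sN}m_n/(n+1)\ge 0$ for some $r\in\fR$ and $m_n\in\Z^{\ge 0}$; running the identity backwards writes $b=\varphi+\frac1l\sum_{n\in\sN}m_n/(n+1)$ with $\varphi=1-r/l\in\Phi$, and the inequalities $0\le r'\le 1\le l$ give $0\le b\le 1$, so $b$ meets all requirements and $b\in\Gamma(\sN,\Phi)$.

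The whole argument is elementary and there is no serious obstacle; the one point deserving attention is the finiteness of $\fR'$, which reduces to the bound $m_n\le n+1$, together with the observation that the defining constraint $b\le 1$ of $\Gamma(\sN,\Phi)$ translates precisely into the intersection with $[0,+\infty)$ in the definition of $\fR'$.
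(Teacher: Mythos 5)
Your proof is correct and takes essentially the same route as the paper's: the algebraic rewriting $b=\varphi+\frac 1l\sum_{n\in\sN}\frac{m_n}{n+1}=1-\frac{r'}{l}$ with $r'=r-\sum_{n\in\sN}\frac{m_n}{n+1}$, where the constraint $b\le 1$ is exactly what forces $r'\ge 0$. You merely spell out both inclusions and the finiteness of $\fR'$ via the bound $m_n\le n+1$, points the paper's proof leaves implicit.
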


\begin{proof}
Take $b\in\Gamma(\sN,\Phi)$.
Then by definition $b=1-r'/m$, where
$$
r'=r-\sum_{n\in\sN}\frac{m_n}{n+1}.
$$
(Cf. with $\overline{\fR}$  in \cite[p.~160]{PSh08}.)
Since every $m_n\ge 0$ and
$r\le 1$, $r'\le r\le 1$ too.
On the other hand, $b\le 1$.
Hence $r'\ge 0$ and the set $\fR'$ is
finite rational in $[0,1]$.
\end{proof}

\begin{prop} \label{dependence_on_Phi}
Let $\sN$ be a set of positive numbers and
$\fR,\fR'$ be two sets of rational numbers in $[0,1]$
such that $\Phi=\Phi(\fR)=\Phi(\fR')$.
Then $\Gamma(\sN,\Phi)=\frak{G}(\sN,\fR)=\frak{G}(\sN,\fR')$.
\end{prop}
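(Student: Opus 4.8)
The plan is to establish the sharper assertion that $\frak{G}(\sN,\fR)=\frak{G}(\sN,\fR')$ as soon as $\Phi(\fR)=\Phi(\fR')$; the common value is then, by definition, $\Gamma(\sN,\Phi)$, so this is precisely what legitimizes the notation $\Gamma(\sN,\Phi)$. By symmetry it suffices to prove the inclusion $\frak{G}(\sN,\fR)\subseteq\frak{G}(\sN,\fR')$.

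First I would take an arbitrary $b\in\frak{G}(\sN,\fR)$ and write it in its defining form
$$
b=1-\frac rl+\frac 1l\sum_{n\in\sN}\frac{m_n}{n+1},
$$
with $r\in\fR$, $l$ a positive integer, the $m_n$ nonnegative integers, and $b\in[0,1]$. The one thing to exploit is that, taking $l=1$ in the definition of $\Phi$ (which is allowed) and using $\fR\subseteq[0,1]$, the number $1-r$ is a genuine element of $\Phi(\fR)=\Phi(\fR')$; hence $1-r=1-r'/l'$ for some $r'\in\fR'$ and some positive integer $l'$, that is, $r=r'/l'$. Substituting this and clearing denominators gives
$$
b=1-\frac{r'}{ll'}+\frac 1{ll'}\sum_{n\in\sN}\frac{l'm_n}{n+1},
$$
in which $r'\in\fR'$, the integer $ll'$ is positive, and the $l'm_n$ are nonnegative integers. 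Since the constraints $b\ge 0$ and $b\le 1$ are intrinsic to $b$ and do not refer to the presentation, this exhibits $b\in\frak{G}(\sN,\fR')$. Swapping the roles of $\fR$ and $\fR'$ yields the reverse inclusion, hence equality.

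I do not anticipate a real obstacle here: the argument is essentially formal bookkeeping with the monoid generated by the fractions $1/(n+1)$, $n\in\sN$. The only step that deserves a word of care is $1-r\in\Phi(\fR)$, which relies on $l=1$ being permitted in the definition of $\Phi$ and on $1-r$ lying in the intended range (guaranteed by $\fR\subseteq[0,1]$). As an alternative one could instead invoke Proposition~\ref{every_sN_Phi_hyperstandard} to rewrite both $\frak{G}(\sN,\fR)$ and $\frak{G}(\sN,\fR')$ as hyperstandard sets $\Phi(\fR'')$ for the explicit saturated sets constructed there and then check those coincide; but the direct substitution above is shorter and avoids reproving the saturation formula.
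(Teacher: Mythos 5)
Your proof is correct. The engine of your argument---note that $1-r\in\Phi(\fR)=\Phi(\fR')$ by taking $l=1$, deduce $r=r'/l'$ with $r'\in\fR'$ and $l'$ a positive integer, then absorb $l'$ into the denominator and into the coefficients $m_n$---is the same divisibility mechanism the paper relies on, but you package it differently. The paper's very terse proof routes the comparison through the minimal (in fact smallest) set $\fR_{\min}$ with $\Phi(\fR_{\min})=\Phi$, citing that every $r'\in\fR'$ has the form $r/m$ with $r\in\fR_{\min}$ and $m$ a positive integer, so that both $\frak{G}(\sN,\fR)$ and $\frak{G}(\sN,\fR')$ are compared with $\frak{G}(\sN,\fR_{\min})$; the existence of that smallest set and the reverse inclusion $\frak{G}(\sN,\fR_{\min})\subseteq\frak{G}(\sN,\fR')$ are left implicit there. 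Your version is symmetric and direct between the two given sets, so it needs neither of those tacit points, and it makes explicit the (easy but necessary) observation that the constraints $b\ge 0$ and $b\le 1$ are intrinsic to the number $b$ and hence independent of the presentation. In short, same underlying computation, but your direct route is self-contained and arguably cleaner; the alternative you mention via Proposition~\ref{every_sN_Phi_hyperstandard} would also work but, as you note, is longer.
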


\begin{proof}
Immediate by definition and the property
that every $r'\in\fR'$ has the form $r'=r/m$
for some positive integer $m$ if
$\fR$ is the minimal (actually, smallest)
set such that $\Phi=\Phi(\fR)$.

\end{proof}

\begin{cor}[{cf.~\cite[Lemma~2.7]{Sh95}}] \label{accumulation_1}
Let $\sN$ be a finite set of positive integers and
$\fR$ be a finite set of rational numbers in $[0,1]$.
Then $\Gamma(\sN,\Phi)$ satisfies the dcc
with only one accumulation point $1$.
Equivalently, for every positive real number $\ep$,
the set of rational numbers
$$
\Gamma(\sN,\Phi)\cap [0,1-\ep]
$$
is finite.
\end{cor}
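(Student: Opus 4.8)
The plan is to reduce at once to the case of an ordinary hyperstandard set. By Proposition~\ref{every_sN_Phi_hyperstandard} there is a finite set $\fR'$ of rational numbers in $[0,1]$ with $\Gamma(\sN,\Phi)=\Phi(\fR')$, and from the explicit formula for $\fR'$ (take $r=1\in\fR$ and all $m_n=0$) one sees $1\in\fR'$; hence $1-1/l\in\Gamma(\sN,\Phi)$ for every positive integer $l$, so $1$ is an accumulation point. It therefore remains to prove the displayed finiteness statement for the hyperstandard set $\Phi(\fR')$, which immediately yields the dcc and the fact that $1$ is the only accumulation point.

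For the finiteness, fix $\ep>0$. An element of $\Phi(\fR')$ has the form $1-r/l$ with $r\in\fR'$ and $l$ a positive integer, and it lies in $[0,1-\ep]$ precisely when $\ep\le r/l\le 1$. This forces $r>0$ and, since $r\le 1$, also $1\le l\le r/\ep\le 1/\ep$. Thus $r$ ranges over the finite set $\fR'$ and $l$ over the finite set of positive integers $\le 1/\ep$, so $\Phi(\fR')\cap[0,1-\ep]$ is finite.

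Finally the remaining assertions follow formally: if $b_1>b_2>\cdots$ were an infinite strictly decreasing sequence in $\Gamma(\sN,\Phi)$, then $b_2<1$ and, with $\ep=1-b_2>0$, all terms with index $\ge 2$ would lie in the finite set $\Gamma(\sN,\Phi)\cap[0,1-\ep]$, a contradiction; and any accumulation point is $\ge 1-\ep$ for every $\ep>0$, hence equals $1$, which by the first paragraph is indeed attained as a limit. I expect no real obstacle here; the only point requiring a little care is that inside $[0,1-\ep]$ only multiplicities with $r>0$ occur and that the bound $l\le 1/\ep$ is uniform in $r$, which works because $\fR'\subseteq[0,1]$.
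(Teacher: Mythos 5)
Your proof is correct and follows essentially the same route as the paper, which disposes of the statement as ``immediate by definition or Proposition~\ref{every_sN_Phi_hyperstandard}'' (relying on the corresponding fact for hyperstandard sets, cf.~\cite[Lemma~2.7]{Sh95}). You have simply written out the elementary finiteness bound $l\le 1/\ep$ and the observation $1\in\fR'$ that the paper leaves implicit, and these details check out.
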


\begin{add} \label{0_in_Gamma_sN_Phi}
If $\sN\not=\emptyset$ then $1\in\Gamma(\sN,\Phi)$.
\end{add}

\begin{proof}
Immediate by definition or
Proposition~\ref{every_sN_Phi_hyperstandard}.

If $n\in\sN$, then
$$
1=1-\frac 11+\frac{n+1}{n+1}\in\Gamma(\sN,\Phi).
$$
\end{proof}

\begin{const}[Low approximations] \label{low_approximation}
Let $\sN$ be a nonempty finite set of positive integers and
$\fR$ be a finite set of rational numbers in $[0,1]$.
Then by Corollary~\ref{accumulation_1} and
Addendum~\ref{0_in_Gamma_sN_Phi}, for every $b\in[0,1]$, there exists
and unique (best low approximation) largest $b'\le b$ in
$\Gamma(\sN,\Phi)$.
We denote that $b'$ by $b_{\sN\_\Phi}$.
Respectively, for a boundary $B$ on $X$, $B_{\sN\_\Phi}$ denotes
the largest boundary on $X$ such that $B_{\sN\_\Phi}\le B$ and
$B_{\sN\_\Phi}\in\Gamma(\sN,\Phi)$.

Respectively, for a boundary $B_Y$ on $Y$,
put $B_{Y,\sN\_\Phi}=(B_Y)_{\sN\_\Phi}$.

We can and will apply also this construction for the divisorial part
$B$ of a bd-pair $(X/Z,B+\sP)$, when $B$ is a boundary.
\end{const}

Notation:

$b_\Phi=b_{\emptyset\_\Phi}$;

$B_\Phi=B_{\emptyset\_\Phi}$ but we suppose that $0,1\in\Phi$;

$b_{n\_\Phi}=b_{\{n\}\_\Phi}$;

$B_{n\_\Phi}=B_{\{n\}\_\Phi}$;

$b_{n\_0}=b_{n\_\{0\}}$;

$B_{n\_0}=B_{n\_\{0\}}$

etc

$B^\sharp$,
$B_{n\_\Phi}{}^\sharp=(\B_{n\_\Phi}{}^\sharp)_X$;
for different birational model $Y$ of $X$:
$B_{n\_\Phi}{}^\sharp{}_Y=(\B_{n\_\Phi}{}^\sharp)_Y$.

\begin{prop} \label{Phi_<Phi'}
$\sN\subseteq\sN',\Phi\subseteq\Phi'\Rightarrow
B_{\sN\_\Phi}\le B_{\sN'\_\Phi'}$.

\end{prop}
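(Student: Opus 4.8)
The plan is to verify the inequality coefficient by coefficient and then reduce everything to the inclusion $\Gamma(\sN,\Phi)\subseteq\Gamma(\sN',\Phi')$ already recorded in the discussion of hyperstandard sets. First I would observe that, writing $B=\sum_i b_iD_i$, Construction~\ref{low_approximation} gives $B_{\sN\_\Phi}=\sum_i (b_i)_{\sN\_\Phi}D_i$ and $B_{\sN'\_\Phi'}=\sum_i (b_i)_{\sN'\_\Phi'}D_i$ (and the same for the divisorial part $B$ of a bd-pair $(X/Z,B+\sP)$). Hence it suffices to prove the numerical statement $b_{\sN\_\Phi}\le b_{\sN'\_\Phi'}$ for an arbitrary real number $b\in[0,1]$.

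For the numerical statement I would unwind Construction~\ref{low_approximation}: by Corollary~\ref{accumulation_1} and Addendum~\ref{0_in_Gamma_sN_Phi}, $\Gamma(\sN,\Phi)$ and $\Gamma(\sN',\Phi')$ are dcc subsets of $[0,1]$ containing $0$ and $1$, so the best low approximations are well-defined: $b_{\sN\_\Phi}$ is the largest element of $\Gamma(\sN,\Phi)$ bounded by $b$, and $b_{\sN'\_\Phi'}$ is the largest element of $\Gamma(\sN',\Phi')$ bounded by $b$. Since $\sN\subseteq\sN'$ and $\Phi\subseteq\Phi'$ yield $\Gamma(\sN,\Phi)\subseteq\Gamma(\sN',\Phi')$, the number $b_{\sN\_\Phi}$ is itself an element of $\Gamma(\sN',\Phi')$ with $b_{\sN\_\Phi}\le b$; hence by the maximality defining $b_{\sN'\_\Phi'}$ we get $b_{\sN\_\Phi}\le b_{\sN'\_\Phi'}$, as required.

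If one wishes to allow the degenerate case $\sN=\emptyset$ (not literally covered by Construction~\ref{low_approximation}), one invokes the standing convention $0,1\in\Phi$ in that case, under which $b_\Phi=b_{\emptyset\_\Phi}$ is well-defined and the argument above applies verbatim.

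I do not expect any genuine obstacle: the whole content lies in the already-established inclusion $\Gamma(\sN,\Phi)\subseteq\Gamma(\sN',\Phi')$ together with the well-definedness of best low approximations, and the only points requiring a moment's care are the passage from real multiplicities to boundaries (resp. to the divisorial part of a bd-pair) and the edge case $\sN=\emptyset$.
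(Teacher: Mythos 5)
Your proof is correct and is precisely the unpacking of what the paper dismisses with ``Immediate by definition'': the multiplicity-wise reduction, the inclusion $\Gamma(\sN,\Phi)\subseteq\Gamma(\sN',\Phi')$, and the maximality defining the best low approximation. No further comment is needed.
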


\begin{proof}
Immediate by definition.
\end{proof}

\begin{prop} \label{1_of def_2 for B}
Let $B,B^+$ be two boundaries on the same variety and
$n$ be a positive integer such that
$B^+\in\Z/n,n\in\sN$, and
$B^+$ satisfies (1) of Definition~\ref{n_comp}
with respect to $B_{\sN\_\Phi}$ .
Then $B^+$ satisfies {\rm (1)\/} of
Definition~\ref{n_comp} with respect to $B$.

\end{prop}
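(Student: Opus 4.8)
The plan is to reduce the assertion to a coefficient-by-coefficient comparison and then exploit that, because $n\in\sN$, the set $\Gamma(\sN,\Phi)$ contains all the fractions $m/(n+1)$ with $0\le m\le n+1$. Fix a prime divisor $D_i$ on $X$ and put $b_i=\mult_{D_i}B$, $b_i'=\mult_{D_i}B_{\sN\_\Phi}$ and $b_i^+=\mult_{D_i}B^+$. Since $B$ is a boundary, $0\le b_i\le 1$; by Construction~\ref{low_approximation}, $b_i'$ is the largest element of $\Gamma(\sN,\Phi)$ with $b_i'\le b_i$, so in particular $b_i'\le b_i$. By hypothesis $b_i^+\ge 1$ if $b_i'=1$, and $b_i^+\ge\rddown{(n+1)b_i'}/n$ otherwise; we must deduce the same with $b_i'$ replaced by $b_i$.

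The case $b_i=1$ is immediate: since $n\in\sN$ the set $\sN$ is nonempty, so $1\in\Gamma(\sN,\Phi)$ by Addendum~\ref{0_in_Gamma_sN_Phi}; hence the largest element of $\Gamma(\sN,\Phi)$ not exceeding $b_i=1$ is $1$ itself, i.e. $b_i'=1$, and the hypothesis gives $b_i^+\ge 1$.

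So assume $0\le b_i<1$; then also $b_i'\le b_i<1$, the hypothesis reads $b_i^+\ge\rddown{(n+1)b_i'}/n$, and it suffices to show $\rddown{(n+1)b_i'}\ge\rddown{(n+1)b_i}$. This is the one place where care is needed: plain monotonicity of $\rddown{\ }$ runs the wrong way, giving only $\rddown{(n+1)b_i'}\le\rddown{(n+1)b_i}$ from $b_i'\le b_i$. Instead I set $m=\rddown{(n+1)b_i}$, so that $0\le m\le n$ (using $0\le b_i<1$) and $m/(n+1)\le b_i$. Since $m/(n+1)\in\Gamma(\sN,\Phi)$, maximality of $b_i'$ forces $b_i'\ge m/(n+1)$, hence $(n+1)b_i'\ge m$; and since $m\in\Z$ this gives $\rddown{(n+1)b_i'}\ge m=\rddown{(n+1)b_i}$, as desired.

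I expect the main obstacle to be exactly this last point: realizing that monotonicity of the round-down must be replaced by \emph{snapping} $b_i'$ up to the grid point $m/(n+1)$ lying just below $b_i$, which is available precisely because $n\in\sN$. Note that the hypothesis $B^+\in\Z/n$ is not actually used in the argument; it is present only because (1) of Definition~\ref{n_comp} is intended for an honest $n$-complement candidate. Once the coefficient-wise inequality is verified for every prime $D_i$, it is by definition the statement that $B^+$ satisfies (1) of Definition~\ref{n_comp} with respect to $B$.
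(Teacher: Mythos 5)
Your proof is correct and follows essentially the same route as the paper's: argue coefficientwise, handle $b_i=1$ via $1\in\Gamma(\sN,\Phi)$, and for $b_i<1$ snap down to the grid point $m/(n+1)=\rddown{(n+1)b_i}/(n+1)\in\Gamma(\sN,\Phi)$, which by maximality of $b_{i,\sN\_\Phi}$ forces $\rddown{(n+1)b_{i,\sN\_\Phi}}=\rddown{(n+1)b_i}$ — exactly the threshold argument in the paper. Your side remark that $B^+\in\Z/n$ is not needed is also consistent with the paper's proof.
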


\begin{proof}
Let $P$ be a prime divisor on the variety.
Put $b^+=\mult_PB^+,b=\mult_P B$.
Then $\mult_PB_{\sN\_\Phi}=b_{\sN\_\Phi}$.
By our assumptions $b^+=m/n\in[0,1],m\in\Z$.

If $b=1$ then $b_{\sN\_\Phi}=1$ and by our assumptions $b^+=1$.
This gives (1) of Definition~\ref{n_comp} for $b$.

If $b<1$ then by definition $b_{\sN\_\Phi}\le b<1$ too.
It is enough to verify that
$$
\rddown{(n+1)b}/n=
\rddown{(n+1)b_{\sN\_\Phi}}/n,
$$

Put $b'=m/(n+1)$, where
$$
m=\max\{m\in\Z\mid\frac m{n+1}\le b\}.
$$
Then by our assumptions and definition $b'\in\Gamma(\sN,\Phi)$.
Hence $b'\le b_{\sN\_\Phi}\le b<1$.
So, by the monotonicity of $\rddown{\ }$ and
by construction
$$
\rddown{(n+1)b'}/n=
\rddown{(n+1)b_{\sN\_\Phi}}/n=
\rddown{(n+1)b}/n,
$$
in particular, the required equality.
Indeed, $b'=m/(n+1)\le b<(m+1)/(n+1)$ and
$$
\rddown{(n+1)b}/n=m/n=\rddown{(n+1)b'}/n.
$$

The threshold $m/(n+1)$ plays an important role
in the paper through $\Gamma(\sN,\Phi)$.

\end{proof}

\begin{cor} \label{B+_B_n_compl}
Let $B,B^+$ be two boundaries on the same variety and
$n$ be a positive integer such that
$B^+\in\Z/n,n\in\sN$ and $B^+\ge B_{\sN\_\Phi}$.
Then $B^+$ satisfies {\rm (1)\/} of
Definition~\ref{n_comp} with respect to $B$.
\end{cor}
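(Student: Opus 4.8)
The plan is to reduce immediately to Proposition~\ref{1_of def_2 for B}. That proposition says that if $B^+\in\Z/n$ with $n\in\sN$ and $B^+$ satisfies (1) of Definition~\ref{n_comp} with respect to $B_{\sN\_\Phi}$, then it already satisfies (1) of Definition~\ref{n_comp} with respect to $B$. So the corollary comes down to the single assertion that $B^+\ge B_{\sN\_\Phi}$ forces $B^+$ to satisfy (1) of Definition~\ref{n_comp} with respect to $B_{\sN\_\Phi}$.

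For that assertion I would argue multiplicity by multiplicity. Fix a prime divisor $P$ and set $b^+=\mult_P B^+$, $c=\mult_P B_{\sN\_\Phi}$; the hypothesis gives $b^+\ge c$, and both lie in $[0,1]$ since $B^+$ and $B_{\sN\_\Phi}$ are boundaries. If $c=1$ then $b^+\ge 1$, which is the required inequality. If $c<1$ I would split into the subcases $b^+=1$ and $b^+<1$: when $b^+=1$, Example~\ref{rddown_(n+1)_m_m}, (3) gives $\rddown{(n+1)c}/n\le 1=b^+$; when $b^+<1$, the monotonicity of $\rddown{\ }$ together with $b^+\in\Z/n$ and Example~\ref{rddown_(n+1)_m_m}, (2) give $\rddown{(n+1)c}/n\le\rddown{(n+1)b^+}/n\le b^+$. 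In either subcase $b^+\ge\rddown{(n+1)c}/n$, which is exactly (1) of Definition~\ref{n_comp} for $B^+$ relative to $B_{\sN\_\Phi}$. This is really the same monotonicity phenomenon as in Proposition~\ref{D_D'_complement} and Example~\ref{b_n_comp_of_itself}, (1): $B^+$ satisfies (1) relative to itself, and lowering the divisor to $B_{\sN\_\Phi}\le B^+$ cannot destroy (1).

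I do not expect a genuine obstacle; the statement is a formal consequence of Proposition~\ref{1_of def_2 for B} plus the elementary inequalities already recorded in Section~2. The only points needing a little care are distinguishing the regimes $b^+=1$ and $b^+<1$ when citing Example~\ref{rddown_(n+1)_m_m}, and remembering that it is precisely Proposition~\ref{1_of def_2 for B} — and hence the hypotheses $n\in\sN$ and $B^+\in\Z/n$ — that carries the conclusion the last step from $B_{\sN\_\Phi}$ to $B$.
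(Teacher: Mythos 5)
Your proposal is correct and follows essentially the same route as the paper: reduce to Proposition~\ref{1_of def_2 for B}, and deduce (1) of Definition~\ref{n_comp} with respect to $B_{\sN\_\Phi}$ from $B^+\ge B_{\sN\_\Phi}$ via the monotonicity of $\rddown{\ }$ and Example~\ref{rddown_(n+1)_m_m}, (2)--(3), which is exactly the combination of Example~\ref{b_n_comp_of_itself}, (1) and the proof of Proposition~\ref{D_D'_complement} that the paper cites. Your multiplicity-by-multiplicity case split just makes that citation explicit.
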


\begin{proof}
Immediate by Proposition~\ref{1_of def_2 for B}.
Indeed $B^+\ge B_{\sN\_\Phi}$ implies (1) of Definition~\ref{n_comp}
with respect to $B_{\sN\_\Phi}$ by arguments in Example~\ref{b_n_comp_of_itself}, (1)
and the proof of Proposition~\ref{D_D'_complement}.

\end{proof}

\begin{cor} \label{B_B_+B_sN_Phi}
Let $(X/Z,B)$ be a pair with a boundary $B$ such that
$(X/Z,B_{\sN\_\Phi})$ has an $n$-complement $(X/Z,B^+)$ with $n\in\sN$.
Then $(X/Z,B^+)$ is an $n$-complement of $(X/Z,B)$ too.

The same holds for bd-pairs of index $m$ $(X/Z,B+\sP),(X/Z,B_{\sN\_\Phi}+\sP),(Z/Z,B^++\sP)$
under assumption $m|n$.
\end{cor}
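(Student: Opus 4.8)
The plan is to reduce the statement to Proposition~\ref{1_of def_2 for B}: the corollary is just that proposition repackaged, together with the trivial observation that two of the three conditions defining an $n$-complement are insensitive to the divisor being complemented.

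First I would isolate conditions (2) and (3) of Definition~\ref{n_comp}. These assert that $(X,B^+)$ is lc and that $K+B^+\sim_n 0/Z\ni o$; both are properties of $(X/Z,B^+)$ alone and make no reference to the pair it complements. Since $(X/Z,B^+)$ is by hypothesis an $n$-complement of $(X/Z,B_{\sN\_\Phi})$, both hold, hence they hold equally well when $(X/Z,B^+)$ is regarded as a candidate $n$-complement of $(X/Z,B)$. So the whole problem collapses to verifying condition (1) of Definition~\ref{n_comp} with respect to $B$.

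For that step I would check that the hypotheses of Proposition~\ref{1_of def_2 for B} are literally met. By Construction~\ref{low_approximation}, $B_{\sN\_\Phi}$ is a boundary with $B_{\sN\_\Phi}\le B$, so $B^+$ is a boundary by Remark~\ref{remark_def_complements}, (1), and $B^+\in\frac1n\Z$ by Remark~\ref{remark_def_complements}, (3); by hypothesis $B^+$ satisfies (1) of Definition~\ref{n_comp} with respect to $B_{\sN\_\Phi}$, and $n\in\sN$. Proposition~\ref{1_of def_2 for B} then yields condition (1) of Definition~\ref{n_comp} for $B^+$ with respect to $B$, completing the pair case. For the bd-version of index $m$ with $m\mid n$ the same split works: in Definition~\ref{bd_n_comp}, conditions (2) and (3) again depend only on $(X/Z,B^++\sP)$ -- the divisibility $m\mid n$ being precisely what makes $\sim_n$ meaningful for a bd-pair of index $m$ -- while condition (1) involves only the divisorial multiplicities $d_i,d_i^+$ and is word for word (1) of Definition~\ref{n_comp}, so Proposition~\ref{1_of def_2 for B} applies unchanged.

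I do not expect a genuine obstacle: all the arithmetic content -- the behaviour of $\rddown{(n+1)b}/n$ across the threshold $m/(n+1)$, which is where $\Gamma(\sN,\Phi)$ enters -- is already absorbed into Proposition~\ref{1_of def_2 for B}. The only points deserving a moment's care are confirming $B^+\in\frac1n\Z$ (automatic from (3) of Definition~\ref{n_comp}) and checking that nothing in the bd-data used in conditions (2)--(3) secretly references the divisor being complemented.
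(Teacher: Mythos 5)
Your proposal is correct and follows essentially the same route as the paper: the paper's proof is precisely an application of Proposition~\ref{1_of def_2 for B}, using that $B^+\in\Z/n$ by (3) of Definition~\ref{n_comp} (and, for bd-pairs, (3) of Definition~\ref{bd_n_comp} together with $m\mid n$), with conditions (2)--(3) carrying over unchanged since they do not refer to the divisor being complemented.
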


\begin{proof}
Immediate by Proposition~\ref{1_of def_2 for B}
because $B^+\in\Z/n$ by (3) of Definition~\ref{n_comp}.

For bd-pairs we use (3) of Definition~\ref{bd_n_comp} and
our assumption $m|n$ (cf. Remark~\ref{m_div_n}).
[Indeed, $m|n$ implies that $B^+\in\Z/n$ in this case too.]
\end{proof}

\begin{const}[Maximal model or
adding fixed components] \label{sharp_construction}
Let $(X/Z,D)$ be a pair such that
\begin{description}

\item[]
$X/Z$ has wFt;
and

\item[]
$-(K+D)$ is (pseudo)effective modulo $\sim_{\R,Z}$, or
equivalently,

\item[]
there exists $D^+\ge D$ such that $K+D^+\sim_{\R,Z}0$
(an nonlc $\R$-complement).

\end{description}

Warning: $(X/Z,D^+)$ is not necessarily lc and
an $\R$-complement of $(X/Z,D)$.

Consider a Zariski decomposition $-(K+D)=M+F$,
where $M,F$ are respectively the $\R$-mobile and
fixed parts of $-(K+B)$ over $Z$.
Such a decomposition exists by Lemma~\ref{wTt_vs_Ft} and
\cite[Corollary~4.5]{ShCh}.
In addition, there exists a commutative triangle
$$
\begin{array}{ccccc}
(X,D)&\stackrel{\psi}{\dashrightarrow}&(X^\sharp,D_{X^\sharp}^\sharp)\\
\searrow&&\swarrow\\
& Z
\end{array},
$$
where $\psi$ is a birational morphism making $M$ $\R$-free
[-semiample] over $Z$ as follows.
First, we make a small birational modification $X\dashrightarrow Y/Z$
such that $M$ is $\R$-Cartier on $Y$, e.g., we can use
a $\Q$-factorialization.
By Lemma~\ref{wTt_vs_Ft} we can suppose also that $X/Z$ has Ft.
Second, we can apply the $M$-MMP to $Y/Z$.
This gives $X^\sharp/Z$.
Put $D_{X^\sharp}^\sharp=D+F$, where $D,F$ are birational transforms
of $D,F$ on $X^\sharp$.
Constructed transformation $X\dashrightarrow X^\sharp/Z$
is small and
$$
-(K_{X^\sharp}+D_{X^\sharp}^\sharp)=-(K_{X^\sharp}+D)-F=M
$$
is the birational transform of $M$.
So, $-(K_{X^\sharp}+D_{X^\sharp}^\sharp)$ is $\R$-free [-semiample] over $Z$.
All relative varieties in the construction and, in particular,
$X^\sharp/Z$ have wFt.
Finally, we can take any crepant model of
$(X^\sharp/Z,D_{X^\sharp}^\sharp)$ and usually denote
in the same way.
Every such a model is assumed to be a log pair and
its morphisms to previously constructed $X^\sharp$ is rational but not
necessarily regular.
For those pairs $X^\sharp/Z$ has not necessarily wFt.
But all pairs $(X^\sharp/Z,D_{X^\sharp}^\sharp)$ are log pairs.
They have the same b-$\R$-divisor $\D^\sharp=\D(X^\sharp,D_{X^\sharp}^\sharp)$.
By definition $D_{X^\sharp}^\sharp=\D^\sharp{}_{X^\sharp}$,
where the last divisor is the trace of $\D^\sharp$ on $X^\sharp$.
We can take the trace on any birational model of $X/Z$, e.g.,
for $X$ itself: $D^\sharp=\D^\sharp{}_X=D+F$.
However, $\D^\sharp$ is {\em stable\/} (BP) only
over such birational models $X^\sharp/Z$ that
$\D^\sharp=\D(X^\sharp,\D_{X^\sharp}^\sharp)$
is the b-codiscrepancy.
Equivalently, $\sM^\sharp=\overline{-(K_{X^\sharp}+D_{X^\sharp}^\sharp)}$
is stable over those models but the stabilization is different (Cartier):
$\sM^\sharp=\overline{\sM^\sharp_{X^\sharp}}$.

Note also, that if we apply $-(K+D)$-MMP to $X/Z$ we
get (anticanonical model) $(X^\sharp/Z,D_{X^\sharp}^\sharp)$
and possibly contract some divisors, e.g., components of $F$.
To construct the model we use antiflips.
So, the model is {\em maximal\/} in
contrast to a minimal one.
It is also maximal and even {\em largest\/} with respect to $\D^\sharp$.
For any log pair [model] $(Y/Z,D_Y^\sharp)$ of
the [bd-]pair $(X/Z,\D^\sharp)$,
$$
\D(Y,D_Y^\sharp)\le \D^\sharp
$$
and $=$ holds exactly when $(Y/Z,D_Y^\sharp)$ is a maximal model of $(X/Z,D)$.

Equivalently,
$$
\overline{M_{X^\sharp}}\le
\overline{M_Y}=\sM^\sharp
$$
and $=$ holds exactly when $(Y/Z,D_Y^\sharp)$ is maximal,
where $M_{X^\sharp},M_Y$ are birational transform of $M$
on $X^\sharp,Y$ respectively.
However, to establish this in such a more general situation
it is better to use the negativity \cite[2.15]{Sh95}
(see Lemma~\ref{b_negativity} below).

The same construction works for a bd-pair $(X/Z,D+\sP)$
such that
\begin{description}

\item[]
$X/Z$ has wFt;
and

\item[]
$-(K+D+\sP_X)$ is (pseudo)effective modulo $\sim_{\R,Z}$, or
equivalently,

\item[]
there exists $D^+\ge D$ such that $K+D^++\sP_X\sim_{\R,Z}0$.

\end{description}
A maximal model in this case will be
a log bd-pair $(X/Z,D_{X^\sharp}^\sharp+\sP)$
with same $\sP$.
That is, $\D^\sharp=\D\dv^\sharp+\sP$ or $\sP^\sharp=\sP$.
We add the fixed part $F$ only to the divisorial part:
$(D+\sP_X)^\sharp=D^\sharp+\sP_X$.
However, the $\R$-free  over $Z$ property holds for
$-(K_{X^\sharp}+D_{X^\sharp}^\sharp+\sP_{X^\sharp})$ but
usually not for $-(K_{X^\sharp}+D_{X^\sharp}^\sharp)$.
The construction actually works for any b-divisor $\sP$
under our assumptions.

By construction, the assumptions and
maximal models are invariant of small birational
modifications of pairs $(X/Z,D)$ or bd-pairs $(X/Z,D+\sP)$.

\end{const}

\begin{prop} [Monotonicity I] \label{monotonicity_I}
Let $(X/Z,D)$ be a pair under
the assumptions of Construction~\ref{sharp_construction}.
Then for every $D^+\ge D$ such that $K+D^+\sim_{\R,Z} 0$,
$\D^+\ge\D^\sharp$ and $D^\sharp\ge D$.
Moreover, $\D^\sharp\ge \D$ if $(X/Z,D)$ is a log pair.

\end{prop}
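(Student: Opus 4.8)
The plan is to verify all three inequalities after restricting the relevant b-divisors to one common proper birational model. Fix $p\colon W\to X$ and $q\colon W\to X^\sharp$ with $W$ dominating both $X$ and $X^\sharp$. Since $X\dashrightarrow X^\sharp$ is small, a prime b-divisor of $X$ is exceptional over $X$ if and only if it is exceptional over $X^\sharp$; so the $p$-exceptional prime divisors on $W$ are exactly the $q$-exceptional ones, and consequently $q_*p^*(K+D^+)=K_{X^\sharp}+D^+_{X^\sharp}$ and $p_*q^*M=M$. Moreover $\D^+-\D^\sharp$ and $\D^\sharp-\D$ are differences of pullback b-divisors (the canonical part $\K$ cancels, since $K+D^+$, $K_{X^\sharp}+D^\sharp_{X^\sharp}=-M$ and, in the last case, $K+D$ are $\R$-Cartier), so an inequality of b-divisors is equivalent to the corresponding inequality of traces on $W$. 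Finally $D^\sharp\ge D$ is immediate, as $D^\sharp=\D^\sharp_X=D+F$ with $F\ge 0$ the fixed part of Construction~\ref{sharp_construction}.

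For $\D^+\ge\D^\sharp$ I would set $N:=\D^+_W-\D^\sharp_W=p^*(K+D^+)+q^*M$. Both summands are numerically trivial over $X^\sharp$ — $q^*M$ tautologically, and $p^*(K+D^+)$ because $K+D^+\sim_{\R,Z}0$ and the curves contracted by $q$ lie in fibres over $Z$ — so $-N$ is nef over $X^\sharp$. Pushing forward, $q_*N=(K_{X^\sharp}+D^+_{X^\sharp})+M=D^+_{X^\sharp}-D^\sharp_{X^\sharp}$. Now $D^+-D$ is an effective $\R$-divisor with $D^+-D\sim_{\R,Z}-(K+D)=M+F$, hence $D^+-D\ge F$ by the defining property of the fixed part, and taking birational transforms $D^+_{X^\sharp}-D_{X^\sharp}\ge F_{X^\sharp}$, i.e. $D^+_{X^\sharp}\ge D^\sharp_{X^\sharp}$, so $q_*N\ge 0$. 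The negativity lemma (Lemma~\ref{b_negativity}) then gives $N\ge 0$. (One could instead observe, via Proposition~\ref{small_transform_compl}, that $\D^+=\D(X^\sharp,D^+_{X^\sharp})=\D^\sharp+\overline{D^+_{X^\sharp}-D^\sharp_{X^\sharp}}$, the last b-divisor being effective.)

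For $\D^\sharp\ge\D$ under the log pair hypothesis I would set $N':=q^*M+p^*(K+D)$, so that $\D^\sharp_W-\D_W=-N'$. Since $M$ is $\R$-free over $Z$ on $X^\sharp$, $q^*M$ is nef over $Z$, hence nef over $X$ (curves contracted by $p$ lie in fibres over $Z$), while $p^*(K+D)$ is numerically trivial over $X$; thus $N'$ is nef over $X$. Its pushforward is $p_*N'=M+(K+D)=-F\le 0$. By the negativity lemma $N'\le 0$, i.e. $\D^\sharp_W\ge\D_W$.

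The substance of the argument is the two appeals to the negativity lemma; the technical care lies in (i) that in the first two assertions $(X,D)$ is \emph{not} assumed to be a log pair, so $M$, $F$ and ``$-(K+D)$'' must be read off the Zariski decomposition furnished by Construction~\ref{sharp_construction} rather than from $K+D$ itself, and (ii) in verifying the relative-nefness hypotheses and the pushforward identities for $q_*N$ and $p_*N'$, both of which genuinely use the smallness of $X\dashrightarrow X^\sharp$.
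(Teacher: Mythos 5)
Your proof is correct and follows essentially the same route as the paper: the paper's (terser) argument also rests on the smallness of $X\dashrightarrow X^\sharp$, the fixed-part property giving $D^+_{X^\sharp}\ge D^\sharp_{X^\sharp}$, and the negativity lemma (Lemma~\ref{b_negativity}) exactly as the construction prescribes for the ``largest'' property $\D^\sharp\ge\D$. For the first inequality the paper simply invokes the birational invariance of $\D^+$ and computes on $X^\sharp$, which is precisely your parenthetical shortcut $\D^+=\D^\sharp+\overline{D^+_{X^\sharp}-D^\sharp_{X^\sharp}}$, so your extra appeal to negativity there is harmless but not needed.
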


\begin{add}
$D\ge 0$ implies $D^\sharp\ge 0$.

\end{add}

\begin{add} \label{R_complement_criterion}
$(X/Z,D)$ has an $\R$-complement if and only if
$(X^\sharp,D_{X^\sharp}^\sharp)$ is lc, equivalently,
$(X,\D^\sharp)$ is lc or $\D^\sharp$ is a b-subboundary.

\end{add}

\begin{add}
The same holds for bd-pairs.

\end{add}

\begin{proof}
Immediate by definition and Construction~\ref{sharp_construction}.
Notice for this that $\D^+=\D(X,D^+)$ is a birational invariant of
$(X/Z,D^+)$, that is, every model $(Y/Z,D_Y^+),D_Y^+=\D_Y^+$, of
$(X/Z,D^+)$ is crepant.
In Addendum~\ref{R_complement_criterion},
an $\R$-complement of $(X^\sharp/Z,D_{X^\sharp}^\sharp)$
exists if it is lc (cf. Examples~\ref{1stexe}, (1-2)).

Similarly we can treat bd-pairs.

\end{proof}

\begin{cor} [Monotonicity II] \label{monotonicity_II}
Let $(X/Z,D)$ be a pair under
the assumptions of Construction~\ref{sharp_construction} and
$D'$ be an $\R$-divisor on $X$ such that $D'\le D$.
Then $(X/Z,D')$ also satisfies
the assumptions of Construction~\ref{sharp_construction} and
$\D'^\sharp\le\D^\sharp$.

\end{cor}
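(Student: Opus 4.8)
The first assertion requires essentially nothing. The weak Fano type hypothesis concerns the morphism $X/Z$ only, so it persists verbatim for $(X/Z,D')$; and since $(X/Z,D)$ satisfies the assumptions of Construction~\ref{sharp_construction}, there is $D^+\ge D$ with $K+D^+\sim_{\R,Z}0$, whence $D^+\ge D\ge D'$ shows that $-(K+D')$ is (pseudo)effective modulo $\sim_{\R,Z}$. So only the inequality $\D'^\sharp\le\D^\sharp$ has content. The plan is to identify $\D^\sharp$ explicitly as the infimum
$$
\D^\sharp=\bigwedge\{\D^+=\D(X,D^+)\mid D^+\ge D,\ K+D^+\sim_{\R,Z}0\}
$$
(the meet taken over every prime b-divisor), and likewise for $\D'^\sharp$. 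Once this is known, Monotonicity~II is immediate: by $D'\le D$ every $D^+$ admissible for $(X/Z,D)$ is admissible for $(X/Z,D')$, so the meet for $D'$ runs over a larger set and is therefore $\le\D^\sharp$.

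To prove the infimum formula, the ``$\le$'' half is exactly Monotonicity~I (Proposition~\ref{monotonicity_I}), which gives $\D^+\ge\D^\sharp$ for every admissible $D^+$. For attainment, I would use the maximal model $(X^\sharp/Z,D_{X^\sharp}^\sharp)$ from Construction~\ref{sharp_construction}, on which $M=-(K_{X^\sharp}+D_{X^\sharp}^\sharp)$ is $\R$-free, hence $\R$-semiample over $Z$ by Corollary~\ref{semiample}. Fix a prime b-divisor $E$ of $X$. Choose a general effective $\R$-divisor $M^+\sim_{\R,Z}M$ — a nonnegative real combination of general members of base point free relative linear systems, taken on a birational model where $E$ becomes a divisor — so that $\mult_E\overline{M^+}=0$. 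Then $D_{X^\sharp}^\sharp+M^+\ge D_{X^\sharp}^\sharp\ge D_{X^\sharp}$ and $K_{X^\sharp}+D_{X^\sharp}^\sharp+M^+\sim_{\R,Z}K_{X^\sharp}+D_{X^\sharp}^\sharp+M=0$, so (reading it birationally on $X$, or applying Monotonicity~I to $(X^\sharp/Z,D_{X^\sharp})$, the constructions being birational in nature) $D_{X^\sharp}^\sharp+M^+$ is an admissible choice, and additivity of the Cartier closure gives $\D(X^\sharp,D_{X^\sharp}^\sharp+M^+)=\D^\sharp+\overline{M^+}$. Taking $\mult_E$ shows $\mult_E\D^\sharp$ is realized as a limit of $\mult_E\D^+$'s, which proves attainment at $E$, hence the meet formula.

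The step I expect to be the real obstacle is precisely this general position statement: that $M^+\sim_{\R,Z}M$ can be chosen with $\mult_E\overline{M^+}=0$ for a prescribed prime b-divisor $E$. It hinges on $M$ being $\R$-\emph{semiample} over $Z$ (not merely pseudoeffective), which is exactly what passing to the maximal model buys, together with the usual fact that general members of base point free systems contain no fixed divisor and, after a further blowup, can be made to miss the center of $E$. A cleaner alternative — probably preferable in the bd-setting — is to bypass the infimum and instead reduce $\D'^\sharp\le\D^\sharp$ to the b-divisor inequality $\sM'^\sharp\ge\sM^\sharp$ between the stabilized $\R$-mobile parts of $-(K+D)$ and $-(K+D')$: since $D-D'\ge0$, multiplication by the section cutting it out yields $\Fix(-n(K+D'))\le\Fix(-n(K+D))+n(D-D')$, hence $\Mob$ of $-(K+D')$ dominates that of $-(K+D)$ on $X$, and one propagates this to the stabilizations by the negativity lemma (Lemma~\ref{b_negativity}); the identities $\K+\D^\sharp=-\sM^\sharp$, $\K+\D'^\sharp=-\sM'^\sharp$ from Construction~\ref{sharp_construction} then finish it. The bd-pair version goes through unchanged, carrying the fixed b-part $\sP$ along throughout (it never enters the estimate).
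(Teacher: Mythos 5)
Your argument is correct, but it goes by a genuinely different route than the paper. The paper disposes of the corollary in one line: it applies Monotonicity~III (Corollary~\ref{monotonicity_III}), i.e.\ the b-negativity Lemma~\ref{b_negativity}, taking for the log pair a maximal model of $(X/Z,D')$ and for the comparison b-divisor $\D^\sharp$, for which $\K+\D^\sharp$ is b-antinef by Construction~\ref{sharp_construction}; the trace hypothesis on that model is supplied by the fixed-part estimate $D'+F'\le D+F$ (the fixed part can grow by at most $D-D'$). You instead characterize $\D^\sharp$ as the multiplicity-wise infimum of $\D(X,D^+)$ over all $D^+\ge D$ with $K+D^+\sim_{\R,Z}0$: the lower bound is Monotonicity~I, and attainment at a prescribed prime b-divisor $E$ comes from a Bertini-type choice of a general effective $M^+\sim_{\R,Z}-(K_{X^\sharp}+D^\sharp_{X^\sharp})$ whose support misses the center of $E$ --- legitimate precisely because the maximal model makes this divisor $\R$-semiample over $Z$ --- after which Monotonicity~II follows formally from the inclusion of admissible sets. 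What each buys: the paper's proof is shorter, stays inside the negativity machinery already set up, and transfers verbatim to bd-pairs; yours produces a reusable extra statement (that $\D^\sharp$ is an attained infimum over nonlc $\R$-complements) at the cost of the general-position step and of transporting the constructed complement through the small modification $X\dashrightarrow X^\sharp$, which is harmless since $K+D^+\sim_{\R,Z}0$ forces $\R$-Cartier and small maps preserve $\sim_{\R,Z}$ and crepancy. One caution about your ``cleaner alternative'': the inequality of mobile parts on $X$ is not by itself the trace hypothesis that Lemma~\ref{b_negativity} requires, because that hypothesis must be checked on the model where the stable side lives; arranged correctly, one applies the lemma to the b-antinef $-\sM^\sharp=\K+\D^\sharp$ against $\K+\D'^\sharp$ on the maximal model of $D'$, with the trace inequality there again given by $F'\le F+(D-D')$ --- which is exactly the paper's Monotonicity~III argument, so that route is not really an alternative.
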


\begin{lemma}[b-Negativity] \label{b_negativity}
Let $\sD,\sD'$ be two b-$\R$-divisor of
a variety or space $X$ with a proper morphism $X\to S$
to a scheme $S$ such that
\begin{description}

\item[\rm (1)\/]
$\sD'$ is b-pseudoantinef;

\item[\rm (2)\/]
$\sD_X'\ge \sD_X$;
and

\item[\rm (3)\/]
$\sD$ is {\em stable\/} over $X$, that is,
$\sD=\overline{\sD_X}$
{\em (cf. \cite[Discent of divisors~5.1]{Sh03})\/}.

\end{description}
Then
$\sD'\ge \sD$.
\end{lemma}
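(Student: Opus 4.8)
The plan is to deduce this b-divisor statement from the negativity lemma for a single proper birational morphism (the form recorded in \cite[2.15]{Sh95}). Put $\sC := \sD' - \sD$, so that $\sD' \ge \sD$ is precisely the assertion $\sC \ge 0$ as a b-$\R$-divisor, i.e.\ $\sC_Y \ge 0$ on every model $Y$ of $X$. Since every model of $X$ is dominated by a resolution $g\colon Y\to X$, and since for a proper birational morphism $h\colon Y\to Y'$ one has $\sC_{Y'}=h_*\sC_Y$ with $h_*$ preserving effectivity, it suffices to show $\sC_Y\ge 0$ for all $Y$ in a cofinal family of resolutions $g\colon Y\to X$. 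By hypothesis~(1) this family may be chosen so that in addition $-\sD'_Y$ is nef over $S$ on each such $Y$; in particular $\sD'_Y$ is $\R$-Cartier there. (If ``b-pseudoantinef'' is read as ``$-\sD'$ lies, cofinally, in the closure of the nef, resp.\ movable, cone over $S$'', the argument below is unchanged, since the only use made of it is the intersection inequality $((-\sD'_Y)\cdot\gamma)\ge 0$ for curves $\gamma$ contracted by $g$, which survives passage to the limit.)

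Having fixed such a $g\colon Y\to X$, hypothesis~(3) gives $\sD=\overline{\sD_X}$, hence $\sD_Y=g^*\sD_X$; here $\sD_X$ is $\R$-Cartier, so $\sC_Y=\sD'_Y-g^*\sD_X$ is $\R$-Cartier. Two facts then follow at once. First, $-\sC_Y=(-\sD'_Y)+g^*\sD_X$ is $g$-nef: any curve $\gamma$ contracted by $g$ is contracted by $Y\to S$, so $(g^*\sD_X\cdot\gamma)=0$ and hence $((-\sC_Y)\cdot\gamma)=((-\sD'_Y)\cdot\gamma)\ge 0$. Second, by the compatibility of b-divisors with pushforward and the projection formula,
$$
g_*\sC_Y=g_*\sD'_Y-g_*g^*\sD_X=\sD'_X-\sD_X=\sC_X\ge 0
$$
by hypothesis~(2).

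It then remains to invoke the negativity lemma in its single-morphism form: if $g\colon Y\to X$ is proper birational with $X,Y$ normal, $\sC_Y$ is $\R$-Cartier, $-\sC_Y$ is $g$-nef, and $g_*\sC_Y\ge 0$, then $\sC_Y\ge 0$. This gives $\sC_Y\ge 0$ on the chosen model; running over the cofinal family and pushing down yields $\sC_Y\ge 0$ on every model, i.e.\ $\sD'\ge\sD$. I expect the only genuinely delicate point to be the bookkeeping in the first paragraph: one must check that ``b-pseudoantinef over $S$'' is exactly the positivity hypothesis the negativity lemma consumes, and that it is available on a cofinal family of models, so that the conclusion $\sC_Y\ge 0$ can be propagated down the tower of models by pushforward. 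Everything else is the classical negativity argument together with the elementary b-divisor formalism.
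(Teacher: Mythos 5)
Your treatment of the b-antinef case is exactly the paper's argument: on a sufficiently high model $g\colon Y\to X$ you use (3) to write $\sD_Y=g^*\sD_X$, note that $-\sC_Y=-\sD'_Y+g^*\sD_X$ is $g$-nef because a $g$-contracted curve is contracted over $S$ and $g^*\sD_X$ is numerically trivial on it, use (2) and the projection formula to get $g_*\sC_Y=\sD'_X-\sD_X\ge 0$, invoke the classical negativity lemma \cite[Negativity~1.1]{Sh92} to get $\sC_Y\ge 0$, and push down from a cofinal family of models. The paper does precisely this on models over which $\sD'$ is stable. The only point of divergence, and the place where your write-up is weaker, is how hypothesis (1) is consumed. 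In the paper ``b-pseudoantinef'' means that $\sD'=\lim_{n\to\infty}\sD_n$ as a weak, multiplicity-wise limit of b-antinef b-divisors $\sD_n$, and the proof reduces at the outset to the b-antinef case by working with these approximants; under that definition there is in general no cofinal family of models on which $-\sD'_Y$ itself is nef over $S$ (indeed $\sD'_Y$ need not be $\R$-Cartier on any model), so your main line assumes more than (1) gives. Your parenthetical fallback, that the inequality $((-\sD'_Y)\cdot\gamma)\ge 0$ ``survives passage to the limit,'' reads (1) as a statement about closures of nef cones and is not justified for the paper's multiplicity-wise limits, since convergence of multiplicities for divisors with varying supports does not control intersection numbers against a fixed contracted curve. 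So: identical core argument, but the pseudoantinef reduction should be handled as in the paper — replace $\sD'$ by the b-antinef approximants and pass to the limit in the multiplicities at the end — rather than by transporting the nefness inequality to a fixed model.
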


\begin{proof}
The b-psedoantinef means that $\sD'=\lim_{n\to \infty}\sD_n$,
where the limit is weak (multiplicity wise) and
every $\sD_n$ is a b-antinef.
So, we can suppose that $\sD'$ is itself b-antinef.
Take any birational proper model $Y/X$ of $X$ over which
$\sD'$ is stable.
Then $\sD'_Y\ge \sD_Y$ by \cite[Negativity~1.1]{Sh92}.
Apply to $D=\sD'_Y-\sD_Y$ and to
the birational contraction $Y\to X$.
This implies $\sD'\ge \sD$ because we can take
an arbitrary high model $Y$.

\end{proof}

\begin{cor}[Monotonicity III] \label{monotonicity_III}
Let $(X/S,D)$ be a log pair with proper $X\to S$ and
$\D'$ be a (pseudoBP) b-$\R$-divisor of $X$ such that
\begin{description}

\item[\rm (1)\/]
$\K+\D'$ is b-pseudoantinef;
and

\item[\rm (2)\/]
$\D_X'\ge D$.

\end{description}
Then $\D'\ge\D$, where $\D=\D(X,D)$.

The same holds for a log bd-pair $(X,D+\sP)$ with
\begin{description}

\item[\rm (1-bd)\/]
$\K+\D'+\sP$ is b-pseudoantinef;
and

\end{description}
and $\D=\D(X,D+\sP)-\sP$.
\end{cor}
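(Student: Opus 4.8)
The plan is to deduce this directly from Lemma~\ref{b_negativity} (b-Negativity), applied not to $\D$ and $\D'$ themselves but to the b-$\R$-divisors $\K+\D$ and $\K+\D'$. The one point worth isolating in advance is why hypothesis (3) of that lemma will be at our disposal: by the definition of the b-divisor of codiscrepancy one has $\K+\D=\overline{K+D}$, so $\K+\D$ is the Cartier closure of its own trace $K+D$ on $X$, that is, it is {\em stable\/} over $X$ in the sense of Lemma~\ref{b_negativity}, (3). This step, and only this step, uses the assumption that $(X/S,D)$ is a {\em log\/} pair, namely that $K+D$ is $\R$-Cartier; without it $\overline{K+D}$ would not even be defined.

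So I would set $\sD=\K+\D$ and $\sD'=\K+\D'$ and check the two remaining hypotheses of Lemma~\ref{b_negativity}. Hypothesis (1), that $\sD'$ is b-pseudoantinef, is exactly hypothesis (1) of the corollary. For hypothesis (2): the trace of $\D=\D(X,D)$ on $X$ is $D$, so $\sD_X=K+D$, whereas $\sD_X'=K+\D_X'$; hence $\sD_X'\ge\sD_X$ is equivalent to $\D_X'\ge D$, which is hypothesis (2) of the corollary. Lemma~\ref{b_negativity} then gives $\sD'\ge\sD$, that is $\K+\D'\ge\K+\D$, and cancelling the common b-divisor $\K$ yields $\D'\ge\D$.

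For the bd-pair version I would run the identical argument with $K+D$ replaced throughout by $K+D+\sP_X$: fixing a representative of $\sP$, put $\sD=\overline{K+D+\sP_X}=\K+\D+\sP$ (here $\D=\D(X,D+\sP_X)-\sP$ as in the statement) and $\sD'=\K+\D'+\sP$. Stability of $\sD$ over $X$ now uses that $(X,D+\sP)$ is a {\em log\/} bd-pair, so that $K+D+\sP_X$ is $\R$-Cartier; hypothesis (1-bd) supplies hypothesis (1) of Lemma~\ref{b_negativity}; and since $\sD_X=K+D+\sP_X$ and $\sD_X'=K+\D_X'+\sP_X$, hypothesis (2) again reduces to $\D_X'\ge D$. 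The lemma gives $\sD'\ge\sD$, i.e.\ $\K+\D'+\sP\ge\K+\D+\sP$, and cancelling $\K$ and $\sP$ gives $\D'\ge\D$.

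Thus there is no genuine obstacle: the whole content sits in Lemma~\ref{b_negativity}, and the only thing requiring care is recognising that the log (bd-)pair hypothesis is precisely what makes $\K+\D$ (resp.\ $\overline{K+D+\sP_X}$) stable over $X$, so that condition (3) of b-Negativity applies; everything else is a routine comparison of traces.
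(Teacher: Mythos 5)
Your argument is exactly the paper's: apply Lemma~\ref{b_negativity} with $\sD=\K+\D$ and $\sD'=\K+\D'$ (resp.\ $\K+\D+\sP$ and $\K+\D'+\sP$ for bd-pairs), noting that the log (bd-)pair hypothesis makes $\sD=\overline{K+D}$ stable over $X$, and that hypotheses (1)--(2) of the corollary translate directly into (1)--(2) of the lemma via traces. The proposal is correct and matches the paper's proof in all essentials.
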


\begin{proof}
Immediate by Lemma~\ref{b_negativity}
with $\sD=\K+\D$ and $\sD'=\K+\D'$.
Assumptions~(1-2) of the corollary correspond
respectively to (1-2) of the statement.
Since $(X,D)$ is a log pair,
$\sD$ is defined.
So, $D=\D_X$ and $\sD$ is stable over $X$ as a b-$\R$-Cartier divisor
by definition.

Respectively, for the bd-pair,
$\sD=\K+\D+\sP$ and $\sD'=\K+\D'+\sP$, that is,
the b-divisorial part is the same.

Note that (1) implies the pseudoBP property of $\D'$,
the limit of BP b-$\R$-divisors.
\end{proof}

\begin{proof}[Proof of Corollary~\ref{monotonicity_II}]
Immediate by Corollary~\ref{monotonicity_III}.
Apply the corollary to a maximal model
$(X^\sharp,D_{X^\sharp}'^\sharp)$ with
a birational $1$-contraction
$\psi\colon X\dashrightarrow X^\sharp$ such that
$D_{X^\sharp}'^\sharp=\psi(D')$.
\end{proof}

(Non)existence of $\R$-complements has the local nature.
The same holds for $n$-complements.

\begin{prop} \label{local_compl}
Let $(X/Z\ni o,D)$ be a local pair with wFt $X/Z\ni o$
and $n$ be a positive integer.
Then the existence of an $\R$-complement (respectively
of an $n$-complement) is a local property
with respect to connected components of $X_o$, that is,
in the \'etal topology.

The same holds for bd-pairs $(X/Z\ni o,D+\sP)$.

\end{prop}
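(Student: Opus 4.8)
The plan is to reduce to the situation where $X$ genuinely decomposes into a disjoint union of neighborhoods of the connected components of $X_o$ --- which happens only after the faithfully flat local base change to the henselization $Z^{h}=\Spec\sO_{Z,o}^{h}$ of $Z$ at $o$ (equivalently, only \'etale-locally) --- and then to show that having a complement is insensitive to this base change. For the decomposition I would take the Stein factorization $\psi\colon X\to W$, $\pi\colon W\to Z$ of the proper morphism $X/Z\ni o$ (wFt gives properness): $\pi$ is finite and $\psi$ has connected fibers, so the points of the Artinian scheme $W_o=\pi^{-1}(o)$ are in bijection with the connected components $C_1,\dots,C_k$ of $X_o$ via $C_j=\psi^{-1}(w_j)$. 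Over the henselian local $Z^{h}$ the finite scheme $W$ splits as $\bigsqcup_j W_j$ with $w_j\in W_j$, hence $X=\bigsqcup_j U_j$ with $U_j=\psi^{-1}(W_j)$ and $(U_j)_o=C_j$, and each $U_j/Z^{h}\ni o$ is again wFt with connected central fiber (restrict a big boundary $B$ with $(X/Z\ni o,B)$ a klt $0$-pair). Conditions (1)--(2) of Definitions~\ref{r_comp} and~\ref{n_comp} are checked divisor by divisor, and an $\R$- or $n$-linear equivalence to $0$ on a disjoint union holds iff it holds on each summand; so a complement of $(X/Z^{h}\ni o,D)$ is the same datum as a complement of $(U_j/Z^{h}\ni o,D|_{U_j})$ for all $j$, and everything comes down to the assertion: $(X/Z\ni o,D)$ has an $\R$- (resp.\ $n$-) complement iff its base change to $Z^{h}$ does. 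The ``only if'' direction is immediate --- pulling a complement back to $Z^{h}$ preserves multiplicities, lc-ness and principal (resp.\ $\R$-principal) divisors.

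For the ``if'' direction in the $\R$-case I would invoke Addendum~\ref{R_complement_criterion}: $(X/Z\ni o,D)$ has an $\R$-complement iff the maximal model $(X^{\sharp},D^{\sharp}_{X^{\sharp}})$ of Construction~\ref{sharp_construction} is lc along its central fiber, i.e.\ iff $\D^{\sharp}$ is a b-subboundary. The hypotheses of that construction (wFt of $X/Z$, pseudoeffectivity of $-(K+D)$ over $Z$ near $o$), the formation of $X^{\sharp}$ by the $-(K+D)$-MMP, and the lc-ness of $(X^{\sharp},D^{\sharp})$ near the central fiber are all unchanged by $Z^{h}\to Z$: the MMP commutes with \'etale base change, while pseudoeffectivity over the base and lc-ness are \'etale-local. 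So the criterion holds over $Z$ near $o$ iff it holds over $Z^{h}$, as required; here one also uses Lemma~\ref{wTt_vs_Ft} and Proposition~\ref{small_transform_compl} to pass freely to a $\Q$-factorial Ft model.

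The $n$-case is the delicate point and, I expect, the main obstacle. Reducing via a maximal model and Corollary~\ref{monotonic_n_compl} (one may take the $n$-complement monotonic, hence an $\R$-complement), one is left to verify that the single linear equivalence $n(K+D^{+})\sim 0$ in a neighborhood of $X_o$ descends from $Z^{h}$ to a Zariski neighborhood of $o$; unlike $\sim_\R 0$, triviality of a line bundle is not a priori \'etale-local, so this genuinely uses the geometry of wFt morphisms. The approach would be to spread the $Z^{h}$-complement out to an \'etale neighborhood $Z'$ of $o$ by a standard limit argument, and then use that, $X/Z$ being of (weak) Fano type, $R^{1}\varphi_{*}\sO_{X}=0$ and the fibers of $X/Z$ are simply connected, so the relative Picard group near $X_o$ is finitely generated, torsion-free, and does not grow along $Z'\to Z$ --- whence a divisor class trivial over $Z'$ is already trivial near $o$. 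Combined with the disjoint decomposition this produces the $n$-complement over $Z$. The bd-pair case runs identically, carrying the fixed b-part $\sP$ (unchanged on each $U_j$) along throughout, and for $G$-pairs or for a non-closed base field one argues equivariantly, resp.\ after the corresponding localization, cf.\ Addendum~\ref{nonclosed}.
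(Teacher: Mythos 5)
Your reduction via Stein factorization is the right first move, and your treatment of the $\R$-case (the $\sharp$-model criterion of Addendum~\ref{R_complement_criterion} together with Construction~\ref{sharp_construction}, checked after an \'etale/henselian base change) can be pushed through, though it carries extra machinery ("the MMP commutes with \'etale base change") that the paper never needs: the paper stays over the Stein base $Y$ and works with the semilocal germ $X/Y\ni o_1,\dots,o_l$, where the connected components of $X_o$ are already the fibers over the \emph{distinct} points $o_i$ of $Y$, so no splitting of the total space and no descent are required at all.

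The genuine gap is in your $n$-complement step. After henselizing (or spreading out to an \'etale $Z'\to Z$) the hypothesis hands you complement divisors $D_j^{+}$ on the separated pieces $U_j\subset X\times_Z Z'$; their union is a divisor on the base-changed space which is \emph{not} a priori the pullback of any divisor on $X$ over a Zariski neighborhood of $o$. So the object whose linear equivalence $n(K+D^{+})\sim 0$ you propose to descend does not yet exist over $Z$ --- the problem is to \emph{produce} $D^{+}$ over $Z$, not to test triviality of a class already defined there, and the claims about the relative Picard group ("finitely generated, torsion-free, does not grow along $Z'\to Z$") do not address this; moreover they are the wrong invariant (the relevant classes are Weil, and class groups of klt Fano-type germs do have torsion, so injectivity of pullback is exactly the kind of statement that would need proof). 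The paper's Step~3 sidesteps all of this: existence of an $n$-complement is equivalent, by the criterion after \cite[Definition~5.1]{Sh92}, to the existence of a member of a linear system of the form $\linsys{-nK-nS-\rddown{(n+1)(D-S)}}$ --- a system attached to data already defined over $Z\ni o$ --- whose associated pair is lc near the central fiber; over the semilocal base $Y\ni o_1,\dots,o_l$ the required member exists as soon as suitable members exist near each $o_i$, since these are finitely many conditions at distinct points of $Y$ and the lc requirement is an open (generic) condition on the system. If you want to keep your henselization framework, the repair is to run the same linear-system argument (sections commute with the flat base change $\sO_{Z,o}\to\sO_{Z,o}^{h}$, and a nonempty open condition on sections is detected over the original infinite base field), rather than to argue through Picard or class groups.
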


\begin{proof} (Cf. the proof of Step~8 of Theorem~\ref{invers_stability_R_complements}.)

Step~1. ({\em Reduction to a semilocal case.\/})
We can replace $(X/Z\ni o,D)$ by $(X/Y\ni o_1,\dots,o_l,D)$
with wFt $X/Y\ni o_1,\dots,o_l$ and connected fibers
$X_{o_1},\dots,X_{o_l}$.
We can suppose that $X_{o_i}$ are connected components
of $X_{o}$.
Indeed, take Stein factorization $X\twoheadrightarrow Y\to Z$.
Let $o_1,\dots,o_l$ be the points on $Y$ over $o$.
Notice that semilocal $X/Y\ni o_1,\dots,o_l$ has also wFt, equivalently,
every local $X/Y\ni o_i$ has wFt.
We need to verify that $(X/Y\ni o_1,\dots,o_l,D)$ has
an $\R$-complement (respectively an $n$-complement) if and only
if every $(X/Y\ni o_i,D)$ has and $\R$-complement (respectively
an $n$-complement).
Actually, we need to verify only the if statement.

Step~2. ({\em $\R$-Complements\/}.)
Use Addendum~\ref{R_complement_criterion}.
Indeed, we can suppose that every $(X/Y\ni o_i,D)$ has
and $\R$-complement, in particular,
every $-(K+D)$ is effective modulo $\sim_{\R}$ over $Y\ni o_i$.
Thus Construction~\ref{sharp_construction} gives
$(X^\sharp/Y\ni o_1,\dots,o_l;D^\sharp_{X^\sharp})$ for
$(X/Y\ni o_1,\dots,o_l,D)$.
So, by Addendum~\ref{R_complement_criterion} $(X^\sharp,D^\sharp_{X^\sharp})$
is lc over $Y\ni o_1,\dots,o_l$.
Thus by the same addendum $(X/Y\ni o_1,\dots,o_l,D)$ has
an $\R$-complement.

Step~3. ({\em $\R$-Complements\/}.)
Immediate by the criterion of existence for
$n$-complements in terms of linear systems \cite[after Definition~5.1]{Sh92}.

Similarly we can treat bd-pairs.

\end{proof}

\section{Constant sheaves} \label{constant_sheaves}

\paragraph{Componentwise constant sheaves.}
We consider constant sheaves $\sF$ on a variety $T$
of Abelian groups, vector spaces, monoids, convex cones,
union of monoids, polyhedrons with a polyhedral decomposition
and of sets.
E.g., if $\sF$ is a constant sheaf of Abelian groups $A$ then,
for every point $t\in T$, $\sF_t=A$ and, for every
connected open subset $S\subseteq T$,
$\sF_S=\Gamma(S,\sF)=A$, where $=$ means the canonical
isomorphism given by the restriction.
Usually in description of such a sheaf $\sF$ we give
its sections $\sF_t$ for closed points.
We also give its global sections $\sF_T$.
For connected $T$, $\sF_T=\sF_t=A$ under the restriction.

Below by a {\em constant\/} sheaf we mean also a sheaf
which is constant on every connected component of $T$.
So, it is actually constant if $T$ is connected.
Strictly speaking these sheaves are
{\em componentwise constant\/}.

We need the following constant sheaves.
However, they are really constant only for families
of very special varieties, only over connected components of $T$ and under
an appropriate parametrization (cf. Addendum~\ref{all_together} below).
These sheaves will be sheaves of different nature:
sheaves of sets,
of Abelian groups and monoids,
of $\R$-,$\Q$-linear spaces, of finite union of
monoids, of cones and of polyhedrons.

{\em Marked divisors\/}
$D_{1,t},\dots,D_{r,t}$
on a family  $X/T$ of varieties $X_t$,
are its sections over $t$.
Actually, they form an ordered set.
For connected $T$, sections form an ordered set
of global divisors $D_{1,T},\dots,D_{r,T}$ on $X$.
The identification of sections is given by
the restriction $D_{i,t}=D_{i,T}\rest{X_t}$.
So,
divisors should be good and the family $X/T$ too.
Usually, we consider distinct prime Weil divisors
$D_{i,t}$.

{\em Abelian group generated by marked divisors\/}
$\fD_t=\fD_t(D_{1,t},\dots,D_{r,t})$.
We consider this group as a subgroup of $\WDiv X_t$.
The group $\fD_t$ is free Abelian of rank $r$
with the standard basis $D_{1,t},\dots,D_{r,t}$ if
the divisors are distinct prime.
The corresponding constant sheaf we denote by $\fD=\fD(D_1,\dots,D_r)$.
Similarly, we can define real and rational (sub)spaces $\fD_\R,\fD_\Q$ with
$\fD_{\R,t}\subseteq\WDiv_\R X_t,
\fD_{\Q,t}\subseteq\WDiv_\Q X_t$ generated by divisors $D_{i,t}$;
$\fD\subseteq\fD_\Q\subseteq\fD_\R$.
If divisors $D_{i,t}$ are linearly free (in particular, distinct) in $\WDiv X_t$
then $\fD_{\R,t}=\fD_t\otimes\R,\fD_{\Q,t}\otimes\Q$.

We will use also constant divisors
$$
K_{X_t}\in\fD,\sP_{X,t}\in\fD_\Q.
$$
\begin{align*}
  K_{T}=K_{X/T}\in &\fD_T=\Z D_{1,T}\oplus\dots\oplus\Z D_{r,T}; \\
  &\fD_{\R,T}=\R D_{1,T}\oplus\dots\oplus\R D_{r,T};\\
  \sP_X=\sP_{X,T}\in &\fD_{\Q,T}=\Q D_{1,T}\oplus\dots\oplus\Q D_{r,T}.
\end{align*}
Warning: usually we do not suppose that $\sP$ exits globally but
$\sP_t$ exists for every closed $t\in T$, $\sP_{X,t}$ exists for every $t\in T$ and
$\sP_X=\sP_{X,T}$ exists too.
However, if $\sP$ exists globally over $T$ then
$\sP_T=\sP$ is meaningful and $\sP_X$ is its trace on $X$.

{\em Abelian monoid of effective divisors generated by marked divisors\/}
$\fD_t^+=\fD_t^+(D_{1,t},\dots,D_{r,t})\subseteq \WDiv X_t$ with
elements $D_t\in\fD_t,\Z^{\ge 0}$.
The monoid $\fD_t^+$ is free Abelian of rank $r$
with the standard basis $D_{1,t},\dots,D_{r,t}$ if
the divisors are distinct prime.
The corresponding constant sheaf we denote by $\fD^+=\fD^+(D_1,\dots,D_r)$.
Similarly, we can define closed convex rational polyhedral (sub)cones
$\fD_{\R,t}^+\subseteq\fD_{\R,t},
\fD_{\Q,t}^+\subseteq\fD_{\Q,t}$ generated by divisors $D_{i,t}$.

\begin{align*}
D_{1,T},\dots,D_{r,T}\in  &\fD_T^+=\Z^{\ge 0} D_{1,T}\oplus\dots\oplus\Z^{\ge 0} D_{r,T}; \\
  &\fD_{\R,T}^+=\R^+ D_{1,T}\oplus\dots\oplus\R^+ D_{r,T},\ \R^+=[0,+\infty);\\
&\fD_{\Q,T}^+=\Q^+ D_{1,T}\oplus\dots\oplus\Q^+ D_{r,T},\ \Q^+=\R^+\cap\Q.
\end{align*}

{\em Monoid of linearly effective divisors\/}
$\Effd_t\subseteq \WDiv X_t$ with
elements $D_t\in\fD_t$, which are effective modulo $\sim$.
In general the monoid is not finitely generated.
The corresponding constant sheaf we denote by $\Effd=\Effd(D_1,\dots,D_r)$.
Similarly, we can define (sub)cones
$\Effd_{\R,t}\subseteq\fD_{\R,t},
\Effd_{\Q,t}\subseteq\fD_{\Q,t}$
with
elements $D_t\in\R,\Q$,
which are effective modulo $\sim_\R,\sim_\Q$ respectively.
In general those cones are not closed rational polyhedral.
However, for wFt $X/T$, the monoid finitely generated by Corollary~\ref{eff_linear_rep_D} and
cones are closed convex rational polyhedral \cite[Corollary~4.5]{ShCh}.
Moreover, the monoid and cones have finite decompositions
into respectively finitely generated monoids,
convex rational polyhedral cones (possibly not closed) such that
the components correspond to the same rational $1$-contraction
\cite[Corollary~5.3]{ShCh}.

\begin{align*}
&\fD_T^+\subseteq \Effd_T=\Effd X=
\{D\in\fD_T\mid\ D\text{ is effective modulo\/}\sim\}; \\
&\fD_{\R,T}^+\subseteq \Effd_{\R,T}=\Effd_\R X=
\{D\in\fD_{\R,T}\mid\ D\text{ is effective modulo\/}\sim_\R\};\\
&\fD_{\Q,T}^+\subseteq\Effd_{\Q,T}=\Effd_\Q X=
\{D\in\fD_{\Q,T}\mid\ D\text{ is effective modulo\/}\sim_\Q\}.
\end{align*}

Important submonoids and subcones of semiample, nef, mobile
divisors $D_t$
$$
\sAmpd_t,\Nefd_t,\Mobd_t\subseteq\Effd_t,\
\sAmpd_{\R,t},\Nefd_{\R,t},\Mobd_{\R,t}\subseteq\Effd_{\R,t},\
\sAmpd_{\Q,t},\Nefd_{\Q,t},\Mobd_{\Q,t}\subseteq\Effd_{\Q,t}
$$
have constant sheaves
$\sAmpd=\sAmpd(D_1,\dots,D_r),\sAmpd_\R=\sAmpd_\R(D_1,\dots,D_r),
\sAmpd_\Q=\sAmpd_\Q(D_1,\dots,D_r)$ etc for $\Nefd,\Mobd,\Effd$.
Global sections are respectively
\begin{align*}
&\sAmpd_T=\{D\in\fD_T\mid\ D\text{ is semiample\/}\};\\
&\sAmpd_{\R,T}=\{D\in\fD_{\R,T}\mid\ D\text{ is semiample\/}\};\\
&\sAmpd_{\Q,T}=\{D\in\fD_{\Q,T}\mid\ D\text{ is semiample\/}\}.
\end{align*}
Etc for $\Nefd,\Mobd$.

\begin{war}[cf. \cite{ShCh}] \label{sim_vs_equiv}
Here we use $\sAmpd,\Nefd,\Mobd$ and $\Effd$
for the absolute case with multiplicities in $\Z$.
For $\Effd,\Compd$  we use $\sim,$ etc instead of $\equiv$.
We also use absolute $\sim$, etc but not relative $\sim_T$, etc.
Of course the last substitution needs appropriate parametrization
(cf. Step~3 in the proof of Proposition~\ref{bounded_rank} below).
\end{war}

{\em lc Divisors\/} $\lcd_t\subseteq \WDiv X$ with
elements $D_t\in\fD_t$,
such that $(X_t,D_t)$ is lc.
The corresponding constant sheaf we denote by $\lcd=\lcd(D_1,\dots,D_r)$.
Similarly, we can define closed convex rational polyhedrons
(possibly noncompact, e.g., $d_i\le 1$ correspond to
subboundaries) \cite[(1.3.2)]{Sh92}
$\lcd_{\R,t}\subseteq\fD_{\R,t},
\lcd_{\Q,t}\subseteq\fD_{\Q,t}$
with
elements $D_t\in\R,\Q$ respectively,
such that $(X_t,D_t)$ is lc.

Similarly, for bd-pairs, $\lcd_t\sP_t\subseteq \WDiv X$ with
elements $D_t\in\fD_t$,
such that $(X_t,D_t+\sP_t)$ is lc as a bd-pair.
The corresponding constant sheaf we denote by
$\lcd \sP=\lcd(D_1,\dots,D_r,\sP)$.
Similarly, we can define closed convex rational polyhedrons
\cite[(1.3.2)]{Sh92}
$\lcd_{\R,t}\sP_t\subseteq\fD_{\R,t},
\lcd_{\Q,t}\sP_t\subseteq\fD_{\Q,t}$
with
elements $D_t\in\R,\Q$ respectively,
such that $(X_t,D_t+\sP_t)$ is lc.
For the constant sheaf property, it is better to suppose that
the b-divisor $\sP$ exists globally over $T$ but
this is not applicable in our paper (cf. the proof of Addendum~\ref{bd_exceptional_compl}
in Step~8 of the proof of Theorem~\ref{excep_comp}).

{\em Divisors with $\R$-complement\/} $\Compd_t\subseteq \WDiv X$ with
elements $D_t\in\fD_t$,
such that $(X_t,D_t)$ has an $\R$-complement.
The corresponding constant sheaf we denote by $\Compd=\Compd(D_1,\dots,D_r)$.
Similarly, we can define convex sets
$\Compd_{\R,t}\subseteq\fD_{\R,t},
\Compd_{\Q,t}\subseteq\fD_{\Q,t}$
with
elements $D_t\in\R,\Q$ respectively,
such that $(X_t,D_t)$ has an $\R$-complement (cf. Theorem~\ref{R_compl_polyhedral}).

Similarly, for bd-pairs, $\Compd_t\sP_t\subseteq \WDiv X$ with
elements $D_t\in\fD_t$,
such that $(X_t,D_t+\sP_t)$ has an $\R$-complement as a bd-pair.
The corresponding constant sheaf we denote
by $\Compd \sP=\Compd(D_1,\dots,D_r,\sP)$.
Similarly, we can define convex sets
$\Compd_{\R,t}\sP_t\subseteq\fD_{\R,t},
\Compd_{\Q,t}\sP_t\subseteq\fD_{\Q,t}$
with
elements $D_t\in\R,\Q$ respectively,
such that $(X_t,D_t+\sP_t)$ has an $\R$-complement (cf. Addendum~\ref{bd_R_compl_polyhedral}).

By definition and our assumptions, for $K\in\fD,K+\sP_X\in\fD_\Q$,
respectively
$$
\Compd\subseteq -K-\Effd_\R, \ \
\Compd \sP\subseteq -K-\sP_X-\Effd_\R
$$
(see Addenda~\ref{comp_in_lc_eff} and~\ref{bd_R_compl_polyhedral}).
Respectively, if every $X_t$ is $\Q$-factorial, then
$$
\Compd\subseteq \lcd\cap (-K-\Effd_\R), \ \
\Compd \sP\subseteq\lcd \sP\cap (-K-\sP_X-\Effd_\R)
$$
(see again Addenda~\ref{comp_in_lc_eff} and~\ref{bd_R_compl_polyhedral})
Etc over $\R,\Q$.
In general $=$ does not hold even over $\R$
(however, cf. the exceptional case in Step~3
of the proof of Theorem~\ref{excep_comp}).
$$
\lcd_T=\{D\in\fD_T\mid (X,D)\text{ is lc}\},
\lcd_{\R,T}=\{D\in\fD_{\R,T}\mid (X,D)\text{ is lc}\},
\lcd_{\Q,T}=\{D\in\fD_{\Q,T}\mid (X,D)\text{ is lc}\}.
$$
$$
\Compd_T=\{D\in\fD_T\mid (X/T,D)\text{ has an $\R$-complement}\}.
$$
Etc over $\R,\Q$.

Notice also that $\lcd_\R,\Compd_\R$ are usually not
compact.
However, for effective divisors $D$ both sets
$\lcd,\Compd$ are compact because in this case
$D$ is a boundary, a compact condition.

{\em Sheaf $\shCl=\shCl X/T$ of Abelian class groups
of Weil divisors modulo $\sim$\/} with
$\shCl_t X/T=\Cl X_t$ and $\shCl_T X/T=\Cl X/T$.
Respectively, $\shCl_\R X/T,\shCl_\Q X/T$ with
$\shCl_{\R,t}X/T=\Cl_\R X_t,\shCl_{\Q,t} X/T=\Cl_\Q X_t$ and
$\shCl_{\R,T}X/T=\Cl_\R X/T,\shCl_{\Q,T}X/T=\Cl_\Q X/T$.
The relative class group $\Cl X/T$ is defined modulo
relative $\sim_T$.
Respectively, $\Cl_\R X/T,\Cl_\Q X/T$ modulo $\sim_{\R,T},\sim_{\Q,T}$.

Other important subsheaves in $\shCl$ include invertible sheaves,
classes of semiample, nef, mobile, effective and torsion divisors
$$
\shPic=\shPic X/T,\shsAmp=\shsAmp X/T,\shNef=\shNef X/T,\shMob=\shMob X/T,\shEff=\shEff X/T,
\shTor X/T,
$$
etc for $\R,\Q$ with
$$
\shPic_t=\Pic X_t,\shsAmp_t=\sAmp X_t,
\shNef_t=\Nef X_t,\shMob_t=\Mob X_t,\shEff_t=\Eff X_t,
\shTor_t=\Tor X_t
$$
and
$$
\shPic_T=\Pic X/T,\shsAmp_T=\sAmp X/T,
\shNef_T=\Nef X/T,\shMob_T=\Mob X/T,\shEff_T=\Eff X/T,
\shTor_T=\Tor X/T
$$
etc for $\R,\Q$.
Notice that $\shTor_\R=\shTor_\Q=0$ is trivial.
Cf. Warning~\ref{sim_vs_equiv}.

Sheaves $\shPic,\shTor$ are sheaves of Abelian groups.
Sheaves $\shPic_\R,\shPic_\Q$ are sheaves of respectively
$\R$- and $\Q$-linear spaces.
Sheaves $\shsAmp,\shNef,\shMob,\shEff$ are sheaves of Abelian monoids.
Their $\R,\Q$ versions are sheaves of convex cones.

Remark:
Usually $\sAmp_\R X_t,\Nef_\R X_t,\Mob_\R X_t,\Eff_\R X_t$
are considered in $\rm N^1 X_t$, the space of $\R$-divisors of $X_t$
modulo the numerical equivalence $\equiv$ \cite[Section 4]{ShCh}.
However, in the paper we usually consider wFt $X_t$ and
$\equiv$ is $\sim_\R$ in this situation \cite[Corollary~4.5]{ShCh}.

\begin{prop} \label{bounded_rank}
Let $X_t,t\in T$, be a variety or an algebraic space in
a bounded family of rationally connected spaces.
Then $\Cl X_t$ has bounded rank $r$.
More precisely, for appropriate parametrization $X/T$,
every $X_t$ has
marked distinct prime divisors $D_{1,t},\dots,D_{r,t}$
such that, for every $D\in\WDiv X_t$,
$$
D\sim d_1D_{1,t}+\dots+d_rD_{r,t}
$$
for some $d_1,\dots,d_r\in \Z$.

\end{prop}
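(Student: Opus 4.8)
The plan is to bound the rank of $\Cl X_t$ by that of the Picard group of a simultaneous resolution, to bound the latter by a topological invariant that stays finite over a finite-type base, and then to spread out and primify a generating set of divisors. First, by definition of a bounded family there is a morphism $\mathcal X\to S$ of finite type over $k$ such that every $X_t$ is isomorphic to a closed fibre. By Noetherian induction on $S$ it is enough to treat one locally closed stratum at a time, so we may assume $S$ irreducible and smooth and $\mathcal X\to S$ flat with normal fibres (replace $\mathcal X$ by its relative normalisation, again of finite type, and stratify $S$ so that forming the normalisation commutes with restriction to fibres). Since $\operatorname{char}k=0$, resolving $\mathcal X$ and stratifying $S$ once more by generic smoothness gives $\widetilde{\mathcal X}\to\mathcal X\to S$ with $\widetilde{\mathcal X}/S$ smooth proper (after a projective modification, if projectivity is wanted) and $\widetilde X_t\to X_t$ a resolution for every $t$; after a further stratification the exceptional divisor $E\subset\widetilde{\mathcal X}$ has irreducible components $E_1,\dots,E_s$ whose fibres $E_{j,t}$ are exactly the exceptional prime divisors of $\widetilde X_t\to X_t$. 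This $X/T$ with $T=S$ is the sought ``appropriate parametrisation''.

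Next, each $\widetilde X_t$ is rationally connected, so $H^1(\widetilde X_t,\mathcal O)=0$, hence $\Pic\widetilde X_t=\NS\widetilde X_t$ is a finitely generated abelian group; and $\Cl X_t$, generated by the pushforwards from $\widetilde X_t$ of the strict transforms of its prime divisors modulo the classes of the $E_{j,t}$, is finitely generated too. For the uniform bound, the cycle class map embeds $\NS\widetilde X_t$ into $H^2(\widetilde X_t,\Q)$; stratifying $S$ so that the higher direct images $R^qf_{*}\Q$ of $f\colon\widetilde{\mathcal X}\to S$ are locally constant, $\dim_\Q H^2(\widetilde X_t,\Q)$ is bounded by a constant depending only on the family, so $\operatorname{rk}\NS\widetilde X_t$ is bounded, and together with the bounded number $s$ of exceptional components this bounds $\operatorname{rk}\Cl X_t\le r$. (Structurally this is the statement that the relative Picard scheme of $\widetilde{\mathcal X}/S$ has identically trivial identity component and is of finite type over the finite-type base.)

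Finally, shrinking $S$ to a dense open and passing to a finite cover if necessary, choose sections $\widetilde D_1,\dots,\widetilde D_{r_0}$ of components of a relative Hilbert scheme of effective divisors whose fibrewise classes, together with the $[E_{j,t}]$, generate $\NS\widetilde X_t$ at the generic point of $S$; since ``these finitely many classes generate'' is a constructible condition on $t$, one more stratification makes generation hold for every $t\in S$. Pushing forward to $\mathcal X$ kills the $E_{j,t}$, so the $f_{*}\widetilde D_{i,t}$ generate $\Cl X_t$. Adjoining a fixed relatively very ample $A$ on $\widetilde{\mathcal X}$ and replacing each generator class by itself twisted by $[A]$, a relative Bertini theorem in characteristic $0$ lets us represent every generator by a divisor whose fibres $D_{i,t}$ are prime and pairwise distinct Weil divisors on $X_t$; re-indexing these as $D_{1,t},\dots,D_{r,t}$ yields global marked divisors $D_{1,T},\dots,D_{r,T}$ on $X$ with the property that every $D\in\WDiv X_t$ satisfies $D\sim d_1D_{1,t}+\dots+d_rD_{r,t}$ for suitable $d_i\in\Z$.

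The main obstacle is the uniformity, not the finiteness on a single fibre: the Picard rank can jump \emph{up} along a countable union of proper closed (Noether--Lefschetz type) subsets of $S$, so there is no hope of a single family of divisors working over all of $S$ without stratifying and passing to finite covers, and the argument only closes because the ambient bound ($\dim H^2$, equivalently the finiteness of the relative Picard scheme over the finite-type base) does not jump and because the Noetherian induction on $S$ terminates. A secondary, more routine, point of care is the construction and bookkeeping of the finitely many exceptional divisors of a simultaneous resolution in the family, and the reduction to the projective case for algebraic spaces.
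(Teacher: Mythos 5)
Your overall route is the same as the paper's: pass to a simultaneous resolution over a stratified base by Noetherian induction, use rational connectedness to make $\Pic\widetilde X_t=\NS\widetilde X_t$ finitely generated of rank bounded by a topological invariant of the family, spread out finitely many generating classes to relative divisors, take general (prime) members, and recover $\Cl X_t$ as the pushforward modulo the finitely many exceptional components. The rank bound, the handling of the exceptional divisors, and the Bertini/prime-component bookkeeping all match the paper's Steps 1 and 3, and the minor points you leave implicit (adjoining the ample class to the generator list before twisting, the finite cover for monodromy of components) are of the same looseness as in the paper itself.

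The one genuine soft spot is the justification of the uniformity step, and as written it is internally inconsistent. You spread the generators from the generic point to all of $S$ by claiming that ``these finitely many classes generate'' is a constructible condition, yet in your closing paragraph you assert that the Picard rank can jump up along a countable union of proper closed Noether--Lefschetz-type subsets. These two claims cannot both hold: if such jumping occurred, the non-generation locus would not be contained in a proper closed subset, constructibility would fail, the recursion ``shrink to a dense open and induct on the complement'' would not terminate, and the bound $\dim_\Q H^2(\widetilde X_t,\Q)$ alone would not rescue the argument, since it bounds the rank but does not prevent new classes from appearing on a dense set of fibres. What actually closes the argument -- and what the paper proves in its Step~2 -- is that for rationally connected fibres one has $H^1(X_t,\sO)=H^2(X_t,\sO)=0$, so deformations of line bundles are unobstructed and the relative Picard functor is \'etale over the base; hence $\shPic X/T$ is locally constant (over $\C$ it is the local system $H^2(X_t,\Z)$), its monodromy is finite because it permutes the finitely many ample classes of bounded degree on rationally connected fibres, and after a finite base change $\Pic$ is constant, so generators chosen on one fibre restrict from global divisors on the family. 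In particular there is no Noether--Lefschetz jumping in this setting. You have the needed ingredients on the page ($H^1(\sO)=0$ and the remark about the relative Picard scheme), so the repair is short, but the spreading step should be rested on this local constancy rather than on constructibility together with an acknowledged jumping phenomenon that would in fact defeat it.
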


\begin{add} \label{const_lc}
$\lcd,\lcd_\R,\lcd_\Q$
are also constant.

\end{add}

\begin{add} \label{const_bn_lc}
$\lcd\sP,\lcd_\R\sP,\lcd_\Q\sP$ are constant if
$\sP$ defined over $T$.

\end{add}

\begin{add} \label{const_Cl}
Sheaves $\shCl X/T,\shTor X/T,\fD,\Pd,\Nuld$ are constant,
$$
\shCl X/T=\fD/\sim=\fD/\Pd,
\shCl_T X/T=\fD_T/\sim=\fD_T/\Pd_T
\text{ and }\Cl X_t=\fD_t/\sim=\fD_t/\Pd_t,
$$
$$
\shTor X/T=\Tor (\shCl X/T)=\Nuld/\sim=\Nuld/\Pd,
$$
$$
\shTor_T X/T=\Tor(\shCl_T X/T)=\Nuld_T/\sim=\Nuld_T/\Pd_T
\text{ and }
\Tor X_t=\Tor(\Cl X_t)=\Nuld_t/\sim=\Nuld_t/\Pd_t
$$
for every (closed) $t\in T$,
where $\Pd,\Nuld$ are respectively subsheaves of principal and
of numerically trivial over $T$ divisors in $\fD$.
Canonical isomorphisms are given by homomorphisms
$\fD_t\to\Cl X_t, D_t\mapsto D_t/\sim=D_t/\Pd_t$.
The following sheaves
$$
\shCl_\R X/T,\shCl_\Q X/T,\Pd_\R,\Pd_\Q,
\Nuld_\R,\Nuld_\Q
$$
are also constant and
$$
\shCl_\R X/T=(\shCl X/T)\otimes \R,
\shCl_\Q X/T=(\shCl X/T)\otimes \Q,
$$
$$
\Nuld_\R=\Pd_\R=\Nuld\otimes\R=\Pd\otimes \R,
\Nuld_\Q=\Pd_\Q=\Nuld\otimes\Q=\Pd\otimes \Q,
$$
$$
\shTor_\R X/T=\shTor_\Q X/T=0.
$$

\end{add}

\begin{add} \label{const_Pic_etc}
If additionally every $X_t$ has only rational singularities then
the following sheaves
$$
\shPic X/T,\shPic_\R X,\shPic_\Q X/T,
\Card,\Card_\R,\Card_\Q
$$
are also constant and
$$
\shPic X/T=\Card/\sim=\Card/\Pd,
$$
$$
\shPic_\R X/T=(\shPic X/T)\otimes \R=\Card_\R/\sim_\R=\Card_\R/\Pd_\R,
\shPic_\Q X/T=(\shPic X/T)\otimes \Q=\Card_\Q/\sim_\Q=\Card_\Q/\Pd_\Q,
$$
$$
\Card_\R=\Card\otimes \R,\Card_\Q=\Card\otimes\Q,
$$
where $\Card$ is the subsheaf of Cartier or locally
principal divisors in $\fD$.
$$
\shPic_T X/T=\Card_T/\sim=\Card_T/\Pd_T
\text{ and }
\Pic X_t=\Card_t/\sim=\Card_t/\Pd_t,
$$
for every (closed) $t\in T$.
Canonical isomorphisms are induced by
the homomorphism $\fD\to \shCl X/T$.
Equivalences $\sim_\Q,\sim_\R$ can be replaced by $\equiv/T$.

\end{add}

\begin{proof}

Step~1. We can suppose that every $X_t$ is
nonsingular, projective, the family $X/T$
is smooth, $X,T$ are nonsingular, irreducible.
Use a resolution of singularities,
reparametrization  and
Noetherian induction.
For every closed $t\in T$, $X_t$  is again rationally connected.

Indeed, let $Y\to X/T$ be a resolution and reparametrization
such that $Y\to X$ and $Y_t\to X_t,t\in T$ are birational.
We can suppose that $X,T$ also satisfies required properties.
By definition there are canonical surjections
$$
\Cl Y/T\twoheadrightarrow\Cl X/T,
\Cl Y_t\twoheadrightarrow\Cl X_t,
$$
and the first one commutes with restrictions.
Indeed, the group of Weil divisors on $Y$ modulo $\sim$
is the group of Cartier divisors on $Y$ modulo $\sim$ and
goes canonically to the group of Weil divisors on $X$
modulo $\sim$.
The same works for $Y_t\to X_t$.
Note also that the restriction of divisor commutes with
its image.
Since we consider relative $\Cl Y/T$ we need to
take the quotient of $\Cl Y$ modulo the vertical over $T$ divisors or
to take sufficiently small $T$.
(We can suppose that they are pullbacks of Cartier
divisors from $T$.)
Their images are also vertical over $T$.

The nonexceptional over $X_t$
marked distinct prime divisors on $Y_t$ goes
to marked distinct prime divisors on $X_t$.
Recall, that by definition of marked divisors:
$D_{1,t}=D_1\rest{Y_t},\dots,D_{s,t}=D_s\rest{Y_t}$
on $Y_t$, where $D_1,\dots,D_s$ are prime divisors on $Y$.
Suppose that $D_1,\dots,D_s$ are required generators.
Let $D_{1,t},\dots,D_{1,r}, r\le s$, be the only nonexceptional
among them, equivalently, $D_1,\dots, D_r$ be
the only nonexceptional over $X$ among $D_1,\dots,D_s$.
Then
\begin{align*}
\fD(D_1,\dots,D_s)/\sim=&\fD(D_1,\dots,D_r)/\sim\bigoplus
\fD(D_{r+1},\dots,D_s)=\Cl Y/T=\Cl Y_t=\\
&\fD(D_{1,t},\dots,D_{r,t})/\sim\bigoplus
\fD(D_{r+1,t},\dots,D_{s,t}.
\end{align*}
Note that for distinct prime divisors
$D_{r+1},\dots,D_s$ with exceptional $D_{r+1}+\dots+D_s$,
$\fD(D_{r+1},\dots,D_s)/\sim=\fD(D_{r+1},\dots,D_s)$
(e.g., by \cite[1.1]{Sh92}).
Thus the required statements for $X_t$ follows
from that of $Y_t$.
Note for this that
$\fD(D_1,\dots,D_r),\fD(D_{1,t},\dots,D_{r,t})$ are
birational invariants with respect to blowdowns on
$X,X_t$ respectively.

For sufficiently small $T$ we can use $\sim$ instead of $\sim_T$
(see Step~3).
We can replace quotients by $\sim$ by
that of $\Pd$, the principal divisor sheaf.
(For $\sim_T$ we can use locally over $T$ principal divisors.)

In the following we suppose that $X/T,X_t$
satisfy required properties.

Step~2. $\shCl X/T=\shPic X/T$
is constant sheaf of $\Pic X_t$.
The equation holds because $X$ is nonsingular and
all Weil divisors are Cartier.
For closed points $t\in T$, $X_t$ is rationally connected.
Hence $\Pic X_t$ is a finitely generated Abelian group.
(This follows from the finite generation of the Neron-Severi group
and from the vanishing $H^0(X_t,\Omega_{X_t}^1)=0$; e.g.,
see \cite[Corollary~3.8, Chapter IV]{Kol91}.)
This implies also that $\Pic X/T$ is locally constant
in \'etal topology.
Moreover, the monodromy is finite because
it transforms ample divisors into ample ones of
the same degree.
By rational connectedness again there are only finitely many of those
divisors up to $\sim$.
After a base change we can suppose that
the monodromy is trivial and $\shPic X/T$ is constant.

Note that if we are working over $k=\C$ then
$\shPic X/T$ is the constant sheaf of $\Pic X_t=H^2(X_t,\Z)$.
The same holds in characteristic $0$
by the Lefshchetz principal.

Step~3. Search for generators.
Take a closed point $t\in T$ and
generators $D_{1,t},\dots,D_{r,t}$ of $\Cl X_t=\Pic X_t$.
More precisely, their classes module $\sim$
are those generators.
By construction and Step~2 there exists horizontal divisors $D_1,\dots,D_r$
such that $D_{1,t}\sim D_1\rest{X_t},\dots,D_r\sim D_r\rest{X_t}$.
Replacing every $D_{i,t}$ by the restriction $D_i\rest{X_t}$ we get
equation instead of $\sim$.
Since $X/T$ is projective, sufficiently general $D_i$
are reduced with reduced restriction $D_{i,t}$ for
every $t\in T$ and with pairwise disjoint support.
When it is needed, we can take sufficiently general $t$, that is,
a nonempty open subset in $T$.
The prime components of $D_i$ corresponds to the prime components of
$D_{i,t}$.
Take prime components of all $D_{i,t}$ we get
required marked distinct prime divisors $D_{1,t},\dots,D_{r,t}$ for every $t\in T$.
(Possible with a different $r$.)
Here we need again a finite covering of $T$ related to
the monodromy of components.
By construction $D_{i,t}$ generate $\WDiv X_t=\CDiv X_t$
modulo $\sim$ for every $t$.
This concludes the proof of the proposition
and Addendum~\ref{const_Cl}.
By construction, for every closed $t\in T$ the natural homomorphism
$$
\fD\to\fD_t/\sim=\Cl X_t=\Cl X/T
$$
is surjective.
Its kernel consists of divisors $D\in\fD$ such that
$D\sim 0$ is principal modulo vertical divisors.
In general, we can't remove vertical divisors.
But for every given linear equivalence we can make this
if we consider smaller $T$.
Since we have a finitely generated Abelian group $\fD_T$,
$\Pd_T$ is finitely generated too and
it is enough finitely many relations and
we can get a required sufficiently small $T$.

Assume that $Y/T$ is a log resolution of $X/T$:
the exceptional divisors
$E_{i,t}$ with the birational transform of  $D_{1,t},\dots,D_{r,t}$
are normal crossing.
In this situation we can use the arguments of
the proof of \cite[(1.3.2)]{Sh92} for
Addendum~\ref{const_lc}.

Addendum~\ref{const_bn_lc} can be verified similarly,
assuming that the log resolution
is sufficiently high: $\sP$ is stable over $X$.
The latter means that $\sP$ is defined over $T$.

Addendum~\ref{const_Cl} for $\shTor$ and $\Nuld$
follows from the fact that topologically or numerically
trivial divisors in the rationally connected variety $X_t$
correspond to torsions of $\Tor(H^2(X_t,\Z))=\Tor(\Pic X_t)$.
That is, they are torsions modulo $\sim$.

Addendum~\ref{const_Pic_etc} is immediate
by Addendum~\ref{const_Cl} under the rationality of singularities.
Indeed, the divisors of $\Card_t$ correspond to
the divisors on the resolution $Y_t$ which are
linearly trivial over $X_t$, equivalently,
vertical Cartier on $Y_t$ over $X_t$
\cite[Propositions~2-3]{Sh19}.
By the assumption, $\sim$ over $Y_t$ is $\equiv/T$
up to torsions.
This implies the constant property for
sheaf of $\Q$-Cartier divisors $\Card_\Q\cap\fD$.
The Cartier divisors of the last sheaf are $\Q$-Cartier Weil divisors
of Cartier index $1$.
This is an open condition with respect to $T$.
A Noetherian induction concludes the proof.
We need to use here also the boundedness
of the Cartier index that follows from the boundedness
of torsions.

\end{proof}

\begin{thm} \label{eff_linear_rep_Cl}
Let $X/Z$ be a wFt morphism.
Then the classes of effective Weil divisors in $\Cl X/Z$
form a finitely generated Abelian monoid $\Eff X/Z$.
The same holds for classes effective Cartier divisors in $\Cl X/Z$.

\end{thm}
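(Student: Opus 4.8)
\emph{Approach.} The plan is to realise $\Eff X/Z$ as the image, under a monoid homomorphism, of the concrete monoid $\Effd$ of divisors effective modulo $\sim_Z$ inside a suitable finitely generated free group of Weil divisors on $X$, and then to invoke finite generation of $\Effd$ for wFt morphisms. I would first observe that $\Cl X/Z$ is finitely generated: by Lemma~\ref{wTt_vs_Ft}, $X/Z$ is a small birational modification of an Ft morphism $Y/Z$; a small modification is an isomorphism in codimension one, hence identifies $\Cl X/Z$ with $\Cl Y/Z$ (and effective classes with effective classes), and $\Cl Y/Z$ is finitely generated — its rank is finite by the cone theorem and its torsion is finite since $Y$ is rationally connected.

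Since $\Cl X/Z$ is finitely generated, writing every generator as a difference of effective classes and collecting all prime divisors that occur yields prime divisors $D_1,\dots,D_r$ on $X$ whose classes generate $\Cl X/Z$. I would set $\fD=\Z D_1\oplus\dots\oplus\Z D_r$, so the class map $\fD\to\Cl X/Z$, $D\mapsto[D]$, is surjective, and let $\Effd=\Effd(D_1,\dots,D_r)\subseteq\fD$ be the submonoid of those $D$ effective modulo $\sim_Z$. The class map carries $\Effd$ \emph{onto} $\Eff X/Z$: given an effective Weil divisor $E$ on $X$, choose $D'\in\fD$ with $[D']=[E]$; then $D'\sim_Z E\ge 0$, so $D'\in\Effd$ and $[D']=[E]$. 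Now $\Effd$ is finitely generated — by \cite[Corollary~5.3]{ShCh}, applicable since $X/Z$ has wFt, it is a finite union of finitely generated submonoids, and a monoid that is a finite union of finitely generated submonoids is itself finitely generated. Hence $\Eff X/Z$, being the image of a finitely generated monoid under a monoid homomorphism, is finitely generated.

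For effective Cartier classes I would use that a Weil divisor $\sim_Z$-equivalent to a Cartier divisor is itself Cartier, so the monoid of effective Cartier divisor classes is exactly $\Eff X/Z\cap\Pic X/Z$, with $\Pic X/Z$ a finitely generated subgroup of $\Cl X/Z$. When $X$ is $\Q$-factorial this subgroup has finite index, and the intersection of a finitely generated monoid with a finite-index subgroup is again finitely generated by reducing coefficients in a generating expression modulo the index. In general I would pass to a small $\Q$-factorialization $f\colon Y\to X$, under which the monoid of effective Cartier classes of $X$ becomes $\Eff Y/Z\cap f^*\Pic X/Z$, then use that an integral Cartier class lying in the (rational polyhedral) effective cone is actually effective — a saturation statement I would prove by running a suitable $D$-MMP on $Y$ together with Corollary~\ref{semiample} — and conclude by Gordan's lemma for the lattice $f^*\Pic X/Z$.

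The decisive input will be \cite[Corollary~5.3]{ShCh}: finite generation (with polyhedral decomposition) of $\Effd$ for wFt morphisms, equivalently the Mori-dream-space property of Fano type morphisms, which itself rests on the $D$-MMP for Ft. Everything else is bookkeeping — except for the delicate point in the Cartier statement for non-$\Q$-factorial $X$, namely the saturation claim that effectivity of a Cartier class is detected by the effective cone; that is where I expect to spend the most care.
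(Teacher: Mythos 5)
The decisive step of your argument — ``$\Effd$ is finitely generated by \cite[Corollary~5.3]{ShCh}'' — is not available, and this is exactly where the real content of the theorem lives. \cite[Corollary~4.5 and Corollary~5.3]{ShCh} give the rational polyhedrality of the \emph{real} cone $\Eff_\R X/Z$ and its finite decomposition into countries $\fP_\varphi$; these are statements about classes effective modulo $\sim_{\R}$ (equivalently $\equiv$), and they say nothing about the monoid of \emph{integral} divisors effective modulo $\sim_Z$. Indeed the paper points out that $\Effd$ is not finitely generated in general, and for wFt morphisms its finite generation is deduced from Corollary~\ref{eff_linear_rep_D}, which is itself a corollary of the theorem you are proving — so your route is circular. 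The obstruction your proposal never confronts is non-saturation: a lattice class lying in the rational polyhedral cone $\Eff_\R X/Z$ is only effective modulo $\sim_\Q$ (some positive multiple is linearly equivalent to an effective divisor), not necessarily linearly equivalent to an effective Weil divisor itself, so Gordan's lemma does not apply to $\Eff X/Z$ directly. The paper's proof is built precisely to bridge this gap: after reducing to $\Q$-factorial Ft (your first step agrees with its Step~2), it takes a standard rational simplicial subcone $Q$ of $\Eff_\R X/Z$, replaces the primitive ray generators $e_i$ by sufficiently divisible \emph{effective} multiples $C_i=m_ie_i$ (using \cite[Proposition~5.14]{ShCh} and killing torsion), observes that the set of effective classes in $c\1 Q$ is stable under adding the $C_i$, and then invokes the combinatorial Lemma~\ref{fg_monoid} to cover it by finitely many orbits $W_j+C$. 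Nothing in your proposal plays the role of the $C_i$ and of Lemma~\ref{fg_monoid}.

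The same unproved saturation appears explicitly in your Cartier argument: ``an integral Cartier class lying in the effective cone is actually effective'' is false in general (torsion classes, and ray generators whose smallest effective multiple is $>1$, lie in the cone without being effective), and it cannot be obtained from a $D$-MMP plus Corollary~\ref{semiample}, which only converts nefness into semiampleness and still produces sections only after passing to multiples. By contrast, the reduction of the Cartier statement in the paper repeats the same orbit argument (with effective indecomposable Cartier generators), again avoiding any saturation claim. Your surjectivity bookkeeping $\Effd\twoheadrightarrow\Eff X/Z$, the reduction via Lemma~\ref{wTt_vs_Ft}, and the finite-index intersection trick are fine, but without an honest substitute for Steps~3--5 and Lemma~\ref{fg_monoid} the proof does not go through.
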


\begin{add} \label{b_free_C}
More precisely, there exists a finite set of
classes $C_1,\dots,C_r\in\Cl X/Z$ of effective divisors
with b-free mobile part over $Z$, in particular,
their Zariski decomposition over $Z$ is defined over $\Z$,
and a finite set of classes $W_1,\dots,W_s\in\Cl X/Z$ of
effective Weil divisors such that every class $E\in\Cl X/Z$
of an effective Weil divisor has the form
\begin{equation}\label{generation}
E=W_j+n_1C_1+\dots n_rC_r,
\text{ every } n_i\in \Z^{\ge 0}.
\end{equation}

\end{add}

\begin{add} \label{effective_Cartier}
If $X$ is $\Q$-factorial,
then we can suppose additionally that every $C_i$ is
the class of an effective Cartier divisor.

\end{add}

For us a different presentation of effective classes
in terms of prime divisors will be more important,
especially, on the level of divisors
(cf. Step~6 in the proof of Theorem~\ref{excep_comp}).

\paragraph{Finite linear presentation.}
Let $X/Z$ be a morphism and $D_1,\dots,D_r$ be
a finite collection of distinct prime divisors on $X$.
They give a monoid of
effective Weil divisors supported on $D_1+\dots+D_r$
$$
\fD^+=\fD^+(D_1,\dots,D_r)=
\{d_1D_1+\dots+d_rD_r\mid d_1,\dots,d_r\in\Z^{\ge 0}\}
\subseteq\fD=\fD(D_1,\dots,D_r).
$$
The monoid is Abelian with $0$ and free, finitely generated.
We say that $\fD^+$ is a {\em linear representative\/} if
every effective divisor $E$ on $X$ is in $\fD^+$ modulo $\sim$:
$$
E\sim_Z d_1D_1+\dots+d_rD_r\in\fD^+.
$$
For local $X/Z\ni o$, it is enough $\sim$ instead of $\sim_Z$.

The same can be defined for Cartier divisors with
effective indecomposable Cartier generators, not necessarily prime.
An effective Cartier divisor $D$ is {\em indecomposable\/},
if $D>0$ and $D=C_1+C_2$, where $C_1,C_2$ are effective Cartier
then $D=C_1$ for $C_1>0$, otherwise $D=C_2$.
However, Cartier generators may be not free in $\CDiv X$.

\begin{cor} \label{eff_linear_rep_D}
Let $X/Z$ be a wFt morphism.
Then there exists a linear representative
finitely generated by prime Weil divisors monoid $\fD^+$.
The same hold for Cartier divisors
with effective indecomposable Cartier generators.
\end{cor}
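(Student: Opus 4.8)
The plan is to deduce the statement about divisors from the statement about divisor classes in Theorem~\ref{eff_linear_rep_Cl}, by realizing a finite generating set of classes by honest effective divisors and then passing to prime components.

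First I would apply Theorem~\ref{eff_linear_rep_Cl} to the wFt morphism $X/Z$: the monoid $\Eff X/Z$ of classes of effective Weil divisors modulo $\sim_Z$ is finitely generated, say by classes $E_1,\dots,E_k\in\Cl X/Z$ (one could take these as in Addendum~\ref{b_free_C}, but mere finite generation suffices here). For each $j$ pick an effective Weil divisor $\widetilde E_j$ whose class is $E_j$, and let $D_1,\dots,D_r$ be the distinct prime components of $\widetilde E_1+\dots+\widetilde E_k$. Put $\fD^+=\fD^+(D_1,\dots,D_r)$; this is the free finitely generated Abelian monoid on the $D_i$, and by construction each $\widetilde E_j\in\fD^+$.

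Next I would check that $\fD^+$ is a linear representative. Let $E$ be an arbitrary effective Weil divisor on $X$. Its class lies in $\Eff X/Z$, hence equals $m_1E_1+\dots+m_kE_k$ for some $m_j\in\Z^{\ge 0}$, so $E\sim_Z m_1\widetilde E_1+\dots+m_k\widetilde E_k$, and the right-hand side is a $\Z^{\ge 0}$-combination of $D_1,\dots,D_r$, i.e.\ an element of $\fD^+$. For local $X/Z\ni o$ one replaces $\sim_Z$ by $\sim$, since $\Cl X/Z$ is computed modulo $\sim$ in that case. This gives the first assertion.

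For the Cartier version I would argue in the same way, now using the last sentence of Theorem~\ref{eff_linear_rep_Cl}: the classes of effective Cartier divisors form a finitely generated submonoid of $\Cl X/Z$. Choose finitely many generating classes and effective Cartier representatives of them; each such representative $D$ is a finite sum of indecomposable effective Cartier divisors, because for a fixed effective $D$ only finitely many Weil divisors are $\le D$, so any chain of proper decompositions $D=C_1+C_2$ into nonzero effective Cartier divisors terminates. Taking all the indecomposable summands that occur produces a finite set of indecomposable effective Cartier generators whose additive span contains, modulo $\sim_Z$, every effective Cartier divisor, by the computation above; as warned in the Finite linear presentation paragraph, these generators need not be linearly free in $\CDiv X$, which the definition permits. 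The whole argument is bookkeeping once Theorem~\ref{eff_linear_rep_Cl} is in hand, and the only point needing a small verification is this finite decomposition into indecomposables, which is immediate from the finiteness just noted. (If free Cartier generators were wanted in the $\Q$-factorial case, one would instead invoke Addendum~\ref{effective_Cartier}.)
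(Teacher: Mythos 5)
Your proposal is correct and follows essentially the same route as the paper: take the distinct prime components of effective Weil divisors representing a finite set of generating classes of $\Eff X/Z$ from Theorem~\ref{eff_linear_rep_Cl}, and in the Cartier case replace the effective generators by their indecomposable summands. You merely spell out details the paper dismisses as immediate, such as the termination of the decomposition into indecomposable effective Cartier divisors.
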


\begin{add} \label{eff_linear_rep_DR}
Every (\/$\R$-linearly) effective $\R$-divisor $E$
is $\R$-linearly equivalent to an element in $\fD_\R^+$.
Respectively, every ($\Q$-linearly) effective $\Q$-divisor $E$
is $\Q$-linearly equivalent to an element in $\fD_\Q^+$.
\end{add}

\begin{proof}
Immediate by Theorem~\ref{eff_linear_rep_Cl}.
Take as generators of $\fD^+$ distinct prime components $D_i$ of
effective Weil divisors $E_j$ such that classes of
$E_j$ generate $\Eff(X/Z)$.

Effective Cartier generators can be replace
by indecomposable ones.

The addendum is immediate.
If Weil divisors $W_1,\dots,W_n$ are
linearly equivalent to effective divisors
$E_1,\dots,E_n\in\fD^+$ respectively, then
$$
D=r_1W_1+\dots+r_nW_n\sim_\R r_1E_1+\dots+r_nE_n\in\fD_\R^+
$$
for every real numbers $r_1,\dots,r_n\ge 0$.
The same works for over $\Q$.

\end{proof}

\begin{cor}
Let $X/Z$ be a wFt morphism. Then the classes of
effective exceptional divisors of $X/Z$ in $\Cl X/Z$ is a finite union
of free finitely generated Abelian saturated monoids of
classes of effective exceptional divisors.
Respectively, the classes of effective linearly fixed over $Z$ divisors in $\Cl X/Z$
belong to a finite union of orbits with action of
free saturated monoids of classes of exceptional divisors.

In particular, exceptional and effective linearly fixed divisors supported
a finite reduced divisor.
\end{cor}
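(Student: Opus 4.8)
The plan is to reduce the corollary to a combinatorial fact about the free monoid on the finitely many prime divisors of $X$ exceptional over $Z$.

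\emph{Finiteness and support.} First I would establish that there are only finitely many prime divisors $E_1,\dots,E_p$ of $X$ exceptional over $Z$, and that both the exceptional and the linearly fixed effective divisors are supported on $E_1+\dots+E_p$. By \cite[Corollary~5.3]{ShCh} the monoid $\Eff X/Z$ admits a finite decomposition into rational polyhedral subcones on each of which the Zariski decomposition over $Z$ is linear, and the prime divisors exceptional over $Z$ are exactly the prime components occurring in the fixed parts $F$ of these finitely many chambers; alternatively, a divisor exceptional over $Z$ is rigid over $Z$ and hence, by Corollary~\ref{eff_linear_rep_D}, equals its representative in the linear representative $\fD^+$ and so is supported on the finite reduced divisor underlying $\fD^+$. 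The same argument via Corollary~\ref{eff_linear_rep_D} applies to any effective divisor linearly fixed over $Z$: it has trivial mobile part over $Z$, so it is its own fixed part, in particular rigid over $Z$. This gives the last assertion of the corollary, identifies the effective exceptional divisors of $X/Z$ with the free finitely generated monoid $\fD^+(E_1,\dots,E_p)$, and realizes the effective divisors linearly fixed over $Z$ as a subset of it.

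\emph{Classes of exceptional divisors.} Next I would pass to classes in $\Cl X/Z$. Since the $E_i$ are the components of the fixed parts over $Z$, b-negativity (Lemma~\ref{b_negativity}, \cite[Negativity~1.1]{Sh92}) shows that the homomorphism $\Z^p\to\Cl X/Z$ sending the $i$-th basis vector to the class of $E_i$ is injective modulo torsion, so the cone spanned by the image monoid $S$ of $\fD^+(E_1,\dots,E_p)=(\Z^{\ge 0})^p$ is a simplicial rational cone. Subdividing this cone into unimodular subcones and, when $\Cl X/Z$ has torsion, splitting $S$ further into the finitely many cosets it meets of the torsion-free subgroup generated by the $E_i$, one obtains $S$ as a finite union of free finitely generated saturated monoids, each generated by classes of effective exceptional divisors; when $\Cl X/Z$ is torsion-free this is the single free monoid $(\Z^{\ge 0})^p$.

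\emph{Classes of linearly fixed divisors, and the main obstacle.} By the first step the effective divisors linearly fixed over $Z$ form a subset $\mathcal R\subseteq\fD^+(E_1,\dots,E_p)=(\Z^{\ge 0})^p$ which is closed under passing to sub-divisors (a sub-divisor of a divisor rigid over $Z$ is again rigid over $Z$), i.e.\ a down-closed subset; by Dickson's lemma its complement is a finite union of translated orthants, so $\mathcal R$ itself is a finite union of pieces $W_j+M_j$ with each $M_j$ the free saturated submonoid of $\fD^+(E_1,\dots,E_p)$ spanned by a subset of $\{E_1,\dots,E_p\}$, and mapping to $\Cl X/Z$ (which at worst multiplies the number of pieces by the torsion) displays these classes as contained in a finite union of orbits under free saturated monoids of classes of exceptional divisors. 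One may equivalently feed the representation $E=W_j+\sum n_iC_i$ of Addendum~\ref{b_free_C} into this, observing that if $E$ is linearly fixed over $Z$ then every $C_i$ with $n_i>0$ is itself linearly fixed over $Z$, hence exceptional over $Z$. The hard part will be the first two steps in the genuinely wFt (not merely birational) case: pinning ``exceptional over $Z$'' down to the components of the fixed cones, deducing their finiteness, and proving the independence of their classes modulo torsion, which rests on combining the chamber decomposition of $\Eff X/Z$ of \cite[Corollary~5.3]{ShCh} with b-negativity (Lemma~\ref{b_negativity}); the remaining combinatorics (Dickson's lemma and unimodular subdivision of a simplicial cone) is routine.
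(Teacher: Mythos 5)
There is a genuine gap, and it sits exactly where the paper warns you it would: you identify the set of effective exceptional divisors of $X/Z$ with the single free monoid $\fD^+(E_1,\dots,E_p)$ on all prime divisors exceptional over $Z$. But ``exceptional'' here means contractible by a (birational) rational $1$-contraction $X\dashrightarrow Y/Z$, and a sum of effective exceptional divisors need not be exceptional: two prime divisors contracted by two \emph{different} $1$-contractions need not be contracted by any single one. This is precisely why the statement is a \emph{finite union} of free saturated monoids and not one monoid; the paper gets the union from the finiteness of birational $1$-contractions of wFt (equivalently Ft) $X/Z$ (\cite[Corollary~4.5 and Proposition~5.14]{ShCh}), each contraction contributing the free saturated monoid generated by \emph{its} prime exceptional divisors (freeness in $\Cl X/Z$ holds for the exceptional primes of a \emph{fixed} contraction, by negativity as in \cite[1.1]{Sh92}). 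Your finite union comes only from torsion in $\Cl X/Z$ and a unimodular subdivision, which does not produce monoids of exceptional classes in the required sense, and your claim that \emph{all} prime divisors exceptional over $Z$ have classes independent modulo torsion is unjustified: Lemma~\ref{b_negativity} gives this only contraction by contraction, not for the union.

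The second step fails for a related reason: ``linearly fixed'' means the divisor is the only member of its linear system (for that one multiple), and this does \emph{not} imply trivial mobile part, rigidity, or support in the exceptional primes. Consequently your ambient monoid for the Dickson-lemma argument is wrong. The paper instead observes that a linearly fixed effective divisor, being unique in its class, must \emph{equal} its representative in the linear representative monoid $\fD^+(D_1,\dots,D_r)$ of Corollary~\ref{eff_linear_rep_D} (this, not rigidity, is what gives the finite support, and that support includes non-exceptional primes), and then that multiplicities can be unbounded only in exceptional components; the bounded non-exceptional part produces the finitely many base points $W_j$ of the orbits, and the unbounded part the exceptional monoids (cf.\ Addendum~\ref{fix_exc}). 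So the down-closedness/Dickson combinatorics is not the issue; the issue is that it is applied inside $(\Z^{\ge 0})^p$ on exceptional primes, where neither the exceptional nor the linearly fixed divisors actually live in the way you assume.
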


\begin{add} \label{fix_exc}
$\Fix X/Z=\Fixd$ has finite support and
is a finite union of orbits with action of
exceptional submonoids of $\Excd^+$.
Every saturated submonoid in $\Excd^+$ is exceptional.
\end{add}

Exceptional divisors here and everywhere include birationally
exceptional divisors, that is, divisors contractible by
a rational $1$-contraction.

\begin{proof}
A sum of (effective) exceptional divisors can be nonexceptional.
For a wFt morphism or, equivalently, for
a Ft morphisms $X/Z$ the exceptional divisors belong to a union of exceptional
divisors corresponding to birational $1$-contractions
$X\dashrightarrow Y/Z$.
The exceptional divisors of such a contraction form
a free Abelian group generated by the prime exceptional divisors of
this contraction.
Respectively, the effective exceptional divisors of this contraction form
a free Abelian saturated monoid generated by the prime exceptional divisors
of this contraction.
The required results follows from \cite[Corollary~4.5 and Proposition~5.14]{ShCh}.

An effective {\em linearly fixed\/} divisor is
the only divisor in its linear system.
A sum of those divisors is effective but not necessarily linearly fixed.
By Corollary~\ref{eff_linear_rep_D} every
effective linearly fixed divisor $D$ is supported in $D_1+\dots+D_r$, that is,
$D=n_1D_1+\dots+n_rD_r,n_i\in\Z^{\ge 0}$.
On the other hand,
nonnegative integral multiplicities $n_i$ can be
arbitrary large only for exceptional $D_i$.
This concludes the statement about
effective linearly fixed divisors.

In Addendum~\ref{fix_exc}, $=$ holds because every effective fixed divisor is
unique in its class modulo $\sim$.
An {\em exceptional\/} submonoid of $\Excd^+$ in the addendum is
the monoid of effective exceptional divisors
for a birational $1$-contraction $X\dashrightarrow Y/Z$.
Respectively, {\em saturation\/} means that if $n_1E_1+\dots+n_mE_m$
belongs to the submonoid, where every $n_i\in \N$ and
every $E_i$ is an effective Weil divisor, then
every $E_i$ belongs to the submonoid.

\end{proof}

The proof of Theorem~\ref{eff_linear_rep_Cl}
will be reduced to the following [general fact
from algebra].

\begin{lemma} \label{fg_monoid}
Let $C$ be a (free) Abelian monoid generated
by a finite set of generators $C_1,\dots,C_r$, and
$S$ be a set with a transitive action of $C$ and
$T$ be a subset in $S$ closed under the action of $C$.
Then $T$ is finitely generated over $C$, that is, can be covered
by finitely many orbits of $C$ in $T$.
\end{lemma}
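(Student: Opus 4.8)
The plan is to reduce the claim to Dickson's lemma on the free commutative monoid $M=(\Z^{\ge 0})^r$: every upward closed subset of $M$ (for the componentwise partial order) has only finitely many minimal elements.

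First I would fix $s_0\in S$; since the $C$-action is transitive, $S=Cs_0$. Because $C$ is generated by $C_1,\dots,C_r$, the assignment $(a_1,\dots,a_r)\mapsto C_1^{a_1}\cdots C_r^{a_r}$ is a surjective homomorphism of monoids $M\to C$, and composing with $c\mapsto cs_0$ yields a surjection $\phi\colon M\to S$, $\phi(a)=(C_1^{a_1}\cdots C_r^{a_r})s_0$. Two facts will be used: for $v,w\in M$ one has $\phi(v+w)=(C_1^{w_1}\cdots C_r^{w_r})\cdot\phi(v)$, and $\{C_1^{w_1}\cdots C_r^{w_r}\mid w\in M\}=C$ (the identity of $C$ arising at $w=0$).

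Next I would put $\widetilde T=\phi^{-1}(T)\subseteq M$ and observe that $\widetilde T$ is upward closed: if $v\in\widetilde T$ and $w\in M$, then $\phi(v+w)=(C_1^{w_1}\cdots C_r^{w_r})\cdot\phi(v)\in C\cdot T\subseteq T$ since $T$ is invariant under the $C$-action, whence $v+w\in\widetilde T$. By Dickson's lemma $\widetilde T$ has finitely many minimal elements $v_1,\dots,v_k$, so $\widetilde T=\bigcup_{i=1}^k(v_i+M)$. Applying $\phi$ and using the two facts above, $\phi(v_i+M)=C\cdot\phi(v_i)$, hence, setting $t_i:=\phi(v_i)\in T$, we obtain $T=\phi(\widetilde T)=\bigcup_{i=1}^k Ct_i$: a cover of $T$ by the finitely many $C$-orbits $Ct_1,\dots,Ct_k$.

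The reduction is the whole content; there is no real obstacle. Dickson's lemma is elementary (induction on $r$, or Noetherianity of $M$). Note that only finite generation of $C$ is used, not freeness, and that the presence of the identity in $C$ is what guarantees each orbit $Ct_i$ contains $t_i$, so that the chosen representatives indeed lie in $T$.
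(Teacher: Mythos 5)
Your proof is correct, and it differs from the paper's argument in packaging rather than in substance. The paper runs an induction on the number of generators $r$ directly on $S$: it fixes one element $t=0+m_1C_1+\dots+m_rC_r\in T$, observes that any element of $T$ admitting a representation dominating $(m_1,\dots,m_r)$ componentwise lies in the orbit $t+C$, and covers the remaining elements by the finitely many slices with a fixed index $i$ and a fixed value $m_i'<m_i$, to which the inductive hypothesis applies with the submonoid $C'$ generated by the $C_j$, $j\ne i$. You instead pull everything back to the free monoid $M=(\Z^{\ge 0})^r$ along the surjection $\phi\colon a\mapsto (C_1^{a_1}\cdots C_r^{a_r})s_0$ and invoke Dickson's lemma there; since the paper's induction is essentially the standard proof of Dickson's lemma transplanted to $S$, the combinatorial core is identical. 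Your reduction buys a cleaner separation of that combinatorics from the action, and it automatically dispenses with the non-uniqueness of the representations $0+\sum n_iC_i$ (harmless but implicit in the paper, which only needs a cover, not a partition); the paper's version buys self-containedness, the induction being carried out on the spot instead of quoting Dickson. Both arguments, as you note, use only finite generation of $C$ together with its identity element, not freeness. The one microscopic caveat in yours: ``transitive'' should be read, as the paper uses it, as $S=C\cdot s_0$ for a suitable base point $s_0$, so $s_0$ should be chosen to be such a base point rather than an arbitrary element of $S$; with that reading your argument goes through verbatim.
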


\begin{proof}
Induction on $r$.
The case $r=0$ has empty set of generators and
$S$ is has at most one element.

Suppose now that $r\ge 1$ and  $S\not=\emptyset$.
Then by transitivity there exists an element $0\in S$ such
that every element $s\in S$ has the form
$$
s=0+n_1 C_1+\dots+n_rC_r,
\text{ every } n_i\in \Z^{\ge 0},
$$
where $+$ denotes the action.
We can suppose also that $T\not=\emptyset$.
Then $T$ has an element
$$
t=0+m_1C_1+\dots+m_r C_r,
\text{ every }m_i\in \Z^{\ge 0}.
$$
This element gives a big orbit $t+C$ in $T$.
This means that for all other element
$$
t'=0+m_1'C_1+\dots+m_r' C_r,
\text{ every }m_i'\in \Z^{\ge 0}
$$
in $T$ some $m_i'<m_i$.
The elements $t'$ with fixed $m_i'$ belong to
the orbit
$$
S'=0+m_i'C_i+ C',
$$
where $C'$ is a free Abelian submonoid in $C$ generated by
$C_j$ with $j\not=i$.
By induction on $r$ the intersection $S'\cap T$ can be covered
by finitely many orbits of $C'$.
Since we have finitely many subsets as $S'$ in $S$,
$T$ can be covered by finitely many orbits of $C$.

\end{proof}

\begin{proof}[Proof of Theorem~\ref{eff_linear_rep_Cl}]

Step~1. (Monoid.)
If $E,E'\in\Eff X/Z$ are classes of effective
Weil divisors $D,D'$ then $E+E'$ is the class of
effective Weil divisor $D+D'$.
The zero class of $\Eff X/Z$ is the class of $0\ge 0$.
So, $\Eff X/Z$ is a Abelian monoid.

Step~2. Taking a $\Q$-factorialization $\varphi\colon Y\dashrightarrow X$
with a small birational modification of Lemma~\ref{wTt_vs_Ft},
we can suppose that $X$ is $\Q$-factorial and $X/Z$ has Ft.
Since $\varphi$ is a small birational
transformation, it preserves required generators
according to the commutative diagram
$$
\begin{array}{ccc}
\Eff Y/Z&\subset &\Cl Y/Z\\
\downarrow&&\downarrow\varphi_*\\
\Eff X/Z&\subset&\Cl X/Z
\end{array}
$$
with vertical isomorphisms.

Step~3. Generators $C_i$.
We find generators modulo $\sim_\Q$.
The classes of effective Weil divisors map naturally
(on generators over $\R^+$) into the cone of effective divisors:
$$
c\colon\Eff X/Z\to
\Eff_\R X/Z\subset \Cl_\R X/Z=(\Cl X/Z)\otimes_\Z \R,
$$
a class modulo $\sim_Z$ goes to the class modulo
$\sim_{\R,Z}$ or $\equiv$ over $Z$.
(The image generate the cone over $\R^+$.)
The cone $\Eff_\R X/Z$ is closed convex rational polyhedral
\cite[Corollary~4.5]{ShCh}.
It has a rational simplicial cone decomposition.
So, it is enough to establish the required finite generatedness
of classes of effective Weil divisors over any
rational simplicial cone $Q\subseteq \Eff_\R X/Z$.
Moreover, we can suppose that $Q$ is standard
with respect to the lattice
$$
(\Cl X/Z)/\Tor\subset\Cl_\R X/Z,
$$
where $\Tor$ denotes the subgroup of torsions in
$\Cl X/Z$.
In other words, the primitive vectors $e_1,\dots,e_r$ of edges
of $Q$ generate the monoid $Q\cap((\Cl X/Z)/\Tor)$.
By definition and construction,
for every $e_i$,
some positive multiple $C_i=m_ie_i$ is linear equivalent
to an effective Weil divisor.
By \cite[Proposition~5.14]{ShCh} we can take $C_i$
which satisfy Addenda~\ref{b_free_C} and \ref{effective_Cartier}.
The b-{\em free\/} assumption means that, after a small birational modification
of $X/Z$, the mobile part of $C_i$ become the class of
a linearly free divisor over $Z$.
In this situation, the Zariski decomposition of every $C_i$
is defined in $\Cl X/Z$, in particular, over $\Z$.
We can suppose also that the positive integers $m_i$ are
sufficiently divisible, e.g., kill torsions:
for every $m_i$, $m_i\Tor=0$.
So, a lifting of $C_i$ to $\Cl X/Z$ is well-defined:
we identify $C_i\in Q$ with the lifting $C_i=m_i e_i'\in\Cl X/Z$,
where $e_i'\in\Cl X/Z$ goes to $e_i\in Q$.
Every class $C_i$ in $\Cl X/Z$ is not a torsion
and belongs to $\Eff X/Z$.
The free Abelian monoid $C$ generated by $C_1,\dots,C_r$
will be considered as a submonoid in $\Eff X/Z$ and
in $Q$.

Step~4. Generators $W_j$.
The submonoid $C$ acts naturally on $\Eff X/Z,Q$ and on $c\1 Q$:
for every element $n_1C_1+\dots+n_rC_r\in C$,
a class $E\in\Eff X/Z$ goes to
$$
E+n_1C_1+\dots+ n_r C_r\in\Eff X/Z
$$
under the action of $n_1C_1+\dots+n_r C_r$.
Indeed, if $E\in c\1 Q$, then
$$
c(E+n_1C_1+\dots+n_r C_r)=c(E)+n_1C_1 +\dots+n_rC_r\in Q
$$
and
$$
E+n_1C_1+\dots+n_r C_r\in c\1 Q.
$$
So, if $E\in c\1 Q\cap (\Eff X/Z)$,
then for every $n_1C_1+\dots+ n_r C_r\in C$,
$$
E+n_1C_1\dots+ n_r C_r\in c\1 Q\cap(\Eff X/Z)
$$
and $C$ acts on $c\1 Q\cap(\Eff X/Z)$.
The required finite generatedness over $Q$ means that
there exists finitely many classes $W_1,\dots,W_s\in c\1 Q\cap (\Eff X/Z)$
such that every class $E\in c\1 Q\cap (\Eff X/Z)$ has
the form~(\ref{generation}).

Step~5. Reduction to Lemma~\ref{fg_monoid}.
Actually, it is enough to establish this for every
orbit $E+C$ of $c\1 Q$.
The orbits are not disjoint but finitely many of them
cover $c\1 Q$: for finitely many $E_1,\dots,E_l\in c\1 Q$,
$$
c\1 Q=\cup_{i=1}^l (E_i+ C).
$$
Indeed, the same holds for $Q$ with
$$
E_j=n_1e_1+\dots+n_r e_r, \text{ every }0\le n_i<m_i.
$$
The required classes for $c\1 Q$ are
the liftings of these $E_j$, that is, the elements
of $c\1 E_j$.
Every last set is finite and has as many elements as
$\Tor$.

The required finite generatedness of $(E+C)\cap  (\Eff X/Z)$ for every given class
$E+C$, that is, the intersection can be covered by finitely many
orbits under the action of $C$, follows from Lemma~\ref{fg_monoid}.

The version with Cartier divisors can be established similarly.

\end{proof}

\begin{prop} \label{finite_lin_repres}
Let $X_t,t\in T$, be a wFt variety or an algebraic space in
a bounded family.
Then a linear representative monoid $\fD^+_t$
has bounded rank $r$.
More precisely, for appropriate parametrization $X/T$,
every $X_t$ has
marked distinct prime divisors $D_{1,t},\dots,D_{r,t}$
such that for every effective $D\in\WDiv X_t$,
$$
D\sim d_1D_{1,t}+\dots+d_rD_{r,t}
$$
for some $d_1,\dots,d_r\in \Z^{\ge 0}$.

\end{prop}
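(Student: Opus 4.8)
The plan is to prove this as the relative, bounded-family counterpart of Corollary~\ref{eff_linear_rep_D}, running the argument of Proposition~\ref{bounded_rank} but with the monoid $\shEff X/T$ of effective classes in place of $\shCl X/T$. First I would reduce to the case in which every $X_t$ is $\Q$-factorial of Fano type, by passing to a family of $\Q$-factorializations as in Step~2 of the proof of Theorem~\ref{eff_linear_rep_Cl}: these are small birational modifications, so they preserve $\Cl$, the effective monoid, and the sought generators, and boundedness is preserved. A wFt variety is rationally connected, so Proposition~\ref{bounded_rank} and Addendum~\ref{const_Cl} apply: after reparametrizing and a finite base change killing monodromy, $\shCl X/T=\fD/\Pd$ is a componentwise constant sheaf of finitely generated abelian groups of bounded rank, equipped with marked distinct prime divisors.

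The heart of the matter is upgrading this constancy from $\shCl$ to $\shEff$. By Theorem~\ref{eff_linear_rep_Cl} each $\Eff X_t$ is a finitely generated submonoid of $\Cl X_t$, the cone $\Eff_\R X_t$ is closed convex rational polyhedral, and by \cite[Corollary~4.5 and Corollary~5.3]{ShCh} it carries a finite decomposition into (rational, possibly nonclosed) cones corresponding to the rational $1$-contractions of $X_t/\Spec k$, with generators $C_i$ (of b-free mobile part, so with integral Zariski decomposition) and $W_j$ as in Addendum~\ref{b_free_C}. Since the family is bounded, the polyhedral fan attached to $\Eff_\R X_t$ inside $\shCl_{\R,t}X/T$ takes only finitely many combinatorial types; a Noetherian stratification of $T$ by this type, together with a further finite base change over each stratum trivializing the monodromy permuting the chambers and the primitive edge vectors, makes $\shEff X/T$ componentwise constant and produces a uniform number of marked classes $C_1,\dots,C_r,W_1,\dots,W_s$ of effective Weil divisors for which~(\ref{generation}) holds on every fiber. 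This is the same mechanism already used for $\shCl$ in Step~3 of the proof of Proposition~\ref{bounded_rank}, now applied to the richer datum $\shEff$.

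Finally, for sufficiently general representatives I would pick effective Weil divisors on $X$ restricting to effective representatives of the $C_i$ and $W_j$ on each $X_t$, with reduced restrictions and with the monodromy on their prime components trivialized (again as in Step~3 of Proposition~\ref{bounded_rank}); let $D_{1,t},\dots,D_{r,t}$ be the resulting finite collection of marked distinct prime divisors (with $r$ now reset to their number). Then $\fD^+_t=\fD^+(D_{1,t},\dots,D_{r,t})$ is a linear representative monoid of $X_t$ of the desired bounded rank: given an effective $D\in\WDiv X_t$, write its class as $W_j+\sum_i n_iC_i$ via~(\ref{generation}); each $C_i$ and each $W_j$ is $\sim$ to a $\Z^{\ge 0}$-combination of the $D_{k,t}$ by construction, hence so is $D$. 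The only genuine obstacle is the second paragraph — transporting the constructions of \cite{ShCh} (finite generation of $\Eff$, polyhedrality of $\Eff_\R$, finiteness of $1$-contractions and the attendant chamber decomposition) uniformly through the bounded family — and it is resolved by the standard pattern: boundedness forces finitely many combinatorial types, which a Noetherian induction plus a finite base change then renders constant.
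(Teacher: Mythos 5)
Your overall route — reduce to $\Q$-factorial Ft fibers, get constancy of $\shCl X/T$ from Proposition~\ref{bounded_rank}, and then upgrade to the effective monoid via the generators $C_i,W_j$ of Theorem~\ref{eff_linear_rep_Cl} — is the same skeleton as the paper's proof, but the step you yourself flag as "the only genuine obstacle" is exactly where the real work lies, and your proposed resolution does not close it. The problem is that the integral monoid $\shEff X/T$ is not determined by the real polyhedral data: constancy of the cone $\Eff_\R X_t$ and of its chamber decomposition (even with monodromy killed) says nothing about which integral classes inside the cone are actually effective on a given fiber, and effectivity of an individual class can jump upward on special fibers (semicontinuity of $h^0$) without any change in the fan. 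Moreover, since there are infinitely many integral classes, you cannot argue "Noetherian stratification plus finite base change" class by class: a priori each nonrestricted class contributes its own proper closed locus in $T$, and infinitely many such loci cannot simply be discarded. This is precisely the difficulty the paper confronts in Step~3 of its proof, and it is resolved not formally but by the orbit argument in the spirit of Lemma~\ref{fg_monoid}: one assumes infinitely many nonnegative vectors $(n_1,\dots,n_r)$ give classes $W_t+n_1C_{1,t}+\dots+n_rC_{r,t}\in\Eff X_t$ not restricted from $\Eff X/T$, passes to the contraction $X'\to Y/T$ attached to the nef model of the chamber containing $Q$, uses boundedness of the effective representatives as cycles to conclude (after removing one proper closed subset) that $W_t$ has a representative whose horizontal part over $Y$ is effective, and then adds $n_1C_{1,t}+\dots+n_qC_{q,t}$, which is the pullback of a very ample divisor from $Y$, to exhibit the class as restricted — a contradiction.

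A second, related omission: even the constancy of $\shEff_\R X/T$ (and of the chamber structure) is not obtained in the paper by "finitely many combinatorial types of the fan," which by itself does not identify the fiberwise chambers and contractions with restrictions of relative ones. The paper's Step~1 equips the family with a klt $n$-complement $(X/T,B)$, an effective $E\le B$ and a polarization $H$ with $E\equiv hH/T$, and Step~2 uses this to bound the relevant cycles — fibers of fibration-type contractions, and curves $C$ in exceptional loci with $-(C.K+B-E)\le 2\dim X$, hence $(C.H)\le 2\dim X/h$ — so that, after removing a proper closed subset of $T$, every contraction of $X_t$ is restricted from a contraction of $X/T$. Your sketch drops this polarization/complement mechanism entirely, so neither the identification of fiberwise geography with relative geography nor the bounded-cycle input needed for the Step~3 orbit argument is available. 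In short: the statement you are proving is genuinely about integral effectivity in families, and the missing ingredients are (i) the compatible complement/polarization giving boundedness of contractions and representatives, and (ii) the orbit-plus-fibration argument showing that far enough out along each orbit $W+C$ the fiberwise effective classes are restricted; without these the appeal to boundedness and Noetherian induction does not suffice.
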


Notice that these marked divisors also satisfy
Proposition~\ref{bounded_rank} (cf. Addendum~\ref{all_together} below).

\begin{add}
The sheaf of linear representative monoids
$\fD^+=\fD^+(D_1,\dots,D_r)$ is constant.
Every $D\in\Effd$ is
linear equivalent to an effective divisor in $\fD_T^+$.

\end{add}

Remark: $\fD^+\subseteq\Effd$ but not $=$ in general.

\begin{add} \label{const_Mob}
Sheaves
$
\shEff X/T,\shMob X/T,\shFix X/T,\shExc X/T,\shExc^+ X/T,
\Effd,\Mobd, \Fixd,\Excd,\Excd^+
$
are constant,
$$
\shEff X/T=\Effd/\sim=\Effd/\Pd,
\shEff_T X/T=\Effd_T/\sim=\Effd_T/\Pd_T
\text{ and }
\Eff X_t=\Effd_t/\sim=\Effd_t/\Pd_t,
$$
$$
\shMob X/T=\Mobd/\sim=\Mobd/\Pd,
\shMob_T X/T=\Mobd_T/\sim=\Mobd_T/\Pd_T
\text{ and }
\Mob X_t=\Mobd_t/\sim=\Mobd_t/\Pd_t,
$$
$$
\shFix X/T=\Fixd,
\shFix_T X/T=\Fix X/T=\Fixd_T
\text{ and }
\Fix X_t=\Fixd_t,
$$
$$
\shExc X/T=\Excd,
\shExc_T X/T=\Exc X/T=\Excd_T
\text{ and }
\Exc X_t=\Excd_t,
$$
$$
\shExc^+ X/T=\Excd^+,
\shExc_T^+ X/T=\Exc^+ X/T=\Excd_T^+
\text{ and }
\Exc^+ X_t=\Excd_t^+,
$$
for every (closed) $t\in T$,
where $\Fixd,\Excd,\Excd^+$ are respectively subsheaves of
(linearly) fixed, exceptional, effective exceptional divisors
over $T$ in $\fD$ and
$\shFix X/T,\shExc X/T,\shExc^+ X/T$ are corresponding
subsheaves in $\shCl X/T$.

The following sheaves
$$
\shEff_\R X/T,\shEff_\Q,\shMob_\R X/T,\shMob_\Q X/T,
\shExc_\R X/T,\shExc_\Q X/T,\shExc_\R^+ X/T,
\shExc_\Q^+ X/T
$$
$$
\Effd_\R,\Effd_\Q,\Mobd_\R\Mobd_\Q,
\Excd_\R,\Excd_\Q,\Excd_\R^+,\Excd_\Q^+
$$
are also constant,
generated respectively by
$\shEff X/T,\shMob X/T,\shExc X/T,\shExc^+ X/T,
\Effd,\Mobd,\Excd,\Excd^+$ over $\R^+,\Q^+$ and
$$
\shEff_{\R,T}X/T=\Effd_{\R,T}/\sim_\R=
\Effd_{\R,T}/\Pd_{\R,T},
\shEff_{\Q,T}=\Effd_{\Q,T}/\sim_\Q=\Effd_{\Q,T}/\Pd_{\Q,T},
$$
$$
\shMob_{\R,T}X/T=\Mobd_{\R,T}/\sim_\R=
\Mobd_{\R,T}/\Pd_{\R,T},
\shMob_{\Q,T}=\Mobd_{\Q,T}/\sim_\Q=\Mobd_{\Q,T}/\Pd_{\Q,T},
$$
$$
\shExc_{\R,T}X/T=\Exc_\R X/T=\Excd_{\R,T},
\shExc_{\Q,T}=\Exc_\Q X/T=\Excd_{\Q,T},
$$
$$
\shExc_{\R,T}^+X/T=\Exc_\R^+ X/T=\Excd_{\R,T}^+,
\shExc_{\Q,T}^+=\Exc_\Q X/T=\Excd_{\Q,T}^+.
$$
Canonical isomorphisms are given by homomorphisms
$\fD_t\to\Cl X_t, D_t\mapsto D_t/\sim=D_t/\Pd_t$.
Equivalences $\sim_\Q,\sim_\R$ can be replaced by $\equiv/T$.

\end{add}

Remark: We do not consider $\shFix_\R X/T,\shFix_\Q X/T$
and respectively $\Fixd_\R,\Fixd_\Q$
because to be fixed even under wFt is preserved for multiplicities
of a divisor if and only if it is exceptional
(Zariski decomposition).

For $X/Z$, monoids $\Effd$ and $\Mobd$ are defined
respectively as effective and mobile divisors in $\fD$ modulo $\sim_Z$.
The corresponding classes in $\Cl X/Z$ are $\Eff X/Z$ and $\Mob X/Z$ respectively.
But $\Fixd$ defined as divisors
without $\sim_Z$ in $\fD$ and
$\Fixd\supseteq \Excd^+$.
The corresponding classes belong to $\Fix X/Z$ and
$\Fix X/Z=\Fixd$ under the canonical identification.

\begin{add}
If additionally $X/T$ is projective, that is,
has Ft, then sheaves of monoids
$$
\shsAmp X/T=\shNef X/T,\sAmpd=\Nefd
$$
are constant and
$$
\shsAmp X/T=\sAmpd/\sim=\sAmpd/\Pd,
\shsAmp_T X/T=\sAmpd_T/\sim=\sAmpd_T/\Pd_T,
$$
$$
\shNef X/T=\Nefd/\sim=\Nefd/\Pd,
\shNef_T X/T=\Nefd_T/\sim=\Nefd_T/\Pd_T.
$$

The following sheaves of cones
$$
\shsAmp_\R X/T=\shNef_\R X/T,
\shsAmp_\Q X/T=\shNef_\Q X/T,
\sAmpd_\R=\Nefd_\R,\sAmpd_\Q=\Nefd_\Q
$$
are also constant, generated respectively by
$$
\shsAmp X/T,\sAmpd
$$
over $\R^+,\Q^+$ and
$$
\shsAmp_{\R,T}X/T=\sAmpd_{\R,T}/\sim_\R=
\sAmpd_{\R,T}/\Pd_{\R,T},
\shsAmp_{\Q,T}=\sAmpd_{\Q,T}/\sim_\Q=\sAmpd_{\Q,T}/\Pd_{\Q,T},
$$
$$
\shNef_{\R,T}X/T=\Nefd_{\R,T}/\Nuld_{\R,T},
\shNef_{\Q,T}=\Nefd_{\Q,T}/\Nuld_{\Q,T}.
$$
Canonical isomorphisms are given by homomorphisms
$\fD_t\to\Cl X_t, D_t\mapsto D_t/\sim=D_t/\Pd_t$
Equivalences $\sim_\Q,\sim_\R$ can be replaced by $\equiv/T$.

\end{add}

\begin{proof}
For good properties of the family $X_t,t\in T$,
we need to change it or to take an appropriate
parametrization.
Usually we alternate the base $T$ and cut out
its closed subfamilies.
We use for this a Noetherian induction.
However, the key point in all proofs is a boundedness that
typically means finiteness and boundedness of
generators.

Step~1. We can suppose that every $X_t$ is
$\Q$-factorial, has Ft and
there exists a polarization $H_t$ on $X_t$
compatible with a complement.
The latter means that there exists an $\R$-complement
$(X_t,B_t)$ of $(X_t,0)$ and effective $E_t\le B_t$
such that $E_t\equiv hH_t$, where $h$ is a positive rational number,
independent of $t$,
and $H_t$ is an ample Cartier divisor on $X_t$.
For this we can convert our family $X/T$ into
a wFt and $\Q$-factorial Ft variety
with $\Q$-factorial Ft fibers $X_t$
by Lemma~\ref{wTt_vs_Ft} and after a $\Q$-factorialization.
Then we can construct an $\R$-complement and
actually a klt $n$-complement $(X/T,B)$ of $(X/T,0)$,
a relative polarization $H$ on $X$ over $T$
and an effective $\Q$-divisor $E\le B$ on $X$
such that $E\equiv hH/T$, where $h$ is a required
positive rational number.
Moreover, by Proposition~\ref{bounded_rank}
we can suppose that $K,B,E,H$ are constant, that is,
belong to $\fD_\Q$.
Put $B_t=B\rest{X_t},E_t=E\rest{X_t},H_t=H\rest{X_t}$.

In particular, boundedness of divisors or other cycles on $X_t$
can be measured by $H$ or, more precisely, by $H_t$.

Note also that sheaves $\shEff,\shMob,\shFix,\shExc,\shExc^+$,
their divisorial versions $\Effd,\Mobd,\Fixd,\Excd,\Excd^+$ and
their $\R,\Q$ versions are invariants of small birational modification.
But sheaves $\shsAmp=\shNef,\sAmpd=\Nefd$ and
their $\R,\Q$ versions depend on a model of
$X/T$; $=$ holds by \cite[Corollary~4.5]{ShCh} because every $X_t$ has Ft.

Our main objective is to verify the constant sheaf property for
$\Effd$ and $\shEff X/T$.
We will be sketchy for other sheaves.
By Proposition~\ref{bounded_rank} we can
suppose that the sheaf of $\R$-linear spaces $\shCl_\R X/T$
is constant.

Step~2. Its subsheaf of
closed convex rational polyhedral cones $\shEff_\R X/T$
is also constant.
Indeed, $\Eff_\R X/T$ is covered by
finitely many convex rational polyhedral cones (countries)
$\fP_\varphi$ (relative geography).
The cone $\fP_\varphi$ corresponds to
a models $\varphi\colon X\dashrightarrow Y/T$,
where $\varphi$ is a rational $1$-contraction.
The class of an $\R$-divisor $D$ belongs to $\fP_\varphi$
if its $1$-contraction $\varphi_D$ ($D$-model) is $\varphi$ and
the birational $1$-contractions, on which $D$ is nef,
are the same (minimal $D$-models).
Cones $\fP_\varphi$ are not
necessarily closed.
(They depend also on the minimal models.)
The finiteness and existence of such
a covering  for $\Eff_\R X/T$ and for $\Eff_\R X_t$
see in \cite[Theorem~3.4 and Corollary~5.3]{ShCh}.

To verify that $\shEff_\R X/T$ is constant it is
easier to do this with the constant sheaf property
of cones $\fP_\varphi$.
For this we need to verify that the restriction $\sf_\varphi\rest{X_t}$
of cone $\fP_\varphi$ for $\Eff_\R X/T$ is
$\fP_{\varphi_t}$, where
$$
\varphi_t=\varphi\rest{X_t}\colon
X_t\dashrightarrow Y_t
$$
and the corresponding models with nef $D_t$
are restrictions too.

We can do this inductively with respect to models $\varphi$.
We start with $\varphi=\Id_X$, the identical model.
In this case $\fP_\varphi=\Amp_\R X/T$ is
the cone of ample $\R$-divisors on $X/T$.
The sheaf of such cones $\shAmp_\R X/T$ is also constant
over appropriate nonempty open subset in $T$.
Moreover, so does its closure $\shNef_\R X/T=\shsAmp_\R X/T$.
The equality $=$ holds because $X/T$ has Ft.
The closure is covered by cones $\fP_\varphi$ with
contractions $\varphi\colon X\to Y/T$ given
by nef over $T$ divisors $D$.
Contractions $X_t\to Y_t$ are restricted from
those contractions of $X$ over $T$ after taking
an appropriate nonempty open subset in $T$.
Both $X/T$ and $X_t$ have finitely many contractions.
Moreover, they are bounded.
This allows to remove a proper closed subset in $T$
of contraction of $X_t$ which are not restricted.
The boundedness uses the $n$-complement structure.
If $\varphi$ is a fibration then
generic fibers $X_y=\varphi\1 y,y\in Y$, are bounded:
$(X_y,B_y-E_y)$ is a klt log Fano of bounded (local) lc index,
where $B_y=B\rest{X_y},E_y=E\rest{X_y}$.
If $\varphi$ is birational then its exceptional locus
is covered by bounded curves $C$ over $Y$ and bounded itself.
To find bounded curves $C$ we can take ones with
$-(C.K+B-E)\le 2\dim X$ \cite{MM}.
Since $(C.K+B)=0$
then $(C.H)\le 2\dim X/h$ is bounded.

Fibers and curves are treated as effective cycles.
Perhaps, we need an alteration of $T$ to have
an appropriate (constant) family of those cycles.

If $\Eff_\R X/T=\Nef_\R X/T$ we are done.
Otherwise we take a cone $\fP_\varphi$ of the maximal dimension
as $\Nef_\R X/T$ with the closure intersecting
$\Nef_\R X/T$.
In this case $X$ is birational to $Y$ under $\varphi$ and
$\fP_\varphi=\Amp_\R Y/T$.
(Actually, we can suppose that $\varphi$ is
an extremal divisorial contraction or small birational modification,
an extremal flop of $(X/T,B)$.)
So, we get a bounded family of Ft varieties $Y_t$.
We can repeat above arguments and extend
our constant sheaf $\shNef_\R X/T$ by
also a constant sheaf $\shNef_\R Y/T$.
Since we have finitely many models $\varphi$ and
closures of their cones $\fP_\varphi$ are
connected by the convex property of
$\shEff_\R X/T$, we get the constant property
of $\shEff_\R X/T$, possible taking
a nonempty open subset in $T$.
For the rest in $T$ we use Noetherian induction.

Since all cones $\fP_\varphi$ are rational
we proved also that $\shEff_\Q X/T$ is constant too.
Similarly we can prove that $\shMob_\R X/T,\shMob_\Q X/T$
are constant.
Sheaves $\shExc X/T=\Excd,\shExc^+X/T=\Excd^+$ and
their $\R,\Q$ versions
are also constant.
They are union of corresponding monoids of exceptional
locus for birational models $\varphi$.
For $=$ we need a marked divisors $D_{i,t}$ as
in Proposition~\ref{bounded_rank}.
Note that $\Excd_t,\Excd_t^+$ are defined as divisors
without $\sim$ in $\fD_t$.
The same marked divisors work for
the constant sheaf property with other properties
of $\Effd_\R,\Mobd_\R$ and
their $\Q$ versions.

By the way we proved that sheaves $\shNef_\R X/T=\shsAmp_\R X/T$
and their $\Q$ version are constant too.
The marked divisors as above can be used to prove
that $\Nefd_\R=\sAmpd_\R$ and their $\Q$ versions
are constant and satisfy required properties.

The other sheaves of integral divisors need
more sophisticated technique of the proof
of Theorem~\ref{eff_linear_rep_Cl}.
A Zariski decomposition is not useful here since
it is not integral usually.
On the other hand, the decomposition into
a fixed and a mobile parts is not stable (Zariski) under multiplication.
Thus we can't reduce the constant property for
$\shEff X/T$ to that of $\shFix X/T$ and of $\shMob X/T$.
Cf. also Step~4.

Step~3. $\shEff X/T$ is constant.
We consider $\shEff X/T$ as
a sheaf of submonoids in $\shCl X/T$.
We use the proof of Theorem~\ref{eff_linear_rep_Cl}
with minor [modifications] improvements (cf. Lemma~\ref{fg_monoid}).

By Proposition~\ref{bounded_rank}, $\shCl X/T$
is constant and there exists a natural constant
sheaf homomorphism
$$
c\colon \shCl X/T \to \shCl_\R X/T
$$
induced by $\otimes\R$.
We consider a constant rational simplicial cone
$Q\subseteq\shEff_\R X/T\subseteq\shCl_\R X/T$ as in Step~3 of
the proof of Theorem~\ref{eff_linear_rep_Cl}.
We can suppose also that $Q$ lies in
the closure of a cone $\fP_\varphi$ in $\shEff_\R X/T$.
We suppose also that $C_i\in Q$ are free on
a nef model for $C_i$ (cf. Addendum~\ref{b_free_C}).
We take also constant classes $W_j$ corresponding
to $W_j$ of Step~4 in the proof of Theorem~\ref{eff_linear_rep_Cl}.
Notice for this that $\shCl_{\R,T}X/T=\Cl_\R X/T$
by Proposition~\ref{bounded_rank}.
We state that restrictions $C_{i,t}=C_i\rest{X_t},
W_{j,t}=W_j\rest{X_t}$ generates $c\1 Q\cap(\Eff X_t)$,
possibly after taking a nonempty open subset in $T$.
This implies that $c\1 Q\cap(\shEff X/T)$ is constant and
$\shEff X/T$ is constant too.

For this we consider orbits $W+C$,
where $c(W)$ belongs to integral points of $Q$.
Since the kernel of $c$ is finite and
$Q$ is covered by finitely many orbits $c(W)+C$, the inverse image
$c\1 Q$ is covered by finitely many orbits $W+C$ too.
We verify that, for every $t$ in some nonempty open subset
in $T$, every orbit $W_t+C_t$ has
only restricted classes of $\Eff X/T$ in
the intersection with $\Eff X_t$, where
$W_t=W\rest{X_t},C_t=C\rest{X_t}$.
This implies required statement about generators.
If it is not true, for some $W$,
there exist {\em infinitely\/} many elements
$W_t+n_1C_{1,t}+\dots+n_rC_{r,t}\in\Eff X_t$
with nonnegative integers $n_i$ which are not
restricted from elements of $\Eff X/T$.
That is, there exist infinitely many nonnegative integral
vectors $(n_1,\dots,n_r)$ and
some $t\in T$ for every such vector that
$W_t+n_1C_{1,t}+\dots+n_rC_{r,t}\in\Eff X_t$
is not
restricted from elements of $\Eff X/T$.
Otherwise, we can remove finitely many closed proper subsets in $T$ corresponding
to vectors $(n_1,\dots,n_r)$ of nonrestricted elements.
The effective divisors with the linear equivalence class
$W_t+n_1C_{1,t}+\dots+n_rC_{r,t}$ are bounded.
(By definition $\Eff X_t$ consists of
the classes of effective Weil divisors modulo $\sim$.)
Replacing $W$ by $W+m_1C_1+\dots+m_rC_r$ with
a fixed nonnegative integral vector $(m_1,\dots,m_r)$ and interchanging
classes $C_i$ we can suppose that
nonnegative integral vectors $(n_1,\dots,n_r)$ has
the form $(n_1,\dots,n_q,0,\dots,0),q\le r$,
where $n_i,i=1,\dots,q$, are arbitrary large:
for every nonnegative integral vector $(m_1,\dots,m_q)$
there exists $(n_1,\dots,n_q)$ with every $n_i\ge m_i$.
This is impossible after removing
a proper closed subset in $T$.

Indeed, consider the contraction $X'\to Y/T$
corresponding to the nef model of $\varphi$.
By construction if the class of linear equivalence
$W_t+n_1C_{1,t}+\dots+n_qC_{q,t}\in\Eff X_t$
then it has an effective divisor $D_t$ on $X_t$.
So, a sufficiently general fiber $X_{t,y}',y\in Y$,
of $X'/Y$
has an effective divisor $D_{t,y}=D_t\rest{X_{t,y}'}$
in the class of linear equivalence $W_{t,y}=W_t\rest{X_{t,y}'}$.
In other words, the class $W_t$ has a representative modulo $\sim$
with the effective restriction $D_{t,y}$,
that is, the horizontal part of the representative is effective.
Those representative are bounded as cycles on $X'/T$.
If the corresponding closed subset in $T$ is proper then
we can remove this set with all classes
$W+n_1C_1\dots+n_qC_q$ and there are no
nonrestricted classes.
Thus we can assume that, for every $t\in T$,
$W_t$ has a representative modulo $\sim$ such that
its horizontal part over $Y$ is effective.
Adding $n_1C_{1,t}+\dots+n_qC_{q,t}$ with sufficiently large
$n_1,\dots,n_q$ we get
$W_t+n_1C_{1,t}+\dots+n_qC_{q,t}$ in $\Eff X_t$.
Indeed, $n_1C_{1,t}+\dots+ n_q C_{q,t}$ is
the pull back of a very ample divisor over $T$ from $Y$.
Thus $W_t+n_1C_{1,t}+\dots+n_qC_{q,t}$ is
the restriction of $W+c_1C_1\dots+c_qC_q\in\Eff X/T$,
a contradiction.

Theorem~\ref{eff_linear_rep_Cl} implies that
$\shEff X/T$ is finitely generated.
Thus we can find marked divisors
$D_i$ and construct $\Effd\subset\fD$ with
surjection $\Effd\twoheadrightarrow\shEff X/T$.
The sheaf $\Effd$ is also constant.

Step~4. $\shMob X/T$ is constant by the similar arguments
as for $\shEff X/T$ in Step~3.
This allows to construct the constant sheaf $\Mobd$
with required properties.

$\shFix X/T=\Fixd$ is constant too by Addendum~\ref{fix_exc}.

Finally, we can replace $\sim_T$ by $\sim$ for smaller $T$
in relations such as $\shEff X/T=\Effd/\sim$ because
$\sP$ is finitely generated.
However, for the sheaf $\shEff X/T$ it is better to use
$\sim_T$. The same holds for other relations and over $\R,\Q$.

\end{proof}

\begin{prop} \label{const_comp}
Let $X_t,t\in T$, be a wFt variety or an algebraic space in
a bounded family.
Then for appropriate parametrization $X/T$ sheaves
$\Compd,\Compd_\R, \Compd_\Q$
are constant.

\end{prop}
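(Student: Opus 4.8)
The plan is to identify $\Compd_{\R}$ fibrewise with the locus cut out, chamber by chamber in the geography of $X$, by an lc condition on a maximal model, and then to invoke the constancy results already proved for that geography and for $\lcd_\R$.

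First I would normalize the family. By Lemma~\ref{wTt_vs_Ft} and a $\Q$-factorialization we may, after a small birational modification, assume that every $X_t$ is $\Q$-factorial of Ft type; small modifications preserve $\R$-complements by Proposition~\ref{small_transform_compl}, and they induce a bijection of prime divisors in codimension one, so $\fD,\fD_\R,\fD_\Q$ and hence the subsheaves $\Compd,\Compd_\R,\Compd_\Q$ are carried along. Using Proposition~\ref{bounded_rank} I take the appropriate parametrization in which $\shCl_\R X/T$ is constant, $K\in\fD$ is a constant divisor, and the marked prime divisors $D_{1},\dots,D_{r}$ are available; by Proposition~\ref{finite_lin_repres} and the proof of its constancy statements I may also assume $\Effd_\R$, $\Fixd$ and $\Excd$ are constant and that the decomposition of $\Eff_\R X_t$ into finitely many rational polyhedral countries $\fP_\varphi$, attached to rational $1$-contractions $\varphi\colon X\dashrightarrow Y_\varphi$ over $T$, is constant with $\fP_{\varphi,t}=\fP_{\varphi_t}$ and with $Y_\varphi/T$ a bounded family of $\Q$-factorial Ft varieties. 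A final Noetherian shrinking of $T$, using Addendum~\ref{const_lc} for each of the finitely many $Y_\varphi/T$, lets me assume $\lcd_\R(Y_\varphi/T)$ is constant.

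Now I would argue chamber by chamber. If $(X_t,D_t)$ has an $\R$-complement then $-(K_{X_t}+D_t)$ is pseudoeffective, so $\Compd_{\R,t}\subseteq\sC_{\R}:=-K-\Effd_\R$, a constant set which, translating the geography of $\Eff_\R$ by $-K$, is covered by finitely many constant countries $\fP_\varphi$. Fix $\fP_\varphi$ and $D_t\in\fP_{\varphi,t}$. By Construction~\ref{sharp_construction} a maximal model of $(X_t,D_t)$ is $Y_{\varphi,t}$, and on $\fP_\varphi$ the Zariski decomposition $-(K+D)=M+F$ is affine-linear in $D$ with coefficients determined by the combinatorics of the geography; hence $D^\sharp_{Y_\varphi}=\varphi_*(D+F)$ is a constant affine-linear function of $D$, supported on a fixed finite set of prime divisors of $Y_\varphi$. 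By Addendum~\ref{R_complement_criterion}, $(X_t,D_t)$ has an $\R$-complement if and only if the maximal model $(Y_{\varphi,t},D^\sharp_{Y_{\varphi,t}})$ is lc, i.e. $D^\sharp_{Y_{\varphi,t}}\in\lcd_\R(Y_{\varphi,t})$. Thus $\Compd_{\R,t}\cap\fP_{\varphi,t}$ is the preimage of the constant polyhedron $\lcd_\R(Y_{\varphi,t})$ under a constant affine-linear map, hence constant; taking the union over the finitely many countries, $\Compd_\R$ is constant. Finally $\Compd_\Q=\Compd_\R\cap\fD_\Q$ and $\Compd=\Compd_\R\cap\fD$, so all three sheaves are constant.

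The main obstacle is the assertion in the previous paragraph that over a single country the maximal model $Y_\varphi$ and the divisor $D^\sharp_{Y_\varphi}$ vary in a family of wFt varieties independently of $t$: this amounts to running the anticanonical MMP and the Zariski decomposition of Construction~\ref{sharp_construction} uniformly over $T$, which is precisely the geography-in-families worked out in the proof of Proposition~\ref{finite_lin_repres}, together with the observation that lc-ness of a maximal model is a crepant invariant and hence independent of which maximal model is chosen. The remaining points — compatibility of the several "appropriate parametrizations" and the passage from $\R$- to $\Q$- and $\Z$-coefficients — are routine Noetherian bookkeeping.
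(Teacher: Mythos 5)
Your proposal is correct and follows essentially the same route as the paper: reduce to $\Compd_\R$, decompose $-K-\Effd_\R$ by the constant countries $\fP_\varphi$ of the geography (constancy coming from the proof of Proposition~\ref{finite_lin_repres}/Addendum~\ref{all_together}), identify membership in $\Compd_\R$ over each country with lc-ness of the maximal model via Construction~\ref{sharp_construction} and Addendum~\ref{R_complement_criterion} exactly as in the proof of Theorem~\ref{R_compl_polyhedral}, and conclude by the constancy of $\lcd_\R$ from Addendum~\ref{const_lc}. The paper's proof is just a terser version of this same argument, citing the proof of Theorem~\ref{R_compl_polyhedral} instead of spelling out the chamber-by-chamber affine-linear reduction.
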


\begin{add} \label{const_bd_comp}
$\Compd\sP,\Compd_\R\sP, \Compd_\Q\sP$ are also constant
if $\sP$ is defined over $T$ (globally).

\end{add}

\begin{add} \label{all_together}
We can suppose that all our standard sheaves are constant simultaneously.
Moreover, they are bounded or, equivalently, finitely generated.

\end{add}

\begin{exa} [cf. Step~8 Addendum~\ref{bd_exceptional_compl}
in Proof of Theorem~\ref{excep_comp}]
Consider a family $(X_t,D_t'+\sP_t), t\in T$,
of exceptional wFt pairs, not assuming that $\sP$ is
defined over $T$, but assuming that the pairs $(X_t,D_t'+\sP_{X_t})$
form a family with $\sP_{X_t}=(\sP_t)_{X_t}$.
Then sheaves $\Compd_t\sP_t\cap\{D_t\in\fD_t\mid D_t\ge D_t'\},
\Compd_{\R,t}\sP_t\cap \{D_t\in\fD_{\R,t}\mid D_t\ge D_t'\},
\Compd_{\Q,t}\sP_t\cap\{D_t\in\fD_{\Q,t}\mid D_t\ge D_t'\}$
are well-defined and constant;
additionally $\Compd_{\R,t}\sP_t\cap \{D_t\in\fD_{\R,t}\mid D_t\ge D_t'\}$ is
closed convex rational polyhedral.
Indeed, $(X_t,D_t+\sP_t)$ is also exceptional if and only if
$-(K_{X_t}+D_t+\sP_{X_t})\in\Effd_{\R,t}$,
equivalently, has an $\R$-complement.
However, the inclusion assumption is
constant and polyhedral by Addendum~\ref{const_Mob} and \cite[Corollary~4.5]{ShCh}.
We assume here that $K_{X_t},\sP_{X_t}\in\fD_t$ are also constant.

Notice that $\Compd_t\sP_t,\Compd_{\R,t}\sP_t,\Compd_{\Q,t}\sP_t$
are possibly nonconstant and $\Compd_\R\sP$ is not well-defined if
$\sP$ is not defined over $T$.

\end{exa}

\begin{proof}[Proof of Proposition~\ref{const_comp}]
It is enough to be constant for $\Compd_\R$.
To establish this we can use the proof of
Theorem~\ref{R_compl_polyhedral} below.
In particular, we can suppose that
the $1$-contractions $\varphi_t\colon X_t\dashrightarrow Y_t$
and cones $\fP_{\varphi_t}$ are constant.
In this situation, Addendum~\ref{const_lc} implies
that $\Compd_\R$ is constant.

Respectively, Addendum~\ref{const_bn_lc} implies
Addendum~\ref{const_bd_comp}.

Addendum~\ref{all_together}
is immediate by proofs of Propositions~\ref{bounded_rank},
\ref{finite_lin_repres} and their Addenda.
First, we begin from the constant property of $\shCl X/T$ and
its $\R,\Q$ versions.
Second, we add its constant
subsheaves $\shEff X/T,\shMob X/T$ etc and their $\R,\Q$ versions.
Third, we pick up marked divisors $D_1,\dots,D_r$ in
classes of $\shCl X/T$ as generators of $\shEff X/D$.
This gives the constant sheaf $\fD$,
adds its constant subsheaves $\Effd,\Mobd$ etc and their $\R,\Q$ versions.
We can suppose that marked divisors are prime and
generate $\shCl X/T$.
Forth, we can include into generators prime exceptional and
prime components of fixed divisors.
Here we use the finiteness of Addendum~\ref{fix_exc}.
Finally, for projective $X_t$, we can add the constant
subsheaves $\shsAmp X/T,\shNef X/T,\sAmpd,\Nefd$ and
their $\R,\Q$ versions.
We add constant divisors $K,\sP_{X_t}$ etc,
finitely many other constant divisors or
their classes.

The same works for families of bd-pairs $(X_t,D_t+\sP_t)$, e.g.,
for sheaves as $\lcd_\R\sP,\Compd_\R \sP$.

\end{proof}

\begin{lemma} \label{effecive_lin_Qeff}
Let $\fP$ be a compact rational polyhedron in $\Effd_\R$.
Then there exists a positive integer $J$ such that,
for any rational divisor $D\in \fP$ with $qD\in\Z$,
where $q$ is a positive integer,
$JqD$ is effective modulo $\sim$.
\end{lemma}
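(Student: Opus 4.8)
The plan is to reduce the statement to two finite-generation facts and then assemble a single index. Since the divisors involved lie in $\fD_\R$ and $\fP\subseteq\Effd_\R$, I would first record that for a wFt morphism the monoid $\Effd$ of integral divisors in $\fD$ that are effective modulo $\sim$ is finitely generated (Corollary~\ref{eff_linear_rep_D}), so the cone $\Effd_\R$ is rational polyhedral (\cite[Corollary~4.5]{ShCh}); and I would invoke Gordan's lemma, that the lattice points of a rational polyhedral cone form a finitely generated monoid. The compact polyhedron $\fP$ then plays essentially no role beyond guaranteeing $\fP\subseteq\Effd_\R$: the whole content is to produce one integer $J$ that works simultaneously for every $D$ and every denominator $q$.

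Next I would set $\Lambda=\fD\cap\Effd_\R$, the monoid of integral divisors in $\fD$ lying in the cone $\Effd_\R$. By Gordan's lemma $\Lambda$ is finitely generated, say by $H_1,\dots,H_t$. Each $H_l$ is an integral, hence rational, point of the rational polyhedral cone $\Effd_\R$, so $H_l\in\Effd_\Q=\fD_\Q\cap\Effd_\R$; that is, $H_l$ is effective modulo $\sim_\Q$, and therefore, after clearing denominators, there is a positive integer $m_l$ with $m_lH_l$ effective modulo $\sim$, i.e. $m_lH_l\in\Effd$. (This step uses that for rational divisors effectivity modulo $\sim_\R$ agrees with effectivity modulo $\sim_\Q$, cf. Addendum~\ref{eff_linear_rep_DR}, and that the latter yields an integral effective representative of a suitable multiple.) I would then put $J=\operatorname{lcm}(m_1,\dots,m_t)$, which depends only on $\fP$ through the finitely many generators of $\Lambda$.

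Finally, given rational $D\in\fP$ with $qD\in\Z$: since $\fP\subseteq\Effd_\R$ and $\Effd_\R$ is a cone, $qD\in\Effd_\R$; as $qD$ is integral, $qD\in\Lambda$, so $qD=\sum_l a_lH_l$ with $a_l\in\Z^{\ge 0}$. Then $JqD=\sum_l a_l(J/m_l)(m_lH_l)$ is a sum of elements of the monoid $\Effd$, hence $JqD\in\Effd$, i.e. $JqD$ is effective modulo $\sim$, as required. The only delicate point — and the step I would check most carefully — is the uniformity of $J$: it must be extracted before any $D$ or $q$ is chosen, which is exactly what finite generation of $\Lambda$ provides. (If one prefers to avoid the polyhedrality of $\Effd_\R$ altogether, one can instead triangulate $\fP$ into finitely many rational simplices, choose $N$ with $Nv$ effective modulo $\sim$ for every vertex $v$, and run the same Gordan argument on the simplicial cone $\sum_v\R^+ Nv$ over each simplex; there compactness of $\fP$ is what makes the triangulation finite, and one takes the least common multiple of the resulting finitely many indices.)
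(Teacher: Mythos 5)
Your argument is correct, but it takes a genuinely different route from the paper. The paper triangulates the compact polyhedron $\fP$ into rational simplices, picks for each rational vertex $D_i\in\Effd_\R$ an integer $J_i$ with $J_iD_i\sim E_i\ge 0$, and takes for $J$ a product of the vertex indices, expressing $JqD$ through the barycentric coordinates $w_i$ of $D$; you instead apply Gordan's lemma to the monoid $\Lambda=\fD\cap\Effd_\R$ of all lattice points of the cone, clear denominators of the finitely many monoid generators, and take an lcm. Both proofs rest on the same background input (for wFt the cone $\Effd_\R$ is closed rational polyhedral and rational points of it have integral multiples effective modulo $\sim$, via Corollary~\ref{eff_linear_rep_D} and \cite[Corollary~4.5]{ShCh}); but what each buys is different. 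Your $J$ is independent of $\fP$ altogether, a stronger uniformity, at the price of using polyhedrality of the whole cone $\Effd_\R$ — which is fine in the wFt/bounded-family setting where the lemma is applied, but is exactly the hypothesis that can fail in general, whereas the paper's argument only needs data of the compact polyhedron (its rational vertices); your parenthetical fallback, triangulating $\fP$ and running Gordan on the simplicial cones over the simplices, is the natural hybrid and is closest to the paper. A further practical advantage of your route is that writing $qD$ as a $\Z^{\ge 0}$-combination of monoid generators sidesteps the delicate integrality bookkeeping for the barycentric weights ($J_1\cdots J_rqw_i\in\Z$) on which the paper's Step~3 relies, and which in general requires controlling the denominators of the $w_i$ (e.g.\ by refining the triangulation or absorbing a determinant into $J$, which is what the squared product $J=J_1^2\cdots J_r^2$ is meant to do). So your uniformity concern is well placed, and your proof discharges it cleanly.
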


\begin{add} \label{effecive_lin_Qeff_T}
If $\Effd$ is also constant then,
for a constant compact rational polyhedron
$\fP_t\subseteq\Effd_{\R,t}$,
there exists a positive integer $J$ such that
the lemma hold for same $J$ for every $D_t\in\fP_t$
(possibly after reparametrization).
\end{add}

\begin{proof}
Step~1. Reduction to the case when
$\fP=[D_1,\dots,D_r]$ is a simplex.
Decompose the polyhedron into finitely
many simplices $\fP_1,\dots,\fP_l$.
Take $J=J_1\dots J_r$ where $J_i$ is
a required integer for $\fP_i$.

Step~2. Since every vertex $D_i$ is
a rational in $\Effd_\R$, there exists
a positive integer $J_1$ such that
$J_iD_i\sim E_i$ and $E_i$ is an effective Weil divisor.
In particular, every $J_iD_i\in\Z$.

Step~3. We contend that $J=J_1^2\dots J_r^2$ is
a required integer.
Indeed, take $D\in[D_1,\dots,D_r]$ with $qD\in \Z$
for a positive integer $q$.
Then $D=w_1D_1\dots+w_rD_r$ with
$0\le w_1,\dots,w_r\in\Q,w_1+\dots+w_r=1$.
Then by Step~2
every $J_1\dots J_rqw_i$ is a nonnegative integer and
$$
JqD=Jqw_1D_1+\dots+Jqw_rD_r\sim
(Jqw_1/J_1)E_1+\dots+(Jqw_r/J_r)E_r\ge 0
\text{ and }\in\Z.
$$

Step~4. The addendum holds for constant vertices
$D_{1,t},\dots,D_{r,t}$.
Indeed, every $J_iD_{i,t}$
is constant too and $J_iD_{i,t}\sim E_{i,t}$ for
some $E_{i,t}\in\Effd_t$.
(We do not assume that $E_{i,t}$ is constant.
However, it is true for an appropriate parametrization
by Addendum~\ref{all_together} if $X/T$ has wFt and
the collection of marked divisors is sufficiently large.)

\end{proof}

\begin{thm} \label{R_compl_polyhedral}
Let $X/Z$ be a wFt morphism and
$D_1,\dots,D_r$ be a finite collection of distinct prime divisors on $X$.
Then the set
$$
\Compd_\R=
\{D\in\fD_\R\mid (X/Z,D) \text{\rm \ has an }
\R\text{\rm-complement}\}
$$
is a closed convex rational polyhedron in $\fD$.

\end{thm}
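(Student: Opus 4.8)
The plan is to characterize the existence of an $\R$-complement by two conditions on $D\in\fD_\R$, each of which is visibly polyhedral, using the maximal-model construction of Construction~\ref{sharp_construction}. First I would replace $X/Z$ by a small birational modification: by Lemma~\ref{wTt_vs_Ft} such a modification exists making $X$ $\Q$-factorial of Fano type, and by Proposition~\ref{small_transform_compl} it changes neither the set of prime divisors, the space $\fD_\R$, nor the set $\Compd_\R$. Since enlarging the collection $D_1,\dots,D_r$ only intersects $\Compd_\R$ with a rational linear subspace, I may also assume that $D_1,\dots,D_r$ contains the prime components of $K_X$, so that $K+D\in\fD_\R$ for every $D\in\fD_\R$. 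Then Construction~\ref{sharp_construction} together with Addendum~\ref{R_complement_criterion} tells us that $(X/Z,D)$ has an $\R$-complement if and only if (a) $-(K+D)$ is effective modulo $\sim_{\R,Z}$, and, in that case, (b) the b-codiscrepancy $\D^\sharp=\D^\sharp(D)$ of the maximal model $(X^\sharp/Z,D^\sharp_{X^\sharp})$ of $(X/Z,D)$ is a b-subboundary, equivalently $(X^\sharp,D^\sharp_{X^\sharp})$ is lc.

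Condition (a) is immediately polyhedral: by \cite[Corollary~4.5]{ShCh} the cone $\Effd_\R$ of divisors in $\fD_\R$ that are effective modulo $\sim_{\R,Z}$ is closed convex rational polyhedral, so $\fP:=\{D\in\fD_\R\mid -(K+D)\in\Effd_\R\}=-K-\Effd_\R$ is a closed convex rational polyhedron, and $\Compd_\R\subseteq\fP$.

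For (b) I would invoke the relative geography of $X/Z$: by \cite[Corollary~5.3]{ShCh} the cone $\Effd_\R$ decomposes into finitely many (rational polyhedral, possibly not closed) countries $\fP_\varphi$, each corresponding to a fixed rational $1$-contraction $\varphi\colon X\dashrightarrow Y/Z$; pulling these back decomposes $\fP$ into finitely many pieces $-K-\fP_\varphi$. On each such piece the Zariski decomposition $-(K+D)=M_D+F_D$ over $Z$ depends affinely on $D$, with $\Supp F_D$ contained in a fixed finite reduced divisor; the maximal model $X^\sharp=X^\sharp_\varphi$ is constant; and $D^\sharp_{X^\sharp}$, the birational transform of $D+F_D$ on $X^\sharp_\varphi$, depends affinely on $D$ with support in a fixed finite reduced divisor $\Sigma_\varphi$ on $X^\sharp_\varphi$. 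By the lc-polyhedron fact \cite[(1.3.2)]{Sh92} (cf.\ Addendum~\ref{const_lc}), the $\R$-divisors on $X^\sharp_\varphi$ supported in $\Sigma_\varphi$ that are log canonical form a closed convex rational polyhedron; pulling this back along the affine map $D\mapsto D^\sharp_{X^\sharp}$ and intersecting with the closure $\overline{-K-\fP_\varphi}$ gives a closed convex rational polyhedron $Q_\varphi$. Using that $D\mapsto\D^\sharp(D)$ is continuous (the positive part of the Zariski decomposition varies continuously) and affine on the interior of each chamber, one checks that on all of $\overline{-K-\fP_\varphi}$ the affinely extended chamber formula computes the true maximal-model codiscrepancy; hence $Q_\varphi=\Compd_\R\cap\overline{-K-\fP_\varphi}$, and therefore $\Compd_\R=\bigcup_\varphi Q_\varphi$ is a finite union of closed convex rational polyhedra.

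Finally I would note that $\Compd_\R$ is convex: for a fixed $X$ the log discrepancies of $(X,D^+)$ are affine-linear in $D^+$, so a convex combination of $\R$-complements of $(X/Z,D_0)$ and $(X/Z,D_1)$ is an $\R$-complement of $(X/Z,(1-t)D_0+tD_1)$; and a closed convex set that is a finite union of rational polyhedra is itself a rational polyhedron, which finishes the proof. I expect the main obstacle to be step (b): checking carefully that, country by country, the maximal model and the boundary $D^\sharp_{X^\sharp}$ depend affinely on $D$ with uniformly bounded support, and that the chamber-wise polyhedra glue along the walls to give exactly $\Compd_\R$; the reduction ``finite union of polyhedra $+$ convex $\Rightarrow$ polyhedron'' is routine but must be spelled out.
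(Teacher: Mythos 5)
Your proposal is correct and follows essentially the same route as the paper's proof: convexity from the definition, the geography decomposition of $\Effd_\R$ into countries $\fP_\varphi$ from \cite[Corollary~5.3]{ShCh}, the $\Q$-affine map $D\mapsto D^\sharp_{X^\sharp}$ onto the maximal model of Construction~\ref{sharp_construction} with Addendum~\ref{R_complement_criterion}, and the lc polyhedron \cite[(1.3.2)]{Sh92}. The only cosmetic difference is that the paper reduces at once to the nef country (where having an $\R$-complement is equivalent to lc) instead of gluing chamber-wise polyhedra and then invoking convexity, but the underlying argument is the same.
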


\begin{add} \label{comp_in_lc_eff}
If $K\in\fD$ then
$$
\Compd_\R\subseteq(-K-\Effd_\R).
$$
Moreover, if $X$ is $\Q$-factorial then
$$
\Compd_\R\subseteq\lcd_\R\cap(-K-\Effd_\R).
$$

\end{add}

In general, $=$ does not hold but it is true
for divisors $D$ such that $-(K+D)$ is nef over $Z$
(cf. the proof below).

\begin{add} \label{bd_R_compl_polyhedral}
For any b-divisor $\sP$ of $X$,
the same holds for the set
$$
\Compd_\R \sP=
\{D\in\fD_\R\mid (X/Z,D+\sP) \text{\rm\ has an }
\R\text{\rm-complement}\}.
$$
In Addendum~\ref{comp_in_lc_eff}, $K$ should be replace by $K+\sP_X$.

\end{add}

Actually, we need the last statement especially for bd-pairs $(X/Z,D+\sP)$.

\begin{proof}
Convexity of $\Compd_\R$ is by definition:
[if $(X/Z,D_1),(X/Z,D_2)$ have respectively
$\R$-complements $(X/Z,D_1^+),(X/Z,D_2^+)$ and
$a_1,a_2\in[0,1],a_1+a_2=1$, then
$(X/Z,a_1D_1^++a_2D_2^+)$ is an $\R$-complement of
$(X/Z,a_1D_1+a_2D_2)$].

It is enough to verify the statement
for a sufficiently large collection of
divisors $D_1,\dots,D_r$.
In particular, we can suppose that some $K\in\fD$.

Step~1. Reduction to the nef and lc properties.
The cone decomposition $\fP_\varphi$ of $\Eff_\R X/Z$
\cite[Theorem~3.4 and Corollary~5.3]{ShCh}
(see also Step~2 in the proof of Proposition~\ref{finite_lin_repres})
induces a convex rational polyhedral cone decomposition
of $\Effd_\R$.
We use the same notation for cones $\fP_\varphi$ of $\Effd_\R$ as for $\Eff_\R X/Z$.
It is enough to establish the polyhedral property of $\Compd_\R$
for its intersection
$$
\Compd_\R\cap (-K-\overline{\fP_\varphi})
$$
with the closure of every cone $\fP_\varphi$ of the decomposition.
Indeed, $\fP_\varphi$ corresponds to a rational $1$-contraction
$\varphi\colon X\dashrightarrow Y/Z$
(and its birational nef models).
By definition, for every $D\in \Compd_\R$,
$-(K+D)\in \Effd_\R$ holds, or equivalently,
$D\in -K-\Effd_\R$.
Moreover, a birational $1$-contraction $\psi\colon X\dashrightarrow X^\sharp/Z$,
on which $D$ is nef, is one of maximal models of
Construction~\ref{sharp_construction}.
By Addendum~\ref{R_complement_criterion}, if
$D\in(-K-\overline{\fP_\varphi})$ then,
for the birational transform $D_{X^\sharp}^\sharp$
of divisor $D$ on $X^\sharp$,
$(X/Z,D)$ has an $\R$-complement if and only
if $(X^\sharp/Z,D_{X^\sharp}^\sharp)$ is lc.
By construction, $-(K_{X^\sharp}+D_{X^\sharp}^\sharp)$ is nef over $Z$.
On the other hand, mapping of
$$
-K-\overline{\fP_\varphi}\to
-K_{X^\sharp}-\Nefd_\R X^\sharp/Z, D\to D_{X^\sharp}^\sharp,
$$
is affine over $\Q$ with a polyhedral image,
where $\fD_\R X^\sharp$ is generated by
the birational images $D_{i,X^\sharp}$ of
nonexceptional divisors $D_i$ on $X^\sharp$.
Thus $\Compd_\R\cap (-K-\Nefd_\R)$
is the preimage of
$\lcd_\R X^\sharp\cap(-K_{X^\sharp}-\Nefd_\R X^\sharp/Z)$ and
we need to verify that the latter is closed rational polyhedral.
Notice also that $K$ goes birationally to
$K_{X^\sharp}\in \fD X^\sharp$.

Step~2.
We can consider only the case $\overline{\fP_\varphi}=\Nefd_\R$
with $\varphi=\Id_X$.
In this situation
$$
\Compd_\R\cap (-K-\Nefd_\R)=\lcd_\R\cap(-K-\Nefd_\R).
$$
In other words, for nef $-(K+D)$ over $Z$,
$(X/Z,D)$ has an $\R$-complement if and only if
$(X,D)$ is lc (cf. Example~\ref{1stexe}, (1)).
(In this case $(X,D)$ is a log pair.)
But $\lcd_\R$ is a closed convex rational polyhedron
\cite[(1.3.2)]{Sh92}.
Thus the above intersection is
closed convex rational polyhedron too.

Step~3.
Addendum~\ref{comp_in_lc_eff} follows from the effective property
of $-(K+D)$ noticed in Step~1.
The rest of the addendum follows from the lc property
of $\R$-complements
[if $(X/Z,D)$ has an $\R$-complement and $(X,D)$ is a log pair
then $(X,D)$ is lc].

The pairs with a b-divisor $\sP$
can be treated similarly.

\end{proof}

\begin{proof}[Proof of Theorem~\ref{exist_n_compl}]
Immediate by \cite[Corollary~1.3]{BSh} and
the rational polyhedral property Theorem~\ref{R_compl_polyhedral}.
(Cf. Step~5 in the proof of Theorem~\ref{excep_comp} below.)
For simplicity, we can use here
a $\Q$-factorialization of $X$ and
Proposition~\ref{small_transform_compl}.
\end{proof}

\begin{lemma} \label{approximation}
Let $c$ be a real number.
Then every real number $b$ has
a sufficiently close approximation
by $\rddown{(n+c)b}/n$ for every sufficiently large (positive) real number $n$:
$$
\lim_{n\to+\infty} \rddown{(n+c)b}/n=b.
$$
\end{lemma}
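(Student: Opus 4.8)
The plan is to reduce the statement to the defining inequality for the Gauss integral part and then squeeze. For any real number $x$ one has $x-1<\rddown{x}\le x$; applying this with $x=(n+c)b$ gives
$$
(n+c)b-1<\rddown{(n+c)b}\le (n+c)b .
$$
Since we only care about sufficiently large positive $n$, we may assume $n>0$, so dividing through by $n$ preserves the inequalities and yields
$$
\frac{(n+c)b}{n}-\frac1n<\frac{\rddown{(n+c)b}}{n}\le\frac{(n+c)b}{n}.
$$

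Next I would rewrite the outer expressions in a form that makes the limit transparent, namely $\frac{(n+c)b}{n}=b+\frac{cb}{n}$. Thus the displayed chain becomes
$$
b+\frac{cb}{n}-\frac1n<\frac{\rddown{(n+c)b}}{n}\le b+\frac{cb}{n}.
$$
As $n\to+\infty$ the quantities $\frac{cb}{n}$ and $\frac1n$ both tend to $0$, so the left-hand and right-hand sides both tend to $b$. By the squeeze theorem $\rddown{(n+c)b}/n\to b$, which is exactly the assertion.

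There is essentially no obstacle here: the only point requiring a word of care is that the inequalities are divided by $n$, which is legitimate precisely because we restrict to positive $n$ (and "sufficiently large" guarantees $n>0$); note also that nothing is used about $n$ being an integer, so the statement holds for $n$ ranging over the positive reals as stated. This lemma will be applied later with various values of $c$ (for instance $c=n+1$ versus $c=n$ comparisons as in Example~\ref{rddown_(n+1)_m_m}), so it is stated for a general real constant $c$.
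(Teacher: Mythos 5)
Your proof is correct and follows essentially the same route as the paper: the paper's argument is exactly the observation that $|(n+c)b-\rddown{(n+c)b}|<1$, so $\rddown{(n+c)b}/n$ and $(n+c)b/n$ have the same limit $b$, which is your squeeze written more tersely. (Only your closing aside about later applications is slightly off — the lemma is invoked with $c=1$ — but this does not affect the proof.)
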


\begin{proof}
Indeed,
$$
|(n+c)b-\rddown{(n+c)b}|<1.
$$
Hence
$$
\lim_{n\to+\infty} \rddown{(n+c)b}/n=
\lim_{n\to+\infty} (n+c)b/n=b.
$$

\end{proof}

\begin{proof}[Proof of Theorem~\ref{R-vs-n-complements}]
The easy part of the theorem is about the existence of
$\R$-complements when $n$-complements $(X/Z\ni o,B^+)$ exist.
This part is immediate by Theorem~\ref{R_compl_polyhedral}
and approximation of $b$ by $\rddown{(n+1)b}/n$
in Lemma~\ref{approximation} with $c=1,n\in\Z$.
Notice also that by Corollary~\ref{monotonic_n_compl}
$(X/Z\ni o,B^+)$ is an $\R$- and monotonic $n$-complement of
$(X/Z,\rdn{B}{n})$ (cf. $\rdn{x}{n}$ in~\ref{n+1_n_monotonicity}).

Conversely, the existence of
$n$-complements, when an $\R$-complement exists,
is immediate by Theorem~\ref{exist_n_compl}.

\end{proof}

\section{Exceptional complements:
explicit construction} \label{excep_n_comp}

\paragraph{Exceptional pairs.}
They can be defined in terms of $\R$-complements.
A pair $(X,D)$ is called {\em exceptional\/}
if it has an $\R$-complement and
every its $\R$-complement is klt.

The same definition works
[for pairs $(X,\D)$ with an arbitrary b-divisor $\D$, in particular,]
for bd-pairs $(X,D+\sP)$ (of index $m$).

Note that exceptional pairs can be defined
for local pairs $(X/Z\ni o,D)$ but,
under the klt assumption,
the only exceptional pairs are
global.
(For local exceptional pairs we should allow
{\em exceptional\/} nonklt singularities; see \cite[\S 7]{Sh92}.)

Exceptionality is related to boundedness.

\begin{cor} \label{bound_ex_pairs}
Let $d$ be a nonnegative integer and
$\Phi=\Phi(\fR)$ be a hyperstandard set associated with
a finite set of rational numbers $\fR$ in $[0,1]$.
The exceptional pairs $(X,B)$
with Ft $X$ of dimension $d$ and
with $B\in\Phi$ are bounded.

The boundedness of pairs includes
the boundedness of $B$
with multiplicities.
\end{cor}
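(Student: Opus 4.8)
The plan is to package the two theorems of Birkar quoted in the Introduction, namely boundedness of $n$-complements in the hyperstandard case \cite[Theorem~1.7]{B} and BBAB \cite[Theorem~1.1]{B16}. Since an exceptional pair has, by definition, an $\R$-complement, and $\Phi=\Phi(\fR)$ is hyperstandard, \cite[Theorem~1.7]{B} furnishes a single (sufficiently divisible) positive integer $n=n(d,\fR)$ such that every $(X,B)$ with Ft $X$ of dimension $d$, $B\in\Phi$ and an $\R$-complement admits a \emph{monotonic} $n$-complement $(X,B^+)$; as $B$ is hyperstandard this $(X,B^+)$ is in fact a monotonic b-$n$-complement. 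A monotonic $n$-complement is an $\R$-complement (Remark~\ref{remark_def_complements}, (2)), so exceptionality forces $(X,B^+)$ to be klt. Passing first to a $\Q$-factorialization, which preserves both exceptionality and the Ft property by Proposition~\ref{small_transform_compl}, I may assume $X$ is $\Q$-factorial. Note that $n(K_X+B^+)\sim 0$ and $B^+\ge B\ge 0$, so $-K_X\sim_\Q B^+\ge 0$, and the coefficients of $B^+$ all lie in the fixed finite set $\tfrac1n\Z\cap[0,1]$.

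The crux is to promote ``klt'' to ``$\ep$-lc'' uniformly: I claim there is $\ep=\ep(d,\fR)>0$ such that $(X,B^+)$ is $\ep$-lc for every pair in question. Because the coefficients of $B^+$ range in a fixed finite set and $K_X+B^+\sim_\Q 0$, this follows from the global ACC for log discrepancies of $0$-pairs with bounded coefficients together with the boundedness of complements already invoked: along a hypothetical sequence of such exceptional pairs whose minimal log discrepancies tend to $0$ one extracts the accumulating divisors and, running the $D$-MMP for Fano type morphisms \cite[Section~5]{Sh96}, degenerates the situation to a non-klt $\R$-complement of a member $(X,B)$ — contradicting exceptionality. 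Granting this, $X$ itself is $\ep$-lc since $B^+\ge 0$, and $X$ is of Fano type, hence $-K_X$ is big (and effective modulo $\sim_\Q$); therefore $X$ belongs to a bounded family by (the standard consequence of) BBAB \cite[Theorem~1.1]{B16}.

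It remains to bound $B$ together with its multiplicities. Fix a very ample $H$ with $H^d$ bounded on the members of the bounding family. Then $(-K_X\cdot H^{d-1})=(B^+\cdot H^{d-1})$ is bounded, $B^+$ is effective, and its coefficients lie in the fixed finite set $\tfrac1n\Z\cap[0,1]$; hence the divisors $B^+$, and a fortiori the $B$ with $0\le B\le B^+$, form a bounded family. This yields boundedness of the pairs $(X,B)$ with the boundary and its coefficients; taking images to undo the $\Q$-factorialization of the first step concludes. The only genuine obstacle is the uniform $\ep$-lc step of the second paragraph; everything else is bookkeeping around \cite[Theorem~1.7]{B} and \cite[Theorem~1.1]{B16}.
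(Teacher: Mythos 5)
Your skeleton (monotonic $n$-complement via \cite[Theorem~1.7]{B}, klt by exceptionality, boundedness of $X$, then bound $B$) parallels the paper, but the step you yourself identify as the crux --- a uniform $\ep=\ep(d,\fR)>0$ such that every such klt $n$-complement $(X,B^+)$ is $\ep$-lc --- is not proved by your argument, and as written it is a genuine gap. The ``global ACC for log discrepancies of $0$-pairs with bounded coefficients'' you invoke is not an available result: what is known (Hacon--McKernan--Xu) is global ACC for the \emph{coefficients} of log Calabi--Yau pairs, while ACC for minimal log discrepancies is an open conjecture in dimension $\ge 4$ and in any case would not by itself produce a uniform positive lower bound. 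Your fallback, ``degenerate the situation to a non-klt $\R$-complement of a member,'' is circular: extracting a limiting member from a sequence $(X_i,B_i^+)$ with $\mathrm{mld}\to 0$ presupposes exactly the boundedness of the family that the corollary asserts, and a non-klt complement would have to be produced on one fixed $(X_i,B_i)$, which does not follow from the mld's of \emph{different} pairs tending to $0$. The uniform $\ep$-lc property of exceptional pairs with hyperstandard coefficients is itself one of Birkar's deep theorems, not a routine consequence of the tools you list; the paper avoids needing it altogether by citing \cite[Theorem~1.3]{B} to reduce to the crucial case of an exceptional klt $(X,0)$, establishing an effective nonvanishing $h^0(X,-nK)\ne 0$ (from boundedness of complements), and then applying Hacon--Xu \cite[Theorem~1.3]{HX}, which requires only kltness together with a big boundary whose nonzero coefficients are bounded below (here by $1/n$), not an $\ep$-lc bound.

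A smaller point: the corollary's ``boundedness of $B$ with multiplicities'' includes finiteness of the possible coefficients, which does not follow merely from $0\le B\le B^+$ and bounded degree, since $\Phi$ is infinite with accumulation point $1$. It is recoverable from your setup --- kltness of $B^+$ and $nB^+\in\Z$ force each coefficient of $B^+$, hence of $B$, to be $\le (n-1)/n$, and then Corollary~\ref{accumulation_1} gives finiteness --- and this is exactly the argument the paper makes explicitly (a coefficient of $B$ close to $1$ would force a non-klt $n$-complement, contradicting exceptionality); but you should state it rather than pass from boundedness of $B^+$ to boundedness of $B$ ``a fortiori.''
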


\begin{add} \label{bound_ex_bd_pairs}
The same holds for bd-pairs $(X,B+\sP)$ of [fixed] index $m$.
In this case the boundedness includes additionally
the boundedness of $\sP_X$  modulo $\sim_m$.
\end{add}

Q. Is the b-divisor $\sP$ itself bounded modulo $\sim_m$
in higher dimensions?
It is true for surfaces because $C'^2\le C^2-1$ for
the birational transform $C'$ of a curve $C$ on a surface
under the monoidal transformation in a point of $C$
\cite[6.1, (8)]{IShf}.

\begin{proof}
Follows from two fundamental results of Birkar:
$n$-complements with hyperstandard multiplicities and
BBAB \cite[Theorem~1.8]{B} \cite[Theorem~1.1]{B16}.

Can be reduced to the crucial case: exceptional klt $(X,0)$  \cite[Theorem~1.3]{B}.
In this case, it is enough to establish the nonvanishing:
there exists a positive integer $n$, depending only on
the dimension $d=\dim X$ such that $h^0(X,-nK)\not= 0$.
After that we can use the boundedness of Hacon-Xu \cite[Theorem~1.3]{HX}.

The boundedness of boundary $B$ means that of
$\Supp B$ and the finiteness of multiplicities of $B$.
This holds because $B$ is effective and its multiplicities are
hyperstandard.
Indeed, $B=b_1D_1+\dots+b_rD_r$ and every $b_i\ge 0$.
Moreover, $b_i=0$ or $b_i\ge c$, where
$$
c=\min\{b\in\Phi\setminus 0\}
$$
is a positive rational number because
$\Phi$ is a dcc set.
Since $-K-B$ is effective modulo $\sim_\R$,
$\deg(-K-B)\ge 0$ and $\deg B\le-\deg K$,
where the degree $\deg$ is taken with respect to a bounded polarization of $X$.
Hence $\Supp B$ is bounded because $\deg K$
is bounded.
Since $(X,B)$ is exceptional, the set of possible $b_i$ is finite by Corollary~\ref{accumulation_1}.
Otherwise an $n$-complement $(X,B)$, with some $b_i$ close to $1$,
gives a nonklt $\R$-complement of $(X,B)$
\cite[Theorem~1.8]{B}, a contradiction.

Similarly for bd-pairs of index $m$ with bounded $X$,
$\sP_X$ is also bounded:
$$
\sP\sim_m E_1-E_2,
$$
where $E_1,E_2$ are two bounded effective $\Q$-divisors.
By construction $E_1,E_2\in\Z/m$.
The class of $\sP_X$ modulo $\sim_\R$
is a bounded point in the closed convex rational polyhedral cone
$\Eff_\R(X)$.
(Moreover, the class belongs to the mobile cone $\Mob_\R(X)$.)
Indeed, by definition $(X,B+\sP_X)$ has an $\R$-complement.
Thus  $-(K+B+\sP_X)$ is effective modulo $\sim_\R$ and
$\deg (B+\sP_X)\le -\deg K$.
Since $\sP$ is b-nef, $\sP_X$ is effective (even mobile) modulo
$\sim_\R$ \cite[Corollary~4.5]{ShCh}.
This implies that the divisorial part $B$ is bounded as above.
This gives also the boundedness of the class of $\sP_X$
with respect to the polarization.
The class of $m\sP_X$ is integral.
Thus $m\sP$ modulo $\sim_\R$ is $E_1'-E_2'$, where
$E_1',E_2'$ are two bounded Weil divisors on $X$.
Here we use Addenda~\ref{const_Cl}, \ref{const_Mob} and \ref{all_together}.
In particular, for bounded family of (w)Ft varieties,
sheaves $\shCl,\shEff,\shMob,\shTor$ are constant
and modulo $\shTor$ are subsheaves
of $\shCl_\R$.
The classes of $K,B,\sP_X,E_1',E_2'$ and of the polarization in $\Cl_\R X$ are restrictions on $X$
of corresponding sections of $\shCl_\R$.
Moreover, $\sim_\R$ can be replaced by $\sim$ and
torsions, that is, Weil divisors $D$ such with $nD\sim 0$.
The torsions are bounded on bounded wFt varieties by Addendum~\ref{const_Cl}
that gives required presentation with $E_1=E_1'/m,E_2=E_2'/m$
modulo torsions.

\end{proof}

\begin{lemma} \label{lifting_direction}
Let $\lambda\colon V\to W$ be a linear or just continuous map
of finite dimensional $\R$-linear spaces with norms and
$e$ be a direction in $V$ such that $\lambda(e)\not=0$.
Then, for every real number $\ep>0$, there exists
a real number $\delta>0$ such that if $e'$ is
another direction in $V$ with $\norm{e'-e}<\delta$ then
\begin{description}

\item[\rm (1)]
$\lambda(e')\not=0$ too;
and

\item[\rm (2)]
$$
\norm{\frac{\lambda(e')}{\norm{\lambda(e')}}-
\frac{\lambda(e)}{\norm{\lambda(e)}}}<\ep.
$$

\end{description}

\end{lemma}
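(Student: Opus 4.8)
The plan is to reduce everything to the continuity of the map $e'\mapsto \lambda(e')/\norm{\lambda(e')}$ at the point $e$, which is defined there because $\lambda(e)\ne 0$; both (1) and (2) are then simply two manifestations of this continuity. No serious input is needed, so there is essentially no obstacle here — the only care required is to arrange the choice of $\delta$ so that $\lambda(e')\ne 0$ is already known before one normalises, and then to quantify how normalisation distorts a small perturbation.

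First I would record the elementary estimate: for any two nonzero vectors $a,b$ of a normed space,
\[
\norm{\frac{a}{\norm{a}}-\frac{b}{\norm{b}}}\le\frac{2\norm{a-b}}{\norm{a}},
\]
which follows by writing $a\norm{b}-b\norm{a}=(a-b)\norm{b}+b(\norm{b}-\norm{a})$, dividing by $\norm{a}\norm{b}$, and using $|\norm{b}-\norm{a}|\le\norm{a-b}$.

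Next, since $\lambda$ is continuous at $e$ and $\lambda(e)\ne 0$, I would choose $\delta_0>0$ so that $\norm{e'-e}<\delta_0$ implies $\norm{\lambda(e')-\lambda(e)}<\norm{\lambda(e)}/2$; for such $e'$ one has $\norm{\lambda(e')}\ge\norm{\lambda(e)}-\norm{\lambda(e')-\lambda(e)}>0$, so $\lambda(e')\ne 0$, which is (1). For (2), I would use continuity once more to choose $\delta_1>0$ so that $\norm{e'-e}<\delta_1$ implies $\norm{\lambda(e')-\lambda(e)}<\ep\,\norm{\lambda(e)}/2$, and then set $\delta=\min\{\delta_0,\delta_1\}$. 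For $\norm{e'-e}<\delta$ both $\lambda(e)$ and $\lambda(e')$ are nonzero, so the estimate above applies with $a=\lambda(e)$ and $b=\lambda(e')$, giving
\[
\norm{\frac{\lambda(e')}{\norm{\lambda(e')}}-\frac{\lambda(e)}{\norm{\lambda(e)}}}\le\frac{2\norm{\lambda(e')-\lambda(e)}}{\norm{\lambda(e)}}<\ep,
\]
which is (2). Finally I would remark that the finite dimensionality of $V,W$ and the unit length of $e$ enter only in making the word ``direction'' meaningful; the argument uses nothing about $\lambda$ beyond continuity at the single point $e$ together with the hypothesis $\lambda(e)\ne 0$.
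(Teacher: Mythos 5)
Your proof is correct and follows essentially the same route as the paper: both arguments rest solely on continuity of $\lambda$ at $e$ together with $\lambda(e)\ne 0$, first to secure nonvanishing of $\lambda(e')$ near $e$ and then to conclude that the normalized difference tends to $0$. The only difference is cosmetic — you make the continuity of the normalization quantitative via the elementary estimate $\norm{a/\norm{a}-b/\norm{b}}\le 2\norm{a-b}/\norm{a}$, where the paper simply invokes continuity and the vanishing limit.
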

In other words, close directions go to close ones.

\begin{proof}
The map $\lambda$ is continuous.
Hence $\lambda(e')\not=0$ for every $e'\in W$
with $\norm{e'-e}<\delta'$.

The map
$$
\frac{\lambda(e')}{\norm{\lambda(e')}}-
\frac{\lambda(e)}{\norm{\lambda(e)}}
$$
is also continuous for $e'\in W$
with $\norm{e'-e}<\delta'$.
Moreover,
$$
\lim_{e'\to e}\frac{\lambda(e')}{\norm{\lambda(e')}}-
\frac{\lambda(e)}{\norm{\lambda(e)}}=0.
$$
This gives required $0<\delta\le\delta'$.

\end{proof}

\begin{thm}[Exceptional $n$-complements] \label{excep_comp}
Let $d$ be a nonnegative integer,
$I,\ep,v,e$ be the data as
in Restrictions on complementary indices, and
$\Phi=\Phi(\fR)$ be a hyperstandard set associated with
a finite set of rational numbers $\fR$ in $[0,1]$.
Then there exists a finite set
$\sN=\sN(d,I,\ep,v,e,\Phi)$
of positive integers such that
\begin{description}

\item[\rm Restrictions:\/]
every $n\in\sN$ satisfies
Restrictions on complementary indices with the given data.

\item[\rm Existence of $n$-complement:\/]
if $(X,B)$ is a pair with wFt $X$,
$\dim X=d$, with a boundary $B$,
with an $\R$-complement and
with exceptional $(X,B_\Phi)$
then $(X,B)$ has an $n$-complement $(X,B^+)$ for some $n\in\sN$.

\end{description}

\end{thm}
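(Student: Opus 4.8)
\medskip
\noindent\emph{Plan of proof.} The plan is to use the boundedness of exceptional pairs to put everything into one bounded family, where the $\R$-complement locus becomes a fixed rational polytope; then to read off an $n$-complement as a rational point of a suitable subpolytope whose denominator (equivalently, lc index) is controlled; and finally to arrange the complementary indices to meet the arithmetic Restrictions. First I would reduce to a bounded family: by Corollary~\ref{bound_ex_pairs} the pairs $(X,B_\Phi)$ with wFt $X$ of dimension $d$, $B_\Phi\in\Phi$ and $(X,B_\Phi)$ exceptional form a bounded family, and since $(X,B)$ has an $\R$-complement, $\deg B\le -\deg K$ is bounded, so $\Supp B$ lies among boundedly many prime divisors and (by the argument already used in the proof of Corollary~\ref{bound_ex_pairs}, via exceptionality and \cite[Theorem~1.7]{B}) the multiplicities of $B$ lie in a finite subset of $[0,1)$ bounded away from $1$. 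Passing to a $\Q$-factorialization (which preserves $\R$- and $n$-complements, Proposition~\ref{small_transform_compl}) and reparametrizing, I would arrange a family $X/T$ with all standard sheaves constant (Proposition~\ref{const_comp}, Addendum~\ref{all_together}); writing $\fD$ for the constant sheaf spanned by the marked prime divisors (chosen to contain $\Supp B$, $\Supp K$ and generators of $\shCl$), the set $\Compd_\R\cap\{D\ge 0\}$ is a fixed compact rational polytope (Theorem~\ref{R_compl_polyhedral}), the class of $K$ in $\shCl$ is constant and integral, and, the fibres being bounded, the torsion of $\Cl X_t$ and the Cartier index of $K_{X_t}$ are uniformly bounded.

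Given a suitable finite $\sN$ (chosen in the next paragraph), fix $(X,B)$ as in the theorem and an $n\in\sN$. By exceptionality $(X,B)$ has a klt $\R$-complement $(X,\tilde B)$ with $\tilde B\ge B\ge B_{\sN\_\Phi}\ge B_\Phi$ (Proposition~\ref{Phi_<Phi'}, Construction~\ref{low_approximation}) and $K+\tilde B\sim_\R 0$. Consider $\sP=\{D\in\fD_\R\mid D\ge B_{\sN\_\Phi},\ (X,D)\text{ has an }\R\text{-complement},\ K+D\sim_\R 0\}$, which is $\Compd_\R$ intersected with the $\Q$-affine subspace $\{[K+D]=0\}$ of $\shCl_\R$ and with the rational halfspaces $D_i\ge (b_i)_{\sN\_\Phi}$ (rational since, by Corollary~\ref{accumulation_1}, the $(b_i)_{\sN\_\Phi}$ take finitely many values, all of bounded denominator). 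It is a compact rational polytope, nonempty because $\tilde B\in\sP$, so it has a rational point $B^+$. Then $K+B^+\sim_\R 0$ and $(X,B^+)$ has an $\R$-complement, so it is a $0$-pair, and it is klt since $B^+\ge B_\Phi$; being a rational klt $0$-pair on the bounded family, its lc index divides a bound $N_1=N_1(d,\Phi)$ absorbing the denominators of the vertices of $\sP$ and the torsion of $\shCl$, hence $n(K+B^+)\sim 0$ and $nB^+$ is integral once $N_1\mid n$. Since $B^+\ge B_{\sN\_\Phi}$, $nB^+$ integral and $n\in\sN$, Corollary~\ref{B+_B_n_compl} shows $B^+$ satisfies (1) of Definition~\ref{n_comp} with respect to $B$; together with lc-ness and $n(K+B^+)\sim 0$ this exhibits $(X,B^+)$ as an $n$-complement of $(X,B)$.

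It remains to choose $\sN$. The data $I,\ep,v,e$ enter only through the requirement that every $n\in\sN$ satisfy the Restrictions on complementary indices; this is purely arithmetic and is met by infinitely many $n$ — for nonrational $v$ the span $\spn{v}$ carries best simultaneous approximations $v_n$ with $I\mid n$, $\norm{v_n-v}<\ep/n$ and the anisotropic bound in any prescribed direction $e$, along a set of $n$ that stays infinite under any fixed further divisibility constraint (Lemma~\ref{approximation}, Lemma~\ref{lifting_direction}, and the directional approximation behind \cite[Notation~5.16]{Sh03}); the rational case is immediate. I would then build $\sN$ out of such indices so that, in addition, for each relevant bounded pair the number $N_1$ (which depends on $\sN$ through $\Gamma(\sN,\Phi)$) divides some $n\in\sN$ with $1/(n+1)$ smaller than the gap between $1$ and the multiplicities of $B$ — concretely by taking $\sN$ to contain both a sufficiently large index and a sufficiently divisible multiple of it, using that $\Gamma(\sN,\Phi)=\Phi(\fR')$ is again hyperstandard (Proposition~\ref{every_sN_Phi_hyperstandard}) and that $\Gamma(\sN,\Phi)\cap[0,1-\delta]$ stays finite (Corollary~\ref{accumulation_1}), so that only boundedly many genuinely new bounded hyperstandard pairs can arise and the process closes up after finitely many steps.

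The main obstacle I expect is exactly this last step: the choice of $\sN$ is self-referential, because $B_{\sN\_\Phi}$, hence the polytope $\sP$, hence the denominator bound $N_1$, all depend on $\sN$; and — as the mechanism underlying Example~\ref{nonsingle} shows — a single complementary index will not do, so $\sN$ must be engineered with genuine care about divisibility (the ``$(n+1)$ in the denominator'' phenomenon). Apart from this combinatorial/Diophantine bookkeeping, the only deep input is Corollary~\ref{bound_ex_pairs}, itself built on Birkar's effective nonvanishing and BBAB; once boundedness is granted, the geometry is formal — a nonempty rational polytope has a rational point and bounded torsion bounds the index — and the klt property of the complement is automatic from exceptionality.
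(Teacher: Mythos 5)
Your reduction steps — boundedness of the exceptional pairs $(X,B_\Phi)$ via Corollary~\ref{bound_ex_pairs}, passage to a family with constant sheaves, polyhedrality of $\Compd_\R$ (Theorem~\ref{R_compl_polyhedral}), and the reduction of (1) of Definition~\ref{n_comp} to ``$B^+\ge B_{\sN\_\Phi}$ plus $nB^+\in\Z$'' via Corollary~\ref{B+_B_n_compl} — do match the paper's Steps~1--4. But the proposal has genuine gaps exactly where you defer to ``bookkeeping''. First, your setup overstates what boundedness gives: only $B_\Phi$ (or $B_{n\_\Phi}$) has multiplicities in a finite set; the boundary $B$ itself is an arbitrary $\R$-divisor, its multiplicities need not lie in any finite set, and since they may be arbitrarily small, $\Supp B$ need not have boundedly many components nor lie among the fixed marked divisors of the parametrized family — so your polytope $\sP\subset\fD_\R$ does not even see a general $B$ (the paper needs the projection $\mu\colon\WDiv_\R X\to\fD_\R$ and Steps~6--7 for precisely this). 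Second, and more seriously, the self-referential choice of $\sN$ is not resolved by your closure argument: adding an index $n'$ to $\sN$ puts values $1-r/l+m/(l(n'+1))$ into $\Gamma(\sN,\Phi)$, so the active constraints $D\ge B_{\sN\_\Phi}$, the vertices of $\sP$, and your bound $N_1$ acquire denominators divisible by $n'+1$; since $\gcd(n',n'+1)=1$, that divisibility cannot be supplied by $n'$, so you must add a strictly larger index, which injects $n''+1$, and so on — nothing bounds the number of rounds. Also, Restrictions are not inherited by ``a sufficiently divisible multiple'' of an admissible index (Approximation requires $\norm{v_n-v}<\ep/n$ with the same $\ep$, and the anisotropic condition fixes the direction); one has to re-apply \cite[Corollary~1.3]{BSh} with an enlarged divisibility datum (the trick of Corollary~\ref{disjoint}), which is legitimate but does not break the circularity.

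The paper avoids the circularity by never demanding $B^+\ge B_{\sN\_\Phi}$ for the whole $\sN$. It fixes in advance a single factor $N$ — divisible by $I$, by the $J$ of Lemma~\ref{effecive_lin_Qeff}, and clearing the finite set $\fR'$ of values $1-r/l$, which is independent of $\sN$ thanks to boundedness of exceptional pairs — and then applies \cite[Corollary~1.3]{BSh} to the \emph{joint} vector $(B,v)\in\fD_\R\times\R^l$. This produces one denominator $q$ such that, with $n=Nq$, the single rational divisor $B_q$ simultaneously equals $\rddown{(n+1)B''}/n$ for every $B''$ in a $\delta/q$-neighbourhood of $B$, satisfies $B_q\ge B''_{n\_\Phi}$ (the delicate $n$ versus $n{+}1$ estimates, properties (3) and (9) of Step~5), makes $-nK-\rddown{(n+1)B''}$ effective through $J$, and supplies $v_n=v_q$ for Restrictions; compactness of $\Compd_\R\cap\{B\ge B'\}$ then yields finitely many $q$'s, hence a finite $\sN$, with no self-reference. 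If you insist on your polytope formulation, the statement you would actually need is: for a single prescribed $n$, the polytope $\{D\ge B_{n\_\Phi}\}\cap\Compd_\R\cap\{K+D\sim_\R 0\}$ contains a point with denominator dividing $n$. That is not a consequence of ``a nonempty rational polytope has a rational point'', because its active facets carry denominators divisible by $n+1$; it is exactly the arithmetic content of the paper's Step~5, which is where the real work of this theorem lies.
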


\begin{add} \label{B_B_n_Phi_sharp_B_n_Phi}
$B^+\ge B_{n\_\Phi}{}^\sharp\ge B_{n\_\Phi}$.

\end{add}

\begin{add} \label{adden_Cartier_index_B+}
$nB^+$ is Cartier, if it is $\Q$-Cartier and $X$ has Ft.

\end{add}

Q. Does the same hold for wFt varieties
with bounded Ft models?

\begin{add} \label{standard_excep_compl}
$(X,B^+)$ is a monotonic $n$-complement of itself and
of $(X,B_{n\_\Phi}), (X,B_{n\_\Phi}{}^\sharp)$,
and is a monotonic b-$n$-complement of itself and
of $(X,B_{n\_\Phi}),(X,B_{n\_\Phi}{}^\sharp),(X^\sharp,B_{n\_\Phi}{}^\sharp{}_{X^\sharp})$,
if $(X,B_{n\_\Phi}),(X,B_{n\_\Phi}{}^\sharp)$ are log pairs respectively.

\end{add}

\begin{add} \label{bd_exceptional_compl}
The same holds for bd-pairs $(X,B+\sP)$ of index $m$
with $\sN=\sN(d,I,\ep,v,e,\Phi,m)$.
That is,
\begin{description}

\item[\rm Restrictions:\/]
every $n\in\sN$ satisfies
Restrictions on complementary indices with the given data and
$m|n$.

\item[\rm Existence of $n$-complement:\/]
if $(X,B+\sP)$ is a bd-pair of index $m$ with wFt $X$,
$\dim X=d$, with a boundary $B$,
with an $\R$-complement and
with exceptional $(X,B_\Phi+\sP)$
then $(X,B+\sP)$ has an $n$-complement $(X,B^++\sP)$ for some $n\in\sN$.

\end{description}
Addenda~\ref{B_B_n_Phi_sharp_B_n_Phi}-\ref{adden_Cartier_index_B+}
hold literally.
In Addenda~\ref{standard_excep_compl}
$(X,B^++\sP)$ is
a monotonic $n$-complement of itself and
of $(X,B_{n\_\Phi}+\sP), (X,B_{n\_\Phi}{}^\sharp+\sP)$,
and is a monotonic b-$n$-complement of itself and
of $(X,B_{n\_\Phi}+\sP),(X,B_{n\_\Phi}{}^\sharp+\sP),
(X^\sharp,B_{n\_\Phi}{}^\sharp{}_{X^\sharp}+\sP)$,
if $(X,B_{n\_\Phi}+\sP_X),(X,B_{n\_\Phi}{}^\sharp+\sP_X)$ are log bd-pairs
respectively.

\end{add}

\begin{proof}

Step~1. Construction of an appropriate family of pairs.
We construct a bounded family of pairs with marked divisors such that
every pair of the theorem will be in the family and
the theorem holds for every (typical) pair $(X,B)$ in the family
if and only if it holds in any other pair of
the connected component of the family with $(X,B)$.
The boundedness implies the finiteness of typical pairs
which can be chosen.
This reduces the theorem to the case where
$X$ is a fixed $\Q$-factorial Ft variety and $B$
is a boundary such that
\begin{description}

\item[\rm (1)]
$B\ge B'$, where $B'\in\Phi$ is also fixed, $(X,B')$ is exceptional; and

\item[\rm (2)]
$(X,B)$ has an $\R$-compliment.

\end{description}

Taking a $\Q$-factorialization and by
Lemma~\ref{wTt_vs_Ft} we can suppose that every $X$ has Ft
and is $\Q$-factorial.
Indeed, we use for this a small birational modification and
it preserves all required complements
by Proposition~\ref{small_transform_compl}.
However, this works only for
a projective factorialization for Addendum~\ref{adden_Cartier_index_B+}.
So, by Corollary~\ref{bound_ex_pairs}
the pairs $(X,B_\Phi)$ associated with pairs $(X,B)$ of the theorem
are bounded.
It is enough to consider one connected family of
pairs $(X_t,B_{\Phi,t}),t\in T$.
Hence we can suppose that $B_{\Phi,t}=B_t'\in\Phi$ is fixed
or constant in the following sense.
There exists a positive integer $r$ and
rational numbers $b_1',\dots,b_h'\in \Phi$ such that
every pair $(X_t,B_{\Phi,t})$ in the family has
{\em marked\/} distinct prime divisors
$D_{1,t},\dots,D_{h,t}$ and
$$
B_{\Phi,t}=B_t'=b_1'D_{1,t}+\dots+b_h'D_{h,t}\in\fD_{\R,t}.
$$
(1) holds by our assumptions for
every pair $(X,B')$ in our family.
We contend that
construction of $n$-complements
is [uniform][constant for an appropriate parametrization [family]
(see Section~\ref{constant_sheaves} and cf. Proposition~\ref{const_comp}), that is,
if an $n$-complement with required properties
exists for one pair $(X_t,B_t)$ then
the same holds with same $n$ for
every other pair of the family with the same boundary.
Possibly we need an appropriate
reparametrization of Addendum~\ref{all_together}.

By Proposition~\ref{bounded_rank}, Addendum~\ref{const_Cl} and
Proposition~\ref{finite_lin_repres} we can suppose that
the collection [ordered set]
of marked divisors $D_1,\dots,D_h$ is sufficiently large and
the parametrization is appropriate.
This means that, for every pair $(X_t,B_t')$ in our family,
\begin{description}

\item[]
$$
K_{X_t}\sim d_1D_{1,t}+\dots+d_hD_{h,t}
$$
for some $d_1,\dots,d_h\in \Z$ independent on $t$;
so, we can suppose that
a canonical divisor is constant:
\item[]
$$
K_{X_t}=d_1D_{1,t}+\dots+d_hD_{h,t}\in\fD_t[=\fD(D_{1,t},\dots,D_{h,t})];
$$
and

\item[]
every effective Weil divisor on $X_t$ is
linearly equivalent to an element of
a constant monoid of effective Weil divisors
$\fD_t^+=\fD^+(D_{1,t},\dots,D_{h,t})$,
the monoid of effective Weil divisors
generated by $D_1,\dots,D_h$.

\end{description}
Actually, we need slightly more: Proposition~\ref{const_comp} and
Addendum~\ref{const_Mob}.
We can assume this and that some other (standard) sheaves
are also constant by Addendum~\ref{all_together}.

The reduction to one (typical) pair
will be done in Step~3
in the special case when $B$ is supported
on $D_1+\dots+D_h$, that is, $B\in\fD^+$.
The general case will be treated in Step~7,
that is, an $n$-complement of $(X,B)$ under (1-2)
with $n\in \sN$ of Step~2 below can be constructed in terms
of an $n$-complement of the special case.
But before we reformulate the existence of
$n$-complements in terms of linear systems.
Note also the dependence of $\sN$ on $X$ instead of $d=\dim X$
in Step~2.
Since we have finitely many typical $(X,B')$ of dimension $d$,
the total finite set $\sN=\sN(d,I,\ep,v,e,\Phi)$ is
a finite union of $\sN(X,B',I,\ep,v,e,\Phi)$.

Step~2. (Reduction to linear complements.)
Let $(X,B')$ be pair as in Step~1, that is,
under assumption (1).
Suppose that there exists a finite set of positive integers
$\sN=\sN(X,B',I,\ep,v,e,\Phi)$ such that,
every $n\in \sN$ satisfies
Restrictions on complementary indices and,
for every boundary $B$ on $X$ under (1-2),
there exists $n\in\sN$ such that

\begin{description}

\item[\rm (3)]
$$
\rddown{(n+1)B}/n\ge B_{n\_\Phi};
$$
and

\item[\rm (4)]
$$
-nK-\rddown{(n+1)B}
$$
is linear equivalent to an effective Weil divisor $E$
(cf. $\overline{D}$ after \cite[Definition~5.1]{Sh92}).

\end{description}
Then $(X,B)$ has an $n$-complement.
The same holds for the bd-pairs $(X,B)$ of index $m|n$
with (possibly different) $\sN=\sN(X,B'+\sP_X,I,\ep,v,e,\Phi,m)$.

Indeed,
$(X,B)$ has an $n$-complement $(X,B^+)$ with
required properties, including
\begin{equation}
B^+\ge B_{n\_\Phi}.
\end{equation}
Take
$$
B^+=\rddown{(n+1)B}/n+E/n.
$$
We verify only that $(X,B^+)$ is an $n$-compliment of $(X,B)$.
The addenda will be explained later in Step~8.
By construction and (4)
$$
n(K+B^+)=nK+\rddown{(n+1)B}+E\sim 0.
$$
This implies~(3) of Definition~\ref{n_comp}.
By~(3) and Proposition~\ref{Phi_<Phi'},
since $B^+\ge B_{n\_\Phi}\ge B_\Phi\ge B'$
and $(X,B')$ is exceptional by (1),
$(X,B^+)$ is lc, moreover, klt.
This implies (2) and (1) of Definition~\ref{n_comp}
by Corollary~\ref{B+_B_n_compl}.

Step~3. Reduction to one pair in the special case.
Suppose that
$(X,B)=(X_t,B_t)$ for some $t\in T$.
The special case means that $B\in\fD^+$ and $B_t\in\fD_{\R,t}^+$.

The pair $(X_t,B_t')$ satisfies (1).
By construction every other pair $(X_s,B_s')$
also satisfies (1).
We need to verify the following.
Let $B_s\in\fD_{\R,s}^+$ be a boundary under assumptions (1-2),
then $(X_s,B_s)$ satisfies (3-4) for some $n\in\sN$ of Step~2.
In particular, by Step~2 $(X_s,B_s)$ has also a required $n$-complement.

By our assumptions and construction
$B_s=b_1D_{1,s}+\dots+b_hD_{h,s}$ satisfies (1-2).
Hence $B=B_t=b_1D_{1,t}+\dots+b_hD_{h,t}$ also
satisfies (1-2).
Indeed, we can also treat divisors $B,B',K$ and $D_i$ as
constant ones $B_T,B_T',K_{X/T},D_{i,T}$ in $\fD_{\R,T}$ as in
Addendum~\ref{const_Cl}.
Then (1) means that $B_s\ge B_s'$ and implies
$$
B_T\ge B_T'\text{ and }
B_t=b_1D_{1,t}+\dots+b_hD_{h,t}\ge
B_t'=b_1'D_{1,t}+\dots+b_h'D_{h,t}.
$$
However, (2) is more advanced and requires
the constant cone $\Compd_\R$,
the cone of divisors $D\in \fD_\R$ with
$\R$-complements (see Proposition~\ref{const_comp}).
Since it is constant, (2) for $B_s$ implies
that of for $B_t$.
Actually, in the exceptional case under (1) it is
enough the effective property:
$\Compd_{\R,t}\cap\{B_t\ge B_t'\}=
(-K_{X_t}-\Effd_{\R,t})\cap\{B_t\ge B_t'\}$ means (2) under (1).

Now according to our assumptions, for $(X_t,B_t)$,
there exists $n\in\sN$ such that $B_t$ satisfies (3-4).
Hence $B_s$ also satisfies (3-4).
Indeed, since $B_t,B_{n\_\Phi,t}\in\fD_{\R,t}$, (3) is independent on $t$.
Similarly, (4) independent on $t$ by Addendum~\ref{const_Mob}
because $K_{X_t},\rddown{(n+1)B_t}\in\fD_t$ and
$-nK_{X_t}-\rddown{(n+1)B_t}\in\Effd_t$.
Actually, we can choose $E_t\in\fD_t^+$ and $B_t^+\in\fD_{\Q,t}^+$.

Note that the constructed complement under (3-4)
in Step~2 satisfies $B_s^+\ge B_{s,n\_\Phi}$.

Step~4. Preparatory for the special case.
Below we consider fixed $(X,B')=(X_t,B_t')$.

By Lemma~\ref{effecive_lin_Qeff}
there exists a positive integer $J$ such that,
\begin{description}

\item[\rm (5)]
for every positive integer $q$,
if $B_q\in\fD_\R$ under (1-2) and such that $qB_q\in \Z$,
then $$
-JqK-JqB_q
$$
is linear equivalent to an effective Weil divisor $E$.

\end{description}
Actually, $B_q$ is a boundary because $(X,B')$ is exceptional.
By Step~1, we can suppose that $K\in\fD$, integral.
So, (1-2) mean that
$$
B_q\in\Compd_\R\cap
\{B_q\ge B'\},
$$
a compact rational polyhedron in $\Compd_\R$ \cite[Corollary~4.5]{ShCh}.
Recall that
$\Compd_\R\cap\{B\ge B'\}=
(-K-\Effd_\R)\cap\{B\ge B'\}$.
We apply Lemma~\ref{effecive_lin_Qeff} to
$\Effd_\R\cap\{D=-K-B_q\le-K-B'\}$.
Since by Addendum~\ref{const_Mob} the polyhedral is constant over $T$,
there exists $J$ independent on $t\in T$ by
Addendum~\ref{effecive_lin_Qeff_T}.

Consider divisors $B\in \Compd_\R\cap\{B\ge B'\}$.
The multiplicities
$$
b_{n\_\Phi}=\mult_P B_{n\_\Phi},\ \
$$
where $P$ is a prime divisor on $X$ and
$n$ is a positive integer,
are not accumulated to $1$ for all $i$.
Indeed, since $(X,B')$ is exceptional,
$B$ is a (klt) boundary.
On the other hand, by definition
$$
b_{n\_\Phi}=1-\frac r l+\frac m {l(n+1)},
$$
where $r\in\fR$, $m$ is a nonnegative integer and
$l,n$ are positive integers.
We can always find a boundary $B''\in\Phi$ between $B_{n\_\Phi}$ and $B'$:
$B\ge B_{n\_\Phi}\ge B''\ge B'$.
For every prime divisor $P$ on $X$,
$$
b''=\begin{cases}
1-\frac r {l'}, \text{ if } &b_{n\_\Phi}\ge 1-\frac 1 {l'}\ge b'=\mult_P B'
\text{ for some positive integer } l',\\
b' &\text{ otherwise},
\end{cases}
$$
where $b''=\mult_Pb''$.
By the klt property of $B$, $r>0$.
By construction $(X,B'')$ is exceptional.
Hence $l'$ and $l$ are bounded by Corollary~\ref{bound_ex_pairs}
and $b_{n\_\Phi}$ are not accumulated to $1$.
So, the set $\fR'$ of rational numbers
$$
r'=1-\frac r l
$$
for all multiplicities $b_{n\_\Phi}$
is finite and independent of $n$.
However, $n$ and $m$ can be large.

Step~5. (Linear complements: the special case.)
To find a required set $\sN=\sN(X,B',I,\ep,v,e,\Phi)$ of Step~2
we use Diophantine approximations in the $\R$-linear space
$\fD_\R=\fD_\R(D_1,\dots,D_h)$.
The special case means that we consider only boundaries $B\in\fD_\R$.
For those boundaries $B_\Phi,B_{n\_\Phi}\in\fD_\R$ too.
For every such $B$ under (1-2), we find $n\in\sN$ such that
(3-4) holds.

Fix for this a sufficiently divisible positive integer $N$:
$I|N,N\fR'\subset \Z$ and $J|N$, where $J$ of Step~4.
We also suppose that $N\ge 2$.
Take a positive real number $\delta$ such that
$\delta<1/4N$,
$\delta<\ep/N$, the $\delta$-neighborhood of $B$ in
$\spn{B}$ lies in $\Compd_\R\cap\{B\ge B'\}$ and
$\delta$ satisfies Lemma~\ref{lifting_direction} for the projection
$\pr\colon\fD_\R\times \R^l\to \R^l$ and a direction $e'$ in $\spn{(B,v)}$
which goes to
$$
e=\frac{\pr(e')}{\norm{\pr(e')}}.
$$
We assume also that
$$
\delta\le\frac{\min\{b_i,1-b_j\mid i,j=1,\dots,l,
\text{ and } b_i>0\}}{3N}.
$$
[Warning: the minimum for all $1-b_j$ but not for all $b_i$.]
The minimum is positive because every $b_j<1$ by
the klt property of $\R$-complements.

Such $\delta$ exists because $\Compd_\R\cap\{B\ge B'\}$
is a compact rational polyhedron and the minimum is positive.
Note also that $e'$ exists because the restriction of projection
$$
\spn{(B,v)}\to\spn{v}
$$
is surjective.
Then by \cite[Corollary~1.3]{BSh} there exists a positive integer $q$ and
a pair of vectors $B_q\in \spn{B},v_q\in \spn{v}$ such that
\begin{description}

\item[\rm (6)]
$qB_q\in\Z,qv_q\in\Z^l$;

\item[\rm (7)]
$\norm{B_q-B},\norm{v_q-v}<\delta/q$;
and

\item[\rm (8)]
$$
\norm{\frac{(B_q,v_q)-(B,v)}{\norm{(B_q,v_q)-(B,v)}}-e'}<\delta.
$$

\end{description}
The approximation $(B_q,v_q)$ applies to the vector $(B,v)$ in
$\spn{(B,v)}\subseteq\fD_\R\times \R^l$ with the direction $e'$ in $\spn{(B,v)}$.

Put $n=Nq$ and $v_n=v_q$.
Then $v_n$ satisfies required properties of
Restrictions on complementary indices:
Divisibility by the divisibility of $N$,
Denominators by (6), Approximation by (7) and Anisotropic approximation
by (8) and Lemma~\ref{lifting_direction} respectively.
(The lemma is applied to the direction $e'$ and another direction
$$
\frac{(B_q,v_q)-(B,v)}{\norm{(B_q,v_q)-(B,v)}}
$$
in $\fD_\R\times\R^l$.)
To avoid different $v_n$ for the same $n$,
it would be better to suppose that $\ep<1/2$.
Instead, we assumed above that $\delta<1/4N\le 1/4$.
Then $v_n$ independent of $B$ and $(X,B')$ too.

Now we verify that, for every boundary $B''\in\fD_\R$,
in the $\delta/q$-neighbourhood of $B$ in
$\Compd_\R\cap\{B\ge B'\}$,
$n$ satisfies the properties~(3-4).
In other words, the properties holds for $B''\in\fD_\R$
if $\norm{B''-B}<\delta/q$,
$B''\ge B'$ and $(X,B'')$ has an $\R$-complement.
Since $\Compd_\R\cap\{B\ge B'\}$ is compact
it has a finite cover
by those neighbourhoods and $\sN$ is
the finite set of numbers $n$ for the neighbourhoods of such a covering.

However  we start from the property
\begin{description}

\item[\rm (9)]
$$
\rddown{(n+1)B''}/n=B_q.
$$

\end{description}
Equivalently, for every $i=1,\dots,h$,
$$
\rddown{(n+1)b_i''}/n=b_{i,q},\
b_i''=\mult_{D_i}B''\text { and }
b_{i,q}=\mult_{D_i}B_q.
$$
By construction and (6), $b_{i,q}\in \Z/q\subseteq \Z/n$.
By construction and (7)
\begin{equation}\label{2delta_ineq}
\norm{b_i''-b_{i,q}}\le
\norm{b_i''-b_i}+ \norm{b_i-b_{i,q}}<
\frac{2\delta}q, b_i=\mult_{D_i}B.
\end{equation}

Case~$b_i=0$.
By construction and~(7),
$$
\norm{b_{i,q}}=\norm{b_{i,q}-b_i}
<\frac \delta q=\frac{N\delta}n<
\frac 1{4n}<\frac 1q.
$$
Hence by (6) $b_{i,q}=0$.
By construction $b_i''\ge 0$.
The inequality~(\ref{2delta_ineq}) implies
$$
\norm{b_i''}=\norm{b_i''-b_{i,q}}
<\frac{2\delta}q<\frac {2N\delta}n
<\frac 1{2n}\le\frac 1{n+1}.
$$
Thus
$$
\rddown{(n+1)b_i''}/n=0=b_{i,q}.
$$

Case~$b_i>0$.
By Lemma~\ref{inequal} it is enough to
verify the inequality
$$
\norm{b_i''-b_{i,q}}<\frac{\min\{b_i'',1-b_i''\}}n.
$$

By construction, our assumptions and
the positivity of $b_i$
$$
b_i''>b_i-\frac \delta q
\ge\min\{b_i,1-b_j\mid i,j=1,\dots,l,
\text{ and } b_i>0\}-\frac \delta q
\ge 3N\delta -\frac \delta q\ge
2N\delta.
$$
Similarly,
$$
1-b_i''>1-b_i-\frac \delta q\ge
3N\delta -\frac \delta q\ge
2N\delta.
$$
Recall, that $1-b_i>0$ because $(X,B)$
is klt.
Both inequalities gives
$$
\min\{b_i'',1-b_i''\}>2N\delta.
$$
Hence by the inequality~(\ref{2delta_ineq})
$$
\norm{b_i''-b_{i,q}}<\frac{2\delta}q=
\frac{2N\delta}n<
\frac{\min\{b_i'',1-b_i''\}}n.
$$

Property~(3): By~(9), (3) means
$$
B_q\ge B_{n\_\Phi}''
$$
or, equivalently,
for every $i=1,\dots,l$,
$$
b_{i,q}\ge b_{i,n\_\Phi}''.
$$
By definition
$$
b_i''\ge b_{i,n\_\Phi}''.
$$
So, it is enough to disprove the case
$$
b_i''\ge b_{i,n\_\Phi}''>b_{i,q}.
$$
By the inequality~(\ref{2delta_ineq}) this
implies the inequalities
$$
0<b_{i,n\_\Phi}''-b_{i,q}<\frac{2\delta}q.
$$
The low estimation~(\ref{low_estimation}) of the difference below gives
a contradiction.
For this we use the following
form of numbers under consideration:
$$
b_{i,n\_\Phi}''=r+\frac {m_i}{l_i(n+1)}
\text{ and }
b_{i,q}=r+\frac mn,
$$
where $r=1-r_i/l_i\in\fR'$
[(not necessarily positive for general $\fR$ \cite[3.2]{PSh08})],
$l_i$ is a positive integer,
$m_i$ is a nonnegative integer and $m$ is an integer.
Such an integer $m$ exists because by construction
$r\in\fR'\subset \Z/n$
and $b_{i,q}\in\Z/n$ (see the choice of $N$ and~(9) above).
The inequality $b_{i,n\_\Phi}''>b_{i,q}$ implies that
$$
\frac {m_i}{l_i(n+1)}>\frac mn.
$$
Hence $l_im<m_i$.
Since $l_im$ is an integer,
$$
l_im\le m_i-1 \text{ and }
b_{i,q}=r+\frac mn=
r+\frac {l_im}{l_i n}\le
r+\frac {m_i-1}{l_i n}.
$$
Note also that the assumption $\fR\subset [0,1]$
implies that $r_i\le 1$ and
$$
r=1-\frac {r_i}{l_i}\ge 1-\frac 1{l_i}.
$$
On the other hand, $b_{i,n\_\Phi}''\le b_i''<1$.
So,
$$
\frac {m_i}{l_i (n+1)}=b_{i,n\_\Phi}''-r<1-1+\frac 1{l_i}=
\frac 1{l_i},\ \
\frac{m_i}{n+1}<1
$$
and $m_i\le n$ because $m_i$ is integer.
Hence
$$
\frac{m_i-1}n< \frac{m_i}{n+1}
\text{ and }
b_{i,q}\le r+\frac {m_i-1}{l_i n}<
r+\frac {m_i}{l_i(n+1)}=b_{i,n\_\Phi}''.
$$
Thus by the inequality~(\ref{2delta_ineq})
$$
\frac {m_i}{l_i(n+1)}-\frac {m_i-1}{l_i n}=
r+\frac {m_i}{l_i(n+1)}-r-\frac {m_i-1}{l_i n}<
\frac{2\delta}q.
$$
That is,
$$
\frac 1n(\frac 1{l_i}-\frac{m_i}{l_i(n+1)})=
\frac {m_i}{l_i(n+1)}-\frac {m_i-1}{l_in}<
\frac{2\delta}q=\frac{2N\delta}n.
$$
Hence
\begin{equation} \label{upper_estimation}
\frac 1{l_i}-\frac{m_i}{l_i(n+1)}<
2N\delta.
\end{equation}

On the other hand, by construction and the inequality~(\ref{2delta_ineq})
$$
1-\frac{r_i}{l_i}+\frac {m_i}{l_i(n+1)}=
r+\frac {m_i}{l_i(n+1)}=
b_{i,n\_\Phi}''\le b_i''<b_i+\frac{2\delta}q.
$$
Since $r_i\le1$ and $1-b_i>0$, $1-b_i\ge 3N\delta$,
the last inequality implies
$$
\frac 1{l_i}-\frac{m_i}{l_i(n+1)}=
1-\frac{r_i}{l_i}+\frac 1{l_i}
-1+\frac{r_i}{l_i}-\frac{m_i}{l_i(n+1)}=
1-\frac{r_i}{l_i}+\frac 1{l_i}-b_{i,n\_\Phi}''
$$
\begin{equation} \label{low_estimation}
>1-\frac{r_i}{l_i}+\frac 1{l_i}-b_i-\frac{2\delta}q\ge
1-b_i-\frac{2\delta}q>3N\delta -\frac{2\delta}q.
\end{equation}
This gives by the inequality~(\ref{upper_estimation})
$$
3N\delta -\frac{2\delta}q<2N\delta.
$$
Equivalently,
$$
Nq<2
$$
a contradiction because $N\ge 2,q\ge 1$.

Property~(4) for $B''$ is immediate by (5).
Indeed, $B_q$ satisfies (1-2) by (7) and our assumptions,
$qB_q\in\Z$ by (6), and by (9) and assumptions
$$
-nK-\rddown{(n+1)B''}=
-nK -nB_q=
\frac N J (-JqK-JqB_q)
$$
is linear equivalent to an effective Weil divisor.

Finally, in this step it is not necessarily to
suppose that $X$ is irreducible.
It is enough a bound on the number of irreducible
components of $X$.
In particular, we can take finitely many $B$ in
$\Compd_\R\cap\{B\ge B'\}$ if the number of boundaries
is bounded.
Indeed, constructions of $\sN$ and of $n$-complements
are combinatorial: we work with the lattice $\fD$ and
approximations.

Step~6. Preparatory for the general case.
There exists a homomorphism
$$
\mu\colon \WDiv X\to \fD
$$
of the free Abelian group $\WDiv X$ of
Weil divisors to the lattice $\fD$.
For every prime Weil divisor $P$ on $X$,
put $\mu(P)=E$, where $E\in \fD$ such that
$P\sim E $.
Since $P$ is effective we can suppose that
$E\in \fD^+$, that is, also effective.
Moreover, for the generators $D_i$,
we take $\mu(D_i)=D_i$,
that, is $\mu$ is a projection on $\fD$.
That is, for every divisor $D$ in $\fD$,
$\mu(D)=D$.
In particular, $\mu(K)=K$.
By definition and construction, for every $D\in \WDiv X$,
$\mu(D)\sim D$.

Since $\mu$ is a homomorphisms of groups, for every two divisors $D,D'$ on $X$ and
every integer $n$,
$\mu(D+D')=\mu(D)+\mu(D')$ and $\mu(nD)=n\mu(D)$.

Below
we always assume that, for every prime Weil divisor
$P$ on $X$, $\mu(P)$ is effective.
So, we obtain a homomorphism
$$
\mu\colon \EffWDiv X\to \fD^+
$$
of the free Abelian monoids
[$\EffWDiv X$ effective Weil divisors on $X$].

For an $\R$-divisor $D=\sum d_iQ_i$, where $Q_i$ are
distinct prime Weil divisors,
put
$$
\mu(D)=\sum d_i \mu(Q_i).
$$
This gives a natural $\R$-linear extension
$$
\mu\colon \WDiv_\R X\to \fD_\R=\fD\otimes\R
$$
of the above homomorphism of free groups.
In particular,
for every $r\in\R$ and every $D\in\WDiv_\R X$,
$\mu(rD)=r\mu(D)$ holds.
For every $\R$-divisor $D$ in $\fD_\R$,
$\mu(D)=D$.
In particular, $\mu(B')=B'$.
Note also that, in the important for us situation
of Step~7,
$B_\Phi=B'$ and $\mu(B_\Phi)=B_\Phi$ but
$\mu(B)_\Phi\not=\mu(B_\Phi)$ is possible if $B$
is not supported in $D_1+\dots+D_h$.

Taking nonnegative real numbers $d_i$
we obtain a homomorphism of monoids
$$
\mu\colon \EffWDiv_\R X\to \fD_\R^+,
$$
[where $\EffWDiv_\R X$ is the cone of effective
$\R$-divisors and $\fD_\R^+$ is the cone of
effective $\R$-divisors supported on $D_1+\dots+D_h$].
It is an $\R^+$-homomorphism, that is,
for every $r\in\R^+=[0,+\infty)$ and every $D\in\EffWDiv_\R X$,
$rD\in\EffWDiv_\R X$ and $\mu(rD)=r\mu(D)$ holds.
Note also the following monotonicity
\begin{description}

\item[\rm (10)\/]
if $D,D'\in \WDiv_\R X$ and $D\ge D'$,
then $\mu(D)\ge \mu(D')$.

\end{description}

However, $\mu$ is not unique and not canonical.

We use in Step~7 the following monotonicity:
for any $\R$-divisor $D$
\begin{equation}\label{mu_n+1_monotonicity}
\rddown{(n+1)\mu(D)}\ge
\mu(\rddown{(n+1)D}).
\end{equation}
Both sides are supported in $D_1+\dots+D_h$.
So, it is enough to verify the inequality
for the multiplicities in every $D_i$.
Let $D=\sum d_j Q_j$,
where $Q_j$ are distinct prime Weil divisors and
$$
\mu(Q_j)=\sum_{i=1}^h n_{i,j}D_i,
$$
where $n_{i,j}$ are nonnegative integers.
Then
$$
\mu(D)=\sum d_j \mu(Q_j)=
\sum_{i=1}^h (\sum n_{i,j}d_j)D_i
\text{ and }
\mult_{D_i}\mu(D)=\sum n_{i,j}d_j.
$$
Respectively,
$$
\mult_{Q_j}\rddown{(n+1)D}=
\rddown{(n+1)\mult_{Q_j}D}=
\rddown{(n+1)d_j},
$$
$$
\mu(\rddown{(n+1)D})=\sum_{i=1}^h(\sum n_{i,j}\rddown{(n+1)d_j})D_i
$$
and
$$
\mult_{D_i}
\mu(\rddown{(n+1)D}=\sum n_{i,j}\rddown{(n+1)d_j}.
$$
Hence by inequalities~(\ref{induc_low_bound}-\ref{1st_cor_low_bound}),
for nonnegative integers $n_i$ and real $d_i$,
$$
\rddown{(n+1)(\sum n_id_i)}\ge
\sum n_i\rddown{(n+1)d_i}
$$
and
$$
\mult_{D_i}\rddown{(n+1)\mu(D)}=
\rddown{(n+1)\mult_{D_i}\mu(D)}=
\rddown{(n+1)(\sum n_{i,j}d_j)}\ge
$$
$$
\sum n_{i,j}\rddown{(n+1)d_j}=
\mult_{D_i}\mu(\rddown{(n+1)D}).
$$

By definition and construction, for every $D\in \WDiv_\R X$,
$\mu(D)\sim_\R D$.
In particular, if $(X,D^+)$ is an $\R$-complement
of $(X,D)$ then
$$
\mu(D^+)\sim_\R D^+\sim_\R-K.
$$
Moreover, $\mu(D),\mu(D^+)\in\fD_\R$.
By the monotonicity~(10), $\mu(D^+)\ge \mu(D)$.
Hence $(X,\mu(D^+))$ is an $\R$-complement of $(X,\mu(D))$ if
$(X,\mu(D^+))$ is lc.
Indeed,
$$
K+\mu(D^+)= \mu(K)+\mu(D^+)=\mu(K+D^+)\sim_\R K+D^+
\sim_\R 0.
$$
If additionally $D=B$ is a boundary then $B^+=D^+$ is a boundary and,
under the lc property of $(X,\mu(B)),(X,\mu(B^+))$,
$\mu(B),\mu(B^+)$ are boundaries too.

In conclusion to this step, if
$(X,B)$ has an $\R$-complement $(X,B^+)$ and $B\ge B'$,
then $(X,\mu(B^+))$ is lc and
is an $\R$-complement of $(X,\mu(B))$.
Indeed, $B^+\ge B'$ and
$\mu(B^+)\ge \mu(B')=B'$
because $B'\in\fD_\R$.
By the exceptional property of $(X,B')$, $(X,\mu(B^+))$ is klt.

Step~7. (Linear complements: the general case.)
We consider $(X,B)$ under (1-2). Moreover,
we can suppose in this case that $B_\Phi=B'$.
Put
$$
B''=\sum_{i=1}^h b_iD_i,
\text{ every }b_i=\mult_{D_i}B.
$$
Note that $D_1,\dots,D_h$ include all
prime divisors $P$ on $X$ with
$$
b_{P,\Phi}=\mult_P (B_\Phi)=(\mult_P b)_\Phi>0.
$$
Indeed, $B_\Phi\in\fD_\R$.

By Step~6 and construction both divisors $\mu(B),B''\in\fD_\R$
and satisfy (1-2).
Thus by Step~5 we have $n\in \sN$ which satisfies
required properties (3-4) simultaneously for $\mu(B)$ and $B''$.
We verify now that (3-4) holds also for $B$ and the same $n$.

Property (3): (Here we need $n$ only for $B''$.)
By definition $b_{P,n\_\Phi}$ is the largest number
$$
1-\frac rl+\frac m{l(n+1)}\le b_P=\mult_P B,
$$
where $P$ is a prime Weil divisor of $X$,
$r\in\fR$, $l$ is a positive integer and
$m$ is a nonnegative integer.

Case $1-r/l>0$.
Then, for some $i=1,\dots,l$, $P=D_i$ and
$b_P=b_i=\mult_{D_i}B''$.
Hence (3) holds for this prime Weil divisor $P$.

Case $1-r/l=0$.
Then $l=1,r=1$ and $b_{P,n\_\Phi}=m/(n+1)$.
By definition
$$
b_P\ge \frac m{n+1}.
$$
So,
$$
\rddown{(n+1)b_P}/n\ge
\rddown{(n+1)\frac m{n+1}}/n=\frac mn
>\frac m{n+1}=b_{P,n\_\Phi}.
$$

Property~(4): (Here we need $n$ only for $\mu(B)$.)
Let $E$ be an effective Weil divisor such that
$$
E\sim -nK-\rddown{(n+1)\mu(B)}.
$$
On the other hand, by~(\ref{mu_n+1_monotonicity})
$$
E'=\rddown{(n+1)\mu(B)}-
\mu(\rddown{(n+1)B})
$$
is an effective Weil divisor.
Thus by Step~6
$$
-nK-\rddown{(n+1)B}\sim
-nK-\mu(\rddown{(n+1)B})=
$$
$$
-nK-\rddown{(n+1)\mu(B)}+
\rddown{(n+1)\mu(B)}-
\mu(\rddown{(n+1)B})=
$$
$$
-nK-\rddown{(n+1)\mu(B)}+E'
\sim E+E'.
$$
By construction $E+E'$ is also effective.

Step~8. (Addenda.)
By Step~2, $B^+\ge B_{n\_\Phi}$.
So, Proposition~\ref{monotonicity_I} implies Addendum~\ref{B_B_n_Phi_sharp_B_n_Phi}.

Addendum~\ref{adden_Cartier_index_B+} follows
from Definition~\ref{n_comp}, (3) and
from the boundedness of {\em canonical index\/},
that is, the index of canonical divisors $K_{X_t}$
for a bounded family $X/T$ of Ft varieties.
Note that $X$ is $\Q$-Gorenstein if
$nB^+$ is $\Q$-Cartier.
The index of $K$ on $X$ gives the bound on
the index of all $K_{X_t},t\in T$.
If $N$ of Step~5 will be divisible by the last index
then $n$ will be sufficiently divisible and satisfies
the addendum.

Addendum~\ref{standard_excep_compl} is standard.
For $n$-complements, it follows from Addendum~\ref{B_B_n_Phi_sharp_B_n_Phi}
and from Definition~\ref{n_comp} for $(X,B_{n\_\Phi}),(X,B_{n\_\Phi}{}^\sharp)$.
For b-$n$-complements,
it follows from Example~\ref{b_n_comp_of_itself}, (1) and
from Proposition~\ref{D_D'_complement}
for $(X,B_{n\_\Phi}),(X,B_{n\_\Phi}{}^\sharp)$,
if respectively they are log pairs.
Note also that $(X/Z\ni o,B^+)$ is a log pair by definition.

For $(X^\sharp,B_{n\_\Phi}{}^\sharp{}_{X^\sharp})$,
we use Addendum~\ref{B_B_n_Phi_sharp_B_n_Phi} and
Proposition~\ref{monotonicity_I}.
The pair $(X^\sharp,B_{n\_\Phi}{}^\sharp{}_{X^\sharp})$ is a log one
by definition.
The statement is meaningful because the $n$-complement
$(X,B^+)$ induces an $n$-complement for a special
maximal model $(X^\sharp,B_{n\_\Phi}{}^\sharp{}_{X^\sharp})$ with
a small transformation $X\dashrightarrow X^\sharp$ (cf. Proposition~\ref{small_transform_compl})
and then for any other its crepant model by Remark~\ref{rem_def_b_n_compl}, (1).

In this step b-$n$-complements of $(X^\sharp,B_{n\_\Phi}{}^\sharp{}_{X^\sharp})$  is
our final destination.
They give all other complements in
Addendum~\ref{standard_excep_compl} and
an $n$-complements of $(X,B)$ as well.
However, in many other situation we first construct
a b-$n$-complement for $(X^\sharp,B_{n\_\Phi}{}^\sharp{}_{X^\sharp})$.
The former is actually induced from this root (exceptional) complement.

Addendum~\ref{bd_exceptional_compl} about bd-pairs $(X,B+\sP)$ can
be treated as the case of usual pairs with minor improvement
as follows.
First, (1-2) should be replaced by
\begin{description}

\item[\rm (1-bd)\/]
$B\ge B'$, where $B'\in\Phi$ is also fixed and
$(X,B'+\sP)$ is exceptional with fixed $\sP_X$; and

\item[\rm (2-bd)\/]
$(X,B+\sP)$ has an $\R$-compliment.

\end{description}
Note for this that by Corollary~\ref{bound_ex_pairs} and
Addendum~\ref{bound_ex_bd_pairs} we
can suppose that $\sP_X$ is fixed too and
corresponds to a constant divisor $\sP_{X,t},t\in T$.
(We do not suppose the existence of entire
constant b-divisor $\sP_t$.)

We can construct $n$-complements using also
linear complements with same (3-bd)=(3) and modified
\begin{description}

\item[\rm (4-bd)\/] $m|n$ and
$$
-nK-n\sP_X-\rddown{(n+1)B}
$$
is linear equivalent to an effective Weil divisor $E$.

\end{description}

In this situation we modify the effective property:
$\Compd_{\R,t}\sP_t\cap\{B_t\ge B_t'\}=
(-K_{X_t}-\sP_{X,t}-\Effd_{\R,t})\cap\{B_t\ge B_t'\}$
means (2-bd) under (1-bd).
This is a constant compact rational polyhedron and we
can use Lemma~\ref{effecive_lin_Qeff} again.
There exists a positive integer $J$ such that,
$m|J$ and
\begin{description}

\item[\rm (5-bd)\/]
for every positive integer $q$,
if $B_q\in\fD_\R$ under (1-2-bd) and such that $qB_q\in \Z$,
then $$
-JqK-Jq\sP_X-JqB_q
$$
is linear equivalent to an effective Weil divisor $E$.

\end{description}
Since $m|J$, $Jq\sP_X$ is integral and well-defined modulo $\sim 0$.
The rest of proof for usual pairs is related to boundary
computations and works literally.

Addendum~\ref{adden_Cartier_index_B+} for bd-pairs follows
from Definition~\ref{bd_n_comp}, (3) and
from the boundedness of {\em bd-semicanonical index\/},
that is, the index of bd-semicanonical divisors $K_{X_t}+\sP_{X,t}$
for a bounded family $X/T$ of Ft varieties.

\end{proof}

\section{Adjunction  correspondence} \label{adjunction_cor_multiplicities}

Let $r$ be a real number and $l$ be a positive integer.
The {\em adjunction correspondence\/} with
{\em parameters\/} $r,l$ is the following
transformations of two real numbers $b,d$
\begin{equation}\label{direct_cor}
d=1-\frac rl+\frac bl\ \ \text{ (direct)\/}
\end{equation}
and
\begin{equation}\label{invers_cor}
b=r-l+ld\ \ \text{ (inverse)}.
\end{equation}
The correspondence is $1$-to-$1$.
How the correspondence is related to
the divisorial part of adjunction for morphism
see Section~\ref{adj}.
In this section we establish the basic properties
of this correspondence (almost) without any relation adjunction.

\subsection{Linearity}
Both transformations~(\ref{direct_cor}) and (\ref{invers_cor}) are
linear respectively for variables $b$ and $d$.

\begin{proof}
Immediate by (\ref{direct_cor}) and (\ref{invers_cor}).

\end{proof}

\subsection{Monotonicity} \label{adjunction_monotonicity}
Both transformations~(\ref{direct_cor}) and (\ref{invers_cor}) are
monotonically strictly increasing: for corresponding $b_1,d_1$ and
$b_2,d_2$,
$$
b_1>b_2\Leftrightarrow d_1>d_2.
$$

\begin{proof}
Immediate by~~(\ref{direct_cor}) and (\ref{invers_cor}) and positivity of $l$.

\end{proof}

\subsection{Rationality}
Both transformations~(\ref{direct_cor}) and (\ref{invers_cor}) are
rational linear functions if $r$ is rational.
So, for rational $r$ and corresponding $b,d$,
$$
b\in\Q\Leftrightarrow d\in\Q.
$$

\subsection{Direct log canonicity (klt)
(cf. \cite[Lemma~7.4, (iii)]{PSh08})} \label{direct_lc}
If $b\le r$ ($<r$) then the corresponding $d\le 1$ (respectively $<1$).
Note that, for adjunction,
$b\le r$ means lc of $b$, that is, $b\le 1$ (see~\ref{adjunction_mult}).
So, if $d$ is a multiplicity of a divisor $D$
in a prime divisor $P$ of $X$ then $(X,D)$ is lc near $P$.
Note also that if additionally $r\le 1$ then $b\le 1$ too.

\begin{proof}
By Monotonicity
$$
d=1-\frac rl + \frac bl\le 1-\frac rl+\frac rl=1
\text{ (respectively }<1\text{)}.
$$
\end{proof}

\subsection{Inverse log canonicity (klt)
(cf. \cite[Lemma~7.4, (iii)]{PSh08})}
\label{invers_lc}
If $d\le 1$ ($d<1$) then the corresponding $b\le r$ (respectively $<r$).
So, if additionally $r\le 1$ then $b\le 1$ (respectively $<1$).
Note that $r\le 1$ holds for adjunction of morphisms
by construction (see~\ref{adjunction_mult}).

\begin{proof}
By Monotonicity
$$
b=r-l + ld\le r-l+l=r
\text{ (respectively }<r\text{)}.
$$
\end{proof}

\subsection{Direct positivity (cf. \cite[Lemma~7.4, (i)]{PSh08})}
\label{direct_positivity}
If $b\ge 0$ and $r\le l$ then the corresponding $d\ge 0$.
The second assumption always holds for $r\le 1$
because $l\ge 1$.

\begin{proof} By Monotonicity and positivity of $l$
$$
d=1-\frac rl + \frac bl\ge 1-\frac r l\ge 0
$$
\end{proof}

However, the inverse positivity does not hold in general.
E.g., if $0\le d\ll 1$ then $b\approx r-l$ and $<0$
for $l> r$ which is typical for adjunction.

\subsection{Direct boundary property (cf. \cite[Lemma~7.4, (iv)]{PSh08})}
\label{direct_boundary}
If $0\le b\le r\le 1$ then the corresponding $d\in[0,1]$, that is,
$0\le d\le 1$.

\begin{proof}
Immediate by Direct log canonicity and positivity.
\end{proof}

\subsection{Direct hyperstandard property (cf. \cite[Proposition~9.3 (i)]{PSh08})}
\label{direct_hyperst}
Let $\fR,\fR''$ be two finite subsets of rational numbers in $[0,1]$ such that
$1\in\fR,\fR''$.
Then
\begin{equation}\label{const_adj}
\fR'=\{r'-l(1-r)\mid r\in\fR'',r'\in\fR,l\in \Z, l>0
\text{ and } r'-l(1-r)\ge 0\}.
\end{equation}
is also a finite subset of rational numbers in $[0,1]$ and $1\in\fR'$.

For every parameters $r,l$ such that $r\in\fR''$ and
$b\in\Phi=\Phi(\fR)$ such that $b\le r$,
$$
d\in\Phi'=\Phi(\fR'),
$$
where $d$ corresponds to $b$.

If additionally $\sN$ is a (finite) set of positive integers
then, for every $b\in\Gamma(\sN,\Phi)$ such that $b\le r$,
$$
d\in\Gamma(\sN,\Phi'),
$$
where $d$ corresponds to $b$.

\begin{proof}
Rationality of elements of $\fR'$ is
immediate by definition.
Since $r'\ge 0$ and $r\le 1$,
the set $\fR'$ is a subset of $[0,1]$.
Notice for this also that elements of $\fR'$ are
nonnegative by definition.

If $r=1$ then $r'-l(1-r)=r'\in \fR$.
Thus the set of such numbers is finite and $1$
belongs to it.

If $r<1$ then $1-r>0$.
Thus the set of numbers $r'-l(1-r)\ge0$ with $r'\in\fR$ is
finite.
Recall that $l$ is a positive integer.

By definition and our assumptions
$$
d=1-\frac rl+\frac bl,
$$
where $r\in\fR''$ and $l$ is a positive integer.
On the other hand, if $b\in\Gamma(\sN,\Phi)$ then
$$
b=1-\frac{r'}{l'}+\frac 1{l'}(\sum_{n\in\sN}\frac {m_n}{n+1}),
$$
where $r'\in \fR$, $l'$ is a positive integer and
$m_n,n\in\sN$, are nonnegative integers
(if $\sN$ is infinite then almost all $m_n=0$).
Hence
$$
d=1-\frac rl+\frac {1-\frac{r'}{l'}+\frac 1{l'}(\sum_{n\in\sN}\frac {m_n}{n+1})}l=
1-\frac{r'-l'(1-r)}{ll'}+\frac 1{ll'}(\sum_{n\in\sN}\frac {m_n}{n+1}).
$$
By our assumptions $b\le r$.
Thus by Direct log canonicity~\ref{direct_lc},
$d\le 1$.
Thus $r'-l'(1-r)\ge 0,\in\fR'$ by~(\ref{const_adj}) and
$d\in\Gamma(\sN,\Phi')$.
If particular, if $\sN=\emptyset$ then $d\in\Phi'$.

\end{proof}

\subsection{$n\_\Phi$ inequality} \label{n_Phi_inequality}

Under assumptions and notation of~\ref{direct_hyperst},
let $b_1,d'$ be real numbers such that
$0\le b_1\le r$ and
\begin{equation}\label{n_Phi'}
d'\ge d_{1,n\_\Phi'},
\end{equation}
where $d_1$ corresponds to $b_1$ by~(\ref{direct_cor}).
Then
\begin{equation}\label{n_Phi}
b_{1,n\_\Phi}\le b',
\end{equation}
where $b'$ corresponds to $d'$ by~(\ref{invers_cor}).

\begin{proof}
By~\ref{direct_boundary}, $d_1\in [0,1]$
because $r\le 1$ by \ref{direct_hyperst}.
Thus the assumption~(\ref{n_Phi'}) and
the conclusion~(\ref{n_Phi}) are meaningful.
By definition
$$
b_{1,n\_\Phi}\le b_1
\text{ and }
b_{1,n\_\Phi}\in\Gamma(n,\Phi).
$$
Hence by~\ref{adjunction_monotonicity}
$$
d_1'\le d_1,
$$
where $d_1'$ corresponds to $b_{1,n\_\Phi}$
by~(\ref{direct_cor}).
On the other hand, $d_1'\in\Gamma(n,\Phi')$
by~\ref{direct_hyperst}.
Thus again by definition
$$
d_1'\le d_{1,n\_\Phi'}.
$$
This implies that
$$
d_1'\le d'
$$
by the assumption~(\ref{n_Phi'}).
Again \ref{adjunction_monotonicity} implies
the required inequality~(\ref{n_Phi}).

\end{proof}

\subsection{Direct dcc} \label{direct_dcc}
Let $\Gamma\subset [0,1]$ be a dcc set and
$\fR''$ be a finite subset in $[0,1]$.
The the corresponding set
$$
\Gamma'=\{1-\frac rl+\frac bl\mid
r\in\fR'',l\in \Z, l>0,b\in\Gamma
\text{ and } r\ge b\}
$$
also satisfies dcc.
Note that the last assumption equivalent to
$1-\frac rl+\frac bl\le 1$
(cf. Direct log canonicity \ref{direct_lc}).

\begin{proof}
Since $r,b$ are bounded and $r-b\ge 0$,
any strictly decreasing sequence in $\Gamma'$ has
finite possibilities for $l$.
Hence the finiteness of $\fR''$ and the dcc for $\Gamma$
imply the dcc for $\Gamma'$.

\end{proof}

\subsection{Direct hyperstandard property for
adjunction on divisor (cf. \cite[Lemma~4.2]{Sh92})} \label{direct_hyperst_on_div}

For any subset $\Gamma\subseteq [0,1]$,
put
$$
\widetilde{\Gamma}=\{1-\frac 1l+
\sum_i\frac{l_i} l b_i
\le 1\mid l,l_i \text{ are positive integers and }
b_i\in \Gamma \}\cup\{1\}.
$$
Then $0,1\in\widetilde{\Gamma},\Gamma\subseteq\widetilde{\Gamma}$ and
$\widetilde{\widetilde{\Gamma}}=\widetilde{\Gamma}$.
Transition form $\Gamma$ to $\widetilde{\Gamma}$
corresponds to lc adjunction on a divisor
\cite[Corollary~3.10]{Sh92}.

Let $\Phi=\Phi(\fR)$ be a hyperstandard set associated with
a (finite) set of (rational) numbers $\fR$ in $[0,1],1\in\fR$, and
$\sN$ be a (finite) set of positive integers.
Then
$$
\widetilde{\Gamma(\sN,\Phi)}=
\widetilde{\frak{G}(\sN,\fR)}=
\Gamma(\sN,\widetilde{\Phi})=
\frak{G}(\sN,\overline{\fR}\cup\{0\}),
$$
in particular,
$\widetilde{\Phi}=\Phi(\overline{\fR}\cup\{0\})$,
where
$$
\overline{\fR}=\{r_0-\sum_i (1-r_i)\mid
r_i\in \fR \}\cap [0,1].
$$
The set $\overline{\fR}$
is (rational) finite, if $\fR$ is (rational) finite, and
same as $\overline{\fR}$ in \cite[p.~160]{PSh08}
and $\overline{\overline{\fR}}=\overline{\fR}$.
(If $1\not\in\fR$, we need to replace
$\overline{\fR}$ by $\overline{\fR\cup\{1\}}$.)
Note also that $\widetilde{\Phi}$ depends only on $\Phi$
and above $\overline{\fR}$ is one of possible (finite) sets in
$[0,1]$ to determine $\widetilde{\Phi}$
(see Example~\ref{standard_set} below and
cf. Proposition~\ref{dependence_on_Phi}).

\begin{proof}
It is enough to verify  that
$\widetilde{\frak{G}(\sN,\fR)}=
\frak{G}(\sN,\overline{\fR}\cup\{0\})$.
Then by definition, for $\sN=\emptyset$,
$\Phi=\frak{G}(\emptyset,\fR)=\Phi(\fR)$ and
$\widetilde{\Phi}=\widetilde{\frak{G}(\emptyset,\fR)}=
\frak{G}(\emptyset,\overline{\fR}\cup\{0\})=
\Phi(\overline{\fR}\cup\{0\})$.
Additionally $\widetilde{\frak{G}(\sN,\fR)}=
\frak{G}(\sN,\overline{\fR}\cup\{0\})=\Gamma(\sN,\widetilde{\Phi})$
by definition and Proposition~\ref{dependence_on_Phi}.

Take $b\in\widetilde{\frak{G}(\sN,\fR)}$.
First, we verify that $b\in\frak{G}(\sN,\overline{\fR}\cup\{0\})$.

Step~1. We can suppose that $b<1$.
Then by definition
$$
b=1-\frac 1l+
\sum_{i=0}^s\frac{l_i} l(1-\frac {r_i}{m_i}+
\frac 1{m_i}\sum_{n\in\sN} \frac {l_{i,n}}{n+1}),
$$
where $l,l_i,m_i$ are positive integers,
$l_{i,n}$ are nonnegative integers and $r_i\in\fR$.
Otherwise, the only possible case $b=1$.
But then $1=1-0/1\in\frak{G}(\emptyset,{0})\subseteq\
\frak{G}(\sN,\overline{\fR}\cup\{0\})$.

More precisely, we verify that $b<1$
belongs to $\frak{G}(\sN,\overline{\fR})\subseteq
\frak{G}(\sN,\overline{\fR}\cup\{0\})$,
We can suppose that $m_0\ge m_1\ge\dots\ge m_s$.

Step~2. $m_i=1$ for all $i\ge 1$.
Otherwise, $m_1\ge 2$.
Then $m_0,m_1\ge 2$ and
$$
b\ge 1-\frac 1l+\frac 1l(1-\frac 12)+\frac 1l(1-\frac 12)=1,
$$
a contradiction.
Here we use the inequality $r_i\le 1$ for every $i$.

Step~3. If $m_0=m\ge 2$ then $l_0=1$.
Otherwise, $l_0\ge 2$ and
$$
b\ge 1-\frac 1l+\frac 2l(1-\frac 12)=1.
$$

Hence we have the following two cases.

Case~1. $m_i=1$ for all $i\ge 1,m_0=m\ge 2$
and $l_0=1$.
So,
\begin{align*}
b=&1-\frac 1l+\frac 1l(1-\frac {r_0}m+
\frac 1m\sum_{n\in\sN} \frac {l_{0,n}}{n+1})+
\sum_{i=1}^s\frac{l_i} l(1-r_i+
\sum_{n\in\sN} \frac {l_{i,n}}{n+1})=\\
&1-\frac{r_0-\sum_{i=1}^s ml_i(1-r_i)}{lm}+
\frac 1{lm}({\sum_{n\in\sN}\frac{l_{0,n}+\sum_{i=1}^sml_il_{i,n}}{n+1}})
\end{align*}
belongs to $\frak{G}(\sN,\overline{\fR})$.
Indeed, $0<r_0-\sum_{i=1}^s ml_i(1-r_i)\le r_0\le 1$ and
belongs to $\overline{\fR}$
because respectively $1>b$ and every $r_i\le 1$.

Case~2. Every $m_i=1$.
Then

\begin{align*}
b=1-\frac 1l+
\sum_{i=0}^s\frac{l_i} l(1-r_i+
\sum_{n\in\sN} \frac {l_{i,n}}{n+1})=\\
1-\frac{1-\sum_{i=0}^s l_i(1-r_i)}l+
\frac 1l\sum_{n\in\sN}\frac{\sum_{i=0}^s l_il_{i,n}}{n+1}
\end{align*}
also belongs to $\frak{G}(\sN,\overline{\fR})$
because $1\in\fR$ and
$1-\sum_{i=0}^s l_i(1-r_i)\in\overline{\fR}$.

Step~4.
Conversely, take $b\in\frak{G}(\sN,\overline{\fR}\cup\{0\})$.
Now, we verify that $b\in\widetilde{\frak{G}(\sN,\fR)}$.
By definition $b\le 1$.

Case~1. $b=1$. Then $b=1=1-1/l+1/l\in \widetilde{\frak{G}(\emptyset,\{1\})}
\subset \widetilde{\frak{G}(\sN,\fR)}$ because $1\in\fR$.

Case~2. $b<1$. Then by definition
$$
b=1-\frac rl+
\frac 1l\sum_{n\in\sN}\frac{l_n}{n+1},
$$
where $r\in\overline{\fR}\cup\{0\}$ and $r>0$.
Hence $r\in\overline{\fR}$ and
$$
b=1-\frac{r_0-\sum_{i=1}^s(1-r_i)}l+
\frac 1l\sum_{n\in\sN}\frac{l_n}{n+1}=
1-\frac 1l+\frac 1l\sum_{i=0}^s(1-r_i)+
\frac 1l \sum_{n\in\sN}\frac{l_n}{n+1}.
$$
Note now that every
$$
1-r_i=1-\frac {r_i} 1 \in
\Phi=\Gamma(\emptyset,\Phi)=\frak{G}(\emptyset,\fR)\subseteq
\Gamma(\sN,\Phi)=\frak{G}(\sN,\fR)
$$
and
$$
\frac 1{n+1}=1-\frac 11+\frac 1{n+1}
\in \Gamma(\sN)=\frak{G}(\sN,\{1\})\subseteq\Gamma(\sN,\Phi)=
\frak{G}(\sN,\fR).
$$
So, $b\in \widetilde{\frak{G}(\sN,\fR)}$.

\end{proof}

\begin{exa} \label{standard_set}
As an exception, take $\fR=\emptyset$.
Then $\overline{\fR}=\Phi=\Phi(\fR)=\emptyset$ too and
$$
\widetilde{\Phi}=\{1-\frac 1l\mid l\text{ is a positive integer}\}
\cup\{1\},
=\Phi(\{0,1\}),
$$
the standard set.
However, $\widetilde{\Phi}=\Phi(\overline{\fR'})$
for $\fR'\subseteq\{1/l\mid l \text{ is a positive integer}\}$
if and only if $\fR'=\{1,1/2\}$.

\end{exa}

\subsection{Main inequality} \label{main_inequality_statement}
For any positive integers $n,l$,
any real number $d$, and
any rational number $r$ such that
$nr\in\Z$ and $r\le 1$,
\begin{equation}\label{main_inequality}
r-l+l\rddown{(n+1)d}/n\ge
\rddown{(n+1)(r-l+ld}/n.
\end{equation}

\begin{proof}
The inequality~(\ref{main_inequality}) is equivalent to
$$
nr-nl+l\rddown{(n+1)d}\ge
\rddown{(n+1)(r-l+ld}=\rddown{(n+1)(r+ld)}-(n+1)l
$$
or to
\begin{equation}\label{main_inequality_v2}
nr+l+l\rddown{(n+1)d}\ge\rddown{(n+1)(r+ld)}.
\end{equation}

For $r=1$, the inequality~(\ref{main_inequality_v2}) has
the form
$$
n+l+l\rddown{(n+1)d}\ge\rddown{(n+1)(1+ld)}=
n+1+\rddown{l(n+1)d},
$$
that is,
$$
l-1+l\rddown{(n+1)d}\ge\rddown{l(n+1)d}.
$$
It follows from the inequality~(\ref{1st_cor_uuper_bound}).

For $r<1$, by upper bounds~(\ref{basic_upper_bound}),
(\ref{special_upper_bound}) and (\ref{1st_cor_uuper_bound})
\begin{align*}
\rddown{(n+1)(r+ld)}\le &
1+\rddown{(n+1)r}+\rddown{l(n+1)d}\le\\
&1+nr+l-1+l\rddown{(n+1)d}=
nr+l+l\rddown{(n+1)d},
\end{align*}
that completes the proof of~(\ref{main_inequality_v2}).

\end{proof}

\subsection{Inverse $\rddown{(n+1)-}/n$-monotonicity}
\label{n+1_n_monotonicity}
For a real number $x$, put
$$
\rdn{x}{n}=\begin{cases}
1, \text{ if } &x=1;\\
\rddown{(n+1)x}/n &\text{ otherwise};
\end{cases}
$$
Under the assumptions of~\ref{main_inequality_statement}
suppose additionally that $d\le 1$. Then
$$
\rdn{b}{n} \le b^{[n]}
$$
where real numbers $b,b^{[n]}$ correspond respectively to
$d,\rdn{d}{n}$ according to~(\ref{invers_cor}).

\begin{proof}
\

Case~1. $d <1$. Immediate by~(\ref{main_inequality}).
Indeed, $b<1$ by \ref{invers_lc} and our assumptions.

Case~2. $d=1,r<1$. Then $\rdn{d}{n}=1=d$ and
$b=b^{[n]}=r$ by~(\ref{invers_cor}).
The required inequality follows from our assumptions
by Example~\ref{rddown_(n+1)_m_m}, (2).

Case~3. $d=r=1$. As in Case~2
$b=b^{[n]}=r$. Since $r=1$,
$\rdn{b}{n}=1=b^{[n]}$.

\end{proof}

\subsection{Inverse inequality~(1) of Definition~\ref{n_comp}}
\label{invers_1_n_comp}
Under the assumptions and notation
of~\ref{main_inequality_statement}-\ref{n+1_n_monotonicity},
for corresponding $b^+,d^+$ according to~(\ref{invers_cor}),
\begin{align*}
\rdn{b}{n} \le &b^+\le r\le 1\\
&\Uparrow \\
\rdn{d}{n} \le &d^+\le 1.
\end{align*}

\begin{proof}
The second inequality of the top row holds by~\ref{invers_lc}
and the third one by our assumptions.

Since $\rdn{d}{n}\le 1$, $d\le 1$ holds (cf. Remark~\ref{remark_def_complements}, (2)).
Hence by~\ref{n+1_n_monotonicity} and~\ref{adjunction_monotonicity}
$$
\rdn{b}{n} \le b^{[n]}\le b^+.
$$

\end{proof}

\section{Adjunction} \label{adj}

We recall some basic facts about (log) adjunction or
subadjunction in the Kawamata terminology \cite{K98}.

\subsection{Adjunction for $0$-contractions \cite[Section~7]{PSh08}}\label{adjunction_0_contr}
Let $f\colon X\to Z$ be a contraction of normal
algebraic varieties or spaces and
$D$ be an $\R$-divisor on $X$ such that
\begin{description}

\item[\rm (1)\/]
$(X,D)$ is lc generically over $Z$;

\item[\rm (2)\/]
$D$ is a boundary generically over $Z$
or, equivalently, $D\hor$ is a boundary,
where $D\hor$ denotes the horizontal part of $D$
with respect to $f$;
and

\item[\rm (3)\/]
$K+D\sim_{\R,Z} 0$ (cf. \ref{adjunction_index_of}, (3)
and \cite[Construction~7.5]{PSh08}), in particular,
$K+D$ is $\R$-Cartier and $(X,D)$ is a log pair.

\end{description} Then there exist two $\R$-divisors on $Z$:
\begin{description}

\item[]
the {\em divisorial part of adjunction\/} $D\dv$ of $(X,D)\to Z$
\cite[Construction~7.2]{PSh08};
and

\item[]
the {\em moduli part of adjunction\/} $D\md$  of $(X,D)\to Z$
\cite[Construction~7.5]{PSh08}

\end{description}
such that the following generalization of Kodaira formula
\begin{equation} \label{adjunction_f}
K+D\sim_\R f^*(K_Z+D\dv+D\md)
\end{equation}
holds.
It is also called the ({\em log\/}) {\em adjunction formular\/}
for $(X,D)\to Z$.
The pair $(Z,D\dv+D\md)$ is a log pair and
$f^*(K_Z+D\dv+D\md)$ is well-defined.
The pair $(Z,D\dv+D\md)$ is called
the {\em adjoint pair\/} of $(X,D)\to Z$.
The divisor $D\dv$ is unique but $D\md$ is
defined up to $\sim_\R$.

The adjunction has birational nature.
We say that two contractions $f\colon (X,D)\to Z,f'\colon (X',D')\to Z'$
are {\em birationally equivalent\/} or {\em crepant\/} if
there exists a commutative diagram
$$
\begin{array}{ccc}
(X',D')&\dashrightarrow & (X,D)\\
f'\downarrow&&\downarrow f\\
Z'&\dashrightarrow & Z
\end{array},
$$
where the horizontal arrow $(X',D')\dashrightarrow (X,D)$
is a crepant proper birational isomorphism,
another horizontal arrow $Z'\dashrightarrow Z$ is
a proper birational isomorphism and $f'$ satisfies the same
assumptions as $f$.
In particular,
$(X,D),(X',D')$ are birationally equivalent or crepant
if $(X/X,D),(X'/X',D')$ does so.
If $X$ is complete then this also means that
$(X/\pt,D),(X'/\pt,D')$ does so.
Notice that $D'=D_{X'}=\D(X,D)_{X'}$ and
$(X',D')$ is lc generically over $Z'$ automatically.
(The property to be a boundary for $D'\hor$ or, equivalently,
to be effective for $D'\hor$ can be omitted.)
We say also that $f'\colon (X',D')\to Z'$ is
a ({\em crepant\/}) {\em model\/} of $f\colon (X,D)\to Z$.
In this situation the adjoint pair $(Z',D'\dv+D'\md)$ is defined and
$(Z',D'\dv+D'\md)\dashrightarrow(Z,D\dv+D\md)$ is
also a crepant proper birational isomorphism.
This allows to define two b-$\R$-divisors $\D\dv$ and $\sD\md$ of $Z$
\cite[Remark~7.7]{PSh08} such that
$$
D\dv=(\D\dv)_Z,D\md=(\sD\md)_Z
\text{ and }
D'\dv=(\D\dv)_{Z'},D'\md=(\sD\md)_{Z'}.
$$
Of course we use that same $\sim_\R$ for $f'$ as
in~(\ref{adjunction_f}) and
suppose that $K_{X'}=(\K)_{X'}$ and $K_{Z'}=(\K_Z)_{Z'}$,
where $\K_Z$ is a canonical b-divisor of $Z$.
Otherwise $D'\md\sim_\R(\sD\md)_{Z'}$.
Indeed, for every prime b-divisor $Q$ of $Z$
there exist a model $f'$ of $f$ and a prime
divisor $P$ on $X'$ such that $f'(P)=Q$ is
a divisor on $Z'$.
(For details see \cite[Section~7]{PSh08}.)
Respectively, for b-$\R$-divisors, (\ref{adjunction_f}) become
\begin{equation}\label{adjunction_f_b}
\K+\D\sim_\R f^*(\K_Z+\D\dv+\sD\md)
\end{equation}
and $(Z,D\dv+\sD\md)$ become
the {\em adjoint\/} log {\em bd-pair\/} of $(X,D)\to Z$,
that is, $\D\dv=\D(Z,D\dv+D\md)-\sD\md$.
Actually, in our applications,
$\sD\md$ will be a b-nef b-$\Q$-divisor and
$(Z,D\dv+\sD\md)$ will be a bd-pair of
some positive integral index
(see Theorem~\ref{b_nef} and cf. Conjecture~\ref{mod_part_b-semiample}).

For a bd-pair $(X,D+\sP)$ and
its contraction $f\colon (X,D+\sP)\to Z$ under
assumptions
\begin{description}

\item[\rm (1-bd)\/]
$(X,D+\sP)$ is lc generically over $Z$;

\item[\rm (2-bd)\/]
$D$ is a boundary
and $\sP$ is b-nef generically over $Z$;
and

\item[\rm (3-bd)\/]
$K+D+\sP_X\sim_{\R,Z} 0$, in particular,
$K+D+\sP_X$ is $\R$-Cartier and $(X,D+\sP)$ is a log bd-pair.

\end{description}
Then there exist the following $\R$-divisors on $Z$ and b-$\R$-divisors of $Z$
\cite[Section~4]{F18}:
\begin{description}

\item[]
the $\R$-divisor,
the {\em divisorial part of adjunction\/} $(D+\sP)\dv{}_{,Z}$ of $(X,D+\sP)\to Z$;

\item[]
respectively,
b-$\R$-divisor $(D+\sP)\dv$, e.g., $((D+\sP)\dv)_Z=(D+\sP)\dv{}_{,Z}$;

\item[]
the $\R$-divisor,
{\em moduli part of adjunction\/} $(D+\sP)\md{}_{,Z}$  of $(X,D+\sP)\to Z$;
and,

\item[]
respectively,
b-$\R$-divisor $(D+\sP)\md$, e.g., $((D+\sP)\md)_Z=(D+\sP)\md{}_{,Z}$;

\end{description}
such that the following generalizations of Kodaira formula
$$
K+D+\sP_X\sim_\R f^*(K_Z+(D+\sP)\dv{}_{,Z}+(D+\sP)\md{}_{,Z})
$$
and
$$
\K+\D(X,D+\sP_X)\sim_\R f^*(\K_Z+(D+\sP)\dv+(D+\sP)\md)
$$
hold.
It is also called the ({\em log\/}) {\em adjunction formular\/}
for $(X,D+\sP)\to Z$.
The bd-pair $(Z,(D+\sP)\dv{}_{,Z}+(D+\sP)\md)$ is a log bd-pair and
$f^*(K_Z+(D+\sP)\dv{}_{,Z}+(D+\sP)\md{}_{,Z})$ is well-defined.
The pair $(Z,(D+\sP)\dv{}_{,Z}+(D+\sP)\md)$ is called
the {\em adjoint bd-pair\/} of $(X,D+\sP)\to Z$.
Indeed, $(D+\sP)\dv=\D(Z,(D+\sP)\dv{}_{,Z}+(D+\sP)\md{}_{,Z})-(D+\sP)\md$.
The b-$\R$-divisor $(D+\sP)\dv$ is unique but b-$\R$-divisor $(D+\sP)\md$ is
defined up to $\sim_\R$.
Actually, in our applications,
$(D+\sP)\md$ will be a b-nef b-$\Q$-divisor and
$(Z,(D+\sP)\dv{}_{,Z}+(D+\sP)\md)$ will be a bd-pair of
some positive integral index
(see Addendum~\ref{b_nef_bd} and cf. Conjecture~\ref{mod_part_b-semiample}).

Adjunction for $(X,D+\sP)\to Z$ also has birational nature.
In this situation, we say that two contractions
$f\colon (X,D+\sP)\to Z,f'\colon (X',D'+\sP)\to Z'$
are {\em birationally equivalent\/} or {\em crepant\/} if
there exists a commutative diagram
$$
\begin{array}{ccc}
(X',D'+\sP)&\dashrightarrow & (X,D+\sP)\\
f'\downarrow&&\downarrow f\\
Z'&\dashrightarrow & Z
\end{array},
$$
where the horizontal arrow $(X',D'+\sP)\dashrightarrow (X,D+\sP)$
is a crepant proper birational isomorphism,
another horizontal arrow $Z'\dashrightarrow Z$ is
a proper birational isomorphism and $f'$ satisfies the same
assumptions as $f$.
Notice that $D'=D_{X'}=\D(X,D+\sP_X)\dv{}_{,X'}-\sP_{X'},
\D(X',D'+\sP_{X'})=\D(X,D+\sP_X)$ and
$(X',D'+\sP)$ is lc generically over $Z'$ automatically.
(The property to be a boundary for $D'\hor$ or, equivalently,
to be effective for $D'\hor$ can be omitted.)
We say also that $f'\colon (X',D'+\sP)\to Z'$ is
a ({\em crepant\/}) {\em model\/} of $f\colon (X,D+\sP)\to Z$.
In this situation the adjoint bd-pair
$(Z',(D+\sP)\dv{}_{,Z'}+(D+\sP)\md)$ is defined,
$(Z',(D+\sP)\dv{}_{,Z'}+(D+\sP)\md{})\dashrightarrow
(Z,(D+\sP)\dv{}_{,Z}+(D+\sP)\md)$ is
a crepant proper birational isomorphism and
$$
(D+\sP)\dv{}_{,Z'}=((D+\sP)\dv)_{Z'},
(D+\sP)\md{}_{,Z'}=((D+\sP)\md)_{Z'}.
$$

We can modify above concepts,
e.g., birational equivalence or to be crepant, to
the relative situation.
However, adjunction formulae~(\ref{adjunction_f}-\ref{adjunction_f_b})
and muduli part of adjunction
have absolute nature and defines up to $\sim_\R$.
For bd-pairs we assume the same for $\sP$.
(Usually we omit $S$ if $S=\pt$)

The following result about the moduli part of adjunction
holds in much more general situation \cite{Sh13}.
Actually we expect more \cite[Conjecture~7.13]{PSh08}
(see also Conjecture~\ref{mod_part_b-semiample}).
But we need the result only under stated assumptions.
On the other hand, the nef property is assumed
here in the weak sense (see Nef in Section~\ref{intro}).
In particular, the property applies to the situation,
where $X$ is complete or
it has a proper morphism $X\to S$ to a scheme or a space $S$
compatible with the contraction $X\to Z$, equivalently,
$X\to Z$ is a morphisms over $S$ and $Z/S$ is proper.

\begin{thm}[\cite{A04}]\label{b_nef}
Under notation and assumptions of~\ref{adjunction_0_contr}
suppose additionally that
$D$ is an effective $\Q$-{\em divisor\/} generically over $Z$.
Then $\sD\md$ is a b-$\Q$-divisor of $Z$,
defined up to a $\Q$-linear equivalence.
Moreover, $\K_Z+\D\dv$ is a b-$\R$-Cartier divisor of $Z$ and $\sD\md$ is b-nef.
\end{thm}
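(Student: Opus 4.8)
The plan is to reduce to Ambro's theory of lc-trivial fibrations \cite{A04} and then invoke its two pillars: the stabilization of the discriminant and the Hodge-theoretic semipositivity of the moduli part. First I would upgrade the relative $\R$-linear triviality (3) of \ref{adjunction_0_contr} to $\Q$-linear triviality. Restricting to the generic fibre $X_\eta$ over $\eta=\Spec k(Z)$, the $\R$-divisor $K_{X_\eta}+D_\eta$ is $\R$-linearly trivial; since $D$ is an effective $\Q$-divisor generically over $Z$ and $K_{X_\eta}$ is a $\Q$-divisor, and since a $\Q$-divisor that is $\R$-linearly equivalent to $0$ is automatically $\Q$-linearly equivalent to $0$ (pass to $\Cl_\Q$, which injects into $\Cl_\R$), we get $K_{X_\eta}+D_\eta\sim_\Q 0$. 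Because the moduli b-divisor is a birational invariant of the crepant class and, by \ref{adjunction_0_contr}, depends (up to $\sim_\R$) only on the fibration, I may replace $f$ by a crepant model, shrink $Z$, and normalize the equivalence so that $f\colon(X,D)\to Z$ is an lc-trivial fibration in Ambro's sense: $K_X+D\sim_\Q f^*L$ for a $\Q$-Cartier divisor $L$ on $Z$, with $(X,D)$ lc and $D$ an effective $\Q$-boundary over the generic point.

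Second, I would handle the discriminant $\D\dv$. Over every prime b-divisor $Q$ of $Z$ its coefficient is $1-\lct$, the log canonical threshold of the fibre over the generic point of the centre of $Q$ on a suitable model, computed for $(X,D)$; this is a rational number because $D$ is a $\Q$-divisor and $K+D\sim_\Q f^*L$, and on any fixed model of $Z$ only finitely many coefficients are nonzero since $(X,D)$ is lc generically over $Z$. Hence $\D\dv$ is a b-$\Q$-divisor, and therefore $\sD\md:=\overline L-\K_Z-\D\dv$ is a b-$\Q$-divisor, well defined up to $\sim_\Q$ (independence of the choice of $L$). For the b-$\R$-Cartier — in fact b-$\Q$-Cartier — property of $\K_Z+\D\dv$ I would pass to a sufficiently high model: a log resolution of $(X,D)$ over a model $Z'\to Z$ for which the fibration is \emph{prepared}, i.e. has simple normal crossing (snc) fibre divisor and is semistable in codimension one after an auxiliary generically finite base change handled by Kawamata's covering trick. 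On such $Z'$ the discriminant stabilizes, i.e. is unchanged by further blow-ups, so $\K_Z+\D\dv=\overline{K_{Z'}+D'\dv}$ descends to $Z'$.

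Third comes the semipositivity of $\sD\md$, the heart of the matter and where the $\Q$-divisor hypothesis is really used. After a further finite base change making the monodromy of the local system $R^{d}f_*\C$, with $d=\dim X-\dim Z$, unipotent while keeping the total space and fibre divisor snc, the trace of $\sD\md$ on the base becomes, up to a positive rational multiple and up to $\sim_\Q$, the first Chern class of the Deligne canonical extension of the bottom piece of the Hodge filtration of the family — essentially $f_*\omega_{X'/Z'}$ up to an integral twist. This bundle is nef by the Fujita--Kawamata--Koll\'ar semipositivity theorem for Hodge bundles of families; nefness descends along the finite base change (a finite surjection preserves nefness), so the trace of $\sD\md$ on $Z'$ is nef. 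Letting $Z'$ vary over all sufficiently high models and using the compatibility of traces under crepant birational maps recorded in \ref{adjunction_0_contr}, I conclude that $\sD\md$ is b-nef; equivalently, once the reduction of the first paragraph is in place, the three assertions are exactly the content of \cite{A04}.

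The main obstacle is this third step: the Hodge-theoretic input — identifying the moduli part with a Hodge bundle after the right base change and invoking semipositivity — together with the bookkeeping needed to choose the base change (controlling monodromy, arranging snc fibres) and to descend nefness to the original base. A secondary difficulty is the stabilization of the discriminant underlying the b-$\Q$-Cartier property of $\K_Z+\D\dv$. Both are precisely the content of \cite{A04}, whose hypotheses are met by the reduction above; the $\R$-coefficient input (3) of \ref{adjunction_0_contr} causes no trouble here because the extra assumption that $D$ is a $\Q$-divisor generically over $Z$ lets one work throughout with $\Q$-linear equivalence.
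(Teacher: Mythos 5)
Your proposal is correct and follows essentially the same route as the paper: the paper's proof simply refers to \cite{A04} (and \cite{F18} for the bd-addendum), checking only that the hypothesis $K+D\sim_\R f^*L$ needed for the construction of $\sD\md$ holds by (3) of~\ref{adjunction_0_contr}. Your additional details — upgrading $\sim_\R$ to $\sim_\Q$ generically via the rationality of $D$, the rationality and stabilization of the discriminant, and the Hodge-theoretic semipositivity — are exactly the content of Ambro's theorem that the paper is citing rather than reproving.
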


\begin{add}[{\cite[Theorem~4.1]{F18}}]\label{b_nef_bd}
The same holds for every contraction $(X,D+\sP)\to Z$ as
in~\ref{adjunction_0_contr} under the additional assumptions of
the theorem and the assumption that $(X,D+\sP)$ is a bd-pair of
some positive integral index.
\end{add}

[
Remark: the theorem holds either without the generic $\Q$-assumption over $Z$,
but with a b-$\R$-divisor $\sD\md$ of $Z$, or
without the generic effective property over $Z$.
However, the effective property is very important when $D$ has
horizontal nonrational multiplicities \cite{Sh13} \cite{Sh08}.

About notation: For b-$\R$-Cartier divisors we use mathcal, e.g.,
$\sD\md,\sP$.
For b-$\R$-divisors with BP we use mathbb: e.g., $\D$.
However, $\D+\sP,-\K$ have BP and $\K+\D$ is b-$\R$-Cartier.
]

\begin{proof}
See proofs in \cite{A04} and \cite{F18}.
Notice only that the necessary assumption
for the existence of $\sD\md$ that
$K+D\sim_\R f^*L$ for some $\R$-Cartier divisor $L$ on $Z$ \cite[Construction~7.5]{PSh08}
holds by~(3) of~\ref{adjunction_0_contr}.
Similarly, in the addendum we use (3-bd) of~\ref{adjunction_0_contr}.

\end{proof}

\subsection{Adjunction correspondence of multiplicities} \label{adjunction_mult}

Let $f\colon X\to Z$ be a proper surjective morphism, e.g.,
a contraction as in~\ref{adjunction_0_contr}.
Then, for every {\em vertical over\/} $Z$
prime b-divisor $P$ of $X$, its image $Q=f(P)$ as
a {\em prime\/} b-divisor is well-defined.
For this consider a model $f'\colon X'\to Z'$ as in~\ref{adjunction_0_contr}
of $f$ such that $P$ is a divisor on $X'$ and
$f'(P)=f(P)=Q$ is a divisor on $Z'$.
Since $f$ is proper surjective, it is also {\em surjective on prime\/}
b-{\em divisors\/}, that is, for every prime b-divisor $Q$ of $Z$
there exists a vertical prime b-divisor $P$ of $X$ such that $f(P)=Q$.
Indeed, for a birational model $f'$ as above,
$P$ is any divisorial irreducible component of ${f'}\1 Q$,
equivalently, $P,Q$ are prime divisors on $X',Z'$ respectively and
$f'(P)=Q$.
The prime b-divisors $P,Q$ such that $f(P)=Q$ will be called {\em corresponding\/}
with respect to $f$.

Let $f\colon (X,D)\to Z$ be a morphism as in~\ref{adjunction_0_contr}.
Consider prime b-divisors $P,Q$ of $X,Z$ respectively such that
$f(P)=Q$.
Put
$$
r=\mult_P (D'+c_Q f'^*Q) \text{ and }
l=\mult_P f'^*Q,
$$
where $f'$ is the above model of $f$,
$D'=D_{X'}=(\D(X,D))_{X'}$,
$c_Q$ is the log canonical threshold as in \cite[Construction~7.2]{PSh08} and
$f'^*Q$ is well-defined generically over $Q$.
Then $r$ is a real number $\le 1$, because $(X',D_{X'}+c_Q f'^*Q)$
generically lc over $Q$ and near $P$, and $l$ is a positive integer
such that
\begin{equation}\label{direct_cor_d}
d_Q=1-\frac rl+\frac {d_P}l
\end{equation}
and
\begin{equation}\label{invers_cor_d}
d_P=r-l+ld_Q.
\end{equation}
This is exactly the adjunction correspondence~(\ref{direct_cor}-\ref{invers_cor})
between $b=d_P=\mult_PD'=\mult_P\D$ and $d=d_Q=\mult_Q\D\dv$.
The parameters $r=r_P,l=l_P$ depend only on $P$ but not on
a model $f'$ (cf. Proposition~\ref{adjunction_same_mod_etc}, (4)).
They will be called {\em adjunction constants\/} of $(X,D)\to Z$ at $P$.
Similar constants $r,l$ can be defined for a morphism $(X,D+\sP)\to Z$
as in~\ref{adjunction_0_contr}.

If $(X,D)$ is lc (klt) then $d_P\le r$ ($< r$ respectively).
Conversely, $(X,D)$ is lc (klt over $Z$) if
$d_P\le r$ ($< r$ respectively) for every vertical prime b-divisor $P$ of $X$.
The relative klt over $Z$ means
$\mult_P\D<1$ but only for vertical $P$.

The same holds for adjunction of $(X,D+\sP)\to Z$ as in~\ref{adjunction_0_contr}.

\begin{proof}
By definition and construction
$$
r=\mult_P(D'+c_Q f'^*Q)=
\mult_P D'+c_Q \mult_P f'^*Q=d_P+c_Q l
$$
and $c_Q=(r-d_P)/l$.
On the other hand, by definition
$$
d_Q=1-c_Q=1-\frac{r-d_P}l=1-\frac rl+\frac{d_P}l.
$$
This proves~(\ref{direct_cor_d}).
The latter implies~(\ref{invers_cor_d}).

It is enough to establish the independence of $r,l$ of $f'$ in
the case when $Z$ is a curve \cite[Remark~7.3]{PSh08};
if $Z$ is a point then every prime b-divisor $P$ of $X$
is horizontal over $Z$.
But for a curve $Z$, the divisor $f'^*Q$ can be replaces
by its Cartier b-divisor.
Then $r,l$ are independent of a proper birational model $X'$ of $X$ over $Z$.

If $(X,D)$ is lc (klt) then by definition
$c_Q\ge 0$ ($>0$ respectively).
Hence $r\ge d_P$ (respectively $>d_P$)
because $l>0$. The converse can be established similarly.

The same works for adjunction of $(X,D+\sP)\to Z$.

\end{proof}

[
Remark: $r_P$ is not a multiplicity of a (b-)divisor at $P$,
that is, $\sum r_P P$ is not a (b-)divisor!
However it is a (b-)divisor for $P/Q$
(cf.~\ref{adjunction_div}).
]

\subsection{Index of adjunction} \label{adjunction_index_of}

Let $f\colon (X,D)\to Z$ be a $0$-contraction as in~\ref{adjunction_0_contr} and
$I$ be a positive integer.
We say that the $0$-contraction has an {\em adjunction index\/} $I$ if
\begin{description}

\item[\rm (1)\/]
$\K+\D\sim_I f^*(\K_Z+\D\dv+\sD\md)$, in particular,
$K+D\sim_I f^*(K_Z+D\dv+D\md)$;

\item[\rm (2)\/]
$I$ is an lc index of $(X/Z\ni\eta,D)$ and
$I\D$ is b-Cartier generically over $\eta$,
where $\eta\in Z$ is the generic point of $Z$;
actually $\sD\md$ is defined modulo $I_\eta$,
the generic index;

\item[\rm (3)\/]
$I\sD\md$ is b-Cartier and
$\sD\md$ is defined modulo $\sim_I$,
in particular, $\K+\D\sim_{I,Z} f^*(\K_Z+\D\dv)$
(cf.~\ref{adjunction_0_contr}, (3)
and Corollary-Conjecture~\ref{conj_bounded_lc_index});
and

\item[\rm (4)\/]
$Ir_P\in \Z$ for every adjunction constant $r_P$ of
$(X,D)\to Z$.

\end{description}
Actually, (1) is equivalent to (2) and,
by Proposition~\ref{adjunction_same_mod_etc}
and the reduction in Step~1 of the proof of Theorem~\ref{adjunction_index} below,
(3) implies~(1-2) and~(4).

Respectively, a $0$-contraction $f\colon (X,D+\sP)\to Z$ of
a bd-pair $(X,D+\sP)$ as in~\ref{adjunction_0_contr}
has an {\em adjunction index\/} $I$ if
\begin{description}

\item[\rm (0-bd)\/]
$(X,D+\sP)$ is a bd-pair of index $I$, in particular,
$X,Z$ are complete or proper over some scheme $S$, e.g.,
$(X,D+\sP)$ is a bd-pair of index $m$ and $m|I$;

\item[\rm (1-bd)\/]
$\K+\D(X,D+\sP_X)\sim_I f^*(\K_Z+(D+\sP)\dv+(D+\sP)\md)$, in particular,
$K+D+\sP_X\sim_I f^*(K_Z+(D+\sP)\dv{}_{,Z}+(D+\sP)\md{}_{,Z})$;

\item[\rm (2-bd)\/]
$I(D+\sP)\md$ is b-Cartier and
$(D+\sP)\md$ is defined modulo $\sim_I$;

\item[\rm (3-bd)\/]
$I$ is an lc index of $(X/Z\ni\eta,D+\sP_X)$ and
$I\D(X,D+\sP_X)$ is b-Cartier generically over $\eta$,
where $\eta\in Z$ is the generic point of $Z$;
and

\item[\rm (4-bd)\/]
$Ir_P\in \Z$ for every adjunction constant $r_P$ of
$(X,D+\sP)\to Z$.

\end{description}

Corollary~\ref{adjunction_Q_horizontal} below
implies that under assumptions in~\ref{adjunction_0_contr},
if $D\hor$ is a $\Q$-divisor, then $(X,D)\to Z$ has some
adjunction index (cf. Corollary~\ref{bounded_lc_index}).
We need a similar result for certain families of
$0$-contractions.

\begin{thm} \label{adjunction_index}
Let $d$ be a nonnegative integer and
$\Gamma$ be
a dcc set of rational numbers in $[0,1]$.
Then there exists a positive integer $I=I(d,\Gamma)$ such that
every $0$-contraction $f\colon(X,D)\to Z$ as in~\ref{adjunction_0_contr}
with wFt $X/Z,\dim X=d$ and $D\hor\in\Gamma$ has
the adjunction index $I$.

\end{thm}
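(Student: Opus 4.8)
The plan is to reduce the statement to a boundedness assertion about the generic fibers and about the adjunction data, and then invoke the results already established in the paper. First I would restrict to the generic point $\eta \in Z$: by~\ref{adjunction_mult} the adjunction constants $r_P, l_P$ and the lc index of $(X/Z \ni \eta, D)$ are all computed from the generic fiber $(X_\eta, D_\eta)$, which is a pair with an $\R$-complement (since $K+D \sim_{\R,Z} 0$) over the function field of $Z$. Passing to a resolution and spreading out, $(X_\eta, D_\eta)$ belongs to a bounded family: here I would use that $X/Z$ has wFt, so the generic fiber has Fano type, together with the dcc hypothesis $D\hor \in \Gamma$ and the existence of an $\R$-complement, to apply Theorem~\ref{bndc} (boundedness of $n$-complements) to the family of generic fibers. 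This yields a uniform $n$, hence a uniform bound on the lc index $I_\eta$ of $(X_\eta, D_\eta)$ and on the denominators $l_P$, $r_P$; it gives (2) and (4) in the definition of adjunction index~\ref{adjunction_index_of} with $I$ a multiple of $I_\eta$ and of all the $l_P$.

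Next I would handle the moduli part. The key input is Theorem~\ref{b_nef}: since $D\hor \in \Gamma$ consists of rationals, $D\hor$ is an effective $\Q$-divisor generically over $Z$, so $\sD\md$ is a b-$\Q$-divisor and $\sD\md$ is b-nef, while $\K_Z + \D\dv$ is b-$\R$-Cartier. What remains is to bound the index of $\sD\md$, i.e. to find $I$ with $I\sD\md$ b-Cartier and $\sD\md$ defined modulo $\sim_I$, which is condition (3). For this I would again use boundedness of the generic fiber: the moduli part is, up to the standard normalization, determined by the variation of Hodge structure (or the moduli of the fibers $(X_\eta, D_\eta)$), and for a bounded family of log Fano pairs of fixed dimension with boundary in a dcc set there is a uniform bound on the torsion and on the denominators appearing in $\sD\md$. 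Concretely one reduces to the case $\dim Z = 1$ (as in the proof of~\ref{adjunction_mult}, via \cite[Remark~7.3]{PSh08}), where $\sD\md$ becomes an honest $\Q$-divisor on a curve and its index is controlled by the monodromy of the family, which is finite and bounded because the fibers lie in a bounded family.

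Then I would assemble the pieces: take $I = I(d,\Gamma)$ to be a common multiple of the bound on the lc index $I_\eta$ of the generic fiber, the lcm of the (bounded set of) denominators $l_P$ and of the $I r_P \in \Z$ conditions, and the bound on the index of the moduli part $\sD\md$. Condition (1) then follows from (2)—indeed, as remarked after the definition in~\ref{adjunction_index_of}, (1) is equivalent to (2), and (3) implies (1-2) and (4) by Proposition~\ref{adjunction_same_mod_etc} and the reduction in Step~1 of the proof; so it suffices to verify (3), together with the observation that $I$ divides the relevant linear-equivalence indices. A Noetherian induction on $Z$ (stratifying so that the family of generic fibers over each stratum is bounded, as in the proof of Proposition~\ref{bounded_rank}) then upgrades the statement from the generic point to all of $Z$.

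The main obstacle I expect is the uniform bound on the index of the moduli part $\sD\md$ in condition (3): Theorem~\ref{b_nef} gives b-nefness and that $\sD\md$ is a b-$\Q$-divisor, but \emph{uniformity} of its index across the family requires boundedness of the moduli of the fibers (effectively, an effective bound on the variation of Hodge structure and its monodromy), which is where the boundedness of $\ep$-lc Fano varieties (BBAB, \cite{B16}) and boundedness of $n$-complements for the fibers~\cite[Theorem~1.7]{B} really get used. The bounds on the adjunction constants $r_P, l_P$ and on $I_\eta$ are comparatively routine once the generic fiber is known to be bounded, but extracting a single $I$ that simultaneously clears the moduli part is the delicate point and is the reason the hypothesis $D\hor \in \Gamma$ (a dcc set of \emph{rationals}) cannot be dropped.
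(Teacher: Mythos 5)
There is a genuine gap, and it sits exactly where you locate the ``routine'' part. The adjunction constants $r_P$ for \emph{vertical} prime b-divisors are not generic-fiber data: by~\ref{adjunction_mult} they are computed from the lc threshold $c_Q$ of $f^*Q$ over divisorial points $Q$ of $Z$ (and of its models), so boundedness of $(X_\eta,D_\eta)$ says nothing about condition (4) of~\ref{adjunction_index_of}. Moreover the multiplicities $l_P$ are \emph{not} bounded in general (multiple fibers of arbitrarily high multiplicity occur even under wFt), and the theorem does not need them to be; what is needed is $Ir_P\in\Z$, and this comes from boundedness of the lc index of the \emph{local maximal lc $0$-pairs over the base}, i.e.\ from Corollary~\ref{bounded_lc_index} applied over the lc centers of the adjoint pair, with coefficients in a dcc set derived from $\Gamma$ --- in dimension $d$, over $Z\ni Q$, not over $\eta$. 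The paper's proof therefore first passes (Step~1) to a generically crepant model with tlc (toroidal/weak semistable) singularities, reduced vertical boundary and $\sD\md$ stable over $Z'$; on that model $K_{Z'}+D\dv$ is Cartier, Corollary~\ref{bounded_lc_index} gives $K+D\sim_{I,Z}0$ near $\Supp D\dv$ (where the $0$-pair is maximal lc), and the equisingularity of the tlc model (adding $f^*H$ for a hyperplane through any $z\in Z\setminus\Supp D\dv$) spreads this over all of $Z$. Your Noetherian stratification of $Z$ cannot substitute for this: the theorem quantifies over \emph{all} $0$-contractions with the given data, and within a single one the difficulty is precisely over codimension~$\ge 1$ points of $Z$, which the generic-point analysis never sees.

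The second gap is the index of the moduli part. Your proposed bound via monodromy/VHS for a bounded family of fibers is not available in the paper and is essentially a piece of effective adjunction (Conjecture~\ref{mod_part_b-semiample} and \cite[(7.13.3)]{PSh08}), which is stated here as open; reducing to $\dim Z=1$ does not rescue it, because uniformity of the monodromy bound across the whole class of fibrations is exactly what would have to be proved. The paper obtains condition (3) without any such argument: once $\sD\md$ is stable over the prepared model (Theorem~\ref{b_nef}), once $K_{Z'}+D\dv$ is Cartier there, and once $I(K+D)\sim f^*L$ with $L$ Cartier is known from the bounded relative lc index, the formula $ID\md\sim L-I(K_{Z'}+D\dv)$ forces $I\sD\md$ to be b-Cartier and $\sD\md$ to be defined modulo $\sim_I$. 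So the correct engine is the one you name (boundedness of $n$-complements, hence of lc indices), but it must be applied to maximal lc $0$-pairs over the base after the semistable/tlc reduction, not to the family of generic fibers; as written, your argument establishes only condition (2) and the finiteness of $D\hor$ multiplicities, and the remaining conditions (3) and (4) are not reached.
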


\begin{add} \label{adjunction_index_adjoint}
$(Z,D\dv+\sD\md)$ is a log bd-pair of index $I$.

\end{add}

\begin{add} \label{adjunction_index_hor}
There are two finite sets of rational numbers
$\Gamma\hor(d)\subseteq\Gamma$ and
$\fR''=\fR''(d,\Gamma)\subset [0,1]$ such that,
for every $0$-contraction $(X,D)\to Z$ in as the theorem,
\begin{description}

\item[]
$D\hor\in\Gamma\hor(d)$;
and

\item[]
every nonnegative adjunction constant $r_P$ of $(X,D)\to Z$
belongs $\fR''$.

\end{description}

\end{add}

\begin{add} \label{adjunction_index_ver}
Let $\Gamma''$ be another dcc in $[0,1]$, e.g., $\Gamma''=\Gamma$.
Then there exists a dcc set $\Gamma'\subset[0,1]$ such that,
for every $0$-contraction $(X,D)\to Z$ in as the theorem and
with lc $(X,D)$,
$$
D\ver\in\Gamma''\Rightarrow D\dv\in\Gamma'.
$$
$\Gamma'$ depends on $d,\Gamma,\Gamma''$ and
is rational if $\Gamma''$ is rational.

\end{add}

\begin{add} \label{adjunction_index_div}
Let $\fR$ be a finite set of rational numbers in $[0,1]$.
Then there exists a finite set of rational numbers $\fR'\subset [0,1]$
such that, for every (finite) set of integers $\sN$ and
every $0$-contraction $(X,D)\to Z$ in as the theorem
with lc $(X,D)$,
$$
D\ver\in\Gamma(\sN,\Phi)\Rightarrow D\dv\in\Gamma(\sN,\Phi'),
$$
where $\Phi=\Phi(\fR),\Phi'=\Phi(\fR')$ are
hyperstandard sets associated with $\fR,\fR'$
respectively.
The set $\fR'$ depends on $d,\Gamma$ and $\fR$.
\end{add}

\begin{add} \label{adjunction_index_bd}
The same holds for every contraction $(X,D+\sP)\to Z$ as
in~\ref{adjunction_0_contr} under the additional assumptions of
the theorem and the assumption that $(X,D+\sP)$ is a bd-pair
of index $m$.
In this situation $I=I(d,\Gamma,m),
\fR''=\fR''(d,\Gamma,m),\Gamma(d,m)$ depend also on $m$ and
$\Gamma',\fR'$ depend
respectively on $d,\Gamma,\Gamma'',m$ and $d,\Gamma,\fR,m$.
The adjoint bd-pair in Addendum~\ref{adjunction_index_adjoint}
$(Z,(D+\sP)\dv{}_{,Z}+(D+\sP)\md)$ is a log bd-pair of index $I$.
Additionally $m|I$.

\end{add}

\begin{proof} (Hyperstandard case.)
For the general case see Section~\ref{applic}.

Step~1. (Reduction to relative tlc singularities.)
There exists a model $f'\colon(X',D')\to Z'$ of $(X,D)\to Z$
and a boundary $B$ on $X'$
such that
\begin{description}

\item[\rm (1)\/]
$\sD\md$ is stable over $Z'$: $\sD\md=\overline{\sD\md{}_{,Z'}}$,
the Cartier closure over $Z'$ \cite[Example~1.1.1]{Sh96};

and

\item[\rm (2)\/]
$B\hor=D\hor$ and
$(X'/Z',B)$ is a $0$-pair with tlc (toroidal log canonical)
singularities.

\end{description}
The latter means that $(X'/Z',B)$ is toroidal
near lc but not klt points, in particular, $(X',B')$ is lc,
and with vertical reduced boundary: $B\ver\in\{0,1\}$.
Additionally, it means that $Z\setminus \Supp B\dv$ is
nonsingular and, for every nonsingular hypersurface
$H$ in $Z$, $(X',B'+f'^*H)$ is lc over $Z\setminus \Supp B\dv$ too
(equisingularity).
(Actually, it is enough that $(X'/Z',B)$ has
such a toroidal model.)
For the property (2), it is enough weak semistable reduction
and relative LMMP (see also \cite[Theorem~1.1]{B12}).
For (1), $Z'$ should be sufficiently high over $Z$
by Theorem~\ref{b_nef}.

By \cite[Remark~7.5.1]{PSh08} (cf. Proposition~\ref{adjunction_same_mod_etc}),
$(X'/Z',B)$ has the same adjunction index.
Thus for simplicity we can suppose that
$(X'/Z',B)=(X/Z,D)$.

Step~2. (Adjunction index.)
By~(2), $D\dv\in\{0,1\}$ (reduced) and
$(Z,D\dv)$ has lc index $1$, that is,
$K_Z+D\dv$ is Cartier.
Moreover, by~(2) and Corollary~\ref{bounded_lc_index}
there exists a positive integer $I=I(d,\Gamma)$ and
a finite set of rational numbers $\Gamma\hor(d)$
depending on $d$ and $\Gamma$, such that
\begin{description}

\item[\rm (3)\/]
$K+D\sim_{I,Z} 0$;
and

\item[\rm (4)\/]
$D\hor\in\Gamma\hor(d)$.

\end{description}
Note that in this section we can prove Corollary~\ref{bounded_lc_index}
and the theorem
only for hyperstandard sets, that is, assuming $\Gamma=\Phi(\fR)$,
where $\fR$ is a finite set of rational numbers in $[0,1]$, possibly,
different from $\fR$ of Addendum~\ref{adjunction_index_div}.
In this situation $\Gamma\hor(d)=\Gamma\hor(d,\fR)$ depends only
on $d$ and $\fR$.
Indeed, the proof of Corollary~\ref{bounded_lc_index}
uses boundedness of $n$-complements.
In the case $\dim Z\ge 1$ we can use
dimensional induction.
In the case $\dim Z=0$, again we can use
dimensional induction as in \cite[4.13]{PSh08}
if $(X,D)$ is lc but not klt.
Finally, if $\dim Z=0$ and $(X,D)$ is klt then
$(X,D)$ is bounded by Corollary~\ref{bound_ex_pairs}  because $D=D\hor\in\Phi$.
A different approach can use \cite[Theorem~1.8]{B}
\cite[Theorem~1.3]{HX} \cite[Theorem~1.6]{HLSh}.

We can apply Corollary~\ref{bounded_lc_index}
because the construction in Step~1 does not change
the following properties of $(X,D)\to Z$:
it is still $0$-contraction as in~\ref{adjunction_0_contr}
with wFt $X/Z,\dim X=d$ and $D\hor\in\Gamma$.
By construction the $0$-pair $(X/Z,D)$ is maximal lc over
$\Supp D\dv$.
Thus by Corollary~\ref{bounded_lc_index} or \cite[Theorem~1.8]{B}
(3) holds over a neighborhood of $\Supp D\dv$.
By~(4), $D\hor\in\Gamma\hor(d)$.
Actually, by~(3), $I\Gamma\hor(d)\in\Z$ and
$\Gamma\hor(d)$ is a finite set of rational numbers in
$[0,1]\cap (\Z/I)$.
Again by~(2), for a nonsingular hyperplane $H$
through any point $z$ of $Z\setminus \Supp D\dv$,
$(X/Z,D+f^*H)$ is lc over $Z\setminus D\dv$.
(It is sufficiently to take effective Cartier $H$
such that $(Z,H)$ is lc near $z$.)
By the same reason as above,
(3) holds locally over $z$ because
$f^*H$ is Cartier.
Hence (3) holds over $Z$.

[Remark:]
By~(3), there exists a Cartier divisor $L$ on $Z$
such that $I(K+D)\sim f^*L$.
(An additional property of the adjunction index $I$ which
follows from (1) of~\ref{adjunction_index_of} for appropriate $D$ and
on an appropriate model of $(X,D)\to Z$.)
Thus (1-3) of \ref{adjunction_index_of} hold
by (1) and because $K_Z+D$ is Cartier (cf. \cite[Construction~7.5]{PSh08}).
Since the $0$-pair $(X/Z,D)$ is maximal lc (locally) over
$\Supp D\dv$ and $K+D$ has Cartier index $I$,
$Ir_P\in \Z$ for every prime b-divisor $P$ over $\Supp D\dv$,
that is, (4) of~\ref{adjunction_index_of} over $\Supp D\dv$.
Adding $f^*H$ as above we can establish (4) of~\ref{adjunction_index_of}
everywhere over $Z$.
Actually in this situation,
just~(1) of~\ref{adjunction_index_of}
implies (2-4) of~\ref{adjunction_index_of} because $K_Z+D\dv$ is Cartier and
$K+D$ has Cartier index $I$.
(In general, (3) implies~(1-2) and~(4).)

In the proof of the step, a choice of Cartier $L$
is crucial.
The divisor $L$ is defined up to $\sim$ on $Z$.
Canonical divisors $K,K_Z$ are also defined up to $\sim$
on $X,Z$ respectively.
Moreover, the linear equivalence $K\sim K'$ for $K$, that is, $K-K'$
should be vertical principal.
Thus in $K+D\sim_I f^*(L/I)$ we can suppose that
$\sim_I$, that is, $K+D-f^*(L/I)$, is fixed.
This gives $\Q$-divisors $L/I$ and $D\md$
with Cartier index $I$ which are defined up to $\sim_I$.
Thus the adjunction~(\ref{adjunction_f}) become
(1) in~\ref{adjunction_index_of} with $\sim_I$ instead
of general $\sim_\R$ (cf. \cite[Conjecture~7.13 and (7.13.4)]{PSh08}).

Step~3. (Addenda.)
Addendum~\ref{adjunction_index_adjoint} follows
from~\ref{adjunction_0_contr}, \ref{adjunction_index_of}, (3) and
Theorem~\ref{b_nef}.

In Addendum~\ref{adjunction_index_hor},
we can take
$$
\Gamma\hor(d)=\Gamma\cap \frac \Z I\subseteq\fR''=[0,1]\cap \frac \Z I
$$
by~(2) and~(4) of~\ref{adjunction_index_of} (cf. (4) above).

Addendum~\ref{adjunction_index_ver} follows
from~(\ref{direct_cor_d}), Addendum~\ref{adjunction_index_hor}
and~\ref{direct_dcc}.

Similarly, Addendum~\ref{adjunction_index_div} follows
from~(\ref{direct_cor_d})
and~\ref{direct_hyperst} with $\fR'$ defined by~(\ref{const_adj}).
Notice that $\fR'$ depends on $d,\Gamma$ and $\fR$
because $\fR''$ depends on $d,\Gamma$.

Finally, all the same works for $0$-contractions of
bd-pairs of Addendum~\ref{adjunction_index_bd}
with the additional new parameter $m$ and
the new assumption $m|I$.

\end{proof}

\subsection{Generically crepant adjunctions}

Let $f\colon (X,D)\to Z,f'\colon (X',D')\to Z'$ be two $0$-contractions as in~\ref{adjunction_0_contr}.
We say that they are {\em birationally equivalent\/} or  {\em crepant generically over\/} $Z$ or $Z'$ if there exist nonempty open subsets
$U,U'$ in $X,X'$ respectively such that
$f\rest{U}\colon (X_U,D_U)\to U,f'\rest{U'}\colon (X_{U'}',D_{U'}')\to U'$
are crepant, where
$X_U=f\1U,D_U=D\rest{X_U}$ and
$X_{U'}'=f'{}\1U',D_{U'}=D\rest{X_{U'}}$ respectively.
Equivalently, $X'\to Z'$ is a model of $X\to Z$ and
$\D\hor=\D'\hor$ under the birational equivalence,
where $\D'=\D(X',D')$.
We say also that $(X',D')\to Z'$ is a ({\em crepant\/})
{\em model\/} of $(X,D)\to Z$.
In this situation, $X'\to Z'$ is a model of $X\to Z$
with given proper birational isomorphisms
$X'\dashrightarrow X, g'\colon Z'\dashrightarrow Z$.
Thus we can compare certain (birational) invariants of adjunction, e.g.,
the moduli part of adjunction $\sD\md$ on $Z$ with that of $\sD\md'$ on $Z'$,
where $\sD\md'$ is the moduli part of adjunction of $(X',D')\to Z'$.
So, $\sD\md=\sD\md'$ means that $\sD\md'=g'^*\sD\md$.
(In our applications $g'$ is identical on
a nonempty open subset $U'$ in $X'$.)
Notice that we can omit (2) in~\ref{adjunction_0_contr},
the effective property of $D\hor,D'\hor$.
In some applications we omit also the assumption
that $X\to Z,X'\to Z'$ are regular.
But we still
assume that they are proper rational contractions and
$f\colon (X,D)\dashrightarrow Z,f'\colon (X',D')\dashrightarrow Z'$
are proper rational $0$-contractions.
Moreover, the latter ones are crepant respectively over $Z,Z'$
to $0$-contraction as in~\ref{adjunction_0_contr}
(cf. Lemma~\ref{B_D_r_f}).
Thus we apply the definition and results to
the latter contractions.

The same applies to bd-pairs.

Some of important invariants of adjunction in~\ref{adjunction_0_contr}
depend on $(X,D)\to Z$ only generically over $Z$.
The same applies to bd-pairs.

\begin{prop} \label{adjunction_same_mod_etc}
Let $f\colon (X,D)\to Z,f'\colon (X',D')\to Z'$ be two
birationally equivalent generically over $Z$
$0$-contractions as in~\ref{adjunction_0_contr}.
Then they have the same following invariants
under the birational equivalence:
\begin{description}

\item[\rm (1)\/]
the moduli part of adjunction:
$$
\sD\md=\sD\md'\text{ and }
D\md'=(\sD\md)_{Z'};
$$

\item[\rm (2)\/]
if
$$
K+D\sim_\Q f^*(K_Z+D\dv+D\md)\text{ and }
\K+\D\sim_\Q f^*(\K_Z+\D\dv+\sD\md)
$$
then respectively
$$
K_{X'}+D'\sim_\Q f^*(K_{Z'}+D\dv'+D\md')\text{ and }
\K_{X'}+\D'\sim_\Q f^*(\K_{Z'}+\D\dv'+\sD\md');
$$
the same holds for $\sim_I$ instead of $\sim_\Q$,
where $I$ is a positive integer;

\item[\rm (3)\/]
the adjunction correspondence of (b-)$\R$-divisors;

\item[\rm (4)\/]
the adjunction constants $l_P,r_P$ at
every vertical over $Z$ prime b-divisor $P$ of $X$;
actually, $l_P$ depend only on $f\colon X\to Z$
generically over $Z$;

\item[\rm (5)\/]
the adjunction index $I$ if such one exists for either of models;
and

\item[\rm (6)\/]
the horizontal b-codiscrepancies: $\D\hor=\D'\hor$.

\end{description}

The same holds for bd-pairs with same $\sP$.
In (2) and (5) the bd-pairs have index $m|I$.
\end{prop}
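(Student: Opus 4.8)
The plan is to verify each of the six items by exploiting that the objects in question are, by their construction in~\ref{adjunction_0_contr} and~\ref{adjunction_mult}, birational invariants of a $0$-contraction that are insensitive to the vertical structure over the base; so the hypothesis $\D\hor=\D'\hor$ forces them all to agree. First I would reduce to a convenient situation: since $g'\colon Z'\dashrightarrow Z$ is a proper birational isomorphism, b-divisors of $Z'$ and of $Z$ are canonically identified and each asserted invariant of $f'$ is by definition the $g'^*$-transform of the corresponding one of $f$; hence it is enough to treat $Z'=Z$, $g'=\mathrm{id}$, with a proper birational isomorphism $\varphi\colon X'\dashrightarrow X$ over $Z$ which is crepant over a nonempty open $U\subseteq Z$. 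In this form item~(6) is exactly the hypothesis.

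The core is a cancellation. Let $D''$ be the $\varphi$-crepant transform of $D$ onto $X'$, so that $\D(X',D'')=\D(X,D)$, and hence $K_{X'}+D''\sim_{\R,Z}0$; since also $K_{X'}+D'\sim_{\R,Z}0$ and $\mult_P D''=\mult_P D'$ for every horizontal prime divisor $P$ on $X'$ (this is $\D\hor=\D'\hor$), the difference $D''-D'$ is vertical and $\sim_{\R,Z}0$, hence by connectedness of fibres and the negativity lemma it equals $\alpha_Q f^*Q$ for some $\alpha_Q\in\R$ in a neighbourhood of the generic point of each prime divisor $Q$ of $Z$. A short computation then shows that passing from $(X',D')$ to $(X',D'')$ shifts the log canonical threshold $c_Q$ by $-\alpha_Q$, the multiplicity $\mult_P\D$ at a vertical prime b-divisor $P$ over $Q$ by $\alpha_Q l_P$, and the class $L$ on $Z$ with $K+D\sim_\R f^*L$ that enters the construction of $D\md$ by $\alpha_Q Q$; the first two shifts cancel in $r_P=\mult_P\D+c_Q l_P$, all three cancel in $D\md=L-K_Z-D\dv$, while $l_P$ does not move at all. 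This yields~(4): $l_P=l_P'$ and $r_P=r_P'$, with $l_P$ depending only on $f\colon X\to Z$ generically over $Z$; and it yields~(1): $\sD\md=\sD\md'$ and hence $D\md'=(\sD\md)_{Z'}$, which may alternatively be quoted from the birational invariance of the moduli b-divisor established in~\cite{A04}. Item~(3), the adjunction correspondence of (b-)$\R$-divisors, is then immediate, since at a vertical prime b-divisor it is~(\ref{direct_cor_d}--\ref{invers_cor_d}) with the shared constants $r_P,l_P$, matched at horizontal prime b-divisors by $\mult_P\D=\mult_P\D'$.

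The remaining items are formal. For~(2): given $K+D\sim_\Q f^*(K_Z+D\dv+D\md)$, pulling back by $\varphi$, using $K_{X'}=(\K)_{X'}$ and that $\varphi^*f^*=f^*$ on divisors from $Z$, together with $D''=D'+\sum_Q\alpha_Q f^*Q$, $D\dv=D'\dv+\sum_Q\alpha_Q Q$ and $D\md=D'\md$ from the cancellation, gives $K_{X'}+D'\sim_\Q f^*(K_Z+D'\dv+D'\md)$; the same computation works verbatim with $\sim_I$ in place of $\sim_\Q$ for any positive integer $I$. For~(5): if $f$ has adjunction index $I$ in the sense of~\ref{adjunction_index_of}, the conditions there refer only to $\sim_I$-equivalence, the generic lc index, the b-Cartier index of $\sD\md$, and the constants $r_P$, all shared with $f'$ by~(1)--(4) above and the obvious birational invariance of the generic lc index; so $f'$ has adjunction index $I$, and symmetrically.

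Finally, for bd-pairs $(X,D+\sP),(X',D'+\sP)$ with the same $\sP$, I would run the identical scheme with the bd-adjunction of~\ref{adjunction_0_contr} and Addendum~\ref{b_nef_bd}: the term $\sP_X$ is fixed, so $D''-D'$ is still vertical and $\sim_{\R,Z}0$, the cancellation is untouched, and one gets $(D+\sP)\md=(D+\sP)\md'$ together with the equality of the adjunction constants; in~(2) and~(5) one keeps $m\mid I$ throughout. The step I expect to be the real obstacle is the cancellation lemma of the second paragraph — precisely, reducing the statement that $D''-D'$ is fibre-wise a multiple of a fibre to connectedness of fibres in a form uniform enough to apply over \emph{every} prime divisor of $Z$, including those sitting inside $Z\setminus U$, and then checking that the threefold shift genuinely cancels; once that is in place, items~(1)--(6) fall out as indicated.
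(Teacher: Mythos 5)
Your argument is correct and is essentially the paper's own: the cancellation you establish --- the vertical difference of the two boundaries is, at the generic point of each prime divisor $Q$ of $Z$, a multiple of $f^*Q$, so it cancels in $D+c_Qf^*Q$ (giving (4)) and in the moduli part (giving (1--2)) --- is exactly the paper's one-line justification that ``$c_Q$ depends on $D$ but $D+c_Qf^*Q$ depends only on $D\hor$'' after its reduction to a one-dimensional base, and is the content of \cite[Remark~7.5.1]{PSh08}, which the paper simply cites for (1--2) rather than reproves. The step you flag as the obstacle is not one: localizing at the generic point of $Q$ uses only that $f'$ is a contraction (so the fibre over the generic point of $Q$ is connected), not the crepancy over $U$, hence it applies uniformly to every prime divisor of $Z$, including those inside $Z\setminus U$.
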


\begin{proof}
Up to a birational equivalence as in~\ref{adjunction_0_contr},
we can suppose that $(X/Z,D\hor)$ is equal to $(X'/Z',D'\hor)$
(cf. (1) in~\ref{adjunction_div}).
In this situation we can use \cite[Remark~7.5.1]{PSh08}
for (1-2).
Of course, we suppose that $\K_{X'}=\K$ and
$\sim_\R$ (respectively $\sim_\Q,\sim_I$)
is the same under the birational equivalence.

(3) holds by definition (see \ref{adjunction_div} below).

For (4) we can use reduction to
a $1$-dimensional base $Z$ \cite[Remark~7.3, (i)]{PSh08}.
In this case $l_P$ depends only on $X\to Z$ and
$r$ does so additionally on $D\hor$.
Indeed, $c_Q$ depend on $D$ but $D+c_Qf^*Q$ depend only
on $D\hor$.

(5-6)
hold by definition and already established facts about
other invariants.

The same works for bd-pairs.
\end{proof}

\begin{cor} \label{adjunction_Q_horizontal}
Let $(X,D)\to Z$ be a $0$-contraction as in \ref{adjunction_0_contr} and
additionally $D\hor$ be a $\Q$-divisor.
Then~(\ref{adjunction_f}) holds with $\sim_\Q$ instead of $\sim_\R$,
$\sD\md$ is a b-$\Q$-divisor and
every adjunction constant $r_P$ is rational.

The same holds for bd-pairs with a b-$\Q$-divisor $\sP$.
\end{cor}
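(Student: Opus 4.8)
The statement that $\sD\md$ is a b-$\Q$-divisor is essentially Theorem~\ref{b_nef}: by assumption~(2) in~\ref{adjunction_0_contr} the divisor $D\hor$ is a boundary, hence effective, so the hypothesis that $D\hor$ is a $\Q$-divisor says precisely that $D$ is an effective $\Q$-divisor generically over $Z$. Therefore Theorem~\ref{b_nef} applies and yields that $\sD\md$ is a b-$\Q$-divisor of $Z$ defined up to $\Q$-linear equivalence, and that $\K_Z+\D\dv$ is b-$\R$-Cartier. Thus the real content of the corollary is the rationality of the adjunction constants $r_P$; the $\sim_\Q$ form of~(\ref{adjunction_f}) will then follow formally.

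To prove that every $r_P$ is rational I would use Proposition~\ref{adjunction_same_mod_etc}, (4): the adjunction constants $l_P$ and $r_P$ depend only on $f\colon X\to Z$ generically over $Z$ and on $D\hor$, hence are unchanged if $(X,D)\to Z$ is replaced by any $0$-contraction birationally equivalent to it generically over $Z$ with the same horizontal part. For the latter I would take the toroidal $0$-pair model $f'\colon(X',B)\to Z'$ produced as in Step~1 of the proof of Theorem~\ref{adjunction_index} (weak semistable reduction followed by a relative minimal model program for a relatively numerically trivial pair; this needs no weak Fano type assumption, and if desired one may first pass to a $\Q$-factorialization of $X$): there $B\hor=D\hor$, $B\ver\in\{0,1\}$ and $K_{X'}+B\sim_{\R,Z'}0$. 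Since $B$ is then an effective $\Q$-divisor and $K_{X'}+B$ is $\R$-Cartier, $K_{X'}+B$ is in fact $\Q$-Cartier and $\sim_{\Q,Z'}0$; consequently the codiscrepancy $\D(X',B)$ and every trace $B'$ on a crepant model are $\Q$-divisors, and each log canonical threshold $c_Q$ occurring in the adjunction formalism of~\ref{adjunction_mult} for $(X',B)\to Z'$ — being the threshold of a $\Q$-boundary against the $\Q$-Cartier divisor $f'^*Q$ — is rational. By the defining relation $r_P=\mult_PB'+c_Q\mult_Pf'^*Q$ of~\ref{adjunction_mult}, every $r_P$ of $(X',B)\to Z'$ is rational, and hence so is every $r_P$ of $(X,D)\to Z$.

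For the $\sim_\Q$ statement, choose $D\md=(\sD\md)_Z$ as above (a $\Q$-divisor) and put $\Delta=(K+D)-f^*(K_Z+D\dv+D\md)$, an $\R$-Cartier divisor which is $\R$-principal by~(\ref{adjunction_f}). Along a horizontal prime divisor of $X$ the multiplicity of $\Delta$ is an integer plus a multiplicity of $D\hor$, hence rational; along a vertical prime divisor $E$ with $f(E)=Q$ it equals $(\text{an integer}+d_E)-l_E\cdot\mult_Q(K_Z+D\dv+D\md)$, which by the inverse adjunction correspondence~(\ref{invers_cor_d}), $d_E=r_E-l_E+l_Ed_Q$, equals a rational number plus $r_E-l_E$, rational by the previous step. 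Hence $\Delta$ is an $\R$-principal $\Q$-divisor, so $\Delta\sim_\Q0$ and~(\ref{adjunction_f}) holds with $\sim_\Q$ in place of $\sim_\R$. The bd-pair case is identical: assuming $\sP$ a b-$\Q$-divisor and $D\hor$ a $\Q$-divisor, Addendum~\ref{b_nef_bd} gives that $(D+\sP)\md$ is a b-$\Q$-divisor; the toroidal model carries the same b-part $\sP$ and a $\Q$-boundary, so its adjunction constants are rational and transfer by the bd-version of Proposition~\ref{adjunction_same_mod_etc}; the $\sim_\Q$ form of the adjunction formula follows as above with $K+\sP_X$ in place of $K$.

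The main obstacle is precisely the rationality of the $r_P$: it is not a consequence of Theorem~\ref{b_nef}, and $r_P$ is assembled from data — $\mult_PD'$ and the threshold $c_Q$ — each of which may be irrational when $D\ver$ is irrational. What saves it is that this particular combination is a birational invariant depending only on $D\hor$, so it may be computed on a crepant model on which $D$ is a genuine $\Q$-divisor, where rationality is manifest; everything else in the corollary is either quoted from Theorem~\ref{b_nef} or is a short multiplicity computation built on this rationality.
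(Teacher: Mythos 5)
Your overall strategy -- compute the adjunction data on a generically crepant model whose boundary is a $\Q$-divisor and transfer by Proposition~\ref{adjunction_same_mod_etc} -- is exactly the paper's proof, which consists of the single sentence ``we can suppose that $D'$ is a $\Q$-divisor,'' and your use of Theorem~\ref{b_nef} for the b-$\Q$ property of $\sD\md$ is fine. Two remarks on how you implement it. For the rationality of $r_P$ you invoke the toroidal model of Step~1 of the proof of Theorem~\ref{adjunction_index}; that construction (weak semistable reduction plus a relative LMMP producing a $0$-pair with reduced vertical boundary) is stated and used in the paper only under the wFt hypothesis, and your parenthetical claim that it needs no such hypothesis is asserted, not argued. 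It is also far more than you need: since by the proof of Proposition~\ref{adjunction_same_mod_etc}, (4) the constants $l_P,r_P$ are computed after localizing at the generic point of $Q$ (a $1$-dimensional base, \cite[Remark~7.3]{PSh08}), it suffices to replace $D\ver$ there by a rational vertical divisor $V$ with $K+D\hor+V\sim_\R 0$ over the DVR germ; such a $V$ exists by rational linear algebra because the condition is affine and defined over $\Q$ and has the real solution $D\ver$. With rational $D\hor+V$ both $c_Q$ and $r_P$ are visibly rational, which is what the paper means by passing to a rational $D'$.

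The genuine gap is in your verification of the $\sim_\Q$ form of~(\ref{adjunction_f}). You show that $\Delta=(K+D)-f^*(K_Z+D\dv+D\md)$ has rational multiplicity along horizontal prime divisors and along vertical prime divisors $E$ with $f(E)=Q$ a divisor of $Z$ (the cancellation of $d_Q$ via $d_E=r_E-l_E+l_Ed_Q$ is correct). But $X$ may carry vertical prime divisors whose image in $Z$ has codimension $\ge 2$; for these there is no adjunction constant and no identity $\mult_Ef^*(\cdot)=l_E\cdot\mult_Q(\cdot)$, and since $K_Z+D\dv+D\md$ is only $\R$-Cartier (with $D\dv$ possibly irrational when $D\ver$ is), the rationality of $\mult_E\Delta$ along such $E$ is not established -- so you have not shown $\Delta$ is a $\Q$-divisor, which your final step ``$\R$-principal $\Q$-divisor $\Rightarrow\sim_\Q 0$'' requires. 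The paper avoids this entirely: the $\sim_\Q$ statement is read off on the model with rational $D'$ (where $K_{X'}+D'\sim_{\Q,Z'}0$ and $D\dv',D\md'$ are rational by construction) and then transported to $(X,D)\to Z$ by Proposition~\ref{adjunction_same_mod_etc}, (1)--(2), equivalently by the adjunction correspondence~(\ref{adjunction_cor_b_div})--(\ref{adjunction_f_b_'}), rather than by a multiplicity computation on $X$. You should either argue the missing components (e.g.\ at the level of b-divisors, where every vertical prime b-divisor does have a divisorial image on some model of $Z$, and then control the traces there) or, more simply, replace your third paragraph by this transfer.
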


\begin{proof}
We can suppose that $D'$ is a $\Q$-divisor.
Thus~(\ref{adjunction_f}) holds with $\sim_\Q$,
$\sD\md$ is a b-$\Q$-divisor and
all $c_Q,r_P$ are rational by construction.

Similarly we can treat bd-pairs.

\end{proof}

\subsection{Adjunction correspondence of divisors} \label{adjunction_div}

Let $f\colon (X,D)\to Z$ be a $0$-contraction as
in~\ref{adjunction_0_contr}.
We say now that b-$\R$-divisor $\D\dv$ {\em corresponds under adjunction\/} to
the b-$\R$-divisor $\D=\D(X,D)$, the b-codiscrepancy of $(X,D)$.
We can formally extend this correspondence to all
b-$\R$-divisors $\D_Z$ of $Z$: the {\em adjunction correspondent\/}
to $\D_Z$ on $X$ is the b-$\R$-divisor $\D'$ of $X$
such that
\begin{description}

\item[\rm (1)\/]
$\D'\hor=\D\hor$;
and

\item[\rm (2)\/]
$\D'\ver$ can be determined by
the adjunction correspondence of multiplicities
in~\ref{adjunction_mult}: for every vertical over $Z$
prime b-divisor $P$,
$$
\mult_P \D'=d_{P}'=r_P-l_P+l_Pd_Q \ \text{( cf.~(\ref{invers_cor_d}))}
$$
where $r_P,l_P$ are the adjunction constants of $(X,D)\to Z$
at $P$, $Q=f(P)$ and $d_Q=\mult_Q\D_Z$.

\end{description}
$\D'$ is a b-$\R$-divisor but it is not necessarily
satisfies BP as the b-discrepancy $\D$
(cf.~(3) below).
If we consider the correspondence for all
b-$\R$-divisors $\D_Z$ of $Z$ then their image consists of
b-$\R$-divisors $\D'$ of $X$ which satisfy (1)
of the definition and
$\D'-\D=f^*\D_Z'$ for some b-$\R$-divisor $\D_Z'$
of $Z$.
The pull back $f^*$ here is birational, that is,
for every model $f'\colon X'\to Z'$ of $X\to Z$,
\begin{equation}\label{adjunction_cor_div}
\D'_{X'}-\D_{X'}=f^*(\D_{Z,Z'}')
\end{equation}
generically over divisorial points of $Z$.
Such a divisor $\D_Z'$ is unique and
$=\D_Z-\D\dv$ by~(\ref{direct_cor_d}-\ref{invers_cor_d}).

Equivalently, we can define the inductive limit
for the correspondence between $\R$-divisors
in the opposite directions.
For every model $X'\to X$ of $X\to Z$, which
is isomorphic to $X\to Z$ generically over $Z$, and
every b-$\R$-divisor $\D'$ under above assumptions
(in the image), $(X',D')\to Z'$ is
a $0$-contraction as in~\ref{adjunction_0_contr}
generically over divisorial points of $Z'$,
where $D'=\D_{X'}'$.
Thus the $\R$-divisor $D\dv'$ is well-defined and
{\em adjunction corresponds\/} to the $\R$-divisor $D'$.
Since every prime b-divisor $Q$ is a divisor
on $Z'$ for an appropriate model $X'\to Z'$,
$\D\dv'$ is well-defined and adjunction corresponds to $\D'$:
$\D\dv'=\D_Z$.
We usually denote by $\D\dv'$ the adjunction correspondent
b-$\R$-divisor to $\D'$, that is, the {\em divisorial
part of adjunction\/} for $f\colon (X,\D')\to Z$.
According to the above (cf.~(\ref{adjunction_cor_div}))
\begin{equation} \label{adjunction_cor_b_div}
\D'=\D+f^*(\D\dv'-\D\dv).
\end{equation}
Equivalently, the adjunction correspondence can be defined
and determined by the {\em actual\/} adjunction formula
(cf.~(\ref{adjunction_f_b})):
\begin{equation}\label{adjunction_f_b_'}
\K+\D'\sim_\R f^*(\K_Z+\D\dv'+\sD\md),
\end{equation}
where $\K_Z$ denotes a canonical b-divisor of $Z$.
The moduli part of adjunction independent of $\D'$:
$\sD\md'=\sD\md$.
By definition, Corollary~\ref{adjunction_Q_horizontal} and
\ref{adjunction_index_of}, (1),
in the adjunction formula $\sim_\R$ can be replaced by
$\sim_\Q$, if $D\hor\in\Q$, and even by $\sim_I$,
if $(X,D)\to Z$ has an adjunction index $I$
(cf. Proposition~\ref{adjunction_same_mod_etc}, (2)).

A similar construction works a $0$-contraction
$(X,D+\sP)\to Z$ with
a bd-pair $(X,D+\sP)$ as in~\ref{adjunction_0_contr}.
We denote by $(\D'+\sP)\dv$ the b-$\R$-divisor
adjoint correspondent to $\D'$.
However, in this situation $\D'$ should satisfy
the following assumption: for every model
$X'\to Z'$ of $X\to Z$, which
is isomorphic to $X\to Z$ generically over $Z$,
$(X',D'+\sP)\to Z'$ is
a $0$-contraction as in~\ref{adjunction_0_contr}
generically over divisorial points of $Z'$,
where $D'=\D_{X'}'$
In particular, for $\D'=\D(X,D+\sP_X)-\sP$,
$(\D'+\sP)\dv=(\D(X,D+\sP_X))\dv=(D+\sP)\dv$ holds, that is,
the b-divisorial part of $(X,D+\sP)$ adjunction corresponds
to the b-divisorial part of adjunction of $(X,D+\sP)\to Z$.
In general
$$
\K+\D'+\sP\sim_\R
f^*(\K_Z+(\D'+\sP)\dv+(D+\sP)\md)
$$
with the moduli part $(\D'+\sP)\md=(\D(X,D+\sP_X))\md=(D+\sP)\md$
independent of $\D'$.

The correspondence for $\R$-divisors is considered
for any fixed model $X'\to Z'$ of $X\to Z$ and
generically over divisorial points.
In this situation $\D'$ satisfies BP (generically over divisorial points), e.g.,
by~(3) below because every $\R$-divisor in divisorial points
satisfies BP and additionally is $\R$-Cartier.

The correspondence has the following properties.

(1) {\em Injectivity.\/}
The adjunction correspondence of b-$\R$-divisors and
$\R$-divisors of $Z'$ is $1$-to-$1$ on its image in
b-$\R$-divisors and $\R$-divisors of $X'$ respectively.

\begin{proof}
Immediate by~(\ref{direct_cor_d}-\ref{invers_cor_d}).
\end{proof}

(2) {\em Linearity.} The correspondence is a affine $\R$-linear isomorphisms
of affine $\R$-spaces.

\begin{proof}
Immediate by~(\ref{adjunction_cor_b_div}).
\end{proof}

(3) {\em BP.\/}
$$
\D' \text{ satisfies BP }\Leftrightarrow
\text{ so does } \D\dv'.
$$
This gives a $1$-to-$1$ affine $\R$-linear isomorphism of
affine $\R$-spaces of b-$\R$-divisors under BP.
Recall that BP (boundary property) of a b-$\R$-divisor $\D$
of $X$ means that
there exists a model $X'$ of $X$ such that
$\D=\D(X',\D_{X'})$ \cite[p.~125]{Sh03}.
Equivalently, $\K+\D$ is b-$\R$-Cartier.
In particular, if $\K+\D\sim_\R 0$, b-$\R$-principal,
or $\sim_{\R,Z} 0$, relative b-$\R$-principal,
then $\D$ satisfies BP.

If $\D'$ satisfies BP than there exists a $0$-contraction
$(X',D')\to Z'$ as in~\ref{adjunction_0_contr}, with $D'=\D_{X'}'$
and $\D'=\D(X',D')$, which
generically over $Z'$ is isomorphic to $(X,D)\to Z$ and
hence is generically crepant to the latter.
The new contraction $(X',D')\to Z'$ gives the same
adjunction correspondence (see Proposition~\ref{adjunction_same_mod_etc}, (3)).
In particular, all facts that are stated for $\D$
can be applied to $\D'$ under BP with $(X',D')$
as in this paragraph.

\begin{proof}
On $X$ this is the affine $\R$-space of b-$\R$-divisors
$\D+\sC$, where $\sC$ is vertical b-$\R$-Cartier over $Z$,
that is, $\sC=f^*\sC_Z$
for some b-$\R$-Cartier $\sC_Z$ of $Z$.
So, on $Z$ this is the affine $\R$-space  of b-$\R$-divisors
$\D\dv+\sC_Z$, where
$\sC_Z$ is b-$\R$-Cartier.
In both cases BP follows from BP for $(X,D)$ and
$(Z,\D\dv)$ respectively.

The last statement about the correspondence means
the change of the origin in both affine $R$-spaces:
$\D',\D\dv'$ instead of $\D,\D\dv$ respectively
(cf.~(\ref{adjunction_cor_b_div})):
$$
\D'-\D=f^*(\D\dv'-\D\dv),
$$
where both differences are b-$\R$-Cartier.

\end{proof}

(4) {\em Monotonicity.}
$$
\D''\ge \D'\Leftrightarrow \D\dv''\ge\D\dv';
$$
and
$$
\D_{X'}''=D_{X'}''\ge D'\Leftrightarrow D\dv''{}_{,Z'}\ge D\dv'{}_{,Z'}
$$

\begin{proof}
Immediate by~(\ref{invers_cor_d}) and
the equation~(1) in the definition of the correspondence.
\end{proof}

(5) {\em Rationality.} If $D\hor$ is a $\Q$-divisor then
$$
\D'\text{ is a } \Q\text{-divisor }\Leftrightarrow
\D\dv' \text{ is a } \Q\text{-divisor};
$$
and
$$
D' \text{ is a } \Q\text{-divisor }\Leftrightarrow
D\dv'{}_{,Z'} \text{ is a } \Q\text{-divisor}.
$$
That is the correspondence is $\Q$-linear.

\begin{proof}
Immediate by~(\ref{direct_cor_d}).
Indeed, by definition
every constant $r=r_P\in\Q$ in this case
(cf. Addendum~~\ref{adjunction_index_hor},
Corollary~\ref{adjunction_Q_horizontal} and
\cite[Lemma~7.4, (iv)]{PSh08}).
\end{proof}

(6) {\em Klt, lc and nonlc.\/} (Cf. the strict $\delta$-lc
property in Definition~\ref{strict_delta_lc}.)
$$
(X,\D') \text{ is lc (respectively nonlc) }
\Leftrightarrow
(Z,\D\dv') \text{ is lc (respectively nonlc)};
$$
$$
(X',D') \text{ is lc (respectively nonlc) }
\Leftrightarrow
(Z',D\dv'{}_{,Z'}) \text{ is lc (respectively nonlc)};
$$
$$
(X,\D') \text{ is klt over } Z
\Leftrightarrow
(Z,\D\dv') \text{ is klt};
$$
$$
(X',D') \text{ is klt over } Z'
\Leftrightarrow
(Z',D\dv'{}_{,Z'}) \text{ is klt}.
$$
If $(X,\D')$ is klt generically over $Z$,
that is, $\D\hor$ is klt (horizontally), then
$$
(X,\D') \text{ is klt }
\Leftrightarrow
(Z,\D\dv') \text{ is klt};
$$
$$
(X',D') \text{ is klt }
\Leftrightarrow
(Z',D\dv'{}_{,Z'}) \text{ is klt}.
$$

The lc (klt) property here is formal:
for all prime b-divisors $P$ of $X$,
$\mult_P\D'\le1$ (respectively $<1$)
(cf. \cite[Example~1.1.2]{Sh96}).

\begin{proof}
Immediate by~\ref{direct_lc}-\ref{invers_lc}
and~(\ref{direct_cor_d}-\ref{invers_cor_d}).
By the equation (1) and \ref{adjunction_0_contr}, (1),
it is enough to consider only
vertical prime b-divisors $P$ of $X$.
In this case, by~\ref{adjunction_mult},
$b=d_P\le r=r_P$ ($<r=r_P$) means
that $(X',D_{X'}')$ is lc (respectively klt over $Z'$).

\end{proof}

(7) $\sim_{\R,S} 0$. For a proper morphism $Z\to S$ to a scheme $S$,
$$
\K+\D\sim_{\R,S} 0\Leftrightarrow \K_Z+\D\dv+\sD\md\sim_{\R,S} 0;
$$
or equivalently,
$$
K+D\sim_{\R,S} 0\Leftrightarrow K_Z+D\dv+\sD\md\sim_{\R,S} 0;
$$
where we can replace $X\to Z$ by any its model $X'\to Z'/S$
and $\sim_{\R,S}$ by $\equiv$ over $S$ ($\sim_{\Q,S},\sim_{I,S}$ if
respectively $D\hor\in\Q$,
the $0$-contraction $(X,D)\to Z$ has the adjunction index $I$).

We can replace $\sim_{\R,S} 0$ by the $\R$-free or -antifree property over $S$.
Respectively, we can replace the $\R$-free or -antifree property over $S$
by the nef or antinef property over $S$,
by $\Q$($I$)-free or -antifree property over $S$ if $D\hor\in\Q$
(respectively, $(X,D)\to Z$ has the adjunction index $I$).

\begin{proof}
Immediate by the adjunction formula~(\ref{adjunction_f_b_'})
and a relative over $S$ $\R$-version of~\cite[Proposition~3]{Sh19}.
Notice also that $(X',D_{X'})\to Z'$ with $D_{X'}=\D_{X'}$
is a $0$-contraction as in~\ref{adjunction_0_contr},
except for~(2) in \ref{adjunction_0_contr} but
it is crepant to $(X,D)\to Z$ and satisfies
the same adjunction properties by Proposition~\ref{adjunction_same_mod_etc}.
\end{proof}

(8) $\R$-{\em complements\/}.
For a proper morphism $Z\to S$ to a scheme $S$,
\begin{align*}
(X/S,\D^+)\text{ is a (b-)}&\R\text{-complement of }(X/S,\D) \\
&\Updownarrow\\
(Z/S,\D\dv^++\sD\md)\text{ is a (b-)}&\R\text{-complement of }(Z/S,\D\dv+\sD\md).
\end{align*}
So,
$$
(X/S,\D)\text{ has a (b-)}\R\text{-complement}
\Leftrightarrow
(Z/S,\D\dv+\sD\md)\text{ has a (b-)}\R\text{-complement}.
$$
Equivalently, for $\R$-divisors,
\begin{align*}
(X/S,D^+)\text{ is an }&\R\text{-complement of }(X/S,D) \\
&\Updownarrow\\
(Z/S,D\dv^++\sD\md)\text{ is an }&\R\text{-complement of }(Z/S,D\dv+\sD\md);
\end{align*}
$$
(X/S,D)\text{ has an }\R\text{-complement}
\Leftrightarrow
(Z/S,D\dv+\sD\md)\text{ has an }\R\text{-complement}.
$$

\begin{proof}
Immediate by definition (see also Remark~\ref{rem_def_b_n_compl}, (2)),
(4) and (6-7).
Notice that $(X,D)$ is a log pair by~\ref{adjunction_0_contr}, (3)
and $(Z,D\dv+\sD\md)$ is a log bd-pair by construction.

\end{proof}

For $n$-complements, we have only inverse results~(10) below
for b-$n$-complements.

(9) {\em Inverse inequality\/}~(1-b) {\em of Definition\/}~\ref{b_n_comp}.
Suppose additionally that $(X,D)\to Z$
has an adjunction index $I$.
Let $n$ be a positive integer and $I|n$.
Then
\begin{align*}
\D^+\text{ is lc and satisfies (1-b) of} &\text{ Definition~\ref{b_n_comp}
with respect to } \D\\
&\Uparrow\\
\D\dv^+\text{ satisfies (1-b-2) of} &\text{ Definition~\ref{bd_n_comp}
with respect to } \D\dv.
\end{align*}
Equivalently,
\begin{align*}
(X,D^+)\text{ satisfies (1-b-2) of} &\text{ Definition~\ref{b_n_comp}
with respect to } (X,D)\\
&\Uparrow\\
(Z,D\dv^++\sD\md)\text{ satisfies (1-b-2) of} &\text{ Definition~\ref{bd_n_comp}
with respect to } (Z,D\dv+\sD\md).
\end{align*}
Additionally,
\begin{align*}
\D^+ &\ge \D\\
&\Updownarrow\\
\D\dv^+&\ge \D\dv.
\end{align*}

\begin{proof}
Immediate by~\ref{invers_1_n_comp} and~\ref{adjunction_mult}.
Indeed, for every vertical prime b-divisor $P$,
$Ir=Ir_P\in \Z$ by~(4) in~\ref{adjunction_index_of} and
$r\le 1$ by \ref{adjunction_mult}.
For corresponding $Q=f(P)$,
$d^+=d_Q^+=\mult_Q\D\dv^+\le 1$ by (2) of Definition~\ref{bd_n_comp}.
This and~\ref{adjunction_mult} prove (1-b) of Definition~\ref{b_n_comp}
and that $\D^+$ is lc for vertical $P$.
For a horizontal prime b-divisor $P$,
we can use the equation (1): $\D^+\hor=\D\hor$.
Then $\D^+\hor$ is lc by (1) in~\ref{adjunction_0_contr} and
satisfies (1-b) of Definition~\ref{b_n_comp}
with respect to $\D\hor=\D^+\hor$
by  the equation (1), Example~\ref{b_n_comp_of_itself}, (1)
and (2) in~\ref{adjunction_index_of}.
The additional equivalence of
the monotonicity of b-$n$-complements follows from (4).

Notice also that $(X,D)$ is a log pair by~\ref{adjunction_0_contr}, (3)
and $(Z,D\dv+\sD\md)$ is a log bd-pair by construction.

\end{proof}

(10) {\em Inverse\/} b-$n$-{\em complements\/}.
Under the assumptions of~(9),
for a proper morphism $Z\to S$ to a scheme $S$,
\begin{align*}
(X/S,\D^+)\text{ is a (b-)}&n\text{-complement of }(X/S,\D) \\
&\Uparrow\\
(Z/S,\D\dv^++\sD\md)\text{ is a (b-)}&n\text{-complement of }(Z/S,\D\dv+\sD\md).
\end{align*}
So,
$$
(X/S,\D)\text{ has a (b-)}n\text{-complement}
\Leftarrow
(Z/S,\D\dv+\sD\md)\text{ has a (b-)}n\text{-complement}.
$$
Equivalently,
\begin{align*}
(X/S,D^+)\text{ is a b-}&n\text{-complement of }(X/S,D) \\
&\Uparrow\\
(Z/S,D\dv^++\sD\md)\text{ is a b-}&n\text{-complement of }(Z/S,D\dv+\sD\md);
\end{align*}
$$
(X/S,D)\text{ has a b-}n\text{-complement}
\Leftarrow
(Z/S,D\dv+\sD\md)\text{ has a b-}n\text{-complement}.
$$
Moreover, if the b-$n$-complement
$(Z/S,D\dv^++\sD\md)$ is monotonic, equivalently,
$(Z/S,\D\dv^++\sD\md)$ is monotonic, then so does
$(X/S,D^+)$, equivalently, $(X/S,\D^+)$.

\begin{proof}
Immediate by~(4),(6) and~(9).
As a definition in (8-10) for b-$\R$-divisors
we use Definitions~\ref{b_n_comp} and \ref{bd_n_comp}
with $\D,\D^+,\D\dv+\sD\md,\D\dv^++\sD\md$
instead of $D,D^+,D\dv+\sD\md,D\dv^++\sD\md$.
\end{proof}

However, in applications we use
a more technical result,~Theorem~\ref{invers_b_n_comp} below and
its addenda.

(11) {\em Direct effectivity.\/}
The vertical effective (b-)support of $\D'$ goes
to the effective (b-)support of $\D\dv'$, that is,
for every vertical prime b-divisor $P$ of $X$,
$$
\mult_P\D'\ge 0\Rightarrow
\mult_Q\D\dv'\ge 0,
$$
where $Q=f(P)$.
In particular,
$$
D_{X'}'\ge 0\Rightarrow
D\dv'{}_{,Z'}\ge 0.
$$

\begin{proof}
Immediate by~\ref{direct_positivity} and~(\ref{direct_cor_d}).
Indeed, every $r=r_P\le 1$ by~\ref{adjunction_mult}.
\end{proof}

(12) {\em Direct boundary property.\/}
Suppose additionally that $(X,\D')$ is lc or has an $\R$-complement
over some scheme $S$.
Then the vertical boundary (b-)support of $\D'$ goes
to the boundary (b-)support of $\D\dv'$, that is,
for every vertical prime b-divisor $P$ of $X$,
$$
\mult_P\D'\in [0,1]\Rightarrow
\mult_Q\D\dv'\in [0,1],
$$
where $Q=f(P)$.
In particular,
$$
D_{X'}'\in [0,1]\Rightarrow
D\dv'{}_{,Z'}\in [0,1].
$$

\begin{proof}
Immediate by~(6) and~(11) (cf.~\ref{direct_boundary}).
Note also that the existence of an $\R$-complement
implies that $(X,\D')$ is lc (cf. Remark~\ref{remark_def_complements}, (1)).
\end{proof}

Converses of the directed properties do not hold in general.

(13) {\em bd-Pairs.\/}
All the above properties of the adjunction correspondence
hold for a $0$-contraction with a bd-pair $(X,D+\sP)$ as in~\ref{adjunction_0_contr}.
Notice that the correspondence is only between
(b-)-divisorial parts, $\sP$ and $\sD\md$ are fixed
(modulo $\sim_\R$ or $\sim_\Q,\sim_I$ if
$\sP$ is b-$\Q$-Cartier,$I\sP$ is b-Cartier respectively).

For a bd-pair $(X,\D+\sP)$ in~(3), (bd-)BP means that
there exists a model $(X',\D_{X'}+\sP)$ which is a log bd-pair
and $\D=\D(X',\D_{X'}+\sP_{X'})-\sP$, the divisorial part
of the latter bd-pair.

In (5) to add that $\sP$ is b-$\Q$-Cartier.

In (6-7) $\D+\sP,D+\sP,\D'+\sP,D'+\sP$
should be instead of $\D,D,\D',D'$ respectively.
Similarly, in other properties: sometimes
$+\sP$ is needed, sometimes not.

\begin{thm} \label{invers_b_n_comp}
Let $I,n$ be two positive integers such that $I|n$ and
$\Phi,\Phi'$ be two hyperstandard sets as in~\ref{direct_hyperst}
with $\fR''=[0,1]\cap (\Z/I)$.
Let $(X,B)\to Z/S$ be a $0$-contraction as in~\ref{adjunction_0_contr}
and additionally of the adjunction index $I$, with a boundary $B$ and
a proper morphism $Z\to S$ to a scheme $S$.
Suppose also that $(X,B)$ is lc or has an $\R$-complement over $S$, and
$Z/S$ has a model $Z'/S$
and an $\R$-divisor $D\dv'$ on $Z'$ such that
\begin{description}

\item[\rm (1)\/]
for every vertical over $Z$ prime divisor $P$ on $X$,
the image $Q=f(P)$ of $P$ on $Z'$ is a divisor and
$$
d\dv'{}_{,Q}\ge b_{Q,n\_\Phi'}
$$
where
$d\dv'{}_{,Q}=\mult_Q D\dv'$ and
$b_Q=\mult_Q \B\dv$;

\item[\rm (2)\/]
$K_{Z'}+D\dv'+B\md{}_{,Z'}$ is antinef over $S$; and

\item[\rm (3)\/]
$(Z'/S,D\dv'+\sB\md)$ has
a b-$n$-complement $(Z'/S,D\dv^++\sB\md)$.

\end{description}
Then
$(X/S,D^+)$ is respectively a b-$n$-complement of
$(X^\sharp/S,B_{n\_\Phi}{}^\sharp{}_{X^\sharp})$
and an $n$-complement of $(X/S,B)$,
where $D^+=\D_X^+$ and $\D^+$ adjunction corresponds to $\D\dv^+$.

\end{thm}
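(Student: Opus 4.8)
The plan is to build the b-divisor $\D^+$ on a model of $X$ as the adjunction correspondent of the b-divisorial part $\D\dv^+$ of the given downstairs complement, and then to invoke property~(10) (\emph{Inverse b-$n$-complements}) of the adjunction correspondence in~\ref{adjunction_div}, which is available because $(X,B)\to Z$ has adjunction index $I$ and $I\mid n$. By Theorem~\ref{adjunction_index} and Addendum~\ref{adjunction_index_adjoint} the adjoint log bd-pair $(Z,\B\dv+\sB\md)$ has index $I$ with $\sB\md$ b-nef (Theorem~\ref{b_nef}). Form $B_{n\_\Phi}\le B$ (Construction~\ref{low_approximation}); the hypothesis on $(X,B)$ makes $-(K+B_{n\_\Phi})$ pseudoeffective modulo $\sim_{\R,S}$, so Construction~\ref{sharp_construction} produces the maximal model $(X^\sharp,B_{n\_\Phi}{}^\sharp{}_{X^\sharp})$ with $X\dashrightarrow X^\sharp$ small and $\D(X,B_{n\_\Phi})\le\B_{n\_\Phi}{}^\sharp\le\D(X,B)$ (Proposition~\ref{monotonicity_I} and $F\le B-B_{n\_\Phi}$). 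By~\ref{direct_hyperst} with $\fR''=[0,1]\cap(\Z/I)$ and Addendum~\ref{adjunction_index_div}, the divisorial part of adjunction of $(X^\sharp,B_{n\_\Phi}{}^\sharp{}_{X^\sharp})\to Z$ has multiplicities in $\Gamma(n,\Phi')$; under the adjunction correspondence of~\ref{adjunction_div}, $\B_{n\_\Phi}{}^\sharp$ corresponds to the b-codiscrepancy $\D_0$ of the maximal model of $(Z,(\B\dv)_{n\_\Phi'}+\sB\md)$. So the real task is to see that $(Z/S,D\dv^++\sB\md)$ is a b-$n$-complement of $(Z/S,\D_0+\sB\md)$, after which property~(10) and a restriction finish both claims.

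For the downstairs reduction, the lc clause ($\D\dv^+$ a b-subboundary) and $\K_Z+\D\dv^++\sB\md\sim_n 0/S$ hold by hypothesis~(3), using $I\mid n$ to pass from index-$I$ data to $\sim_n$. For the inequalities~(1-b) of Definition~\ref{bd_n_comp} against $\D_0$, set $\D\dv'_{\mathrm b}=\D(Z',D\dv'+B\md{}_{,Z'})-\sB\md$; hypothesis~(2) says $\K_Z+\D\dv'_{\mathrm b}+\sB\md$, the Cartier closure over $Z'$ of $K_{Z'}+D\dv'+B\md{}_{,Z'}$, is b-pseudoantinef over $S$, while hypothesis~(1) gives $(\D\dv'_{\mathrm b})_{Z'}=D\dv'\ge((\B\dv)_{n\_\Phi'})_{Z'}$ on the divisors of $Z'$. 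Combining these with the negativity Lemma~\ref{b_negativity} (equivalently Corollary~\ref{monotonicity_III}, Proposition~\ref{monotonicity_I}) applied to the maximal model of $(Z,(\B\dv)_{n\_\Phi'}+\sB\md)$ yields $\D\dv'_{\mathrm b}\ge\D_0$ as b-divisors. Since the given complement satisfies~(1-b) with respect to $\D\dv'_{\mathrm b}$, monotonicity of $\rddown{\ }$ (cf. Proposition~\ref{D_D'_complement}) promotes this to~(1-b) with respect to $\D_0$; and because $\D_0$ dominates the b-codiscrepancy of $(Z,(\B\dv)_{n\_\Phi'})$, whose multiplicities lie in $\Gamma(n,\Phi')$, while $\D\dv^+\in\Z/n$, Proposition~\ref{1_of def_2 for B} (with $\sN=\{n\}$ and $\Phi'$) even gives~(1) with respect to $\B\dv$ itself. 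Hence $(Z/S,D\dv^++\sB\md)$ is a b-$n$-complement both of $(Z/S,\D_0+\sB\md)$ and of the adjoint bd-pair.

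Property~(10) of~\ref{adjunction_div} now applies to the b-$n$-complement of $(Z/S,\D_0+\sB\md)$ — legitimate since the adjunction index is $I$ and $I\mid n$ — and gives that $(X/S,\D^+)$, with $\D^+$ the adjunction correspondent of $\D\dv^+$ and $D^+=\D^+_X$, is a b-$n$-complement of the pair on $X$ adjunction-corresponding to $\D_0$, namely $(X^\sharp/S,B_{n\_\Phi}{}^\sharp{}_{X^\sharp})$; the lc and $\sim_n 0/S$ clauses transfer by properties~(6)--(7) of the same correspondence (with the horizontal part handled via $\D^+\hor=\D\hor$). This is the first assertion. Since $X\dashrightarrow X^\sharp$ is small, restricting this b-$n$-complement (Proposition~\ref{small_transform_compl}, Remark~\ref{rem_def_b_n_compl}) gives an $n$-complement $(X/S,D^+)$ of $(X/S,B_{n\_\Phi})$ --- here $D^+=\D^+_X\in\Z/n$ by Definition~\ref{bd_n_comp},(3) together with $I\mid n$ --- and Corollary~\ref{B_B_+B_sN_Phi} then upgrades it to an $n$-complement of $(X/S,B)$.

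The main obstacle is the second paragraph: verifying that $(Z/S,D\dv^++\sB\md)$ is genuinely a b-$n$-complement of the canonical maximal-model object $(Z/S,\D_0+\sB\md)$, not merely of the ad hoc perturbation $(Z'/S,D\dv'+\sB\md)$. This is precisely what forces the conclusion to be phrased for $(X^\sharp,B_{n\_\Phi}{}^\sharp{}_{X^\sharp})$ rather than $(X,B)$ directly, and it is the step where hypothesis~(2) must be recognized as exactly the negativity input needed so that $\D\dv'_{\mathrm b}\ge\D_0$ holds at every prime b-divisor, in particular every exceptional one, and where the hyperstandard bookkeeping of Section~\ref{adjunction_cor_multiplicities} (\ref{direct_hyperst}, \ref{n_Phi_inequality}, \ref{invers_1_n_comp}) and the monotonicity of $\rddown{\ }$ have to be threaded together with the maximal-model monotonicity lemmas. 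Once this is in hand, the adjunction-correspondence identities (properties (1)--(10) of~\ref{adjunction_div}) make the actual lift routine.
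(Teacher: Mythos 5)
Your overall architecture (lift a downstairs b-$n$-complement through property~(10) of~\ref{adjunction_div}, compare with $\B_{n\_\Phi}{}^\sharp$ by negativity plus the arithmetic of~\ref{n_Phi_inequality}, finish with Proposition~\ref{D_D'_complement}, Example~\ref{b_n_comp_of_itself},(1), Proposition~\ref{1_of def_2 for B}) is the right one, but the middle of your argument has a genuine gap. You perform the comparison \emph{downstairs}, against an object $\D_0$ defined as the b-codiscrepancy of the maximal model of $(Z,(\B\dv)_{n\_\Phi'}+\sB\md)$, and you assert that $\B_{n\_\Phi}{}^\sharp$ adjunction-corresponds to $\D_0$. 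That identification is not justified and is false in general: the low approximations $B\mapsto B_{n\_\Phi}$ upstairs and $\B\dv\mapsto(\B\dv)_{n\_\Phi'}$ downstairs do not correspond under the adjunction correspondence (\ref{direct_hyperst} only places the image in $\Gamma(n,\Phi')$, and \ref{n_Phi_inequality} gives a one-sided inequality, not an equality), and the $\sharp$-construction on $X$ need not commute with adjunction either (the fixed part $F$ added in Construction~\ref{sharp_construction} upstairs has no reason to match the one downstairs). So even granting your downstairs claim, property~(10) would produce a b-$n$-complement of the upstairs correspondent of $\D_0$, which need not be $(X^\sharp,B_{n\_\Phi}{}^\sharp{}_{X^\sharp})$; to land on the stated conclusion you would still need the upstairs comparison, which your write-up only gestures at in the last paragraph.

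Moreover the downstairs negativity step itself is not supported by the hypotheses as stated. Hypothesis~(1) bounds $D\dv'$ only at those prime divisors $Q$ of $Z'$ which are images of vertical prime divisors of $X$ itself, and only against $b_{Q,n\_\Phi'}=\mult_Q(\B\dv)_{n\_\Phi'}$; to run Lemma~\ref{b_negativity}/Corollary~\ref{monotonicity_III} against $\D_0$ you would need the trace inequality on a model over which $\K_Z+\D_0+\sB\md$ is stable, at \emph{all} its prime divisors (including ones exceptional over $Z$, not dominated by divisors of $X$), and against the $\sharp$-boundary, which exceeds $(\B\dv)_{n\_\Phi'}$ by the added fixed part. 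The paper's proof avoids both problems by reversing the order: first apply property~(10) to the given data, i.e. lift $(Z'/S,D\dv^++\sB\md)$ to a b-$n$-complement of $(X/S,\D')$ with $\D'$ the correspondent of $\D\dv'=\D(Z',D\dv'+B\md{}_{,Z'})-\sB\md$ (BP and the needed antinefness come from~(2) via property~(7)); then prove $\B_{n\_\Phi}{}^\sharp\le\D'$ \emph{upstairs} by Corollary~\ref{monotonicity_III}, where the trace inequality only has to be checked at prime divisors of $X^\sharp$, i.e. of $X$: horizontally it is the equality $\D'\hor=\B\hor$, and vertically it is exactly hypothesis~(1) converted through the adjunction constants $r_P,l_P$ by~\ref{n_Phi_inequality} (here $Ir_P\in\Z$, $I\mid n$ are used). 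Then Proposition~\ref{D_D'_complement} gives the b-$n$-complement of $(X^\sharp/S,B_{n\_\Phi}{}^\sharp{}_{X^\sharp})$, and the $n$-complement of $(X/S,B)$ follows from Example~\ref{b_n_comp_of_itself},(1) and Proposition~\ref{1_of def_2 for B}. If you reorganize your second paragraph along these lines (comparison on $X^\sharp$ against $\D'$, not on $Z$ against $\D_0$), your proof closes.
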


\begin{add} \label{invers_b_n_comp_monotonic}
If additionally
\begin{description}

\item[\rm (4)\/]
$$
d\dv^+{}_{,Q}\ge b_{Q,n\_\Phi'}
$$
for every $Q$ as in~(1) of the theorem,
where $d\dv^+{}_{,Q}=\mult_Q D\dv^+$.

\end{description}
Then we can take $D\dv'=D\dv^+$ and
the monotonicity
\begin{description}

\item[\rm (5)\/]
$D^+\ge B_{n\_\Phi}{}^\sharp\ge B_{n\_\Phi}$

\end{description}
holds.
For this addendum, it is enough to be an $n$-complement in~(3).
\end{add}

\begin{add}
The same holds for every contraction
$(X,B+\sP)\to Z/S$ as in~\ref{adjunction_0_contr}
and additionally of the adjunction index $I$ with
a bd-pair $(X/S,B+\sP)$ of index $m|I$,
with a boundary $B$ and proper $Z\to S$.

\end{add}

\begin{proof}

Step~1. $b_Q\in [0,1]$.
Immediate by~(12).
Thus the assumption~(1) of the theorem is meaningful.

Step~2. $(X/S,\D^+)$ is
a b-$n$-complement of $(X/S,\D')$,
where $\D'$ adjunction corresponds to
$\D\dv'=\D(Z',D\dv'+B\md{}_{,Z'})-\sB\md$.
Immediate by (3) and the property~(10),
Inverse b-$n$-complements.
However to apply~(10) note that
by definition, the BP property~(3) of
the adjunction correspondence and the assumption (2) of the theorem
$\D',\D\dv'$ satisfy BP and
there exists generically crepant model $(X',D')\to Z'/S$
of $(X,B)\to Z/S$ with $D'=\D'_{X'},\D'=\D(X',D')$ and
of the same adjunction index $I$ by~Proposition~\ref{adjunction_same_mod_etc}, (5).
But $D'$ and $D\dv'$ are not necessarily boundaries.

Step~3. $\B_{n\_\Phi}{}^\sharp\le\D'$ by (1-2) and
Corollary~\ref{monotonicity_III}.
Indeed, the assumption (1) of the corollary
follows from the assumption~(2) of the theorem by
the property~(7) of the adjunction correspondence
with b-antinef $\K_{Z'}+\D\dv'+B\md{}_{,Z'}$ over $S$.
The BP b-$\R$-divisor $\D'$ corresponds to $\D\dv'$.

We take a maximal model $(X^\sharp/S,B_{n\_\Phi}{}^\sharp{}_{X^\sharp})$
of Construction~\ref{sharp_construction}
as a log pair $(X/S,D)$ of Corollary~\ref{monotonicity_III}.
We suppose also that $X\dashrightarrow X^\sharp$
is a birational $1$-contraction, that is,
does not blow up divisors, and
$B_{n\_\Phi}{}^\sharp{}_{X^\sharp}=B_{n\_\Phi,X^\sharp}$
has the same divisorial multiplicities as $B_{n\_\Phi}$.
By construction $(X^\sharp/S,B_{n\_\Phi}{}^\sharp{}_{X^\sharp})$
is a log pair and $\B_{n\_\Phi}{}^\sharp=
\D(X^\sharp,B_{n\_\Phi}{}^\sharp{}_{X^\sharp})$.
The inequality (2) of the corollary
\begin{equation}\label{n_Phi_sharp}
B_{n\_\Phi}{}^\sharp{}_{X^\sharp}\le \D_{X^\sharp}'
\end{equation}
follows from the equation~(1) in the definition
of the adjunction correspondence and
the assumption~(1) of the theorem by~\ref{n_Phi_inequality}.
Indeed,
by construction and definition $\D_{X^\sharp}'\hor=B_{X^\sharp}\hor$,
where $B_{X^\sharp}=\B_{X^\sharp}$.
Thus, for the horizontal part of~(\ref{n_Phi_sharp}) over $Z$,
$$
B_{n\_\Phi}{}^\sharp{}_{X^\sharp}{}\hor=
B_{n\_\Phi,X^\sharp}{}\hor\le
B_{X^\sharp}{}\hor=\D'_{X^\sharp}{}\hor.
$$

The vertical part of~(\ref{n_Phi_sharp}) means that
for every vertical over $Z$ prime divisor $P$ of $X^\sharp$,
\begin{equation}\label{n_Phi_sharp_b}
b_{P,n\_\Phi}=b_{P,n\_\Phi}{}^\sharp\le d_P',
\end{equation}
where $b_P=\mult_P B=\mult_P B_{X^\sharp},
b_{P,n\_\Phi}{}^\sharp=\mult_P B_{n\_\Phi}{}^\sharp{}_{X^\sharp}$
and $d_P'=\mult_P\D'$.
Note for this that by construction $P$ is also a divisor on $X$ and
$B_{n\_\Phi}{}^\sharp{}_{X^\sharp}=B_{n\_\Phi,X^\sharp}$.
Since $B$ is a boundary and
$(X,B)$ is lc or has an $\R$-complement over $S$,
$0\le b_P\le r_p\le 1$ by~\ref{adjunction_mult}.
Thus~(\ref{n_Phi_sharp_b}) is meaningful.
To verify~(\ref{n_Phi_sharp_b}) we apply~\ref{n_Phi_inequality}
to $b_1=b_P,r=r_P,l=l_P$ and $d'=\mult_Q D\dv'$,
where $Q=f(P)$.
The image $Q$ is a prime divisor on $Z'$
by~(1) of the theorem.
In this situation~(\ref{n_Phi'}) of~\ref{n_Phi_inequality}
means the inequality in~(1) of the theorem.
Indeed, by construction $d_1$ corresponds to $b_1=b_P$
by~(\ref{direct_cor}) with $r=r_P,l=l_P$ and
$r\in\fR''$ by \ref{adjunction_index_of}, (4)
and assumptions of the theorem.
Thus by~\ref{adjunction_mult} and construction
$d_1=b_Q$.
The other assumptions and notation of~\ref{n_Phi_inequality}
hold by assumptions and notation of the theorem.
By construction $b'$ corresponds to $d'=\mult_Q D\dv'$
by~(\ref{invers_cor}) with $r=r_P,l=l_P$.
Thus $d'=d_P'$ again by~\ref{adjunction_mult}.
Now since $b_1=b_P$ and $b'=d_P'$,
(\ref{n_Phi}) means exactly~(\ref{n_Phi_sharp_b}).

Step~4. Steps~2-3 and Proposition~\ref{D_D'_complement} imply that
$(X/S,\D^+)$ is
a (b-)$n$-complement of $(X/S,\B_{n\_\Phi}{}^\sharp)$.
Or, equivalently,
$(X/S,D^+)$ is
a b-$n$-complement of $(X^\sharp/S,B_{n\_\Phi}{}^\sharp{}_{X^\sharp})$,
actually,
of any maximal model of $(X/S,B_{n\_\Phi})$ in Construction~\ref{sharp_construction}.
(In particular, we can replace $X'/S$ by $X^\sharp/S$
for those b-$n$-complements.)
Additionally, $(X/S,D^+)$ is an $n$-complement of
$(X/S,B)$ by Example~\ref{b_n_comp_of_itself}, (1) and
Proposition~\ref{1_of def_2 for B}.

Step~5. (Addenda.)
In Addendum~\ref{invers_b_n_comp_monotonic}
we can replace $D\dv'$ by $D\dv^+$.
Indeed, (1) holds by~(4).
In~(2) $K_{Z'}+D\dv^++B\md{}_{,Z'}\sim_n 0/S$ and is antinef over $S$
by Definition~\ref{bd_n_comp}, (3).
And, finally, (3) holds by~(3) and~Example~\ref{b_n_comp_of_itself}, (2).
In this situation it is enough to be an $n$-complement in (3).

The monotonicity~(5) follows from Proposition~\ref{monotonicity_I} and
Step~3.
Indeed, in the addenda $\D'=\D^+$ adjunction corresponds
to $\D\dv'=\D^+$.

The proof of the addendum for bd-pairs,
is similar to the above proof for usual pairs.

\end{proof}

\subsection{Adjunction on divisor} \label{adjunction_on_divisor}

Under the divisorial adjunction $\Gamma(\sN,\Phi)$ goes to
$\Gamma(\sN,\widetilde{\Phi})$:
for every (finite) set of positive integers $\sN$ and
hyperstandard sets $\Phi,\widetilde{\Phi}$
as in~\ref{direct_hyperst_on_div},
\begin{equation}\label{Phi-to-wtPhi}
B\in\Gamma(\sN,\Phi)\Rightarrow
B_{S\nu}\in\Gamma(\sN,\widetilde{\Phi}),
\end{equation}
where $B$ is a boundary of a pair $(X,B+S)$,
lc in codimension $\le 2$,
$S$ is a reduced prime divisor of $B+S$,
$S^\nu$ is a normalization of $S$ and
$(S^\nu,B_{S^\nu})$ is the adjoint pair of $(X,B)$,
that is, $B_{S^\nu}=\Diff B$,
(see \cite[3.1]{Sh92}).
In particular, $S^\nu=S$ when $S$ is normal.
The property~(\ref{Phi-to-wtPhi}) is immediate
by~\cite[Corollary~3.10]{Sh92} and~\ref{direct_hyperst_on_div}.

\begin{thm} \label{extension_n_complement}
Let $n$ be a positive integer and $(X/Z\ni o,B)$ be
a pair with a boundary $B$ such that
\begin{description}

\item[\rm (1)\/]
$X/Z\ni o$ is proper with connected $X_o$;

\item[\rm (2)\/]
$(X,B)$ is plt with a reduced divisor $S$ of $B$
over $Z\ni o$;

\item[\rm (3)\/]
$-(K+B)$ is nef and big over $Z\ni o$;
and

\item[\rm (4)\/]
the adjoint pair $(S/Z\ni o,B_S)$ has
a b-$n$-complement $(S/Z\ni o,B_S^+)$.

\end{description}
Then there exists a b-$n$-complement $(X/Z\ni o,B^+)$ of
$(X/Z\ni o,B)$ with an adjunction extension $B^+$ of $B_S^+$, that is,
$\mult_S B^+=1$ and
\begin{description}

  \item[\rm (5)\/]
$$
\Diff(B^+-S)=B_S^+.
$$

\end{description}

The same holds for a bd-pair $(X/Z\ni o,B+\sP)$
of index $m|n$.

\end{thm}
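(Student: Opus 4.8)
The plan is to run the standard \emph{extension of $n$-complements from a plt divisor} argument, exactly as in \cite[Section~6]{PSh08}, adapted to b-$n$-complements and bd-pairs. First I would record the setup: by (2) the pair $(X,B)$ is plt with $S$ a normal prime divisor of $\lfloor B\rfloor$ over $Z\ni o$ (plt forces $S$ normal), and $(S,B_S)$ with $B_S=\Diff(B-S)$ is its adjoint pair, which is klt. By (4) there is a b-$n$-complement $(S/Z\ni o,B_S^+)$; replacing it by its trace on $S$ we may assume $nB_S^+$ is integral, $K_S+B_S^+\sim_n 0/Z\ni o$, and $B_S^+$ satisfies (1-b) of Definition~\ref{b_n_comp} with respect to $\B_S$. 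The goal is an effective $\Q$-divisor $B^+$ on $X$ with $\mult_S B^+=1$, $\Diff(B^+-S)=B_S^+$, $(X,B^+)$ lc, $K+B^++\,(\text{the }\sP_X\text{ term})\sim_n 0/Z\ni o$, and the b-version (1-b).

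The key steps, in order: \textbf{(i) Lifting the linear equivalence.} Consider the sheaf $\sO_X(-n(K+S)-\lfloor (n+1)(B-S)\rfloor)$ (for the bd-pair replace $-n(K+S)$ by $-nK-n\sP_X-\lfloor (n+1)(B-S)\rfloor$, using $m|n$ so $n\sP_X$ is integral). The divisor $-n(K+B)$ is nef and big over $Z\ni o$ by (3), so by Kawamata--Viehweg vanishing (in the relative analytic/formal neighborhood of $X_o$) one gets $H^1(X,\sO_X(-n(K+S)-\lfloor(n+1)(B-S)\rfloor))=0$ over a neighborhood of $o$, hence surjectivity of the restriction map to $S$. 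This lets me lift an effective divisor on $S$ realizing $nB_S^+$ (more precisely $n(K_S+B_S^+)\sim 0/Z\ni o$ via $-nK_S-\lfloor(n+1)B_S\rfloor\sim$ effective, by adjunction $-n(K+S)|_S=-nK_S$, using the b-$n$-complement hypothesis on $S$) to an effective divisor $E$ on $X$ with $E|_S$ the chosen divisor on $S$. \textbf{(ii) Defining $B^+$.} Set $B^+=S+\{\text{low-round part of }B\}+E/n$, i.e. $B^+=S+\lfloor(n+1)(B-S)\rfloor/n+E/n$ on a neighborhood of $X_o$; then $n(K+B^+)=n(K+S)+\lfloor(n+1)(B-S)\rfloor+E\sim 0/Z\ni o$ by construction, giving (3) of Definition~\ref{n_comp}. \textbf{(iii) lc and b-lc.} By inversion of adjunction (Shokurov/Kawamata), since $(S,\Diff(B^+-S))=(S,B_S^+)$ is lc (it is klt, being a b-$n$-complement of the klt $(S,B_S)$) and $-(K+B^+)\equiv 0/Z\ni o$, the pair $(X,B^+)$ is lc in a neighborhood of $S$; away from $S$ lc-ness follows because there $B^+$ differs from $\lfloor(n+1)B\rfloor/n$ by the effective $E/n$ and one uses that $\lfloor(n+1)B\rfloor/n\le B+\varepsilon$-type comparison together with lc-ness coming from the $\R$-complement structure implicit in (3). \textbf{(iv) the b-version (1-b).} For prime b-divisors $P$ with center on $S$, the multiplicity inequality $\mult_P\D(X,B^+)\ge\lfloor(n+1)\mult_P\D(X,B)\rfloor/n$ is transferred from the corresponding inequality on $S$ via the adjunction correspondence of multiplicities in~\ref{adjunction_div}, property~(9)--(10), using that $B_S^+$ is a \emph{b}-$n$-complement; for $P$ with center not on $S$, apply Proposition~\ref{b_n_comp-nc}-style reasoning on a log resolution together with the inductive bounds~(\ref{induc_upper_bound_exa}) of~\ref{induc_bound}, exactly as in the proof of Proposition~\ref{b_n_comp-nc}, Step~3.

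\textbf{The main obstacle} I expect is step (iv), the verification of the b-version of (1) over \emph{all} prime b-divisors simultaneously, especially over centers meeting $S$: one must check that lifting through the restriction map and through $\Diff$ does not destroy the round-down inequality at infinitely near points, which is where the inequality of Example~\ref{induc_upper_bound_exa} and the compatibility $\widetilde{\Gamma(\sN,\Phi)}=\Gamma(\sN,\widetilde\Phi)$ of~\ref{direct_hyperst_on_div} (here entering through~(\ref{Phi-to-wtPhi})) do the real work. A secondary technical point is ensuring the vanishing in step (i) holds in the \emph{local} (neighborhood of $o$, or formal) setting with $X_o$ connected; this is standard but needs (1) and the nef-and-big hypothesis (3) together with the fact that $-(K+B)-S=-(K+S)-(B-S)$ has the right positivity, and one runs it on a $\Q$-factorial dlt or log resolution model and descends. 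For the bd-pair version, the only changes are bookkeeping: carry the fixed $\sP$ (of index $m\mid n$) through adjunction via Addendum~\ref{b_nef_bd} and the adjunction formula of~\ref{adjunction_0_contr}, replace $K$ by $K+\sP_X$ throughout, and invoke the bd-pair forms of inversion of adjunction and of property~(9)--(10); the divisorial-adjunction compatibility~(\ref{Phi-to-wtPhi}) and the inductive bounds are insensitive to the presence of $\sP$.
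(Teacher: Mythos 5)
Your steps (i)--(iii) are essentially the paper's construction: Kawamata--Viehweg vanishing to make the restriction to $S$ surjective, the same formula $B^+=S+\rddown{(n+1)(B-S)}/n+E/n$, and inversion of adjunction to get lc near $S$. But there are two genuine gaps. First, in step (iv) you propose to transfer the inequality (1-b) at b-divisors with center on $S$ via properties (9)--(10) of the adjunction correspondence in~\ref{adjunction_div}; those properties concern adjunction for $0$-contractions (fibration/Kodaira-type adjunction), not divisorial adjunction through $\Diff$, so they do not apply here. Second, your identity $-n(K+S)\rest{S}=-nK_S$ is false on a general $X$ (a Different appears); it only holds after passing to a model where $S$ and $\Supp B$ are simple normal crossing. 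The paper handles both points at once by making the reduction \emph{first}: Step~1 replaces $(X,B)$ by a crepant log resolution $(Y,D)$ with normal crossings, where $D_S=(D-S)\rest{S}$ exactly, and the entire construction is run on $Y$. This is also exactly where the hypothesis that $(S,B_S^+)$ is a \emph{b}-$n$-complement (not merely an $n$-complement) is consumed: on the crepant model the transform $D_S$ of $B_S$ is only a subboundary, and one needs $D_S^+$ to satisfy (1) of Definition~\ref{n_comp} with respect to $D_S$, which is what the b-hypothesis guarantees. Your proposal never isolates this use, and without it step (i) does not even produce the needed effective member of $\linsys{-nK_S-\rddown{(n+1)D_S}}$ on the resolution.

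Once on the normal-crossing model, the verification you anticipate as the ``main obstacle'' evaporates: any $n$-complement there is automatically a b-$n$-complement by Proposition~\ref{b_n_comp-nc}, and b-$n$-complements are birational (Remark~\ref{rem_def_b_n_compl}, (1)), so the complement descends to $(X/Z\ni o,B)$ with property (5) via the adjunction formula. Your invocation of~(\ref{Phi-to-wtPhi}) and $\widetilde{\Phi}$ is misplaced for this theorem --- the hyperstandard bookkeeping only enters in Corollary~\ref{plt_b_n_complement}. Finally, lc-ness away from $S$ is not obtained by comparing $B^+$ with $\rddown{(n+1)B}/n$ as you sketch; in the paper it follows from the connectedness of the non-klt locus over $Z\ni o$ (this is precisely where the connectedness of $X_o$ in hypothesis (1) is used), combined with lc-ness near $S$ from inversion of adjunction.
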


Note that by (3) $S$ is actually unique and irreducible over $Z\ni o$
by its connectedness \cite[Connectedness Lemma~5.7]{Sh92} \cite[Theorem~17.4]{K}
\cite[Theorem~6.3]{A14} etc.
Moreover, the central fiber $S_o$ is also connected.

Remark: 1. $B$ is assumed to be a boundary, in particular, effective.

2. The b-$n$-complement on $X/Z\ni o$ is not
necessarily monotonic even if the b-$n$-complement in (4)
is monotonic.
However, for special boundary multiplicities we can
get the monotonic property a posteriori
\cite[Lemma~3.3]{PSh08} \cite[Theorem~1.7]{B}
\cite[Theorem~1.6]{HLSh}.

3. The theorem is the first main theorem (from global to local), that is,
similar to \cite[Proposition~6.2]{PSh01} \cite[Proposition~4.4]{P}
where the condition 4),(iv) respectively imply our (4).
However the proof is similar to the proof of \cite[Theorem~5.12]{Sh92}.

\begin{proof}

Step~1.
For construction of an $n$-complement we replace $(X,B)$ by a crepant pair $(Y,D)$.
We can suppose that $\Supp D$ has only normal crossings.
For this we can take a log resolution of $(X/Z\ni o,D)$.
Then one can verify all required assumptions (1-4),
except for, the boundary property of $D$.
In particular, (4) holds by definition (cf. Remark~\ref{rem_def_b_n_compl}, (1)).
This implies the existence of a required complement for
$(Y/Z\ni o,D)$ and also $(X/Z\ni o,B)$
with~(5) (cf. \cite[Lemma~5.4]{Sh92}).
For this we can use the adjunction formula \cite[3.1]{Sh92}
because birational modifications of pairs and their complements
are crepant and b-$n$-complements agree with
those modifications by Remark~\ref{rem_def_b_n_compl}, (1).

Note now that every $n$-complement in the normal crossing case is
actually a b-$n$-complement by the same arguments with sufficiently high crepant
modifications
or by Proposition~\ref{b_n_comp-nc}.

Step~2. Construction of $D^+$.
The complete linear system $\linsys{-nK_Y-\rddown{(n+1)D}+S}=
\linsys{-n(K_Y+S)-\rddown{(n+1)(D-S)}}$ on $Y$ cuts out
the complete linear system  $\linsys{-nK_S-\rddown{(n+1)D_S}}$ on $S$
over a suitable neighborhood of $o$ in $Z$.
(Tacit we use the same notation $S$ for its
birational transform on $Y$.)
Indeed,
$$
\rddown{(n+1)D_S}=\rddown{(n+1)(D-S)\rest{S}}=
\rddown{(n+1)(D-S)}\rest{S}
$$
by normal crossings.
By \cite[Corollary~3.10]{Sh92}
$D_S=(D-S)\rest{S}$ because $S$ is nonsingular.
The required surjectivity follows from the Kawamata-Viehweg vanishing theorem
$$
R^1\varphi_*\sO_Y(-nK_Y-\rddown{(n+1)D})=
R^1\varphi_*\sO_Y(K_Y+\rdup{-(n+1)(K+D)})=0
$$
by (3), where $\varphi\colon Y\to Z\ni o$.
So, for every effective divisor
$E_S\in \linsys{-nK_S-\rddown{(n+1)D_S}}$,
there exists an effective divisor $E\in \linsys{-nK_Y-\rddown{(n+1)D}+S}$ with
$E\rest{S}=E_S$.

By (2) the pair $(S/Z\ni o,D_S)$ is klt and by (4)
it has an $n$-complement $(S/Z\ni o,D_S^+)$.
So, by (1) and (3) of Definition~\ref{n_comp} (cf. also \cite[Definition~5.1]{Sh92}
$$
D_S^+=\frac {E_S}n+\frac{\rddown{(n+1)D_S}}n,
$$
where $E_S\in \linsys{-nK_S-\rddown{(n+1)D_S}}$.
Put
$$
D^+=\frac {E}n+\frac{\rddown{(n+1)D}}n-\frac Sn,
$$
where $E\in \linsys{-nK_Y-\rddown{(n+1)D}+S}$ with
$E\rest{S}=E_S$.

Actually, $(Y/Z\ni o,D^+)$ is an $n$-complement of $(Y/Z\ni o,D)$ and
induces the $n$-complement $(X/Z\ni o,B^+)$ of $(X/Z\ni o,B)$
with crepant $B^+=\psi(D^+)$, where $\psi\colon Y\to X$.
The $n$-complement properties (2-3) of Definition~\ref{n_comp}
are equivalent for both pairs (see Step~1)
and one of them we verify on $X$ (see Step~6 below).

Step~3. Adjunction:
$\mult_SD^+=1$, and by normal crossings and construction
\begin{align*}
\Diff(D^+-S)=&
(\frac {E}n+\frac{\rddown{(n+1)(D-S)}}n)\rest{S}=
\frac {E_S}n+\frac{\rddown{(n+1)(D-S)\rest{S}}}n=\\
&\frac {E_S}n+\frac{\rddown{(n+1)D_S}}n=D_S^+.
\end{align*}

Step~4. Since $E\ge 0$,
$$
D^+\ge
\frac{\rddown{(n+1)D}}n-\frac Sn.
$$
This implies (1) of Definition~\ref{n_comp}.

Step~5.
By construction in Step~2,
$$
nK_Y+nD^+=nK_Y+E+\rddown{(n+1)D}-S\sim 0.
$$
This implies (3) of Definition~\ref{n_comp}.

Step~6.
It is enough to verify (2) of Definition~\ref{n_comp} on $X$.
Notice that (5) holds by construction and Step~3 (see also Step~1).
Thus by inversion of adjunction \cite[3.3-4]{Sh92} \cite[Theorem~17.7]{K}
\cite[Theorem~6.3]{A14}
$(X,B^+)$ is lc near $S$.
The connectedness of the lc locus (if $(X,B^+)$ is not lc) and its
lc \cite[Theorem~6.3]{A14}
imply lc of $(X,B^+)$ over $Z\ni o$, that is,
(2) of Definition~\ref{n_comp}.
Note that in the local case for the lc connectedness we need
the connectedness of $\varphi\1o$ that holds by (1).

Step~7.
For a bd-pair $(X/Z\ni o,B+\sP)$, we can take in Step~1
a log resolution over which $\sP$ is stable and
has normal crossings together with $D$.
(Actually, according to
the Kawamata-Viehweg vanishing theorem,
we do not need the last normal crossings
because $n\sP$ is integral: $n\sP\in \Z$.)
We can assume also that
$\sP_Y$ is in general position (modulo $\sim_n$)
with respect to $S$ on $Y$ and
$\sP_Y\rest{S}$ is well-defined.
Then $\sP_S=\sP\brest{S}=\overline{\sP_X\rest{S}}$ is stable over $S$
(a definition $\brest{}$ see in \cite[Mixed restriction~7.3]{Sh03}) and
is nef Cartier on $S$ over $Z\ni o$
because $m|n$.
Then we can use above arguments with $K_Y,K_S$ replaced by
$K_Y+\sP_Y,K_S+(\sP_S)_S$ respectively (cf. general ideology in Crepant bd-models
in Section~\ref{technical}).
To extend a complement as in Step~2 we need only
the aggregated bd-version of (3): $-(K_Y+B+\sP_Y)$ is nef and big over $Z\ni o$
but the b-nef property of $\sP$ is not needed here.
However, to verify the lc property of Step~6
we need the lc connectedness (if $(X,B^++\sP)$ is not lc) which uses
the pseudoeffective modulo $\sim_\R$ property of $\sP_X$ over $Z\ni o$ that follows
from the b-nef property of $\sP$ over $Z\ni o$
(cf. \cite[Theorem~1.2]{FS}).

\end{proof}

\begin{cor} \label{plt_b_n_complement}
Let $n$ be a positive integer and
$\Phi=\Phi(\fR)$ be a hyperstandard set associated with
a (finite) set of (rational) numbers $\fR$ in $[0,1]$.
Let $(X/Z\ni o,B)$ be a pair such that
\begin{description}

  \item[\rm (1)\/]
$X/Z\ni o$ is proper with connected $X_o$;

  \item[\rm (2)\/]
$(X,B)$ is plt with a boundary $B$ and
a reduced divisor $S$ of $B$ over $Z\ni o$;

  \item[\rm (3)\/]
$-(K+B)$ is nef and big over $Z\ni o$;
and

  \item[\rm (4)\/]
$(S'^\sharp/Z\ni o,B_{S',n\_\widetilde{\Phi}}{}^\sharp)$
has a b-$n$-complement
$(S'^\sharp/Z\ni o,B_{S'^\sharp}^+)$,
where $(S'/Z\ni o,B_{S'})$ is a highest crepant model
with a boundary of the adjoint pair $(S/Z\ni o,B_S)$
(cf. Construction~\ref{adjoint_pair} and \cite[(3.2.3)]{Sh92}) and
$\widetilde{\Phi}$ is defined in~\ref{direct_hyperst_on_div}.

\end{description}
Then there exists a b-$n$-complement $(X/Z\ni o,B^+)$ of
$(X^\sharp/Z\ni o,{B_{n\_\Phi}}^\sharp)$
with an adjunction extension $B^+$ of $B_S^+$, that is,
$\mult_SB^+=1$ and
\begin{description}

  \item[\rm (5)\/]
$$
\Diff(B^+-S)=B_S^+=\B_{S'^\sharp,S}^+.
$$

\end{description}

The same holds for a bd-pair $(X/Z\ni o,B+\sP)$
of index $m|n$.

\end{cor}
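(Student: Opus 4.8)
The plan is to reduce Corollary~\ref{plt_b_n_complement} to Theorem~\ref{extension_n_complement} by replacing the adjoint pair $(S/Z\ni o,B_S)$ with its highest crepant model $(S'/Z\ni o,B_{S'})$ and tracking hyperstandard sets through the divisorial adjunction. First I would recall from Construction~\ref{adjoint_pair} (and \cite[(3.2.3)]{Sh92}) that $(S'/Z\ni o,B_{S'})$ is a proper crepant model of $(S/Z\ni o,B_S)$ with $B_{S'}$ a boundary, and $B_S=\psi(B_{S'})$ under the birational contraction $\psi\colon S'\to S$; hence a b-$n$-complement of $(S'/Z\ni o,B_{S'})$ is the same thing as a b-$n$-complement of $(S/Z\ni o,B_S)$, by the birational invariance of b-$n$-complements in Remark~\ref{rem_def_b_n_compl}, (1). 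So the task is to produce from hypothesis (4) a b-$n$-complement $(S/Z\ni o,B_S^+)$, and then feed it into Theorem~\ref{extension_n_complement}.

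Next I would run Construction~\ref{sharp_construction} on $(S'/Z\ni o,B_{S',n\_\widetilde\Phi})$ to get the maximal model $(S'^\sharp/Z\ni o,B_{S',n\_\widetilde\Phi}{}^\sharp)$ appearing in (4). By hypothesis this pair has a b-$n$-complement $(S'^\sharp/Z\ni o,B_{S'^\sharp}^+)$. Since $S'\dashrightarrow S'^\sharp$ is a small birational $1$-contraction preserving the log pair property, Proposition~\ref{small_transform_compl} together with Remark~\ref{rem_def_b_n_compl}, (1) transfers this to a b-$n$-complement of $(S'/Z\ni o,B_{S',n\_\widetilde\Phi})$, and then of $(S/Z\ni o,B_{S,n\_\widetilde\Phi})$. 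Now I use the key containment: under divisorial adjunction, $B\in\Gamma(\sN,\Phi)$ forces $B_S\in\Gamma(\sN,\widetilde\Phi)$ by~(\ref{Phi-to-wtPhi}) in~\ref{adjunction_on_divisor}, applied with $\sN=\{n\}$ (the pair $(X,B+S)$ is plt, hence lc in codimension $\le 2$, so~(\ref{Phi-to-wtPhi}) applies, and $S^\nu=S$ is normal by plt-ness). Thus the corresponding low approximations satisfy $B_{S,n\_\widetilde\Phi}\ge (B_\Phi)$-style bound; more precisely I would invoke Corollary~\ref{B_B_+B_sN_Phi}: a b-$n$-complement of $(S/Z\ni o,B_{S,n\_\widetilde\Phi})$ is automatically a b-$n$-complement of $(S/Z\ni o,B_S)$ itself. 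This gives hypothesis (4) of Theorem~\ref{extension_n_complement}.

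With (1), (2), (3) of Corollary~\ref{plt_b_n_complement} matching (1), (2), (3) of Theorem~\ref{extension_n_complement} verbatim, the theorem yields a b-$n$-complement $(X/Z\ni o,B^+)$ of $(X/Z\ni o,B)$ with $\mult_S B^+=1$ and $\Diff(B^+-S)=B_S^+$. To upgrade ``$n$-complement of $(X/Z\ni o,B)$'' to ``b-$n$-complement of $(X^\sharp/Z\ni o,B_{n\_\Phi}{}^\sharp)$'' I would argue as in Step~4 of the proof of Theorem~\ref{invers_b_n_comp}: apply Corollary~\ref{monotonicity_III} to the maximal model $(X^\sharp/Z\ni o,B_{n\_\Phi}{}^\sharp{}_{X^\sharp})$ of Construction~\ref{sharp_construction} to get $\B_{n\_\Phi}{}^\sharp\le\D^+$, where $\D^+=\D(X,B^+)$ is b-antinef by Definition~\ref{b_n_comp}, (3); then Proposition~\ref{D_D'_complement} shows $(X/Z\ni o,B^+)$ is a b-$n$-complement of $(X^\sharp/Z\ni o,B_{n\_\Phi}{}^\sharp)$. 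The identity $B_S^+=\B_{S'^\sharp,S}^+$ in (5) is bookkeeping: it records that the complement cut out on $S$ is the trace on $S$ of the b-divisor $\B_{S'^\sharp}^+$ of the b-$n$-complement from (4), which holds because divisorial adjunction commutes with crepant pullback and the whole construction is crepant-birational.

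The bd-pair case of index $m\mid n$ is parallel: Theorem~\ref{extension_n_complement} already has its bd-version, $\widetilde\Phi$ and the containment~(\ref{Phi-to-wtPhi}) are insensitive to the moduli part since $\sP$ contributes no fixed boundary multiplicities, and $m\mid n$ guarantees $n\sP$ is integral so Cartier-index arguments go through; I would simply add $+\sP$ throughout, using Addendum~\ref{b_nef_bd} and~\ref{adjunction_index_bd} where a moduli part appears. The main obstacle I anticipate is not any single inequality but the careful identification in step two that ``b-$n$-complement of the $n\_\widetilde\Phi$-approximation on $S'^\sharp$'' really does descend, via the small modification $S'\dashrightarrow S'^\sharp$ and the crepant model $S'\to S$, to an honest b-$n$-complement of $(S/Z\ni o,B_S)$ usable in Theorem~\ref{extension_n_complement} — this requires Proposition~\ref{small_transform_compl}, Corollary~\ref{B_B_+B_sN_Phi}, and the birational invariance of b-$n$-complements to be applied in the correct order, and it is the place where one must be most careful that the pairs in question remain log pairs at each stage.
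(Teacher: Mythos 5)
There is a genuine gap, and it sits exactly at the step you flagged as the delicate one. You claim that hypothesis (4) descends to a b-$n$-complement of $(S/Z\ni o,B_S)$ itself, citing Corollary~\ref{B_B_+B_sN_Phi}. But that corollary (via Proposition~\ref{1_of def_2 for B}) only transfers condition (1) of Definition~\ref{n_comp}, i.e.\ the rounding inequality at the \emph{divisorial} multiplicities of $B_{S,n\_\widetilde{\Phi}}$ versus $B_S$; it says nothing about condition (1-b) of Definition~\ref{b_n_comp}, which must hold at every prime b-divisor. At an exceptional prime b-divisor $P$ the relevant quantities are $\mult_P\D(S,B_{S,n\_\widetilde{\Phi}})$ and $\mult_P\D(S,B_S)$, which are affine combinations of boundary multiplicities and need not satisfy $\rddown{(n+1)\,\cdot\,}/n$-compatibility, so a b-$n$-complement of the approximated pair does not ascend to one of $(S,B_S)$. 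A second defect in the same step: hypothesis (4) concerns $B_{S',n\_\widetilde{\Phi}}$, the approximation taken \emph{on the highest model} $S'$, and the operations ``pass to a crepant model'' and ``take the $n\_\widetilde{\Phi}$-approximation'' do not commute (the paper warns about precisely this in Step~6 of the proof of Theorem~\ref{generic_complements}), so you cannot identify it crepantly with $B_{S,n\_\widetilde{\Phi}}$. Consequently hypothesis (4) of Theorem~\ref{extension_n_complement} for the pair $(X/Z\ni o,B)$ is not verified; and had it been, you would be proving the strictly stronger statement that $(X/Z\ni o,B)$ itself has a b-$n$-complement, which is exactly what the theory of $n\_\Phi$-approximations is designed to avoid claiming (cf.\ Example~\ref{b_n_comp_of_itself}, (3)).

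The paper's proof runs the extension theorem on a different pair: it builds the maximal wlF model $(X^\sharp/Z\ni o,B_{n\_\Phi}{}^\sharp)$ by Construction~\ref{sharp_construction}, checks (1)--(3) of Theorem~\ref{extension_n_complement} there, and verifies its hypothesis (4) for the adjoint pair on $S^\sharp$ by \emph{inducing} a b-$n$-complement of $(S^\sharp/Z\ni o,B_{n\_\Phi}{}^\sharp{}_{S^\sharp})$ from the one hypothesized on $S'^\sharp$. The engine of that induction is the inequality
$$
\B_{n\_\Phi}{}^\sharp{}_{S^\sharp}\le\B_{S',n\_\widetilde{\Phi}}{}^\sharp,
$$
proved on the divisorial level on $S^\sharp$ using the semiadditivity of adjunction, the containment $b_{n\_\Phi}{}^\sharp{}_{S^\sharp,P}\in\Gamma(n,\widetilde{\Phi})$ coming from~(\ref{Phi-to-wtPhi}), the maximality of the crepant model $S'$, and two applications of Corollary~\ref{monotonicity_III}; Proposition~\ref{D_D'_complement} then converts the complement on $S'^\sharp$ into one usable for $S^\sharp$. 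Your proposal omits this central comparison entirely, and without it the reduction to Theorem~\ref{extension_n_complement} does not go through.
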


\begin{proof}

Step~1. {\em Construction of $(S'/Z\ni o,B_{S'}),
(S'/Z\ni o,B_{S',n\_\widetilde{\Phi}})$.\/}
By~(2) the adjoint pair $(S/Z\ni o,B_S)$ is
a klt pair with normal and irreducible $S$ over $Z\ni o$.
It has finitely many prime b-divisors $P$ over $Z\ni o$
with $\mult_P\B_S\ge 0$.
A {\em highest\/} crepant model $(S',B_{S'})$ of $(S,B_S)$ with
a {\em boundary\/} $B_{S'}$ (equivalently, $B_{S'}\ge 0$)
blows up those exceptional $P$ \cite[Theorem~3.1]{Sh96} \cite[Theoem~1.1]{B12}.
Usually, the blowup is not unique (and
is not necessarily $\Q$-factorial).
It is defined up to a small flop over $Z\ni o$.

After that we change the boundary $B_{S'}$ and
get the pair $(S'/Z\ni o,B_{S',n\_\widetilde{\Phi}})$
with a boundary $B_{S',n\_\widetilde{\Phi}}$.
(By construction $(S',B_{S'})$ is a log pair.
But $(S'/Z\ni o,B_{S',n\_\widetilde{\Phi}})$ is
not necessarily a log pair when $S'$ is not $\Q$-factorial.)

In our applications we construct a maximal model
$(S'^\sharp/Z\ni o,B_{S',n\_\widetilde{\Phi}}{}^\sharp)$
using Construction~\ref{sharp_construction} (see
Step~5 in the proof Theorem~\ref{generic_complements}).
Actually in our situation we can construct
such a model too even $S/Z\ni o$ has not necessarily wFt.
(But $B,B_{S'},B_{S',n\_\widetilde{\Phi}}$ are boundaries and
$(S/Z\ni o,B_S),(S',B_{S'}),(S',B_{S',n\_\widetilde{\Phi}})$
have klt $\R$-complements by (1-3); cf. Step~2.)
However, in (4) we suppose that such a model
$(S'^\sharp/Z\ni o,B_{S',n\_\widetilde{\Phi}}{}^\sharp)$ exists
and it has a b-$n$-complement $(S'^\sharp/Z\ni o,B_{S'^\sharp}^+)$.

By construction $S,S',S'^\sharp$ are
birationally isomorphic.
So, (5) is meaningful.

Step~2. {\em Reduction to a plt wlF model
$(X^\sharp/Z\ni,B_{n\_\Phi}{}^\sharp)$.\/}
By~Construction~\ref{sharp_construction}
a required maximal model exists:
$$
\begin{array}{ccccc}
(X,B_{n\_\Phi})&\stackrel{\psi}{\dashrightarrow}&(X^\sharp,B_{n\_\Phi}{}^\sharp)\\
\searrow&&\swarrow\\
& Z\ni o
\end{array}
$$
where $\psi$ is a birational $1$-contraction and
$X^\sharp/Z\ni o$ has wFt with connected $X_o^\sharp$.
(For simplicity of notation we use $B_{n\_\Phi}{}^\sharp$
instead of $B_{n\_\Phi}{}^\sharp{}_{X^\sharp}$.)
Indeed, by (1) and (3), $X/Z\ni o$ has wFt with connected $X_o$.
Since $B_{n\_\Phi}\le B,\mult_S B_{n\_\Phi}=
\mult_S B=1$ and by (3),
$(X,B_{n\_\Phi})$ has a plt $\R$-complement
with the same reduced divisor $S$ of $B_{n\_\Phi}$.
In particular, $S$ is not the base locus of $-(K+B_{n\_\Phi})$.
Hence we can suppose that $(X^\sharp,B_{n\_\Phi}{}^\sharp)$
is also plt with birationally the same prime reduced divisor
$S^\sharp=\psi(S)\subset X^\sharp$ of $B_{n\_\Phi}{}^\sharp$.
By~Construction~\ref{sharp_construction} and the assumption (3) respectively,
$-(K_{X^\sharp}+B_{n\_\Phi}{}^\sharp)$ is $\R$-free and big over $Z\ni o$.
Thus $(X^\sharp/Z\ni,B_{n\_\Phi}{}^\sharp)$ is wlF
(a weak log Fano variety or space).

Notice that
we suppose that $\psi$ does nor blow up divisors.
Hence $B_{n\_\Phi}{}^\sharp\in\Gamma(n,\Phi)$.

Step~3. {\em Construction of a b-$n$-complement
$(X/Z\ni o,B^+)$.\/}
By birational nature of b-$n$-complements
(see Definition~\ref{b_n_comp} and Remark~\ref{rem_def_b_n_compl}, (1)),
a required b-$n$-complement $(X/Z\ni o,B^+)$ of
$(X^\sharp/Z\ni o,{B_{n\_\Phi}}^\sharp)$
can be induced from a b-$n$-complement $(X^\sharp/Z\ni o,B_{X^\sharp}^+)$
of $(X^\sharp/Z\ni o,B_{n\_\Phi}{}^\sharp)$, that is,
$B^+=\B_{X^{\sharp}}^+{}_{,X}$.
To construct the latter complement we apply
Theorem~\ref{extension_n_complement} to
$(X^\sharp/Z\ni,B_{n\_\Phi}{}^\sharp)$.
In Step~2 we verified assumptions (1-3) of the theorem.
The conclusion (5) of the theorem follows from
(5) of the corollary.
So, we need only to verify (4) of the theorem.

Step~4. {\em Construction of a b-$n$-complement of
$(S^\sharp/Z\ni o,B_{n\_\Phi}{}^\sharp{}_{S^\sharp})$.\/}
Actually the b-$n$-complement
$(S'^\sharp/Z\ni o,B_{S'{}^\sharp}^+)$
induces a b-$n$-complement $(S^\sharp,B_{S^\sharp}^+)$
of $(S^\sharp/Z\ni o,B_{n\_\Phi}{}^\sharp{}_{S^\sharp})$,
that is, $B_{S^\sharp}^+=B_{S'{}^\sharp}^+{}_{,S^\sharp}$.
By construction varieties $S'^\sharp$ and $S^\sharp$
are birational isomorphic over $Z\ni o$.
The (2-3) of Definition~\ref{n_comp} for
$(S^\sharp,B_{S^\sharp}^+)$ follows from that of
for $(S'^\sharp/Z\ni o,B_{S'{}^\sharp}^+)$.
Thus
it is enough to very (1-b) of~Definition~\ref{b_n_comp}.
This follows from Proposition~\ref{D_D'_complement},
applied to crepant models of
$(S^\sharp/Z\ni o,B_{n\_\Phi}{}^\sharp{}_{S^\sharp}),
(S'^\sharp/Z\ni o,B_{S',n\_\widetilde{\Phi}}{}^\sharp)$
on a common model $S''/Z\ni o$ of $S/Z\ni o$.
The required inequality for the statement is
$$
\B_{n\_\Phi}{}^\sharp{}_{S^\sharp,S''}\le
\B_{S',n\_\widetilde{\Phi}}{}^\sharp{}_{S''}.
$$
Its b-version
$$
\B_{n\_\Phi}{}^\sharp{}_{S^\sharp}\le
\B_{S',n\_\widetilde{\Phi}}{}^\sharp.
$$

By Corollary~\ref{monotonicity_III}, the b-version follows from
its divisorial version on $S^\sharp$:
\begin{equation}\label{sharp_tilde_Phi}
B_{n\_\Phi}{}^\sharp{}_{S^\sharp}\le
\B_{S',n\_\widetilde{\Phi}}{}^\sharp{}_{S^\sharp}.
\end{equation}
Indeed, by construction
$-(\K_{S^\sharp}+\B_{S',n\_\widetilde{\Phi}}{}^\sharp)=
-(\K_{S'^\sharp}+\B_{S',n\_\widetilde{\Phi}}{}^\sharp)$
is nef.
(For any rational differential form $\omega\not =0$ on $S/Z\ni o$,
$\K_{S^\sharp}=\K_{S'^\sharp}=(\omega)$, where the last canonical
divisor is treated as a b-divisor.)
This implies (1) of Corollary~\ref{monotonicity_III}.
The inequality~(\ref{sharp_tilde_Phi}) is (2)
of Corollary~\ref{monotonicity_III}.
By Step~2 $(S^\sharp/Z\ni o,B_{n\_\Phi}{}^\sharp{}_{S^\sharp})$
is a log pair with
$\D(S^\sharp,B_{n\_\Phi}{}^\sharp{}_{S^\sharp})=
\B_{n\_\Phi}{}^\sharp{}_{S^\sharp}$.

Again Corollary~\ref{monotonicity_III}, but
now applied to $(X^\sharp/Z\ni,B_{n\_\Phi}{}^\sharp)$
of Step~2,
implies
\begin{equation}\label{sharp_Phi}
\B_{n\_\Phi}{}^\sharp\le
\B.
\end{equation}

Indeed,$-(\K+\B)$ is nef by (3).
This gives (1) of Corollary~\ref{monotonicity_III}.
Construction in Step~2 gives (2) of Corollary~\ref{monotonicity_III}:
$\B_{X^\sharp}=B_{X^\sharp}\ge B_{n\_\Phi}{}^\sharp$.

The semiadditivity \cite[3.2.1]{Sh92} and~(\ref{sharp_Phi})
implies
\begin{equation}\label{sharp_Phi_S}
\B_{n\_\Phi}{}^\sharp{}_{S^\sharp}\le
\B_S.
\end{equation}

Finally, we verify~(\ref{sharp_tilde_Phi})
in every prime divisor $P$ on $S^\sharp$.
Note $P$ is also a divisor on $S'$
by the maximal property of the crepant model
$(S'/Z\ni o,B_{S'})$.
Indeed, if $P$ is a divisor of $S^\sharp$ then by~(\ref{sharp_Phi_S})
$$
0\le b_{n\_\Phi}{}^\sharp{}_{S^\sharp,P}=
\mult_P \B_{n\_\Phi}{}^\sharp{}_{S^\sharp}\le
\mult_P\B_S=b_{S,P}.
$$
Hence $P$ is a divisor on $S'$.
On the other hand,
$b_{n\_\Phi}{}^\sharp{}_{S^\sharp,P}\in\Gamma(n,\widetilde{\Phi})$
by~(\ref{Phi-to-wtPhi}).
Hence by definition and Proposition~\ref{monotonicity_I}

$$
b_{n\_\Phi}{}^\sharp{}_{S^\sharp,P}\le
b_{S,P,n\_\widetilde{\Phi}}=
\mult_P\B_{S',n\_\widetilde{\Phi}}
\le \mult_P\B_{S',n\_\widetilde{\Phi}}{}^\sharp,
$$
that is, (\ref{sharp_tilde_Phi}) in $P$.

bd-Pairs can be treated similarly.
In this case, a maximal crepant model with boundary
means that of only for the divisorial part $B_S$.
So, we take $(S'/Z\ni o,B_{S'}+\sP\brest{S'})$.
After that we take
$(S'/Z\ni o,B_{S',n\_\widetilde{\Phi}}+\sP\brest{S'})$ and,
finally,
$(S'/Z\ni o,B_{S',n\_\widetilde{\Phi}}{}^\sharp+\sP\brest{S'^\sharp})$.
By definition $\sP\brest{S}=\sP\brest{S'}=\sP\brest{S'^\sharp}$.

\end{proof}

\section{Semiexceptional complements:
lifting exceptional} \label{semiexcep_compl_:}

\paragraph{Semiexceptional pairs.}
They can be defined in terms of $\R$-complements.
A pair $(X,D)$ is called {\em semiexceptional\/}
if it has an $\R$-complement and
$(X,D^+)$ is an $\R$-complement of $(X,D)$ for every
$D^+$ on $X$ such that $D^+\ge D$ and $K+D^+\sim_\R 0$.
(In this situation $D^+$ is always a subboundary and
a boundary if so does $D$.)

The same definition works
for pairs $(X,D+\sP)$ with an arbitrary b-divisor $\sP$, in particular,
for bd-pairs $(X,D+\sP)$ (of index $m$).

\begin{const}[b-Contraction associated to
lc singularities] \label{b_fib_lc}
Let $(X,B)$ be a pair with a nonklt $\R$-complement $(X,B^+)$.
Then there exists a prime b-divisor $P$ such that $\ld(X,B^+;P)=0$,
equivalently, $\mult_P \B^+=1$, where $\B^+=\D(X,B)$ denotes
the codiscrepancy b-divisor of $(X,B)$.
Suppose also that $X$ has wFt and $B$ is a boundary.
Then there exists a b-{\em contraction associated\/} to $(X,B^+)$.
Such a b-contraction is a diagram
$$
(X,B^+)\stackrel{\varphi}{\dashleftarrow} (Y,B_Y^+)
\stackrel{\psi}{\to} Z,
$$
where $\varphi$ is a crepant birational modification and
$\psi$ is a contraction such that
\begin{description}

\item[\rm (1)\/]
$(Y,B_Y^+)$ is lc but nonklt;

\item[\rm (2)\/]
$\varphi$ blows up only prime b-divisors $P$ with $\ld(X,B^+;P)=0$,
equivalently, for every exceptional prime divisor $P$ of $\varphi$,
$\mult_{P}B_Y^+=1$;
and

\item[\rm (3)\/]
$$
B^+=B+\psi^*H \text{ and }
B_Y^+=B^{+,}\logb=B\logb+\psi^* H,
$$
where
$B,B\logb,B^+,B^{+,}\logb$ are respectively
birational and log birational transforms of $B,B^+$ from $X$ to $Y$ and
$H$ is an effective ample $\R$-divisor on $Z$.

\end{description}
For $\varphi$ we can take a dlt crepant blowup of $(X,B^+)$ \cite[Theorem~3.1]{KK}.
This gives (1-2) by construction.
We can suppose that $Y$ is $\Q$-factorial and has Ft,
in particular, projective.
($Z$ is also Ft by \cite[Lemma~2.8,(i)]{PSh08}.)
To satisfy (3) consider the effective $\R$-divisor $E=B^+-B=B^{+,\log}-B^{\log}$ on $Y$.
By construction $E$ is supported outside $\LCS(Y,B_Y^+)$, in particular,
does not contain prime divisors $P$ on $Y$ with $\mult_P B_Y^+=1$.
Moreover, the dlt property implies that $(Y,B_Y^++\ep E)$
is dlt for a sufficiently small positive real number $\ep$.
If $E$ is nef then $E$ is semiample \cite[Corollary~4.5]{ShCh}
and gives the required contraction
$\psi$ because $K_Y+B_Y^+\sim_\R 0$ and
$\ep E\sim_\R K_Y+B_Y^++\ep E$.

If $E$ is not nef we apply the LMMP to $(Y,B_Y^++\ep E)$ or
$E$-MMP to $Y$.
A resulting model is not a Mori fibration or
a fibration negative with respect to $E$, and
$E$ is nef on this model.
The model gives a required pair $(Y,B_Y^+)$ and
$\varphi$ is a birational modification of $(Y,B_Y^+)$ to $(X,B^+)$.
Note that the LMMP contracts only divisors supported on $\Supp E$
and is crepant with respect to $B_Y^+$.
Thus (1-2) hold for constructed $\varphi$.
By construction
\begin{description}

\item[\rm (4)\/]
$(Y/Z,B\logb)$ is a $0$-pair, in particular,
$(Y,B\logb)$ is a log pair.

\end{description}
Indeed, $\psi^*H$ is vertical and $\sim_\R 0$ over $Z$.

We can use also Construction~\ref{sharp_construction} to construct $(Y,B\logb)$
from $(Y,B\logb)$ of the dlt blowup.
Thus the model $(Y/Z,B\logb)$ up to a crepant modification is
defined by the dlt blowup and even by its exceptional divisors.

Since $K_Y+B_Y^+\sim_\R 0$,
we can associate to $Z$ an adjoint boundary
$B_Z^+=B\dv^++B\md^+$ on $Z$  \cite[Constructions~7.2 and 7.5, Remark~7.7]{PSh08}.
If we treat $B_Z^+$ as a usual boundary then we suppose that
$B\md^+$ is the trace on $Z$ of a sufficiently general effective b-divisor $\sB\md^+$ of $Z$
as in Conjecture~\ref{mod_part_b-semiample}.
In this case the pair $(Z,B_Z^+)$ is a usual lc pair with a boundary
(cf. Corollary~\ref{Alexeev_index_m} below).
However, to avoid Conjecture~\ref{mod_part_b-semiample}
we usually treat $B\md^+=\sB\md^+{}_{,Z}$ as
the trace of a nef b-divisor $\sB\md^+$ on $Z$.
In this case the pair $(Z,B\dv^++\sB\md^+)$ is an adjoint bd-pair
with b-nef $\sP=\sB\md^+$ (see bd-Pairs in Section~\ref{technical} and
cf. Warning below).
The pair is a log bd-pair and actually a $0$-bd-pair.
Note also that by (4) we can also associate
to the pair $(Y,B^{\log})$ an adjoint boundary $B_Z^{\log}=B\dv^{\log}+B\md^{\log}$.
The corresponding adjoint bd-pair is $(Z,B\dv^{\log}+\sB\md^{\log})$
with the same moduli b-part $\sB\md^{\log}=\sB\md^+$.
Indeed, by~\cite[Lemma~7.4(ii), Construction~7.5]{PSh08}
or by Proposition~\ref{adjunction_same_mod_etc} and \ref{adjunction_div}
$$
B\dv^+=B\dv^{\log}+H, B\md^+=B\md^{\log}
\text{ and } B_Z^+=B_Z^{\log}+H.
$$
So, $(Z,B_Z^+)$ is an $\R$-complement of $(Z,B_Z^{\log})$.
Respectively, for bd-pairs, $(Z,B\dv^++\sB\md^+)$ is
an $\R$-complement of $(Z,B\dv^{\log}+\sB\md^{\log})$.
By construction $(Z,B_Z^{\log})$ is a log Fano pair with
a log Fano bd-pair $(Z,B\dv^{\log}+\sB\md^{\log})$.

Warning: bd-Pairs $(Z,B\dv^++\sB\md^+),(Z,B\dv^{\log}+\sB\md^{\log})$
have lc singularities by definition.
However, corresponding pairs $(Z,B_Z^+),(B,B_Z^{\log})$ have lc singularities only
for an appropriate choice of the moduli part $\sB\md^+=\sB\md^{\log}$
(cf. \cite[Corollary~7.18(ii)]{PSh08}).

The bd-pair $(Z,B\dv^{\log}+\sB\md^{\log})$ actually depend on the complement $(X,B^+)$ and
on its dlt blowup $(Y,B_Y^+)$; actually on
the exceptional divisors of the blowup.
If we replace in our construction $(Y,B_Y^+)$ by any crepant model $(V,B_V^+)$ of $(X,B)$,
possibly with a subboundary $B_V^+$, with a contraction to $Y$ and to $Z$.
Then the difference $B^+-B=B^{+,\log}-B^{\log}$
(of birational and log birational transforms) usually is not semiample and
even with fixed components.
Moreover, the horizontal difference over $Z$ has only those fixed components and
is exceptional on $Y$, in particular, on the generic fiber of $\psi$.

The same construction works for a bd-pair $(X,B+\sP)$
with a nonklt $\R$-complement $(X,B^++\sP)$ and
such that
\begin{description}

\item[]
$X$ has wFt;

\item[]
$\sP$ is pseudoeffective modulo $\sim_\R$;

\item[]
$B$ is a boundary and

\item[]
$(X,B+\sP)$ has a nonklt $\R$-complement $(X,B^++\sP)$.

\end{description}
In this case and in the case for usual pairs it
is better to apply $E$-MMP.
To construct a dlt Ft blowup we need $\sP$ to be pseudoeffective modulo $\sim_\R$.
For adjoint boundaries see~\ref{adjunction_0_contr} and
(12-13) in \ref{adjunction_div}.

By construction and Proposition~\ref{small_transform_compl}, the b-contraction is
invariant of small birational
modifications of pairs $(X,B)$ or bd-pairs $(X,B+\sP)$.

\end{const}

\begin{prop} \label{horizontal_lc}
Let $(X,B)$ be a semiexceptional but not exceptional pair,
with wFt $X$ and a boundary $B$.
Then Construction~\ref{b_fib_lc} is applicable to $(X,B)$.

Every b-contraction of $(X,B)$ is not birational, that is,
$\dim Z<\dim Y=\dim X$.
Every lc centers and
prime b-divisors $P$ with $\ld(P;X,B^+)=0$ are horizontal with
respect to the b-contraction, that is, for every b-divisor $P$ such that
$\mult_P\B^+=1$, $\psi\cent P=Z$.

Pairs $(Z,B\dv^++\sB\md^+),(Z,B\dv^{\log}+\sB\md^{\log})$ are a klt $0$-bd-pair and
a klt log Fano bd-pair respectively.
The bd-pair $(Z,B\dv^++\sB\md^+)$ is exceptional and
$(Z,B\dv^{\log}+\sB\md^{\log})$ is semiexceptional.

\end{prop}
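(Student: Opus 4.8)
The plan is to fix a nonklt $\R$-complement of $(X,B)$ together with an associated b-contraction, prove the horizontality assertion, and then read off the remaining statements from the adjunction formalism of~\ref{adjunction_0_contr}--\ref{adjunction_div}. First, Construction~\ref{b_fib_lc} does apply: by semiexceptionality $(X,B)$ has an $\R$-complement, and since $(X,B)$ is not exceptional some $\R$-complement is not klt, so there is a nonklt $\R$-complement $(X,B^+)$; together with the hypotheses that $X$ has wFt and $B$ is a boundary, this is exactly the input of the construction. Fix such a $B^+$ and an associated b-contraction $(X,B^+)\stackrel{\varphi}{\dashleftarrow}(Y,B_Y^+)\stackrel{\psi}{\to}Z$, with $B^+=B+\psi^*H$, $B_Y^+=B\logb+\psi^*H$, $(Y/Z,B\logb)$ a $0$-pair, $(Z,B_Z\logb)=(Z,B\dv\logb+B\md\logb)$ a log Fano pair, and adjoint bd-pairs $(Z,B\dv\logb+\sB\md\logb)$ (log Fano) and $(Z,B\dv^++\sB\md^+)$ (its $\R$-complement, $\sB\md^+=\sB\md\logb$). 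Here $X,Y,Z$ are complete; since $(Y,B_Y^+)$ is a $0$-pair, any $\R$-complement $D_Y^+\ge B_Y^+$ has $D_Y^+-B_Y^+$ effective and $\sim_\R0$, hence $=0$, so $(Y,B_Y^+)$, equivalently $(X,B^+)$, is its own unique $\R$-complement (cf.\ Example~\ref{1stexe}, (3)); consequently so is $(Z,B\dv^++\sB\md^+)$.

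The core is horizontality: every prime b-divisor $P$ of $X$ with $\mult_P\B^+=1$ (i.e.\ $\ld(P;X,B^+)=0$) satisfies $\psi\cent P=Z$. Suppose not, so some such $P$ is vertical over $Z$; let $Q$ be the prime b-divisor of $Z$ under $P$, which after passing to a higher crepant model of $\psi$ — harmless by Proposition~\ref{small_transform_compl} — we may take to be a divisor on $Z$. Since $-(K_Z+B_Z\logb)\sim_\R H$ is ample, I can choose an effective $H'\sim_\R H$ with $\mult_QH'>\mult_QH$; then for small $s\in(0,1]$ the divisor $C_Z:=B_Z\logb+(1-s)H+sH'$ satisfies $C_Z\ge B_Z\logb$, $K_Z+C_Z\sim_\R0$, and $\mult_QC_Z>\mult_QB_Z^+=1$, so the bd-pair $(Z,[B\dv\logb+(1-s)H+sH']+\sB\md\logb)$ dominates $(Z,B\dv\logb+\sB\md\logb)$ in its divisorial part, is $\sim_\R0$, and is not lc. Feeding this through the inverse adjunction correspondence (monotonicity~(4) and log-canonicity~(6) in~\ref{adjunction_div}) for the $0$-contraction $(Y,B\logb)\to Z$ produces $D_Y^+\ge B\logb$ on $Y$ with $K_Y+D_Y^+\sim_\R0$ and $(Y,D_Y^+)$ not lc; pushing forward by $\varphi$ and using $B\logb\ge$ the crepant transform of $B$ gives $D_X^+\ge B$ on $X$ with $K_X+D_X^+\sim_\R0$, crepant to $(Y,D_Y^+)$ by the negativity lemma, hence not lc. Thus $(X,D_X^+)$ is a non-lc $\R$-complement of $(X,B)$, contradicting semiexceptionality. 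This proves horizontality for b-divisors; each lc center of $(X,B^+)$ is $\cent_X P$ for such a $P$, so lc centers are horizontal too. Finally $\psi$ is not birational: $(X,B^+)$ being nonklt there is a $P$ with $\mult_P\B^+=1$, which by horizontality has $\cent_Y P$ dominating $Z$, so $\dim\cent_Y P\ge\dim Z$ while $\cent_Y P\subsetneq Y$, forcing $\dim Z<\dim Y=\dim X$.

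The remaining claims are formal. Both $(Y,B_Y^+)\to Z$ and $(Y,B\logb)\to Z$ are $0$-contractions as in~\ref{adjunction_0_contr}, so by Theorem~\ref{b_nef} and Addendum~\ref{b_nef_bd} the moduli part $\sB\md^+=\sB\md\logb$ is b-nef and the adjoint data are log bd-pairs, with $K_Z+B\dv^++B\md^+\sim_\R0$. Horizontality says no vertical prime b-divisor over $Z$ has codiscrepancy $\ge1$ for $B_Y^+$, nor for $B\logb\le B_Y^+$, i.e.\ $(Y,B_Y^+)$ and $(Y,B\logb)$ are klt over $Z$; by property~(6) of the adjunction correspondence, $(Z,B\dv^++\sB\md^+)$ and $(Z,B\dv\logb+\sB\md\logb)$ are klt, hence a klt $0$-bd-pair and a klt log Fano bd-pair respectively (using $K_Z+B\dv^++B\md^+\sim_\R0$ and the log Fano property of $(Z,B_Z\logb)$). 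Being klt and its own unique $\R$-complement, $(Z,B\dv^++\sB\md^+)$ is exceptional. For semiexceptionality of $(Z,B\dv\logb+\sB\md\logb)$: by property~(8) any $\R$-complement of it corresponds to an $\R$-complement of $(Y,B\logb)$, which pushes forward by $\varphi$ (again via $B\logb\ge$ crepant transform of $B$ and the negativity lemma) to an $\R$-complement $D_X^+\ge B$ of $(X,B)$; by semiexceptionality of $(X,B)$ this $D_X^+$ is lc, so by~(6) the $\R$-complement on $Z$ is lc.

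The step I expect to cost real effort is horizontality — specifically, making the choice of $H'$ rigorous (immediate when $Q\not\subseteq\Supp H$, but when $Q\subseteq\Supp H$ or $Q$ has a codimension $\ge2$ center on $Z$ one must work on an appropriate model of $\psi$ where $-(K_Z+B_Z\logb)$ is only semiample and argue there), and carefully tracking the log- and crepant-transform bookkeeping through $\varphi$ and $\psi$ so that the perturbed divisor genuinely descends to a non-lc $\R$-complement of $(X,B)$ on $X$ itself, not merely to a non-lc pair on $Y$ or $Z$. The other assertions are routine applications of~\ref{adjunction_0_contr}--\ref{adjunction_div} together with uniqueness of $\R$-complements of complete $0$-(bd-)pairs.
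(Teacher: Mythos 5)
Your overall architecture is the paper's: produce a non-lc $\R$-complement of $(X,B)$ by perturbing inside the fixed class $B^+-B\sim_\R\psi^*H$ and contradict semiexceptionality, then read off the klt, exceptional and semiexceptional claims for the adjoint bd-pairs from the properties (4), (6) and (8) of~\ref{adjunction_div}. Your derivation of $\dim Z<\dim X$ as a consequence of horizontality is a tidy variant of the paper's separate treatment of the birational case (where it uses that $B^+-B$ is big on $Y$), and the formal part -- uniqueness of the $\R$-complement of a complete $0$-pair, klt of the adjoint pairs via (6), exceptionality of the klt $0$-bd-pair, semiexceptionality via (8) and push-forward to $X$ -- is in order.

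The gap is exactly the step you flag, and it is not a routine technicality. The assertion that, $H$ being ample, one can choose an effective $H'\sim_\R H$ with $\mult_Q H'>\mult_Q H$ is false in general: on a complete variety the only effective divisor $\sim_\R 0$ is $0$, so raising the multiplicity along $Q$ must be compensated by lowering $H$ elsewhere, and this is impossible when the class of $H$ already carries its maximal multiplicity along $Q$ -- for instance when $H$ is proportional to $Q$, every effective $H'\sim_\R H$ has $\mult_Q H'\le \mult_Q H$, since otherwise $H'-(\mult_Q H')Q$ would be an effective divisor $\R$-linearly equivalent to a negative multiple of an ample divisor. This is precisely the configuration you defer (the vertical lc place lying over $\Supp H$, or having small centre), and your proposed remedy of passing to a higher model where the anti-log-canonical class is only semiample makes matters worse, because pull-backs of a semiample class give even less room to increase a multiplicity at a fixed place. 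The paper's mechanism at this point is different: it does not restrict to base pull-backs of perturbations of $H$, but works on $Y$ with effective members $E\sim_\R B^+-B$ of the full class (a strictly larger supply, including vertical divisors that are not pulled back from $Z$), chooses such an $E$ through the centre of $P$, and derives the contradiction from (6) and (8) of~\ref{adjunction_div}, with the birational case of $\psi$ handled separately by bigness. Your argument as written does not reduce to this, so the horizontality claim -- the heart of the proposition, from which everything else in your write-up is then deduced -- remains unproved in the proposal.
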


\begin{add}
The same holds for semiexceptional but not exceptional
bd-pairs $(X,B+\sP)$
with wFt $X$, a boundary $B$ and
a pseudoeffective modulo $\sim_\R$ b-divisor $\sP$.
\end{add}

\begin{proof}
The construction is applicable because $(X,B)$ has
an $\R$-complement.

According to Construction~\ref{b_fib_lc}, if $\psi$ is birational then
$B^+-B$ is big on any dlt blowup $\varphi\colon (Y,B_Y^+)\to (X,B^+)$.
So, there exists an effective divisor $E\sim_\R B^+-B$ on $Y$
such that $(Y,B_Y')$ is nonlc, where $B_Y'=B_Y^+-B^++B+E$.
Since $B_Y^+\sim_\R B_Y^+-B^++B+E$ and $\varphi (B_Y')=B'=B+\varphi(E)$ on $X$,
$(X,B')$ is a noncl $\R$-complement of $(X,B)$.
This contradicts to the semiexceptional property of $(X,B)$.

Similarly, if $P$ is an lc prime b-divisor of $(X,B^+)$ such that
$\psi\cent P$ is proper in $Z$, then there exists an effective divisor
$E\sim_\R B^+-B$ passing through $P$.
Moreover, by construction and our assumptions $Z$ is complete and
proper over $S=\pt$
This leads again to a contradiction by~(6)
and~(8) of~\ref{adjunction_div}.
 Hence by \cite[Lemma~7.4(iii)]{PSh08} and construction
$(Z,B\dv^++\sB\md^+))$ is a klt $0$-bd-pair.
Since $H$ is effective and ample,
$(Z,B\dv^{\log}+\sB\md^{\log})$ is a klt log Fano bd-pair.
Since $(Z,B\dv^++\sB\md^+))$ is a klt $0$-bd-pair,
it is exceptional.
The bd-pair $(Z,B\dv^{\log}+\sB\md^{\log})$ is
semiexceptional again by~(6) and~(8)
of~\ref{adjunction_div}
(cf. the proof of Corollary~\ref{excep_base}).

Similarly we treat bd-pairs.
In this case $(B^{\log}+\sP)\dv^+{}_{,Z}$ is a usual $\R$-divisor on $Z$
and $(B^{\log}+\sP)\md^+$ is a nef b-divisor of $Z$
(see~\ref{adjunction_0_contr}).

\end{proof}

Since $\dim Z<\dim X$ and
$(Z,B\dv+\sB\md^{\log})$ is semiexceptional
we can use the dimensional induction
to construct $n$-complements of
semiexceptional pairs $(X,B)$ and bd-pairs $(X,B+\sP)$.
Finally, this reduces construction of
semiexceptional $n$-complements to exceptional ones.
Any $0$-dimensional pair is exceptional.
We prefer a direct reduction to the exceptional case.

\paragraph{Semiexceptional type.}
Let $(X,B)$ be a semiexceptional pair under
the assumptions of Construction~\ref{b_fib_lc} and
$(X,B^+)\dashrightarrow Z$ be a b-contraction
associated a nonklt complement $(X,B^+)$.
The b-contraction has
a pair $(r,f)$ of invariants, where
$r=\reg(X,B^+)=\dim\Reg(X,B)$ \cite[Proposition-Definition~7.9]{Sh95} is
the {\em regularity\/} of $(X,B^+)$, characterising
topological depth [difficulty] of lc singularities of $(X,B^+)$, and
$f$ is the dimension of $Z$, of the base of b-contraction.
We order these pairs lexicographically:
$$
(r_1,f_1)\ge(r_2,f_2) \text{ if }
\begin{cases}
r_1> r_2;\text{ or}\\
r_1=r_2 \text{ and } f_1\ge f_2.
\end{cases}
$$

A {\em maximal} b-contraction is a largest one
with respect to $(r,f)$, that is, of the largest regularity and
the largest dimension of the base for
such a regularity.
The {\em semiexceptional\/} type of $(X,B)$ is
such a largest pair $(r,f)$.
[
If $(X,B)$ is exceptional then $r$ is not defined [or $=-\infty$] and
$f=\dim X$ is possible (see Semiexceptional filtration below).
]

For pairs of dimension $d$ not all invariants
$0\le r\le d-1,0\le f\le d$ are possible.
E.g., the top type in this situation is $(d-1,0)$ according
to Addendum~\ref{reg_dim_fiber} below.

Similar notion can be applied to a semiexceptional bd-pair $(X,B+\sP)$
under the assumptions of Construction~\ref{b_fib_lc}.

\begin{cor} \label{reg_semiexceptional type}
Under the assumptions and notation of Construction~\ref{b_fib_lc},
$\reg(X,B^+)$ is the same topological invariant as
of a sufficiently general or generic fiber of $(Y,B_Y^+)$
over $Z$: for the generic point $\eta$ of $Z$,
$$
\reg(X,B)\ge \reg(X,B^+)=\reg(Y_\eta,B_\eta^+).
$$
\end{cor}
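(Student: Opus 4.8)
The plan is to prove Corollary~\ref{reg_semiexceptional type} as an immediate consequence of Construction~\ref{b_fib_lc} together with the horizontality statement of Proposition~\ref{horizontal_lc}. The equality $\reg(X,B^+)=\reg(Y_\eta,B_\eta^+)$ should be read as an equality of the topological invariant $\reg$ attached to the lc (but nonklt) pair $(Y,B_Y^+)$ and to its restriction to the generic fiber over $Z$; the inequality $\reg(X,B)\ge\reg(X,B^+)$ is just the definition of $\reg(X,B)=\dim\Reg(X,B)$ as the maximum over all nonklt $\R$-complements (here a maximal b-contraction realizes the semiexceptional type, so the largest regularity is achieved).

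First I would recall from Construction~\ref{b_fib_lc} that the crepant dlt blowup $\varphi\colon(Y,B_Y^+)\to(X,B^+)$ has $\reg(Y,B_Y^+)=\reg(X,B^+)$, since $\varphi$ is crepant and the regularity is a crepant-birational invariant computed on any dlt model: the dual complex of the reduced part $\lfloor B_Y^+\rfloor$ (equivalently, the nonklt locus of the codiscrepancy b-divisor $\B^+$) has dimension $\reg(X,B^+)$ by \cite[Proposition-Definition~7.9]{Sh95}. So it remains to compare $\reg(Y,B_Y^+)$ with $\reg(Y_\eta,B_\eta^+)$ for the contraction $\psi\colon Y\to Z$.

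The key step is to use that every prime b-divisor $P$ with $\mult_P\B^+=1$ — equivalently every lc center of $(Y,B_Y^+)$ — is horizontal over $Z$, which is exactly the content of Proposition~\ref{horizontal_lc} (applicable since $(X,B)$ is semiexceptional but not exceptional, $X$ is wFt, $B$ a boundary). Because all lc centers dominate $Z$, the nonklt locus of $(Y,B_Y^+)$ meets the generic fiber $Y_\eta$, and its intersection with $Y_\eta$ is precisely the nonklt locus of the adjoint pair $(Y_\eta,B_\eta^+)$; moreover, restricting to $Y_\eta$ does not lose any stratum of the dual complex, and (by generality of $\eta$, or after shrinking $Z$ and using the equisingularity implicit in the dlt structure) it does not create new strata or change incidences. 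Hence the dual complex of $(Y_\eta,B_\eta^+)$ is (PL-homeomorphic to) that of $(Y,B_Y^+)$, giving $\reg(Y_\eta,B_\eta^+)=\reg(Y,B_Y^+)=\reg(X,B^+)$. Finally $\reg(X,B)\ge\reg(X,B^+)$ holds by the definition of $\reg(X,B)$ as $\dim\Reg(X,B)$, the supremum of the regularities over nonklt $\R$-complements.

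The main obstacle I expect is the precise bookkeeping in the comparison of dual complexes under restriction to the generic fiber: one must check that the generic fiber is transverse to the snc structure of the reduced boundary on a suitable dlt model, so that the combinatorics (not just the dimension) is preserved, and that no lc stratum is hidden in the vertical locus — but this is guaranteed by the horizontality of Proposition~\ref{horizontal_lc} and by taking $\eta$ generic; spelling this out is routine given the cited results. The bd-pair case is identical, replacing $(Y,B_Y^+)$ by the corresponding log bd-pair and using that $\sP$ is stabilized and nef over the relevant model, so no new issue arises.
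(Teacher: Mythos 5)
Your proposal is correct and follows essentially the same route as the paper: the paper's proof is precisely "immediate by Proposition~\ref{horizontal_lc} and the definition, because all lc centers are horizontal," so that the nonklt strata (hence the dual complex computing $\reg$) survive restriction to the generic fiber, while $\reg(X,B)\ge\reg(X,B^+)$ is definitional. Your extra bookkeeping on crepant invariance and the dual-complex comparison just spells out what the paper leaves implicit.
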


\begin{add} \label{reg_dim_fiber}
$\reg(X,B^+)\le\dim Y_\eta-1=\dim X-\dim Z -1$, or
$\dim Z\le \dim X-\reg(X,B^+)-1$.
\end{add}

\begin{add}
The same holds for bd-pairs.
\end{add}

\begin{proof}
Immediate by Proposition~\ref{horizontal_lc} and
definition because all lc centers are horizontal.

Addendum~\ref{reg_dim_fiber} follows from
the general fact that, for every lc pair $(X,B)$,
$\reg(X,B)\le\dim X -1$.

For bd-pairs notice only that
$\reg(X,B^++\sP)$ can be defined as for usual pairs.
\end{proof}

\begin{cor} \label{excep_base}
Let $(X,B)$ be a semiexceptional pair under
the assumptions of Construction~\ref{b_fib_lc} and
$(X,B^+)\dashrightarrow Z$ be a maximal b-contraction
with respect to $r$ associated to a nonklt complement $(X,B^+)$.
Then $(Z,B_Z^{\log}+\sB\md^{\log})$ is exceptional.

The same hold for semiexceptional bd-pairs.
\end{cor}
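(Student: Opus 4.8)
The plan is to reduce the exceptionality of $(Z,B_Z^{\log}+\sB\md^{\log})$ to that of a generic fiber of the b-contraction, using the adjunction correspondence of Section~\ref{adjunction_div}, and then invoke maximality of the regularity $r$. First I would recall the setup: $(X,B^+)\dashleftarrow(Y,B_Y^+)\stackrel{\psi}{\to}Z$ is the maximal (with respect to $r$) b-contraction associated to a nonklt $\R$-complement $(X,B^+)$. By Proposition~\ref{horizontal_lc} the map $\psi$ is a genuine fibration ($\dim Z<\dim X$), all lc centers and all prime b-divisors $P$ with $\mult_P\B^+=1$ are horizontal over $Z$, and $(Z,B_Z^{\log}+\sB\md^{\log})$ is a klt log Fano bd-pair which is semiexceptional. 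So what remains is to upgrade ``semiexceptional'' to ``exceptional''. By the definition of exceptional bd-pairs, we must show that \emph{every} $\R$-complement of $(Z,B_Z^{\log}+\sB\md^{\log})$ is klt; by the semiexceptional property and Construction~\ref{b_fib_lc} it already has one, so the content is to rule out a nonklt one.

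The key step is a proof by contradiction: suppose $(Z,B_Z^{\log,+}+\sB\md^{\log})$ is a nonklt $\R$-complement of $(Z,B_Z^{\log}+\sB\md^{\log})$. Writing $B_Z^{\log,+}=B_Z^{\log}+H_Z'$ for an effective $H_Z'\sim_\R$ (the anticanonical class), I would pull this back through $\psi$ using the adjunction correspondence: by property~(8) of~\ref{adjunction_div} applied to the $0$-contraction $(Y,B_Y^{\log,+})\to Z$ — or rather to the $0$-bd-contraction obtained from $(Y,B_Y^+)\to Z$ by replacing $H$ with $H+H_Z'$ — we obtain an $\R$-complement $(X,B^{++})$ of $(X,B)$ whose adjoint on $Z$ is precisely $(Z,B_Z^{\log,+}+\sB\md^{\log})$ (up to the ample shift; note $(Z,B_Z^++\sB\md^+)$ is the $\R$-complement of $(Z,B_Z^{\log}+\sB\md^{\log})$ that corresponds to $(X,B^+)$ of $(X,B)$, so adding $H_Z'$ stays in range). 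Since $(Z,B_Z^{\log,+}+\sB\md^{\log})$ is nonklt, property~(6) of~\ref{adjunction_div} gives a vertical prime b-divisor $P$ of $X$ with $\mult_P\B^{++}=1$. Now consider the b-contraction associated to this new nonklt complement $(X,B^{++})$: by Construction~\ref{b_fib_lc} it factors through a model of $\psi$, and because $P$ is vertical over $Z$ but is an lc place of $B^{++}$, the regularity $\reg(X,B^{++})$ of this new b-contraction is strictly larger than $\reg(X,B^+)=r$ — the new lc place lives ``deeper'' in the fiber direction, raising the topological depth of the lc stratification by Corollary~\ref{reg_semiexceptional type} and Addendum~\ref{reg_dim_fiber}. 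This contradicts the maximality of $r$.

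The main obstacle I anticipate is making the regularity-increase argument precise: one must verify that the nonklt place $P$ produced on $Z$ lifts to an lc place of $(X,B^{++})$ that is genuinely ``new'' — i.e.\ that passing from $(X,B^+)$ to $(X,B^{++})$ and taking the associated maximal b-contraction of the latter yields $(r',f')$ with $r'>r$, not merely $r'=r$ and $f'>f$. This requires tracking how lc centers of the adjoint pair on $Z$ correspond, via inversion of adjunction along $\psi$ (the klt/lc equivalences in~(6) of~\ref{adjunction_div}), to lc centers of $(X,B^{++})$, and checking that a nonklt center of $(Z,B_Z^{\log,+}+\sB\md^{\log})$ forces the fiber-wise lc stratification of $(X,B^{++})$ to acquire an extra stratum, hence $\reg$ to jump. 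An alternative, perhaps cleaner, route is to argue directly that $(X,B^{++})$ being nonklt with a \emph{vertical} lc place contradicts the fact that for the maximal-$r$ b-contraction \emph{all} lc places are horizontal (Proposition~\ref{horizontal_lc}): indeed $(X,B^{++})$ is itself a nonklt $\R$-complement of $(X,B)$, so it too has an associated b-contraction, and if $P$ is vertical for $\psi$ then either this new b-contraction has base of dimension $<\dim Z$ while keeping regularity $\ge r$ (violating maximality of $f$ within regularity $r$, \emph{if} $r$ is unchanged), or it has strictly larger regularity. Either way maximality of the type $(r,f)$ is contradicted. Finally, the same argument applies verbatim to bd-pairs $(X,B+\sP)$ with $\sP$ pseudoeffective modulo $\sim_\R$, using the bd-versions of~(6) and~(8) in~\ref{adjunction_div} and the bd-version of Construction~\ref{b_fib_lc} and Proposition~\ref{horizontal_lc}.
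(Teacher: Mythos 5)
Your main argument is essentially the paper's proof: assuming a nonklt $\R$-complement of $(Z,B\dv^{\log}+\sB\md^{\log})$, one takes an effective $E\sim_\R H$ with $(Z,B\dv^{\log}+E+\sB\md^{\log})$ nonklt, obtains a vertical lc center of $(Y,B_Y^{\log}+\psi^*E)$ by inversion of adjunction ((6) of~\ref{adjunction_div}, \cite[Lemma~7.4(iii)]{PSh08}), and, since $\psi^*E\sim_\R\psi^*H$ is effective, this new complement of $(X,B)$ contradicts the maximality of $\psi$ with respect to $r$. Note only that your fallback ``alternative route'' is not sound as written: Proposition~\ref{horizontal_lc} constrains the lc places of the complement $(X,B^+)$ defining the given b-contraction, not those of the new complement $B^{++}$, and maximality of $f$ is not among the hypotheses (only $r$-maximality), so the contradiction must come from the strict increase of regularity, exactly as in your primary argument and in the paper.
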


\begin{proof}
Otherwise there exists an effective $\R$-divisor $E\sim_\R H$ on $Z$
such that $(Z,B\dv^{\log}+E+\sB\md^{\log})$ is nonklt.
Hence $(Y,B_Y^{\log}+\psi^*E)$ has a vertical lc center \cite[Lemma~7.4(iii)]{PSh08}
(cf. the proof of Proposition~\ref{horizontal_lc}).
Since $\psi^*E\sim_\R \psi^*H$ and is effective this
contradicts to the maximal property of $\psi$ with respect to $r$.

\end{proof}

\begin{const} \label{semiexceptional_induction}
Let $d$ be a nonnegative integer and
$\Phi=\Phi(\fR)$ be a hyperstandard set associated with
a finite set of rational numbers $\fR$ in $[0,1]$.
By Theorem~\ref{adjunction_index}
there exists a positive integer $J$ such that
every contraction $\psi\colon (Y,B^{\log})\to Z$
of Construction~\ref{b_fib_lc}, (4) has the adjunction index $J$ if
\begin{description}

\item[\rm (1)\/]
$\dim X=d$ and

\item[\rm (2)\/]
$B\hor\in\Phi$.

\end{description}
Note that if $1\not\in\Phi$ then
we can add $0$ to $\fR$ and so $1$ to $\Phi$.
Moreover, there exists
a finite set of rational numbers $\fR'$ in $[0,1]$
such that $\Phi'=\Phi(\fR')$ satisfies Addendum~\ref{adjunction_index_div}.
More precisely, $\fR'$ is defined by~(\ref{const_adj}),
where
$$
\fR''=[0,1]\cap \frac \Z J.
$$

By Addendum~\ref{adjunction_index_bd},
the same adjunction index $J$ has every contraction
$\psi\colon (Y,B^{\log}+\sP)\to Z$
of Construction~\ref{b_fib_lc} if we
apply the construction to a bd-pair $(X,B+\sP)$ and suppose
additionally to (1-2) that $(X,B+\sP)$ is a bd-pair of index $m$,
or equivalently, $(Y,B^{\log}+\sP)$ is
a log bd-pair of index $m$.

Let $I,\ep,v,e$ be the data
as in Restrictions on complementary indices in Section~\ref{intro} and
$f$ be a nonnegative integer such that $f\le d-1$.
By Theorem~\ref{excep_comp} or
by dimensional induction there exists a finite set of
positive integers
$\sN=\sN(f,I,\ep,v,e,\Phi',J)$  such that
\begin{description}

\item[\rm Restrictions:\/]
every $n\in\sN$ satisfies
Restrictions on complementary indices with the given data,
in particular, $J|n$ (see Remark~\ref{m_div_n});

\item[\rm Existence of $n$-complement:\/]
if $(Z,B_Z+\sQ)$ a bd-pair of dimension $f$ and
of index $J$ with wFt $Z$, with a boundary $B_Z$,
with an $\R$-complement and
with exceptional $(Z,B_{Z,\Phi'}+\sQ)$,
then $(Z,B_Z+\sQ)$ has an $n$-complement $(Z,B_Z^++\sQ)$
for some $n\in \sN$.
We can apply dimensional induction by
Addendum~\ref{bd_exceptional_compl}
[or by the assumption~(1) Theorem~\ref{bd_bndc}, bd-version of Theorem~\ref{bndc}]
because $(Z,B_Z+\sQ)$ is also exceptional and
has a klt $\R$-complement.
Moreover,

\item[\rm (3)\/]
$B_Z^+\ge B_{Z,n\underline{\ }\Phi'}$.

\end{description}

\end{const}

The following result applies to the construction but not only.

\begin{cor} \label{disjoint}
For any given finite set $\sN'$ of positive integers,
we can suppose that $\sN(I)=\sN(\dots,I,\dots)$
under Restrictions on complementary indices with the given data
is disjoint from $\sN'$.
\end{cor}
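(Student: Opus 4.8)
\textbf{Proof plan for Corollary~\ref{disjoint}.}
The plan is to exploit the freedom built into Restrictions on complementary indices: the set $\sN$ is never unique, because Divisibility only asks that $I$ divide $n$ and Approximation only asks $\norm{v_n-v}<\ep/n$, both of which survive when $I$ is enlarged and $\ep$ is shrunk. So first I would observe that replacing the data $(I,\ep)$ by $(I',\ep')$ with $I\mid I'$ and $\ep'\le\ep$ only strengthens the four conditions; hence any $\sN(\dots,I',\ep',\dots)$ produced by the relevant existence theorem (Theorem~\ref{excep_comp}, or inductively the statement of Construction~\ref{semiexceptional_induction}) still satisfies Restrictions with the original data $(I,\ep)$, and still witnesses existence of $n$-complements. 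Thus it suffices to choose $I'$ and $\ep'$ so that \emph{every} $n$ that could possibly appear in such a set avoids the finite set $\sN'$.

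The key step is the choice of $I'$. Let $M=\max\sN'$ (or $M=1$ if $\sN'=\emptyset$), and set $I'=I\cdot(M!)$, or more simply $I'$ equal to $I$ times the least common multiple of all elements of $\sN'\cup\{1\}$ together with one extra prime factor; the cleanest choice is $I'=I\cdot\prod_{k\in\sN'}k$ enlarged so that $I'>M$. Then every $n$ with $I'\mid n$ satisfies $n\ge I'>M\ge\max\sN'$, so $n\notin\sN'$ automatically. No condition beyond Divisibility is needed for this, and Divisibility for $I'$ implies Divisibility for $I$ since $I\mid I'$. Then I would invoke the cited existence result with the enlarged divisibility parameter $I'$ (and $\ep'=\ep$, or $\ep'\le\ep$ if one also wants $\ep\le 1/2$ for uniqueness of $v_n$), obtaining a finite set $\sN=\sN(\dots,I',\ep,\dots)$; by the monotonicity observation above this $\sN$ meets the requirements of the corollary with respect to the original data, and $\sN\cap\sN'=\emptyset$ because every element of $\sN$ exceeds $\max\sN'$.

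I do not expect a serious obstacle here: the statement is essentially a bookkeeping remark, and the only thing to be careful about is that enlarging $I$ to a multiple $I'$ genuinely preserves all four Restrictions (Divisibility, Denominators, Approximation, Anisotropic approximation) and the Existence of $n$-complement conclusion — which it does, since those are all either divisibility conditions that get easier or metric conditions on $v_n$ that are unaffected by changing $I$. The one place to state explicitly is that the existence theorems quoted earlier accept an arbitrary positive integer as the divisibility datum $I$, so feeding them $I'$ in place of $I$ is legitimate; this is exactly the hypothesis ``$I$ be positive integers'' in Restrictions on complementary indices, so nothing new is required. Hence the corollary follows, and the same argument applies verbatim in the bd-pair setting, replacing the relevant existence theorem by its bd-version.
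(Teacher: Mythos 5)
Your proposal is correct and matches the paper's own (one-line) argument: the paper simply takes $\sN(I)=\sN(I')$ for a sufficiently divisible $I'$ in place of $I$, which is exactly your enlargement trick; since $I\mid I'$, Divisibility (and hence all of Restrictions) for the original data is preserved, and every $n$ with $I'\mid n$ exceeds $\max\sN'$. Your write-up just spells out the bookkeeping that the paper leaves implicit.
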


\begin{proof}
Use $\sN(I)=\sN(I')$ with the same data except for $I$
replaced by sufficiently divisible $I'$.
\end{proof}

\begin{thm} \label{semiexcep_compl}
Let $d,\fR,\Phi,J,I,\fR',\Phi',\ep,v,e,f,\sN$ be
the data of Construction~\ref{semiexceptional_induction} and
$r$ be a nonnegative integer.
Let $(X,B)$ be a pair with a boundary $B$ such that
\begin{description}

\item[\rm (1)\/]
$X$ has wFt;

\item[\rm (2)\/]
$\dim X=d$;
and

\item[\rm (3)\/]
both pairs
$$
(X,B_\Phi),\ (X,B_{\sN\underline{\ }\Phi})
$$
are semiexceptional of the same type $(r,f)$.

\end{description}
Then there exists $n\in\sN$ such that $(X,B)$ has
an $n$-complement $(X,B^+)$ with
\begin{description}

\item[\rm (4)\/]
$B^+\ge B_{n\underline{\ }\Phi}$.

\end{description}

\end{thm}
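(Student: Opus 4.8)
The strategy is to run the induction on the semiexceptional type $(r,f)$ that the preceding material has been set up for. Since $(X,B_\Phi)$ and $(X,B_{\sN\underline{\ }\Phi})$ are semiexceptional of the same type $(r,f)$ with $f\le d-1$, they are in particular not exceptional, so Construction~\ref{b_fib_lc} applies to $(X,B_\Phi)$ (and also to $(X,B_{\sN\underline{\ }\Phi})$). First I would fix a nonklt $\R$-complement $(X,B_\Phi^+)$ realizing a \emph{maximal} b-contraction $(X,B_\Phi^+)\dashleftarrow(Y,B_{Y,\Phi}^+)\stackrel{\psi}{\to}Z$ of type $(r,f)$, so that by Proposition~\ref{horizontal_lc} and Corollaries~\ref{reg_semiexceptional type}--\ref{excep_base} the base $Z$ has $\dim Z=f\le d-1$, the adjoint bd-pair $(Z,B\dv^{\logb}+\sB\md^{\logb})$ is a klt log Fano bd-pair, and $(Z,B\dv^{\logb}+\sB\md^{\logb})$ is \emph{exceptional} (the maximality with respect to $r$ is exactly what makes the base exceptional, by Corollary~\ref{excep_base}). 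By Construction~\ref{b_fib_lc}, (4) the associated contraction $\psi\colon(Y,B^{\logb})\to Z$ has adjunction index $J$ (this is where assumptions (1)--(2) and $B\hor\in\Phi$ enter, via Theorem~\ref{adjunction_index} and the choice of $J,\fR',\Phi'$ in Construction~\ref{semiexceptional_induction}).

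Next I would pass to the divisorial side. Adjunction for the $0$-contraction $(Y,B^{\logb})\to Z$ takes $B^{\logb}\ver\in\Gamma(\sN,\Phi)$ (the vertical multiplicities of the maximal-model boundary on $Y$) into $\Gamma(\sN,\Phi')$ on $Z$ by Addendum~\ref{adjunction_index_div}, so the adjoint divisorial boundary $B\dv{}_{,Z}$ of $(Y,B_{\sN\underline{\ }\Phi}^{\logb})$ (or more precisely of the appropriate maximal model) lies in $\Gamma(\sN,\Phi')$; since $(Z,B\dv^{\logb}{}_{\Phi'}+\sB\md^{\logb})$ is exceptional of dimension $f$, has a klt $\R$-complement, and $(Z,B\dv{}_{,Z}+\sB\md^{\logb})$ has an $\R$-complement (inherited from $(X,B)$ via property (8) of~\ref{adjunction_div}), the exceptional $n$-complement machinery of Construction~\ref{semiexceptional_induction} produces $n\in\sN$ and an $n$-complement $(Z,B_Z^++\sB\md^{\logb})$ with $B_Z^+\ge B_{Z,n\underline{\ }\Phi'}$. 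In the normal-crossing model this $n$-complement is automatically a b-$n$-complement by Proposition~\ref{b_n_comp-nc}. Then I would invoke Theorem~\ref{invers_b_n_comp} (and Addendum~\ref{invers_b_n_comp_monotonic}): its hypotheses (1)--(3) are exactly the inequality $d\dv^+{}_{,Q}\ge b_{Q,n\underline{\ }\Phi'}$ for $Q$ the images of vertical divisors (which follows from $B_Z^+\ge B_{Z,n\underline{\ }\Phi'}$ and monotonicity), the antinef property of $K_Z+B_Z^++\sB\md^{\logb}$ (from $\sim_n 0$), and the existence of the b-$n$-complement on $Z$ just constructed. The theorem then yields an $n$-complement $(X,B^+)$ of $(X,B)$ which is a b-$n$-complement of $(X^\sharp,B_{n\underline{\ }\Phi}{}^\sharp{}_{X^\sharp})$, and the monotonicity addendum gives $B^+\ge B_{n\underline{\ }\Phi}{}^\sharp\ge B_{n\underline{\ }\Phi}$, which is (4).

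One delicate point is that the b-contraction, and hence $Z$, is attached to the complement of $(X,B_\Phi)$, but I need the adjunction to carry information about the \emph{larger} boundary $B$ (or at least $B_{\sN\underline{\ }\Phi}$) down to $Z$. This is handled by working on a common maximal model of $(X,B_{n\underline{\ }\Phi})$ as in Theorem~\ref{invers_b_n_comp}, Step~3: the horizontal parts of $B$ and $B^{\logb}$ agree (so $B\hor\in\Phi$ is preserved), and the vertical multiplicities only need the inequality $B_{n\underline{\ }\Phi}{}^\sharp{}_{X^\sharp}\le \D'_{X^\sharp}$, which comes from the $n\underline{\ }\Phi$-inequality~\ref{n_Phi_inequality} applied multiplicity by multiplicity with the adjunction constants $r_P\le 1$, $Jr_P\in\Z$. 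The genuine obstacle, however, is the bookkeeping of \emph{which} maximal model and which crepant model of the adjoint pair to use so that all the sets $\Gamma(\sN,\Phi)$, $\Gamma(\sN,\Phi')$, $\Gamma(\sN,\widetilde{\Phi})$ line up correctly and the hypothesis (3) of the theorem — that \emph{both} $(X,B_\Phi)$ and $(X,B_{\sN\underline{\ }\Phi})$ are semiexceptional of type $(r,f)$, not just $(X,B_\Phi)$ — is actually used to guarantee that the $n$-complement built from the $(X,B_\Phi)$-side b-contraction remains valid after enlarging the boundary; I would verify this by checking that enlarging $B_\Phi$ to $B_{\sN\underline{\ }\Phi}$ does not create new lower-type b-contractions, so the maximal b-contraction for $(X,B_{\sN\underline{\ }\Phi})$ has the same base $Z$ up to crepant equivalence.

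Finally, the bd-pair version $(X,B+\sP)$ of index $m\mid J$ goes through verbatim, replacing $K$ by $K+\sP_X$ throughout, using Addendum~\ref{adjunction_index_bd}, the bd-versions of Construction~\ref{b_fib_lc} and Theorem~\ref{invers_b_n_comp}, and the fact that $\sB\md^{\logb}$ is b-nef of bounded index so that the adjoint bd-pair on $Z$ has index $J$; I would remark at the end that all the divisor and boundary manipulations are the same and only the positivity input for $\sP$ (pseudoeffectivity modulo $\sim_\R$, needed for the dlt Ft blowup) must be checked, which follows from $(X,B+\sP)$ having an $\R$-complement.
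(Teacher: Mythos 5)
Your overall route coincides with the paper's: a maximal b-contraction from Construction~\ref{b_fib_lc}, transfer of the adjunction index $J$ to the base, an exceptional $n$-complement on the base via Construction~\ref{semiexceptional_induction}, and lifting by Theorem~\ref{invers_b_n_comp} with Addendum~\ref{invers_b_n_comp_monotonic} to get (4). The genuine gap is at the point where you invoke the inductive existence of exceptional complements on $Z$: you assert that $(Z,B\dv^{\log}{}_{,\Phi'}+\sB\md^{\log})$ is exceptional, but Corollary~\ref{excep_base} only gives exceptionality of $(Z,B\dv^{\log}+\sB\md^{\log})$, i.e.\ for the full adjoint boundary. Exceptionality is \emph{not} inherited when the boundary is decreased (a smaller boundary admits more $\R$-complements, so nonklt ones may appear), so the passage from $B\dv^{\log}$ to the low approximation $B\dv^{\log}{}_{,\Phi'}$ is exactly where the real work lies and exactly where hypothesis (3) is consumed. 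In the paper this is Step~2: one forms the relative maximal model of $(Y^\sharp/Z,B^{\log}_{Y^\sharp,\Phi})$, uses Lemma~\ref{B_D_r_f} (same type $(r,f)$ for $(X,B_\Phi)$) to see that the difference is vertical so the two adjunctions are crepant generically over $Z$ — this also yields the index $J$ via Proposition~\ref{adjunction_same_mod_etc} — then by Addendum~\ref{adjunction_index_div} the adjoint divisorial part of the $\Phi$-boundary lies in $\Phi'$ and is $\le B\dv^{\log}{}_{,\Phi'}$, and finally exceptionality follows because a nonklt $\R$-complement of the lowered base pair would produce a complement of $(X,B_\Phi)$ of regularity $>r$, contradicting that $(X,B_\Phi)$ has type $(r,f)$. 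Your ``delicate point'' paragraph identifies that the two b-contractions must agree, but it never carries out this deduction; without it the appeal to the Existence of $n$-complement clause of Construction~\ref{semiexceptional_induction} (which requires exceptional $(Z,B_{Z,\Phi'}+\sQ)$) is unjustified.

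A secondary but real issue is hypothesis (1) of Theorem~\ref{invers_b_n_comp} (equivalently (4) of the Addendum): the inequality $d\dv^+{}_{,Q}\ge b_{Q,n\_\Phi'}$ is needed at every $Q$ that is the image of a vertical prime divisor $P$ on $X$, including $Q$ exceptional over $Z$. The inequality $B_Z^+\ge B_{Z,n\_\Phi'}$ controls only the divisorial $Q$ on $Z$; ``monotonicity'' does not control the multiplicity of the complement's b-divisor at exceptional $Q$, and Proposition~\ref{b_n_comp-nc} does not apply (there is no fixed normal-crossing pair to which it refers). The paper resolves this by constructing the base complement on a crepant model $Z'$ that blows up the finitely many such $Q$ (their multiplicities lie in $[0,1)$ by (12) of~\ref{adjunction_div}), checking that the lowered pair on $Z'$ is again exceptional, and only then applying Addendum~\ref{invers_b_n_comp_monotonic}, for which an $n$-complement on $Z'$ suffices once (4) holds there. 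Your argument needs this refinement to go through.
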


Notice that we do not assume that $(X,B)$ has an $\R$-complement.

\begin{add} \label{B_B_n_Phi_sharp_B_n_Phi_semiexep}
$B^+\ge B_{n\_\Phi}{}^\sharp\ge B_{n\_\Phi}$.

\end{add}

\begin{add} \label{standard_excep_compl_semiexep}
$(X,B^+)$ is a monotonic $n$-complement of itself and
of $(X,B_{n\_\Phi}), (X,B_{n\_\Phi}{}^\sharp)$,
and is a monotonic b-$n$-complement of itself and
of $(X,B_{n\_\Phi}),(X,B_{n\_\Phi}{}^\sharp),(X^\sharp,B_{n\_\Phi}{}^\sharp{}_{X^\sharp})$,
if $(X,B_{n\_\Phi}),(X,B_{n\_\Phi}{}^\sharp)$ are log pairs respectively.

\end{add}

\begin{add}
The same holds for
the bd-pairs $(X,B+\sP)$ of index $m$ and with $\sN$
as in Construction~\ref{semiexceptional_induction}.
That is,
\begin{description}

\item[\rm Existence of $n$-complement:\/]
if $(X,B+\sP)$ is a bd-pair of index $m$ with a boundary $B$, under (1-2) and
such that
\item[]
both bd-pairs
$$
(X,B_\Phi+\sP),\ (X,B_{\sN\underline{\ }\Phi}+\sP)
$$
are semiexceptional of type $(r,f)$,

then $(X,B+\sP)$ has an $n$-complement $(X,B^++\sP)$ for some $n\in\sN$.

\end{description}
Addenda~\ref{B_B_n_Phi_sharp_B_n_Phi_semiexep}
holds literally.
In Addenda~\ref{standard_excep_compl_semiexep}
$(X,B^++\sP)$ is
a monotonic $n$-complement of itself and
of $(X,B_{n\_\Phi}+\sP), (X,B_{n\_\Phi}{}^\sharp+\sP)$,
and is a monotonic b-$n$-complement of itself and
of $(X,B_{n\_\Phi}+\sP),(X,B_{n\_\Phi}{}^\sharp+\sP),
(X^\sharp,B_{n\_\Phi}{}^\sharp{}_{X^\sharp}+\sP)$,
if $(X,B_{n\_\Phi}+\sP_X),(X,B_{n\_\Phi}{}^\sharp+\sP_X)$ are log bd-pairs
respectively.

\end{add}

\begin{lemma} \label{B_D_r_f}
Let $(X,B)$ be a semiexceptional pair with
a boundary $B$ of type $(r,f)$, with wFt $X$ and
$D\ge B$ be an effective $\R$-divisor of $X$ such
that $-(K+D)$ is (pseudo)effective modulo $\sim_\R$.
Then the pair $(X,D)$ is also semiexceptional
of type $\le (r,f)$ with a boundary $D$.
If $(X,D)$ has also type $(r,f)$ then
every maximal b-contraction $(X,D^+)\dashrightarrow Z'$ is
birationally isomorphic to
such a b-contraction for $(X,B^+)\dashrightarrow Z$.
Moreover, the isomorphism is crepant generically over $Z'$,
equivalently, over $Z$.

The same holds for bd-pairs $(X,B+\sP),(X,D+\sP)$
with $-(K+D+\sP_X),\sP$, pseudoeffective modulo $\sim_\R$.

\end{lemma}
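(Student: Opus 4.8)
The plan is to argue directly from the definitions of semiexceptional pairs and from the basic properties of Construction~\ref{b_fib_lc}, closely following the proof of Proposition~\ref{horizontal_lc}. First I would dispose of the assertion that $(X,D)$ is semiexceptional with boundary $D$. Since $B\le D$ and $D$ is effective, $D$ is a boundary because $(X,D^+)$ being lc for any $\R$-complement forces $D$ to be a subboundary; effectivity of $D$ then makes it a boundary. The hypothesis that $-(K+D)$ is pseudoeffective modulo $\sim_\R$ is exactly the hypothesis of Construction~\ref{sharp_construction}, so a maximal model $(X^\sharp,D^\sharp_{X^\sharp})$ exists; by Proposition~\ref{monotonicity_I} and Corollary~\ref{monotonicity_II} the largest codiscrepancy $\D^\sharp(X,D)$ dominates $\D^\sharp(X,B)$. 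Now if $(X,D)$ had some \emph{non}-$\R$-complement $D'\ge D$ with $K+D'\sim_\R 0$, then since $D'\ge D\ge B$ the same $D'$ would be a non-$\R$-complement of $(X,B)$, contradicting that $(X,B)$ is semiexceptional. Hence $(X,D)$ is semiexceptional too.

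Next I would compare the semiexceptional types. Pick a nonklt $\R$-complement $(X,D^+)$ of $(X,D)$ realizing a maximal b-contraction $(X,D^+)\dashleftarrow(Y,D_Y^+)\xrightarrow{\psi'}Z'$ of regularity $r'$ and base dimension $f'$. Since $D^+\ge D\ge B$ and $K+D^+\sim_\R 0$, the pair $(X,D^+)$ is also an $\R$-complement of $(X,B)$, and it is nonklt; thus it already exhibits lc singularities of regularity $r'$, so $\reg(X,B^+)\ge r'$ for this complement, hence $(X,B)$ admits a b-contraction of type $\ge (r',f')$. By maximality of $(r,f)$ for $(X,B)$ we get $(r,f)\ge (r',f')$, i.e. $(X,D)$ has type $\le(r,f)$. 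This uses only the definitions and the ordering of pairs $(r,f)$, together with Corollary~\ref{reg_semiexceptional type} to identify regularities with the generic-fiber invariant.

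The substantive part is the equality case: assuming $(X,D)$ also has type $(r,f)$, I would show that a maximal b-contraction $\psi'\colon(Y,D_Y^+)\dashrightarrow Z'$ for $(X,D)$ is birationally, generically crepantly, the same as one for $(X,B)$. Start from a common dlt crepant Ft model $\varphi\colon(V,\cdot)\to(X,\cdot)$ that works for both complements $B^+$ and $D^+$ (take a high enough log resolution and run the relevant MMPs). The key observation, already used in Proposition~\ref{horizontal_lc}, is that the difference $D^+-D = E$ is effective and that if any lc center of $(V,D_V^+)$ (equivalently any prime b-divisor $P$ with $\mult_P\D^+=1$) were vertical over $Z'$, one could slide $E$ to produce a non-lc $\R$-complement of $(X,D)$ — impossible. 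So all such $P$ are horizontal over $Z'$; the same holds over $Z$ for $(X,B)$. Since the two contractions have the same regularity $r$ and the same base dimension $f$, and the lc stratification of the generic fibre determines and is determined by the horizontal lc structure, the fibration structures must agree on a common open set of $X$; concretely, $\psi$ and $\psi'$ restrict to crepant-equivalent $0$-contractions over the generic points of $Z$ and $Z'$, which is the assertion of "crepant generically over $Z'$ (equivalently $Z$)." I expect the main obstacle here to be pinning down precisely why \emph{maximality} of both types forces the two fibrations to coincide birationally rather than merely to have isomorphic generic fibres — this is where one must invoke that the algebra of vertical/horizontal lc centers, under the constraint that $B\le D$ have the same type, rigidifies the base; I would handle it by contradiction, showing that a genuinely different (finer) fibration available for one pair would be available for the other and would raise $f$, contradicting maximality.

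Finally, for the bd-pair version $(X,B+\sP),(X,D+\sP)$ with $-(K+D+\sP_X)$ and $\sP$ pseudoeffective modulo $\sim_\R$, the argument is verbatim the same: Construction~\ref{b_fib_lc} applies to bd-pairs (it needs only $\sP$ pseudoeffective modulo $\sim_\R$ for the dlt Ft blowup), Construction~\ref{sharp_construction} and the monotonicity results (Corollaries~\ref{monotonicity_II}, \ref{monotonicity_III}) have bd-versions, and the adjunction correspondence of~\ref{adjunction_div}, (6)--(8) controls lc centers on the base in the bd-setting. So I would simply remark that each step above carries over with $B$ replaced by $B+\sP$, $D$ by $D+\sP$, and the moduli b-part $\sB\md$ untouched, as in Proposition~\ref{horizontal_lc}.
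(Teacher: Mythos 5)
Your first half (semiexceptionality of $(X,D)$, $D$ being a boundary, and the inequality $r\ge r'$ for the regularities) is essentially the paper's argument. The gap sits exactly where the lemma has content. First, when $r=r'$ you assert that $(X,B)$ ``admits a b-contraction of type $\ge(r',f')$'', but regularity alone does not control the base dimension: the b-contraction of Construction~\ref{b_fib_lc} depends on the original boundary and not only on the complement, since it is produced by running the MMP for the difference between the complement and that boundary. The paper's device is to take the \emph{same} nonklt complement $B^+=D^+$ and the same dlt crepant blowup for both pairs and to decompose $E=B^+-B=E'+E''$, where $E'=D^+-D$ and $E''=D-B\ge 0$; the maximal b-contraction for $(X,D)$ comes from the $E'$-MMP, the one for $(X,B)$ from the $E$-MMP, and since $E$ dominates $E'$ by an effective divisor the resulting base for $(X,B)$ has dimension at least $f'$, giving $f\ge f'$. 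Your proposal never makes this comparison, and Corollary~\ref{reg_semiexceptional type}, which you invoke, only concerns regularity.

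Second, your treatment of the equality case, the actual assertion of the lemma, is not a proof, as you yourself concede: ``same regularity, same base dimension, all lc centers horizontal'' does not force two fibrations to coincide, since a priori $X$ could carry two genuinely different fibrations with these properties. The paper closes this with the same decomposition: if, generically over $Z$, $E''$ were neither exceptional nor $0$, then the $E=E'+E''$ MMP would produce a b-contraction for $(X,B)$ with the same regularity $r$ but base dimension strictly larger than $f'=f$, contradicting maximality of $(r,f)$; hence $E''$ is exceptional or $0$ generically over $Z$, so the $E$-MMP and the $E'$-MMP give birationally the same fibration. Crepancy generically over $Z'$ (equivalently over $Z$) then follows because over the generic point of the base one has $\D^+=\B^{\log}=\D^{\log}$ in the notation of Construction~\ref{b_fib_lc}, so the birational identification of $(Y,B_Y^+)$ and $(Y',D_{Y'}^+)$ is crepant there; your proposal asserts this crepancy without a mechanism. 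Your ``slide $E$'' step for horizontality of lc centers is fine (it is Proposition~\ref{horizontal_lc}), but it does not substitute for the $E=E'+E''$ comparison, which is the missing idea. The bd-pair remark is then routine, as in the paper.
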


\begin{proof}
By our assumptions there exists $D^+\ge D$ such that
$K+D^+\sim_\R 0$.
Since $D^+\ge D\ge B\ge 0$ and $(X,B)$ is semiexceptional,
$(X,D)$ is semiexceptional and $D$ is a boundary too.
In other words, $(X,D^+)$ is lc and
$(X,D^+)$ is an $\R$-complement of $(X,D)$.
Hence by Addendum~\ref{R_D_D'_complement} $(X,D^+)$ is
also an $\R$-complement of $(X,B)$.

Let $(r',f')$ be the semiexceptional type of $(X,D)$
and
$$
(X,D^+)\dashleftarrow
(Y',D_{Y'}^+)\stackrel{\psi'}{\to} Z',
$$
be a maximal b-contraction associated to
a nonklt $\R$-complement $(X,D^+)$, that is,
$\dim Z'=f'$ and $r'=\reg(X,D^+)$.

The complement $(X,D^+)$ is also an $\R$-complement of $(X,B)$.
Hence $r\ge r'$.
If $r>r'$ then $(r,f)>(r',f')$.
Otherwise $r=r'$.
However to verify that $f'\le f$ we need
to return back to a dlt crepant blowup of $(X,D^+)$
of Construction~\ref{b_fib_lc}.
The same blowup can be used for $(X,B)$ with
$B^+=D^+$.
But a maximal b-contraction for $(X,B)$ should be constructed
by E-MMP with
$$
E=B^+-B=D^+-B=D^+-D+D-B=E'+E'',
$$
where $E'=D^+-D,E''=D-B\ge 0$.
Hence $f\ge f'$ and $(r,f)\ge (r',f')$.

If $(r',f)=(r,f)$ then in the last construction $f=f'$.
In this case rational contractions $(X,D^+)\dashrightarrow Z',
(X,B^+)\dashrightarrow Z$ are birationally isomorphic, or
equivalently, $\psi',\psi$ are birationally isomorphic.
Generically over $Z$, $E''$ is exceptional or $0$.
Otherwise
we can construct a b-contraction for $(X,B)$ with
the same $r$ and $f>f'$.

By construction of the proof, Construction~\ref{b_fib_lc} and
in its notation
$\D^+=\B^+=\B^{\log}$ over the generic point of $Z$, where $\B^{\log}=\B(Y,B^{\log})$.
Similarly, $\D^+=\D^{\log}$ over the generic point of $Z'$ with $\D^{\log}=\D(Y',D^{\log})$.
By the above birational isomorphism, $\D^{\log}=\B^{\log}$ over
the generic point of $Z'$ which is the same as over
the generic point of $Z$.
Thus the birational isomorphism of
$(Y',D_{Y'}^+)=(Y',D^{\log}),(Y,B_Y^+)=(Y,B^{\log})$
is crepant over the generic point of $Z'$ which is the same as over
the generic point of $Z$.

Similarly we can treat bd-pairs.

\end{proof}

\begin{proof}[Proof of Theorem~\ref{semiexcep_compl}]
Let $(X,B)$ be a pair satisfying (1-3) of the theorem.
For simplicity of notation suppose that
$B=B_{\sN\_\Phi}$, in particular,
$B\in \Gamma(\sN,\Phi)$
(see Construction~\ref{low_approximation}; but this is not important for the following).
So, instead of (3) we have
\begin{description}

\item[\rm (3$'$)\/]
both pairs
$$
(X,B_\Phi),\ (X,B)
$$
are semiexceptional of the same type $(r,f)$.

\end{description}
Indeed, by definition and Proposition~\ref{Phi_<Phi'}
$$
B_{\sN\_\Phi}=
B_{\sN\_\Phi,\sN\_\Phi},
B_\Phi=
B_{\sN\_\Phi,\Phi}
\text{ and }
B_{\sN\_\Phi,n\_\Phi}=B_{n\_\Phi}
$$
for every $n\in\sN$.
By Corollary~\ref{B_B_+B_sN_Phi}
an $n$-complement of $(X,B_{\sN\_\Phi})$
is also an $n$-complement of $(X,B)$.

Step~1. Choice of a b-contraction.
Let $(X,B^{\R\_+})$ be an $\R$-complement of $(X,B)$
such that its associated b-contraction
\begin{equation} \label{b_contration_sNPhi}
(X,B^{\R\_+})\stackrel{\varphi}{\dashleftarrow}
(Y,B_Y^{\R\_+})\stackrel{\psi}{\to} Z
\end{equation}
has (maximal) type $(r,f)$.
Such an $\R$-complement and a b-contraction exist by (3$'$).
By construction
$(X,B^{\R\_+})$ is lc, $B^{\R\_+}$
is a boundary,
$\dim Y=\dim X=d,\dim Z=f$ and
$r=\reg(X,B^{\R\_+})$.

Notice that $\psi$ is actually a fibration, that is, $f<d$, because
$(X,B)$ is semiexceptional,
$r\ge 0$ by our assumptions and
the complement $(X,B^{\R\_+})$ is nonklt.
By Construction~\ref{b_fib_lc} we get an adjoint bd-pair
$(Z,B\dv^{\log}+\sB\md^{\log})$ of $(Y,B^{\log})\to Z$.
The bd-pair $(Z,B\dv^{\log}+\sB\md^{\log})$ is exceptional by Corollary~\ref{excep_base}.
We need a stronger fact.

Step~2. bd-Pair $(Z,B\dv^{\log}{}_{,\Phi'}+\sB\md^{\log})$ is also exceptional.
To verify this consider a crepant model $(Y^\sharp,B^{\log,\sharp}{}_{Y^\sharp})$
of $(Y,B^{\log})$ which
blows up exactly prime b-divisors of $Y$ which are divisors on $X$ or,
equivalently, on the dlt blowup of $(X,B^+)$ of Construction~\ref{b_fib_lc}.
Thus if we use the same complement $(X,B^+)$ and the same dlt blowup of $(X,B^+)$
to construct a b-contraction for $(X,B_\Phi)$ then we can apply
$E$-MMP to $(Y^\sharp,B_{Y^\sharp}^{\log}{}_{,\Phi})$ with
$E=B_{Y^\sharp}^+-B_{Y^\sharp}^{\log}{}_{,\Phi}$ or, equivalently,
we construct a maximal model using antiflips of
$(Y^\sharp,B_{Y^\sharp}^{\log}{}_{,\Phi})$.
This time we apply $E$-MMP only relatively over $Z$,
that is, Construction~\ref{sharp_construction}
to $(Y^\sharp/Z,B_{Y^\sharp}^{\log}{}_{,\Phi})$.
Denote by $(Y^\sharp/Z,B_{Y^\sharp}^{\log}{}_{,\Phi}{}^\sharp)$ the result.
We can use the same notation $Y^\sharp$ even some divisors can be
contracted because by construction $(Y^\sharp/Z,B^{\log,\sharp}{}_{Y^\sharp})$ is a $0$-pair and
all birational transformations over $Z$ are flops of this pair.
By~\ref{adjunction_0_contr}, the adjoint bd-pair of $(Y^\sharp/Z,B^{\log,\sharp}{}_{Y^\sharp})$ is
$(Z,B\dv^{\log}+\sB\md^{\log})$, the same as of $(Y,B^{\log})\to Z$.
Equivalently,
$$
B^{\log,\sharp}{}_{Y^\sharp,}{}\dv=
(B^{\log,\sharp}{}_{Y^\sharp})\dv{}_{,Z}=B\dv^{\log} \text{ and }
(B^{\log,\sharp}{}_{Y^\sharp})\md=\sB\md^{\log}.
$$

On the other hand, by our assumption (3) and Lemma~\ref{B_D_r_f},
the mobile part of $E$ is vertical.
Thus, generically over $Z$,
the resulting model $(Y^\sharp/Z,B_{Y^\sharp}^{\log}{}_{,\Phi}{}^\sharp)$ is
a $0$-pair.
It is $0$-pair over $Z$ everywhere because it is maximal.
During the construction of this model
we contract all prime divisor $P$ of $Y^\sharp$ for which
$$
\mult_P B_{Y^\sharp}^{\log}{}_{,\Phi}{}^\sharp>
\mult_P B_{Y^\sharp}^{\log}{}_{,\Phi}.
$$
Thus on the resulting model $(Y^\sharp/Z,B_{Y^\sharp}^{\log}{}_{,\Phi}{}^\sharp)$,
for every prime divisor $P$ on $Y^\sharp$,
$$
\mult_P B_{Y^\sharp}^{\log}{}_{,\Phi}{}^\sharp=
\mult_P B_{Y^\sharp}^{\log}{}_{,\Phi} \text{ and }
(Y^\sharp/Z,B_{Y^\sharp}^{\log}{}_{,\Phi}{}^\sharp)=
(Y^\sharp/Z,B_{Y^\sharp}^{\log}{}_{,\Phi}).
$$
We can take adjunction for the last pair too.
Denote its adjoint bd-pair by
$(Z,B_{Y^\sharp}^{\log}{}_{,\Phi,}{}\dv+\sB_{Y^\sharp}^{\log}{}_{,\Phi,}{}\md)$,
where $B_{Y^\sharp}^{\log}{}_{,\Phi,}{}\dv=(\B_{Y^\sharp}^{\log}{}_{,\Phi})\dv{}_{,Z}$ and
$\sB_{Y^\sharp}^{\log}{}_{,\Phi,}{}\md=(\sB_{Y^\sharp}^{\log}{}_{,\Phi}{})\md=
\sB\md^{\log}$.
The last equality follows again from Lemma~\ref{B_D_r_f} and
Proposition~\ref{adjunction_same_mod_etc}, (1).
Indeed, by the lemma b-contractions $(X,B^{\R\_+})\dashrightarrow Z,(X,B^{\R\_+})\dashrightarrow Z_\Phi$
for $(X,B)$ and $(X,B_\Phi)$ are crepant generically over $Z$.
Generically over $Z$ these b-contractions are crepant respectively to
$(Y,B^{\log})\to Z$ and $(Y^\sharp,B_{Y^\sharp}^{\log}{}_{,\Phi})\to Z$.
But by construction $(Y,B^{\log})\to Z$ and $(Y^\sharp,B^{\log,\sharp}{}_{Y^\sharp})\to Z$
are also crepant over $Z$.
Thus $(Y^\sharp,B^{\log,\sharp}{}_{Y^\sharp})\to Z$ and
$(Y^\sharp,B_{Y^\sharp}^{\log}{}_{,\Phi})\to Z$ are
crepant generically over $Z$,
moreover, they are equal generically over $Z$ (i.e., horizontally):
$$
B^{\log,\sharp}{}_{Y^\sharp}{}\hor=B_{Y^\sharp}^{\log}{}_{,\Phi}{}\hor=
B_{Y^\sharp}^{\R\_+,}{}\hor.
$$

Since crepant modifications preserve the semiexceptional property and
by Proposition~\ref{monotonicity_I}, the pair $(Y^\sharp,B_{Y^\sharp}^{\log}{}_{,\Phi})$
is semiexceptional.
Otherwise, by the statement and Construction~\ref{sharp_construction},
$(Y^{\sharp,\sharp},B_{Y^\sharp}^{\log}{}_{,\Phi}{}^\sharp{}_{Y^{\sharp,\sharp}})$ and
$(X,B_\Phi)$ are not semiexceptional, a contradiction.

The adjoint pair
$(Z,B_{Y^\sharp}^{\log}{}_{,\Phi,}{}\dv+\sB_{Y^\sharp}^{\log}{}_{,\Phi,}{}\md)$
is exceptional according to the proof of Corollary~\ref{excep_base}.
Otherwise there exists an $\R$-complement $(Y^\sharp,(B_{Y^\sharp}^{\log}{}_{,\Phi})^+)$
of $(Y^\sharp/Z,B_{Y^\sharp}^{\log}{}_{,\Phi})$ with
the induced complement $(X,(B_{Y^\sharp}^{\log}{}_{,\Phi})_X^+)$ of $(X,B_\Phi)$
(see Remark~\ref{rem_def_b_n_compl}, (1-2)) and
with
$$
\reg(Y^\sharp,(B_{Y^\sharp}^{\log}{}_{,\Phi})^+)=\reg(X,(B_{Y^\sharp}^{\log}{}_{,\Phi})_X^+)
>r=\reg(Y^\sharp{},B_{Y^\sharp}^{\log}{}_{,\Phi})=\reg(X,B^{\R\_+}).
$$

So, $(Z,B\dv^{\log}{}_{,\Phi'}+\sB\md^{\log})$ is exceptional because
$$
B_{Y^\sharp}^{\log}{}_{,\Phi,}{}\dv\le B\dv^{\log}{}_{,\Phi'}
\text{ and }
\sB_{Y^\sharp}^{\log}{}_{,\Phi,}{}\md=\sB\md^{\log}.
$$
The last equality we already know.
By definition the inequality follows from two facts (cf.~\ref{n_Phi_inequality}):
$$
B_{Y^\sharp}^{\log}{}_{,\Phi,}{}\dv\in\Phi'
\text{ and }
B_{Y^\sharp}^{\log}{}_{,\Phi,}{}\dv\le B\dv^{\log}.
$$
The inclusion follows from Construction~\ref{semiexceptional_induction} and
Addendum~\ref{adjunction_index_div}.
Indeed,
by construction $B_{Y^\sharp}^{\log}{}_{,\Phi}\in\Phi\cup\{1\}$, in particular,
$B_{Y^\sharp}^{\log}{}_{,\Phi}\hor\in\Phi\cup\{1\}$.
(Or we can suppose that $1\in\Phi$ already.)
The required inequality follows from~(4) of~\ref{adjunction_div}
because $B_{Y^\sharp}^{\log}{}_{,\Phi}\le B^{\log,\sharp}{}_{Y^\sharp}$ and
$B^{\log,\sharp}{}_{Y^\sharp,}{}\dv=B\dv^{\log}$ by construction
(cf. again~\ref{n_Phi_inequality}).
By Addendum~\ref{R_D_D'_complement} the last inequality also shows that
$(Z,B_{Y^\sharp}^{\log}{}_{,\Phi,}{}\dv+\sB_{Y^\sharp}^{\log}{}_{,\Phi,}{}\md)$
actually has an $\R$-complement.

Notice that the exceptional property of this step can be established
by more sophisticated but quite formal methods of Section~\ref{adjunction_cor_multiplicities}.

Step~3. Choice of $n\in\sN$ and
construction on an $n$-complement $(Z,B\dv^{\log,+}+\sB\md^{\log})$
of the bd-pair $(Z,B\dv^{\log}+\sB\md^{\log})$.
The bd-pair is a bd-pair of index $J$ in notation of Construction~\ref{semiexceptional_induction}.
Indeed, according to Step~2, the adjunctions
$(Y^\sharp,B^{\log,\sharp}{}_{Y^\sharp})\to Z,
(Y^\sharp,B_{Y^\sharp}^{\log}{}_{,\Phi})\to Z$
are crepant generically over $Z$ one to another.
So, by Proposition~\ref{adjunction_same_mod_etc}, (1) and (5)
they have the same moduli part and the same adjunction index
if such one exists for either of them.
The second adjunction
$(Y^\sharp,B_{Y^\sharp}^{\log}{}_{,\Phi})\to Z$ satisfies (1-2)
of Construction~\ref{semiexceptional_induction} and has the adjunction index $J$.
(We can suppose that $1\in\Phi$ or to consider $\Phi\cup\{1\}$ in
(2) instead of $\Phi$.)
Hence $(Y^\sharp,B^{\log,\sharp}{}_{Y^\sharp})\to Z$ has also the adjunction index $J$.
The adjoint bd-pair $(Z,B^{\log,\sharp}{}_{Y^\sharp,}{}\dv+
(B^{\log,\sharp}{}_{Y^\sharp})\md)=(Z,B\dv^{\log}+\sB\md^{\log})$
has also the index $J$ by (3) of \ref{adjunction_index_of} and
Theorem~\ref{b_nef} (cf. Addendum~\ref{adjunction_index_adjoint}).

By construction $\dim Z=f$ and by Step~2 the bd-pair
$(Z,B\dv^{\log}{}_{,\Phi'}+\sB\md^{\log})$ is exceptional.
Notice that $Z$ has Ft by \cite[Lemma~2.8,(i)]{PSh08} because
$Y$ has Ft by construction.
Thus by Existence of $n$-complement in Construction~\ref{semiexceptional_induction},
there exists $n\in\sN$ and a required $n$-complement
$(Z,B\dv^{\log,+}+\sB\md^{\log})$ of $(Z,B\dv^{\log}+\sB\md^{\log})$.
About this complement we suppose (3) of Construction~\ref{semiexceptional_induction}.
However, we need slightly more:
\begin{description}

\item[\rm (3$''$)\/]
$b_Q^+\ge b_{Q,n\_\Phi'}$ for every prime b-divisor $Q$ of $Z$ which
is the image of a prime divisor $P$ on $X$, where
$b_Q^+=\mult_Q \D(Z,B\dv^{\log,+}+\sB\md^{\log})\dv$ and
$b_Q=\mult_Q \D(Z,B\dv^{\log}+\sB\md^{\log})\dv=\mult_Q\B\dv^{\log}$.

\end{description}
To satisfy this inequality we need to take an $n$-complement on
an appropriate crepant model $(Z',B\dv^{\log}{}_{,Z'}+\sB\md^{\log})$  of
$(Z,B\dv^{\log}+\sB\md^{\log})$,
in particular, both are log bd-pairs.
The model should blow up those $Q$ which are exceptional on $Z$.
There are only finitely many of those b-divisors $Q$ and $b_Q\ge 0$,
actually, $b_Q\in[0,1)$ by~(12) of~\ref{adjunction_div}
and $b_{Q,n\_\Phi'}$ is well-defined.
Indeed, every $b_P=\mult_P B^{\log}\in [0,1]$ by Construction~\ref{b_fib_lc}.
Thus a required crepant model exists and its divisorial part is
the  boundary $B\dv^{\log}{}_{,Z'}$. The model
is an exceptional bd-pair $(Z',B\dv^{\log}{}_{,Z'}+\sB\md^{\log})$
of dimension $f$ and of index $J$.
In particular, it has an $\R$-complement.
The log bd-pair $(Z',B\dv^{\log}{}_{,Z',\Phi'}+\sB\md^{\log})$ also
has an $\R$-complement and is exceptional.
The $\R$-complement exists by Addendum~\ref{R_D_D'_complement}.
The exceptional property follows from the fact the image
of a nonlc $\R$-complement of $(Z',B\dv^{\log}{}_{,Z',\Phi'}+\sB\md^{\log})$
gives a nonlc $\R$-complement of $(Z,B\dv^{\log}{}_{,\Phi}+\sB\md^{\log})$
that contradicts to Step~2.

Step~4.
Construction on an $n$-complement $(Y,B^{\log,+})$ of $(Y,B^{\log})$.
Take the induced $n$-complement $(Y,B^{\log,+})$ of $(Z,B\dv^{\log,+}+\sB\md^{\log})$,
that is, $B^{\log,+}$ adjunction corresponds to $B\dv^{\log,+}$
as in~\ref{adjunction_div}.
Actually we need to verify that it is an $n$-complement
with the log version of (4):
\begin{description}

\item[\rm (4$'$)\/]
$b_P^+\ge b_{P,n\_\Phi}^{\log}$ for every prime divisor on $X$ or on $Y$, where
$b_P^+=\mult_P \B^{\log,+},b_{P,n\_\Phi}^{\log}=(b_P^{\log})_{n\_\Phi}$ and
$$
b_P^{\log}=
\begin{cases}
b_P=\mult_P B, &\text{ if }P \text{ is nonexceptional on } X;\\
1 &\text{ otherwise}.
\end{cases}
$$

\end{description}
For this we use Addendum~\ref{invers_b_n_comp_monotonic}.

We apply the addendum to the $0$-contraction
$(Y,B^{\log})\to Z$ over $S=\pt$
The contraction has the adjunction index $J$.
Indeed,
by construction in Step~2, $(Y^\sharp,B^{\log,\sharp}{}_{Y^\sharp})\to Z$ and
$(Y/Z,B^{\log})\to Z$ are crepant generically over $Z$ and
$(Y^\sharp,B^{\log,\sharp}{}_{Y^\sharp})\to Z$ has
the adjunction index $J$.
Thus by~Proposition~\ref{adjunction_same_mod_etc}, (5)
the (adjunction for) $0$-contraction $(Y,B^{\log})\to Z$ has
also the adjunction index $J$.

By Constructions~\ref{semiexceptional_induction} and~\ref{b_fib_lc},
$(Y/Z,B^{\log})$ is lc with a boundary $B^{\log}$ and
$Z$ is projective over $S=\pt$

We can suppose also that the hyperstandard sets $\Phi,\Phi'$
and the set of rational numbers $\fR''$ of
Construction~\ref{semiexceptional_induction} are the same
as in~\ref{direct_hyperst} and in~Theorem~\ref{invers_b_n_comp}.

Again by Construction~\ref{semiexceptional_induction} and Step~3,
$J|n$.

Finally, by Step~3 there exists a required model $Z'/\pt$
of $Z/\pt$ with a boundary $B\dv^{\log,+}{}_{Z'}$,
that is we consider the $n$-complement of Step~3 on $Z'/\pt$
By (3$''$), (4) of Addendum~\ref{invers_b_n_comp_monotonic} holds.
Thus (1-3) of~Theorem~\ref{invers_b_n_comp} also hold by the addendum.
Hence $(Y,B^{\log,+})$ is an $n$-complement $(Y,B^{\log})$
which satisfies~(4$'$) by~(5) of the addendum.

Step~5. Construction on an $n$-complement $(X,B^+)$ of $(X,B)$.
The complement is induced by the $n$-complement $(Y,B^{\log,+})$,
that is, $(X,B^+)$ is crepant to $(Y,B^{\log,+})$.
The properties (2-3) of Definition~\ref{n_comp} holds automatically.
The property (4$'$) implies (4) of the theorem.
In its turn, (4) implies (1) of the definition
as (4$'$) in Step~4 (cf. \cite[Lemma~5.4]{Sh92}).

Step~6. The proof of the addenda and, in particular,
for bd-pairs,
is similar to the above proof for usual pairs and/or
to the proof of Theorem~\ref{excep_comp}.

\end{proof}

\paragraph{Semiexceptional filtration.}
Let $d$ be a nonnegative integer,
$\Phi=\Phi(\fR)$ be a hyperstandard set associated with
a finite set of rational numbers $\fR$ in $[0,1]$ and
$\sN\supseteq\sN'$ be sets of positive integers.
Let
\begin{equation} \label{filtration_se_type}
\sN\supseteq \sN^{(0,0)}
\supseteq\dots\supseteq \sN^{(r,f)}
\supseteq\dots\supseteq \sN^{(d-1,0)}
\supseteq\sN',\
0\le r\le d-f-1, 0\le f\le d-1,
\end{equation}
be its (decreasing) {\em filtration with respect to
semiexceptional types in the dimension\/} $d$.
Its {\em associated\/} (decreasing) filtration of
hyperstandard sets is
$$
\Gamma(\sN,\Phi)\supseteq \Gamma(\sN^{(0,0)},\Phi)
\supseteq\dots\supseteq \Gamma(\sN^{(r,f)},\Phi)
\supseteq\dots\supseteq \Gamma(\sN^{(d-1,0)},\Phi)
\supseteq \Gamma(\sN',\Phi).
$$

{\em Notation\/}: For the filtration~(\ref{filtration_se_type})
and type $(r,f)$, put
$$
\sN_{(r,f)}=\sN^{(r,f)}\setminus
\sN^{(r',f')},
$$
where $(r',f')$ is the next type if such a type exists.
Otherwise $(r,f)=(d-1,0)$ and
$\sN_{(d-1,0)}=\sN^{(d-1,0)}\setminus\sN'$.
That is, the next set in the last case is $\sN'$
but without any type (cf. Generic type filtration with respect
to dimension in Section~\ref{gen_nonsemiexcep_comp}).
Respectively, it is useful to suppose that $\Gamma(\sN',\Phi)$ is
the next set for $\Gamma(\sN^{(d-1,0)},\Phi)$.

The whole set $\sN$ is not necessarily coincide
with $\sN^{(0,0)}$.
The discrepancy
$$
\sN_{(-1,-)}=\sN\setminus \sN^{(0,0)}
$$
corresponds to exceptional types $(-1,f)$
with $f=d$ possible.
So, the low script is relevant.
Respectively, we use $\sN^{(-1,-)}=\sN$.

We do not filter exceptional types here.

\begin{exa}

(1) If $\fR=\{1\}$ is minimal
then $\Gamma(\sN',\Phi(\{1\}))=\Gamma(\sN')$.
The filtration~(\ref{filtration_se_type})
gives the associated filtration
$$
\Gamma(\sN)\supseteq \Gamma(\sN^{(0,0)})
\supseteq\dots\supseteq \Gamma(\sN^{(r,f)})
\supseteq\dots\supseteq \Gamma(\sN^{(d-1,0)})
\supseteq \Gamma(\sN').
$$

(2) If $\sN'=\emptyset$
then $\frak{G}(\emptyset,\fR)=\Phi(\fR)=\Phi$.

\end{exa}

\begin{defn} \label{semiexcep_type}
Let~(\ref{filtration_se_type}) be a filtration
in Semiexceptional filtration.
Consider a certain class of pairs $(X,B)$
of dimension $d$ with boundaries $B$ which have
an $n$-complement with $n\in \sN$.
Such a {\em pair\/} $(X,B)$ and its $n$-{\em complement
have semiexceptional type\/} $(r,f)$
{\em with respect to the filtration}~(\ref{filtration_se_type}) if
both pairs
$$
(X,B_{\sN^{(r',f')}\_\Phi}),\ (X,B_{\sN^{(r,f)}\_\Phi})
$$
are semiexceptional of the same type $(r,f)$, where
$(r',f')$ is its next type.
If the next type $(r',f')$ does not exists, we
take $B_{\sN'\_\Phi}$ instead of $B_{\sN^{(r',f')}\_\Phi}$.
Note also that a {\em pair\/} $(X,B)$ and
its $n$-{\em complement have exceptional type \/}
$(-1,-)$ {\em with respect to the filtration}~(\ref{filtration_se_type}) if the pair
$$
(X,B_{\sN^{(0,0)}\_\Phi})
$$
is exceptional.

We say that the {\em existence\/} $n$-complements {\em agrees
with the filtration\/} if
every pair $(X,B)$ in the class has an $n$-{\em complement
of filtration semiexceptional type\/} $(r,f)$ with $n\in \sN_{(r,f)}$ and
$$
B^+\ge B_{n\_\sN^{(r',f')}\_\Phi}.
$$
If the next type $(r',f')$, does not exists, we
take $B_{n\_\sN'\_\Phi}$ instead of $B_{n\_\sN_{(r',f')}\_\Phi}$.
Notice again that $\sN'$ does not have any semiexceptional or
exceptional type.
Respectively, exceptional $n$-complements
have type $(-1,-)$, $n\in \sN_{(-1,-)}$, and
$$B^+\ge B_{n\_\sN^{(0,0)}\_\Phi}.
$$

The same applies to certain classes of bd-pairs $(X,B+\sP)$
of dimension $d$ and index $m|n$ with boundaries $B$,
which have an $n$-complement with $n\in \sN$.

\end{defn}

\begin{war}
It is possible other $n$-complements which are
not agree with the filtration.
E.g., we can have a triple or a longer chain of
subsequent pairs of the same type $(r,f)$
$$
(X,B_{\sN^{(r',f')}\_\Phi}),\dots,\ (X,B_{\sN^{(r,f)}\_\Phi}).
$$
So, they have many types and many complementary
indices which are agree to some type but not
agree to other. Additionally, we can have $n$-complements
without any type but with $n\in\sN$.
\end{war}

\begin{thm}[Semiexceptional $n$-complements] \label{semiexcep_compl_with_filtration}
Let $d$ be a nonnegative integer,
$I,\ep,v,e$ be the data as
in Restrictions on complementary indices, and
$\Phi=\Phi(\fR)$ be a hyperstandard set associated with
a finite set of rational numbers $\fR$ in $[0,1]$.
Then there exists a finite set
$\sN=\sN(d,I,\ep,v,e,\Phi)$
of positive integers such that
\begin{description}

\item[\rm Restrictions:\/]
every $n\in\sN$ satisfies
Restrictions on complementary indices with the given data.

\item[\rm Existence of $n$-complement:\/]
if $(X,B)$ is a pair with wFt $X$,
$\dim X=d$, with a boundary $B$,
with an $\R$-complement and
with semiexceptional $(X,B_\Phi)$
then $(X,B)$ has an $n$-complement $(X,B^+)$ for some $n\in\sN$.

\end{description}

\end{thm}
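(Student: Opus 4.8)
The statement combines the exceptional case of Theorem~\ref{excep_comp} with the dimensional induction of Theorem~\ref{semiexcep_compl} via the semiexceptional filtration. The idea is to stratify the semiexceptional pairs by their semiexceptional type $(r,f)$ and to treat each stratum separately, gluing the resulting finite sets of complementary indices together. First I would fix $d$ and $\Phi=\Phi(\fR)$, and observe that a pair $(X,B)$ with wFt $X$, $\dim X=d$, boundary $B$, an $\R$-complement, and semiexceptional $(X,B_\Phi)$ falls into one of finitely many cases: either $(X,B_\Phi)$ is exceptional (type $(-1,-)$), or it has a semiexceptional type $(r,f)$ with $0\le r\le d-f-1$, $0\le f\le d-1$ (the range being dictated by Addendum~\ref{reg_dim_fiber}). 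There are only finitely many such types. The plan is to build the finite set $\sN$ as a disjoint union $\sN=\sN_{(-1,-)}\sqcup\bigsqcup_{(r,f)}\sN_{(r,f)}$, one piece per type, and then assemble them into a decreasing filtration~(\ref{filtration_se_type}) so that Definition~\ref{semiexcep_type} is satisfied.

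\textbf{Construction of the strata.} For the exceptional stratum, I would invoke Theorem~\ref{excep_comp} directly to obtain a finite set $\sN(d,I,\ep,v,e,\Phi)$ handling all pairs with $(X,B_\Phi)$ exceptional; put $\sN_{(-1,-)}$ equal to this set. For a fixed semiexceptional type $(r,f)$ with $f\le d-1$, I would apply Theorem~\ref{semiexcep_compl}: its hypothesis requires both $(X,B_\Phi)$ and $(X,B_{\sN^{(r,f)}\_\Phi})$ to be semiexceptional of type $(r,f)$, and it produces, given the data $d,\fR,\Phi,J,I,\fR',\Phi',\ep,v,e,f,\sN$ of Construction~\ref{semiexceptional_induction}, a finite set $\sN$ of indices with $n$-complements satisfying $B^+\ge B_{n\_\Phi}$. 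Here $J=J(d,\Phi)$ is the adjunction index from Theorem~\ref{adjunction_index} and $\Phi'=\Phi(\fR')$ with $\fR'$ from Addendum~\ref{adjunction_index_div} and~(\ref{const_adj}); the lower-dimensional input $\sN(f,I,\ep,v,e,\Phi',J)$ comes either from Theorem~\ref{excep_comp} in dimension $f<d$ or by the outer induction on $d$ applied to semiexceptional pairs of smaller dimension, which is legitimate because the base $(Z,B\dv^{\log}{}_{,\Phi'}+\sB\md^{\log})$ is exceptional and has a klt $\R$-complement (Step~2 of the proof of Theorem~\ref{semiexcep_compl}). Using Corollary~\ref{disjoint} at each stage, I would arrange the sets for distinct types (and the exceptional set) to be pairwise disjoint, so that they can be ordered into the filtration~(\ref{filtration_se_type}) by decreasing $(r,f)$.

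\textbf{Assembling the filtration and closing the argument.} With the $\sN_{(r,f)}$ in hand, set $\sN^{(r,f)}=\sN_{(-1,-)}\setminus$ (no—) rather $\sN^{(r,f)}=\bigcup_{(r',f')\le(r,f)}\sN_{(r',f')}$ taken in the ordering of Semiexceptional filtration, together with $\sN'=\emptyset$, giving the required chain~(\ref{filtration_se_type}); then $\sN=\sN^{(-1,-)}$ is the full finite union and every $n\in\sN$ satisfies Restrictions on complementary indices because each stratum does. Given a pair $(X,B)$ as in the theorem, $(X,B_\Phi)$ is semiexceptional, hence either exceptional or of some type $(r,f)$. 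By Lemma~\ref{B_D_r_f} (semiexceptional type is non-increasing under enlarging the boundary, and is \emph{equal} precisely when the associated b-contraction is generically crepant), the low approximations $B_{\sN^{(r',f')}\_\Phi}\le B_{\sN^{(r,f)}\_\Phi}\le B_\Phi\le B$ pass through a unique ``jump'': there is a type $(r,f)$ at which $(X,B_{\sN^{(r,f)}\_\Phi})$ and $(X,B_{\sN^{(r',f')}\_\Phi})$ (with $(r',f')$ the next type, or $\sN'$ if none) are both semiexceptional of type $(r,f)$, while the next one drops. At that jump Theorem~\ref{semiexcep_compl} applies with $\sN=\sN_{(r,f)}$ and produces $n\in\sN_{(r,f)}\subseteq\sN$ and an $n$-complement $(X,B^+)$ with $B^+\ge B_{n\_\Phi}\ge B$ the required monotonic data; if instead $(X,B_{\sN^{(0,0)}\_\Phi})$ is already exceptional we land in the exceptional stratum and use Theorem~\ref{excep_comp}. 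Either way $(X,B)$ has an $n$-complement with $n\in\sN$.

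\textbf{Main obstacle.} The delicate point is verifying that the jump is well-defined and that the monotonicity hypotheses of Theorem~\ref{semiexcep_compl} are actually met at the jump. Concretely one must check that as $B$ is replaced by the successively smaller hyperstandard approximations $B_{\sN^{(r,f)}\_\Phi}$ the semiexceptional type is monotone non-increasing (this is the content of Lemma~\ref{B_D_r_f}, applied with $D=B$ and the various approximations as the smaller divisors, using Proposition~\ref{Phi_<Phi'} for the inclusions of $\Gamma$-sets), and that consecutive equal types really do line up with one of the filtration steps $\sN^{(r,f)}\supseteq\sN^{(r',f')}$ — i.e., that the filtration built from the $\sN_{(r,f)}$ is fine enough. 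The combinatorics of long chains of equal type (the Warning after Definition~\ref{semiexcep_type}) means one should simply take $\sN^{(r,f)}$ to refine \emph{every} possible jump, which is automatic since the type can only take finitely many values and each value is handled by its own Theorem~\ref{semiexcep_compl} invocation. Once this bookkeeping is set up, the rest is immediate from the cited results.
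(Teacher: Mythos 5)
Your overall architecture is the paper's: stratify by semiexceptional type, handle each stratum with Theorem~\ref{semiexcep_compl} (and Theorem~\ref{excep_comp} for the exceptional stratum), keep the strata disjoint via Corollary~\ref{disjoint}, assemble them into the filtration~(\ref{filtration_se_type}), and use monotonicity of the type under enlarging the boundary (Lemma~\ref{B_D_r_f}) to locate a level where two consecutive approximations share the type. But there is a genuine gap in the execution: you build every stratum with the \emph{original} hyperstandard set $\Phi$ (you state the hypothesis as ``both $(X,B_\Phi)$ and $(X,B_{\sN^{(r,f)}\_\Phi})$ semiexceptional of type $(r,f)$''), whereas the invocation you make at the jump needs hypothesis~(3) of Theorem~\ref{semiexcep_compl} to be about the two \emph{consecutive filtration approximations} $(X,B_{\sN^{(r',f')}\_\Phi})$ and $(X,B_{\sN^{(r,f)}\_\Phi})$. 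These coincide only if, at stage $(r,f)$, Theorem~\ref{semiexcep_compl} is applied with the \emph{enlarged} hyperstandard set $\frak{G}(\sN^{(r',f')},\fR)=\Gamma(\sN^{(r',f')},\Phi)$ (again hyperstandard by Proposition~\ref{every_sN_Phi_hyperstandard}) and with $\sN=\sN_{(r,f)}$, so that ``$B_\Phi$'' becomes $B_{\sN^{(r',f')}\_\Phi}$ and ``$B_{\sN\_\Phi}$'' becomes $B_{\sN^{(r,f)}\_\Phi}$. This is exactly what the paper does in Steps~1--3, and it forces the strata to be constructed by descending induction on the type -- each $\sN_{(r,f)}$ (and the associated $J,\Phi'$) depends on the already constructed $\sN^{(r',f')}$ -- rather than independently and then glued. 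With your parameterization the coverage genuinely fails: a pair with $(X,B_\Phi)$ of type $(0,0)$ but with exceptional $(X,B_{\sN^{(0,0)}\_\Phi})$ (enlarging the boundary can kill all nonklt $\R$-complements) is covered neither by your exceptional stratum, which asks for $(X,B_\Phi)$ exceptional -- the paper's Step~3 instead applies Theorem~\ref{excep_comp} with $\frak{G}(\sN^{(0,0)},\fR)$, so it only needs exceptionality of $(X,B_{\sN^{(0,0)}\_\Phi})$, as in Definition~\ref{semiexcep_type} -- nor by any semiexceptional stratum, since no two consecutive approximations share a type; more generally, any pair whose type strictly drops between $B_\Phi$ and the relevant approximation escapes your per-type hypotheses as stated.

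There are also smaller slips worth fixing: the chain of approximations is $B_\Phi\le B_{\sN^{(r',f')}\_\Phi}\le B_{\sN^{(r,f)}\_\Phi}\le B_{\sN\_\Phi}\le B$ ($B_\Phi$ is the smallest, not sandwiched above $B_{\sN^{(r,f)}\_\Phi}$); $\sN^{(r,f)}$ is the union of the $\sN_{(r'',f'')}$ over types $(r'',f'')\ge(r,f)$, not $\le$; and the conclusion gives $B^+\ge B_{n\_\Phi}$ with $B_{n\_\Phi}\le B$, so passing from the complement of the approximation to an $n$-complement of $(X,B)$ uses Corollary~\ref{B_B_+B_sN_Phi} (via Proposition~\ref{1_of def_2 for B}), not an inequality $B_{n\_\Phi}\ge B$, which is false. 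Once the nested hyperstandard sets and the top-down order of construction are put in, your jump/pigeonhole observation (which the paper leaves implicit) does complete the argument, and the proof becomes the same as the paper's.
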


\begin{add} \label{B_B_n_Phi_sharp_B_n_Phi_semiexep_fil}
$B^+\ge B_{n\_\Phi}{}^\sharp\ge B_{n\_\Phi}$.

\end{add}

\begin{add} \label{standard_excep_compl_semiexep_fil}
$(X,B^+)$ is a monotonic $n$-complement of itself and
of $(X,B_{n\_\Phi}), (X,B_{n\_\Phi}{}^\sharp)$,
and is a monotonic b-$n$-complement of itself and
of $(X,B_{n\_\Phi}),(X,B_{n\_\Phi}{}^\sharp),(X^\sharp,B_{n\_\Phi}{}^\sharp{}_{X^\sharp})$,
if $(X,B_{n\_\Phi}),(X,B_{n\_\Phi}{}^\sharp)$ are log pairs respectively.

\end{add}

\begin{add} \label{semiexep_fil}
$\sN$ has a semiexceptional filtration~(\ref{filtration_se_type})
with $\sN'=\emptyset$
which agrees with the existence of $n$-complements
for the class of pairs in Existence of $n$-complements.

\end{add}

\begin{add} \label{bd_exceptional_compl_fil}
The same holds for bd-pairs $(X,B+\sP)$ of index $m$
with $\sN=\sN(d,I,\ep,v,e,\Phi,m)$.
That is,
\begin{description}

\item[\rm Restrictions:\/]
every $n\in\sN$ satisfies
Restrictions on complementary indices with the given data and
$m|n$.

\item[\rm Existence of $n$-complement:\/]
if $(X,B+\sP)$ is a bd-pair of index $m$ with wFt $X$,
$\dim X=d$, with a boundary $B$,
with an $\R$-complement and
with semiexceptional $(X,B_\Phi+\sP)$
then $(X,B+\sP)$ has an $n$-complement $(X,B^++\sP)$ for some $n\in\sN$.

\end{description}
Addenda~\ref{B_B_n_Phi_sharp_B_n_Phi_semiexep_fil} and \ref{semiexep_fil}
hold literally.
In Addenda~\ref{standard_excep_compl_semiexep_fil}
$(X,B^++\sP)$ is
a monotonic $n$-complement of itself and
of $(X,B_{n\_\Phi}+\sP), (X,B_{n\_\Phi}{}^\sharp+\sP)$,
and is a monotonic b-$n$-complement of itself and
of $(X,B_{n\_\Phi}+\sP),(X,B_{n\_\Phi}{}^\sharp+\sP),
(X^\sharp,B_{n\_\Phi}{}^\sharp{}_{X^\sharp}+\sP)$,
if $(X,B_{n\_\Phi}+\sP_X),(X,B_{n\_\Phi}{}^\sharp+\sP_X)$ are log bd-pairs
respectively.

\end{add}

\begin{war}
Sets $\sN(d,I,\varepsilon,v,e,\Phi),\sN(d,I,\varepsilon,v,e,\Phi,m)$ are
not unique and may be not suitable in other situations of the paper
(cf. Remark~\ref{N_with_parameters}, (1)).
Notation shows the parameters on which the sets depend.
We can take such minimal sets under inclusion but
they can be not unique.

\end{war}

It would be interesting to find a more canonical way
to construct such set $\sN(d,I,\varepsilon,v,e,\Phi),\sN(d,I,\varepsilon,v,e,\Phi,m)$
and/or to find an estimation on their largest element.

\begin{proof}

Construction of semiexceptional complements
starts from the top type $(d-1,0)$ and
goes to the bottom $(-1,-)$, the exceptional one.
We use induction on types $(r,f)$
for the class of pairs in Existence of $n$-complements
of the theorem.

Step~1. Type $(d-1,0)$.
Consider a set of positive integers
$\sN_{(d-1,0)}=
\sN(0,I,\ep,v,e,\Phi',J)$ as
in Construction~\ref{semiexceptional_induction} with $f=0$.
Put $\sN^{(d-1,0)}=\sN_{(d-1,0)}$.
By Theorem~\ref{semiexcep_compl} this gives
the lowest set in the filtration~(\ref{filtration_se_type})
above $\sN'=\emptyset$.

Step~2. Type $(r,f)$.
We can suppose that it has the next type $(r',f')$ and
the filtration~(\ref{filtration_se_type})
is already constructed for types $\ge (r',f')$.
Consider a set of positive integers
$\sN_{(r,f)}=
\sN(f,I,\ep,v,e,\Phi',J)$ as
in Construction~\ref{semiexceptional_induction}
with a hyperstandard set $\Phi=\frak{G}(\sN^{(r',f')},\fR)$
and with given $f$.
Note that $\Phi,\Phi',m$ in the step can be different
from that of in Step~1 or in the statement of theorem.
By Corollary~\ref{disjoint}
we can suppose that $\sN_{(r,f)}$ is
disjoint from $\sN^{(r',f')}$.
Put  $\sN^{(r,f)}=\sN_{(r,f)}\cup \sN^{(r',f')}$.
Again by Theorem~\ref{semiexcep_compl} this gives
the filtration~(\ref{filtration_se_type})
for types $\ge (r,f)$.

This concludes construction of complements
in the semiexceptional case.

Step~3. Exceptional type $(-1,-)$.
In this case we need to construct $n$-complements
for $(X,B)$ with exceptional $(X,B_{\sN^{(0,0)}\_\Phi})$.
Consider a set of positive integers
$\sN_{(-1,-)}=
\sN(d,I,\ep,v,e,\Phi)$ as
in Theorem~\ref{excep_comp}
with the hyperstandard set $\Phi=\frak{G}(\sN^{(0,0)},\fR)$.
By Corollary~\ref{disjoint}
we can suppose that $\sN_{(-1,-)}$ is
disjoint from $\sN^{(0,0)}$.
Put  $\sN=\sN^{(-1,-)}=\sN_{(-1,-)}\cup \sN^{(0,0)}$.
Now by Theorem~\ref{excep_comp} this completes
the filtration~(\ref{filtration_se_type}).

Step~4.
Addenda follows from the above arguments and from
Theorems~\ref{semiexcep_compl}, \ref{excep_comp}, in particular, for  bd-pairs.
However, we take $\sN_{(-1,-)}=
\sN(d,I,\ep,v,e,\Phi,m)$ in Step~3 with
$m$ as the index of bd-pair $(X,B)$.

Notice also that by Addendum~\ref{semiexep_fil},
Propositions~\ref{Phi_<Phi'} and~\ref{monotonicity_I},
in Addenda~\ref{B_B_n_Phi_sharp_B_n_Phi_semiexep_fil},
\ref{standard_excep_compl_semiexep_fil} and \ref{bd_exceptional_compl_fil},
we have stronger results with
$n\_\sN^{(r',f')}\_\Phi$ instead of $n\_\Phi$
for $n$-complements of type $(r,f)$
(see Definition~\ref{semiexcep_type}).

\end{proof}

\begin{cor} \label{N_N'}
In Theorem~\ref{semiexcep_compl_with_filtration}
we can add addition data $\sN'$, a finite
set of positive integers such that every $n\in\sN'$
satisfies Restrictions on complementary indices with the given data.
Then in Addendum~\ref{semiexep_fil} we can find
an extended filtration~(\ref{filtration_se_type}) starting from $\sN'$.

\end{cor}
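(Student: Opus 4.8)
The plan is to re-run the construction in the proof of Theorem~\ref{semiexcep_compl_with_filtration}, but with the base hyperstandard set $\Phi=\Phi(\fR)$ replaced by $\Gamma(\sN',\Phi)$, and then to adjoin $\sN'$ to every term of the resulting filtration. First I would record two purely formal facts about hyperstandard sets. By Proposition~\ref{every_sN_Phi_hyperstandard} the set $\Gamma(\sN',\Phi)=\frak{G}(\sN',\fR)$ is again hyperstandard, say $\Gamma(\sN',\Phi)=\Phi(\fR^0)$ with $\fR^0$ the explicit finite set of rational numbers in $[0,1]$ described there (and $1\in\fR^0$, taking $r=1$ and all the $m_{n'}=0$). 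Secondly, for every finite set $M$ of positive integers one has the composition identity
\begin{equation}\label{composition_identity}
\Gamma(M\cup\sN',\Phi)=\Gamma\bigl(M,\Gamma(\sN',\Phi)\bigr)=\Gamma(M,\Phi(\fR^0)),
\end{equation}
which follows at once from the definitions and the explicit form of $\fR^0$: substituting $r^0=r-\sum_{n'\in\sN'}m_{n'}/(n'+1)\in\fR^0$ into $1-r^0/l+\tfrac1l\sum_{n\in M}m_n/(n+1)$ and collecting the fractions over $M$ and over $\sN'$ gives precisely the defining form of an element of $\frak{G}(M\cup\sN',\fR)$, while conversely one uses $b\le 1$ to see that the corresponding $r^0$ is nonnegative. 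In particular $B_{(M\cup\sN')\_\Phi}=B_{M\_\Phi(\fR^0)}$ and, for $M=\emptyset$, $B_{\sN'\_\Phi}=B_{\Phi(\fR^0)}$.

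Next I would apply Theorem~\ref{semiexcep_compl_with_filtration} and Addendum~\ref{semiexep_fil} to the hyperstandard set $\Phi(\fR^0)$, obtaining a finite set $\sN^0=\sN(d,I,\ep,v,e,\Phi(\fR^0))$ together with a semiexceptional filtration $\sN^0\supseteq(\sN^0)^{(0,0)}\supseteq\dots\supseteq(\sN^0)^{(d-1,0)}\supseteq\emptyset$ which agrees with the existence of $n$-complements for the class of pairs $(X,B)$ with wFt $X$, $\dim X=d$, boundary $B$, an $\R$-complement, and $(X,B_{\Phi(\fR^0)})$ semiexceptional. Using Corollary~\ref{disjoint} at each step of that construction, with $\sN'$ added to the set of indices to be avoided, I may assume $\sN^0\cap\sN'=\emptyset$; then every element of $\sN^0$ still satisfies Restrictions on complementary indices with the given data, as does every element of $\sN'$ by hypothesis. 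Put $\tilde\sN=\sN^0\cup\sN'$ and $\tilde\sN^{(r,f)}=(\sN^0)^{(r,f)}\cup\sN'$ for each semiexceptional type $(r,f)$. The disjointness then yields the filtration~(\ref{filtration_se_type}) for $\tilde\sN$ with bottom set exactly $\sN'$, and each layer is unchanged, $\tilde\sN_{(r,f)}=(\sN^0)_{(r,f)}$ (in particular for the exceptional layer $(-1,-)$, and for the last layer $(d-1,0)$, where $\tilde\sN^{(d-1,0)}\setminus\sN'=(\sN^0)^{(d-1,0)}$).

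Finally I would check that $\tilde\sN$ is an admissible choice for $\sN(d,I,\ep,v,e,\Phi)$ in Theorem~\ref{semiexcep_compl_with_filtration} and that its filtration agrees with the existence of $n$-complements for the theorem's class. Let $(X,B)$ be a pair in that class, so $(X,B_\Phi)$ is semiexceptional; since $B_\Phi\le B_{\sN'\_\Phi}=B_{\Phi(\fR^0)}\le B$ and $(X,B)$ has an $\R$-complement, Lemma~\ref{B_D_r_f} shows $-(K+B_{\Phi(\fR^0)})$ is effective modulo $\sim_\R$ and $(X,B_{\Phi(\fR^0)})$ is semiexceptional, so $(X,B)$ also lies in the $\Phi(\fR^0)$-class. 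Hence it receives, from the $\sN^0$-construction, an $n$-complement $(X,B^+)$ of some filtration semiexceptional type $(r,f)$ with $n\in(\sN^0)_{(r,f)}$ and $B^+\ge B_{n\_(\sN^0)^{(r',f')}\_\Phi(\fR^0)}$ (with the usual conventions for the last type and for the exceptional type $(-1,-)$). By~(\ref{composition_identity}) each boundary $B_{(\sN^0)^{(\cdot)}\_\Phi(\fR^0)}$ equals $B_{\tilde\sN^{(\cdot)}\_\Phi}$, and $B_{\Phi(\fR^0)}=B_{\sN'\_\Phi}$; therefore the inequalities defining filtration semiexceptional type $(r,f)$ (Definition~\ref{semiexcep_type}) and the displayed bound on $B^+$ are literally the same when read against $\tilde\sN$ with base $\Phi$. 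Thus $(X,B)$ has filtration semiexceptional type $(r,f)$ for $\tilde\sN$, with $n\in\tilde\sN_{(r,f)}$ and $B^+\ge B_{n\_\tilde\sN^{(r',f')}\_\Phi}\ge B_{n\_\Phi}$; by Corollary~\ref{B_B_+B_sN_Phi} the last inequality makes $(X,B^+)$ an $n$-complement of $(X,B)$ itself, so $\tilde\sN$ is indeed admissible in the theorem and its filtration agrees with existence. The only delicate point is the bookkeeping through~(\ref{composition_identity}) and the verification that $\tilde\sN$ is still a legitimate (non-unique) output of the theorem; there is no geometric content beyond Theorem~\ref{semiexcep_compl_with_filtration} and Addendum~\ref{semiexep_fil}.
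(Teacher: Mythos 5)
Your argument is correct and is essentially the paper's own proof, which simply replaces $\Phi$ by $\frak{G}(\sN',\fR)=\Gamma(\sN',\Phi)$ (hyperstandard by Proposition~\ref{every_sN_Phi_hyperstandard}) and re-runs Theorem~\ref{semiexcep_compl_with_filtration}. Your composition identity $\Gamma(M\cup\sN',\Phi)=\Gamma(M,\Gamma(\sN',\Phi))$ and the disjointness/bookkeeping via Corollary~\ref{disjoint} just make explicit the translation that the paper leaves implicit.
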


\begin{proof}
Take $\Phi:=\frak{G}(\sN',\fR)$.
It is also hyperstandard by Proposition~\ref{every_sN_Phi_hyperstandard}.

\end{proof}

[Remark:]
E.g., in the induction of the next section
we use a set $\sN'=\sN^{d-1}$ of [non[semi]exceptional] complementary indices coming from
low dimensions $\le d-1$ (see Construction~\ref{generic_induction} below).

\section{Generic nonsemiexceptional complements:
extension of lower dimensional complements} \label{gen_nonsemiexcep_comp}

\begin{defn-prop} \label{generic}
Let $(X/Z,B)$ be a pair with proper $X/Z$ and
with a boundary $B$ such that $(X/Z,B)$ has
a klt $\R$-complement.
We say that $(X/Z,B)$ is {\em generic
with respect to $\R$-complements\/} if
the pair additionally satisfies one of the following equivalent
properties.
\begin{description}

\item[\rm (1)\/]
There exists a klt $\R$-complement $(X/Z,B^+)$ of
$(X/Z,B)$ such that $B^+-B$ is big over $Z$.

\item[\rm (2)\/]
There exists an $\R$-complement $(X/Z,B^+)$ of
$(X/Z,B)$ and an effective big $\R$-mobile over $Z$
divisor $A$ on $X$ such that $\Supp A\subseteq \Supp(B^+-B)$.
Moreover, we can suppose that
$A$ is $\R$-ample over $Z$ when $X/Z$ is projective.

\item[\rm (3)\/] For every finite set of divisors
$D_1,\dots,D_n$ on $X$,
there exists an $\R$-complement $(X/Z,B^+)$ of
$(X/Z,B)$ such that
$\Supp D_1,\dots,\Supp D_n\subseteq \Supp(B^+-B)$.
\end{description}

The same works for a bd-pair $(X/Z,B+\sP)$
with a boundary $B$.

\end{defn-prop}

In this section we use the word generic only in this sense.

\begin{rem} For generic $(X/Z,B)$,
every klt $\R$-complement satisfies (1) and
(2) with an effective $\R$-mobile divisor,
but (2) with $\R$-ample and (3) not always.

\end{rem}

\begin{lemma} \label{perturbation}
Let $(X/Z,D)$ be a pair with proper $X/Z$,
$(X/Z,D^+)$ be its klt $\R$-complement and
$(X/Z,D')$ be a pair such that $K+D'\sim_{\R,Z} 0$ and
$D'\ge D$.
Then for every sufficiently small real positive number
$\ep$,
$$
(X/Z,(1-\ep)D^++\ep D')
$$
is a klt $\R$-complement of $(X/Z,D)$
with
$$
\Supp((1-\ep)D^++\ep D')=\Supp D^+\cup\Supp D'.
$$

If $D$ is a boundary, then $D^+, (1-\ep)D^++\ep D'$
are boundaries too.

The same works for bd-pairs.
\end{lemma}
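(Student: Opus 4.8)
The plan is to verify the three defining conditions of an $\R$-complement for the convex combination $D^\ddag := (1-\ep)D^+ + \ep D'$ when $\ep$ is small. For condition (3) of Definition~\ref{r_comp}, linearity of $\sim_{\R}$ gives immediately
$$
K+D^\ddag = (1-\ep)(K+D^+) + \ep(K+D') \sim_{\R,Z} 0,
$$
since $K+D^+\sim_{\R,Z}0$ (as $(X/Z,D^+)$ is an $\R$-complement) and $K+D'\sim_{\R,Z}0$ by hypothesis. For condition (1), $D^+\ge D$ because $(X/Z,D^+)$ is an $\R$-complement of $(X/Z,D)$, and $D'\ge D$ by hypothesis, so $D^\ddag = (1-\ep)D^+ + \ep D' \ge D$ for every $\ep\in[0,1]$; no smallness of $\ep$ is needed here. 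The support statement $\Supp D^\ddag = \Supp D^+\cup\Supp D'$ holds for all $\ep\in(0,1)$ since both coefficients are strictly positive and there is no cancellation between two effective-relative combinations with the same orientation (more precisely, the multiplicity of $D^\ddag$ at a prime divisor $P$ is $(1-\ep)\mult_P D^+ + \ep\mult_P D'$, which is nonzero iff at least one of the two multiplicities is nonzero, because both $D^+\ge D$ and $D'\ge D$ need not be effective, so I should argue instead that $\mult_P D^\ddag = 0$ forces $\mult_P D^+ = \mult_P D' = 0$ when $\ep$ is irrational or generic — actually the cleanest route is: choose $\ep$ small and such that $(1-\ep)a + \ep b = 0$ with $a,b$ among the finitely many multiplicities involved forces $a=b=0$; a generic small $\ep$ works, and then observe a posteriori that the set of $\ep$ for which the support equality holds is cofinite near $0$).

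The only condition requiring genuine work is (2), the klt property of $(X,D^\ddag)$. Here the key point is openness of the klt condition under perturbation of the boundary. Since $(X,D^+)$ is klt, all log discrepancies $\ld(X,D^+;P)$ over the relevant neighborhood of the fibers are $>0$; and since $K+D'\sim_{\R,Z}0$, the divisor $D'$ is $\R$-Cartier so $(X,D')$ is a log pair with log discrepancies in $[-\infty,\cdot]$ but in any case finite on the finitely many divisors where $D^+$ and $D'$ differ. The function $\ep\mapsto \ld(X,D^\ddag;P)$ is affine in $\ep$ for each prime b-divisor $P$, equal to a positive value at $\ep=0$; on the finitely many prime b-divisors $P$ appearing on a fixed log resolution of $(X, \Supp D^+\cup\Supp D')$, we get a uniform bound $\ld(X,D^\ddag;P)>0$ for all $\ep\in[0,\ep_0]$ with $\ep_0>0$ small. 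Using a log resolution $g\colon Y\to X$ on which $\Supp(D^+)\cup\Supp(D')\cup\Exc(g)$ has simple normal crossings, the klt property of $(X,D^\ddag)$ is detected on the finitely many divisors of $Y$, so this finite check suffices. The main obstacle is therefore packaging this openness argument carefully, including the possibility that $D'$ is only a subboundary (not effective): we still need $D^\ddag$ to be a subboundary, which follows since $D^+$ is a subboundary (klt complements have subboundary support by Remark~\ref{remark_def_complements}, (1)) and $D'$ is a subboundary (as $K+D'\sim_{\R,Z}0$ with $(X,D')$ a log pair need not give subboundary — but in our application $D'$ arises with $D'\ge D\ge 0$ only when $D$ is a boundary; in general the subboundary property of $D'$ must be imposed or deduced, and for a convex combination the subboundary property is preserved termwise).

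For the final sentence of the statement: if $D$ is a boundary, then $D^+\ge D\ge 0$ is effective, and being a subboundary it is a boundary; similarly $D'\ge D\ge 0$ is effective and, since $K+D'\sim_{\R,Z}0$ makes $(X,D')$ a log pair, we still need $D'\le 1$ — this is exactly the hypothesis packaging, and is inherited by $D^\ddag$ as a convex combination of boundaries. Finally, for bd-pairs $(X/Z,D+\sP)$ the argument is identical with $K$ replaced by $K+\sP_X$ throughout: condition (3-bd) follows from linearity of $\sim_\R$ applied to $K+D^\ddag+\sP_X$, condition (1) is unchanged, and condition (2-bd) — klt of $(X,D^\ddag+\sP_X)$ — follows by the same openness argument, since $\D\dv^\ddag = (1-\ep)\D\dv^+ + \ep\D\dv'$ is affine in $\ep$ and the $\sP$-part is fixed. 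I would assemble all these, choosing a single $\ep_0>0$ small enough to simultaneously ensure klt and the support equality, and conclude.
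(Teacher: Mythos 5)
Your argument is essentially the paper's own: the proof there is a one-line appeal to the definition and to the convexity of the lc/klt region \cite[(1.3.2)]{Sh92}, which is precisely the openness-by-convexity computation you spell out on a log resolution (log discrepancies affine in $\ep$, positive at $\ep=0$, finitely many divisors of the resolution to check, hence a uniform $\ep_0>0$), together with the observation that condition (3) of Definition~\ref{r_comp} for nonlocal $X/Z$ is taken in the sense $\sim_{\R,Z}$. Conditions (1) and (3) and the support equality are as you say; for the support it is cleanest to note that for each of the finitely many relevant prime divisors $P$ either $\mult_P D^+\not=0$, in which case $(1-\ep)\mult_P D^++\ep\mult_P D'\not=0$ for all small $\ep$, or $\mult_P D^+=0\not=\mult_P D'$, in which case the combination equals $\ep\mult_P D'\not=0$; so all sufficiently small $\ep>0$ work and no genericity is needed.

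Two of your hedges should be dropped, and one of them makes the last step incorrect as written. You never need $D'$ to be a subboundary: the klt property of $(X,(1-\ep)D^++\ep D')$ for small $\ep$, which your affine-in-$\ep$ argument already gives even when some multiplicities of $D'$ are large, forces all multiplicities of the combination to be $<1$, so it is a subboundary a posteriori. Consequently, in the final sentence you should not deduce the boundary property ``as a convex combination of boundaries'' --- $D'\le 1$ is not a hypothesis and may fail; instead, effectivity follows from $(1-\ep)D^++\ep D'\ge D\ge 0$, and the upper bound $<1$ follows from the klt property just proved (for $D^+$ itself use Remark~\ref{remark_def_complements}, (1) together with $D^+\ge D\ge 0$). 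With that one-line correction the proof is complete, including the bd-pair case exactly as you indicate.
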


\begin{proof}
Immediate by definition and \cite[(1,3.2)]{Sh92}.
Recall (see Remark~\ref{remark_def_complements}, (5))
that (3) in Definition~\ref{r_comp} for
nonlocal $X/Z$ means that
$$
K+D^+\sim_{\R,Z} 0.
$$

Similarly we can treat bd-pairs.

\end{proof}

\begin{cor} \label{generic_wFt}
Every generic $(X/Z,B)$ has wFt $X/Z$.
Moreover, $X/Z$ has Ft when $X/Z$ is projective.

The same holds for a generic bd-pair $(X/Z,B+\sP)$
with pseudoeffective $\sP$ over $Z$.

\end{cor}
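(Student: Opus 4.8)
The plan is to deduce Corollary~\ref{generic_wFt} directly from the characterization of generic pairs in Definition-Proposition~\ref{generic}, specifically property~(1), together with the definition of wFt (and Ft) morphisms in terms of $\R$-complements given in Section~\ref{intro}. Recall that $X/Z$ has wFt precisely when $(X/Z,0)$ has a klt $\R$-complement $(X/Z,B^\circ)$ with big $B^\circ$, equivalently when there is a big boundary $B^\circ$ with $(X/Z,B^\circ)$ a klt $0$-pair; and $X/Z$ has Ft when such a pair can be taken to be a klt log Fano, equivalently when $X/Z$ has wFt and is projective.

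First I would take $(X/Z,B)$ generic. By property~(1) of Definition-Proposition~\ref{generic} there is a klt $\R$-complement $(X/Z,B^+)$ of $(X/Z,B)$ with $B^+-B$ big over $Z$. Since $B$ is effective (a boundary, by hypothesis), we have $B^+\ge B^+-B$, hence $B^+$ is big over $Z$ as well (bigness being preserved under adding an effective divisor). Now $(X/Z,B^+)$ is, by definition of $\R$-complement, a pair with $(X,B^+)$ klt and $K+B^+\sim_{\R,Z}0$; that is, $(X/Z,B^+)$ is a klt $0$-pair with big boundary $B^+$. Applying Proposition~\ref{D_D'_complement} (Addendum~\ref{R_D_D'_complement}) with $0\le B^+$, or simply observing directly that a klt $\R$-complement of $(X/Z,B)$ is also one of $(X/Z,0)$, we conclude that $(X/Z,0)$ has a klt $\R$-complement with big boundary. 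By the definition of wFt this means $X/Z$ has wFt.

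For the projective case, I would argue that when $X/Z$ is projective the $\R$-complement furnished by genericity may additionally be arranged so that $B^+-B$ is big \emph{and} its support contains an $\R$-ample divisor: this is exactly property~(2) of Definition-Proposition~\ref{generic}, where $A$ can be taken $\R$-ample over $Z$ when $X/Z$ is projective. Then, perturbing as in Lemma~\ref{perturbation} (or by a standard tie-break between $B^+$ and a slightly larger boundary), one gets a klt boundary $B^\circ$ on $X$ with $(X,B^\circ)$ klt and $-(K+B^\circ)$ $\R$-ample over $Z$, i.e.\ a klt log Fano structure on $X/Z$; equivalently, since $X/Z$ is projective and has wFt by the previous paragraph, $X/Z$ has Ft (cf.\ the equivalence recalled after Fano and weak Fano types, and Example~\ref{1stexe}, (1)). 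The bd-pair case is identical: genericity of $(X/Z,B+\sP)$ gives a klt $\R$-complement $(X/Z,B^++\sP)$ with $B^+-B$ big over $Z$, and with $\sP$ pseudoeffective over $Z$ the divisor $B^++\sP_X$ is big over $Z$ as well, so $(X/Z,0+\sP)$—and a fortiori $(X/Z,0)$—has a klt $\R$-complement with big boundary, giving wFt; projectivity again upgrades this to Ft using property~(2).

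The main obstacle, such as it is, lies in the bd-pair statement: one must be careful that ``big over $Z$'' for the boundary part together with ``pseudoeffective over $Z$'' for $\sP$ yields a genuine big boundary witnessing wFt in the sense used for bd-pairs, i.e.\ that the moduli/b-part does not spoil the klt $0$-pair structure of $(X/Z,B^++\sP)$. This is precisely why the hypothesis ``$\sP$ pseudoeffective over $Z$'' is imposed, and the point is handled by the positivity bookkeeping in bd-Pairs of Section~\ref{technical} together with Corollary~\ref{eff_linear_rep_D}/\cite[Corollary~4.5]{ShCh}: the big-over-$Z$ class of $B^++\sP_X$ stays big after the harmless manipulations above. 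Everything else is a direct unwinding of definitions, so the proof will be short.
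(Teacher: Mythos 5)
Your proof is correct and follows essentially the same route as the paper: the paper also reads wFt off immediately from Definition-Proposition~\ref{generic}, (1) (big $B^+-B$ plus effective $B$ makes $(X/Z,B^+)$ a klt $0$-pair with big boundary, which is the definition of wFt), and settles the projective case by the equivalence Ft $\Leftrightarrow$ wFt and projective, citing \cite[Lemma-Definition~2.6, (iii)]{PSh08} where you instead sketch the tie-break via property (2) and Lemma~\ref{perturbation}. The bd-pair case is dispatched in the paper with a bare ``similarly,'' so your extra bookkeeping there only adds detail beyond what the paper records.
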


\begin{proof}
Immediate by definition and Definition-Proposition~\ref{generic}, (1).
Respectively, the projective case
by~\cite[Lemma-Definition~2.6, (iii)]{PSh08}.
By Lemma~\ref{perturbation} we can suppose that the complement in
Definition-Proposition~\ref{generic}, (3) is klt.

Similarly we can treat bd-pairs.

\end{proof}

\begin{proof}[Proof of Definition-Proposition~\ref{generic}]

(2)$\Rightarrow$(1) Immediate by definition.
For projective $X/Z$, the ample property implies the big one.
Note also that by Lemma~\ref{perturbation}
we can suppose that the complement in (2) is klt.

(3)$\Rightarrow$(2) Apply (3) for $n=1$ and $D_1=A$.

(1)$\Rightarrow$(3)
We can suppose that $D_i$ are effective and even prime.
Let $(X/Z,B^+)$ be a complement of (1).
We construct another klt $\R$-complement $(X/Z,B')$
of $(X/Z,B)$ such that
$$
\Supp B^+,\Supp D_1,\dots,\Supp D_n\subseteq \Supp B'.
$$
Since $B^+-B$ is big, there exists
an effective divisor $E\sim_{\R,Z} B^+-B$
such that
$$
\Supp D_1,\dots,\Supp D_n\subseteq \Supp E.
$$
Consider $B'=E+B$.
By construction $B'\ge B$ and
$$
K+B'=K+B+E\sim_{\R,Z} K+B+B^+-B=K+B^+\sim_{\R,Z} 0.
$$
Now Lemma~\ref{perturbation}
implies the existence of a required $\R$-complement $(X/Z,B')$.

Similarly we can treat bd-pairs.

\end{proof}

\paragraph{Basic properties of generic pairs.}
(1)
{\em Decreasing of boundary preserves
the generic property\/}: if $(X/Z,B)$
is generic and $B'$ is boundary on $X$
such that $B'\le B$ then
$(X/Z,B')$ is also generic.

\begin{proof}
Immediate by definition.
\end{proof}

(2) {\em Small modifications preserves the generic property\/}:
if $(X/Z,B)$ is generic and $(X'/Z,B)$ is its small modification
over $Z$ then $(X'/Z,B)$ is also generic.

\begin{proof}
Immediate by definition.
The big property over $Z$ is preserved under
small modifications.
\end{proof}

(3) {\em Crepant models with a boundary preserve the generic property\/}:
$(X/Z,B)$ is generic log pair and
$(Y/Z,B_Y)$ its crepant model with a boundary $B_Y$
then $(Y/Z,B_Y)$ is also generic.

\begin{proof}
Immediate by definition.
Indeed, $\B^+-\B=\B_Y^+-\B_Y$ is b-big
with a big trace on $Y$ and
$B_Y=\B_Y$ is a boundary
by definition and our assumptions.

\end{proof}

(4) {\em The same properties holds for generic bd-pairs.\/}

\begin{proof}
Immediate by definition.
\end{proof}

\begin{lemma} \label{exist_plt_model}
Let $(X/Z\ni o,B)$ be a local generic pair
which is not semiexceptional.
Then there exists an $\R$-complement $(X/Z\ni o',B^+)$
with a crepant plt model
\begin{equation} \label{plt_model}
\begin{array}{ccc}
(Y,B_Y^+)&\stackrel{\varphi}{\dashrightarrow}&(X,B^+)\\
&\searrow& \downarrow\\
&& Z\ni o'
\end{array},
\end{equation}
where $\varphi$ is a crepant birational $1$-contraction $\varphi$ over $Z\ni o'$ and
$o'$ is a specialization of $o$, such that
\begin{description}

\item[]
$Y$ is projective ($\Q$-factorial);

\item[]
$(Y,B_Y^+)$ is plt with a single lc center $S$;

\item[]
the only possible exceptional divisor of $\varphi$ is $S$;
and

\item[]
there exists an effective ample over $Z\ni o'$ divisor $A$ on $Y$
with
$$
S\not\in\Supp A\subseteq\Supp(B_Y^+-B\logb_Y).
$$

\end{description}

If $(X/Z\ni o,B)$ is not global,
then the nonsemiexceptional assumption is redundant and
$S$ is over $o'$, a sufficiently general closed
point of the closure of $o$.

The same holds for a generic bd-pair $(X/Z,B+\sP)$
with pseudoeffective $\sP$ over $Z$.

\end{lemma}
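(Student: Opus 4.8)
\textbf{Proof plan for Lemma~\ref{exist_plt_model}.}
The plan is to produce the required plt model in two stages: first make the boundary ``lc but not klt'' in a controlled way using the generic hypothesis, then run an appropriate LMMP to extract exactly one lc center.  First I would invoke Definition-Proposition~\ref{generic}, (1) to choose a klt $\R$-complement $(X/Z\ni o,B^+)$ with $B^+-B$ big over $Z$.  Since $(X/Z\ni o,B)$ is not semiexceptional, there is some $\R$-complement of $(X/Z\ni o,B)$ that is \emph{not} klt; adding a small multiple of the corresponding effective divisor to a convex combination with $B^+$ (as in Lemma~\ref{perturbation}) lets me arrange an $\R$-complement which is lc but not klt while still having big residue over $Z$ and, after passing to a specialization $o'$ of $o$, having an lc center meeting the central fiber.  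In the non-global case $X\ne X_o$ one does not need the nonsemiexceptional hypothesis: a general vertical section forces an lc center over a general closed point $o'$ of $\overline{o}$, because the central fiber can be made non-klt by adding the pullback of a general hyperplane through $o'$.

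Next I would pass to a $\Q$-factorial dlt crepant blowup of this lc-but-not-klt $\R$-complement (cf. \cite[Theorem~3.1]{KK} as used in Construction~\ref{b_fib_lc}), obtaining a dlt model $(Y_0,B_{Y_0}^+)$ over $Z\ni o'$ with reduced part supported on the exceptional divisors having log discrepancy $0$ together with the strict transforms of the reduced part of $B^+$.  The residue $E_0=B_{Y_0}^+-B\logb_{Y_0}$ is still big over $Z$ (bigness is preserved under crepant birational modification, cf. Basic properties of generic pairs, (3)), so after a further small perturbation inside $|E_0|_\R$ I may assume $\Supp E_0$ contains the supports of the finitely many test divisors I care about, and that $E_0$ is ample over $Z$ modulo a fixed part disjoint from one chosen reduced prime divisor $S_0$ of $B_{Y_0}^+$.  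The key point here is that $Y_0$ has Ft over $Z$ by Corollary~\ref{generic_wFt}, so all the LMMP and semiampleness input (Corollary~\ref{semiample}, and the LMMP for Ft morphisms, item (3) of ``What do we use'') is available.

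Now I would run a $(K_{Y_0}+B_{Y_0}^+-S_0)$-MMP over $Z$, or equivalently an $E_0'$-MMP where $E_0'$ is the part of $B_{Y_0}^+$ other than $S_0$ plus the fixed part trick — arranged so that the only lc center that survives is $S_0$.  Because $K_{Y_0}+B_{Y_0}^+\sim_{\R,Z}0$, this MMP contracts only divisors in the support of the relevant residue and is crepant with respect to $B_{Y_0}^+$; it terminates (Ft case) at a model $(Y,B_Y^+)$ which is plt, whose unique lc center is $S=$ strict transform of $S_0$, on which the ample-over-$Z$ divisor $A$ descends, and for which the composite $\varphi\colon Y\dashrightarrow X$ is a birational $1$-contraction whose only possible exceptional divisor is $S$ (all other exceptional divisors of the dlt blowup were either contracted along the way or already non-exceptional over $X$).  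Replacing $Z$ by a neighbourhood of $o'$ if necessary gives the diagram~\eqref{plt_model} with $Y$ projective $\Q$-factorial, as required.  The bd-pair case with $\sP$ pseudoeffective over $Z$ runs identically, using the bd-versions of Lemma~\ref{perturbation}, Construction~\ref{b_fib_lc} and Corollary~\ref{generic_wFt}, and the positivity-of-$\sP$ inputs only where $\sP_X$ must be pseudoeffective to run the relevant MMP.

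\textbf{Main obstacle.}  The delicate step is guaranteeing that the MMP can be steered so that it leaves \emph{exactly one} lc center $S$ and contracts no divisor other than (possibly) $S$, while simultaneously keeping an ample-over-$Z$ subdivisor $A$ of the residue supported away from $S$.  This requires choosing the perturbation of the $\R$-complement and the fixed/mobile splitting of $E_0$ carefully before running the MMP — essentially the bookkeeping in Construction~\ref{b_fib_lc} adapted to force $f=0$ relative to the surviving center — and checking that termination and crepancy hold throughout using only the Ft hypothesis on $Y_0/Z$; the non-global refinement (general $o'$, no nonsemiexceptionality needed) is then a routine consequence of adding a general hyperplane pullback through $o'$.
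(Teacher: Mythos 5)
Your overall scaffolding (pass to a $\Q$-factorial Ft model, use Definition-Proposition~\ref{generic} to get a klt $\R$-complement with big residue, use nonsemiexceptionality to produce a non-klt complement, take the lc-threshold convex combination as in Lemma~\ref{perturbation} to get an lc-but-not-klt $\R$-complement, and handle the non-global case by forcing vertical lc centers over a general closed point $o'$ of $\overline{o}$) matches the paper. But the step you yourself flag as the ``main obstacle'' is not merely delicate — as you set it up, it cannot work. Every step of the MMP you propose (for $K_{Y_0}+B_{Y_0}^+-S_0\equiv -S_0$ over $Z$, or any $E$-MMP) is crepant with respect to $(Y_0,B_{Y_0}^+)$ precisely because $K_{Y_0}+B_{Y_0}^+\sim_{\R,Z}0$, and crepant modifications preserve \emph{all} log discrepancies, hence the whole set of lc places of the pair. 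A coefficient-one component other than $S_0$ is either untouched (and the resulting pair is lc with several lc centers, not plt) or contracted (and then it survives as an exceptional prime b-divisor of log discrepancy $0$, so the pushed-forward pair is again not plt). No amount of ``steering'' a crepant MMP can turn an lc-but-not-plt complement into a plt one; the lc places must be destroyed by actually changing the boundary.

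This is exactly what the paper does, and it is the idea missing from your proposal. On a log resolution $(V,B_V')$ of the lc-but-not-klt complement, let $E$ be the sum of all reduced components of $B_V'$ except one chosen $E_0$. After the earlier perturbations (choosing $B^+$ with very ample $A_i$ generating $\Pic(X/Z\ni o)$ numerically, $\cap\Supp A_i=\emptyset$, arranging $\Supp B^+=\Supp D'$ and absorbing ample generators $M_i$ of $\Pic(V/Z\ni o)$ into $\Supp B^+$), one has $E\sim \sum m_iM_i$ over $Z\ni o$. Replacing $B_V'$ by $B_V''=B_V'-\ep E+\ep\sum m_iM_i$ for small $\ep>0$ lowers every reduced coefficient except that of $E_0$ strictly below $1$, so the pair is plt with the single lc place $S=E_0$, while $E-\sum m_iM_i\sim 0$ over $X$ together with the projection formula gives $K+B''\equiv 0/Z\ni o$ for the pushforward $B''$ on $X$, upgraded to $\sim_\R 0$ by the Ft property; this non-crepant change of boundary is what kills the extra lc centers. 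Only afterwards does one blow up: an extremal dlt extraction of $S$ (identical, up to $\Q$-factorialization, if $S$ is already a divisor on $X$) gives $(Y,B_Y^+)$, and the ample divisor $A$ is built there explicitly as $\varphi^*A_i+\ep M_{j,Y}$ using the extremality of $\varphi$ and $\varphi(S)\not\subset\Supp A_i$ — it is not obtained by ``descending'' an ample divisor through MMP steps, which in general is not possible. So to repair your argument you should replace the MMP step by this perturbation-by-linear-equivalence trick (and then construct $A$ on the extraction as above); the rest of your outline, including the non-global refinement and the bd-pair case, then goes through as in the paper.
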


Warning: Every global plt model $(Y/Z\ni o',B\logb_Y)$ is
not generic because it is not klt.
However, the klt property holds in the nonglobal case for
the {\em generalization\/} $(Y/Z\ni o,B\logb_Y)$ when $o'\not= o$.

\begin{proof}
After a small birational modification of $X$ as in Lemma~\ref{wTt_vs_Ft}
and $\Q$-factorialization over $Z\ni o$,
we can suppose that $X$ is projective over $Z\ni o$ and $\Q$-factorial.
Every small modification of $X/Z\ni o$ over $Z\ni o$ is
again generic and nonsemiexceptional.
Thus by Corollary~\ref{generic_wFt} we can suppose that
$X/Z\ni o$ has Ft.
We can suppose also that $\dim X\ge 2$.

Suppose that $(X/Z\ni o,B^+)$ is a klt $\R$-compliment
as in (3) with distinct very ample prime divisors $A_1,\dots,A_r$
generating numerically $\Pic(X/Z\ni o)$.
(Actually, the generation is modulo $\sim_\R/Z\ni o$
by \cite[Corollary~4.5]{ShCh}.)
We suppose also that $\cap\Supp A_i=\emptyset$.
So, every component $a_iA_i$ in $B^+=E'+\sum a_iA_i,a_i=\mult_{A_i}B^+>0,E'\ge 0$,
can be replace by any general $\R$-linearly equivalent over $Z$
divisor $a_iA_i'$, where $A_i\sim A_i'/Z\ni o$.
(It is better to think about $A_i$ as
a divisor in the Alexeev sense.)

Since $(X/Z\ni o,B)$ is nonsemiexceptional,
there exists an effective divisor
$D'\ge B$ such that $(X/Z\ni o,D')$ is a log pair with $K+D'\sim_\R 0/Z\ni o$
(a nonlc $\R$-complement).
Taking a weighted combinations $D':=aB^++(1-a)D',a\in (0,1)$,
and similarly for $B^+$,
we can suppose additionally that $\Supp B^+=\Supp D'$.
Note that if $(X/Z\ni o)$ is not global, that is,
$o$ is not a closed point or is not the image of $X$ on $Z$, then
such a complement always exists with only vertical nonklt centers
of $(X,D')$ over a sufficiently general closed
point $o'$ of the closure of $o$.
In this case we replace $(X/Z\ni o,D')$ by the specialization $(X/Z\ni o',D')$.
Recall that by definition $o,o'$ belong to
the image of $X\to Z\ni o$ and in the lemma
we can replace $o$ by such $o'$.
In particular, we can suppose below that $o'=o$ is closed.

Now consider a crepant projective over $Z\ni o$
log resolution $(V,D_V')$ of $(X,D')$.
The same works for $B^+$ because $\Supp B^+=\Supp D'$.
Let $M_1,\dots,M_n$ be a finite set of
(very) ample effective generators of $\Pic(V/Z\ni o)$
which are in general position to the log birational transform
$D_V'\logb$, in particular, they do not pass through the lc centers
of $(V,D_V')$ on $V$,
and, moreover,
$$
\Supp (D_V'\logb+\sum M_i)=
\Supp(B^+\logb+\sum M_i)
$$
is also with simple normal crossings.
(The finite generation modulo $\sim/Z\ni o$
holds because $X/Z\ni 0$ has Ft.)
Divisors $M_i$ are possibly not in $\Supp D_V'\logb$ but
we can add them preserving our assumptions.
Indeed, we can add $\psi(M_i)$ to $B^+$ with
certain positive multiplicities, where
$\psi\colon V\to X$ is the log resolution.
By construction every $\psi(M_j)$ is
$\R$-linear equivalent over $Z\ni o$ to $\sum a_i^jA_i,a_i^j\in\R$, over $Z\ni o$.
Thus for sufficiently small positive real number $\ep$,
$$
(X/Z\ni o,B^+-\ep(\sum a_i^jA_i)+\ep \psi(M_j))
$$
is a klt $\R$-complement of $(X/Z\ni o,B)$ with
$\psi(M_j)$ in the support of $B^+:=B^+-\ep(\sum a_iA_i)+\ep \psi(M_j)$.
Adding each $\psi(M_j)$ we got a complement with
required properties.
Again as above we can suppose that $\Supp B^+=\Supp D'$.
By construction $\psi$ is also a log resolution
for new perturbed $B^+,D'$.
If $X/Z\ni o$ is not global, the nonklt centers of $(X,D')$ are
vertical over $o$.
But in the global case we do not have such a control.

Taking a weighted combination $B'=aB^++(1-a)D',a\in (0,1)$,
we can suppose that $(X/Z\ni o,B')$ is
an lc but nonklt $\R$-complement of $(X/Z\ni o,B)$.
Again $\psi\colon (V,B_V')\to (X,B')$ is a log resolution of
$(X,B')$.
In particular, $(V,B_V')$ is dlt.
If $X/Z\ni o$ is not global, the nonklt centers of $(X,B')$ are
vertical over $o$.

We would like to find another lc $\R$-complement
$(X/Z\ni o,B'')$ with a single lc center.
Let
$$
E=\sum E_i
$$
be the sum of prime components of $B_V'$ with
multiplicity $1$ except for one $E_0$.
(The latter component exists because $(X,B_V')$
is dlt but not klt.)
By construction
$$
E\sim \sum m_iM_i/Z\ni o, m_i\in \Z.
$$
So, for every sufficiently small positive real number $\ep$,
$$
(V,B_V'') \text{ with }
B_V''=B_V'-\ep E+\ep(\sum m_iM_i)
$$
is plt with the single nonklt center $E_0$
and $\sim_\R 0/Z\ni o$.
If $X/Z\ni o$ is not global $E_0$ is vertical over $o$.

Since $X$ is $\Q$-factorial,
the pair
$$
(X/Z\ni o,B'') \text{ with }
B''=\psi(B_V'-\ep E+\ep(\sum m_i,M_i))=
B'-\ep\psi(E)+\ep (\sum m_i \psi(M_i))
$$
with a single lc prime b-divisor $S=E_0$, that is,
with $\mult_S\B''=1$.
Actually, $B''$ is effective and a boundary for
sufficiently small $\ep$ because
$\psi(E),\psi(M_i)$ are supported in $\Supp B'$.
By construction
$$
E-\sum m_iM_i\sim 0/X
$$
too.
Hence
$$
\psi^*(\psi(E-\sum m_i M_i))=
E-\sum m_i M_i\sim 0/X
$$
because $\psi$ is a birational contraction
(e.g., by~\cite[Proposition~3]{Sh19}).
For every curve $C$ on $X$ over $Z$,
there exists a curve $C'$ on $V$ and
by the  last relation
$$
(C.\psi(E-\sum m_i M_i))=
(\psi(C').\psi(E-\sum m_i M_i))=
(C'.\psi^*(\psi(E-\sum m_i M_i)))=
(C'.0)=0
$$
by the projection formula.
The pair $(X/Z\ni o,B'')$ is a $0$-pair too:
by the last vanishing
\begin{align*}
(C.K+B'')&=(C.K+B'-\ep\psi(E)+\ep (\sum m_i \psi(M_i))) \\
&=(C.K+B')-\ep(C.\psi(E-\sum m_iM_i))=
0.
\end{align*}
So, at least the complement is numerical.
But since $X/Z\ni o$ has Ft it is actually
an $\R$-complement with a single
lc prime b-divisor $S$.

The divisor $B^+=B''$ gives a required plt complement.
Take a dlt resolution $\varphi\colon (Y,B_Y^+)\to (X,B^+)$.
It has an exceptional divisor if $S$ is exceptional.
Moreover, in this case we can suppose that $\varphi$ is extremal:
the last contraction in the LMMP for $(Y,B_Y)$ over $X$.
Otherwise it is identical (or, more precisely, the above $\Q$-factorialization).
In the last case the property with an ample divisor $A$
holds by construction.
Otherwise $S$ is exceptional on $X$ and
we need to construct an ample divisor $A$ on $Y$.
By construction, for every divisor $M_j$, its
birational image $M_{j,Y}$ on $Y$ is big on $S$
over $Z$ and does not contain $S$.
So, $M_{j,Y}$ is ample on $Y$ over $X$.
Recall for this that $\varphi$ is extremal.
A required $A$ we can construct as
linear combination $\varphi^*(A')+\ep M_{j,Y}$
for some $0<\ep\ll 1$, where $A'$ is
an ample over $Z$ divisor on $X$ supported
in $B^+$ and not passing through $\varphi(S)$.
Take $A'=A_i$ for such $A_i$ that $\varphi(S)\not\subset \Supp A_i$.
Such $A_i$ exists because $\cap\Supp A_i=\emptyset$.

If $X/Z\ni o$ is not global, $S$ is
vertical over $o$.

Similarly we can treat bd-pairs.

\end{proof}

Remark: it looks that similarly we can construct two
disjoint lc center $S_1,S_2$ if
we have at least two summands in $E$.
However, this is impossible because in
our construction every two components of $E$
intersects each other.

\begin{const}[Adjoint pair] \label{adjoint_pair}
Consider a plt model~(\ref{plt_model})
of Lemma~\ref{exist_plt_model}
and assume that $Y$is $\Q$-factorial.
The model gives an {\em adjoint\/} projective log pair
$(S,B_S)$ for $(Y,B\logb_Y)$ or $(Y,B_Y+(1-\mult_S B)S)$,
if $S$ is not exceptional on $X$, with respectively
$$
B_S=\Diff(B\logb_Y-S)\text { or }
B_S=\Diff(B-(\mult_S B) S).
$$
The adjoint pair is always global by Lemma~\ref{exist_plt_model}.
By divisorial adjunction and its monotonicity,
$(S,B_S^+)$ with
$$
B_S^+=\Diff(B_Y^+-S)
$$
is an {\em induced\/} adjoint $\R$-complement of
$(S,B_S)$.

The same applies to~(\ref{plt_model})
with a bd-pair $(X/Z\ni o,B+\sP)$ having pseudoeffective $\sP$.
The adjoint bd-pair $(S,B_S+\sP_S)$ is a log one with
$\sP_S=\sP\brest{S}$ (see the birational restriction $\brest{}$
in \cite[Mixed restriction~7.3]{Sh03}.
So, $\sP_S$ is b-nef (and pseudoeffective) if
$\sP$ is b-nef over $Z\ni o$.
Hence $(S,B_S+\sP_S)$ is a bd-pair of index $m$ if
$(X/Z\ni o,B+\sP)$ is a bd-pair of index $m$.

\end{const}

\begin{cor} \label{induced_pair}
The adjoint pair $(S,B_S)$ is (global) generic
with the induced klt $\R$-complement.
In particular, $S$ is irreducible of wFt.

\end{cor}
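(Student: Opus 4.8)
The plan is to prove Corollary~\ref{induced_pair} by transporting the generic property of $(X/Z\ni o,B)$ across the plt model and the divisorial adjunction, using the big divisor $A$ produced in Lemma~\ref{exist_plt_model} as the essential input. First I would recall the setup: we have the plt model $\varphi\colon (Y,B_Y^+)\dashrightarrow (X,B^+)$ of Lemma~\ref{exist_plt_model}, with $Y$ projective $\Q$-factorial, $(Y,B_Y^+)$ plt with a single lc center $S$, and an effective ample over $Z\ni o'$ divisor $A$ on $Y$ with $S\not\in\Supp A\subseteq\Supp(B_Y^+-B\logb_Y)$. By Construction~\ref{adjoint_pair} the adjoint pair $(S,B_S)$ with $B_S=\Diff(B\logb_Y-S)$ carries the induced $\R$-complement $(S,B_S^+)$, $B_S^+=\Diff(B_Y^+-S)$, which is klt since $(Y,B_Y^+)$ is plt near $S$ (inversion of adjunction). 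So $(S,B_S)$ has a klt $\R$-complement; it remains to verify one of the equivalent genericity conditions of Definition-Proposition~\ref{generic}.

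The key step is to exhibit the big $\R$-mobile divisor required by condition~(2) of Definition-Proposition~\ref{generic}. I would restrict $A$ to $S$: since $A$ is ample over $Z\ni o'$ and $S$ is not a component of $A$, the restriction $A_S=A|_S$ is ample over $Z\ni o'$ (here $S$ is global by Lemma~\ref{exist_plt_model}, so this is ampleness in the absolute sense, or over the relevant base), in particular big and mobile. Moreover $\Supp A\subseteq\Supp(B_Y^+-B\logb_Y)$ restricts to $\Supp A_S\subseteq\Supp(B_S^+-B_S)$ by the monotonicity of divisorial adjunction: $B_S^+-B_S=\Diff(B_Y^+-S)-\Diff(B\logb_Y-S)$ is effective and supported in the restriction of $\Supp(B_Y^+-B\logb_Y)$ to $S$. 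This gives exactly condition~(2), hence $(S,B_S)$ is generic. The irreducibility of $S$ follows from the connectedness of the central fiber together with the plt (hence normal) property of $S$ as in the remark after Theorem~\ref{extension_n_complement}; that $S$ has wFt then follows from Corollary~\ref{generic_wFt} applied to the generic pair $(S,B_S)$, noting $S$ is projective by Lemma~\ref{exist_plt_model}.

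For the bd-pair version I would run the same argument with $(X/Z\ni o,B+\sP)$ having b-nef (hence pseudoeffective) $\sP$ over $Z\ni o$: Construction~\ref{adjoint_pair} produces the adjoint bd-pair $(S,B_S+\sP_S)$ with $\sP_S=\sP\brest{S}$ b-nef, and the restriction of the ample $A$ still lands in $\Supp(B_S^+-B_S)$, so $(S,B_S+\sP_S)$ is generic and by Corollary~\ref{generic_wFt} (bd-version) $S$ has wFt.

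The main obstacle I anticipate is the compatibility of supports under divisorial adjunction, i.e.\ checking carefully that $\Supp A\subseteq\Supp(B_Y^+-B\logb_Y)$ really forces $\Supp A_S\subseteq\Supp(B_S^+-B_S)$ rather than merely $\Supp A_S\subseteq\Supp(B_S^+)$ — one must use that $B\logb_Y$ and $B_Y^+$ differ only outside $S$ up to the $S$-coefficient and that $\Diff$ is additive and monotone on the non-$S$ part, so the difference of the two different restrictions captures exactly the restriction of $A$. A secondary point requiring care is that $A$ is ample over $Z\ni o'$ where $o'$ may be a specialization of $o$; but since $(S,B_S)$ is global (Lemma~\ref{exist_plt_model}) this causes no trouble, and the genericity of $(S,B_S)$ is an absolute statement about the global pair. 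Everything else is a direct citation of Definition-Proposition~\ref{generic}, Construction~\ref{adjoint_pair}, Corollary~\ref{generic_wFt}, and the monotonicity of adjunction.
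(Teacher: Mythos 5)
Your proof is correct and follows essentially the same route as the paper: restrict the ample divisor $A$ of Lemma~\ref{exist_plt_model} to $S$ (which $A$ misses, so $A\rest{S}$ is a well-defined effective ample divisor supported in $\Supp(B_S^+-B_S)$ by monotonicity of $\Diff$), verify condition (2) of Definition-Proposition~\ref{generic}, and get irreducibility, projectivity, wFt and the bd-case exactly as the paper does. The only cosmetic differences are that the klt property of $(S,B_S^+)$ is plain divisorial adjunction from the plt pair $(Y,B_Y^+)$ rather than inversion of adjunction, and that the paper additionally spells out the $0$-pair property $(C.K_S+B_S^+)=(\varphi(C).K+B^+)=0$ on curves over closed points, a verification you delegate to the statement in Construction~\ref{adjoint_pair}.
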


\begin{add}
The same works for the adjoint bd-pair $(S,B_S+\sP_S)$
of index $m$.

\end{add}

\begin{proof}
By construction
$(Y/Z\ni o,B_Y^+),(S,B_S^+) $ are log pairs, where
$S$ is normal irreducible \cite[Lemma~3.6]{Sh92}.
Moreover, $S$ is projective.
It is true if $Y/Z\ni o$ is global
because $Y$ is projective.
Otherwise by Lemma~\ref{exist_plt_model}
$S$ is projective over a closed point.

If $C\subseteq S$ is a curve then $C$ is over the closed point $o'\in Z$ and
$$
(C.K_S+B_S^+)=(C.K_Y+B_Y^+)=(\varphi(C).K+B^+)=0.
$$
Note that even in the local case $C$ is over
a closed point in $Z$ and so is over $Z$.
By construction and monotonicity of divisorial adjunction,
$$
B_Y^+\ge B_Y \text{ and }
B_S^+\ge B_S.
$$
So, $(S,B_S^+)$ is an $\R$-complement of
$(S,B_S)$.

The complement is generic because
there exists an effective ample divisor $A$
supported in $\Supp(B_Y^+-B_Y\logb)$.
It is in general position with $S$.
So, the restriction $A\rest{S}$ is well-defined and
is an effective ample divisor on $S$
supported in $\Supp(B_S^+-B_S)$.

For the bd-pair $(S,B_S+\sP)$ note that
$\sP_S$ is b-nef if $\sP$ is b-nef over $Z\ni o$.

\end{proof}

\begin{const} \label{generic_induction}
Let $d$ be a nonnegative integer,
$I,\ep,v,e$ be the data as
in Restrictions on complementary indices, and
$\Phi=\Phi(\fR)$ be a hyperstandard set associated with
a finite set of rational numbers $\fR$ in $[0,1]$.
Denote by $\widetilde{\Phi}=\Phi(\overline{\fR}\cup\{0\})$
the hyperstandard set constructed in~\ref{direct_hyperst_on_div}.

By dimensional induction there exists a finite set of
positive integers
$\sN=\sN(d-1,I,\ep,v,e,\widetilde{\Phi})$ such that
\begin{description}

\item[\rm Restrictions:\/]
every $n\in\sN$ satisfies
Restrictions on complementary indices with the given data;

\item[\rm Existence of b-$n$-complement:\/]
if $(S,B_S)$ is a generic pair of dimension $d-1$
with Ft $S$ and a boundary $B_S$
then $(S^\sharp, B_{S,n\_\widetilde{\Phi}}{}^\sharp{}_{S^\sharp})$
has a b-$n$-complement $(S,B_S^+)$ for some $n\in\sN$.

\end{description}

For bd-pairs we add a positive integer $m$.
So, $\sN=\sN(d-1,I,\ep,v,e,\widetilde{\Phi},m)$ and
replace $(S,B_S)$ by a bd-pair $(S,B_S+\sP_S)$ of
dimension $d-1$ and of index $m|n$.

Dimensional induction gives a more precise choice of $n$
that has important geometrical implications
(see Corollary~\ref{reg_generic type} below).
For this introduce the following (decreasing) filtration of $\sN$.

\paragraph{Generic type filtration with respect to
dimension $i$}
\begin{equation} \label{filtration_generic}
\sN=\sN^d
\supseteq\dots\supseteq \sN^i
\supseteq\dots\supseteq \sN^0
\supseteq\sN',\
0\le i\le d,
\end{equation}
where $\sN'$ is a subset of $\sN$.
Its {\em associated\/}  filtration of
hyperstandard sets is
$$
\Gamma(\sN^d,\Phi)
\supseteq\dots\supseteq \Gamma(\sN^i,\Phi)
\supseteq\dots\supseteq \Gamma(\sN^0,\Phi)
\supseteq \Gamma(\sN,\Phi).
$$
For dimension $i>0$, put
$$
\sN_i=\sN^i\setminus
\sN^{i-1};
$$
for $i=0$, $\sN_0=\sN^0\setminus \sN'$.
That is, the next set in the last case is $\sN^{-1}=\sN'$
but without dimension.
According to Theorem~\ref{generic_complements} below,
b-$n$-complements of generic type are coming (extended)
from dimension $i$ and $n\in\sN_i$ in this case.

For every $i$, between $\sN^i\supseteq \sN^{i-1}$,
there exists an additional filtration
with respect to semiexceptional types
in the dimension $i$ (see Semiexceptional filtration
in~Section~\ref{semiexcep_compl_:}).

\end{const}

\begin{defn} \label{filtration_generic_df}
Let~(\ref{filtration_generic}) be a filtration
of Construction~\ref{generic_induction}.
Consider a certain class of pairs $(X/Z\ni o,B)$
of dimension $d$ with boundaries which have
an (b-)$n$-complement with $n\in \sN^d$.
Such a {\em pair\/} $(X/Z\ni o,B)$ and its (b-)$n$-{\em complement
have generic type of dimension\/} $0\le i\le d$ if
the complement is extended from
a semiexceptional b-$n$-complement in dimension $i$.
So, additionally, we can associate to the pair its
(filtration) semiexceptional type $(r,f),0\le r\le i-f-1,0\le f\le i-1$
or the exceptional type $(-1,-)$.

We say that the {\em existence\/} $n$-complements {\em agrees
with the generic type filtration\/} if
every pair $(X/Z\ni o,B)$ in the class has an $n$-{\em complement\/}
$(X/Z\ni o,B^+)$
{\em of filtration generic type\/} $i$ with $n\in \sN_i$.
Additionally,
$(X/Z\ni o,B^+)$ {\em of the type\/} $i$ is
an $n$-complement of itself,
of $(X/Z\ni o,B_{n\_\sN^{i-1}\_\Phi}), (X/Z\ni o,B_{n\_\sN^{i-1}\_\Phi}{}^\sharp)$
and
a b-$n$-complement of itself,
of $(X/Z\ni o,B_{n\_\sN^{i-1}\_\Phi}),
(X/Z\ni o,B_{n\_\sN^{i-1}\_\Phi}{}^\sharp),
(X^\sharp/Z\ni o,B_{n\_\sN^{i-1}\_\Phi}{}^\sharp{}_{X^\sharp})$,
if $(X,B_{n\_\sN^{i-1}\_\Phi}),
(X,B_{n\_\sN^{i-1}\_\Phi}{}^\sharp)$ are log pairs respectively.

The boundary $B_{n\_\sN^{i-1}\_\Phi}{}^\sharp{}_{X^\sharp}$ can be
slightly increased if we take into consideration
the (filtration) semiexceptional type $(r,f)$
of $(X/Z\ni o,B)$.

Possibly, the generic type is not unique.
For the uniqueness we can take minimal (better than maximal) type.
The same concerns the semiexceptional type.

The same applies to bd-pairs of dimension $d$.

\end{defn}

\begin{rem} \label{filtration_generic_type}

(1) In particular, $(X/Z\ni o,B)$ has generic type $d$ if
the pair is generic and semiexceptional itself.
If the pair is generic and not semiexceptional then by
Lemma~\ref{exist_plt_model} and Construction~\ref{adjoint_pair}
there exists an adjoint generic pair $(S,B_S)$.
By definition $(S,B_S)$ has generic type $i\le d-1$ and
by dimensional induction this is a generic type of $(X/Z\ni o,B)$
by Step~6 in the proof of Theorem~\ref{generic_complements}.
By the induction it has also [some] semiexceptional type $(r,f)$.

(2) Generic type $d$ only possible for global pairs.

(3) In the proof of Theorem~\ref{generic_complements}
we apply Construction~\ref{adjoint_pair}
to $(X/Z\ni o,B_{\sN\_\Phi})$, where $\sN=\sN^{d-1}$.
However, we can apply the same construction directly to $(X/Z\ni o,B)$ if
the latter pair is generic (cf. Addendum~\ref{generic_complements_B}).

(4) [Warning:] Possibly, there are other $n$-complements which are
not agree with the filtration (cf. Example~\ref{n_comp_dim_1}).
Additionally, we can have $n$-complements
without any type but with $n\in\sN$.

\end{rem}

\begin{thm}[Generic $n$-complements] \label{generic_complements}
Let $d$ be a nonnegative integer,
$I,\ep,v,e$ be the data as
in Restrictions on complementary indices, and
$\Phi=\Phi(\fR)$ be a hyperstandard set associated with
a finite set of rational numbers $\fR$ in $[0,1]$.
Then there exists a finite set
$\sN=\sN(d,I,\ep,v,e,\Phi)$
of positive integers such that
\begin{description}

\item[\rm Restrictions:\/]
every $n\in\sN$ satisfies
Restrictions on complementary indices with the given data.

\item[\rm Existence of $n$-complement:\/]
if $(X/Z\ni o,B)$ is a pair with
$\dim X=d$, a boundary $B$, connected $X_o$ and
with generic $(X/Z\ni o,B_{\sN\_\Phi})$
then $(X/Z\ni o,B)$ has an $n$-complement $(X/Z\ni o,B^+)$ for some $n\in\sN$.

\end{description}

\end{thm}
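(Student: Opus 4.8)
The plan is to reduce the generic case to the semiexceptional case in one dimension lower, using the plt model and adjunction machinery assembled above. First I would dispose of the trivial reduction: by Corollary~\ref{B_B_+B_sN_Phi} it suffices to produce an $n$-complement of $(X/Z\ni o,B_{\sN\_\Phi})$ with $n\in\sN$, so after replacing $B$ by $B_{\sN\_\Phi}$ we may assume $B\in\Gamma(\sN,\Phi)$ and $(X/Z\ni o,B)$ is generic. By Proposition~\ref{local_compl} and Addendum (connected components), the problem is local with connected $X_o$; by Corollary~\ref{generic_wFt} we may assume $X/Z\ni o$ has Ft and, after a $\Q$-factorialization (Proposition~\ref{small_transform_compl}, Lemma~\ref{wTt_vs_Ft}, and Basic property (2) of generic pairs), that $X$ is $\Q$-factorial. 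If $(X/Z\ni o,B)$ is itself semiexceptional we are done by Theorem~\ref{semiexcep_compl_with_filtration} in dimension $d$ (generic type $d$); this also handles the global case when the fibration below would be trivial.

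The main construction: assuming $(X/Z\ni o,B)$ is generic but not semiexceptional, apply Lemma~\ref{exist_plt_model} to obtain an $\R$-complement $(X/Z\ni o',B^+)$ with a plt crepant model $\varphi\colon(Y,B_Y^+)\dashrightarrow(X,B^+)$ over $Z\ni o'$, where $Y$ is projective $\Q$-factorial, $(Y,B_Y^+)$ is plt with a single lc center $S$, and there is an effective ample-over-$Z\ni o'$ divisor $A$ with $S\notin\Supp A\subseteq\Supp(B_Y^+-B_Y\logb)$. By Construction~\ref{adjoint_pair} and Corollary~\ref{induced_pair}, the adjoint pair $(S,B_S)$ is a global generic pair of dimension $d-1$ with Ft $S$, and $B_S=\Diff(B_Y\logb-S)$ transforms a hyperstandard boundary into one in $\widetilde\Phi$ by the divisorial adjunction estimate~(\ref{Phi-to-wtPhi}) in~\ref{adjunction_on_divisor}. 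Then I would invoke the dimensional induction packaged in Construction~\ref{generic_induction}: there is a finite set $\sN=\sN(d-1,I,\ep,v,e,\widetilde\Phi)$, with its generic-type filtration~(\ref{filtration_generic}) refined by semiexceptional filtrations, such that $(S^\sharp,B_{S,n\_\widetilde\Phi}{}^\sharp{}_{S^\sharp})$ has a b-$n$-complement $(S,B_S^+)$ for some $n\in\sN$. Finally, lift this b-$n$-complement back up: $(X/Z\ni o',B^+)$ with $-(K+B^+)$ free and big over $Z\ni o'$ satisfies the hypotheses (1)--(3) of Corollary~\ref{plt_b_n_complement} applied to the plt wlF model $(X^\sharp,B_{n\_\Phi}{}^\sharp)$ of $(X/Z\ni o',B_{n\_\Phi})$ from Construction~\ref{sharp_construction}, with $(S'^\sharp,B_{S',n\_\widetilde\Phi}{}^\sharp)$ the highest crepant model of the adjoint pair; its conclusion gives a b-$n$-complement $(X^\sharp/Z\ni o',B^{++})$, hence (Remark~\ref{rem_def_b_n_compl}) an $n$-complement of $(X/Z\ni o',B_{n\_\Phi})$ and, by Proposition~\ref{1_of def_2 for B}, of $(X/Z\ni o',B)$. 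Specializing $o'$ back to $o$ (or noting the construction already works over $o$ when $X/Z\ni o$ is non-global, as in Lemma~\ref{exist_plt_model}) and passing through connected components completes the existence statement. The total set $\sN(d,I,\ep,v,e,\Phi)$ is assembled as the union over the filtration~(\ref{filtration_generic}), with $\sN'=\sN^{d-1}$ the indices coming from lower dimensions and $\sN_d$ handling the semiexceptional-type-$d$ (global) case via Theorem~\ref{semiexcep_compl_with_filtration}; Corollary~\ref{disjoint} is used to keep the pieces disjoint so the type of a complement is well-defined.

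I would also record the addenda one expects alongside this statement (the monotonicity $B^+\ge B_{n\_\Phi}{}^\sharp\ge B_{n\_\Phi}$, the self- and $\sharp$-complement properties, agreement with the generic-type filtration, and the bd-pair version of index $m$): these follow formally from Addenda~\ref{B_B_n_Phi_sharp_B_n_Phi}, \ref{standard_excep_compl} of Theorem~\ref{excep_comp}, their analogues in Theorem~\ref{semiexcep_compl_with_filtration}, Addendum~\ref{invers_b_n_comp_monotonic} and~(5) of Theorem~\ref{invers_b_n_comp} (which forces the lifted complement to dominate $B_{n\_\Phi}{}^\sharp$), together with Proposition~\ref{monotonicity_I} and Corollary~\ref{monotonicity_III}. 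The bd-case is handled verbatim, using the bd-versions of Lemma~\ref{exist_plt_model}, Construction~\ref{adjoint_pair}, Corollary~\ref{plt_b_n_complement}, and requiring $m\mid n$ throughout (Remark~\ref{m_div_n}); the moduli b-part $\sP\brest{S}$ of the adjoint bd-pair is b-nef by the pseudoeffectivity hypothesis on $\sP$.

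The hard part will be Step~6 — verifying that the b-$n$-complement produced on $(S^\sharp,B_{S,n\_\widetilde\Phi}{}^\sharp)$ actually lifts to a \emph{b-}$n$-complement (not just an $n$-complement) on a maximal model $(X^\sharp,B_{n\_\Phi}{}^\sharp)$ with the correct inequalities at every prime b-divisor. This is exactly what Corollary~\ref{plt_b_n_complement} is built to deliver, but its hypothesis (4) demands a b-$n$-complement on the \emph{highest} crepant model $S'^\sharp$ of the adjoint pair with the $\widetilde\Phi$-approximation, and one must check that the dimensional-induction output (which a priori lives on whatever model the inductive Theorem~\ref{generic_complements}/\ref{semiexcep_compl_with_filtration} produces) can be pushed to $S'^\sharp$ — this uses Proposition~\ref{D_D'_complement} on a common model together with the monotonicity~(\ref{sharp_tilde_Phi})-type inequalities comparing $\B_{n\_\Phi}{}^\sharp{}_{S^\sharp}$ with $\B_{S',n\_\widetilde\Phi}{}^\sharp$, which in turn rests on the semiadditivity of divisorial adjunction and Proposition~\ref{monotonicity_I}. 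Keeping the bookkeeping of the two nested filtrations (generic type in dimension $i$, refined by semiexceptional type $(r,f)$) consistent — so that the final $\sN$ has a genuine generic-type filtration in the sense of Definition~\ref{filtration_generic_df} — is the other place where care is required, but it is combinatorial once the geometric inputs are in place.
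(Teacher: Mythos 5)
Your overall strategy is the paper's: split off the semiexceptional (global, type $d$) case via Theorem~\ref{semiexcep_compl_with_filtration} with the set $\Gamma(\sN^{d-1},\Phi)$, otherwise pass through Lemma~\ref{exist_plt_model} to a plt model with a single lc center $S$, run dimensional induction on the adjoint pair of dimension $d-1$ (Construction~\ref{generic_induction}), lift with Corollary~\ref{plt_b_n_complement}, and assemble $\sN=\sN_d\cup\sN^{d-1}$ disjointly. However, there is a genuine gap at the lifting step, precisely where you yourself flagged the difficulty but then located it in the wrong place. Corollary~\ref{plt_b_n_complement} requires a pair that is plt with $S$ reduced in the boundary \emph{and} with nef and big anticanonical over $Z\ni o'$. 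No pair you name satisfies this: $-(K+B^+)\sim_\R 0$ over $Z\ni o'$, so it is never big, and neither $(X,B_{\sN\_\Phi})$ nor $(X,B_{n\_\Phi})$ is plt with $S$ reduced. The missing construction is the perturbed boundary on the plt model, $D=B_Y^+-aA$ with $0<a\ll 1$, where $A$ is the ample divisor of Lemma~\ref{exist_plt_model} with $S\notin\Supp A\subseteq\Supp(B_Y^+-B^{\log}_Y)$; then $(Y/Z\ni o',D)$ is plt with $\mult_S D=1$ and $-(K_Y+D)\equiv aA$ is nef and big, and Corollary~\ref{plt_b_n_complement} applies to \emph{this} pair. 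You record the ample divisor $A$ in your recollection of the lemma but never use it, which is exactly its purpose.

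Consequently you also skip the return trip from $D$ to $B$. The corollary outputs a b-$n$-complement of $(Y^\sharp/Z\ni o',D_{n\_\Phi}{}^\sharp{}_{Y^\sharp})$, not of anything involving $B$. One must first arrange (by the choice of $A$ and small $a$) that $\mult_P D\ge\mult_P B$ for every prime divisor $P$ of $X$, then apply Corollary~\ref{monotonicity_III} to the maximal model to get $\D_{n\_\Phi}{}^\sharp\ge\B_{n\_\Phi}{}^\sharp$, and finally Proposition~\ref{D_D'_complement} (together with the fact that $o'$ is a specialization of $o$) to conclude that the same divisor is a b-$n$-complement of $(X^\sharp/Z\ni o,B_{n\_\Phi}{}^\sharp{}_{X^\sharp})$ and hence, via Proposition~\ref{1_of def_2 for B}, an $n$-complement of $(X/Z\ni o,B)$. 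Your appeal to Theorem~\ref{invers_b_n_comp} and Addendum~\ref{invers_b_n_comp_monotonic} for this is a misattribution: that theorem lifts complements from the base of a $0$-contraction and plays no role in this proof; the comparison you need is the $D$-versus-$B$ monotonicity above, while the inequalities of type~(\ref{sharp_tilde_Phi}) you cite are internal to the proof of Corollary~\ref{plt_b_n_complement} (comparing $S^\sharp$ with $S'^\sharp$) and do not substitute for it.
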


\begin{add} \label{standard_generic_complements}
$(X/Z\ni o,B^+)$ is an $n$-complement of itself and
of $(X/Z\ni o,B_{n\_\Phi}), (X/Z\ni o,B_{n\_\Phi}{}^\sharp)$,
and is a b-$n$-complement of itself and
of $(X/Z\ni o,B_{n\_\Phi}),
(X/Z\ni o,B_{n\_\Phi}{}^\sharp),(X^\sharp/Z\ni o,B_{n\_\Phi}{}^\sharp{}_{X^\sharp})$,
if $(X,B_{n\_\Phi}),(X,B_{n\_\Phi}{}^\sharp)$ are log pairs respectively.

\end{add}

\begin{add} \label{generic_complements_fil}
$\sN$ has a generic type filtration~(\ref{filtration_generic})
with $\sN^d=\sN$, with any finite set
of positive integers  $\sN'$, satisfying
Restrictions on complementary indices with the given data,
and
the existence $n$-complements agrees the filtration
for the class of pairs under assumptions of
Existence of $n$-complements in the theorem.

\end{add}

\begin{add} \label{generic_complements_B}
In particular, the theorem and addenda applies
to generic pairs $(X/Z\ni o,B)$ instead of
with generic $(X/Z\ni o,B_{\sN\_\Phi})$.
\end{add}

\begin{add} \label{bd_generic_compl_fil}
The same holds for bd-pairs $(X/Z\ni o,B+\sP)$ of index $m$
with $\sN=\sN(d,I,\ep,v,e,\Phi,m)$.
That is,
\begin{description}

\item[\rm Restrictions:\/]
every $n\in\sN$ satisfies
Restrictions on complementary indices with the given data and
$m|n$.

\item[\rm Existence of $n$-complement:\/]
if $(X/Z\ni o,B+\sP)$ is a bd-pair of index $m$ with
$\dim X=d$, a boundary $B$, connected $X_o$ and
with generic $(X/Z\ni o,B_{\sN\_\Phi}+\sP)$
then $(X/Z\ni o,B+\sP)$ has an $n$-complement $(X/Z\ni o,B^++\sP)$ for some $n\in\sN$.

\end{description}
Addendum~\ref{generic_complements_fil}
holds literally.
In Addendum~\ref{standard_generic_complements}
$(X/Z\ni o,B^++\sP)$ is an $n$-complement of itself and
of $(X/Z\ni o,B_{n\_\Phi}+\sP), (X/Z\ni o,B_{n\_\Phi}{}^\sharp+\sP)$,
and is a b-$n$-complement of itself and
of $(X/Z\ni o,B_{n\_\Phi}+\sP),
(X/Z\ni o,B_{n\_\Phi}{}^\sharp+\sP),(X^\sharp/Z\ni o,B_{n\_\Phi}{}^\sharp{}_{X^\sharp}+\sP)$,
if $(X,B_{n\_\Phi}+\sP_X),(X,B_{n\_\Phi}{}^\sharp+\sP_X)$ are log bd-pairs respectively.
In Addendum~\ref{generic_complements_B}
$(X/Z\ni o,B+\sP),(X/Z\ni o,B_{\sN\_\Phi}+\sP)$
should be instead of
$(X/Z\ni o,B),(X/Z\ni o,B_{\sN\_\Phi})$ respectively.

\end{add}

\begin{war}
Sets $\sN(d,I,\varepsilon,v,e,\Phi),\sN(d,I,\varepsilon,v,e,\Phi,m)$ are
not unique and may be not suitable in other situations of the paper
(cf. Remark~\ref{N_with_parameters}, (1)).
We can take such minimal sets under inclusion but
they can be not unique.

\end{war}

It would be interesting to find a more canonical way
to construct such set $\sN(d,I,\varepsilon,v,e,\Phi),\sN(d,I,\varepsilon,v,e,\Phi,m)$
and/or to find an estimation on their largest element.

\begin{proof}
Take $\sN=\sN_d\cup\sN^{d-1}$,
where $\sN_d=\sN(d,I,\ep,v,e,\Gamma(\sN^{d-1},\Phi))$
is the finite set from Theorem~\ref{semiexcep_compl_with_filtration}
and $\sN^{d-1}=\sN(d-1,I,\ep,v,e,\widetilde{\Phi})$
is from  Construction~\ref{generic_induction}.
By Corollary~\ref{N_N'}
we can suppose that $\sN_d\cap \sN^{d-1}=\emptyset$.
By construction $\sN$ is a finite set of positive integers
and satisfies Restrictions.

Let $(X/Z\ni o,B)$ be a pair satisfying
the assumptions in Existence of $n$-complements.
Since $B$ is a boundary, $B_{\sN\_\Phi}$ is well-defined.

Step~1. {\em It is enough to construct
a b-$n$-complement $(X/Z\ni o,B^+)$ of
$(X^\sharp/Z\ni o,B_{n\_\Phi}{}^\sharp{}_{X^\sharp})$
for some $n\in \sN$.\/}
Indeed, then we have all required complements in the theorem and
in Addendum~\ref{standard_generic_complements}
by Propositions~\ref{D_D'_complement}, \ref{monotonicity_I}
and Corollary~\ref{B_B_+B_sN_Phi}.
In particular, the corollary and fact that
$(X/Z\ni o,B^+)$ is a b-$n$-complement of
$(X^\sharp/Z\ni o,B_{n\_\Phi}{}^\sharp{}_{X^\sharp})$
imply that
$(X/Z\ni o,B^+)$ is an $n$-complement of
$(X/Z\ni o,B)$ (cf. the proof of Addendum~\ref{standard_excep_compl}).

Step~2. {\em We can suppose that $(X/Z\ni o,B_{\sN^{d-1}\_\Phi})$
is generic and not semiexceptional\/}.
Indeed by Basic properties of generic pairs (1) and
Proposition~\ref{Phi_<Phi'},
$B_{\sN^{d-1}\_\Phi}\le B_{\sN\_\Phi}$ and the pair $(X/Z\ni o,B_{\sN^{d-1}\_\Phi})$
is also generic.
If $(X/Z\ni o,B_{\sN^{d-1}\_\Phi})$ is semiexceptional
then by Addendum~\ref{standard_excep_compl_semiexep_fil}
$(X^\sharp/Z\ni o,B_{n\_\sN^{d-1}\_\Phi}{}^\sharp{}_{X^\sharp})$
has a b-$n$-complement $(X/Z\ni o,B^+)$ for some $n\in\sN_d$.
We apply Theorem~\ref{semiexcep_compl_with_filtration} and
its Addendum~\ref{standard_excep_compl_semiexep_fil}
to the pair $(X,B_{\sN\_\Phi})$ with $\dim X=d$,
the boundary $B_{\sN\_\Phi}$ and
to the hyperstandard set $\Gamma(\sN^{d-1},\Phi)$
(see Proposition~\ref{every_sN_Phi_hyperstandard});
the other data is the same.
Notice that $X/Z$ is global in this case (cf. Remark~\ref{filtration_generic_type}, (2)),
that is , we can suppose that $Z=\pt$ and $X/Z$ is just $X$.
Since $(X,B_{\sN^{d-1}\_\Phi}),(X,B_{\sN\_\Phi})$ are generic,
$X$ has wFt and $(X,B_{\sN\_\Phi})$ has an $\R$-complement
by Corollary~\ref{generic_wFt} and definition respectively.
By definition and construction $\sN^{d-1}\subseteq \sN,
B_{\sN\_\Phi,\sN^{d-1}\_\Phi}=B_{\sN^{d-1}\_\Phi}$
and $(X,B_{\sN\_\Phi,\sN^{d-1}\_\Phi})$ is  semiexceptional.
By our assumptions $(X,B_{\sN\_\Phi})$ is also semiexceptional and
has generic type $d$.
Indeed, $B_{\sN^{d-1}\_\Phi}\le B_{\sN\_\Phi}$
and $(X,B_{\sN\_\Phi})$ has an $\R$-complement.
If $(X,B_{\sN\_\Phi})$ is not semiexceptional then
there exists $B'\ge B_{\sN\_\Phi}\ge B_{\sN^{d-1}\_\Phi}$ such that
$K+B'\sim_\R 0$ and $(X,B')$ is not lc.
Hence $(X,B_{\sN^{d-1}\_\Phi})$ is also not semiexceptional,
a contradiction.
Thus by Addendum~\ref{standard_excep_compl_semiexep_fil}
$(X^\sharp,B_{n\_\sN^{d-1}\_\Phi}{}^\sharp{}_{X^\sharp})$
has a b-$n$-complements $(X,B^+)$ with $n\in \sN_d$.
The constructed complement exactly
agrees with the filtration.
Hence Addendum~\ref{generic_complements_fil}
holds in this case too.

But we need a slightly weaker complement.
Again by Proposition~\ref{Phi_<Phi'} $B_{n\_\Phi}\le B_{n\_\sN^{d-1}\_\Phi}$.
Hence by Corollary~\ref{monotonicity_II} or arguments as in Step~4 below
$\B_{n\_\Phi}{}^\sharp\le \B_{n\_\sN^{d-1}\_\Phi}{}^\sharp$.
Thus by Proposition~\ref{D_D'_complement}
$(X,B^+)$ is also a b-$n$-complement of
$(X^\sharp,B_{n\_\Phi}{}^\sharp{}_{X^\sharp})$
($X^\sharp$ here is not necessary the same as above and below).

This concludes the semiexceptional case.
Below we suppose that $(X/Z\ni o,B_{\sN^{d-1}\_\Phi})$
is generic and not semiexceptional.
In this case we construct a b-$n$-complement
$(X/Z\ni o,B^+)$ of
$(X^\sharp/Z\ni o,B_{n\_\Phi}{}^\sharp{}_{X^\sharp})$
with $n\in\sN^{d-1}$.
We extend also the filtration of $\sN$
starting from $\sN^{d-1}$.

For simplicity of notation suppose now that
$B=B_{\sN^{d-1}\_\Phi}$, in particular,
$B\in \Gamma(\sN^{d-1},\Phi)$
(but this is not important for the following).
Indeed, by definition and Proposition~\ref{Phi_<Phi'}
$B_{\sN^{d-1}\_\Phi,n\_\Phi}=B_{n\_\Phi}$ and
$B_{\sN^{d-1}\_\Phi,n\_\sN^{i-1}\_\Phi}=B_{n\_\sN^{i-1}\_\Phi}$
for every $n\in\sN^{d-1},0\le i\le d$.
By Step~2 $(X/Z\ni o,B)$ is generic and not semiexceptional itself.
(Actually, if $(X/Z\ni o,B)$ is generic and not semiexceptional then
the same holds for $(X/Z\ni o,B_{\sN\_\Phi}),(X/Z\ni o,B_{\sN^{d-1}\_\Phi})$.)

Step~3. {\em Construction of $(Y/Z\ni o',B_Y^+)$
as in Lemma~\ref{exist_plt_model} and
$(Y/Z\ni o',D)$ as in Corollary~\ref{plt_b_n_complement}.\/}
Indeed, we can apply the lemma to $(X/Z\ni o,B)$.
So, there exists an $\R$-complement with
a crepant model $(Y/Z\ni o',B_Y^+)$ as in(\ref{plt_model}) and
such that

\begin{description}

\item[\rm (1)\/]
$Y\dashrightarrow X$ is a birational $1$-contraction,
in particular, every prime divisor of $X$ is a divisor on $Y$;

\item[\rm (2)\/]
$o'$ is a sufficiently general point of
the closure of $o$;

\item[\rm (3)\/]
the central fiber $Y_{o'}$ is connected;

\item[\rm (4)\/]
$(Y,B_Y^+)$ is plt with a complete single lc center $S$;
and

\item[\rm (5)\/]
there exists an effective ample over $Z\ni o'$ divisor $A$ on $Y$
with
$$
S\not\in\Supp A\subseteq\Supp(B_Y^+-B\logb_Y).
$$

\end{description}
(1-2) and (4-5) hold by Lemma~\ref{exist_plt_model}.
The connectedness in (3) and (4)
holds by the connectedness of $X_o$ and
(5) with lc connectedness respectively.

Now we take a boundary $D=B_Y^+-aA$ on $Y$,
where $a$ is a sufficiently small real number.
The pair $(Y/Z\ni o',D)$ satisfies the assumptions
of Corollary~\ref{plt_b_n_complement}.
(1) of the corollary holds by construction and (2-3).
(2) of the corollary follows from (4-5).
(3) of the corollary follows from (5):
$$
-(K_Y+D)=-(K_Y+B_Y^+)+aA\equiv aA/Z\ni o'.
$$
By (1), construction and since $B$ is a boundary,
$D$ is actually a boundary if
$a$ is sufficiently small.
Additionally, we can suppose that
\begin{description}
  \item[\rm (6)\/]
for every prime divisor $P$ of $X$,
$$
\mult_P D=\mult_P \D\ge \mult_P B.
$$
\end{description}
Indeed, by (1) $P$ is a prime divisor on $Y$
and
$$
\mult_P \D=\mult_P D=(\mult_P B_Y^+)-a\mult_P A.
$$
Thus by (5) $\mult_P D=\mult_PB_Y^+\ge \mult_P B$ if
$P\not\in\Supp A\subseteq\Supp(B_Y^+-B\logb_Y)$.
Otherwise $P$ belongs to a finite set for which
$\mult_PB_Y^+>\mult_P B$.
Notice also that $S\not\in \Supp A$ by (5)
and $\mult_S D=\mult_SB_Y^+=1$.

Step~4. {\em It is enough to construct a b-$n$-complement
of $(Y^\sharp/Z\ni o',D_{n\_\Phi}{}^\sharp{}_{Y^\sharp})$
for some $n\in\sN^{d-1}$.\/}
Indeed, this  complement will be also
a b-$n$-complement of $(X^\sharp/Z\ni o',B_{n\_\Phi}{}^\sharp{}_{X^\sharp})$
and of $(X^\sharp/Z\ni o,B_{n\_\Phi}{}^\sharp{}_{X^\sharp})$
by Proposition~\ref{D_D'_complement} because $o'$ is
a specialization of $o$ and
$\D_{n\_\Phi}{}^\sharp\ge \B_{n\_\Phi}{}^\sharp{}_{X^\sharp}$.
The last inequality follows from (6) by Corollary~\ref{monotonicity_III},
actually, for any $n$ and $\Phi$.
We apply the corollary to
$(X^\sharp/Z\ni o,B_{n\_\Phi}{}^\sharp{}_{X^\sharp})$
with a birational $1$-contraction $X\dashrightarrow X^\sharp/Z\ni o$.
Such a contraction exists by Construction~\ref{sharp_construction}
because $B_{n\_\Phi}\le B$ and $(X/Z\ni o',B_{n\_\Phi})$
has an $\R$-complement again by Proposition~\ref{D_D'_complement}.
The same arguments imply that Construction~\ref{sharp_construction}
is applicable to $(Y/Z\ni o',D_{n\_\Phi})$.
The assumption~(1) of Corollary~\ref{monotonicity_III}
holds by the construction.
On the other hand, by construction $Y\dashrightarrow X^\sharp/Z\ni o'$
is a birational $1$-contraction and every prime divisor of $X^\sharp$
is a divisor on $X$ and $Y$.
Hence the assumption~(2) of Corollary~\ref{monotonicity_III}
follows from (6), the definition of $(-)_{n\_\Phi}$
and Proposition~\ref{monotonicity_I}: for every
prime divisor $P$ of $X^\sharp$,
$$
\mult_P B_{n\_\Phi}{}^\sharp{}_{X^\sharp}=
\mult_P B_{n\_\Phi}\le \mult_P D_{n\_\Phi}\le
\mult_P\D_{n\_\Phi}{}^\sharp.
$$

Step~5. {\em Construction of a b-$n$-complement
of $(Y^\sharp/Z\ni o',D_{n\_\Phi}{}^\sharp{}_{Y^\sharp})$
for some $n\in\sN^{d-1}$.\/}
We apply Corollary~\ref{plt_b_n_complement} to
$(Y/Z\ni o',D)$.
The assumptions~(1-3) of the corollary we verified in Step~3.
So, we need to verify (4) of Corollary~\ref{plt_b_n_complement}.
As in Corollary~\ref{induced_pair} but now by~(5),
the adjoint pair $(S,D_S)$ in (4) of Corollary~\ref{plt_b_n_complement}
is generic global of
dimension $\dim X -1=d-1$.
Notice that $S$ is irreducible by (3) and
\cite[Lemma~3.6]{Sh92}.
By (4) and construction, $(S,D_S)$ is klt with a boundary $D_S$.
It has a highest crepant model $(S',D_{S'})$
with a boundary $D_{S'}$.
By Basic properties of generic pairs~(3)
$(S',D_{S'})$ is also generic.
By Corollary~\ref{generic_wFt} $S'$ has also wFt.
Thus by Construction~\ref{generic_induction}
$(S'^\sharp, D_{S,n\_\widetilde{\Phi}}{}^\sharp{}_{S'^\sharp})$
has a b-$n$-complement $(S',D_{S'}^+)$ for some $n\in\sN^{d-1}$.
This is exactly (4) of Corollary~\ref{plt_b_n_complement}.
More accurately, we need to use
the induced b-$n$-complement $(S'^\sharp,D_{S'^\sharp}^+)$.

So, the theorem and Addendum~\ref{standard_generic_complements} are established.

Step~6. {\em Addendum~\ref{generic_complements_fil}.\/}
We use dimensional induction to construct $\sN^{d-1}$
with a required filtration.
By~\ref{direct_hyperst_on_div} $\widetilde{\Phi}$ is already closed:
$\widetilde{\widetilde{\Phi}}=\widetilde{\Phi}$.
In the induction we work with $\widetilde{\Phi}$ instead of $\Phi$.
By Steps~1-5 above and especially by Lemma~\ref{exist_plt_model},
we can suppose that $X$ is global and every pair $(X,B)$
in the induction has $\dim X\le d-1$.
By~(4-5) of Step~3 $(X,B)$ is generic.
Essentially, we verify the induction in Construction~\ref{generic_induction}.

{\em Induction step\/} $d=0$. Take $\sN_0=
\sN(0,I,\ep,v,e,\widetilde{\Phi})$, a finite set
from Theorem~\ref{semiexcep_compl_with_filtration}
in dimension $0$.
By construction $\sN_0$ is a finite set of positive integers
and satisfies Restrictions on complementary indices with the given data.
The existence of complements in dimension $0$ is trivial.
Moreover, we can add any finite set of positive integers $\sN'=\sN\1$
satisfying Restrictions:
$\sN^0=\sN_0\cup\sN'$ and $\sN_0\cap\sN'=\emptyset$.
Actually \cite[Corollary~1.3]{BSh} is enough for this step.

{\em General step of induction:
construction of $\sN^{i+1}$.\/}
Suppose that a filtration is constructed up to $\sN^i, 0\le i\le d-2$,
and the filtration satisfies Addendum~\ref{generic_complements_fil}
in dimension $i$ with $\widetilde{\Phi}$ instead of $\Phi$.
The class of pairs consists of global generic pairs $(X,B)$
with a boundary $B$ and $\dim X=i$.
Additionally we can assume that $(X,B)$ is klt and highest
with a boundary.
Take $\sN_{i+1}=\sN(i+1,I,\ep,v,e,\Gamma(\sN^i,\widetilde{\Phi}))$
of Theorem~\ref{semiexcep_compl_with_filtration}.
By Corollary~\ref{N_N'}
we can suppose that $\sN_{i+1}$ is disjoint from $\sN^i$.
This gives a required filtration up to $\sN^{i+1}=\sN_{i+1}\cup\sN^i$ and
by induction up to $\sN^{d-1}=\sN_{d-1}\cup\sN^{d-2}$.
By construction $\sN^{i+1}$ is a finite set of positive integers
and satisfies Restrictions.
We need to verify Addendum~~\ref{generic_complements_fil}
in dimension $i+1$ with $\widetilde{\Phi}$ instead of $\Phi$.
The class of pairs consists of global generic pairs $(X,B)$
with a boundary $B$ and $\dim X=i+1$.
Additionally we assume that $(X,B)$ is klt and highest
with a boundary.
In particular, we assign generic type
$0\le t\le i+1$ for every such pair.
For this we apply Steps~1-5 above with $d=i+1$ and $\Phi=\widetilde{\Phi}$.

By Step~1 it is enough to construct
a b-$n$-complement $(X,B^+)$ of
$(X^\sharp,B_{n\_\sN^{t-1}\_\widetilde{\Phi}}{}^\sharp{}_{X^\sharp})$
with $n\in \sN_t$ for $(X,B)$ of type $t$.

By our assumptions $(X,B)$ is generic.
Thus $(X,B_{\sN^i\_\widetilde{\Phi}})$
is also generic by Basic properties of generic pairs (1).
If $(X,B_{\sN^i\_\widetilde{\Phi}})$ is
additionally semiexceptional then its type is $i+1$ and
the required b-$n$-complement of type $i+1$ with $n\in\sN_{i+1}$
exists by Addendum~\ref{standard_excep_compl_semiexep_fil}.

So, we assume that $(X,B_{\sN^i\_\widetilde{\Phi}})$ is
not semiexceptional as in Step~2.
In this case $t\le i$.
By Steps~3-5 a required b-$n$-complement is extended
from a b-$n$-complement of a generic pair $(S,D_S)$
of dimension $i$.
By induction the pair has some type $0\le t\le i$.
This is a type of $(X,B)$ and of $(X,B_{\sN^i\_\widetilde{\Phi}})$.
Indeed, by induction $(S^\sharp,D_{S,n\_\sN^{t-1}\_\widetilde{\Phi}}{}^\sharp{}_{X^\sharp})$
has a b-$n$-complement of type $t$ with $n\in \sN_t$.
By Steps~4-5, the complement can be extended
to a b-$n$-complement $(X,B^+)$ of
$(X^\sharp,B_{n\_\sN^{t-1}\_\widetilde{\Phi}}{}^\sharp{}_{X^\sharp})$
with the same $n\in \sN_t$ for $(X,B)$ and same type $t$.
Notice only that $B_{\sN^i\_\widetilde{\Phi},n\_\sN^{t-1}\_\widetilde{\Phi}}=
B_{n\_\sN^{t-1}\_\widetilde{\Phi}}$ because
$t\le i$ and $n\in\sN^i,\sN^{t-1}\subseteq \sN^i$.

This completes the induction in Construction~\ref{generic_induction}.

Finally, to complete the proof of Addendum~~\ref{generic_complements_fil}
we assign type $t\le d-1$ to every  pair
$(Y/Z\ni o',D)$ of Step~3.
This is the type of $(S',D_{S'})$ in Step~5.
The same type we assign to $(X/Z\ni o,B)$.
This is immediate for $(Y/Z\ni o',D)$ by
Corollary~\ref{plt_b_n_complement} as in Step~5.
By arguments of Step~4 this applies also to $(X/Z\ni o,B)$.

Warning:
In the proof of the theorem we
can't replace $(X/Z\ni o, B)$ by its
crepant model $(X'/Z\ni o, B_{X'})$ because
$'$ and $\sN\_\Phi$ do not commute.
Moreover,
assumptions in Existence of $n$-complements
do not imply that $(X'/Z\ni o,B_{X',\sN\_\Phi})$ is generic.
(This is true but in opposite direction.)
However, we do not need this in the induction because we take $'$ on
an induced model with the generic property
(cf. Corollary~\ref{plt_b_n_complement}
and Step~5).

Step~7. {\em Other addenda.\/}
Addendum~\ref{generic_complements_B} follows from
Basic properties of generic pairs~(1).

Similarly we can treat bd-pairs.

\end{proof}

\begin{cor} \label{reg_generic type}
Under assumptions and in notation of Theorem~\ref{generic_complements} if
$(X/Z\ni o,B_{\sN\_\Phi})$ has generic type $i$ then
$$
\reg(X)\ge \reg(X/Z\ni o,B^+)\ge d-i-1.
$$
More precisely, with the additional
(filtration) semiexceptional type $(r,f)$ with:
$$
\reg(X/Z\ni o)\ge \reg(X/Z\ni o,B^+)\ge d+r-i.
$$
\end{cor}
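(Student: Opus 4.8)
The plan is to unwind, inside the proof of Theorem~\ref{generic_complements}, the construction of a complement of generic type $i$ and to propagate regularity through it. Recall from Definition~\ref{filtration_generic_df} and Remark~\ref{filtration_generic_type}, (1), that such a complement $(X/Z\ni o,B^+)$ is obtained from a semiexceptional b-$n$-complement in dimension $i$ by exactly $d-i$ successive divisorial adjunction extensions: when the current pair is not semiexceptional one passes, by Lemma~\ref{exist_plt_model}, to a plt model with a single lc centre $S$, forms the adjoint generic pair $(S,B_S)$ together with its highest crepant model $(S',B_{S'})$, and recurses in dimension one less; the recursion stops, after $d-i$ steps, at a generic pair in dimension $i$ which is semiexceptional, of some (filtration) semiexceptional type $(r,f)$ (with the convention $r=-1$ in the exceptional case $(-1,-)$). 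By the definition of the semiexceptional type and Corollary~\ref{reg_semiexceptional type}, the b-$n$-complement of that bottom dimension-$i$ pair has regularity $r$, and by Corollary~\ref{plt_b_n_complement} each step extends a complement $(S'^\sharp,B_{S'^\sharp}^+)$ to one $(X^\sharp,B^+)$ with $\mult_S B^+=1$ and $\Diff(B^+-S)=B_S^+$, so that $S$ is a genuine lc centre of the extended complement whose adjoint complement is crepant to the complement of the previous stage.

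First I would isolate the one geometric input, the behaviour of regularity under divisorial adjunction: if $(W,B_W^+)$ is an lc $0$-pair and $S$ a reduced prime divisor of $B_W^+$ with adjoint pair $B_S^+=\Diff(B_W^+-S)$, then
\[
\reg(W,B_W^+)\ \ge\ \reg\bigl(S,B_S^+\bigr)+1 ,
\]
since every lc centre of $(S,B_S^+)$ is an lc centre of $(W,B_W^+)$ of the same dimension while $\dim W=\dim S+1$; this is the standard adjunction property of Shokurov's regularity \cite[Proposition-Definition~7.9]{Sh95} (cf. \cite[3.1]{Sh92}), and inversion of adjunction guarantees that $S$ is indeed an lc centre of $(W,B_W^+)$. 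As regularity is invariant under crepant birational modifications, the highest crepant models and maximal models occurring in the descent do not change it. Iterating the displayed inequality along the $d-i$ steps, starting from the value $r$ at the bottom, yields
\[
\reg(X/Z\ni o,B^+)\ \ge\ r+(d-i)\ =\ d+r-i .
\]
Since $r\ge -1$ this gives in particular $\reg(X/Z\ni o,B^+)\ge d-i-1$, the crude form; and $\reg(X)\ge\reg(X/Z\ni o,B^+)$, respectively $\reg(X/Z\ni o)\ge\reg(X/Z\ni o,B^+)$, because by definition the regularity of the variety, respectively of the local morphism, is the supremum of $\reg(X,D^+)$ over the $\R$-complements $D^+$ of $(X/Z\ni o,0)$, and $B^+\ge B\ge 0$ is one of them.

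The main obstacle I expect is the regularity bookkeeping rather than the geometry: one has to verify that the $d-i$ extensions genuinely stack $d-i$ independent lc centres, i.e. that at each stage the adjoint pair is, up to crepant modification, precisely the pair whose complement was extended --- this is where $\mult_S B^+=1$ and $\Diff(B^+-S)=B_S^+$ from Corollary~\ref{plt_b_n_complement} and inversion of adjunction must be used with care in the possibly non-$\Q$-factorial b-divisorial setting, and where one checks that the complement of the bottom pair really has regularity $\ge r$ and that the semiexceptional type $(r,f)$ propagated up the recursion is the one entering the statement. The auxiliary regularity-under-adjunction inequality, though standard, should be stated and cited precisely, as it is the only point where the local structure of log canonical singularities (rather than the combinatorics of the construction) is used.
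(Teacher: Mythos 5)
Your proposal is correct and follows essentially the same route as the paper's own proof: dimensional induction along the extension construction of Theorem~\ref{generic_complements}, with Theorem~\ref{extension_n_complement} and Corollary~\ref{plt_b_n_complement} giving that each adjunction extension raises $\reg$ by one, the bottom semiexceptional (or exceptional) pair contributing $\reg=r$ via Corollary~\ref{reg_semiexceptional type} (respectively $\reg\ge-1$), and the left inequalities coming from the monotonicity/definition of $\reg$ as in \cite[Proposition-Definition~7.11]{Sh95}. The only slip is the parenthetical claim $B^+\ge B$, which need not hold for a non-monotonic $n$-complement; but your argument only uses $B^+\ge 0$ (a boundary), so nothing is affected.
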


In general, we can't replace $\reg(X/Z\ni o)=\reg(X/Z\ni o,0)$
by $\reg(X/Z\ni o,B)$.
However, this works if $B^+\ge B$.

\begin{proof}
Indeed, the first right inequality hold for type $d$
because $\reg(X/Z\ni o,B^+)\ge -1$ ($-1$ for the empty $\mathrm{R}(X/Z\ni o,B^+)$).
Otherwise by induction, Theorem~\ref{extension_n_complement} and
Corollary~\ref{plt_b_n_complement}
the extension increase $\reg$ by $1$:
$$
\reg(X/Z\ni o,B^+)=\reg(S',B_{S'}^+)+1=
\reg(S,B_S^+)+1\ge d-1-i -1 +1=d-i-1.
$$
The second right inequality follows by the same arguments.
However the induction use definition:
if (filtration) semiexceptional generic $(X,B)$ of dimension $d$ has
the (filtration) semiexceptional type $(r,f)$
then $\reg(X,B^+)= r$ (cf. Corollary~\ref{reg_semiexceptional type}).

Both left inequalities with $\reg(X/Z\ni o)$ hold
by \cite[Proposition-Definition~7.11]{Sh95}.
\end{proof}

\section{Klt nongeneric complements: lifting of generic} \label{klt_nongeneric}

In this section we establish Theorem~\ref{bndc}
under the assumption (1) of the theorem.

\paragraph{Klt type.}
A pair $(X/Z\ni,B)$ with a boundary $B$ has
{\em klt\/} type if
there exists a klt $\R$-complement
$(X/Z\ni o,B^+)$ of $(X/Z\ni o,B)$.
In this situation a log pair $(X/Z\ni o,B)$ is klt itself.

The same definition works
for bd-pairs $(X/Z\ni o,B+\sP)$ with a boundary$B$.

\begin{const} \label{const_b_contraction}
Let $(X/Z\ni o,B)$ be a pair of klt type with wFt $X/Z\ni o$.
Then by Construction~\ref{sharp_construction}
there exists an {\em associated\/} b-$0$-contraction:
a birational $1$-contraction $\varphi$
to a $0$-contraction $\psi$ over $Z\ni o$
$$ \label{klt_model}
\begin{array}{ccc}
(X,B)&\stackrel{\varphi}{\dashrightarrow}&(X^\sharp,B_{X^\sharp})\\
\downarrow&&\psi\downarrow\\
Z\ni o&\leftarrow& (Y,B^\sharp{}\dv+B^\sharp{}\md)
\end{array},
$$
where
\begin{description}

  \item[]

$(X^\sharp/Z\ni o,B_{X^\sharp})$ is a maximal model of $(X/Z\ni o,B)$
with contracted fixed over $Z\ni o$ components of $-(K+B)$,
in particular, $B_{X^\sharp}^\sharp=B_{X^\sharp}$;

  \item[]
$\psi$ is a $0$-contraction given over $Z\ni o$ by
the nef over $Z\ni o$ divisor
$-(K_{X^\sharp}+B_{X^\sharp})$;
$\psi$ satisfies the assumptions of~\ref{adjunction_0_contr};

  \item[]
$B^\sharp{}\dv,B^\sharp{}\md$ are respectively the divisorial and
moduli part of adjunction for $\psi$;

  \item[]
$(Y/Z\ni o,B^\sharp{}\dv+\sB^\sharp{}\md)$
is the {\em adjoint\/} generic klt bd-pair.

\end{description}
Indeed, since $(X/Z\ni o,B)$ has a klt $\R$-complement,
$(X^\sharp,B_{X^\sharp})$ also has a klt $\R$-complement.
Hence by~\ref{adjunction_div}, (6)
$(Y/Z\ni o,B^\sharp{}\dv+\sB^\sharp{}\md)$
is klt too.
The bd-pair is generic
by construction and~\ref{adjunction_div}, (8) and~(12).

Note that if $X_o$ is connected then
$X_o^\sharp$ and $Y_o$ are connected too.

The same construction works
for bd-pairs $(X/Z\ni o,B+\sP)$
of klt type with wFt $X/Z\ni o$.

\end{const}

A pair $(X/Z\ni o,B)$ of klt type is
generic itself if and only if $\psi$ is birational.
Otherwise $(X/Z\ni o,B)$ is fibered and
we can apply Theorem~\ref{generic_complements},
the existence of generic $n$-complements.

\begin{defn}
A klt type pair $(X/Z\ni o, B)$  with wFt $X/Z\ni o$
has {\em klt type\/} $f$ if
in Construction~\ref{const_b_contraction}
$\dim Y=f$.
So, $f\in \Z$ and $0\le f\le \dim X$.

The same definition works
for bd-pairs $(X/Z\ni o,B+\sP)$
of klt type with wFt $X/Z\ni o$.

\end{defn}

In particular, the generic type is exactly a klt one with $f=\dim X$.

\begin{const}[Cf. Construction~\ref{semiexceptional_induction}] \label{klt_induction}
Let $d$ be a nonnegative integer and
$\Phi=\Phi(\fR)$ be a hyperstandard set associated with
a finite set of rational numbers $\fR$ in $[0,1]$.
By Theorem~\ref{adjunction_index}
there exists a positive integer $J$ such that
every contraction $\psi\colon (X^\sharp,B_{X^\sharp})\to Y/Z\ni o$
of Construction~\ref{const_b_contraction} has
the adjunction index $J$ if
\begin{description}

\item[\rm (1)\/]
$\dim X=d$, and

\item[\rm (2)\/]
$B\hor\in\Phi$.

\end{description}
Moreover, there exists
a finite set of rational numbers $\fR'$ in $[0,1]$
such that $\Phi'=\Phi(\fR')$ satisfies Addendum~\ref{adjunction_index_div}.
More precisely, $\fR'$ is defined by~(\ref{const_adj}),
where
$$
\fR''=[0,1]\cap \frac \Z J.
$$

By Addendum~\ref{adjunction_index_bd},
the same adjunction index $J$ has every contraction
$\psi\colon (X^\sharp,B_{X^\sharp}+\sP)\to Y/Z\ni o$
of Construction~\ref{const_b_contraction} if we
apply the construction to a bd-pair $(X/Z\ni o,B+\sP)$ and suppose
additionally to (1-2) that $(X/Z\ni o,B+\sP)$ is a bd-pair of index $m$,
or equivalently, $(X^\sharp/Z\ni o,B_{X^\sharp}+\sP)$ is
a log bd-pair of index $m$.

Let $I,\ep,v,e$ be the data
as in Restrictions on complementary indices in Section~\ref{intro} and
$f$ be a nonnegative integer such that $f\le d-1$.
By Addenda~\ref{generic_complements_B}-\ref{bd_generic_compl_fil} of
Theorem~\ref{generic_complements} or
by dimensional induction there exists a finite set of
positive integers
$\sN=\sN(f,I,\ep,v,e,\Phi',J)$  such that
\begin{description}

\item[\rm Restrictions:\/]
every $n\in\sN$ satisfies
Restrictions on complementary indices with the given data[,
in particular, $J|n$];

\item[\rm Existence of $n$-complement:\/]
if $(Y/Z\ni o,B_Y+\sQ)$ is a generic bd-pair of dimension $f$ and
of index $J$ with wFt $Y/Z\ni o$, with a boundary $B_Y$ and
connected $Y_o$
then $(Y^\sharp/Z\ni o,B_{Y,n\_\Phi'}{}^\sharp{}_{Y^\sharp}+\sQ)$
has a b-$n$-complement
$(Y/Z\ni o,B_Y^++\sQ)$ for some $n\in \sN$.

\end{description}

\end{const}

\begin{thm} \label{klt_compl}
Let $d,\fR,\Phi,J,I,\fR',\Phi',\ep,v,e,f,\sN$ be
the data of Construction~\ref{klt_induction}.
Let $(X/Z\ni o,B)$ be a pair with a boundary $B$,
connected $X_o$ such that
\begin{description}

\item[\rm (1)\/]
$X/Z\ni o$ has wFt;

\item[\rm (2)\/]
$\dim X=d$;
and

\item[\rm (3)]
both pairs
$$
(X/Z\ni o,B_\Phi),\ (X/Z\ni o,B_{\sN\underline{\ }\Phi})
$$
have the same klt type $f$.

\end{description}
Then there exists $n\in\sN$ such that $(X/Z\ni o,B)$ has
an $n$-complement $(X/Z\ni o,B^+)$.

\end{thm}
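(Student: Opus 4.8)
The plan is to follow the blueprint of the proof of Theorem~\ref{semiexcep_compl}, with the b-contraction associated to lc singularities (Construction~\ref{b_fib_lc}) replaced by the b-$0$-contraction of Construction~\ref{const_b_contraction}, and with exceptional $n$-complements replaced by generic ones (Theorem~\ref{generic_complements}, in the packaged form of Construction~\ref{klt_induction}). First I would reduce to $B=B_{\sN\_\Phi}$: by Proposition~\ref{Phi_<Phi'} one has $B_{\sN\_\Phi,n\_\Phi}=B_{n\_\Phi}$ and $B_{\sN\_\Phi,\Phi}=B_\Phi$ for every $n\in\sN$, and by Corollary~\ref{B_B_+B_sN_Phi} any $n$-complement of $(X/Z\ni o,B_{\sN\_\Phi})$ with $n\in\sN$ is automatically an $n$-complement of $(X/Z\ni o,B)$; thus I may assume $B=B_{\sN\_\Phi}$, and assumption (3) then reads: $(X/Z\ni o,B_\Phi)$ and $(X/Z\ni o,B)$ have the same klt type $f$. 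Since $f\le d-1$ in the data of Construction~\ref{klt_induction}, the associated $0$-contraction is a genuine fibration, the generic case $f=d$ being already disposed of by Theorem~\ref{generic_complements}.

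Next I would run Construction~\ref{const_b_contraction} on $(X/Z\ni o,B)$, producing a maximal model $(X^\sharp/Z\ni o,B_{X^\sharp})$, the $0$-contraction $\psi\colon X^\sharp\to Y/Z\ni o$ given by the nef divisor $-(K_{X^\sharp}+B_{X^\sharp})$, and the adjoint generic klt bd-pair $(Y/Z\ni o,B^\sharp{}\dv+\sB^\sharp{}\md)$, with $\dim Y=f$ and $Y_o$ connected. By Construction~\ref{klt_induction} (that is, by Theorem~\ref{adjunction_index} and Theorem~\ref{b_nef}, cf. Addendum~\ref{adjunction_index_adjoint}) the contraction $\psi$ has adjunction index $J$ and $(Y/Z\ni o,B^\sharp{}\dv+\sB^\sharp{}\md)$ is a bd-pair of index $J$; moreover $B^\sharp{}\dv\in\Phi'$ by Addendum~\ref{adjunction_index_div} (after the harmless normalisation $1\in\fR$), since $B_{X^\sharp}\hor\in\Phi$. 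I would then feed the generic bd-pair $(Y/Z\ni o,B^\sharp{}\dv+\sB^\sharp{}\md)$ of dimension $f$ and index $J$ into the Existence clause of Construction~\ref{klt_induction} (the bd-version of Theorem~\ref{generic_complements}): this yields some $n\in\sN$ and a b-$n$-complement of a maximal model of $(Y/Z\ni o,(B^\sharp{}\dv)_{n\_\Phi'}+\sB^\sharp{}\md)$. As in Step~3 of the proof of Theorem~\ref{semiexcep_compl}, I would realise this b-$n$-complement on a crepant model $Y'$ of $Y$ blowing up the finitely many prime b-divisors $Q$ that are images of prime divisors of $X$ and are exceptional over $Y$, so as to secure there the strengthened inequality $d\dv^+{}_{,Q}\ge b_{Q,n\_\Phi'}$.

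The lifting step is then Theorem~\ref{invers_b_n_comp} together with Addendum~\ref{invers_b_n_comp_monotonic}, applied to the $0$-contraction $(X^\sharp,B_{X^\sharp})\to Y/Z\ni o$ (of adjunction index $J$, with $J\mid n$), with $S$ a neighbourhood of $o$, $Z':=Y'$, and $D\dv'$ the divisorial part of the base b-$n$-complement: hypothesis (1) of that theorem is the strengthened inequality just arranged, hypothesis (2) holds because $K_{Y'}+D\dv'+(B^\sharp{}\md)_{Y'}\sim_n 0$ over $Z\ni o$ is antinef by Definition~\ref{bd_n_comp}(3), and hypothesis (3) holds by Example~\ref{b_n_comp_of_itself}(2). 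The conclusion gives a b-divisor $\D^+$ adjunction-corresponding to $\D\dv^+$ such that, on a maximal model of $(X^\sharp/Z\ni o,(B_{X^\sharp})_{n\_\Phi})$, its trace is a b-$n$-complement and, in particular, it dominates $B_{n\_\Phi}{}^\sharp\ge B_{n\_\Phi}$. Pulling this b-$n$-complement back to $X$ by the birational nature of b-$n$-complements (Definition~\ref{b_n_comp}, Remark~\ref{rem_def_b_n_compl}(1)) and using Example~\ref{b_n_comp_of_itself}(1), Corollary~\ref{B+_B_n_compl} and Proposition~\ref{1_of def_2 for B}, one obtains an $n$-complement $(X/Z\ni o,B^+)$ of $(X/Z\ni o,B)$ with $n\in\sN$, as required. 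The bd-case is identical once $\sP$ is carried along Construction~\ref{const_b_contraction}, the base bd-pair acquiring the $\psi$-moduli contribution of $\sP$ in addition to $\sB^\sharp{}\md$ and index $J$ (with $m\mid J\mid n$).

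The step I expect to be the main obstacle is the compatibility of the two hyperstandard approximations: verifying that the $\Phi'$-low-approximation produced on the base $Y$, lifted through the inverse adjunction correspondence of~\ref{adjunction_div}, dominates the $\Phi$-low-approximation $B_{n\_\Phi}{}^\sharp$ on $X^\sharp$. This is the klt-type analogue of Step~2 in the proof of Theorem~\ref{semiexcep_compl}, where the adjoint bd-pair of the $\Phi$-approximation had to be shown exceptional; here it is precisely where assumption (3) — that $(X/Z\ni o,B_\Phi)$ and $(X/Z\ni o,B)$ have the \emph{same} klt type $f$ — enters. Via Corollaries~\ref{monotonicity_II} and~\ref{monotonicity_III} (a klt-type analogue of Lemma~\ref{B_D_r_f}) this forces the adjoint generic bd-pair of $(X/Z\ni o,B_\Phi)$ to be dominated by, and to have the same moduli b-part as, that of $(X/Z\ni o,B)$, so that a single base b-$n$-complement serves both; the arithmetic matching $\Phi$ on $X^\sharp$ with $\Phi'$ on $Y$ is then the content of~\ref{n_Phi_inequality} and~\ref{direct_hyperst} with $\fR''=[0,1]\cap(\Z/J)$, exactly as inside Theorem~\ref{invers_b_n_comp}. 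Everything else — the reduction to $B_{\sN\_\Phi}$, the index bookkeeping, and the passage between birational models — is routine.
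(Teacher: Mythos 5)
Your overall route is the paper's: reduce to $B=B_{\sN\_\Phi}$, pass to the klt b-$0$-contraction of Construction~\ref{const_b_contraction}, obtain on the base a generic klt bd-pair of dimension $f$ and index $J$, produce a base b-$n$-complement on a crepant model where the images of the vertical prime divisors of $X$ are divisors, and lift it by Theorem~\ref{invers_b_n_comp} with Addendum~\ref{invers_b_n_comp_monotonic}. However, there is one genuine gap, and it sits exactly where hypothesis (3) must do its work. You assert that $\psi\colon(X^\sharp,B_{X^\sharp})\to Y$ has adjunction index $J$ ``by Construction~\ref{klt_induction}\dots since $B_{X^\sharp}\hor\in\Phi$''. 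That supporting claim is false: after the reduction you have $B=B_{\sN\_\Phi}$, whose multiplicities lie in $\Gamma(\sN,\Phi)$, which strictly contains $\Phi$ as soon as $\sN\neq\emptyset$; and you cannot instead invoke Theorem~\ref{adjunction_index} for the dcc set $\Gamma(\sN,\Phi)$, because the resulting index would depend on $\sN$, while $\sN$ was chosen in Construction~\ref{klt_induction} using $J$ --- that would be circular. Without adjunction index $J$ (with $J\mid n$) the application of Theorem~\ref{invers_b_n_comp} is not licensed.

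The correct argument is Step~2 of the paper's proof, and it is precisely the point of assumption (3): since $(X/Z\ni o,B_\Phi)$ and $(X/Z\ni o,B)$ have the \emph{same} klt type $f$ and $B_\Phi\le B$, the maximal models agree generically over $Y$ ($\B_\Phi{}^\sharp=\B^\sharp$ generically over $Y$), so the $0$-contraction produced by Construction~\ref{const_b_contraction} from $(X/Z\ni o,B_\Phi)$ --- whose horizontal boundary \emph{is} in $\Phi$, hence has adjunction index $J$ by Construction~\ref{klt_induction} --- is generically crepant over $Y$ to $\psi$, and Proposition~\ref{adjunction_same_mod_etc}, (5) transfers the index $J$ to $\psi$. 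In your closing paragraph you do bring in assumption (3) and the ``same moduli b-part'' of the two adjoint bd-pairs, but you deploy it for the $\Phi$-versus-$\Phi'$ approximation compatibility (which, once the index is $J$ and $\fR''=[0,1]\cap(\Z/J)$, is handled automatically inside Theorem~\ref{invers_b_n_comp} via~\ref{n_Phi_inequality}), and you never close the index claim itself. A further minor slip: ``$B^\sharp{}\dv\in\Phi'$'' is neither justified (at best $B\dv\in\Gamma(\sN,\Phi')$ by Addendum~\ref{adjunction_index_div}) nor needed --- only the boundary property of $B\dv$ from (12) of~\ref{adjunction_div} is used. With Step~2 supplied as above, the rest of your outline matches the paper's proof.
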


Notice that we do not assume that $(X/Z\ni o,B)$ has an $\R$-complement.

\begin{add} \label{standard_klt_compl_semiexep}
$(X/Z\ni o,B^+)$ is an $n$-complement of itself and
of $(X/Z\ni o,B_{n\_\Phi}), (X/Z\ni o,B_{n\_\Phi}{}^\sharp)$,
and is a b-$n$-complement of itself and
of $(X/Z\ni o,B_{n\_\Phi}),(X/Z\ni o,B_{n\_\Phi}{}^\sharp),(X^\sharp/Z\ni o,B_{n\_\Phi}{}^\sharp{}_{X^\sharp})$,
if $(X/Z\ni o,B_{n\_\Phi}),(X/Z\ni o,B_{n\_\Phi}{}^\sharp)$ are log pairs respectively.

\end{add}

\begin{add} \label{klt_compl_N_N'}
$\sN$ disjoint from any finite set of positive integers $\sN'$.
\end{add}

\begin{add}
The same holds for
the bd-pairs $(X/Z\ni o,B+\sP)$ of index $m$ and with $\sN$
as in Construction~\ref{klt_induction}.
That is,
\begin{description}

\item[\rm Existence of $n$-complement:\/]
if $(X/Z\ni o,B+\sP)$ is a bd-pair of index $m$ with a boundary $B$,
$X_o$ connected, under (1-2) and
such that
\item[]
both bd-pairs
$$
(X/Z\ni o,B_\Phi+\sP),\ (X/Z\ni o,B_{\sN\underline{\ }\Phi}+\sP)
$$
have the same klt type $f$,

then $(X/Z\ni o,B+\sP)$ has an $n$-complement $(X/Z\ni o,B^++\sP)$ for some $n\in\sN$.

\end{description}
In Addenda~\ref{standard_klt_compl_semiexep}
$(X/Z\ni o,B^++\sP)$ is
an $n$-complement of itself and
of $(X/Z\ni o,B_{n\_\Phi}+\sP), (X/Z\ni o,B_{n\_\Phi}{}^\sharp+\sP)$,
and is a b-$n$-complement of itself and
of $(X/Z\ni o,B_{n\_\Phi}+\sP),(X/Z\ni o,B_{n\_\Phi}{}^\sharp+\sP),
(X^\sharp/Z\ni o,B_{n\_\Phi}{}^\sharp{}_{X^\sharp}+\sP)$,
if $(X/Z\ni o,B_{n\_\Phi}+\sP_X),(X/Z\ni o,B_{n\_\Phi}{}^\sharp+\sP_X)$ are log bd-pairs
respectively.

\end{add}

The proof of the theorem is very similar to
proofs of Theorems~\ref{semiexcep_compl} and~\ref{generic_complements}.
So, we will be sketchy of it.

\begin{proof}
By Construction~\ref{klt_induction}
$\sN$ is a finite set of positive integers
and satisfies Restrictions.
So, it is enough to construct required $n$-complements.

Let $(X/Z\ni o,B)$ be a pair satisfying
the assumptions in Existence of $n$-complements.
Since $B$ is a boundary, $B_{\sN\_\Phi}$ is well-defined.
It is enough to construct
a b-$n$-complement $(X/Z\ni o,B^+)$ of
$(X^\sharp/Z\ni o,B_{n\_\Phi}{}^\sharp{}_{X^\sharp})$
for some $n\in \sN$
(cf. Step~1 in the proof of Theorem~\ref{generic_complements}).

As in the proof of Theorem~\ref{semiexcep_compl}
we can suppose that $B=B_{\sN\_\Phi}$,
that is, $(X/Z\ni o,B)$ has also klt type $f$.

According to Construction~\ref{const_b_contraction}
there exists $0$-contraction
$\psi\colon (X^\sharp,B_{X^\sharp})\to (Y,B^\sharp{}\dv+B^\sharp{}\md)$
over $Z\ni o$ with $\dim Y=f$.

Step~1. {\em We can suppose that $\psi$ is defined on $(X/Z\ni o,B)$.\/}
Equivalently, $\varphi$ is identical.
For this we use a small modification $\varphi$.
Thus we use $(X^\sharp/Z\ni o,B_{X^\sharp}^\sharp{}_{,X^\sharp})$
crepant to $(X^\sharp/Z\ni o,B_{X^\sharp})$ of Construction~\ref{const_b_contraction}.
By Proposition~\ref{monotonicity_I}
$B_{X^\sharp}^\sharp{}_{,X^\sharp}\ge B$.
Hence by definition and Corollary~\ref{monotonicity_II},
$\B_{X^\sharp}^\sharp{}_{,X^\sharp,n\_\Phi}{}^\sharp\ge
\B_{n\_\Phi}{}^\sharp$.
By Proposition~\ref{D_D'_complement} a b-$n$-complement of
$(X^{\sharp,\sharp}/Z\ni o,
B_{X^\sharp}^\sharp{}_{,X^\sharp,n\_\Phi}{}^\sharp{}_{X^{\sharp,\sharp}})$
is a b-$n$-complement of
$(X^\sharp/Z\ni o,B_{n\_\Phi}{}^\sharp{}_{X^\sharp})$.
On the other hand, $(X^\sharp/Z\ni o,B_{X^\sharp}^\sharp{}_{,X^\sharp})$
also has klt type $f$ because Construction~\ref{sharp_construction}
preserves the property by Proposition~\ref{monotonicity_I} and the invariance of
klt singularities for crepant $0$-pairs.

Additionally,
$(X^\sharp/Z\ni o,B_{X^\sharp}^\sharp{}_{,X^\sharp,\Phi})$
has the same klt type $f$.
Indeed, by Proposition~\ref{monotonicity_I},
construction and definition
$B_{X^\sharp,\Phi}\le B_{X^\sharp}^\sharp{}_{,X^\sharp,\Phi}\le
B_{X^\sharp}^\sharp{}_{,X^\sharp}$, where
$B_{X^\sharp}$ is the birational transform of $B$ on $X^\sharp$.
Since $X^\sharp$ is a small modification of $X$ over $Z\ni o$,
$(X^\sharp/Z\ni o, B_{X^\sharp,\Phi})$ has the same klt type $f$ as
$(X/Z\ni o,B_\Phi)$ (cf. Basic properties of generic pairs (2)).
Hence $(X^\sharp/Z\ni o,B_{X^\sharp}^\sharp{}_{,X^\sharp,\Phi})$
also has klt type $f$
because so does $(X^\sharp/Z\ni o,B_{X^\sharp}^\sharp{}_{,X^\sharp})$
(cf. Lemma~\ref{B_D_r_f}).

By Lemma~\ref{wTt_vs_Ft} $X^\sharp/Z\ni o$ has
wFt, $\dim X^\sharp=\dim X=d$.
(Actually we need the wFt property only for
Construction~\ref{const_b_contraction} and do not use it further.)

So, we can denote $(X^\sharp/Z\ni o,B_{X^\sharp}^\sharp{}_{,X^\sharp})$
by $(X/Z\ni o,B)$.
We have a $0$-contraction $\psi$ on $X$:
$$
(X,B)\stackrel{\psi}{\to} (Y,B\dv+B\md)/ Z\ni o.
$$
To construct a required complement we use Theorem~\ref{invers_b_n_comp}
with the same sets $\Phi,\Phi'$, the index $J$ and
some $n\in\sN$.
The sets $\Phi,\Phi'$  of Construction~\ref{klt_induction} agree
with~\ref{direct_hyperst} for $\fR''=[0,1]\cap(\Z/J)$ and
$J|n$ for every $n\in\sN$.
We apply the theorem to the $0$-contraction
$\psi\colon (X,B)\to Y/Z\ni o$ with lc $(X,B)$.
By construction the contraction $\psi$ satisfies~\ref{adjunction_0_contr},
$B$ is a boundary and $Y\to Z\ni o$ is proper (projective).
So, we need only to verify that $\psi$ has the adjunction index $J$,
to choose $n\in \sN$ and
to construct an appropriate model $Y'/Z\ni o$ of $Y/Z\ni o$
with an $\R$-divisor $D'\dv$ satisfying required properties.

Step~2. $(X,B)\to Y$ {\em has the adjunction index\/} $J$.
By~(3) $(X/Z\ni o,B_\Phi)$ has klt type $f$.
By Construction~\ref{const_b_contraction},
applied to $(X/Z\ni o,B_\Phi)$,
there exists another $0$-contraction
$\psi''\colon (X''^\sharp,B_{\Phi,X''^\sharp})\to Y''$ over $Z\ni o$
which is generically crepant to
$(X,B)\to Y$ over $Y$.
Indeed, $B_\Phi\le B$ and pairs
$(X/Z\ni o, B_\Phi),(X/Z\ni o,B)$ have
the same klt type $f$.
Thus $\B_\Phi{}^\sharp=\B^\sharp$
generically over $Y$.
Thus by Proposition~\ref{adjunction_same_mod_etc}, (5)
$\psi,\psi''$ have the same adjunction index.

On the hand, by definition and Construction~\ref{const_b_contraction}
$B_\Phi$ and $B_{\Phi,X''^\sharp}\in\Phi$.
Thus by Construction~\ref{klt_induction} and
Theorem~\ref{adjunction_index},
$(X''^\sharp,B_{\Phi,X''^\sharp})\to Y''$ has
the adjunction index $J$.
Hence $(X\ni o,B)\to Y$ also has the adjunction index $J$.

Step~3. {\em Choice of $n\in\sN$ and construction
of $(Y'/Z\ni o,D'\dv+\sB\md)$.\/}
By Construction~\ref{const_b_contraction}
the adjoint bd-pair $(Y/Z\ni o,B\dv+\sB\md)$
is generic klt.
By construction $\dim Y=f$,
$B\dv$ is a boundary and $Y_o$ is connected.
Additionally, $(Y/Z\ni o,B\dv+\sB\md)$ is
a bd-pair of index $J$ by \ref{adjunction_index_of}, (3)
(cf. Addendum~\ref{adjunction_index_adjoint}).
Thus by Existence of complement in Construction~\ref{klt_induction}
$(Y^\sharp/Z\ni o,B\dv{}_{,n\_\Phi'}{}^\sharp{}_{Y^\sharp}+\sB\md)$
has a b-$n$-complements for some $n\in\sN$.
However we need a slightly stronger complement
to apply Theorem~\ref{invers_b_n_comp}.

For this take a crepant blowup
$(\widetilde{Y},B\dv{}_{,\widetilde{Y}}+\sB\md)\to
(Y,B\dv+\sB\md)$ such that,
for every vertical over $Y$ prime divisor $P$ on
$X$, its image $Q=\psi(P)$ on $\widetilde{Y}$ is
a divisor.
By construction $(\widetilde{Y}/Z\ni o,B\dv{}_{,\widetilde{Y}}+\sB\md)$
is a bd-pair of index $J$ with $\dim \widetilde{Y}=f$ and
connected $\widetilde{Y}_o$.
By (12) in~\ref{adjunction_div}
$B\dv{}_{,\widetilde{Y}}$ is a boundary
because $(X,B)$ is lc.
Hence by Basic properties of generic pairs (3-4) and
Corollary~\ref{generic_wFt},
$(\widetilde{Y}/Z\ni o,B\dv{}_{,\widetilde{Y}}+\sB\md)$
is generic with wFt $\widetilde{Y}/Z\ni o$.
By definition of bd-pairs of index $J$,
$\sB\md$ is b-nef and $B\md{}_{,\widetilde{Y}}$
is pseudoeffective.
Thus again by
Existence of complement in Construction~\ref{klt_induction}
$(\widetilde{Y}^\sharp/Z\ni o,
B\dv{}_{,\widetilde{Y},n\_\Phi'}{}^\sharp{}_{\widetilde{Y}^\sharp}+\sB\md)$
has a b-$n$-complement for some $n\in \sN$.
This is our choice of $n$ and a required model
is
$$
(Y'/Z\ni o,D'\dv+\sB\md)=
(\widetilde{Y}^\sharp/Z\ni o,
B\dv{}_{,\widetilde{Y},n\_\Phi'}{}^\sharp{}_{\widetilde{Y}^\sharp}+\sB\md)
$$
with a small modification $\widetilde{Y}\dashrightarrow\widetilde{Y}^\sharp=Y'$.
Such a model with a small modification exists by
Construction~\ref{sharp_construction}.
Notice that notation $D'\dv$ means that $\D'\dv$
adjunction corresponds to some
b-divisor $\D'$ on $X$ \ref{adjunction_div}, (3).
By construction and definition
$(Y'/Z\ni o,D'\dv+\sB\md)$ satisfies (2-3)
of Theorem~\ref{invers_b_n_comp}.
By construction, for every vertical over $Y$ prime divisor $P$ on
$X$, its image $Q=\psi(P)$ on $\widetilde{Y}^\sharp$ is
a divisor and inequality~(1) of Theorem~\ref{invers_b_n_comp}
holds by Proposition~\ref{monotonicity_I}.
Thus by Theorem~\ref{invers_b_n_comp}
$(X^\sharp/Z\ni o,B_{n\_\Phi}{}^\sharp{}_{X^\sharp})$
has a b-$n$-complement.

Step~4. {\em Addenda.\/}
Addendum~\ref{klt_compl_N_N'} can
be proved by the same arguments as Corollary~\ref{N_N'}.

Similarly we can treat bd-pairs.

\end{proof}

\paragraph{Klt type filtration.\/}
Let $d, \Phi,\Phi'$ be the same data as in Construction~\ref{klt_induction} and
$\sN\supseteq\sN'$ be two sets of positive integers.
A {\em klt type filtration with respect to
dimension\/} $i$ is a (decreasing) filtration
\begin{equation} \label{filtration_klt}
\sN=\sN^0
\supseteq\dots\supseteq \sN^i
\supseteq\dots\supseteq \sN^d
\supseteq\sN',\
0\le i\le d.
\end{equation}
Its {\em associated\/} (decreasing) filtration of
hyperstandard sets is
$$
\Gamma(\sN^0,\Phi')
\supseteq\dots\supseteq \Gamma(\sN^i,\Phi')
\supseteq\dots\supseteq \Gamma(\sN^d,\Phi')
\supseteq \Gamma(\sN',\Phi').
$$
For dimension $0\le i<d$, put
$$
\sN_i=\sN^i\setminus
\sN^{i+1};
$$
for $i=d$, $\sN_d=\sN^d\setminus \sN'$.
That is, the next set in the last case is $\sN^{d+1}=\sN'$
but without dimension.

For every $i$, between $\sN^i\supseteq \sN^{i+1}$,
there exists an additional filtration
with respect to generic types in dimension $i$.

\begin{defn} \label{klt_filtration}
Let~(\ref{filtration_klt}) be a klt type filtration.
Consider a certain class of pairs $(X/Z\ni o,B)$
of dimension $d$ with boundaries $B$ which have
an (b-)$n$-complement with $n\in \sN^0$.
Such a {\em pair\/} $(X/Z\ni o,B)$ and its (b-)$n$-{\em complement
have klt type of dimension\/} $0\le i\le d$
{\em with respect to filtration\/}~(\ref{filtration_klt}) if
both pairs
$$
(X/Z\ni o,B_{\sN^{i+1}\_\Phi}),(X/Z\ni o,B_{\sN^i\_\Phi})
$$
are of klt type $i$.
(We suppose that $\sN^{d+1}=\sN'$.)

We say that the {\em existence\/} $n$-complements {\em agrees
with the klt type filtration\/} if
every pair $(X/Z\ni ,B)$ of klt type has an $n$-{\em complement\/}
$(X/Z\ni ,B^+)$
{\em of filtration klt type\/} $i$ with $n\in \sN_i$.
Additionally,
$(X/Z\ni o,B^+)$ {\em of the type\/} $i$ is
an $n$-complement of itself,
of $(X/Z\ni o,B_{n\_\sN^{i+1}\_\Phi}), (X/Z\ni o,B_{n\_\sN^{i+1}\_\Phi}{}^\sharp)$
and
a b-$n$-complement of itself,
of $(X/Z\ni o,B_{n\_\sN^{i+1}\_\Phi}),
(X/Z\ni o,B_{n\_\sN^{i+1}\_\Phi}{}^\sharp),
(X^\sharp/Z\ni o,B_{n\_\sN^{i+1}\_\Phi}{}^\sharp{}_{X^\sharp})$,
if $(X,B_{n\_\sN^{i+1}\_\Phi}),
(X,B_{n\_\sN^{i+1}\_\Phi}{}^\sharp)$ are log pairs respectively.

The same applies to bd-pairs of dimension $d$.

\end{defn}

Remark:
(1)
In particular, $(X/Z\ni o,B)$ has (filtration) klt type $d$ if
the pair is of generic type.
The converse holds for the plain klt type $d$.
But this is not true for filtration klt type (cf. (3) below).
If the pair has nongeneric filtration klt type $i$, that is, $i<d$
then by Theorems~\ref{klt_compl} above and~\ref{b_n_comp_klt} below
a required (b-)$n$-complement can be lifted
from a generic one in dimension $i$.

Note also that in the proof of Theorem~\ref{klt_compl}
we apply Construction~\ref{const_b_contraction}
to $(X/Z\ni o,B_{\sN\_\Phi})$, where $\sN=\sN^i$
($f=i$ in the theorem).
However, we can use the same construction
directly for $(X/Z\ni o,B)$ if
the latter pair has the same klt type $i$.
Otherwise we use the construction for the former pair.

(2) By the proof of Theorem~\ref{klt_compl}, for a klt type pair $(X/Z\ni o,B)$,
a complement can be lifted from
a generic complement in dimension $i$ ($f=i$ in the theorem).
So, additionally, we can associate to the pair its
generic type $j\le i$ and
(filtration) semiexceptional type $(r,f)$
(see Definition~\ref{filtration_generic_df}).

Boundary $B_{n\_\sN^{i+1}\_\Phi}{}^\sharp{}_{X^\sharp}$
in Definition~\ref{klt_filtration} can be
slightly increased if we take into consideration the generic type $j$
or better a (filtration) semiexceptional type of $(X/Z\ni o,B)$.

(3) By definition filtration klt type is bigger of equal than klt type.
Filtration klt type gives better understanding of geometry than
plain klt type (cf. Corollary~\ref{reg_generic type}).
The same concerns filtration semiexceptional type vs
semiexceptional type.

(4) Warning: It is possible other $n$-complements which are
not agree with the filtration.
Additionally, we can have $n$-complements
without any type but with $n\in\sN$.

\begin{thm} [Klt type $n$-complements] \label{b_n_comp_klt}
Let $d$ be a nonnegative integer,
$I,\ep,v,e$ be the data as
in Restrictions on complementary indices, and
$\Phi=\Phi(\fR)$ be a hyperstandard set associated with
a finite set of rational numbers $\fR$ in $[0,1]$.
Then there exists a finite set
$\sN=\sN(d,I,\ep,v,e,\Phi)$
of positive integers such that
\begin{description}

\item[\rm Restrictions:\/]
every $n\in\sN$ satisfies
Restrictions on complementary indices with the given data.

\item[\rm Existence of $n$-complement:\/]
if $(X/Z\ni o,B)$ is a pair with
$\dim X=d$, a boundary $B$, wFt $X/Z\ni o$, connected $X_o$ and
with klt type $(X/Z\ni o,B_{\sN\_\Phi})$
then $(X/Z\ni o,B)$ has an $n$-complement $(X/Z\ni o,B^+)$ for some $n\in\sN$.

\end{description}

\end{thm}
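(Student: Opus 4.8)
The plan is to prove Theorem~\ref{b_n_comp_klt} by assembling $\sN$ as a union of pieces indexed by the \emph{klt type} $f=\dim Y$, where $Y$ is the base of the adjoint $0$-contraction attached by Construction~\ref{const_b_contraction} to a maximal model of the pair; this is the exact analogue of the way Theorem~\ref{semiexcep_compl_with_filtration} is built from Theorem~\ref{semiexcep_compl}, and Theorem~\ref{generic_complements} from Theorem~\ref{semiexcep_compl_with_filtration}. Recall that a klt type pair $(X/Z\ni o,B)$ with wFt $X/Z\ni o$ produces, by Construction~\ref{const_b_contraction}, a maximal model $(X^\sharp/Z\ni o,B_{X^\sharp})$, a $0$-contraction $\psi\colon X^\sharp\to Y/Z\ni o$ defined by the nef divisor $-(K_{X^\sharp}+B_{X^\sharp})$, and an adjoint \emph{generic} klt bd-pair $(Y/Z\ni o,B^\sharp{}\dv+\sB^\sharp{}\md)$ with $\dim Y=f$. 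When $f=d$ the morphism $\psi$ is birational and the pair is itself generic, so it is covered by Theorem~\ref{generic_complements}; when $f\le d-1$ it is covered by Theorem~\ref{klt_compl}, which lifts a generic b-$n$-complement of the lower-dimensional adjoint bd-pair back to $X$ through the inverse adjunction of Theorem~\ref{invers_b_n_comp}. Since here only case~(1) of Theorem~\ref{bndc} is at stake, $(X/Z\ni o,B_{\sN\_\Phi})$ is always assumed of klt type.

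First I would build a decreasing \emph{klt type filtration} $\sN=\sN^0\supseteq\sN^1\supseteq\dots\supseteq\sN^d\supseteq\sN^{d+1}=\emptyset$, with associated chain $\Gamma(\sN^0,\Phi)\supseteq\dots\supseteq\Gamma(\sN^{d+1},\Phi)$, by downward induction on $f$ from $d$ to $0$. At the top step $f=d$ put $\sN_d:=\sN(d,I,\ep,v,e,\Gamma(\sN^{d+1},\Phi))$, the set produced by Theorem~\ref{generic_complements} for the hyperstandard set $\Gamma(\sN^{d+1},\Phi)$ (which is hyperstandard by Proposition~\ref{every_sN_Phi_hyperstandard}). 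At a step $f\le d-1$, with $\sN^{f+1}$ already constructed, let $J$ and $\Phi'$ be the data furnished by Theorem~\ref{adjunction_index} and Construction~\ref{klt_induction} applied with the hyperstandard set $\Gamma(\sN^{f+1},\Phi)$ in place of $\Phi$, and put $\sN_f:=\sN(f,I,\ep,v,e,\Phi',J)$, the set of Theorem~\ref{klt_compl}. By Corollary~\ref{N_N'} and Addendum~\ref{klt_compl_N_N'} we may take $\sN_f\cap\sN^{f+1}=\emptyset$; set $\sN^f:=\sN_f\cup\sN^{f+1}$, so that $B_{\sN^{f+1}\_\Phi}=B_{\Gamma(\sN^{f+1},\Phi)}$ and $B_{\sN^f\_\Phi}=B_{\sN_f\_\Gamma(\sN^{f+1},\Phi)}$ by the composition law for the operations $\Gamma(\cdot,\cdot)$ (Propositions~\ref{every_sN_Phi_hyperstandard} and~\ref{dependence_on_Phi}). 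Finally $\sN:=\sN^0$; it is finite and every $n\in\sN$ satisfies Restrictions on complementary indices with the given data.

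Now let $(X/Z\ni o,B)$ be as in Existence of $n$-complement. As in Step~1 of the proof of Theorem~\ref{generic_complements}, it suffices to construct a b-$n$-complement of $(X^\sharp/Z\ni o,B_{n\_\Phi}{}^\sharp{}_{X^\sharp})$ with $n\in\sN$, since Propositions~\ref{D_D'_complement} and~\ref{monotonicity_I} together with Corollary~\ref{B_B_+B_sN_Phi} then upgrade it to an $n$-complement of $(X/Z\ni o,B)$. Along the chain $B_{\sN^0\_\Phi}\ge B_{\sN^1\_\Phi}\ge\dots\ge B_{\sN^{d+1}\_\Phi}$ every term is $\le B_{\sN\_\Phi}$, hence each $(X/Z\ni o,B_{\sN^j\_\Phi})$ is again of klt type by Proposition~\ref{D_D'_complement} and Addendum~\ref{R_D_D'_complement}; let $f_j$ denote its klt type. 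Using Construction~\ref{const_b_contraction} together with the monotonicity of maximal models (Construction~\ref{sharp_construction}, Corollary~\ref{monotonicity_II}, and the sandwich argument already used in Step~1 of the proof of Theorem~\ref{klt_compl}, cf. Lemma~\ref{B_D_r_f}) one checks that the sequence $f_0\le f_1\le\dots\le f_{d+1}$ is non-decreasing with values in $\{0,\dots,d\}$, so by the way the filtration was arranged there is a level $f$ with both $(X/Z\ni o,B_{\sN^{f+1}\_\Phi})$ and $(X/Z\ni o,B_{\sN^f\_\Phi})$ of klt type $f$. For that $f$, Theorem~\ref{klt_compl} if $f\le d-1$, respectively Theorem~\ref{generic_complements} if $f=d$, applied to $(X/Z\ni o,B_{\sN^{f+1}\_\Phi})$ with the hyperstandard set $\Gamma(\sN^{f+1},\Phi)$ and the finite set $\sN_f$, yields an $n$-complement with $n\in\sN_f\subseteq\sN$, and in fact a b-$n$-complement of $(X^\sharp/Z\ni o,B_{n\_\Phi}{}^\sharp{}_{X^\sharp})$ by Addendum~\ref{standard_klt_compl_semiexep}, respectively Addendum~\ref{standard_generic_complements} (using also Proposition~\ref{Phi_<Phi'} and Corollary~\ref{monotonicity_II} to descend from the refined set $\Gamma(\sN^{f+1},\Phi)$ back to $\Phi$). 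The bd-version follows verbatim from the bd-counterparts of Construction~\ref{const_b_contraction}, Theorem~\ref{klt_compl} and Theorem~\ref{generic_complements} (Addendum~\ref{bd_generic_compl_fil}), with the extra parameter $m$ and the divisibility $m\mid n$.

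The hard part will be the combinatorial consistency of the filtration, namely the monotonicity of the klt type under enlarging the boundary: one must establish the klt-type analogue of Lemma~\ref{B_D_r_f}. I expect this to follow because enlarging $B$ makes $-(K+B)$ less positive, so in Construction~\ref{sharp_construction} the mobile part $M$ of $-(K+B)$ over $Z$ shrinks and the contraction $\psi$ of Construction~\ref{const_b_contraction} contracts at least as much, whence $\dim Y$ does not increase; the delicate point is that $M$ is defined only after passage to a maximal model, so the comparison must be phrased on a common model and controlled through Corollary~\ref{monotonicity_II}. Granted this, a non-decreasing sequence of $d+2$ integers in $\{0,\dots,d\}$ must stabilize at exactly one filtration level, and the remaining ingredients — inheritance of Restrictions, the composition law for the sets $\Gamma(\sN,\Phi)$, the passage from $B_{\sN^f\_\Phi}$ to $B$ via Corollary~\ref{B_B_+B_sN_Phi}, and the lifting machinery inside Theorem~\ref{klt_compl} — are already available in the cited results, so the rest of the proof is a routine assembly modelled on the proofs of Theorems~\ref{semiexcep_compl_with_filtration} and~\ref{generic_complements}.
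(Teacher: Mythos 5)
Your architecture is the paper's: the same klt-type filtration built downward from Theorem~\ref{generic_complements} at level $d$ and Theorem~\ref{klt_compl} with the refined sets $\Gamma(\sN^{f+1},\Phi)$ of Construction~\ref{klt_induction} below it, with disjointness via Addendum~\ref{klt_compl_N_N'}; your discrete intermediate-value location of the level $f$ with $f_f=f_{f+1}=f$ is an equivalent repackaging of the paper's downward induction, and both arguments lean on the same (only sketched, cf. Lemma~\ref{B_D_r_f} and Corollary~\ref{monotonicity_II}) monotonicity of klt type under decreasing the boundary, so flagging that as the delicate point is consistent with the paper. The genuine gap is in your final application: you apply Theorem~\ref{klt_compl} (resp.\ Theorem~\ref{generic_complements}) to the lowered pair $(X/Z\ni o,B_{\sN^{f+1}\_\Phi})$ instead of to $(X/Z\ni o,B)$. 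What you then have is an $n$-complement of $(X,B_{\sN^{f+1}\_\Phi})$ and, via Addendum~\ref{standard_klt_compl_semiexep}, a b-$n$-complement of the maximal model of $(B_{\sN^{f+1}\_\Phi})_{n\_\Gamma(\sN^{f+1},\Phi)}=B_{\sN^{f+1}\_\Phi}$ itself; neither is known to be an $n$-complement of $(X,B)$ nor to dominate $B_{n\_\Phi}$. Your descent via Proposition~\ref{Phi_<Phi'} would need $\{n\}\subseteq\sN^{f+1}$, but $n\in\sN_f$ was chosen disjoint from $\sN^{f+1}$, so the inequality $B_{n\_\Phi}\le B_{\sN^{f+1}\_\Phi}$ is simply not available.

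Concretely, take $\fR=\{1\}$, $\sN^{f+1}=\{2\}$, $n=3$ and a boundary multiplicity $b=3/10$: the largest element of $\Gamma(\{2\},\Phi)$ below $b$ is $0$, while $\rddown{(n+1)b}/n=1/3$ and $b_{n\_\Phi}=1/4$. A $3$-complement of the lowered pair is only required to have $b^+\ge 0$ at that divisor, so nothing forces the inequality $b^+\ge 1/3$ demanded by (1) of Definition~\ref{n_comp} for $B$, and $b_{n\_\Phi}=1/4>0=b_{\sN^{f+1}\_\Phi}$ kills the claimed descent to $B_{n\_\Phi}{}^\sharp$. The mechanism that makes the passage from an approximation back to $B$ work is Proposition~\ref{1_of def_2 for B}, and it requires the thresholds $m/(n+1)$ to lie in the approximating set, i.e.\ $n$ must belong to the index set used for the approximation. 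This is exactly why the paper applies Theorem~\ref{klt_compl} (resp.\ Theorem~\ref{generic_complements}) to $(X/Z\ni o,B)$ itself with $\Gamma(\sN^{f+1},\Phi)$ as the hyperstandard set: its hypothesis (3) is precisely your located condition, since $B_{\Gamma(\sN^{f+1},\Phi)}=B_{\sN^{f+1}\_\Phi}$ and $B_{\sN_f\_\Gamma(\sN^{f+1},\Phi)}=B_{\sN^f\_\Phi}$ by Propositions~\ref{every_sN_Phi_hyperstandard} and~\ref{dependence_on_Phi}, and its conclusion is directly an $n$-complement of $(X,B)$ together with a b-$n$-complement of $(X^\sharp,B_{n\_\sN^{f+1}\_\Phi}{}^\sharp{}_{X^\sharp})$, which does dominate $B_{n\_\Phi}{}^\sharp$ because $\{n\}\subseteq\{n\}\cup\sN^{f+1}$. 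With that single change of target your proof coincides with the paper's.
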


\begin{add} \label{standard_klt_complements}
$(X/Z\ni o,B^+)$ is an $n$-complement of itself and
of $(X/Z\ni o,B_{n\_\Phi}), (X/Z\ni o,B_{n\_\Phi}{}^\sharp)$,
and is a b-$n$-complement of itself and
of $(X/Z\ni o,B_{n\_\Phi}),
(X/Z\ni o,B_{n\_\Phi}{}^\sharp),(X^\sharp/Z\ni o,B_{n\_\Phi}{}^\sharp{}_{X^\sharp})$,
if $(X,B_{n\_\Phi}),(X,B_{n\_\Phi}{}^\sharp)$ are log pairs respectively.

\end{add}

\begin{add} \label{klt_complements_fil}
$\sN$ has a klt type filtration~(\ref{filtration_klt})
with $\sN^0=\sN$, with any finite set
of positive integers  $\sN'$, satisfying
Restrictions on complementary indices with the given data,
and
the existence $n$-complements agrees the filtration
for the class of pairs under assumptions of
Existence of $n$-complements in the theorem.

\end{add}

\begin{add} \label{klt_complements_B}
In particular, the theorem and addenda applies
to klt type pairs $(X/Z\ni o,B)$ instead of
with klt type $(X/Z\ni o,B_{\sN\_\Phi})$.
\end{add}

\begin{add} \label{bd_klt_compl}
The same holds for bd-pairs $(X/Z\ni o,B+\sP)$ of index $m$
with $\sN=\sN(d,I,\ep,v,e,\Phi,m)$.
That is,
\begin{description}

\item[\rm Restrictions:\/]
every $n\in\sN$ satisfies
Restrictions on complementary indices with the given data and
$m|n$.

\item[\rm Existence of $n$-complement:\/]
if $(X/Z\ni o,B+\sP)$ is a bd-pair of index $m$ with
$\dim X=d$, a boundary $B$, wFt $X/Z\ni o$, connected $X_o$ and
with klt type $(X/Z\ni o,B_{\sN\_\Phi}+\sP)$
then $(X/Z\ni o,B+\sP)$ has an $n$-complement $(X/Z\ni o,B^++\sP)$ for some $n\in\sN$.

\end{description}
Addendum~\ref{klt_complements_fil}
holds literally.
In Addendum~\ref{standard_klt_complements}
$(X/Z\ni o,B^++\sP)$ is an $n$-complement of itself and
of $(X/Z\ni o,B_{n\_\Phi}+\sP), (X/Z\ni o,B_{n\_\Phi}{}^\sharp+\sP)$,
and is a b-$n$-complement of itself and
of $(X/Z\ni o,B_{n\_\Phi}+\sP),
(X/Z\ni o,B_{n\_\Phi}{}^\sharp+\sP),(X^\sharp/Z\ni o,B_{n\_\Phi}{}^\sharp{}_{X^\sharp}+\sP)$,
if $(X,B_{n\_\Phi}+\sP_X),(X,B_{n\_\Phi}{}^\sharp+\sP_X)$ are log bd-pairs respectively.
In Addendum~\ref{klt_complements_B}
$(X/Z\ni o,B+\sP),(X/Z\ni o,B_{\sN\_\Phi}+\sP)$
should be instead of
$(X/Z\ni o,B),(X/Z\ni o,B_{\sN\_\Phi})$ respectively.

\end{add}

\begin{proof}
Construction of klt type complements
starts from the top type $d$ and
descends to the bottom $0$,
where we use Corollary~\ref{bounded_lc_index}, the boundedness of lc index.
We use [decreasing] induction on klt type $i$ in
dimension $d$.
In other words, we prove the theorem and Addendum~\ref{klt_complements_fil}
simultaneously.
In the induction we suppose that
$\sN=\sN^i$ and, in Existence of $n$-complements,
$(X/Z\ni o,B_{\sN\_\Phi})$ has klt type $\ge i$
instead of just klt type, that is, $\ge 0$.
We take the same class of pairs in Addendum~\ref{klt_complements_fil}.

By Proposition~\ref{D_D'_complement} and Corollary~\ref{monotonicity_II},
more subtle complements of Addendum~\ref{klt_complements_fil}
with $B_{n\_\sN^{i+1}\_\Phi}$
give the same complements of Addendum~\ref{standard_klt_complements}
with $B_{n\_\Phi}$ instead
because $B_{n\_\Phi}\le B_{n\_\sN^{i+1}\_\Phi}$.

Step~1. {\em Klt type filtration for $i=d$.\/}
Consider a set of positive integers
$\sN_d=
\sN(d,I,\ep,v,e,\Phi)$ of Theorem~\ref{generic_complements}.
Then we are done if $\sN'=\emptyset$: $\sN=\sN^d=\sN_d$.
In general we put $\sN=\sN^d=\sN_d\cup \sN'$
assuming that $\sN_d\cap\sN'=\emptyset$.
The last condition holds for $\sN_d=\sN\setminus\sN'$
of Addendum~\ref{generic_complements_fil}
(cf. Corollary~\ref{N_N'}).
Note that in this case generic or filtration klt type $d$ pairs
$(X/Z\ni o,B_{\sN\_\Phi})$
are only possible under inductive version of
Existence of $n$-complements.
Indeed, $B_{\Phi}\le B_{\sN\_\Phi}$ and
$(X/Z\ni o,B_\Phi)$ is also generic and of klt type $d$
by Basic properties of generic pairs, (1).

Step~2. {\em Klt type filtration for $i<d$.\/}
We can suppose that the theorem is established for
klt types $\ge i+1$ and $\sN^{i+1},i+1\le d$,
the next filtration klt type set
is already constructed with
a filtration~(\ref{filtration_klt})
for the class of pairs in Existence of $n$-complements
with klt types $\ge i+1$ $(X/Z\ni o,B_{\sN^{i+1}\_\Phi})$.
Consider a set of positive integers
$\sN_i=\sN(i,I,\ep,v,e,\Phi',J)$ as
in Construction~\ref{klt_induction}
with a hyperstandard set $\Phi=\Gamma(\sN^{i+1},\fR)$
and $f=i$.
(Note that $\Phi,\Phi',J$ in the step depends also on $i$.)
By Addendum~\ref{klt_compl_N_N'}
we can suppose that $\sN_i$ is
disjoint from $\sN^{i+1}$.
Put  $\sN=\sN^i=\sN_i\cup \sN^{i+1}$.
Now by Theorem~\ref{klt_compl} this proves Theorem~\ref{b_n_comp_klt}
and gives
a required filtration~(\ref{filtration_klt})
for klt types $\ge i$.
Note that the complements agree with the filtration
still in the class of pairs
$(X/Z\ni o,B_{\sN\_\Phi})$ of klt type $\ge i$.
If $(X/Z\ni o,B_{\sN^{i+1}\_\Phi})$ has klt type $i$ then
$(X/Z\ni o,B_{\sN\_\Phi})$ has also klt type $i$ and
we use Theorem~\ref{klt_compl} to construct
a required $n$-complement with $n\in\sN_i$.
Otherwise,  $(X/Z\ni o,B_{\sN^{i+1}\_\Phi})$ has klt type $\ge i+1$
and we use induction.

This concludes construction of complements
for the klt type case when $i=0$.

Step~3. {\em Other Addenda.\/}
Addendum~\ref{klt_complements_B} follows from the fact
that if $(X/Z\ni o,B)$ has klt type then so does
$(X/Z\ni o,B_{\sN\_\Phi})$ because
$B_{\sN\_\Phi}\le B$.

Similarly we can treat bd-pairs.

\end{proof}

\section{Lc type complements} \label{lc_type_compl}

In this section we establish Theorem~\ref{bndc}
under assumption (2-3) of the theorem.
For this we consider complements for
the pairs $(X/Z\ni o,B)$ without a klt $\R$-complement, equivalently,
for the pairs $(X/Z\ni o,B)$ with only lc but not klt $\R$-complements.
However, we suppose that an (lc) $\R$-complement exists.
An {\em lc type\/} pair is a local morphism
$(X/Z\ni o,B)$ of this kind.
Note that, if this is the case, $(X/Z\ni o,B)$ is lc
(possibly klt) itself when
it is a log pair.

We start with two examples which show that
$n$-complements of lc type are not bounded in
any dimension $d\ge 3$.
We construct such examples in two extremal situations
when $X/Z\ni o$ is identical (local) or with $Z=o=\pt$ (global).

\begin{exa} \label{unbounded_lc_compl}
(1)
Let $(Z\ni o, B_Z)$ be a pair such that
\begin{description}

  \item[]
$Z$ is a normal variety of the dimension $d\ge 3$ and
$o$ is its closed point;

  \item[]
$B_Z=\sum_{i=1}^{l+d} D_i$ be a reduced divisor near $o$
with $l+d$ prime Weil divisorial components; integers $d,l\ge 0$;
and

  \item[]
$(Z,B_Z)$ is {\em maximally\/} lc (of index $1$), that is, lc (of index $1$) and
with an lc center $o$.

\end{description}
We suppose also that there exists a $\Q$-factorialization
$\varphi\colon X\to Z\ni o$ near $o$ such that each divisor $D_i$,
more precisely, its birational transform,
with $i\ge d+1$ intersects $D_1$, respectively, its birational transform,  on $X$, and
the intersection $D_1\cap D_i$ is exceptional on $D_1$.
Such a factorialization is typical for toric morphisms
associated with a triangulation of a polyhedral cone with
the same $l+d$ edges and one common edge for all simplicial subcones
of dimension $d$.

Now we perturb $B_X=B=\sum_{i=1}^{l+d} D_i$ on $X$ as follows.
Every divisor $D_i$ is mobile and
moreover its linear system is big.
Replace each $D_i$ with $i\ge d+1$ by
$$
\sum_{j=1}^{i-d}\frac1{i-d} D_{i,j},
$$
where $D_{i,j}$ are $i-d$ sufficiently general divisors in
$\linsys{D_i}$.
They are also prime Weil.
So, after the perturbation
$$
B=\sum_{i=1}^d D_i+\sum_{i=1}^l\sum_{j=1}^i \frac1i D_{i+d,j}.
$$
By construction $(X/Z\ni o,B)$ is an lc $0$-pair.
It is an $\R$-complement of itself.
Also by construction the intersection $D_1\cap D_{i+d},l\ge i\ge 1$, is
exceptional on $D_1$ on $X$.
Thus every $D_{i+d,j}$ passes through the intersection and
$(X,B)$ is maximally lc near the intersection.

We contend that $(X/Z\ni o,B)$ does not have
$n$-complements for all $1\le n\le l-1$.
Suppose such an $n$-complement $(X/Z\ni o,B^+)$ exists.
Then
$$
\rddown{(n+1)\frac 1{n+1}}/n=1/n>\frac 1{n+1}.
$$
Hence, by Definition~\ref{n_comp} (1), $B^+>B$ near the intersection $D_1\cap D_{n+d+1}$,
a contradiction by Definition~\ref{n_comp} (2).
Indeed, it was noticed above that $(X,B)$ is maximally lc near the intersection.

Finally note that $l=0$ in the dimension $d\le 2$.
However, for $d\ge 3$, $l$ can be any natural number.

We can construct a required $\varphi$  as a toric morphism.
We can push the example to $Z\ni o$.

(2)
We construct a locally trivial $\PP^1$-bundle $\varphi\colon X\to Y$
with projective $X$, $\dim X=d$ and
with a reduced divisor $D=\sum_{i=1}^l D_i$ on $X$ with only simple normal crossings and
such that
\begin{description}

\item[\rm (1)\/]
divisors $D_1$ and $D_2$ are disjoint sections of $\varphi$;

\item[\rm (2)\/]
$(X,D)$ is a projective $0$-pair;

\item[\rm (3)\/]
all other divisors $D_i,i\ge 3$, are vertical with respect to $\varphi$;

\item[\rm (4)\/]
every irreducible closed curve $C_j,j\in J$, of $X$ with the generic point in
the $1$-dimensional strata of $D$ is rational and has a closed point in
the $0$-dimensional strata;

\item[\rm (5)\/]
those curves generate the cone of effective curves of $X$, that is,
every $1$-dimensional effective algebraic cycle of $X$
is numerically equivalent to $\sum r_j C_j,r_i\in \Q$, and $r_i\ge 0$;

\item[\rm (6)\/]
$D_1$ is base point free and big on $X$, ample on itself;
and

\item[\rm (7)\/]
the restriction of $\linsys{D_1}$ on $D^1=\sum_{i=2}^l D_i$
is surjective.

\end{description}

We start our construction from a product $X=Y\times \PP^1$
of projective toric nonsingular varieties
with the natural projection $\varphi\colon X\to Y$, $\dim X=d$ and
with the invariant divisor $D$ on $X$.
We can suppose that $D_1,D_2$ are horizontal.
The product satisfies the properties (1-5).

To satisfy (6-7) we transform $\varphi$ as follows.
Take a general hyperplane section $H$ on $Y$
for some projective embedding of $Y$.
So, $\varphi\1 H+D$ has only simple normal crossings too.
Let $\psi\colon X'\to X$ be the blowup of $D_2\cap \varphi\1 H$ in $X$.
Denote its exceptional divisor by $E$ and by
$E'$ the proper inverse transform of the vertical divisor
$\varphi\1 H$.
Now we can blow down $E'$ and construct a required
locally trivial $\PP^1$-bundle $X'/Y$ with the birational transform
$D'$ of $D$.
It also satisfies the properties (1-5).
To prove (5) it is better to know that $X'\to Y$ is toric again
(e.g. by the criterion in \cite{BMSZ};
this can be done also directly by a toric construction).

For sufficiently ample $H$, $D_1'$ is very ample on $D_1'$.
We can suppose that, for every  closed curve $C_j$ generically in
the $1$-dimensional strata of $D$ in $D_1$,
$$
(C_j,D_1')\ge 2.
$$
By construction and (5), $D_1'$ is nef and big on $X$.
Since $X$ is projective toric it is Ft and $D_1'$ is semiample.
This concludes (6).

Denote the constructed pair $(X'/Y,D')$ by $(X/Y,D)$.
The restriction of linear systems
$$
\linsys{D_1}\dashrightarrow \linsys{D_1\rest{D^1}}
$$
is surjective due to the vanishing
$$
H^1(X,D_1-D^1)=H^1(X,D_1-(D-D_1))=
H^1(X,2D_1-D)=H^1(X,K+2D_1)=0
$$
by the Grauert-Riemenschneider vanishing.
This gives (7).

Actually, we can prove now that $D_1$ is base point free.
Moreover, let $P_j\in C_j$ be arbitrary closed points
on the vertical curves $C_j$ of the $1$-dimensional strata of $D$.
Then the points $P_j$ are the only base points
of the linear system $\linsys{D_1-\sum P_j}$.
In particular, the linear system is nonempty.
To construct a divisor $M$ in the linear system
we can use the dimensional induction.
If the dimension of $Y=1$, then $Y=\PP^1$ and
we have only two vertical (disjoint) curves $C_1=D_3,C_2=D_4$,
$D^1=D_2+D_3+D_4$ and
$P_1+P_2\in \linsys{D_1\rest{D^1}}$.
By the surjectivity in (7)
there exists such an effective divisor $M$ on $X$ that
$M\rest{D^1}=P_1+P_2$ and $M\in \linsys{D_1-P_1-P_2}$.
Moreover, $M\cap D_2=\emptyset$ for general $M$.
Suppose by induction that on every vertical divisor $D_i,i\ge 3$,
generically in
the $(d-1)$-dimensional (divisorial) strata  of $D$ there exists
an effective divisor
$$
M_i\in\linsys{D_1\rest{D_i}-\sum_{P_j\in D_i} P_j}.
$$
We can also suppose by induction that
the divisors $M_i$ are agree on the vertical
varieties $V$ generically in the smaller dimensional
strata, that is, if $D_l, l\ge 3$, is
another vertical divisor generically in the $(d-1)$-dimensional strata of $D$
with the effective divisor $M_l$ and $V\subseteq D_i,D_l$
then
$$
M_i\rest{V}=M_l\rest{V}.
$$
So we can glue the divisors $M_i$ into a single Cartier divisor $M_{D^1}$
on $D^1$ such that
$$
M_{D^1}\rest{D_i}=M_i.
$$
Note also for this that the general divisors $M_i$ as $D_1$ do not
interest $D_2$ by construction: $D_1\cap D_2=\emptyset$.
By construction $M_{D^1}\in \linsys{D_1\rest{D^1}-\sum P_j}$.
By (7) there exists $M\in\linsys{D_1-\sum P_j}$.
For general $M$, $M\cap D_2=\emptyset$.

Actually, general $M$ behaves as $D_1$, that is, if we replace $D_1$ by
$M$ we have the same properties (1-7).
So, to verify that $P_j$ are the only base points of $\linsys{D_1-\sum P_j}$,
it is enough to consider the case when $P_j$ are points of
the $0$-dimensional strata on $D_1$.
If $\dim Y=1$, then there exists $M$ such that
$P_1,P_2$ are the only base points of $\linsys{D_1-P_1-P_2}$
because $D_1$ is a $1$-dimensional strata and $D_1^2\ge 2$.
So, the base locus does not contain the $1$-dimensional strata.
In higher dimensions the base locus should be
a closed torus invariant subset in $D_1$.
Since it contains the points $P_j$ but does not contain
the $1$-dimensional strata,
the base locus have only the points $P_j$.

According to the above construction for any choice of
$M_V\in \linsys{D_1\rest{V}-P_{1,V}-P_{2,V}}$
there exists $M\in \linsys{D_1-\sum P_j}$ with $M\rest{V}=M_V$,
where $V$ are the vertical irreducible surfaces generically in
the $2$-dimensional strata and
$P_{1,V},P_{2,V}$ are the only points on $V$ from the points $P_j$ on $D_1$.
Suppose that $V_1,\dots,V_m$ are the vertical irreducible surfaces
generically in the $2$-dimensional strata.
Then we can construct $m!$ distinct divisors $M_i, i=1,\dots,m!$,
as follows.
Take $j$ divisors (curves) $C_{j,h},h=1,\dots,j$, on $V_j$ in
$\linsys{D_1\rest{V_j}-P_{1,V_j}-P_{2,V_j}}$.
Then take $M_i$ with
$$
M_i\rest{V_j}=C_{j,h}
$$
for some $h=1,\dots, j$.
But each curve $C_{j,h}$ on $V_j$ is repeated $m!/j$ times
in this construction.
So, we use $m!$ curves $C_{j,h}$ with the multiplicity $m!/j$ on $V_j$.

Now we perturb $D_1$:
$$
B=\sum_{i=1}^{m!}\frac1{m!}M_i+\sum_{i=2}^l D_i.
$$
Finally, we blow up every curve $C_{j,h}$:
$$
(X',B')\to (X,B)
$$
with the crepant boundary $B'$ and with exceptional divisors $E_{j,h}$.
(The order of blowups is unimportant.)
Note that $(X',B')$ is again $0$-pair, in particular, lc.
Since, every divisor $M_i$ intersects every surface $V_j$
transversally in $C_{j,h}$ and $m!/j$ such divisors passing
through $C_{j,h}$,
$$
\mult_{E_{j,h}}B'=\frac{m!}j\frac1{m!}=\frac1j
$$
and every $E_{j,h}$ intersects $V_j$ transversally along
the generic point of $C_{j,h}$, the proper birational transform of
$C_{j,h}$ on $V_j$, the proper birational transform of $V_j$ on $X'$.

The pair $(X',B')$ is an $\R$-complement of itself.
But it has only $n$-complements for $n\ge m$.
Indeed, if $n\le m-1$ and $(X',B^+)$ is an $n$-complement
of $(X',B')$ then near $V_{n+1}$ the pair $K_{X'}+B^+$
is not $\equiv 0$, a contradiction.
For this note that
$$
\rddown{(n+1)\frac 1{n+1}}/n=1/n>1/(n+1).
$$
On the other hand, the reduced part of $D^1=\sum_{i=2}^l D_i$
already gives the lc singularity along $V_{n+1}$.
So,
$$
B^+\ge \sum_{i=2}^l D_i+\sum_{i=1}^{n+1} \frac 1n E_{n+1,i}
$$
and $B^+$ does not contain other divisors than $D_i, i=2,\dots,l$,
passing through $V_{n+1}$ (maximally lc).
Take $C$, the birational transform of a generic fiber
(vertical curve) of $V_{n+1}$ over $Y$.
Then by construction and adjunction,
\begin{align*}
0=(C.K_{X'}+B^+)\ge
(C.K_{X'}+\sum_{i=2}^l D_i+\sum_{i=1}^{n+1} \frac 1n E_{n+1,i})=\\
(C.K_{V_{n+1}}+D_2\rest{V_{n+1}}+\sum_{i=1}^{n+1} \frac 1n E_{n+1,i}\rest{V_{n+1}})\ge \\
(C.K_{V_{n+1}}+D_2\rest{V_{n+1}}+\sum_{i=1}^{n+1} \frac 1n C_{n+1,i})\ge
-2+1+(n+1)\frac 1n=\frac 1n>0.
\end{align*}
Note that $(C.D_2\rest{V_{n+1}})=(C.D_2)=1$.

Finally, if $\dim X=d\ge 3$ or, equivalently, $\dim Y\ge 2$,
we can find such a pair $(X,B)$ with $m\gg 0$.
Thus in dimensions $d\ge 3$ the global $n$-complements
of lc type are not bounded.

(3) Let $d$ be a positive integer $\ge 3$ and
$\sN$ be a finite set of positive integers.
Then, for every integer $r\gg 0$, there exists
a pair $(X/Z\ni o,B)$ with a $\Q$-boundary
$B=\sum_{i=1}^r b_i D_i$, $\dim X=d$ and connected $X_o$ such that
$(X/Z\ni o,B)$ has an $\R$-complement but
does not have an $n$-complement for every $n\in\sN$,
where $D_1,\dots,D_r$ are effective Weil divisors
(cf. (2-3) of Theorem~\ref{bndc}).
Replace in Example~(2) $m!$ by $\sN!=\prod_{n\in\sN}(n+1)$
and $1/j$ by $1/(n+1),n\in\sN$.
The divisor $\sum_{i=1}^{\sN!}M_i$ can be replaced by
a single prime one.
This gives a global example.
A local example can redone from Example~(1).

Actually, for appropriate examples,
the low bound for $r$
depend only on $d$ and the number of elements in $\sN$.
For this we should aggregate curves $C_{j,h}$
into at most $4$ curves $C_{h,j},h=1,2,3,4$, with $4$ multiplicities
$a_h\sN!/j$ such that $a_h$ is a positive integer $<j$
with $\sum_{h=1}^4 a_h/j=2$.
However, the multiplicities $b_i$ in those examples with
all possible bounded $\sN$ can't belong to a dcc set,
in particular, to a finite set by Theorem~\ref{bndc}
under the assumption~(3) \cite[Theorem~1.6]{HLSh}.
Thus the existence of $\R$-complements for every $D$ in
(3) of the theorem is essential.

\end{exa}

This section provides a construction of bounded $n$-complements
under the additional assumption (finiteness).

\paragraph{Maximal lc $0$-pairs.}
Let $(X/Z\ni o,D)$ be a $0$-pair, that is,
$(X,D)$ is lc and $K+D\sim_\R 0/Z\ni o$.
We say that $(X/Z\ni o,D)$ is a {\em maximal lc\/}
$0$-pair if additionally $(X/Z\ni o,D)$ is
the only possible $\R$-complement of $(X/Z\ni o,D)$.
By definition $(X/Z\ni o,D)$ is an $\R$-complement of $(X/Z\ni o,D)$.
So, the maximal lc property means that
if $(X/Z\ni o,D^+)$ is
another $\R$-complement of $(X/Z\ni o,D)$ then
$D^+=D$.
By (1) of Definition~\ref{r_comp}
every global $0$-pair is maximally lc
(even it is klt).
However, every local maximal lc $0$-pair $(X/Z\ni o,D)$
should have an lc center over $o$ as a point
in every connected component of $X_o$
(cf. Example~\ref{1stexe}, (3)).
Notice that there are no such nonglobal
and nonlocal $0$-pairs $(X/Z,D)$.

The same applies to $0$-bd-pairs $(X/Z\ni o,D+\sP)$.

Let $d$ be a nonnegative integer and
$\Gamma$ be a set of boundary multiplicities:
$\Gamma\subseteq [0,1]$, including $0$.
The set $\Gamma$ has {\em bounded rational maximal lc
multiplicities in dimension\/} $d$ if the following
set of rational numbers in $\Gamma$ is finite:
\begin{align*}
\Gamma_{\max}=\{b\in \Gamma\cap\Q\mid
(X/Z\ni o,B) \text{ is a maximal lc }
0\text{-pair},\\ \dim X=d, B\in \Gamma\cap\Q,
\text{ and } b \text{ is a multiplicity of } B\}.
\end{align*}
E.g., $0\in\Gamma_{\max}$.
Note that this assumption allows any irrational
numbers of $[0,1]$ in $\Gamma$.

The same applies to $0$-bd-pairs $(X/Z\ni o,B+\sP)$
of dimension $d$ with $B\in\Gamma\cap\Q$ and
a multiplicity $b$ of $B$.

Moreover, the finiteness and Boundedness of lc index conjecture
(Corollary-Conjecture~\ref{conj_bounded_lc_index}) imply that
there exists a positive integer $I=I(d,\Gamma\cap\Q)$ depending
only on $d$ and $\Gamma$ such that if
$(X/Z\ni o,B)$ is a maximal lc $0$-pair in dimension $d$
with $B\in\Gamma\cap\Q$ then
$I(K+B)\sim 0/Z\ni o$.
For wFt $X/Z\ni o$ this holds without Boundedness of lc index conjecture
by Corollary~\ref{bounded_lc_index} (or \cite[Theorem~1.7]{B}).
We say that $I$ is the {\em  rational maximal lc index of $0$-pairs
in dimension $d$ with respect to\/} $\Gamma$.

The same applies to maximal lc $0$-bd-pairs $(X/Z\ni o,B+\sP)$ of index $m$
but only for wFt $X/Z\ni o$ (cf. Example~\ref{bd_elliptic},
Conjectures~\ref{conj_bounded_lc_index}, \ref{mod_part_b-semiample} and
Corollary~\ref{Alexeev_index_m}).

\begin{exa} \label{max_lc_index}
Every dcc subset $\Gamma\subset [0,1]$ with $0$
has bounded rational maximal lc multiplicities
in dimension $d$ under wFt.
In general this is expected by Boundedness of lc index conjecture.
In particular, every hyperstandard set associated
to a finite set of rational numbers satisfies the property under wFt.
Indeed, we can suppose that the dcc set $\Gamma$ is rational.
Then $\Gamma_{\max}$ is finite by \cite{HX} or Corollary~\ref{bounded_lc_index}.
Under wFt means that $X/Z\ni o$ in the definition
of $\Gamma_{\max}$ is a wFt morphisms.

The same holds for bd-pairs of index $m$ under wFt.

\end{exa}

\begin{const} \label{lc_induction}
Let $d$ be a nonnegative integer,
$I,\ep,v,e$ be the data as
in Restrictions on complementary indices,
$\Phi=\Phi(\fR)$ be a hyperstandard set associated with
a finite set of rational numbers $\fR$ in $[0,1]$ and
$\Gamma$ be a subset of $[0,1]$ with
finite $\Gamma_{\max}$.
Denote by $I(d,\Gamma\cap\Q)$ the corresponding
rational maximal lc index.
In particular, by Example~\ref{max_lc_index}
the finiteness holds and the index exists under wFt
if $\Gamma\cap\Q$ is a dcc set as in Theorem~\ref{bndc}.
We can suppose also that $I=I(d,\Gamma\cap\Q)$, or
equivalently, that $I$ is sufficiently divisible.

By Theorem~\ref{b_n_comp_klt} and Addendum~\ref{standard_klt_complements}
there exists a finite set of positive integers
$\sN=\sN(d,I,\ep,v,e,\Phi)$ such that
\begin{description}

\item[\rm Restrictions:\/]
every $n\in\sN$ satisfies
Restrictions on complementary indices with the given data;

\item[\rm Existence of $n$-complement:\/]
if $(X/Z\ni o,B)$ is a pair with
$\dim X=d$, a boundary $B$, wFt $X/Z\ni o$, connected $X_o$ and
with klt type $(X/Z\ni o,B_{\sN\_\Phi})$
then $(X^\sharp/Z\ni o,B_{n\_\Phi}{}^\sharp{}_{X^\sharp})$
is a b-$n$-complement $(X/Z\ni o,B^+)$ for some $n\in\sN$.

\end{description}

For bd-pairs we add a positive integer $m$.
So, $\sN=\sN(d,I,\ep,v,e,\Phi,m)$ and
replace $(X/Z\ni o,B)$ by a bd-pair $(X/Z\ni o,B+\sP)$ of
dimension $d$ and of index $m|n$.

\end{const}

\begin{thm}[Lc type $n$-complements] \label{lc_compl}
Let $d$,$I,\ep,v,e$,$\Gamma,\sN=\sN(d,I,\ep,v,e)$ be
the data as in Construction~\ref{lc_induction}.
Then every $n\in\sN$ satisfies Restrictions
on complimentary indices with the given data and
\begin{description}

\item[\rm Existence of $n$-complement:\/]
if $(X/Z\ni o,B)$ is a pair with
$\dim X=d$, $B\in\Gamma$, wFt $X/Z\ni o$, connected $X_o$ and
with lc type $(X/Z\ni o,B)$,
or more generally, with an $\R$-complement,
then $(X/Z\ni o,B)$ has an $n$-complement $(X/Z\ni o,B^+)$ for some $n\in\sN$.

\end{description}

\end{thm}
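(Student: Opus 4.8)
That every $n\in\sN$ satisfies Restrictions on complementary indices with the given data is built into Construction~\ref{lc_induction}, so only the existence of an $n$-complement needs proof. The plan is to split off the klt type case and then treat the genuinely lc (non-klt) case.

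If $(X/Z\ni o,B)$ has a klt $\R$-complement, then so does $(X/Z\ni o,B_{\sN\_\Phi})$ since $B_{\sN\_\Phi}\le B$; hence $(X/Z\ni o,B_{\sN\_\Phi})$ is of klt type, and the ``Existence of $n$-complement'' clause of Construction~\ref{lc_induction} (that is, Theorem~\ref{b_n_comp_klt} together with Addendum~\ref{standard_klt_complements}) gives some $n\in\sN$ and a b-$n$-complement of $(X^\sharp/Z\ni o,B_{n\_\Phi}{}^\sharp{}_{X^\sharp})$; by Corollary~\ref{B_B_+B_sN_Phi} this is an $n$-complement of $(X/Z\ni o,B)$. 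So from now on I assume $(X/Z\ni o,B)$ has an lc but no klt $\R$-complement.

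Next I would reduce, by a small birational modification and a $\Q$-factorialization, which preserve $\R$- and $n$-complements (Proposition~\ref{small_transform_compl}), to the case $X/Z\ni o$ has Ft. The key step is then to realize an $\R$-complement of $(X/Z\ni o,B)$, on a suitable birational model, as a \emph{maximal lc $0$-pair of dimension $d$ with boundary in $\Gamma\cap\Q$}. Concretely: take a nonklt $\R$-complement, pass to a dlt crepant blowup, and run a relative LMMP over $Z\ni o$ enlarging the reduced (lc) part; each step keeps the crepant $0$-pair structure and $\rddown{D_Y}$ can grow only a bounded number of times, so one reaches a crepant model $(Y/Z\ni o,D_Y)$ admitting no strictly larger $\R$-complement, where the connectedness of $X_o$ feeds the lc-connectedness arguments. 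At the same time one must arrange that $D_Y$ has rational multiplicities in $\Gamma\cap\Q$: on the finitely many divisors where $B$ already has rational multiplicities this is forced by $B\in\Gamma$, while on the divisors carrying the prescribed nonrational multiplicities one replaces them, using the Diophantine approximation inside the rational polytope $\Compd_\R$ of $\R$-complements (Theorem~\ref{R_compl_polyhedral}, \cite[Corollary~1.3]{BSh}) and the Anisotropic approximation along $e$, by the best denominator-$n$ approximations from below, so that $\rddown{(n+1)B}/n$ stays dominated by $D_Y$ (Lemma~\ref{inequal}); the admissible indices $n$ so obtained are the members of $\sN$. I expect this fusion of a birational argument (an LMMP maximizing the lc locus while keeping $K+D_Y\sim_\R 0/Z\ni o$) with an arithmetic one (approximating the nonrational multiplicities subject to the Restrictions) to be the main obstacle.

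Granting such a $(Y/Z\ni o,D_Y)$, the rest is short. Since it is a maximal lc $0$-pair of dimension $d$ with $D_Y\in\Gamma\cap\Q$ and $Y/Z\ni o$ is wFt, the rational maximal lc index of Construction~\ref{lc_induction} gives $I(K_Y+D_Y)\sim 0/Z\ni o$ with $I=I(d,\Gamma\cap\Q)$, the finiteness of $\Gamma_{\max}$ being exactly what makes this index exist (Example~\ref{max_lc_index}, Corollary~\ref{bounded_lc_index}). As $I\mid n$, we get $n(K_Y+D_Y)\sim 0/Z\ni o$; since $D_Y$ is a boundary with $nD_Y$ integral, Example~\ref{rddown_(n+1)_m_m}, (2) shows $(Y/Z\ni o,D_Y)$ is an $n$-complement, and after a crepant log resolution (Proposition~\ref{b_n_comp-nc}) a b-$n$-complement, of itself. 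Pushing this b-$n$-complement down along $Y\dashrightarrow X$ (Proposition~\ref{D_D'_complement}) gives a divisor $D$ with $(X/Z\ni o,D)$ lc, $n(K+D)\sim 0/Z\ni o$, $nD$ integral, and $D$ dominating the chosen denominator-$n$ approximation of $B$ (in particular $D\ge B_{\sN\_\Phi}$); then Proposition~\ref{1_of def_2 for B}, via Corollary~\ref{B_B_+B_sN_Phi}, shows $(X/Z\ni o,D)$ is an $n$-complement of $(X/Z\ni o,B)$ for this $n\in\sN$. That the finiteness of $\Gamma_{\max}$ cannot be dropped --- it is precisely what upgrades $\sim_\R 0/Z\ni o$ to $\sim_n 0/Z\ni o$ --- is witnessed by Example~\ref{unbounded_lc_compl}.
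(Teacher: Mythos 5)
There is a genuine gap, and it sits exactly at the step you yourself flag as the main obstacle: you cannot, in general, realize an $\R$-complement of an lc type pair $(X/Z\ni o,B)$ on a birational model as a \emph{maximal lc $0$-pair with boundary in $\Gamma\cap\Q$}. Maximality (uniqueness of the $\R$-complement) forces $B^+=B$ only \emph{locally over the lc centers of the adjunction}: this is precisely what the paper proves, namely that for the $0$-contraction $\psi\colon (X^\sharp,B_{\sN\_\Phi}{}^\sharp{}_{X^\sharp})\to Y$ of Construction~\ref{sharp_construction} the index $I=I(d,\Gamma\cap\Q)$ holds only over the finitely many maximal lc points of the adjoint bd-pair on $Y$, because only there are the multiplicities of the complement pinned down to equal those of $B$ and hence to lie in $\Gamma\cap\Q$. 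Away from that locus the pair is klt over the base and the complement carries extra components whose multiplicities are neither in $\Gamma$ nor rational, so $I(d,\Gamma\cap\Q)$ simply does not apply to the whole boundary; and your proposed fix --- replacing the nonrational multiplicities by denominator-$n$ approximations from below --- destroys $K+D_Y\sim_\R 0/Z\ni o$ (the approximated divisor is no longer a $0$-pair), produces multiplicities that are not in $\Gamma$ (so $\Gamma_{\max}$-finiteness says nothing about them), and in any case a divisor dominating a lower approximation of $B$ need not dominate $\rddown{(n+1)B}/n$, since the latter can exceed $B$ (Example~\ref{rddown_(n+1)_m_m},(4)); Lemma~\ref{inequal}/Lemma~\ref{approximation_I} only help when the target multiplicities are within $1/I(n+1)$ of fractions $l/I$ with $I\mid n$, which is exactly the structure you lose by leaving the maximal lc locus.

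The paper's proof has a two-region structure that your plan collapses into one. After passing to the wlF model and the $0$-contraction $\psi$ to $Y$, one gets an open $U\subseteq Y$ over which $(X,B)$ is a $0$-pair of index $I$ with $B\in[0,1]\cap\tfrac{\Z}{I}$ (the maximal lc region, where finiteness of $\Gamma_{\max}$ is used), while $(X,B)$ is klt over $Y\setminus U$. One then perturbs $B$ to a boundary $B'$ with $(X/Z\ni o,B')$ of \emph{klt type}, $B'>B$ over $Y\setminus U$ and $B'$ arbitrarily close to $\tfrac{\Z}{I}$ over $U$, applies the klt-type boundedness Theorem~\ref{b_n_comp_klt} (Addendum~\ref{klt_complements_B}) to $(X/Z\ni o,B')$ to get an $n$-complement with $n\in\sN$, and finally transfers it back to $(X/Z\ni o,B)$ using the strict inequality over $Y\setminus U$ and Lemmas~\ref{approximation_I}--\ref{approximation_II} over $U$ (here $I\mid n$ and the finiteness of $\sN$ fix the admissible $\delta$). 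So the maximal lc index is not used to produce the $n$-complement directly, as in your final step; it is used only to control the shape of $B$ over $U$ so that the klt-type theorem, applied to the perturbation $B'$, yields a complement that also works for $B$. Your opening reduction (splitting off the klt type case) and your identification of $\Gamma_{\max}$-finiteness and $I\mid n$ as the decisive inputs are correct, but the mechanism joining them must be the perturbation-plus-klt-theorem argument, not a global maximal lc $0$-pair.
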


\begin{add}
The same holds for bd-pairs $(X/Z\ni o,B+\sP)$ of index $m$
with $\sN=\sN(d,I,\ep,v,e,m)$.
That is,
\begin{description}

\item[\rm Restrictions:\/]
every $n\in\sN$ satisfies
Restrictions on complementary indices with the given data and
$m|n$.

\item[\rm Existence of $n$-complement:\/]
if $(X/Z\ni o,B+\sP)$ is a bd-pair of index $m$ with
$\dim X=d$, $B\in\Gamma$, wFt $X/Z\ni o$, connected $X_o$ and
with lc type $(X/Z\ni o,B+\sP)$
then $(X/Z\ni o,B+\sP)$ has an $n$-complement $(X/Z\ni o,B^++\sP)$ for some $n\in\sN$.

\end{description}

\end{add}

We do not use $\Phi$ in the statement of the theorem
(cf. Remark~\ref{lc_compl_alternative} below), that is,
we can take any $\Phi$, e.g., $\Phi=\{1\}$.
However, $\Phi$ is hidden in the proof of Theorem~\ref{lc_compl}.
The proof below uses
a reduction to the klt type of Theorem~\ref{b_n_comp_klt}
and Addendum~\ref{klt_complements_B}.
An alternative and more right proof is
sketched in Remark~\ref{lc_compl_alternative}.

\begin{proof}
By construction $\sN$ is a finite set of positive integers
and satisfies Restrictions.

Let $(X/Z\ni o,B)$ be a pair satisfying
the assumptions in Existence of $n$-complements.
Since $B$ is a boundary, $B_{\sN\_\Phi}$ is well-defined.
By Propositions~\ref{D_D'_complement}
and~\ref{monotonicity_I}, $(X/Z\ni o,B_{\sN\_\Phi})$
has an $\R$-complement because
so does $(X/Z\ni o,B)$.
If the pair $(X/Z\ni o,B_{\sN\_\Phi})$ has klt type
then the result holds by Theorem~\ref{b_n_comp_klt}.

Otherwise $(X/Z\ni o,B_{\sN\_\Phi})$ has lc type.
In this case we use approximation and Theorem~\ref{b_n_comp_klt} again.

Step~1. {\em Reduction to the wlF pair
$(X^\sharp/Z\ni o, B_{\sN\_\Phi}{}^\sharp{}_{X^\sharp})$
with a $0$-contraction\/}
$$
\psi\colon (X^\sharp, B_{\sN\_\Phi}{}^\sharp{}_{X^\sharp})\to Y/Z\ni o
$$
{\em such that\/}
\begin{description}

  \item[\rm (1$^\sharp$)]
$\psi^*H\sim_\R -(K_{X^\sharp}+B_{\sN\_\Phi}{}^\sharp{}_{X^\sharp})$
for some
ample over $Z\ni o$ $\R$-divisor $H$ on $Y$.

\end{description}
Use Construction~\ref{sharp_construction}.
We can suppose
that the modification $\varphi\colon X\dashrightarrow X^\sharp$
is small.
So, if $n\in \sN$ gives an $n$-complement
$(X^\sharp/Z\ni o,B^+)$ of
$(X^\sharp/Z\ni o, B_{\sN\_\Phi}{}^\sharp{}_{X^\sharp})$, then
the complement induces an $n$-complement $(X/Z\ni,B^+)$ of
$(X/Z\ni o,B_{\sN\_\Phi}{})$
by Propositions~\ref{small_transform_compl}, \ref{D_D'_complement}
and~\ref{monotonicity_I}.
On its turn,  $(X/Z\ni,B^+)$ is
a required $n$-complement of $(X/Z\ni o,B)$ too by
Corollary~\ref{B_B_+B_sN_Phi}.

By construction $\dim X^\sharp=d$,
$X^\sharp/Z\ni o$ has  wFt and connected $X_o^\sharp$.
Additionally, $(X^\sharp/Z\ni o, B_{\sN\_\Phi}{}^\sharp{}_{X^\sharp})$
has an $\R$-complement.
That is, $(X^\sharp/Z\ni o, B_{\sN\_\Phi}{}^\sharp{}_{X^\sharp})$
satisfies the assumptions of Existence of $n$-complements
except for $B_{\sN\_\Phi}{}^\sharp{}_{X^\sharp}\in\Gamma$.
To compensate this we verify the following property.

Step~2.
$(X^\sharp, B_{\sN\_\Phi}{}^\sharp{}_{X^\sharp}\to Y)$
{\em has index $I$ over $Y\ni P$ if the $0$-contraction
is maximal lc over $Y\ni P$\/},
in particular, over the generic point of $Y$.
Equivalently, by~(6)~\ref{adjunction_div}
$P$ is an lc center of
the adjoint bd-pair
$(Y,B_{\sN\_\Phi}{}^\sharp{}\dv+\sB_{\sN\_\Phi}{}^\sharp{}\md)$
(see~\ref{adjunction_0_contr}).
So, there exits only finitely many of those points $P\in Y/Z\ni o$.
We call them {\em maximal lc\/}.
Let $P\in Y$ be such a maximal lc point.
By Construction~\ref{sharp_construction} the $0$-pair
$(X^\sharp/Y, B_{\sN\_\Phi}{}^\sharp{}_{X^\sharp})$ is
crepant over $Z\ni o$ to a $0$-pair
$(X'/Y, B_{\sN\_\Phi,X'})$
with a boundary $B_{\sN\_\Phi,X'}$
being the birational transform of $B_{\sN\_\Phi}$ on $X'$
by a birational $1$-contraction $X\dashrightarrow X'/Z\ni o$.
The maximal lc property is an invariant
of crepant models, e.g., since
the adjoint bd-pair does so.
Thus $(X'/Y\ni P,B_{\sN\_\Phi,X'})$ is
maximal lc.
On the other hand, $(X/Z\ni o,B)$ has an $\R$-complement
$(X/Z\ni o,B^{+,\R})$.
Hence $(X'/Z\ni o,B^{+,\R}_{X'})$ is also
an $\R$-complement of $(X'/Z\ni o,B_{X'})$, where
$B^{+,\R}_{X'}=\B^{+,\R}_{X'}$ and
$B_{X'}$ is the birational transform of $B$ on $X'$.
By construction and definition $B\ge B_{\sN\_\Phi}$ and
$B^{+,\R}_{X'}\ge B_{X'}\ge B_{\sN\_\Phi,X'}$;
$(X'/Z\ni o,B^{+,\R}_{X'})$ is also
an $\R$-complement of $(X'/Z\ni o,B_{\sN\_\Phi,X'})$.
Since $\sim_\R$ over $Z\ni o$ gives
$\sim_\R$ over $Y\ni P$ for every point $P\in Y$ over $o$,
$(X'/Y\ni P,B^{+,\R}_{X'})$ is
an $\R$-complement of $(X'/Y\ni P,B_{\sN\_\Phi,X'})$ too.
Notice also that by construction $(X'/Y\ni P,B_{\sN\_\Phi,X'})$
is an $\R$-complement of itself.
By the maximal lc property over $Y\ni P$,
$B^{+,\R}_{X'}=B_{X'}=B_{\sN\_\Phi,X'}$ over $Y\ni o$.
Thus $B_{X'}=B_{\sN\_\Phi,X'}$ is rational
and $\in\Gamma$ over $Y\ni o$.
Therefore by our assumptions
$(X'/Y\ni P,B_{\sN\_\Phi,X'})=(X'/Y\ni P,B_{,X'})$
has index $I$.
Now the require index property follows from
the invariance of index under crepant modifications.

For simplicity of notation,
we denote $(X^\sharp/Z\ni o, B_{\sN\_\Phi}{}^\sharp{}_{X^\sharp})$
by $(X/Z\ni o,B)$.
By Step~1 $(X/Z\ni o,B)$ is a wlF pair and has a $0$-contraction
$$
\psi\colon (X,B)\to Y/Z\ni o
$$
such that
\begin{description}

  \item[\rm (1)]
$\psi^*H\sim_\R -(K+B)$
for some
ample over $Z\ni o$ $\R$-divisor $H$ on $Y$.

\end{description}
Additionally, $(X/Z\ni o, B)$
satisfies the assumptions of Existence of $n$-complements
except for $B\in\Gamma$.
By Step~2 the $0$-contraction $\psi$ has
index $I$ locally over the lc centers of $(Y,B\dv+\sB\md)$,
including the generic point of $Y$.
By~(6)~\ref{adjunction_div}
the lc centers of $(X,B)$, including the generic point of $X$,
correspond to that of
$(Y,B\dv+\sB\md)$ under $\psi$.
Hence there exists an open non empty subset $U$ in $Y/Z\ni o$ such that
all lc centers of $(X,B)$ are generically over $U$ and
$(X,B)$ is a $0$-pair over $U$ of index $I$.
In particular,
\begin{description}

\item[\rm (2)]
$(X,B)$ is klt over $Y\setminus U/Z\ni o$; and

\item[\rm (3)]
all multiplicities $b$ of $B$  over $U$ belong to
$$
[0,1]\cap \frac \Z I.
$$

\end{description}
In Step~4 below we construct an $n$-complement of
$(X/Z\ni o,B)$ for some $n\in\sN$.

Step~3. {\em Approximation.\/}
There exists a perturbation $B'$ of the boundary $B$ such that
\begin{description}

\item[\rm (4)]
$(X/Z\ni o, B')$ is a klt wlF;

\item[\rm (5)]
$B'> B$ over $Y\setminus U/Z\ni o$
($>$ in every prime divisor of $X$ over $Y\setminus U/Z\ni o$);
and

\item[\rm (6)]
the multiplicities $b'$ of $B'$ over $U$ are
arbitrary closed to fractions $b=l/I$ with
nonnegative integer $l\le I$.
\end{description}
Indeed, by~(1) we can find and effective $\R$-divisor
$E\sim_\R H/Z\ni o$ on $Y$ over $Z\ni o$ such that
$Y\setminus U\subseteq \Supp E$ locally over $Z\ni o$.
We can suppose also that $E$ does not pass through
the lc centers of $(Y,B\dv+\sB\md)$.

First, we perturb over $E$: take $B_1=B+\ep\psi^*E$
for sufficiently small $\ep>0$.
Then we get (5).
In (6) $b_1=b$ if $b=1$, and in (4) lc instead of klt
and even klt over $Y\setminus U/Z\ni o$ by
(1) and (2).

To make a second approximation we take a boundary
$B_2$ on $X$ such that $(X/Z\ni o, B_2)$ is
a klt $0$-pair.
Such a boundary exists because $X/Z\ni o$ has wFt.
By (1-3) and above versions of (4-6),
a required perturbation is
$$
B'=(1-\delta)B_1+\delta B_2
$$
for some $0<\delta\ll 1$.

Step~4. {\em Construction of an $n$-complement of $(X/Z\ni o,B)$.\/}
By~(4) and Theorem~\ref{b_n_comp_klt} with Addendum~\ref{klt_complements_B},
$(X/Z\ni o,B')$ has an $n$-complement $(X/Z\ni o,B^+)$ for
some $n\in \sN$.
We contend that it is also an $n$-complement of $(X/Z\ni o,B)$.
For this we need to verify only Definition~\ref{n_comp}, (1).
For the prime divisors of $X$ over $E$ it follows from~(5).
For the prime divisors of $X$ over $U$ it follows from (6) and
Lemmas~\ref{approximation_I}, \ref{approximation_II}.
Since $\sN$ is finite, we can find $\delta>0$ such that
$\norm{b'-l/I}<\delta\le 1/I(n+1)$ for all $n\in\sN$.

Step~5. The same arguments works for the bd-pairs.

\end{proof}

\begin{rem} \label{lc_compl_alternative}
However, more conceptual and precise proof
of Theorem~\ref{lc_compl} should work as follows.
This allows to verify that under the assumptions
and notation of Existence of $n$-complements
of the theorem
\begin{description}

  \item[\rm (1)]
$(X/Z\ni o,B^+)$ is an $n$-complement of itself and
of $(X/Z\ni o,B_{n\_\Phi}), (X/Z\ni o,B_{n\_\Phi}{}^\sharp)$,
and is a b-$n$-complement of itself and
of $(X/Z\ni o,B_{n\_\Phi}),
(X/Z\ni o,B_{n\_\Phi}{}^\sharp),(X^\sharp/Z\ni o,B_{n\_\Phi}{}^\sharp{}_{X^\sharp})$,
if $(X,B_{n\_\Phi}),(X,B_{n\_\Phi}{}^\sharp)$ are log pairs respectively;

  \item[\rm (2)]
$\sN$ has a filtration similar to the klt filtration~(\ref{filtration_klt})
of Addendum~\ref{klt_complements_fil} and
the existence $n$-complements agrees the filtration
for the class of pairs under assumptions of
Existence of $n$-complements in the theorem;
and

  \item[\rm (3)]
the bd-pairs can be treated similar to Addendum~\ref{bd_klt_compl}.

  \item[\rm (4)]
However, in Existence of $n$-complements of the theorem
we can not to replace $(X/Z\ni o,B)$ by $(X/Z\ni o,B_{\sN\_\Phi})$
(because lc type is the top of types; cf. Addendum~\ref{klt_complements_B}).

\end{description}
In particular, we use $\Phi$ here.

We sketch a construction of an lc type filtration.
We consider pairs $(X/Z\ni o,B)$ under the assumptions
of Existence of $n$-complements of the theorem.
We start from the top, {\em big lc type\/}:
$(X/Z\ni o,B)$ has an $\R$-complement $(X/Z\ni o,B^+)$
and $B^+-B$ is big over $Z\ni o$, equivalently,
$-(K+B)$ is big over $Z\ni o$ (cf. (1) of
Proposition-Definition~\ref{generic}).
The big lc type is also lc type of dimension $d$
for $d=\dim X$.
Any pair $(X/Z\ni o,B)$ of big lc type has
a maximal model $(X^\sharp/Z\ni o,B_{X^\sharp})$
of Construction~\ref{sharp_construction} with
$\R$-ample $-(K_{X^\sharp}+B_{X^\sharp})$ over $Z\ni o$.
The model is unique up to an isomorphism over $Z\ni o$
and has a unique minimal lc center $S$
by lc connectedness \cite[p.~203]{Sh03} \cite[Theorem~6.3]{A14}.
So, the lc type $d$ has additional parameter $s=\dim S$,
a nonnegative integer $\le d$.
The birational $1$-contraction preserves all multiplicities
of $B$ and we denote by $B_{X^\sharp}$ the birational transform of
$B$ on $X^\sharp$.
In its turn,
the finite set of positive integers $\sN^d$ for lc type $d$
of the lc filtration in (2) has
a (deceasing) subfiltration
$$
\sN^d=\sN^{(d,0)}\supseteq \sN^{(d,1)}\supseteq \dots
\supseteq \sN^{(d,d-1)}\supseteq \sN^{(d,d)}
$$
with respect to $s$.
A pair $(X/Z\ni o,B)$ has lc type $(d,s)$ with respect to
the filtration if both pairs
$$
(X/Z\ni o,B_{\sN^{(d,s+1)}\_\Phi}),(X/Z\ni o,B_{\sN^{(d,s)}\_\Phi})
$$
have lc type $(d,s)$.
The corresponding (b-)$n$-complement of type $(d,s)$
for some $n\in \sN_{(d,s)}=\sN^{(d,s)}\setminus\sN^{(d,s+1)}$ is
extended from an (b-)$n$-complement of an adjoint bd-pair
on $(S^\sharp/Z\ni o,B_{n\_\sN^{(d,s+1)}}{}^\sharp{}\dv+
\sB_{n\_\sN^{(d,s+1)}}{}^\sharp{}\md)$ of dimension $s$.
The adjoint bd-pair has generic klt type and
a finite set of positive numbers
$\sN_{(d,s)}=\sN(d,I,\ep,v,e,\widetilde{\Phi}',I)$
exists by Addendum~\ref{bd_klt_compl}.
So, it looks that we do not need the assumption $B\in\Gamma$.
No, we use the assumption for the log adjunction on $S^\sharp$
for $s\le d-2$ (in codimension $\ge 2$).
Indeed, such an adjunction (after a dlt or better log resolution)
is a sequence of adjunctions on divisors as in~\ref{adjunction_on_divisor}
concluding by an adjunction for a $0$-contraction
as in~\ref{adjunction_0_contr}.
The last adjunction has index $I=I(d,\Gamma\cap\Q)$ that
can be verified as in Step~2 in the proof of Theorem~\ref{lc_compl}.
For this we use lc type property of $(X/Z\ni o,B)$ and
the assumption that $B\in\Gamma$.
Respectively, in general we change $\Phi$ into $\widetilde{\Phi}$
by~(\ref{Phi-to-wtPhi}) for adjunctions on divisors and,
finally, into $\widetilde{\Phi}'$ by Addendum~\ref{adjunction_index_div}
for the adjunction of the $0$-contraction.
In the reverse direction, first
we lift $n$-complements by Theorem~\ref{invers_b_n_comp}.
Then we extend them by induction and by Theorem~\ref{extension_n_complement}.
During the extension we glue these complements on
the reduced divisors of a dlt or log resolution and,
finally, extend on $X$.

Notice that the subfiltration starts from $\sN^{(d,d)}$
and $\sN^{(d,d+1)}=\emptyset$ or $\sN'$ as in Addendum~\ref{klt_complements_fil}.
Type $(d,d)$ is generic klt type with
the minimal lc center
$S=X,S^\sharp=X^\sharp$ and $\Phi$ instead of $\widetilde{\Phi}'$.
However, the previous type $(d,d-1)$
is lc type but it is plt with the divisorial minimal
lc center $S,S^\sharp$, a reduced divisor
of $B_{n\_\sN^{(d,d)}},B_{n\_\sN{(d,d)}}{}^\sharp$
respectively and $\widetilde{\Phi}$ instead of $\widetilde{\Phi}'$.
The starting type $(d,0)$ has $S^\sharp=\pt$, a closed point
and $\sN^d=\sN^{(d,0)}$.

After that we continue with lc type of dimension $i\le d-1$,
where the $0$-contraction
$(X^\sharp,B^\sharp{}_{X^\sharp})\to Y/Z\ni o$ is fibered
and $\dim Y=i$.
By~(8) of~\ref{adjunction_div}
the adjoint bd-pair has lc type but
in general without index because $B$ may have
real horizontal over $Z\ni o$ multiplicities.
Unfortunately, a straightforward reduction
to dimension $i$ does not work because
does not preserve the assumption $B\in\Gamma$
and not preserve $\Gamma$.
However, the index $I$ is preserved for
$0$-contractions
$$
(X^\sharp,B_{n\_\sN^{(i,s+1)}\_\Phi}{}^\sharp{}_{X^\sharp}),
\to Y/Z\ni o,
n\in\sN_{(i,s)}=\sN^{(s,i)}\setminus \sN^{(i,s+1)},
$$
where type $(i,s)$ is the {\em filtration lc type\/}
with $i=\dim Y,s=\dim S$ and $S$
is a minimal lc center of the adjoint bd-pair on $Y$.
The types are ordered lexicographically.
E.g., lc type $(d-1,d-1)$ precede to lc type $(d,0)$;
in this case $\dim Y=d-1$, the adjoint bd-pair
is generic klt and the adjunction index is $I=I(d,\Gamma\cap \Q)$.
In this case we can construct $n$-complements by
Addendum~\ref{bd_klt_compl} and Theorem~\ref{invers_b_n_comp}.
In general construction of $n$-complements is
more involved but extend the construction for types $(d,s)$:
first we construct a (b-)$n$-complement on $Y$ and
then lift it to $X^\sharp$.

The bottom lc type $(0,0)$ has only global pairs
with $S,S^\sharp=\pt$ and
amounts the special global case of Corollary~\ref{bounded_lc_index}.

In particular, Theorem~\ref{lc_compl} can be applied
to any dcc set $\Gamma\subset [0,1]$ by Example~\ref{max_lc_index}.
Moreover, under the dcc assumption on $\Gamma$ in
Theorem~\ref{lc_compl} and in the remark we can suppose that
\begin{description}

  \item[\rm (5)]
$\sN$ has a single element, or equivalently,
there exists a positive integer $n=n(d,I,\ep,v,e)$ such that
Existence of $n$-complements holds for this $n$
\cite[Theorem~1.6]{HLSh}.

\end{description}
Indeed, all $n$-complements are coming from the exceptional
case.
By~\ref{direct_dcc} and a similar fact for adjunction
on a divisor, we can suppose that the exceptional pairs
also have boundaries with dcc multiplicities.
Actually in this situation (and even in general)
increasing multiplicities we can suppose that
the boundary multiplicities form a finite set \cite[Theorem~5.20]{HLSh}.
Then we can find a single complementary index $n$
(cf. \cite[Theorem~1.7]{B}).

The same works for bd-pairs.

Similar results expected for $a$-lc complements
and not only under wFt (cf. Conjecture~\ref{a_n_compl} below).
One of crucial pieces -- Corollary~\ref{bounded_lc_index},
the boundedness of lc index, -
does not hold in general for maximal $a$-lc $0$-pairs
but may hold under slightly stricter assumption
(cf. Addendum~\ref{strict_a_n_compl}).

\end{rem}

\paragraph{Affine maps to divisors.}
Let $\R^r$ be
a finite dimensional $\R$-linear space and
$X$ be an algebraic variety or space.
An affine map $A$ into $\R$-divisors of $X$ is
a map
$$
A\colon \R^r\to \WDiv_\R X
$$
which is $\R$-linear for every multiplicity
of $\R$-divisors.
That is, for every prime divisor $P$ on $X$,
there exist real numbers $a_P,a_{P,1},\dots,a_{P,r}$ such that,
for every point $(x_1,\dots,x_r)\in\R^r$,
\begin{equation}\label{P_component}
\mult_P A(x_1,\dots,x_r)=a_P+\sum_{i=1}^r a_{P,i}x_i.
\end{equation}
The map $A$ is $\Q$-affine if all $a_P,a_{P,i}\in\Q$.
The hight of such a $\Q$-affine map $A$ is
$$
h(A)=\max\{h(a_P),h(a_{P,i})\},
$$
where $h(a)$ is the usual hight of $a\in \Q$.
The boundedness of $h(A)$ does not imply the finiteness
of maps $A$ but the finiteness of their linear components~(\ref{P_component}).
More generally, the finiteness holds if $A\in\fA$,
where $\fA$ is finite set of real numbers and $A\in\fA$
means that every $a_P,a_{P,i}\in\fA$.

\begin{const} \label{Delta_complements}
Let $d$ be a nonnegative integer,
$I,\ep,v,e$ be the data as
in Restrictions on complementary indices,
$\fA$ be a finite set of real numbers and
$\Delta$ be a compact subset, e.g., a compact
polyhedron, in a finite dimensional
$\R$-linear space $\R^r$.

For every $x\in\R^r$, the set of real numbers
$$
\Gamma(x)=\Gamma(x,\Delta,\fA)=
\{a+\sum_{i=1}^r a_ix_i\mid a,a_1,\dots,a_r\in\fA\}\cap [0,1]
$$
is finite.
Hence $\Gamma(x)$ satisfies the assumption of Construction~\ref{lc_induction}:
$\Gamma(x)_{\max}$ is finite too.
Let $I(x)=\Gamma(d,\Gamma\cap\Q)$ be
the corresponding rational maximal lc index.
Additionally, we can suppose that
$I|I(x)$.

By Theorem~\ref{lc_compl}
there exists a finite set of positive integers
$\sN(x)=\sN(d,I(x),\ep,v,e)$ such that
\begin{description}

\item[\rm Restrictions:\/]
every $n\in\sN(x)$ satisfies
Restrictions on complementary indices with the given data;

\item[\rm Existence of $n$-complement:\/]
if $(X/Z\ni o,B)$ is a pair with
$\dim X=d$, $B\in\Gamma(x)$, wFt $X/Z\ni o$, connected $X_o$ and
with lc type $(X/Z\ni o,B)$
then $(X/Z\ni o,B)$
has an $n$-complement $(X/Z\ni o,B^+)$ for some $n\in\sN(x)$.

\end{description}

Additionally, by Lemma~\ref{approximation_lc_complements} below
with $\Gamma=\Gamma(x)$ and $\sN=\sN(x)$
there exists a positive real number $\delta(x)$.

By the finiteness of linear functions
$L(y_1,\dots,y_r)=a+\sum_{i=1}^r a_iy_i,a,a_1,\dots,a_r\in\fA,$
there exists an open neighborhood $U(x)$ of $x$
in $\R^r$ such
that for every those function $L$ and $(y_1,\dots,y_r)$ in $U$
$$
\norm{L(x_1,\dots,x_r)-L(y_1,\dots,y_r)}<\delta(x).
$$

Finally, since $\Delta$ is compact there
exists a finite covering
$$
\Delta\subset \bigcup_j U(x^j), x^j=(x_1^j,\dots,x_r^j)\in\Delta.
$$
Respectively, consider
$$
\sN=\sN(d,I,\ep,v,e)=\bigcup_j \sN(x^j)=
\bigcup_j \sN(d,I(x^j),\ep,v,e).
$$
Thus
\begin{description}

\item[\rm Restrictions:\/]
every $n\in\sN$ satisfies
Restrictions on complementary indices with the given data.

\end{description}

For bd-pairs we add a positive integer $m$.
So, $\sN=\sN(d,I,\ep,v,e,m),\sN(x)=\sN(d,I(x),\ep,v,e,m)$ and
$m|n\in\sN,\sN(x)$.

\end{const}

\begin{thm}[Lc type $n$-complements with $A$] \label{lc_compl_A}
Let $d$,$I,\ep,v,e$,$\Delta,\fA,\sN$ be
the data as in Construction~\ref{Delta_complements}.
Then every $n\in\sN$ satisfies Restrictions
on complimentary indices with the given data and
\begin{description}

\item[\rm Existence of $n$-complement:\/]
if $(X/Z\ni o,B)$ is a pair with
$\dim X=d$, wFt $X/Z\ni o$, connected $X_o$ and
such that there exists an affine map
$A\colon \R^r\to\WDiv_\R X$, where $A\in\fA$,
$$
A(\Delta)\subseteq \Compd_\R\cap \fD_\R^+=
\{D\in\WDiv_\R X\mid (X/Z\ni o,D)
\text{ has an }\R-\text{complement and }
D\ge 0\}
$$
and $B\in A(\Delta)$,
then $(X/Z\ni o,B)$ has an $n$-complement
$(X/Z\ni o,B^+)$ for some $n\in\sN$.

\end{description}

\end{thm}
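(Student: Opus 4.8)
The plan is to reduce, through the finite covering $\Delta\subset\bigcup_j U(x^j)$ constructed in Construction~\ref{Delta_complements}, to the absolute lc-type statement of Theorem~\ref{lc_compl}. First I would fix $x\in\Delta$ with $A(x)=B$ and an index $j$ with $x\in U(x^j)$, and set $B^j=A(x^j)$, the ``rational centre'' of that chart. Since $A(\Delta)\subseteq\Compd_\R\cap\fD_\R^+$, the pair $(X/Z\ni o,B^j)$ is effective and has an $\R$-complement, so $B^j$ is a boundary by Remark~\ref{remark_def_complements}, (1); in particular $(X/Z\ni o,B^j)$ has an $\R$-complement in the sense required by Theorem~\ref{lc_compl}. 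Moreover, because $A\in\fA$, for every prime divisor $P$ on $X$ we have $\mult_P B^j=a_P+\sum_{i=1}^r a_{P,i}x^j_i$ with all $a_P,a_{P,i}\in\fA$, whence $\mult_P B^j\in\Gamma(x^j)$ and therefore $B^j\in\Gamma(x^j)$.

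Next I would apply Theorem~\ref{lc_compl} to $(X/Z\ni o,B^j)$ with the data $d,I(x^j),\ep,v,e$ and the finite set $\Gamma(x^j)$, whose maximal lc index is $I(x^j)$ by construction: this produces an integer $n\in\sN(x^j)\subseteq\sN$ and an $n$-complement $(X/Z\ni o,B^{j,+})$ of $(X/Z\ni o,B^j)$. By Construction~\ref{Delta_complements} every such $n$ satisfies Restrictions on complementary indices with the given data, so it remains only to promote $(X/Z\ni o,B^{j,+})$ to an $n$-complement of $(X/Z\ni o,B)$.

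For this transfer I would use the $\delta$-approximation built into the covering. Applying the defining property of $U(x^j)$ to the affine-linear function $L_P(y)=a_P+\sum_i a_{P,i}y_i$ (whose coefficients lie in $\fA$ because $A\in\fA$), and noting $\mult_P B=L_P(x)$, $\mult_P B^j=L_P(x^j)$, one gets for every prime divisor $P$ on $X$
\[
\norm{\mult_P B-\mult_P B^j}<\delta(x^j).
\]
Now Lemma~\ref{approximation_lc_complements}, applied with $\Gamma=\Gamma(x^j)$, $\sN=\sN(x^j)$ and $\delta=\delta(x^j)$, yields that $(X/Z\ni o,B^{j,+})$ is already an $n$-complement of $(X/Z\ni o,B)$: conditions (2)--(3) of Definition~\ref{n_comp} are untouched, and condition (1) follows because over the locus where $(X/Z\ni o,B^j)$ is maximal lc one has $I(x^j)\,\mult_P B^j\in\Z$, so the rounding Lemmas~\ref{approximation_I}--\ref{approximation_II} (using $I(x^j)\mid n$ and $\delta(x^j)$ small enough for all relevant $n\in\sN$) give $\rddown{(n+1)\mult_P B}/n=\rddown{(n+1)\mult_P B^j}/n$, while off that locus one already has $\mult_P B^{j,+}\ge\mult_P B^j\ge\mult_P B$, exactly as in Steps~3--4 of the proof of Theorem~\ref{lc_compl}. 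The bd-pair case is identical once the index $m$ is carried through Construction~\ref{Delta_complements} and Theorem~\ref{lc_compl}, using $m\mid n$ for $n\in\sN(d,I,\ep,v,e,m)$.

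The main obstacle is exactly this transfer step. Because $B$ may genuinely have irrational multiplicities --- the presence of the affine map $A$ is what makes the statement nontrivial --- one cannot evaluate $\rddown{(n+1)\mult_P B}/n$ directly, and the role of the finite set $\fA$ together with the compactness of $\Delta$ is to guarantee, \emph{uniformly} over the whole bounded family parametrised by $\Delta$, that the multiplicities of $B$ are $\delta$-close to the finitely many rational values occurring among the centres $B^j$, with $\delta$ small relative to the finite set $\sN$. Verifying that Lemma~\ref{approximation_lc_complements} delivers this uniform closeness, and that the maximal lc index $I(x^j)$ really does control the multiplicities of $B^j$ on the locus carrying lc centres (so that the rounding lemmas apply there and the perturbation argument handles the rest), is where the content lies; the remaining covering bookkeeping is already assembled in Construction~\ref{Delta_complements}.
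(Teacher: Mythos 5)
Your proposal is correct and follows essentially the same route as the paper: pick the chart $U(x^j)$ containing $x$ with $A(x)=B$, note that $A(x^j)$ is a boundary in $\Gamma(x^j)$ with an $\R$-complement so that Theorem~\ref{lc_compl} (as packaged in Construction~\ref{Delta_complements}) gives an $n$-complement of $(X/Z\ni o,A(x^j))$ with $n\in\sN(x^j)\subseteq\sN$, and then transfer it to $(X/Z\ni o,B)$ via the $\delta(x^j)$-closeness and Lemma~\ref{approximation_lc_complements}. Your further unpacking of the rounding argument is already contained in that lemma (and in the proof of Theorem~\ref{lc_compl}), so no additional verification is needed beyond citing it, exactly as the paper does.
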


\begin{add}
The same holds for bd-pairs $(X/Z\ni o,B+\sP)$ of index $m$
with $\sN=\sN(d,I,\ep,v,e,m)$.
That is,
\begin{description}

\item[\rm Restrictions:\/]
every $n\in\sN$ satisfies
Restrictions on complementary indices with the given data and
$m|n$.

\item[\rm Existence of $n$-complement:\/]
if $(X/Z\ni o,B+\sP)$ is a bd-pair of index $m$ with
$\dim X=d$, wFt $X/Z\ni o$, connected $X_o$ and
such that there exists an affine map
$A\colon \R^r\to\WDiv_\R X$, where $A\in\fA$,
$$
A(\Delta)\subseteq \Compd_\R\sP\cap \fD_\R^+=
\{D\in\WDiv_\R X\mid (X/Z\ni o,D+\sP)
\text{ has an }\R-\text{complement and }
D\ge 0\}
$$
and $B\in A(\Delta)$,
then $(X/Z\ni o,B+\sP)$ has an $n$-complement $(X/Z\ni o,B^++\sP)$ for some $n\in\sN$.

\end{description}

\end{add}

The proof uses the following.

\begin{lemma}[Approximation of $n$-complements] \label{approximation_lc_complements}
Let $\Gamma$ be a finite subset in $[0,1]$ and
$\sN$ be a finite set of sufficiently divisible positive integers:
$nb\in\Z$ for every $n\in\sN$ and rational $b\in\Gamma$.
Then there exists a positive real number $\delta$
with the following approximation property.
If $(X/Z,B^+)$ is an $n$-complement of
$(X/Z,B)$ with $B\in\Gamma$ with $n\in\sN,B\in\Gamma$ and
$D$ is a subboundary on $X$ such that $\norm{B-D}<\delta$
then $(X/Z,B^+)$ is also an $n$-complement of
$(X/Z,D)$.

\end{lemma}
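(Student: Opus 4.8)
The plan is to reduce everything to the multiplicity inequality (1) of Definition~\ref{n_comp}. Conditions (2) and (3) of that definition for $(X/Z,B^+)$ relative to $(X/Z,D)$ --- that $(X,B^+)$ is lc and that $K+B^+\sim_n 0/Z$ --- involve only $B^+$, not $D$, and hence already hold because $(X/Z,B^+)$ is an $n$-complement of $(X/Z,B)$. So only (1) needs to be checked, and it is a multiplicity-wise statement: for every prime divisor $P$ on $X$, with $b=\mult_P B\in\Gamma$, $d=\mult_P D$ and $b^+=\mult_P B^+$, one must show $b^+\ge 1$ if $d=1$, and $b^+\ge\rddown{(n+1)d}/n$ otherwise. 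On the input side we know $d\le 1$ (as $D$ is a subboundary), $b^+\in\Z/n$ with $b^+\le 1$, and --- since $(X/Z,B^+)$ is an $n$-complement of $(X/Z,B)$ --- that $b^+\ge 1$ if $b=1$, and $b^+\ge\rddown{(n+1)b}/n$ otherwise.

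Next I would choose $\delta$ from the finiteness of $\Gamma$ and of $\sN$. First, take $\delta$ smaller than the distance from $1$ to every element of $\Gamma\setminus\{1\}$, so that $\norm{b-d}<\delta$ forces $d=1$ exactly when $b=1$. Second, for each $b\in\Gamma$ with $b<1$ and each $n\in\sN$ I want $\delta$ small enough that $\norm{b-d}<\delta$ implies $\rddown{(n+1)d}/n=\rddown{(n+1)b}/n$. For rational $b$ this is precisely Lemma~\ref{approximation_I} --- writing $b=l/I$ with $I$ a common denominator of the rational elements of $\Gamma$ and using $nb\in\Z$ --- together with Lemma~\ref{approximation_II} for the value near $1$; there $\rddown{(n+1)b}/n=b$ and $\rddown{(n+1)d}/n=b$ as well. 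For irrational $b\in\Gamma$ the point is just that $(n+1)b\notin\Z$, so it sits at a definite positive distance from $\Z$ and $\rddown{(n+1)x}$, as a function of $x$, is locally constant there; choosing $\delta$ below that gap divided by $n+1$ keeps $\rddown{(n+1)d}=\rddown{(n+1)b}$. The minimum over the finitely many pairs $(b,n)$ is a single positive $\delta$.

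With such a $\delta$ I would finish by the three cases. If $d=1$ then $b=1$ and $b^+\ge 1$. If $d<1$ and $b<1$ then $b^+\ge\rddown{(n+1)b}/n=\rddown{(n+1)d}/n$. If $d<1$ and $b=1$ then $b^+\ge 1\ge\rddown{(n+1)d}/n$ by Example~\ref{rddown_(n+1)_m_m},~(3). In every case $b^+$ meets the bound attached to $d$, so $(X/Z,B^+)$ is an $n$-complement of $(X/Z,D)$. The same computation applies to bd-pairs, as the b-part $\sP$ is left untouched.

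The only delicate point is the choice of $\delta$, and inside it the observation that $(n+1)b$ is essentially never an integer: for rational $b\in\Gamma$ the divisibility hypothesis gives $nb\in\Z$ but not $(n+1)b\in\Z$, while $b$ itself is a proper fraction $<1$; and for irrational $b$ it is automatic. That is exactly what prevents $\rddown{(n+1)d}$ from jumping as $d$ moves slightly off $b$, and it is the one place where the finiteness of $\Gamma$, the finiteness of $\sN$, and the sufficient divisibility of the indices in $\sN$ are all used. The rest is routine bookkeeping of the kind already done in Lemmas~\ref{approximation_I}--\ref{inequal}.
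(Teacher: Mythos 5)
Your proposal follows essentially the same route as the paper's (very terse) proof: reduce to condition (1) of Definition~\ref{n_comp}, since (2)--(3) concern only $B^+$; handle rational $b$ via Lemmas~\ref{approximation_I}--\ref{approximation_II} using the divisibility $nb\in\Z$; and handle irrational $b$ by local constancy of $\rddown{(n+1)x}$, with $\delta$ obtained from the finiteness of $\Gamma$ and $\sN$. The case analysis at the end is sound (and the slip ``$d=1$ exactly when $b=1$'' is harmless, since you only use the implication $d=1\Rightarrow b=1$ and treat $d<1,\ b=1$ separately via Example~\ref{rddown_(n+1)_m_m},~(3)).

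One intermediate claim is overstated, though it does not sink the argument: you assert that for every $b\in\Gamma$ with $b<1$ one can force $\rddown{(n+1)d}/n=\rddown{(n+1)b}/n$, on the grounds that $(n+1)b\notin\Z$. This fails at $b=0$ (which is always in $\Gamma$), since $D$ is only a subboundary and its multiplicities may be negative: for $d\in(-\delta,0)$ one has $\rddown{(n+1)d}=-1\neq 0$, no matter how small $\delta$ is, so neither the equality nor the hypotheses of Lemma~\ref{approximation_I} (which needs the perturbed value in $[0,1)$) apply there. The paper flags exactly this point (``the case with $d\le b=0$ is easy to add to Lemma~\ref{approximation_I}''). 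The repair is one line: when the rounding jumps it can only jump \emph{down}, so $\rddown{(n+1)d}/n\le\rddown{(n+1)b}/n\le b^+$ and condition (1) for $D$ still holds; i.e.\ replace your equality requirement by the inequality $\rddown{(n+1)d}\le\rddown{(n+1)b}$, which your choice of $\delta$ does guarantee. With that adjustment the proof is complete and matches the paper's.
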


\begin{add} \label{approximation_lc_b_n_complements}
Let $\Phi=\Phi(\fR)$ be a hyperstandard set associated with
a finite set of rational numbers $\fR$ in $[0,1]$.
Suppose additionally that $D$ is a boundary.
Then every b-$n$-complement $(X/Z,B^+)$ of
$(X/Z,B_{n\_\Phi})$ is also a b-$n$-complement of
$(X/Z,D_{n\_\Phi})$ if $(X,B_{n\_\Phi}),(X,D_{n\_\Phi})$
are log pairs.

\end{add}

\begin{add}
The same holds for bd-pairs $(X/Z,B+\sP),(X/Z,D+\sP)$
of index $m|n$ with the same b-divisor $\sP$.

\end{add}

\begin{proof}
Immediate by definition and Lemmas~\ref{approximation_I}, \ref{approximation_II}.
Actually, we need to verify only that (1) of Definition~\ref{n_comp}
for $B^+$ with respect to $B$ implies that of with respect to $D$
(cf. Proposition~\ref{1_of def_2 for B}).
For rational $b$, we use Lemmas~\ref{approximation_I}, \ref{approximation_II}.
(The case with $d\le b=0$ is easy to add to Lemma~\ref{approximation_I}.)
For irrational $b$, we can use the continuity of $\rddown{(n+1)d}/n$
in a neighborhood of $b$.

To prove Addendum~\ref{approximation_lc_b_n_complements}
it is sufficient to verify that $D_{n\_\Phi}\le B_{n\_\Phi}$
under our assumptions.
This follows from definition and Corollary~\ref{accumulation_1}.

Similarly we can treat bd-pairs.

\end{proof}

\begin{proof}[Proof of Theorem~\ref{lc_compl_A}]
Restrictions on complementary indices hold by
Construction~\ref{Delta_complements}.

Let $(X/Z\ni o,B)$ be a pair under
the assumptions of Existence of $n$-complements in
the theorem and $A\colon \R^r\to\WDiv_\R X$ be a corresponding affine map.
By our assumptions $B=A(x)$ for some $x\in\Delta$.
On the other hand, by Construction~\ref{Delta_complements},
$x\in U(x^j)$, where $x^j\in\Delta$ and
$$
\norm{A(x^j)-B}<\delta(x^j).
$$
Again by Construction~\ref{Delta_complements}
$(X/Z\ni o, A(x^j))$ has an $n$-complement $(X/Z\ni o,B^+)$
for some $n\in\sN(x^j)$.
Indeed, $(X/Z\ni o, A(x^j))$ satisfies the assumptions of
Existence of $n$-complements in Construction~\ref{Delta_complements}.
In particular, $(X/Z\ni o, A(x^j))$ has an $\R$-complement or
lc type because $A(x^j)\in \Compd_\R$ by our assumptions.
This with $A(x^j)\ge 0$ implies that
$A(x^j)$ is a boundary and $\in\Gamma(x^j)$
(cf. Remark~\ref{remark_def_complements},(1)).

Finally, $(X/Z\ni o,B^+)$ is also an $n$-complement of
$(X/Z\ni o,B)$ by Construction~\ref{Delta_complements}
and Lemma~\ref{approximation_lc_complements}.

Similarly we can treat bd-pairs.

\end{proof}

\begin{rem}
(1) Actually Existence of $n$-complements in Theorem~\ref{lc_compl_A}
holds for a slightly large set of $B$
because the covering of $\Delta$ is larger than $\Delta$ itself.

(2) For an analog of Remark~\ref{lc_compl_alternative}
and Theorem~\ref{lc_compl_A} we can use the remark and
Addendum~\ref{approximation_lc_b_n_complements}.
In this situation under the assumptions of
Existence of $n$-complements of Theorem~\ref{lc_compl_A},
$(X/Z\ni o,B^+)$ is a b-$n$-complement of
$(X/Z\ni o,B_{n\_\Phi}))$ too, if
$(X,B_{n\_\Phi})$ is a log pair.
Actually, in the proof it is better to take
a $\Q$-factorialization at the begging.

The same works for bd-pairs.

\end{rem}

\begin{proof}[Proof of Theorems~\ref{bndc} and~\ref{bd_bndc}]
The theorem under the assumption~(1) is immediate
by Theorem~\ref{b_n_comp_klt} and Addendum~\ref{klt_complements_B}.

For the assumption~(2) we can use Theorem~\ref{lc_compl_A}
with an appropriate set of real numbers $\fA$.
The $P$-component~(\ref{P_component}) of a map $A$ in this case
has $a_P=0,a_{P,i}=\mult_P D_i, i=1,\dots,r$, nonnegative integers.
By definition, for $(d_1,\dots,d_r)\in \R^r$,
$$
A(d_1,\dots,d_r)=\sum_{i=1}^r d_iD_i
\text{ and }
\mult_P A(d_1,\dots,d_r)=\sum_{i=1}^r a_{P,i}d_i.
$$
On the other hand, for $(d_1,\dots,d_r)\in\Delta$,
all $d_i\ge 0$ and $\sum_{i=1}^{r}a_{P,i}d_i\le 1$ hold
by our assumptions.
Hence we can suppose that all numbers $a_{P,i}$ belong to a finite set
$\fA$, including $0$.
Indeed, if $a_{P,i}\gg 0$ then every $d_i=0$ and
we can take $a_{P,i}=0$.

Theorem~~\ref{bndc} under the assumption~(3) is
immediate by Theorem~\ref{lc_compl} and
Example~\ref{max_lc_index}.

To prove Addendum~\ref{bounded_compon} we
can consider in our dimensional induction
simultaneously a bounded number of lc centers, e.g.,
in Lemma~\ref{exist_plt_model} with nonconnected $X_o$
a required plt model for every connected component
of $X_o$.
Equivalently, we can prove Theorem~\ref{bndc}
relaxing the assumption that $X$ is irreducible but
assuming that $X$ has a bounded number of
irreducible (connected) components $X_i$
and $\dim X=\max \dim X_i$.
All (b-)$n$-complements are coming from the exceptional one
of bounded dimension.
So, we add also the boundedness of components of
the exceptional pairs or consider exceptional pairs with
bounded number of irreducible (connected) components.
Notice that in the proof of Theorem~\ref{excep_comp} we
use only boundedness of exceptional pairs but
not their irreducibility.
Since we are working with algebraic spaces $X$ too,
it is possible to use an appropriate \'etale neighborhood
(a branch) for every connected component of $X_o$.
We can take the same $n$-complements
for isomorphic neighborhoods.
Thus it is to count only nonisomorphic neighborhoods.

Similarly we can treat bd-pairs.

To prove Addendum~\ref{nonclosed},
for a global pair $(X,B)$, we use the invariance of $H^0$ with respect
to the algebraic closure.
The connectedness of $X_0$ depends on
the algebraic closure but independent modulo conjugation
of the closure.
Thus after taking the algebraic closure we can use Addendum~\ref{bounded_compon}.
Note also that existence of an $n$-complement means existence of
an element $\overline B$ in a linear system
$$
\linsys{-nK-nS-\rddown{(n+1)(B-S)}}
$$
such that $(X,B^+)$ is lc, where
$S=\rddown{B}$, the reduced part of $B$, and
$$
B^+=B+\frac 1n (\rddown{(n+1)(B-S)}+\overline B)
$$
\cite[after Definition~5.1]{Sh92}.
Thus $\overline B$ should be sufficiently general in the Zariski topology.
If such element exists over the algebraic closure $\overline k$,
it exists also over $k$ because $k$ is infinite
(cf. Example~\ref{1stexe}, (5)).

The same arguments work for global $G$-pairs assuming that $n$ is
sufficiently divisible.
Indeed, we can suppose that a canonical divisor $K$ of $X$ is
$G$-semicanonical: $|G|K$ is $G$-invariant.

Similarly we can treat nonglobal pairs and $G$-pairs.

\end{proof}

In dimension $1$ such subtleties are not need.

\begin{exa}[$n$-complements in dimension $1$] \label{n_comp_dim_1}
Theorem~\ref{bndc} for $\dim X=1$ holds for every pair $(X/Z,B)$,
without the local (with unbounded number of leaves for $X/Z$),
wFt and connectedness of $X_o$ assumptions.
Any such a local pair with a boundary $B$ has an $\R$-complement and
$n$-complement for any positive integer $n$ satisfying
Restrictions on complementary indices.

In the global case we suppose existence of an $\R$-complement
instead of~(1) in Theorem~\ref{bndc} (cf. Example~\ref{R_comp_dim 1}).

The global case with $X=E$, a curve of genus $1$, is also
trivial: $B^+=B=0$ and $n$ is any positive integer
satisfying Restrictions as above.

So, the main interesting case as in Example~\ref{R_comp_dim 1}
concerns global pairs $(\PP^1,B)$ with $B=\sum_{i\ge 1} b_i P_i,
1\ge b_1\ge b_2\ge \dots \ge b_i \ge \dots\ge 0$ and
$\sum_{i\ge 1} b_i\le 2$.
However this time to exclude points $P_i$ with
small $b_i$ we use our general approach with
low approximations but, for simplicity,
without Restrictions on complementary indices.

{\em Lc type\/}: $b_1=1$ and $B$ has at most two points $P_i$
with $b_i=1$.
In this case, $(\PP^1,B)$ has $1$-complement, except for,
$$
(\PP^1,P_1+\frac 12 b_2+\frac 12 b_3).
$$
The pair is a $2$-complement of itself.

{\em Generic pairs\/}: every $b_i<1$ and
$\sum_{i\ge 2} b_i<1$.
In this case we have again a $1$-complement
by an elementary computation (cf. Theorem~\ref{extension_n_complement}).
Moreover, we can suppose that
$(\PP^1,B_1)$ has generic type where
$B_1$ is the approximation of $B$ with
$\Gamma(1,\emptyset)=\{0,1/2,1\}$.
(For $\Phi=\emptyset$, we take an abridged set of multiplicities,
in Hyperstandard sets of Section~\ref{technical}, with
only $l=1$.)
So, $(\PP^1,B_1)$ is generic only for
$$
B_1=\begin{cases}
0, & \mbox {if } 1/2> b_1\\
\frac 12 P_1 , & \mbox{if } b_1\ge 1/2> b_2\\
\frac 12 P_1+\frac 12 P_2, & \mbox{ if } b_2\ge 1/2 >b_3.
    \end{cases}
$$
In all these cases $(\PP^1,B)$ has a $1$-complement.

{\em Semiexceptional pairs\/}: $(\PP^1,B_1)$ with
$$
B_1=\begin{cases}
\frac 12 P_1 +\frac 12 P_2+\frac 12 P_3, & \mbox{if } b_3\ge 1/2>b_4 \\
\frac 12 P_1+\frac 12 P_2+\frac 12 P_3+\frac 12 P_4, & \mbox{ if }
b_1=b_2=b_3=b_4=1/2 \text{ and }b_5=0.
    \end{cases}
$$
In the last case
$$
(\PP^1,\frac 12 P_1+\frac 12 P_2+\frac 12 P_3+\frac 12 P_4)
$$
is a $2$-complement of itself.

Thus we need to construct $n$-complements only
in the case
$$
B_1=\frac 12 P_1+\frac 12 P_2+\frac 12 P_3.
$$
The pair $(\PP^1,B_1)$ has the semiexceptional type $(0,0)$
and this is the top type with the next generic type.
The previous type $(-1,-)$ is exceptional.
According to our general approach we need to
extend the set $\sN'=\{1\}$ to another set of positive
integers $\sN^{(0,0)}$ (see Semiexceptional filtration in
Section~\ref{semiexcep_compl_:}).
For simplicity consider $\sN_{(0,0)}=\{2\},\sN^{(0,0)}=\{1,2\}$
and $\Phi=\emptyset$.
Then $\Gamma(\{1,2\},\emptyset)=\{0,1/3,1/2,2/3,5/6,1\}$.
For the last set of boundary multiplicities $(\PP^1,B_{\{1,2\}})$ is
again semiexceptional but not exceptional only for
$$
B_{\{1,2\}}=\begin{cases}
\frac 12 P_1 +\frac 12 P_2+\frac 12 P_3, & \mbox{if }
2/3>b_1\ge b_3\ge 1/2 \text { and }1/3> b_4\\
\frac 23 P_1+\frac 12 P_2+\frac 12 P_3, & \mbox{ if }
5/6>b_1\ge 2/3> b_2\ge b_3\ge 1/2 \text{ and } 1/3>b_4\\
\frac 56 P_1+\frac 12 P_2+\frac 12 P_3, & \mbox{ if }
b_1\ge 5/6, 2/3> b_2\ge b_3\ge 1/2 \text{ and } 1/3>b_4.
    \end{cases}
$$
In all three cases $(\PP^1,B)$ has a $2$-complement
by a direct computation (cf. Theorem~\ref{semiexcep_compl}).

{\em Exceptional pairs\/}: $(\PP^1,B_{\{1,2\}})$
with $B_{\{1,2\}}\in \{0,1/3,1/2,2/3,5/6,1\}$ form a bounded
family.
As in Example~\ref{R_comp_dim 1} we can join all
small multiplicities $b_i<1/3$ to $b_1$
(or to any other $b_i\ge 1/3$; cf. Step~7 of the proof of Theorem~\ref{excep_comp}).
The new $(X,B)$ will have also exceptional
$(\PP^1,B_{\{1,2\}})$ (possibly with different $B_{\{1,2\}}$).
Moreover, $B=b_1P_1+b_2P_3+b_3P_4$, except for,
$B$ with
$$
B_{\{1,2\}}=\frac 23 P_1 +\frac 12 P_2+\frac 12 P_3+\frac 13 P_4.
$$
In this case $(\PP^1,B)$ is a $6$-complement of itself
(and has also a $4$-complement).

In all other cases $B=b_1P_1+b_2B_2+b_3P_3$ and
$(\PP^1,B_{\{1,2\}})$ is exceptional.
In these cases we can find a finite set of small complementary
indices, e.g., $\{3,4,6\}$ as in \cite[Example~5.2]{Sh92}.
Or we can find a finite set of complementary indices satisfying Restrictions using
Theorem~\ref{excep_comp} or \cite[Example~1.11]{Sh95}.

\end{exa}

\section{Applications} \label{applic}

\paragraph{Inverse stability for $\R$-complements.}
Let $(X/Z,D)$ be a pair and $\ep$ be a positive integer.
We say that the {\em inverse stability for\/}
$\R$-{\em complements\/} holds for $(X/Z,D)$ if,
for every boundary $B$ on $X$ such that $\norm{B-D}<\ep$
and under certain additional assumptions,
the existence of an $\R$-complement for $(X/Z,B)$ implies
that of for $(X/Z,D)$.

The same definition works for bd-pairs $(X/Z,B+\sP),(X/Z,D+\sP)$,
that is, we compare only the divisorial parts $B,D$.

\begin{thm} \label{invers_stability_R_complements}
Let $d,h,l$ be nonnegative integers and
$v\in\R^l$ be a vector.
Then there exists a positive real number $\ep$ such that,
for every
\begin{description}

  \item[]
$\Q$-affine map $A\colon\R^l\to \WDiv_\R X$ of
the height not exceeding $h$; and

  \item[]
pair $(X/Z\ni o,B)$ with wFt $X/Z\ni o$, $\dim X=d$,
a boundary $B$ with $\norm{B-A(v)}<\ep$ and
under either of the (additional) assumptions
(1-3) of Theorem~\ref{bndc},
\end{description}
the inverse stability for $\R$-complements holds for $(X/Z\ni o,A(v))$.

\end{thm}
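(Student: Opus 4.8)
The plan is to reduce the statement, via monotonicity of $\R$-complements and the polyhedrality Theorem~\ref{R_compl_polyhedral}, to a Diophantine gap argument that uses that $v$ is fixed and $A$ has bounded height.

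First I would fix marked divisors. Let $D_1,\dots,D_r$ be the prime divisors occurring in the supports of $A(0),A(e_1),\dots,A(e_l)$; since $A$ is affine it factors through $\fD_\R=\fD_\R(D_1,\dots,D_r)\subseteq\WDiv_\R X$, so $A(v)\in\fD_\R$, and by Proposition~\ref{bounded_rank} (applied after the boundedness reduction below, replacing $A$ by an $\R$-linearly equivalent over $Z$ $\Q$-affine map valued in the marked divisors of that proposition, which changes neither the $\R$-complement property of $A(v)$ nor, up to a bounded factor, the height) I may assume $r\le r_0(d)$. By Theorem~\ref{R_compl_polyhedral} (and Addendum~\ref{bd_R_compl_polyhedral} for bd-pairs) the set $\Compd_\R=\{D\in\fD_\R\mid(X/Z\ni o,D)\text{ has an }\R\text{-complement}\}$ is a closed convex rational polyhedron, so what must be shown is exactly $A(v)\in\Compd_\R$. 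Write $B=B'+B''$ with $B'$ the part of $B$ supported on $D_1+\dots+D_r$ and $B''\ge0$ the rest; then $B\ge B'$, so by Addendum~\ref{R_D_D'_complement} (resp. Addendum~\ref{bd_D_D'_complement}) an $\R$-complement of $(X/Z\ni o,B)$ is one of $(X/Z\ni o,B')$, whence $B'\in\Compd_\R$ and $\norm{B'-A(v)}\le\norm{B-A(v)}<\ep$ because $A(v)$ is supported on $D_1+\dots+D_r$.

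Next is the step where the assumptions (1)--(3) and the boundedness theory are genuinely needed: I claim $\Compd_\R$ has uniformly bounded complexity, i.e. it is cut out in $\fD_\R$ by at most $s_0$ inequalities $\ell_1\ge0,\dots,\ell_{s_0}\ge0$ with $\ell_j$ rational affine functionals of denominators dividing a fixed $N_0$, where $s_0,N_0$ depend only on $d$ and on the data $\Gamma,\Delta,m$ entering (1)--(3). This follows from the boundedness underlying Theorem~\ref{bndc}: under each of (1)--(3) one reduces, as in the proof of that theorem, to a bounded parametrization over which the marked divisors and the constant sheaf $\Compd$ of Proposition~\ref{const_comp} (Addendum~\ref{all_together}) are constant, so $\Compd_\R$ takes only finitely many values, and in the description of Theorem~\ref{R_compl_polyhedral} ($\Compd_\R$ is the preimage under a $\Q$-affine map of $\lcd_\R\cap(-K-\Nefd_\R)$ on a maximal model, a polyhedron of complexity bounded via Corollary~\ref{bound_ex_pairs} and the boundedness of the lc index, Corollary~\ref{bounded_lc_index}) all denominators are then bounded.

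Finally I would run the gap argument. Composing each $\ell_j$ with $A$ gives a $\Q$-affine function $\ell_j\circ A\colon\R^l\to\R$ whose coefficients, as $\ell_j$ ranges over the finitely many functionals of complexity $\le N_0$ and $A$ over the $\Q$-affine maps of height $\le h$ with $\le r_0$ marked divisors, lie in a fixed finite set of rationals; hence the numbers $\ell_j\circ A(v)$ --- evaluated at the \emph{fixed} $v$ --- lie in a fixed finite set $S\subset\R$. Set $\ep_0=\min\{|s|\mid s\in S\setminus\{0\}\}>0$, let $C$ be a common Lipschitz constant for the $\ell_j$ on $\fD_\R$ with respect to $\norm{\ }$, and put $\ep=\ep_0/(2C)$ (shrinking it harmlessly so that $A(v)$ is a subboundary, which anyway follows once $A(v)\in\Compd_\R$). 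If $(X/Z\ni o,B)$ has an $\R$-complement then $B'\in\Compd_\R$, so $\ell_j(B')\ge0$ and $\ell_j(A(v))\ge\ell_j(B')-C\norm{A(v)-B'}>-C\ep=-\ep_0/2$ for every $j$; since $S$ contains no element of $(-\ep_0,0)$, this forces $\ell_j(A(v))\ge0$ for all $j$, i.e. $A(v)\in\Compd_\R$, so $(X/Z\ni o,A(v))$ has an $\R$-complement; the bd-pair case is identical using Addenda~\ref{bd_R_compl_polyhedral} and~\ref{bd_D_D'_complement}. The main obstacle is the bounded-complexity claim of the preceding paragraph: the rest is routine polyhedral and Diophantine bookkeeping, but that claim is precisely where Theorem~\ref{bndc}, the boundedness of exceptional pairs and of the lc index, and the constancy of the $\Compd$-sheaves must be invoked.
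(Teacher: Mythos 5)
Your reduction to the polyhedron $\Compd_\R$ of Theorem~\ref{R_compl_polyhedral} and the passage from $B$ to its part $B'$ supported on the marked divisors are fine, but the proof collapses at the central claim that $\Compd_\R$ has complexity bounded uniformly in $(X/Z\ni o)$ --- boundedly many facets with denominators dividing a fixed $N_0$ depending only on $d,\Gamma,\Delta,m$. Nothing in the paper gives this, and it is not true in the needed generality: the hypotheses (1)--(3) of Theorem~\ref{bndc} constrain the single boundary $B$ (its coefficients, or the existence of a klt complement for it), not the polyhedron $\Compd_\R$, whose facets are governed by lc and anticanonical thresholds of the ambient $X$ and can have arbitrarily large denominators even for wFt $X$ of fixed dimension with a bounded number of marked divisors (cf.\ Example~\ref{act_exa}, where $\act(S,0;L)=n+2$ is unbounded, or lc thresholds $1/m$ of very singular members of $\fD$). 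The proof of Theorem~\ref{bndc} does \emph{not} reduce to a bounded family of total spaces $X$ on which $\Compd$ is a constant sheaf: boundedness (Corollary~\ref{bound_ex_pairs}, Proposition~\ref{const_comp}, Addendum~\ref{all_together}) is only available for exceptional pairs with hyperstandard boundaries, while the klt/lc cases are handled by lifting complements along fibrations and lc centers, with $X$ unbounded. Likewise your appeal to Proposition~\ref{bounded_rank} to bound $r$ presupposes a bounded family that you do not have. Finally, even granting facets with bounded \emph{denominators} but unbounded numerators, the gap argument fails: for irrational $v$ the values $\ell_j\circ A(v)$ of such functionals are dense near $0$, so there is no finite set $S$ and no $\ep_0$.

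The paper gets the gap from a different source: it is anchored at the fixed vector $v$ through Diophantine approximation rather than through the geometry of $\Compd_\R$. One chooses $\mu>0$ bounding the positive multiplicities of $A(v)$ from below (this is where the bounded height $h$ enters), directions $e_0,\dots,e_r$ in $\spn{v}$ whose simplex contains $0$ in its interior, and applies Theorem~\ref{bndc} under (1)--(3) to the nearby boundary $B$ with the Restrictions data $(d,I,\mu/8hl,v,e_i)$; the resulting $n_i$-complements come with rational vectors $w_i\in\spn{v}$ of denominator $n_i$ lying in a \emph{finite} set $\sN_i$ (finiteness depending on $v$ itself, via \cite[Corollary~1.3]{BSh}), approximating $v$ anisotropically in the direction $e_i$. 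Inverse stability for $n$-complements (Theorem~\ref{invers_stability_n_complements}) then shows each $(X/Z\ni o,A(w_i))$ has an $\R$-complement, and convexity of $\Compd_\R$ traps $A(v)\in\simp{A(w_0),\dots,A(w_r)}$ since $v\in\simp{w_0,\dots,w_r}$; a separate MMP argument handles disconnected $X_o$. If you want to salvage your route, you would have to replace the global bounded-complexity claim by exactly this kind of $v$-anchored statement, i.e.\ produce boundedly-denominated rational points of $\Compd_\R$ near $B$ in directions positively spanning $\spn{v}$ --- which is what Theorems~\ref{bndc} and~\ref{invers_stability_n_complements} are for.
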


Notice that we do not suppose that $X_o$ is connected.

\begin{add} \label{invers_stability_R_complements_local}
We can assume that $B$ is given
locally over $Z\ni o$
(even in the \'etale topology)
near every connected component of $X_o$.

\end{add}

\begin{add} \label{invers_stability_R_complements_boundary}
$A(v)$ is a boundary.

\end{add}

\begin{add} \label{invers_stability_R_complements_U}
There exists a neighborhood $U$ of $v$ in $\spn{v}$
such that, for every vector $u\in U$,
$A(u)$ is a boundary and $(X/Z\ni o,A(u))$ has
an $\R$-complement.

\end{add}

\begin{add} The same holds for bd-pairs
$(X/Z\ni o,A(v)+\sP),(X/Z\ni o,B+\sP)$ of index $m$
with $\ep$ depending also on $m$.

\end{add}

\begin{cor}[Direct stability for $\R$-complements;
cf. {\cite[Theorem~1.6]{N}
\cite[Theorems~5.6 and 5.16]{HLSh}}] \label{direct_stability}
Under the assumptions and notation of Theorem~\ref{invers_stability_R_complements}
there exists a neighborhood $U$ of $v$ in $\spn{v}$
such that if $A(v)$ is a boundary and $(X/Z\ni o,A(v))$ has an $\R$-complement
and additionally either of the assumptions
(1-3) of Theorem~\ref{bndc} holds for $(X/Z\ni o,A(v))$
then, for every vector $u\in U$,
$A(u)$ is a boundary and $(X/Z\ni o,A(u))$ has
an $\R$-complement.

The same holds for bd-pairs of index $m$ $(X/Z\ni o,A(v)+\sP)$.
\end{cor}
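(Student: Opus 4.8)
The plan is to reduce the statement directly to the inverse stability Theorem~\ref{invers_stability_R_complements} together with the polyhedral description of $\R$-complements in Theorem~\ref{R_compl_polyhedral} (and its bd-version in Addendum~\ref{bd_R_compl_polyhedral}). First I would enlarge, if necessary, the finite collection of prime divisors on $X$ so that the image $A(\spn{v})$ of the rational affine span lands inside the constant sheaf $\fD_\R$ generated by these divisors; since $A$ is $\Q$-affine, $A$ maps rational affine subspaces of $\R^l$ to rational affine subspaces of $\fD_\R$, so $A(\spn{v})$ is a rational affine subspace of $\fD_\R$. By Theorem~\ref{R_compl_polyhedral} and Addendum~\ref{bd_R_compl_polyhedral} the locus $\Compd_\R$ (resp. $\Compd_\R\sP$) is a closed convex rational polyhedron in $\fD_\R$; and the locus of boundaries in $\fD_\R$ is also a (compact) rational polyhedron. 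Hence $\fP=A^{-1}(\Compd_\R\cap\{D\ge 0\})\cap \spn{v}$ is a closed convex rational polyhedron inside $\spn{v}$ containing $v$ by the hypothesis that $A(v)$ is a boundary with an $\R$-complement.

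The key step is then to show that $v$ lies in the \emph{relative interior} of $\fP$, for then a whole neighborhood $U$ of $v$ in $\spn{v}$ is contained in $\fP$, and by construction every $u\in U$ has $A(u)$ a boundary with an $\R$-complement, which is exactly the conclusion. Suppose not. Then, because $\fP$ is a rational polyhedron and $v\notin\operatorname{relint}\fP$, there are points $u_k\in\spn{v}\setminus\fP$ with $u_k\to v$; writing $B_k=A(u_k)$ we would get boundaries (by Addendum~\ref{invers_stability_R_complements_boundary} applied after a small perturbation, or directly since $A(v)$ is a boundary and boundary-ness is an open condition near $v$ once $A(v)$ has no multiplicity equal to $1$ that drops — handle the reduced part separately) with $\norm{B_k-A(v)}\to 0$ and with $(X/Z\ni o,B_k)$ admitting no $\R$-complement. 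But for $k$ large, $\norm{B_k-A(v)}<\ep$, where $\ep=\ep(d,h,l,v)$ (resp. $\ep(d,h,l,v,m)$) is the constant furnished by Theorem~\ref{invers_stability_R_complements}; since $A(v)$ satisfies one of the assumptions (1--3) of Theorem~\ref{bndc}, and these assumptions are preserved under small perturbation of the boundary inside $A(\Delta)$-type families (for (2) and (3) this is built into the hypotheses; for (1) klt-ness is open), the inverse stability theorem would force $(X/Z\ni o,B_k)$ to \emph{have} an $\R$-complement once we use it in the contrapositive direction — rather, more cleanly: inverse stability says precisely that $A(v)$ has an $\R$-complement whenever a nearby boundary does, so it gives no contradiction this way. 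Instead I would argue the other way: Addendum~\ref{invers_stability_R_complements_U} of Theorem~\ref{invers_stability_R_complements} already asserts the existence of a neighborhood $U$ of $v$ in $\spn{v}$ on which $A(u)$ is a boundary and $(X/Z\ni o,A(u))$ has an $\R$-complement. So in fact the corollary is an immediate restatement of that addendum, and the role of Theorem~\ref{R_compl_polyhedral} is only to identify $U$ with (the relative interior of) the rational polyhedron $\fP$ and thereby make the neighborhood explicit and independent of $B$.

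Accordingly, the proof I would write is short: invoke Theorem~\ref{invers_stability_R_complements} with the same data $d,h,l,v$ (and $m$ in the bd-case) to obtain $\ep$, observe that the hypotheses of the corollary are exactly those placed on $A(v)$ in the theorem, and then quote Addendum~\ref{invers_stability_R_complements_U} verbatim to produce $U$. For the bd-version one repeats this with the bd-version of the theorem and Addendum~\ref{bd_R_compl_polyhedral}; the convexity and rationality of $\Compd_\R\sP$ guarantee the same polyhedral neighborhood. The only genuine subtlety — and the step I expect to need the most care — is checking that assumptions (1--3) of Theorem~\ref{bndc} transfer from $A(v)$ to the perturbed pairs $(X/Z\ni o,A(u))$ for $u\in U$: for (1) one uses openness of the klt condition for $\R$-complements; for (2) one uses that the $\Delta$-family hypothesis is stable under passing to a subfamily containing both $A(v)$ and $A(u)$; and for (3) one must ensure the perturbed multiplicities still lie in $\Gamma$, which is why $U$ must be shrunk so that $A(U)$ stays inside the face of the boundary polyhedron cut out by the relevant $\Gamma$-constraints — this is automatic once $U\subseteq\operatorname{relint}\fP$ with $\fP$ defined using the $\Gamma$-constraints as above.
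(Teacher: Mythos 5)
Your final argument—apply Theorem~\ref{invers_stability_R_complements} with $B=A(v)$ (whose hypotheses are exactly the corollary's assumptions on $A(v)$) and quote Addendum~\ref{invers_stability_R_complements_U} to produce $U$, likewise in the bd-case—is precisely the paper's proof, so the proposal is correct and takes the same route once you discard the abandoned polyhedral/relative-interior detour. Your closing worry about transferring assumptions (1--3) of Theorem~\ref{bndc} to the perturbed pairs $(X/Z\ni o,A(u))$ is unnecessary: the boundedness hypotheses are used only for the single pair $B=A(v)$, and the neighborhood in the addendum already yields $\R$-complements for all $A(u)$, $u\in U$, via the convexity of Theorem~\ref{R_compl_polyhedral}.
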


\begin{proof}
Take $U$ as in Addendum~\ref{invers_stability_R_complements_U}
and apply Theorem~\ref{invers_stability_R_complements}
with the addendum to $B=A(v)$.

Similarly we can treat bd-pairs.

\end{proof}

\begin{cor}[$n$-complements vs $\R$-complements]
Let $(X/Z\ni o,B)$ be a pair with  wFt $X/Z\ni o$ and a boundary $B$.
Then  $(X/Z\ni o,B)$ has an $\R$-complement under
either of the following assumptions:
\begin{description}

  \item[\rm Weak version:]
$(X/Z\ni o,B)$ has $n$-complements for
infinitely many positive integers $n$;
or

  \item[\rm Strong version:]
$(X/Z\ni o,B)$ has an $n$-complements for
one but sufficiently large positive integer $n$.

\end{description}

The same holds for bd-pairs
$(X/Z\ni o,B+\sP)$ of index $m$.

\end{cor}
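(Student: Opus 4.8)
The plan is to split the corollary into its two halves and treat them with the machinery already at hand. The weak version is nothing but the ``if'' direction of Theorem~\ref{R-vs-n-complements}, and I would simply quote it; alternatively one argues directly: if $(X/Z\ni o,B)$ has $n_k$-complements $(X/Z\ni o,B^+_{n_k})$ for a sequence $n_k\to\infty$, then by Corollary~\ref{monotonic_n_compl} each $(X/Z\ni o,B^+_{n_k})$ is an $\R$-complement of $(X/Z\ni o,\rdn{B}{n_k})$, while $\norm{\rdn{B}{n_k}-B}\to 0$ by Lemma~\ref{approximation} (with $c=1$), so the closedness of the $\R$-complement locus in Theorem~\ref{R_compl_polyhedral} forces $(X/Z\ni o,B)$ to have an $\R$-complement. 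I would also note that the weak version is a formal consequence of the strong one, since ``infinitely many valid $n$'' in particular includes one exceeding any prescribed threshold; so the real content is the strong version.

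For the strong version, let $D_1,\dots,D_r$ be the prime components of $B$, let $v=(b_1,\dots,b_r)$ be its multiplicity vector, and let $A\colon\R^r\to\WDiv_\R X$ be the linear map $A(u_1,\dots,u_r)=\sum_i u_iD_i$. This is a $\Q$-affine map of height $\le 1$ with $A(v)=B$. I would apply Theorem~\ref{invers_stability_R_complements} with $d=\dim X$, $h=1$, $l=r$ and this $v$ to obtain a positive real number $\ep$. Now suppose $(X/Z\ni o,B)$ has an $n$-complement $(X/Z\ni o,B^+)$ with $n$ large enough that $\norm{\rdn{B}{n}-B}<\ep$ (possible by Lemma~\ref{approximation}). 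By Corollary~\ref{monotonic_n_compl}, $(X/Z\ni o,B^+)$ is an $\R$-complement of $(X/Z\ni o,\rdn{B}{n})$, so $(X/Z\ni o,\rdn{B}{n})$ has an $\R$-complement; moreover $\rdn{B}{n}$ is again a boundary (Example~\ref{rddown_(n+1)_m_m},~(3)) all of whose multiplicities lie in the finite set $\Gamma=[0,1]\cap(\Z/n)$. Since $\Gamma\cap\Q=\Gamma$ satisfies the dcc, the pair $(X/Z\ni o,\rdn{B}{n})$ fulfils assumption~(3) of Theorem~\ref{bndc}. As $\norm{\rdn{B}{n}-A(v)}=\norm{\rdn{B}{n}-B}<\ep$, Theorem~\ref{invers_stability_R_complements} now applies and tells us that the inverse stability for $\R$-complements holds for $(X/Z\ni o,A(v))=(X/Z\ni o,B)$; hence the existence of an $\R$-complement for $(X/Z\ni o,\rdn{B}{n})$ yields one for $(X/Z\ni o,B)$, as desired. (No connectedness of $X_o$ is used anywhere.)

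The bd-pair case is handled identically: an $n$-complement of a bd-pair $(X/Z\ni o,B+\sP)$ of index $m$ forces $m\mid n$ (cf. Remark~\ref{m_div_n}), so that $(X/Z\ni o,\rdn{B}{n}+\sP)$ is a bd-pair of index $m$; it has an $\R$-complement by the bd-analogue of Corollary~\ref{monotonic_n_compl}, which is immediate from Definition~\ref{bd_n_comp} ((1) gives $B^+\ge\rdn{B}{n}$ with the usual convention at multiplicity $1$, (2) gives lc, and (3) upgrades $\sim_n 0$ to $\sim_\R 0$ over $Z\ni o$); the divisorial part $\rdn{B}{n}$ lands in the finite set $[0,1]\cap(\Z/n)$, so assumption (3-bd) of Theorem~\ref{bd_bndc} holds; and one concludes with the bd-version of Theorem~\ref{invers_stability_R_complements}.

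The elementary verifications — that $\norm{\rdn{B}{n}-B}\to 0$ (Lemma~\ref{approximation} and the inequalities of Section~\ref{induc_bound}) and that $\rdn{B}{n}$ is a boundary — are routine. The one conceptually substantive step, and the likely obstacle if the cited theorems were not available, is the reduction of the strong version to Theorem~\ref{invers_stability_R_complements}: a single complementary index $n$ produces just one point $\rdn{B}{n}$ of the $\R$-complement locus near $B$, which on its own says nothing, and the point I want to emphasise is that this point \emph{automatically} satisfies hypothesis~(3) of Theorem~\ref{bndc}, because its multiplicities lie in the finite (hence dcc) set $\Z/n\cap[0,1]$; this is exactly what lets the quantitative stability of Theorem~\ref{invers_stability_R_complements} — itself resting on the boundedness of $n$-complements — be invoked with no further hypotheses on the nearby pair.
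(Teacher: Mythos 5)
Your proof is correct and coincides with the paper's: the weak version is quoted from Theorem~\ref{R-vs-n-complements} exactly as in the paper, and your strong-version argument --- passing to $\rdn{B}{n}$ via Corollary~\ref{monotonic_n_compl}, observing that its multiplicities lie in the finite (hence dcc) set $[0,1]\cap(\Z/n)$ so that hypothesis~(3) of Theorem~\ref{bndc} holds, and then invoking Theorem~\ref{invers_stability_R_complements} for $A(v)=B$ --- is precisely the alternative proof the paper itself records for this corollary. The only difference worth noting is that the paper's primary route for the strong version is lighter: it uses only the closedness of $\Compd_\R$ from Theorem~\ref{R_compl_polyhedral} together with $\rdn{B}{n}\to B$ (if $B$ had no $\R$-complement, a whole neighborhood of $B$ would miss $\Compd_\R$, contradicting $\rdn{B}{n}\in\Compd_\R$ for large $n$), so no hypothesis of Theorem~\ref{bndc} needs to be checked and the quantifier order you rely on --- $\ep$ being chosen before the set $[0,1]\cap(\Z/n)$, which is legitimate under the literal statement of Theorem~\ref{invers_stability_R_complements} but is exactly the delicate point you flagged --- never enters.
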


\begin{proof}
{\em Weak version.} Immediate by Theorem~\ref{R-vs-n-complements}.

{\em Strong version.} Immediate
by the closed rational polyhedral property of Theorem~\ref{R_compl_polyhedral}
and by the proof of Theorem~\ref{R-vs-n-complements}.

Alternatively, we can use Theorem~\ref{invers_stability_R_complements}
instead of Theorem~\ref{R_compl_polyhedral}.
Indeed, according to the proof of Theorem~\ref{R-vs-n-complements} there exists
a sequence  of positive integers $n_i$ and
of rational boundaries $B_i=\rdn{B}{n_i},i=1,2,\dots$, on $X$ such that
$B=\lim_{i\to\infty}B_i,\lim_{i\to\infty}n_i=+\infty$ and
every pair $(X/Z\ni o,B_i)$
has an $\R$- and monotonic $n_i$-complement.
For $\ep$ of Theorem~\ref{invers_stability_R_complements}
and $i$ such that $\norm{B_i-B}<\ep$,
$n=n_i$ works.
We apply the inverse stability for $\R$-complements
to $A(v)=B=\sum b_jD_j,v=(b_j)$, and $B=B_i$.

Similarly we can treat bd-pairs.

\end{proof}

\begin{thm}[Inverse stability for $n$-complements] \label{invers_stability_n_complements}
Let $n$ be a positive integer
and $\mu$ be a positive real number.
There exists a positive real number $\delta$
such that if
\begin{description}

  \item[]
$(X/Z,B)$ is a pair with a boundary $B$, having
an $n$-complement, and
  \item[]
$D$ is a $\Q$-divisor on $X$ such that
$nD$ is an integral Weil divisor,
$\norm{D-B}< \delta/n$ and
every positive multiplicity $d$ of $D$ is $\ge \mu$

\end{description}
then
$(X/Z,D)$ has an $n$-complement.

\end{thm}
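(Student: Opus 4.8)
Fix an $n$-complement $(X/Z,B^+)$ of $(X/Z,B)$. Since $B$ is a boundary, $B^+$ is a boundary (Remark~\ref{remark_def_complements}, (1)), $nB^+$ is integral (Remark~\ref{remark_def_complements}, (3)), $(X,B^+)$ is lc and $K+B^+\sim_n 0/Z$. The plan is to show that, for $\delta:=\min\{1/(n+1),\,n\mu\}$, the \emph{same} pair $(X/Z,B^+)$ is an $n$-complement of $(X/Z,D)$. Conditions (2) and (3) of Definition~\ref{n_comp} involve only $B^+$, hence hold verbatim; so the whole argument reduces to verifying condition (1) of Definition~\ref{n_comp} for $B^+$ with respect to $D$, one prime divisor at a time.

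The decisive point, to be exploited everywhere, is that $nD$ is integral. Writing $d_i=\mult_{D_i}D$ and $b_i=\mult_{D_i}B\in[0,1]$, we have $nd_i\in\Z$ and $|nd_i-nb_i|=n\,|d_i-b_i|<\delta<1$, so $nd_i$ is the unique integer within distance $1$ of $nb_i$; in particular $nd_i\in\{0,\dots,n\}$, $D$ is a boundary, $d_i=1\Leftrightarrow nd_i=n$, and $b_i=1\Rightarrow nd_i=n\Rightarrow d_i=1$. Also, $\delta/n\le\mu$ gives $\Supp D\subseteq\Supp B$: if $d_i\neq 0$ then $d_i\ge\mu$ by hypothesis, so $b_i>d_i-\delta/n\ge\mu-\delta/n\ge 0$. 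I would then split into cases. If $d_i=0$, condition (1) merely asks $\mult_{D_i}B^+\ge 0$, true since $B^+\ge 0$. If $nd_i=m\ge 1$: then $nb_i>m-\delta\ge 1-\delta$, so $b_i>(1-\delta)/n\ge\delta$, where the last inequality is exactly $\delta\le 1/(n+1)$; moreover $b_i<(m+\delta)/n$, so $b_i<1$ when $m\le n-1$, and when $m=n$ either $b_i<1$ or $b_i=1$. When $b_i<1$ we may use $\mult_{D_i}B^+\ge\rddown{(n+1)b_i}/n$, and $(n+1)b_i=nb_i+b_i>(m-\delta)+\delta=m$ forces $\rddown{(n+1)b_i}\ge m$, hence $\mult_{D_i}B^+\ge m/n=d_i$; when $b_i=1$ (so $m=n$) we get $\mult_{D_i}B^+\ge 1$ directly. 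Finally, since $d_i\in(\Z/n)\cap[0,1]$ with $d_i\ge 0$, Example~\ref{rddown_(n+1)_m_m}, (2) gives $\rddown{(n+1)d_i}/n=d_i$ when $d_i<1$, so the inequalities just obtained are precisely condition (1) of Definition~\ref{n_comp} at $D_i$ ($\mult_{D_i}B^+\ge\rddown{(n+1)d_i}/n=d_i$ if $d_i<1$, and $\mult_{D_i}B^+\ge 1$ if $d_i=1$). Running over all $D_i$ establishes condition (1), so $(X/Z,B^+)$ is an $n$-complement of $(X/Z,D)$.

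The step I expect to carry the real content is the inequality $\rddown{(n+1)b_i}\ge nd_i$. It cannot be obtained from continuity, since $x\mapsto\rddown{(n+1)x}$ jumps; it works only because the integrality of $nD$ collapses $\rddown{(n+1)d_i}$ to $nd_i$ and at the same time keeps $b_i$ bounded below the integers under $nb_i/n$ by at least $\delta$ — which is exactly why $\delta$ must be taken $\le 1/(n+1)$ and not merely small, and why neither the lattice hypothesis ``$nD$ integral'' nor (for $\Supp D\subseteq\Supp B$) the hypothesis on $\mu$ can be dropped. Once this is in hand, the three conditions of Definition~\ref{n_comp} for $(X/Z,B^+)$ relative to $(X/Z,D)$ hold, so $(X/Z,D)$ has an $n$-complement, and $\delta$ as chosen depends only on $n$ and $\mu$, as required.
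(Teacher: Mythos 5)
Your proof is correct, and its skeleton is the same as the paper's: keep the given complement $(X/Z,B^+)$, note that (2) and (3) of Definition~\ref{n_comp} are conditions on $B^+$ alone, and verify (1) prime divisor by prime divisor using the integrality of $nD$ together with Example~\ref{rddown_(n+1)_m_m}, (2) to identify $\rddown{(n+1)d}/n$ with $d$ (this is exactly Addenda~\ref{invers_stability_n_complements_boundary}--\ref{invers_stability_n_complements_B+}). Where you genuinely differ is in the arithmetic and the choice of $\delta$. You take $\delta=\min\{1/(n+1),\,n\mu\}$ and derive $(n+1)b>m$ from $nb>m-\delta$ and $b>(1-\delta)/n\ge\delta$; observe that this case analysis ($d=0$ versus $nd=m\ge 1$, then $b<1$ versus $b=1$) never actually uses the hypothesis that positive multiplicities of $D$ are $\ge\mu$ — the inclusion $\Supp D\subseteq\Supp B$ plays no role afterwards — so your closing remark that the $\mu$-hypothesis cannot be dropped is not supported by your own argument: you have in fact proved the statement with $\delta=1/(n+1)$ and no condition on $\mu$ at all. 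The paper instead splits into $d\le b$ (monotonicity of $\rddown{\ }$, or Proposition~\ref{D_D'_complement}) and $d>b$, and takes the larger threshold $\delta\le\mu n/(n+1)$; there the hypothesis $d\ge\mu$ is used essentially, via $\rddown{(n+1)(m/n-\mu/(n+1))}=m+\rddown{d-\mu}=m$. What the paper's choice buys is Addenda~\ref{invers_stability_n_complements_delta} and~\ref{invers_stability_n_complements_0}: for $\mu\le 1$ one may take $\delta=\mu/2$, \emph{independent of} $n$, and it is precisely this uniformity that is invoked in Step~6 of the proof of Theorem~\ref{invers_stability_R_complements}, where one $\delta=\mu/4$ must serve all indices $n_i$ in the finite sets $\sN_i$ with bounds of the form $\norm{D_i-B}<\mu/4n_i$. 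Your $\delta$ degenerates like $1/(n+1)$, so it establishes the theorem as stated (existence of $\delta$ for given $n,\mu$) but not the quantitative addendum that the later application relies on; if you want the full strength, keep your argument for small $\mu$ and add the paper's floor computation to cover $\delta$ up to $\mu n/(n+1)$.
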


\begin{add} \label{invers_stability_n_complements_boundary}
$D$ is a boundary.

\end{add}

\begin{add} \label{invers_stability_n_complements_B+}
If $(X/Z,B^+)$ is an $n$-complement of $(X/Z,B)$
then $(X/Z,B^+)$ is also an $n$-complement of $(X,D)$.

\end{add}

\begin{add} \label{invers_stability_n_complements_monotonic}
Any $n$-complement of $(X/Z,D)$ is monotonic.

\end{add}

\begin{add} \label{invers_stability_n_complements_delta}
We can take any positive real number $\delta\le \mu n/(n+1)]$ and $1$.

\end{add}

\begin{add} \label{invers_stability_n_complements_0}
For $\mu>1$, $D=0$ and the theorem is trivial.
For $\mu \le 1$, we can take any positive real number $\delta\le \mu n/(n+1)$.
in particular, $0<\delta\le \mu/2$ independent of $n$.

\end{add}

\begin{add} The same holds for bd-pairs
$(X/Z\ni o,B+\sP),(X/Z\ni o,D+\sP),(X/Z,B^++\sP)$ of index $m|n$.

\end{add}

\begin{proof}
It is enough to verify Addendum~\ref{invers_stability_n_complements_B+}.
By Definition~\ref{n_comp},
it is enough to verify (1)~of the definition.

Let $P$ be a prime divisor on $X$.
Put $d=\mult_P D$ and $b^+=\mult_P B^+$.
We need to verify that
$$
b^+\ge
\begin{cases}
1, \text{ if } &d=1;\\
\rddown{(n+1)d}/n &\text{ otherwise.}
\end{cases}
$$

Take $\delta$ of Addendum~\ref{invers_stability_n_complements_delta}.

Step~1. {\em Addendum~\ref{invers_stability_n_complements_boundary}.\/}
$D$ is a boundary, that is,
$d\in [0,1]$.
Moreover, $d=0$ or $\ge \mu$.
By our assumptions $d=m/n$, where $m\in\Z$, and
$$
d< b+\frac\delta n\le 1+\frac1n,
$$
where $b=\mult_P B$.
Hence $m<n+1$ and, moreover, $\le n$, that is,
$d\le 1$.
Similarly, $m\ge 0$ and $d\ge 0$ because $b\ge 0$.

Thus if $d\not= 0$ then $d$ is positive and $\ge \mu$
by our assumptions.
This implies Addendum~\ref{invers_stability_n_complements_0}.
Indeed, for $\mu>1$, $D=0\le B$ and the theorem follows from
Proposition~\ref{D_D'_complement}.

Step~2. Addendum~\ref{invers_stability_n_complements_monotonic} follows from
Example~\ref{rddown_(n+1)_m_m}, (2).

Step~3. {\em Case $d\le b$.\/}
For $b<1$, follows from~(1) of Definition~\ref{n_comp} for $B$ and
the monotonicity of $\rddown{\ }$: $d<1$ and
$$
\rddown{(n+1)d}/n\le\rddown{(n+1)b}/n\le b^+ .
$$
Otherwise, $b=b^+=1,d\le 1=b^+$ and
the required inequality holds by definition.

Step~4.
{\em Case $d>b$.\/}
In this case $d>0$ and $\ge\mu$ because $b\ge 0$.
Since $\norm{d-b}<\delta/n$ holds,
$$
b> d-\frac\delta n\ge d-\frac{\mu n}{(n+1)n}=
d-\frac\mu{n+1}.
$$

By Step~1, $d\le 1$ and $\mu\le 1$ (cf. Addendum~\ref{invers_stability_n_complements_0}).
Moreover, if $\mu=1$ then $d=1,1-1/(n+1)=n/(n+1)<b<1$ and
$$
b^+\ge\rddown{(n+1)b}/n=1=d.
$$

Otherwise, $\mu<1$ and by our assumptions
$d=m/n\ge \mu$, where $m$ is integer and $1\le m\le n$.

If $d<1$ then again Example~\ref{rddown_(n+1)_m_m}, (2) gives the required inequality
\begin{align*}
b^+\ge \rddown{(n+1)b}/n\ge & \rddown{(n+1)(\frac mn-\mu/(n+1))}/n=\\
\rddown{m+\frac mn-\mu}/n=&
m/n+\rddown{d-\mu}/n=m/n=d=\rddown{(n+1)d}/n.
\end{align*}

If $d=1$ then
$$
b^+\ge\rddown{(n+1)b}/n=1=d
$$
because $n/(n+1)<b<1$ as above.

Similarly we can treat bd-pairs.

\end{proof}

\begin{proof}[Proof of Theorem~\ref{invers_stability_R_complements}]
We will chose $\ep$ below.

Step~1. {\em Renormalization of $v$ and $I$.\/}
We can suppose that every $A$ has integral ($\Z$-matrix) linear part.
For this we change the standard basis
$(1,0,\dots,0),\dots, (0,0,\dots,1)$ of $\R^l$ by
$(N,0,\dots,0),\dots,(0,0,\dots,N)$
for sufficiently divisible positive integer $N$.
Then we need to increase the height $h$ and replace
the vector $v$ by $v/N$.

In this step we can chose a positive integer $I$ such
that $I A$ is integral, equivalently, for every constant $c$ of
$A$, $Ic$ is an integer.
Hence $nA(w)$ is integral if $w\in\Q^l$, $nw\in\Z^l$
and $I|n$.

Step~2. {\em Choice of $\mu$.\/}
Since the height of $A$ is bounded by $h$,
every multiplicity $a=\mult_P A(v)$ in prime $P$ has the form
$$
a=c+\sum_{i=1}^l a_i v_i,
$$
where $a_i\in\Z,\norm{a_i}\le h,c\in\Q, Ic\in\Z$,
[remark: we need bounded hight of $c$ only to bound
the denominators of $c$] and
$v=(v_1,\dots,v_l)$.
Hence the set of those multiplicities $a$ is finite and
there exists $\mu>0$ such that $a\ge \mu$ if $a>0$.
Indeed, the constant terms $c$ belong to a finite set, e.g.,
because by our assumptions
$$
\norm{b-a}=\norm{b-c-\sum_{i=1}^la_iv_i}< \ep
$$
and $b\in[0,1]$.
This implies also Addendum~\ref{invers_stability_R_complements_boundary},
if $\ep$ is sufficiently small.
We suppose that $\mu$ is also sufficiently small, e.g., $\mu\le 1$.

Note also that for any two vectors $w_1,w_2\in\R^l$,
$$
\norm{A(w_1)-A(w_2)}\le hl\norm{w_1-w_2}.
$$

Step~3. {\em Rational case.\/}
For rational $v$, $\spn{v}=v$.
In this case we can suppose that $A$ is constant, e.g., all $a_i=0$.
Then Theorem~\ref{invers_stability_n_complements} implies the required stability for $n$ sufficiently
divisible and sufficiently small $\ep$.
The $n$-complement is monotonic for $(X/Z\ni o,A(v))$ by
Addenda~\ref{invers_stability_n_complements_monotonic}.

So, we suppose below that $v$ is irrational.
Thus $r=\dim\spn{v}\ge 1$ and $l\ge r\ge 1$.
Notice that by definition $h\ge 1$ because
otherwise there are no $A$.

Step~4. {\em Choice of directions $e_0,\dots,e_r$ in $\spn{v}$.\/}
Take $r+1,r=\dim\spn{v}$,
directions $e_i,i=0,\dots,r$, in $\spn{v}$ such that
$0$ is inside of the simplex $\simp{e_0,e_1,\dots,e_r}$.
For rather small $\mu$, if every $\norm{e_i'-e_i}\le \mu$,
for any other directions $e_0',e_1',\dots,e_r'$
in $\spn{v}$,
then $\simp{e_0',e_1',\dots,e_r'}$ is also a simplex
with $0$ inside of it.

Step~5. {\em Choice of complementary indices.\/}
For $i=0,1,\dots,r$, there exists a finite set
of positive integers
$$
\sN_i=\sN(d,I,\mu/8hl,v,e_i).
$$
By Theorem~\ref{bndc}
under either of assumptions (1-3) of the theorem, there exists such a finite set.

Step~6. {\em Choice of $\ep$.\/} We can take
$\ep\le\mu/8n$ for every $i=0,1,\dots,r$ and
$n\in \sN_i$.

Indeed, let $B$ be a boundary on $X$ such that
\begin{description}

  \item[\rm (4)]
 $\norm{B-A(v)}< \ep$.

\end{description}

Then under other assumptions of Theorem~~\ref{invers_stability_R_complements}
and either of assumptions~(1-3) of Theorem~\ref{bndc},
for every $i=0,1,\dots,r$, by Theorem~\ref{bndc}
$(X/Z\ni o,B)$ has an $n_i$-complement
$(X/Z\ni o,B_i^+)$ with $n_i\in\sN_i$.
By Restrictions of Theorem~\ref{bndc} we have also approximations
$w_i\in \spn{v}$ such that
\begin{description}

  \item[\rm (5)]
$n_iw_i\in\Z^l$;

  \item[\rm (6)]
$\norm{w_i-v}<\mu/8hln_i$;
and

  \item[\rm (7)]
$$
\norm{\frac{w_i-v}{\norm{w_i-v}}-e_i}<\mu/8hl\le \mu.
$$
\end{description}

By  Theorem~\ref{invers_stability_n_complements}
there exist $\R$-complements $(X/Z\ni o,B_i^+)$
of $(X/Z\ni o,D_i)$, where $D_i=A(w_i)$.

Indeed, by (5) and Step~1 $n_iD_i$ is integral because
$I | n_i$.

By (6) and Step~2,
\begin{description}

  \item[\rm (8)]
$\norm{A(v)-D_i}<hl\mu/8hln_i=\mu/8n_i\le \mu/8$.

\end{description}
In particular, this implies that
if $\mult_P A(v)=0$ for a prime divisor $P$ on $X$,
then $\mult_P D_i=0$ too because $n_i\mult_P D_i$ is integral
but $\mu/8n_i\le 1/8n_i<1$ for $\mu\le 1$.
Otherwise, $\mult_P A(v)\ge \mu$ and by (8)
$\mult_P D_i\ge \mu-\mu/8=7\mu/8\ge \mu/2$.
Again by (8) and (4),
$$
\norm{D_i-B}\le \norm{B-A(v)}+\norm{A(v)-D_i}<
\mu/8n_i+\mu/8n_i=\mu/4n_i.
$$
Thus the inversion of Theorem~\ref{invers_stability_n_complements} holds
for $(X/Z\ni o,D_i)$ with $\mu/2$ instead of $\mu$
and $\delta=\mu/4$ by Addendum~\ref{invers_stability_n_complements_0}.
So, $(X/Z\ni o,D_i)$ has an $n_i$-complement.
It is monotonic by Addendum~\ref{invers_stability_n_complements_monotonic}.
Hence  every $(X/Z\ni o,D_i)$ has an $\R$-complement too.

Step~7. {\em Conclusion.\/} By the convexity of Theorem~\ref{R_compl_polyhedral}
we need to verify that
$A(v)$ belongs to $\simp{D_0,D_1,\dots,D_r}$.
In its turn, this follows from the inclusion
$v\in\simp{w_0,w_1,\dots,w_r}$.

By Step~4 and (7)
$$
0\in\simp{\frac{w_0-v}{\norm{w_0-v}},
\frac{w_1-v}{\norm{w_1-v}},\dots,\frac{w_r-v}{\norm{w_r-v}}}.
$$
Since the denominators $\norm{w_i-v}$ are positive,
there exist positive real numbers $\nu_0,\nu_1,\dots,\nu_r$ such that
$$
\nu_0(w_0-v)+\nu_1(w_1-v)+\dots +\nu_r(w_r-v)=0
\text{ and }
\sum_{i=0}^r \nu_i=1.
$$
Hence $v=\nu_0w_o+\nu_1w_1+\dots+\nu_rw_r$ which gives
the required inclusion.

We were cheating a little bit.
The previous proof works only if $X_o$ is connected.
Now we make

Step~8. {\em General case and Addendum~\ref{invers_stability_R_complements_local}.\/}
(Cf. Proposition~\ref{local_compl})
Suppose that $(X/Z\ni o,A(v))$ does not have $\R$-complement.
We can assume that $X$ is $\Q$-factorial.
Otherwise we replace $X$ by its $\Q$-factorialization.
Again $(X/Z\ni o,A(v))$ does not have $\R$-complement.
We can suppose that $X/Z\ni o$ has Ft, in particular,
is projective.
Otherwise we use a small modification of Lemma~\ref{wTt_vs_Ft}.
Still $(X/Z\ni o,A(v))$ does not have $\R$-complement.

The pair $(X/Z\ni o,A(v))$ is lc.
Otherwise $(X/Z\ni o,A(v))$ is not lc near some
connected component of $X_0$ and
does not have an $\R$-complement near this component.
This is impossible by the connected case.

The divisor $-(K+A(v))$ is not nef.
Otherwise $(X/Z\ni o,A(v))$ has $\R$-complement.
So, we can apply $-(K+A(v))$-MMP as in Construction~\ref{sharp_construction}.
This preserves the $\R$-complements by Proposition~\ref{monotonicity_I}.
Thus after finitely may steps \cite[Corollary~5.5]{ShCh}
we have an extremal contraction $X/Y/Z\ni o$ which
is positive with respect to $K+A(v)$.
The transformations preserve connected components of $X_o$.
So, the initial model $(X/Z\ni o, A(v))$ does not
have an $\R$-complement near the connected component
of $X_o$ corresponding to the connected component
with the nearby fibration.
This is impossible by the connected case.

Step~9. {\em Addendum~\ref{invers_stability_R_complements_U}.\/}
Immediate by Step~7 and the convexity of Theorem~\ref{R_compl_polyhedral}:
$U$ is a neighborhood of $v$ in
$[w_0,w_1,\dots,w_r]\subset \spn{v}$.

Similarly we can treat bd-pairs
with the assumptions (1-3-bd) of Theorem~\ref{bd_bndc}.

\end{proof}

\paragraph{$\R$-complement thresholds.}
Let $(X/Z,D)$ be a pair such that
it has an $\R$-complement and
$F>0$ be an effective $\R$-divisor on $X$.
Then the following threshold
$$
\Rct(X/Z,D;F)=
\sup\{t\in \R\mid (X/Z,D+tF)
\text{ has an }\R-\text{complement}\}
$$
is a nonnegative real number and well-defined.
If $X/Z$ has wFt then
we can use the maximum instead of the supremum
by the closed property in Theorem~\ref{R_compl_polyhedral}.
The threshold will be called the $\R$-{\em complement thereshold\/}
of $F$ for $(X/Z,D)$.

The same definition works for a bd-pair $(X/Z,D+\sP)$ and
gives the threshold $\Rct(X/Z,D+\sP;F)$.

\begin{thm} \label{acc_R-complt}
Let $d$ be a nonnegative integer and
$\Gamma_b,\Gamma_f$ be two dcc sets of nonnegative real numbers.
Then the set of thresholds
$$
\{\Rct(X/Z,B;F)\mid \dim X=d,X/Z
\text{ has wFt}, B\in\Gamma_b \text{ and } F\in \Gamma_f\}
$$
satisfies the acc.

\end{thm}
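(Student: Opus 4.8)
The plan is to argue by contradiction and reduce the statement to the inverse stability for $\R$-complements (Theorem~\ref{invers_stability_R_complements}). Suppose the set of thresholds does not satisfy the acc. Then there are pairs $(X_i/Z_i\ni o_i,B_i)$ with wFt $X_i/Z_i\ni o_i$, $\dim X_i=d$, boundaries $B_i\in\Gamma_b$, and effective $\R$-divisors $0<F_i\in\Gamma_f$, such that $t_i:=\Rct(X_i/Z_i,B_i;F_i)$ is strictly increasing with limit $t_\infty$. Discarding the first term I may assume $t_1>0$. By Theorem~\ref{R_compl_polyhedral} (the closed property, applied to the prime components of $B_i+F_i$ together with a canonical divisor) the pair $(X_i/Z_i,B_i+t_iF_i)$ has an $\R$-complement, so $B_i+t_iF_i$ is a subboundary; hence every coefficient of $F_i$ is at most $1/t_1$, so $\norm{F_i}\le 1/t_1$ is bounded, and $t_\infty<\infty$ (otherwise $B_i+t_iF_i$ would have a coefficient exceeding $1$ for $i\gg 0$). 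By the very definition of the threshold, $(X_i/Z_i,B_i+sF_i)$ has no $\R$-complement for any $s>t_i$, in particular none for $s=t_\infty$.

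I would next reduce to the situation of a fixed bounded-height $\Q$-affine parametrisation. By a standard argument — increasing boundary multiplicities to a finite set while keeping track of the threshold, as in the reduction of dcc-coefficient statements to finite-coefficient ones — one may assume, after passing to a subsequence, that $\Gamma_b$ and $\Gamma_f$ are finite sets of rational numbers. I expect this reduction, together with the geometric boundedness it ultimately rests on (boundedness of $n$-complements, Theorem~\ref{bndc}, and of $\ep$-lc Fanos), to be the main obstacle; it is the analogue of the hard normalisation step in the proof of the acc for log canonical thresholds. Granting it, put $l=1$, $v=(t_\infty)\in\R^1$, and for each $i$ let $A_i\colon\R^1\to\WDiv_\R X_i$ be the affine map $A_i(x)=B_i+xF_i$. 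Because $\Gamma_b$ and $\Gamma_f$ are finite sets of rationals, $A_i$ is $\Q$-affine of height at most a fixed $h$, and $A_i(v)=B_i+t_\infty F_i$, while
$$
\norm{(B_i+t_iF_i)-A_i(v)}=(t_\infty-t_i)\norm{F_i}\le (t_\infty-t_i)/t_1\longrightarrow 0 .
$$

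Now I would apply Theorem~\ref{invers_stability_R_complements} with these $d,h,l,v$. Passing to a subsequence, the set $\Gamma:=(\Gamma_b\cup\{0\})+\{t_j\}_j\cdot(\Gamma_f\cup\{0\})$ is a dcc set of nonnegative reals — a sum of dcc sets, with $\{t_j\}_j$ dcc because it is strictly increasing and bounded — and each boundary $B_i+t_iF_i$ lies in $\Gamma$, so assumption~(3) of Theorem~\ref{bndc} holds for $(X_i/Z_i,B_i+t_iF_i)$. The theorem therefore provides an $\ep>0$, depending only on $d,h,l,v$, such that for all $i$ with $\norm{(B_i+t_iF_i)-A_i(v)}<\ep$ the inverse stability for $\R$-complements holds for $(X_i/Z_i,A_i(v))$; since $(X_i/Z_i,B_i+t_iF_i)$ has an $\R$-complement, it follows that $(X_i/Z_i,A_i(v))=(X_i/Z_i,B_i+t_\infty F_i)$ has an $\R$-complement. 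This contradicts $t_i=\Rct(X_i/Z_i,B_i;F_i)<t_\infty$, and proves the acc.

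Finally, the bd-pair analogue is obtained in exactly the same way, using the bd-pair versions of Theorems~\ref{R_compl_polyhedral}, \ref{bndc} and~\ref{invers_stability_R_complements} and comparing only divisorial parts; and the implicit use of a maximum (rather than a supremum) in $\Rct(X/Z,B;F)$ over wFt morphisms, invoked above, is justified by Theorem~\ref{R_compl_polyhedral}.
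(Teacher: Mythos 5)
There is a genuine gap, and it sits exactly where you ``grant'' the reduction to finite rational coefficient sets. Your whole strategy hinges on making $A_i(x)=B_i+xF_i$ a $\Q$-affine map of bounded height, which forces the multiplicities of $B_i$ and $F_i$ (the constants and linear coefficients of $A_i$) to lie in a fixed finite set of rationals. No such reduction is available as a ``standard argument'': increasing the multiplicities of $B_i$ or $F_i$ only decreases (or destroys) the threshold, and there is no routine way to replace arbitrary real dcc coefficients by boundedly many rational values while preserving both the strict increase of the $t_i$ and the existence of $\R$-complements at the limiting boundary; in the lct setting this normalisation is itself the hard theorem, not a preliminary step. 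Since you explicitly concede this is ``the main obstacle'' and do not supply it, the proof is incomplete at its core.

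The paper's proof avoids this issue by a different division of labour, which is worth comparing with your plan. In its Step~1 it treats the case where $\Supp B_i\cup\Supp F_i$ has at most $l$ prime components: the maps $A_i\colon\R^l\to\WDiv_\R X_i$ send the standard basis to the marked prime divisors, so they are $\Z$-linear of height $1$ regardless of how irrational the coefficients are; the dcc of $\Gamma_b,\Gamma_f$ and of $\{t_i\}$ is used to pass to a monotone subsequence and put all the coefficient data into a single fixed limit vector $v=x+ty\in\R^l$, and then Theorem~\ref{invers_stability_R_complements} (under assumption (3) of Theorem~\ref{bndc}, with the dcc set $\{b_{i,j}+t_if_{i,j}\}$) yields an $\R$-complement for $(X_i,A_i(x+ty))$ and hence the stabilisation $t_i=t$. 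The genuinely needed geometric input is then the bound on the number of components $l$, which the paper obtains in Steps~2--3 by Lemma~\ref{lct_act}: every $\Rct$ threshold is either a local lct on a $\Q$-factorial Ft germ or an act on a $\Q$-factorial Ft variety with $\rho=1$, where \cite[Theorem~18.22]{K} and \cite[Corollary~1.3]{BMSZ} bound the number of components in terms of $d$, $\mu_b$, $\mu_f$ and the truncation $t_i\ge c>0$. Your $l=1$ device trades the support bound for a coefficient-height bound, but that is the same crux in a different guise, and it is precisely the part you have not proved; to repair the argument you should either supply the finiteness/rationality reduction (essentially redoing HMX-type normalisation for $\Rct$, including act thresholds) or follow the paper's route of height-one parametrisation by prime divisors plus the Lemma~\ref{lct_act} reduction.
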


\begin{add} \label{acc_R-complt_add}
Instead of $B\in\Gamma_b,F\in\Gamma_f$, we can suppose that
$B=\sum b_iE_i, F_i=\sum f_iE_i$,
where $E_i$ are effective Weil $\Z$-divisors and
$b_i\in\Gamma_b$ and $f_i\in\Gamma_f$.

\end{add}

\begin{add}
The same holds for
bd-pairs $(X/Z,B+\sP)$ of index $m$
and the set of corresponding thresholds depends also on $m$.

\end{add}

Actually, the set of $\R$-complement thresholds is a union of
two well-known sets of thresholds.

\begin{lemma} \label{lct_act}
Let $(X/Z,D)$ be a pair with wFt $X/Z$ and
$F$ be an effective and $\not=0$ divisor on $X$.
Then
$$
\Rct(X/Z,D;F)=\begin{cases}
\lct(X'\ni o',D';F')=\Rct(X'\ni o',D';F')\text{ or}\\
\act(X''/o'',D'';F'')=\Rct(X''/o'',D'';F'')
                   \end{cases},
$$
where
$\lct,\act$ are respectively {\em log canonical\/}
and {\em anticanonical\/} thresholds;
$X',X''$ are $\Q$-factorial of dimension $\le \dim X$,
$\rho(X'')=1$, $X'\ni o',X''$ are Ft and $o',o''$ are closed points;
the multiplicities of $D',D''$ and
of $F',F''$ are respectively multiplicities of $D$ and of $F$.
In particular, if $D\in \Gamma_d,F\in\Gamma_f$ then
$D',D''\in\Gamma_d,F',F''\in \Gamma_f$ respectively.

Conversely,
$\lct(X'\ni o',D';F')=\Rct(X'\ni o',D';F')$ and
$\act(X''/o'',D'';F'')=\Rct(X''/o'',D'';F'')=a$
if $(X'',D''+aF'')$ is lc.

The same holds for bd-pairs:
$$
\Rct(X/Z,D;F+\sP)=\begin{cases}
\lct(X'\ni o',D';F'+\sP')=\Rct(X'\ni o',D';F'+\sP')\text{ or}\\
\act(X''/o'',D'';F''+\sP'')=\Rct(X''/o'',D'';F''+\sP'').
                   \end{cases}
$$
Additionally, the bd-pairs
$(X'\ni o',D';F'+\sP'),(X''/o'',D'';F''+\sP'')$ have index $m$ if
$(X/Z,D+\sP)$ has index $m$.

\end{lemma}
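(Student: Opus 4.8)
The plan is to reduce the statement about $\R$-complement thresholds to the two classical thresholds $\lct$ and $\act$ by running the minimal model program, exactly mirroring the philosophy of Construction~\ref{sharp_construction}. Starting from $(X/Z,D)$ with an effective $F\not=0$ and with wFt $X/Z$, I would first take a $\Q$-factorialization and, by Lemma~\ref{wTt_vs_Ft}, pass to an Ft model, which by Proposition~\ref{small_transform_compl} preserves $\R$-complements of $(X/Z,D+tF)$ for all $t$ simultaneously; hence the threshold is unchanged. By Theorem~\ref{R_compl_polyhedral} the set $\{t:(X/Z,D+tF)\text{ has an }\R\text{-complement}\}$ is a closed interval $[0,\Rct]$, so the supremum is a maximum and $\Rct=\Rct(X/Z,D;F)$ is realized: $(X/Z,D+\Rct\cdot F)$ has an $\R$-complement.

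Next I would analyze the divisor $-(K+D+\Rct\cdot F)$ on the Ft model. If this divisor is already nef over $Z$, then by Corollary~\ref{semiample} it is semiample; pushing it down gives a contraction $X\to V/Z$, and by Addendum~\ref{R_complement_criterion} (with $\overline{\fP_\varphi}=\Nefd_\R$, cf.\ Step~2 of the proof of Theorem~\ref{R_compl_polyhedral}) the $\R$-complement property at the threshold is equivalent to $(X/Z,D+\Rct\cdot F)$ being lc. By maximality of $\Rct$ this means $\Rct=\lct(X\ni o',D;F)$ locally over a general closed point $o'$ of the relevant image stratum; shrinking and taking hyperplane sections reduces to a local $\Q$-factorial Ft pair $(X'\ni o',D';F')$ with the same multiplicities, and there $\lct=\Rct$ tautologically since adding a larger coefficient destroys lc, hence the $\R$-complement. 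If instead $-(K+D+\Rct\cdot F)$ is not nef, I would run $-(K+D+\Rct\cdot F)$-MMP over $Z$ (available for Ft by item (3) among ``What do we use''), which by Proposition~\ref{monotonicity_I} and Corollary~\ref{monotonicity_II} preserves $\R$-complements at the threshold and, by finiteness of birational $1$-contractions \cite[Corollary~5.5]{ShCh}, terminates either with a nef anticanonical class (the previous case) or with a Mori fibration $X''\to W/V$ to which one further reduces by restricting to a general fiber, ending at a $\Q$-factorial Ft $X''$ with $\rho(X'')=1$; there the threshold becomes the anticanonical threshold $\act(X''/o'',D'';F'')$, with the converse $\act=\Rct$ holding precisely because at an lc log Fano of Picard number $1$ numerical/$\R$-linear triviality of $-(K+D''+aF'')$ is automatic. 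The bd-pair case is handled by the same argument, carrying $\sP$ along unchanged through the MMP (its b-nef positivity is preserved) and invoking the bd-versions Addendum~\ref{bd_R_compl_polyhedral} and the bd-parts of Propositions~\ref{small_transform_compl}, \ref{monotonicity_I}; the index $m$ is inherited by construction.

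With Lemma~\ref{lct_act} in hand, Theorem~\ref{acc_R-complt} follows: by the lemma every $\R$-complement threshold with $\dim X=d$, $B\in\Gamma_b$, $F\in\Gamma_f$ is either an lc threshold or an anticanonical threshold of a $\Q$-factorial Ft pair of dimension $\le d$ whose boundary and ``denominator'' multiplicities still lie in $\Gamma_b,\Gamma_f$. The acc for log canonical thresholds in bounded dimension with dcc coefficients is the ACC for lct \cite[Theorem~1.3]{HX} (cf.\ \cite[Section~4]{PSh08}), and the acc for anticanonical thresholds on Ft varieties of Picard number $1$ follows from boundedness of $\ep$-lc Fanos, i.e.\ BBAB \cite[Theorem~1.1]{B16}, together with boundedness of complements \cite[Theorem~1.7]{B} used as in the proof of Corollary~\ref{bound_ex_pairs} and Corollary~\ref{bounded_lc_index} --- a decreasing sequence of thresholds would force a pair that is not lc yet admits an $\R$-complement, contradicting exceptionality/boundedness. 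The union of two acc sets is acc, giving the theorem; Addendum~\ref{acc_R-complt_add} is immediate since only the multiplicities enter, and the bd-version runs identically with the extra parameter $m$. The main obstacle is the careful bookkeeping in the MMP reduction of Lemma~\ref{lct_act}: ensuring that passing to a general fiber of a Mori fibration, and localizing at a general closed point, genuinely preserves the threshold and keeps all multiplicities inside the prescribed dcc sets (in particular that the moduli b-divisor behaves well in the bd-pair case), rather than any single hard estimate.
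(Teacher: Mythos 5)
Your overall strategy (pass to a $\Q$-factorial Ft model, run an MMP on the threshold divisor, and end either at a log canonical threshold or at an anticanonical threshold on a Picard number one Fano) is the same skeleton as the paper's argument, but the branching criterion you use is not the right one, and this is a genuine gap. You claim that if $-(K+D+\Rct\cdot F)$ is already nef over $Z$ then, by the equivalence ``$\R$-complement $\Leftrightarrow$ lc'' for nef anticanonical divisors and by maximality of $\Rct$, the threshold must be an lc threshold at some point $o'$. This does not follow: the equivalence applies at $t=\Rct$, but for $t+\ep$ the divisor $-(K+D+(t+\ep)F)$ need not be nef, so the failure of an $\R$-complement at $t+\ep$ may be caused by loss of (numerical) effectivity rather than by loss of lc-ness. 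A concrete counterexample to your classification is $(\PP^1, t(P_1+P_2+P_3))$ with three distinct points: here $\Rct=2/3$, the divisor $-(K+\tfrac23(P_1+P_2+P_3))$ is numerically trivial, hence nef, yet $\lct=1$; the threshold is an anticanonical threshold ($\rho=1$, $\act=2/3$), not an lc threshold. The correct dichotomy, as in the paper, is whether $F$ passes through a non-klt point of $(X,D+\Rct F)$ (lct case), or, after making $-(K+D+\Rct F)$ nef, whether $-F$ fails to be nef on the face of contracted curves — which it must, else $\Rct$ were not maximal — in which case one runs a $(-F)$-MMP only on $(K+D+\Rct F)$-trivial rays to reach a fibration on which $F$ is relatively positive, giving the act case.

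Two further points in your ``not nef'' branch need repair even once the dichotomy is fixed. First, during the MMP the birational transform of $F$ can come to pass through a non-klt point of the transformed pair, and then one must switch to the lct case (the paper's loop back to its Step~2); your argument runs the MMP straight to a Mori fibration and never allows for this. Second, you must check that $F$ is not contracted along the way (so that $F''\neq 0$ and its multiplicities still lie in $\Gamma_f$), and that divisorial contractions and antiflips genuinely preserve the threshold — the paper does this via Proposition~\ref{monotonicity_I}/Addendum~\ref{R_complement_criterion}, which you cite for a different purpose but do not apply where it is actually needed. The reduction to a closed point and to $\rho=1$ via a general closed fiber, with dimensional induction when the dimension drops, and the bd-pair bookkeeping are fine in outline and match the paper.
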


\begin{proof}
Put $t=\Rct(X/Z,D;F)$.
We assume that $t\ge 0$ and
is well-defined.

Step~1. {\em We can suppose that $X$ is $\Q$-factorial
and Ft.\/}
Taking a $\Q$-factorialization $Y\to X$,
we reduce the proof to the $\Q$-factorial case
by Proposition~\ref{small_transform_compl}.

Below we assume that $X$ is $\Q$-factorial.
By Lemma~\ref{wTt_vs_Ft}
we can suppose also that $X/Z$ is
Ft, in particular, projective over $Z$.

Step~2. {\em We can suppose that $(X,D+tF)$ is klt over $\Supp F$,\/}
that is, the lc centers of $(X,B+tF)$ are not in $\Supp F$.
If $p\in X$ is a point such that
$(X,D+tF)$ is lc but not klt in $p$, and
$F$ passes through the point (i.e., $F>0$ near $p$),
then we have the {\em lc threshold\/} in $p$:
$\lct(X\ni p,D;F)=t=\Rct(X/Z,D;F)$.
Taking hyperplane sections we can suppose also
that $p$ is closed (cf. Step~6).

Warning: $p$ is not necessarily over $o$
but contains a point over $o$.

So, we can assume that $F$ only passes
klt points of $(X,D+tF)$ or
$(X,D+tF)$ is klt near $\Supp F$.

Step~3. {\em We can suppose that
$-(K+D+tF)$ is nef over $Z$.\/}
Otherwise, there exists an extremal contraction $X\to Y/Z$
which is positive with respect to $K+D+tF$.
The contraction is birational because
otherwise by Definition~\ref{r_comp}, (1) and (3)
$$
K+B^+-(K+D+tF)=B^+-D-tF\ge 0
$$
is numerically negative over $Y$,
a contradiction,
where $(X/Z,B^+)$ is an $\R$-complement of $(X/Z,D+tF)$.

If the contraction is small then
we make an antiflip.
The antiflip preserves the threshold $t$ by
Proposition~\ref{small_transform_compl}.
If after that $F$ passes a nonklt point
then we go to Step~2.

If the contraction is divisorial then
we contract a prime divisor $P$.
The contraction preserves the $\R$-complements and
$t=\Rct(X/Z,D;F)=\Rct(Y/Z,D_Y;F_Y)$, where
$D_Y,F_Y$ are images of respectively $D,F$ on $Y$.
Indeed, by definition $(Y/Z,D_Y+tF_Y)$ has
the $\R$-complement $(Y/Z,B_Y^+)$ with
the image $B_Y^+$ of $B^+$ on $Y$.
Thus $\Rct(Y/Z,D_Y;F_Y)\ge t$.
Actually, $=t$ holds by Proposition~\ref{monotonicity_I}
applied to $(X/Y,D+tF)$.
Indeed, for sufficiently small real number $\ep>0$,
$K+D+(t+\ep)F$ is negative over $Y$ and
$(X^\sharp/Y,(D+(t+\ep)F)^\sharp_{X^\sharp})=(Y/Y,D_Y+(t+\ep)F_Y)$.
Moreover, if $(Y/Z,D_Y+(t+\ep)F_Y)$ has
an $\R$-complement $(Y/Z,B')$.
Then it induces an $\R$-complement $(X/Z,B_X')$,
with crepant $B_X'$, of $(X/Z,D+(t+\ep)F)$, a contradiction.
By definition it is enough to verify that
$$
\mult_P B_X'\ge \mult_P(D+(t+\ep)F).
$$
This is equivalent to (1) of Definition~\ref{r_comp}
over $Y$ locally near the center (image) of $P$.
This holds by Proposition~\ref{monotonicity_I} or
Addendum~\ref{R_complement_criterion}.
In particular we prove that $\Supp F\not=P$ and
$F_Y\not= 0$.
If after the divisorial contraction $F$ passes a nonklt point
then we go again to Step~2.

So, by the termination \cite[Corollary~5.5]{ShCh},
after finitely many steps we
get nef $-(K+D+tF)/Z$.
The last divisor $-(K+D+tF)$ gives a contraction
$X\to Y/Z$.

Step~4. {\em $-F$ is not nef with respect to
the contraction $X/Y$.\/}
Since $X/Z$ has Ft, the cone of curves $\NE(X/Z)$
in $\Nc(X/Z)$
is closed convex rational polyhedral with finitely many extremal rays \cite[Corollary~4.5]{ShCh}.
Denote by $V$ its face generated by the extremal rays $R$
which are contracted on $Y$, that is, they have a curve $C/o$
with $(C.K+D+tF)=0$.
Actually this is a face by Step~3.
Moreover, the cone $V$ is also closed convex rational polyhedral.
If $-F$ is nef on the face $V$ then,
for every sufficiently small real number $\ep>0$,
$-(K+D+(t+\ep)F)$ is nef by Step~3 and
semiample over $Z$ \cite[Corollary~4.5]{ShCh}.
By Step~2 we can suppose also that $(X,D+(t+\ep)F)$ has
lc singularities.
Thus $t$ is not $\Rct(X/Z,D;F)$ by Addendum~\ref{R_complement_criterion},
a contradiction.

Step~5.
Moreover, {\em we can suppose that $X/Y=Z$ itself is
a fibered (extremal) contraction with $o$, the generic point of $Z$,
and $F$ is positive over $Z$.\/}
To establish this we apply $(-F)$-MMP to $X/Z$.
However, we consider only extremal rays $R$ of
the cone of curves $\NE(X/Z)$ which are
numerically trivial with respect to $K+D+tF$, that is, $R\subseteq V$.
By Step~4 there exists such an extremal ray $R$ negative for $-F$,
equivalently, $F$ is positive for $F$.

If $R$ gives a small birational contraction $X\to X'/Z$,
actually, over $Y$ then
we make a flip in $R$ and this preserves the threshold $t$.
If $R$ gives a divisorial contraction $X\to X'/Z$, again also over $Y$, then after the
contraction we have the same threshold
$$
\Rct(X/Z,D;F)=\Rct(X'/Z,D';F'),
$$
where $D',F'$ are birational transforms of $D,F$ respectively on $X'$.
In particular, $F$ is not exceptional for $X/X'$ and $F'\not =0$.
We can argue here as in Step~3.
However, $F'$ can pass lc singularities of $(X',D'+tF')$.
In this case we go again to Step~2.

By Step~4 and termination we get an extremal fibered contraction $X\to X'/Z$
for which $K+D+tF\equiv 0/X'$ and $F$ is numerically positive over $X'$.
By construction and since any contraction of Ft is Ft,
$X/X'\ni \eta$ has Ft, where $\eta$ is the generic point of $X'$ and
$\Rct(X/Z,D;F)=\Rct(X/\eta,D;F)=\act(X/\eta,D;F)$.
Finally, denote $X/\eta$ by $X/o$.

Step~6. {\em We can suppose that $Z=o$ is a closed point and $\rho(X/o)=1$.\/}
In terms of Italian understanding of the generic point
we can replace $X/o$ by $X_p/p$, where $p$ is
a sufficiently general closed point of $Z$ and
$X_p$ is the fiber of $X/Z$, and
$$
\Rct(X/Z,D;F)=\Rct(X/o,D;F)=
\Rct(X_p/p,D_p;F_p)=\act(X_p/p,D_p;F_p),
$$
where $D_p=D\rest{X_p},F_p=F\rest{X_p}$.
We loose only the extremal property of contraction $X\to X'/o$
but only when $o\not=p$.
By Proposition~\ref{bounded_rank} and Addendum~\ref{const_Cl}
and since $X/o$ has Ft,
$X_p$ is $\Q$-factorial for sufficiently general $p\in Z$
(specialization).
For $o\not= p$, $\dim X_p<\dim X$.
So, we use dimensional induction in this case and
go to Step~2.
Otherwise, $o=p$ is a closed point and we are done.

Notice that our algorithm preserves multiplicities of $D$ and of $F$.

The converse holds by definition.
Moreover, the converse holds for constructed pairs.

Similarly we can treat bd-pairs.

\end{proof}

\begin{proof}[Proof of Theorem~\ref{acc_R-complt}]
We reduce the acc to two acc's:
for lc and ac thresholds, local over $Z_i\ni o$.

Consider a sequence of pairs
\begin{equation} \label{sequence_pairs}
(X_i/Z_i\ni o,B_i), i=1,2,\dots,i,\dots
\end{equation}
with effective $\R$-divisors $F_i$ on $X_i$,
such that
\begin{description}

  \item[\rm (1)]
every $X_i/Z_i\ni o$ has wFt with $\dim X_i=d$;

  \item[\rm (2)]
every $B_i\in \Gamma_b,F_i\in \Gamma_f$ and $F_i>0$;

  \item[\rm (3)]
every $(X_i/Z_i\ni o,B_i)$ has an $\R$-complement;
and

  \item[\rm (4)]
the sequence of thresholds is nonnegative and monotonic
\begin{align*}
0&\le \Rct(X_1/Z_1\ni o,B_1;F_1)\le
\Rct(X_2/Z_2\ni o,B_2;F_2)\\
&\le\dots\le
\Rct(X_i/Z_i\ni o,B_i;F_i)\le\dots.
\end{align*}

\end{description}
For simplicity of notation we use same $o$
everywhere instead of $o_i$ (as same $0$
for every field).
We need to verify the stabilization: for every $i\gg 0$,
$$
\Rct(X_i/Z_i\ni o,B_i;F_i)=
\Rct(X_{i+1}/Z_{i+1}\ni o,B_{i+1};F_{i+1}).
$$

Step~1. {\em The case when every $\Supp B_i,\Supp F_i$ have
at most $l$ prime divisors.\/}
Equivalently, $\Supp B_i\cup\Supp F_i$ has at most $l$
prime divisors,
possibly for a different natural number $l$.
In other words, for every $i=1,2,\dots,i,\dots$,
there is a $\Z$-linear map
$A_i\colon \R^l\to\WDiv_\R X_i$  of height $1$ and
vectors $x_i,y_i\in\R^l$ such that
$A_i(x_i)=B_i,A_i(y_i)=F_i$.

Indeed, there exist distinct prime divisors $D_{i,j},j=1,\dots,l$, on $X_i$
such that
$$
B_i=\sum_{j=1}^l b_{i,j}D_{i,j}
\text{ and }
F_i=\sum_{j=1}^l f_{i,j}D_{i,j}.
$$
We put
$$
A_i(1,0,\dots,0)=D_{i,1},
A_i(0,1,\dots,0)=D_{i,2},\dots,
A_i(0,0,\dots,1)=D_{i,l}.
$$
Hence $x_i=(b_{i,1},b_{i,2},\dots,b_{i,l})$ and
$y_i=(f_{i,1},f_{i,2},\dots,f_{i,l})$,
where by~(2) every $b_{i,j}\in\Gamma_b$ and $f_{i,j}\in\Gamma_f$.

Since $\Gamma_g,\Gamma_f$ satisfy the dcc,
taking a subsequence of (\ref{sequence_pairs}) we can suppose that
$0\le x_1\le x_2\le\dots\le x_i\le \dots$ and
$0<y_1\le y_2\le\dots\le y_i\le \dots$,
where
$\le$ for vectors as for divisors:
$(v_1,\dots,v_l)\le (w_1,\dots,w_l)\in\R^l$
if every $v_i\le w_i$.

Put $x=\lim_{i\to\infty}x_i,y=\lim_{i\to \infty} y_i$ and
$$
t=\lim_{i\to\infty}t_i,\ \ t_i=\Rct(X_i/Z_i\ni o,B_i;F_i).
$$
The limits $x,x+ty$ and $t$ are proper ($<+\infty$): $x,x+ty\in\R^l$ and $t\in \R$.
(Thus the limit $y$ is proper if $x+ty$ is proper and $t\not=0$.)
Indeed, for $x$, this follows from the subboundary
property: every $x_i\le (1,1,\dots,1)\in\R^l$ or
$B_i$ is a subboundary by (3).
The same works for $x+ty$ because
every $x_i+t_iy_i\le (1,1,\dots,1)$ or
$B_i+t_iF_i$ is a subboundary
by the definition of $\R$-complements and~(3) again.
By (2-3) and construction every $B_i$ and $B_i+t_iF_i$ are boundaries.
On the other hand, since $F_i\not=0$,
$$
t_i\le 1/\min\{\gamma\mid\gamma\in \Gamma_f
\text{ and }
\gamma>0\}.
$$
The minimum exists and is positive by the dcc of $\Gamma_f$.
Thus $t\in\R$.
Note that $y$ and every $y_i$ are $>(0,0,\dots,0)\in\R^l$ by (2),
in particular, $\not=0\in\R^l$.

Now it is enough to verify that $(X_i/Z_i\ni o,A_i(x+ty))$
has an $\R$-complement for every $i\gg 0$.
Indeed, by construction,
$A_i(x+ty)=A_i(x)+tA_i(y)\ge
A_i(x_i)+t_iA_i(y_i)=B_i+t_iF_i$ by the monotonic property of every $A_i$.
So, by definition
$A_i(x+ty)=A_i(x)+tA_i(y)=A_i(x_i)+t_iA_i(y_i)=A_i(x_i+t_iy_i)$
and $x+ty=x_i+t_iy_i$ for those $i$
because every $A_i$ is injective.
Thus $t_i=t$ for those $i$ too because every $y\ge y_i>0$.
This is the required stabilization.

The $\R$-complement property follows from~(1) and
Theorem~\ref{invers_stability_R_complements}
under the assumption~(3) of Theorem~\ref{n_comp}.
Indeed, $\Gamma=\{b_{i,j}+t_if_{i,j}\}$ satisfies the dcc
with the only limiting points $x_1,\dots,x_l$.
On the other hand, by construction, definition and (3),
every pair $(X_i/Z_i\ni o,B_i+t_iF_i)$ has an $\R$-complement.
For every positive real number $\ep$, the estimation
$$
\norm{B_i+t_iF_i-A_i(x+ty)}\le
l\norm{x_i+t_iy_i-x-ty}< \ep
$$
for every $i\gg 0$ concludes the proof in Step~1.

Below we reduce the general case to the situation of Step~1.
By Lemma~\ref{lct_act} we need to consider
two local over $X_i'\ni o$ or $X_i''/o$ cases.
Indeed, by the lemma every
$$
t_i=
\begin{cases}
\lct(X_i'\ni o,B_i';F_i')=\Rct(X_i'\ni o,B_i';F_i'),\text{ or}\\
\act(X_i''/o,B_i'';F_i'')=\Rct(X_i''/o,B_i'';F_i''),
\end{cases}
$$
where
$X_i',X_i''$ are $\Q$-factorial of dimension $\le \dim X_i=d$,
$\rho(X_i'')=1$, $X_i'\ni o,X_i''$ are Ft and all points $o$ are closed;
$B_i',B_i''\in\Gamma_b,F_i',F_i''\in \Gamma_f$, and
$F_i',F_i''\not=0$.

By dimensional induction and the converse
we can suppose that every $\dim X_i'=X_i''=d$.

Step~2. ({\em lct\/})
Suppose that there exists infinitely many lct cases with
$t_i=\lct(X_i'\ni o,B_i';F_i')=\Rct(X_i'\ni o,B_i';F_i')$.
Taking a subsequence of (\ref{sequence_pairs})
and changing notation we can suppose that
every $t_i=\Rct(X_i'\ni o,B_i';F_i')=\lct(X_i'\ni o,B_i';F_i')$,
where $X_i'$ is $\Q$-factorial.
By \cite[Theorem~18.22]{K}
$\Supp B_i,\Supp F_i$ have not more than
$d/\mu_b,d/c\mu_f$ prime divisors respectively, where
$$
\mu_b=\min\{\gamma\in\Gamma_b\mid \gamma>0\},
\mu_f=\min\{\gamma\in\Gamma_f\mid \gamma>0\}
\text{ and } c=\min\{t_i\},
$$
assuming that all $t_i>0$ (see Warning below).
Minima exist and are positive by the dcc of $\Gamma_b,\Gamma_f$ and
by the monotonic property of $t_i$ (assuming that
all $t_i>0$).
So, by Step~1 the required stabilization of $t_i$ holds.

Warning: our estimations depend on $c$.
Either we can consider a fix sequence of thresholds $t_i$
or consider the (truncated) thresholds $\ge c>0$.
This is sufficient to prove the require acc.

Step~3. ({\em act\/})
Now we suppose that there exists infinitely many act cases with
$t_i=\act(X_i''\ni o,B_i'';F_i'')=\Rct(X_i''\ni o,B_i'';F_i'')$.
As in Step~2, we can suppose that
every $t_i=\Rct(X_i''\ni o,B_i'';F_i'')=\act(X_i''/o,B_i'';F_i'')$,
where $X_i''$ is $\Q$-factorial Ft over $o$ and $\rho(X_i)=1$.
By \cite[Corollary~1.3]{BMSZ}
$\Supp B_i,\Supp F_i$ have not more than
$(d+1)/\mu_b,(d+1)/c\mu_f$ prime divisors respectively, where
$\mu_b,\mu_f,c$ are same as in Step~2.
Again by Step~1 the required stabilization of $t_i$ holds.

Step~4. {\em Addenda.\/}
Addendum~\ref{acc_R-complt_add} is
immediate by the case with only the prime divisors $E_i$ and
the dcc property of $\{\sum \gamma_i\mid \gamma_i\in\Gamma\}\subset [0,+\infty)$
for every dcc $\Gamma\subset [0,+\infty)$.

Similarly we can treat bd-pairs.

\end{proof}

\begin{cor}[Lc thresholds {\cite[Theorem~1.1]{HMX}}] \label{acc_lc_thresholds}
Let $d$ be a nonnegative integer and
$\Gamma_b,\Gamma_f$ be two dcc sets of nonnegative real numbers.
Then the set of lc thresholds
$$
\{\lct(X/X,B;F)\mid \dim X=d, B\in\Gamma_b \text{ and } F\in \Gamma_f\}
$$
satisfies the acc.

\end{cor}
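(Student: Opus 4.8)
The plan is to identify each log canonical threshold with an $\R$-complement threshold of a wFt (indeed Ft) pair and then quote Theorem~\ref{acc_R-complt}. Since the lc threshold is a local invariant, for a pair $(X,B)$ and an effective $\R$-Cartier divisor $F>0$ one has $\lct(X,B;F)=\min_p\lct(X\ni p,B;F)$, and the minimum is attained at some point $p$; cutting $X$ with general hyperplanes through $p$ changes neither the threshold nor the dcc conditions $B\in\Gamma_b$, $F\in\Gamma_f$, so we may assume $p$ is closed. At such a $p$, writing $t=\lct(X\ni p,B;F)$, the pair $(X,B+tF)$ is lc but not klt near $p$, hence has an lc place $E$ with center through $p$.

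Next I would reduce the germ $(X\ni p,B;F)$ to a Fano--type germ of dimension $\le d$ with the same lc threshold and the same boundary and support multiplicities. Replace $X$ by a small $\Q$-factorialization near $p$ (this preserves $\lct$ by Proposition~\ref{small_transform_compl} and \ref{monotonicity_I}), and then extract a suitable lc place of $(X,B+tF)$ over the center through $p$ by a relative MMP. If the extracted place has divisorial center $S$, adjunction to $S$ drops the dimension by one while the coefficients of the different lie in a dcc set depending only on $\Gamma_b,\Gamma_f,d$ (cf.~\ref{direct_dcc} and~\ref{direct_hyperst_on_div}), and the threshold is computed on $S^\nu$, so we conclude by induction on $d$. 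Otherwise the extraction produces a $\Q$-factorial germ $g\colon (X'\ni o')\to(X\ni p)$ with exceptional divisor $E$, $-E$ ample over $X$ and relative Picard number controlled; by the negativity of $-E$ and Lemma~\ref{wTt_vs_Ft} the germ $X'\ni o'$ is of Fano type, $\dim X'\le d$, and the transforms $B',F'$ of $B,F$ have multiplicities among those of $B,F$, so $B'\in\Gamma_b$, $F'\in\Gamma_f$, $F'\neq 0$, and $t=\lct(X'\ni o',B';F')$. By the converse part of Lemma~\ref{lct_act}, $t=\lct(X'\ni o',B';F')=\Rct(X'\ni o',B';F')$.

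Finally, since $X'\ni o'$ is of Fano type, it has wFt, so Theorem~\ref{acc_R-complt} (in the form of Addendum~\ref{acc_R-complt_add}, which is what one actually obtains once the boundary and $F$ are written in terms of their prime components with coefficients in $\Gamma_b$ and $\Gamma_f$) shows that the set of all such $\Rct(X'\ni o',B';F')$, taken over every $d$-dimensional input, satisfies the acc; hence so does the set of lc thresholds $\lct(X/X,B;F)$ with $\dim X=d$, $B\in\Gamma_b$, $F\in\Gamma_f$. The main obstacle is the reduction in the second paragraph: producing the Fano--type germ while preserving the lc threshold \emph{exactly} (a mere inequality $\lct\le\lct'$ would not suffice, since the limit of the $\lct$'s could still be approached strictly). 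This is precisely the local-to-global mechanism of \cite{HMX}; the dimensional induction disposes of the divisorial case, and in the non-divisorial case one must check that a single lc place can be extracted on a model whose relative structure over the base is of Fano type, which is where Lemma~\ref{wTt_vs_Ft} and the negativity lemma (Lemma~\ref{b_negativity}) are used.
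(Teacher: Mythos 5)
The crucial step of your reduction fails as stated. In the non-divisorial case you extract an lc place $E$ of $(X,B+tF)$ with $-E$ ample over $X$ and then claim $t=\lct(X'\ni o',B';F')$ for the birational transforms $B',F'$. But for $t'\ge t$ the crepant pullback of $B+t'F$ to $X'$ is $B'+t'F'+c(t')E$ with $c(t')\ge 1$, and discarding the effective term $c(t')E$ only raises log discrepancies; hence $(X',B'+t'F')$ may remain lc for some $t'>t$, and your construction yields only $\lct(X'\ni o',B';F')\ge t$ --- precisely the useless inequality you flag yourself. To preserve the threshold you must keep $E$ in the data with coefficient $1=\beta+t\varphi$ split between the boundary part and the $F$-part, and then $\beta$ (possibly negative) and $\varphi$ no longer lie in $\Gamma_b,\Gamma_f$, so you are no longer inside the class the corollary is about. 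The divisorial branch has a parallel gap: by divisorial adjunction (cf.~\ref{direct_hyperst_on_div}) the coefficients of the different that multiply $t$ are of the form $l'f/l$ with positive integers $l',l$, and $\{f/l\mid f\in\Gamma_f,\ l\in\Z_{>0}\}$ is not a dcc set (take $f=1$, $l\to\infty$); moreover $t$ is not literally an lc threshold on $S^\nu$ of the same shape, so ``conclude by induction on $d$'' is not an induction on the statement being proved. This local-to-global route can be repaired --- it is essentially the argument of \cite{HMX}, which proceeds by contradiction along a sequence of thresholds and uses inversion of adjunction together with global ACC for lc $0$-pairs on the extracted place --- but none of that machinery appears in your sketch.

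The reduction is also unnecessary, and this is where you diverge from the paper. For the identical local morphism $X/X\ni p$ every $\R$-Cartier divisor is locally $\R$-principal, so an lc local pair $(X/X,B+tF)$ is automatically a $0$-pair and $\lct(X/X,B;F)=\Rct(X/X,B;F)$ by Example~\ref{1stexe}, (3); and if $(X,B)$ is klt then $X/X$ already has wFt, since bigness over the base is vacuous for the identity. Hence Theorem~\ref{acc_R-complt} applies verbatim in the klt case. If $(X,B)$ is lc but not klt, then either $F$ passes through an lc center and $\lct=0$, or one replaces $(X,B)$ by a crepant dlt model, on which the underlying variety is klt and the transform of $F$ keeps its multiplicities, and one is back in the previous case. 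No Fano-type germs, no Lemma~\ref{lct_act}, and no dimensional induction are needed.
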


\begin{add}
Instead of $B\in\Gamma_b,F\in\Gamma_f$, we can suppose that
$B=\sum b_iE_i, F_i=\sum f_iE_i$,
where $E_i$ are effective Weil $\Z$-divisors and
$b_i\in\Gamma_b,f_i\in\Gamma_f$.

\end{add}

\begin{add}
The same holds for
bd-pairs $(X/X,B+\sP)$ of index $m$
and the set of corresponding thresholds depends also on $m$.

\end{add}

\begin{proof}
Immediate by Theorem~\ref{acc_R-complt}
for klt $(X,B)$.
Indeed, in this case $X/X$ or locally
$X$ has wFt.
By definition of the lc threshold we suppose that
$(X,B)$ is lc and $F$ is $>0$, $\R$-Cartier.
Since every lc $(X/X,B+tF)$ is a $0$-pair,
$\lct(X/X,B;F)=\Rct(X/Z,B;F)$ by Example~\ref{1stexe}, (3).

If $(X,B)$ is lc and $F$ passes an lc center of $(X,B)$ then
$\lct(X/X,B;F)=\Rct(X/X,B;F)=0$.
Otherwise, $(X,B)$ is lc and $F$ does not pass the lc centers of $(X,B)$
and we can replace $(X,B)$ by its dlt resolution and
$F$ by its pull-back on the resolution.
The construction preserves the lc threshold.
The pull-back of $F$ does not blow up divisors,
is the birational transform of $F$ with the same
multiplicities as $F$.
The new $X$ is klt and we can apply Theorem~\ref{acc_R-complt} again.

Similarly we can treat bd-pairs.

\end{proof}

\begin{cor}[Ac thresholds] \label{acc_ac_free}
Let $d$ be a nonnegative integer and
$\Gamma_b,\Gamma_f$ be two dcc sets of nonnegative real numbers.
Then the set of ac thresholds
\begin{align*}
\{\act(X/Z,B;F)\mid& \dim X=d,X/Z \text{ has wFt},
(X,B)\text{ is lc},
B\in\Gamma_b \text{ and }  \\
&F=\sum f_i F_i,\text{ where }
F_i\text{ are locally free over }Z \text{ and }f_i\in\Gamma_f\}
\end{align*}
satisfies the acc.

\end{cor}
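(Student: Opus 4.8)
The plan is to deduce this from Theorem~\ref{acc_R-complt} by identifying each anticanonical threshold in the set with an $\R$-complement threshold of a pair of the shape already controlled there. First I would record that, since every $F_i$ is locally free over $Z$, it is in particular an effective Cartier, hence an effective Weil $\Z$-divisor over $Z$; writing $B$ as the sum of its prime components (with coefficients in $\Gamma_b$) and keeping the $F_i$ as the divisors $E_i$, the datum $(X/Z,B;F)$ falls under Addendum~\ref{acc_R-complt_add}. So it suffices to show that every value $\act(X/Z,B;F)$ occurring in the set equals $\Rct(X/Z,B;F)$, whose acc (in the fixed dimension $d$, under wFt, with the stated dcc conditions) is Theorem~\ref{acc_R-complt} together with Addendum~\ref{acc_R-complt_add}.

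The heart of the matter is the identity $\act(X/Z,B;F)=\Rct(X/Z,B;F)$ for $(X/Z,B)$ with wFt $X/Z$, lc $(X,B)$, and $F$ semiample over $Z$ (being a nonnegative combination of divisors locally free over $Z$). Put $t_0=\act(X/Z,B;F)$; note that $(X,B+t_0F)$ is lc, since $t_0$ does not exceed the log canonical threshold of $F$ for $(X,B)$. For every $t\le t_0$ the divisor $-(K+B+tF)$ is nef over $Z$, hence semiample over $Z$ by Corollary~\ref{semiample}; applying Construction~\ref{sharp_construction} to $(X/Z,B+tF)$, the $\R$-mobile part is all of $-(K+B+tF)$ and the fixed part vanishes, so $X$ is already its own maximal model, and Addendum~\ref{R_complement_criterion} produces an $\R$-complement of $(X/Z,B+tF)$ because $(X,B+tF)$ is lc. Hence $\Rct(X/Z,B;F)\ge t_0$.

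For the reverse inequality, suppose $(X/Z,B+tF)$ has an $\R$-complement $(X/Z,D^+)$. Then $-(K+B+tF)\sim_{\R,Z}D^+-(B+tF)\ge 0$ is pseudoeffective over $Z$, and necessarily $(X,B+tF)$ is lc (any $D^+\ge B+tF$ would otherwise give a non-lc $(X,D^+)$). Running the $-(K+B+tF)$-MMP over $Z$ via Lemma~\ref{wTt_vs_Ft} and Construction~\ref{sharp_construction}, which preserves $\R$-complements by Proposition~\ref{monotonicity_I}, and invoking the negativity of the divisorial contractions, flips and antiflips exactly as in the proof of Lemma~\ref{lct_act}, one concludes that $t$ cannot exceed $t_0$. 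Thus $\Rct(X/Z,B;F)=\act(X/Z,B;F)$, and the corollary follows from Theorem~\ref{acc_R-complt} and Addendum~\ref{acc_R-complt_add}. The same argument applies verbatim to bd-pairs of index $m$, using the bd-versions of Corollary~\ref{semiample}, Construction~\ref{sharp_construction}, Lemma~\ref{lct_act} and Theorem~\ref{acc_R-complt}; the resulting set of thresholds then depends also on $m$.

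The main obstacle is the reverse inequality $\Rct\le\act$: controlling, through the negativity of the wFt-MMP of Construction~\ref{sharp_construction}, that no $\R$-complement of $(X/Z,B+tF)$ can exist once $t$ passes the anticanonical threshold even though the anticanonical divisor may still be pseudoeffective over $Z$. This is precisely the delicacy already treated in the proof of Lemma~\ref{lct_act}, so in practice I expect to reuse that argument; the only other point to be careful about is that the reduction of $F=\sum f_iF_i$ to the setting of Addendum~\ref{acc_R-complt_add} keeps the coefficients in a dcc set, which is clear since finite sums of elements of the dcc set $\Gamma_f$ again form a dcc set.
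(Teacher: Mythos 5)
There is a genuine gap at the heart of your argument: the claimed identity $\act(X/Z,B;F)=\Rct(X/Z,B;F)$ for the pair as given is false, and the step on which you base it --- ``$(X,B+t_0F)$ is lc, since $t_0$ does not exceed the log canonical threshold of $F$ for $(X,B)$'' --- has no justification. The anticanonical threshold used in this corollary is defined by the numerical relation $-(K+B)\equiv t F$ over $Z$ (see~(\ref{equiv_tF}) in the paper's proof, and the nef-threshold version in Remark~\ref{act_rem}); nothing in that definition ties $t_0$ to the lc threshold of the \emph{specific} divisor $F$. A trivial example: $X=\PP^2$, $B=0$, $F=10H$ with $H$ a line, so $F$ is a sum of divisors from a free linear system with $f_1=10\in\Gamma_f$; then $\act=3/10$ while $(X,\tfrac{3}{10}F)=(\PP^2,3H)$ is not lc and $\Rct=1/10$. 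So your forward inequality $\Rct\ge\act$ fails, and with it the reduction of the corollary to Theorem~\ref{acc_R-complt} applied directly to $(X/Z,B;F)$. Relatedly, you use the local freeness of the $F_i$ only to get semiampleness of $F$ over $Z$ and Cartier-ness for Addendum~\ref{acc_R-complt_add}; but that is not where the hypothesis is actually needed, as Example~\ref{act_exa} (where $\act\neq\Rct$ and the family is unbounded) already warns.

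What is missing is the paper's reduction, which is exactly where the freeness enters: one localizes to $\act(X/Z\ni o,B;F)$ over a closed point, bounds $t$ (via covering curves $C$ with $-(C\cdot K+B)\le d+1$ and $(C\cdot F_i)\ge 1$, using freeness) and the coefficients $f_i$, and then \emph{changes the representative of $F$ inside its linear systems}: each $f_iF_i$ is replaced by $\{f_i\}F_{i,0}+\sum_{j=1}^{\rddown{f_i}}F_{i,j}$ with general $F_{i,j}\sim F_i$ over $Z\ni o$, and, when $t\ge 1$, $\rddown{t}F$ is absorbed into the boundary $B'=B+\rddown{t}F$ with general members, lc by Bertini. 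These moves preserve the threshold value and make all coefficients and $t$ at most $1$, after which $(X,B+tF)$ is lc for general members and only then does $\act=\Rct$ hold, so that Theorem~\ref{acc_R-complt} applies (the new coefficients $1$ and $\{f_i\}$ stay in a dcc set because $t$ and the $f_i$ are bounded). Without this Bertini-type replacement and coefficient reduction your identification of $\act$ with an $\R$-complement threshold of the original pair does not hold, so the proposal as written does not prove the corollary.
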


\begin{add}
The same holds for
bd-pairs $(X/Z,B+\sP)$ of index $m$
and the set of corresponding thresholds depends also on $m$.

\end{add}

\begin{rem} \label{act_rem}
In general, the {\em anticanonical\/} threshold of
a log pair $(X/Z,D)$ with respect to a nef over $Z$ divisor $H$ on $X$ is
$$
\act(X/Z,D;H)=\inf\{t\in\R\mid K+D+tH\text{ is nef over }Z\}.
$$
In \cite[p.~47]{ISh}
it was used for Fano varieties as one of invariants
in the Sarkisov program.
In this situation Fano varieties $X$ are $\Q$-factorial,
have the Picard number $1$, terminal singularities and
$H$ is an effective nonzero (Weil) divisor.
Thus $\act(X,0;H)$ is $t$ of~(\ref{equiv_tF}) in the proof below
with $D=0,F=H$.
Moreover, such thresholds $\act(X,0;H)$ form an acc set
with a single accumulation point $0$ if
$\dim X=d$ is fixed (cf. Corollary~\ref{acc_Fano_index}).
This follows from boundedness of those Fano varieties \cite[Theorem~1.1]{B16}.
In general it is not true (cf. Remark~\ref{acc_Fano_index_rem}, (1) below).
\end{rem}

\begin{exa} \label{act_exa}
Let $S$ be a cone over a rational normal curve of degree $n$ and
$L$ be its generator.
Then
$$
\act(S,0;L)=n+2.
$$
In this situation $L$ is not free for $n\ge 2$ and
pairs $(S,L)$ form an unbounded family.
Notice also that $\Rct(S,0;L)=1$ and $\not=\act(S,0;L)$
for $n\ge 2$.
This is why we consider only $\act=\Rct$.
But $\lct=\Rct$ holds automatically (cf. Lemma~\ref{lct_act}).

\end{exa}

\begin{proof}[Proof of Corollary~\ref{acc_ac_free}]
Recall that by definition of
$t=\act(X/Z,D;F)$ we suppose that
$(X,D)$ is a log pair,
$F\not\equiv 0$ generically over $Z$ and
\begin{equation}\label{equiv_tF}
-(K+D)\equiv tF/Z.
\end{equation}
The proof uses Theorem~\ref{acc_R-complt}.

Step~1. {\em It is enough to consider
a local case $\act(X/Z\ni o,B;F)$
with a closed point $o\in Z$.\/}
Indeed, for any closed point $o\in Z$,
$$
\act(X/Z,B;F)=\act(X/Z\ni o,B;F)
$$
because $F\not\equiv 0$ generically over $Z$.
We omit locally the divisors $F_i$ with $f_i=0$ and suppose that
every $f_i\not=0$.
Hence every $f_i\ge \mu_f>0$, where
$$
\mu_f=\min\{\gamma\in\Gamma_f\mid \gamma>0\}.
$$
We can consider truncated ac thresholds: $t\ge  c$
for some positive real number $c$.

Step~2. {\em We can suppose that $t$ and every $f_i\le 1$.\/}
Notice for this that $t$ is bounded: $t\le (d+1)/\mu_f$.
Indeed, since $-(K+B)\equiv tF/Z\ni o$ and $t\ge c>0$,
$X$ is covered by curves $C/Z\ni o$ with
$-(C.K)\le d+1$ and $-(C.K+B)\le d+1$ for
such a sufficiently general curve $C/o$.
Additionally, we can assume that $-(C.K+B)>0$.
On the other hand, every $(C.F_i)\ge 0$ and
by~(\ref{equiv_tF}) some $(C.F_i)\ge 1$ because
$F_i$ are free over $Z\ni o$.
Hence, for $(C.F_i)\ge 1$,
$$
t\mu_f\le tf_i\le (C.tf_iF_i)\le (C.tF)=-(C.K+B)\le d+1.
$$
This gives the required bound.

Similarly, we can verify that multiplicities $f_i$
are bounded: every $f_i\le (d+1)/c$.

We replace every $f_iF_i$
by
$$
D_i=\{f_i\}F_{i,0}+\sum_{j=1}^{\rddown{f_i}}F_{i,j},
$$
where $\{f_i\}$ is the fractional part of $f_i$,
belongs to $[0,1)$ and every $F_{i,j}\sim F_i$ over $Z\ni o$.
By definition
$$
t=\act(X/Z\ni o,B;F)=\act(X/Z\ni o,B;D),
$$
where $D=\sum D_i$.
We denote below $D$ by $F$.
Now $F$ has additional multiplicities $1,\{f_i\}$
which satisfy the dcc because $f_i$ are bounded.
Thus we can add them to $\Gamma_f$ and
suppose that every $f_i\le 1$.

If $t\ge 1$ we replace $tF$ by $\{t\}F$ and
$B$ by $B'=B+\rddown{t}F$, where $\{t\}$ is
the fractional part of $t$.
The log pair $(X,B')$ is lc with a boundary $B'$ by Bertini
if every copy of ($\rddown{t}$ copies of) $F_i$ in
$\rddown{t}F$ is sufficiently general effective
divisor in its linear system over $Z\ni o$.
We need to add $1$ to $\Gamma_b$ if it is necessary.
Here we loos the assumption that $t\ge c$.
By construction
$$
\{t\}=\act(X/Z\ni o,B';F).
$$

It is enough to verify the acc for $\{t\}$
because $t$ is bounded.
Below we denote $B'$ by $B$ and $\{t\}$ by $t$.

Step~3. {\em The acc for $t\le 1$ holds.\/}
Since every $f_i\le 1$ and $t\le 1$,
$(X,B+tF)$ is lc again by Bertini if
$F_i$ in are sufficiently general in their
linear system over $Z\ni o$.
Hence by definition and since $F>0$,
$$
\act(X/Z\ni o,B;F)=\Rct(X/Z\ni o,B;F)
$$
holds and
the corollary holds by Theorem~\ref{acc_R-complt}.

Similarly we can treat bd-pairs.

\end{proof}

The next result (with $B=0$) was conjectured by the author in late 80's in
relation to the acc of mld's.
It was suggested as a problem to V.~Alexeev who
was a graduate student of V.~Iskovskikh in that time
(cf. Remark~\ref{acc_Fano_index_rem}, (1) below).

\begin{cor}[Acc for Fano indices] \label{acc_Fano_index}
Let $d$ be a nonnegative integer and $\Gamma_b$ be
a dcc set of nonnegative real numbers.
Then the set of Fano indices
\begin{align*}
\{0\le h\in\R\mid& -(K+B)\equiv hH\text{, where }
\dim X=d, X\text{ has (w)Ft, } \\
&(X,B)\text{ is lc, }
B\in\Gamma_b \text{ and }
H\text{ is a primitive ample divisor on } X\}
\end{align*}
satisfies the acc.

\end{cor}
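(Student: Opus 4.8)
The plan is to reduce the statement to the already-established acc for anticanonical thresholds, Corollary~\ref{acc_ac_free}, by replacing $H$ with a uniformly bounded base point free multiple.

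First I would dispose of the trivial case $\dim X=0$ and of the case $h=0$; so assume $d\ge 1$ and $-(K+B)\equiv hH$ with $h>0$. Since $(X,B)$ is lc, $K+B$ is $\R$-Cartier, and $H$ is ample, hence nef; here I read ``primitive ample divisor'' as ``primitive (in $\Pic X$) ample Cartier divisor'' — this is needed both to make $h$ well defined and for the statement to be true, as Example~\ref{act_exa} produces primitive ample \emph{Weil} divisors on surfaces whose associated Fano indices form a strictly increasing sequence.

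Next I would apply Corollary~\ref{free} with $Z=\pt$: there is a positive integer $N=N(d)$, depending only on $d$, such that for every wFt $X$ with $\dim X=d$ and every nef Cartier divisor $H$ on $X$ the divisor $NH$ is base point free. Put $F:=NH$. Then $F$ is base point free and $F\not\equiv 0$ (as $H$ is ample and $d\ge 1$), and
$$
-(K+B)\equiv hH=\tfrac hN\,F,
$$
so $K+B+tF$ is nef if and only if $t\ge h/N$; that is, $\act(X/\pt,B;F)=h/N$ in the sense of Remark~\ref{act_rem}. Thus $h/N$ lies in the set of anticanonical thresholds $\{\act(X/\pt,B;F)\mid \dim X=d,\ X\text{ has wFt},\ (X,B)\text{ lc},\ B\in\Gamma_b,\ F=1\cdot F_1,\ F_1\text{ base point free}\}$, which by Corollary~\ref{acc_ac_free} (with $\Gamma_f=\{0,1\}$) satisfies the acc. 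Since $N=N(d)$ is a fixed positive integer, multiplication by $N$ is an order isomorphism of $\R$, so the set of Fano indices $h=N\cdot(h/N)$ also satisfies the acc.

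The argument is essentially formal once the two cited corollaries are in hand: the only genuinely nontrivial input is the effective global generation of Corollary~\ref{free}, which furnishes the uniform bound $N(d)$ and converts ``primitive ample'' into ``bounded multiple of a base point free divisor''. Accordingly the main thing to get right is the Cartier interpretation of $H$ and the verification that $h/N$ is literally an anticanonical threshold of the shape covered by Corollary~\ref{acc_ac_free}; I do not expect any substantial obstacle beyond that.
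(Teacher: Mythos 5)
Your proposal is correct and follows essentially the same route as the paper: replace $H$ by $F=NH$ with $N=N(d)$ from effective base point freeness (Corollary~\ref{free}, i.e.\ Koll\'ar's theorem), observe $h/N=\act(X,B;F)$, and conclude by the acc for anticanonical thresholds of Corollary~\ref{acc_ac_free}. Your reading of $H$ as a (primitive) ample \emph{Cartier} divisor is also the intended one, since the paper's proof likewise invokes effective freeness, which requires $H$ Cartier.
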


Actually, $X$ has Ft in the theorem,
$(X,B)$ in the definition of $h$ is an {\em lc log Fano\/} variety,
if $h>0$,
and $h$ in this case is its {\em Fano index\/}.

\begin{add} \label{acc_Fano_index_H_i}
Let $\Gamma_h$ be a dcc set of nonnegative real numbers.
Instead of $B\in\Gamma_b,H$, we can take
$B=\sum b_iE_i,H=\sum h_i H_i\not\equiv 0$, where
$E_i$ are effective Weil $\Z$-divisors,
$H_i$ are nef Cartier divisors and
$b_i\in\Gamma_b,h_i\in\Gamma_h$.

\end{add}

\begin{add}
The same holds for the lc Fano
bd-pairs $(X,B+\sP)$ of index $m$
with $-(K+B+\sP_X)\equiv hH$
and the set of corresponding Fano indices depends also on $m$.

\end{add}

\begin{proof}
Primitive (ample) means that if $H\equiv hH'$,
where $H'$ is also (ample) divisor, then $h\ge 1$.

There exists a positive integer $N$ such that
$F=NH$ is free for every (w)Ft $X$ of dimension $d$
\cite[Theorem~1.1]{K93} (cf. Corollary~\ref{free} above).
Hence
$$
h/N=\act(X,B;F).
$$
Then Corollary~\ref{acc_ac_free} implies the acc for $h$.

In Addendum~\ref{acc_Fano_index_H_i}
we can use freeness of $NH_i$ with $N$ depending only on
the dimension $d$ by Corollary~\ref{free}.

Similarly we can treat bd-pairs.

\end{proof}

\begin{rem} \label{acc_Fano_index_rem}
(1) For every positive real number $\ep$,
the Fano indices for $\ep$-lc log Fano
pairs $(X,B)$ of the corollary with a finite set $\Gamma_b$ form
a finite set by BBAB \cite[Theorem~1.1]{B16}, that is,
there are only finitely many of those indices.
This was conjectured by Alexeev for $B=0$.

However, the union of this finite sets
for all $\ep$ gives the acc set of
the theorem.
In general, the finiteness does not imply the acc.

(2) A Fano index is not always defined
even for log Fano varieties $(X,B)$.
However, it is defined for such a pair
if $B$ is a $\Q$-boundary, e.g.,
$\Gamma_b\subset\Q$.

\end{rem}

\paragraph{$\ainv$-Invariant.\/}
Let $(X,B)$ be a log Fano variety.
Then the $\ainv$-invariant of $(X,B)$ is
$$
\ainv=\ainv(X,B)=(1-\glct(X,B))h,
$$
where $\glct(X,B)$ is the global lc threshold or
$\alpha$-invariant of $(X,B)$ and $h$ is
the Fano index of $(X,B)$.
Since $h>0$, $\ainv\ge 0$ if and only if
$\glct(X,B)\le 1$.

Notice that $\ainv$ is not always defined
but defined for $\Q$-boundaries $B$
(cf. Remark~\ref{acc_Fano_index_rem}, (2) and Addendum~\ref{acc_a_inv_Q} below).

The same definition works for a bd-pair $(X/Z,D+\sP)$
with
$$
\glct(X,B+\sP)=\sup\{t\in\R\mid
(X,B+E+\sP) \text{ is lc for all }
 0\le E\equiv -t(K+B+\sP_X)\}
$$
and $-(K+B+\sP_X)=hH$, where
$H$ is a primitive ample divisor on $X$.

Below we discuss some properties of $\ainv$-invariant and
of $\glct$.

\begin{cor}[Acc for $\ainv$-invariant] \label{acc_a_inv}
Let $d$ be a nonnegative integer and $\Gamma_b$ be
a dcc set of nonnegative real numbers.
Then the set of $\ainv$-invariants
$$
\{0\le \ainv(X,B)\mid (X,B) \text{ is an lc log Fano variety, }
\dim X=d, X \text{ has Ft and }
B\in\Gamma_b\}
$$
satisfies the acc.

\end{cor}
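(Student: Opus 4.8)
The plan is to recognize $\ainv(X,B)=(1-\glct(X,B))h$ as the Fano index of an auxiliary lc (non‑klt) log Fano pair manufactured from $(X,B)$, and then to quote the already established acc for Fano indices (Corollary~\ref{acc_Fano_index} together with Addendum~\ref{acc_Fano_index_H_i}). First I would dispose of the trivial part: if $\glct(X,B)\ge 1$ then $\ainv(X,B)\le 0$, and since we only look at $\ainv\ge 0$ this forces $\glct(X,B)=1$, $\ainv(X,B)=0$ — which happens exactly when $(X,B)$ is semiexceptional — so these pairs contribute only the single value $0$ and may be discarded. Hence assume $\glct(X,B)<1$.

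Since $X$ has Ft, the anticanonical class $-(K+B)$ is big and the effective $\R$‑divisors $\equiv -(K+B)$ form a rational polytope $\fP$ inside a fixed finite‑rank space of divisors (Propositions~\ref{bounded_rank} and~\ref{finite_lin_repres}). On $\fP$ the function $N\mapsto\lct(X,B;N)$ attains its infimum, which is $\glct(X,B)$ by definition, at a rational point $N^{\ast}\ge 0$, $N^{\ast}\equiv -(K+B)$; since $\glct(X,B)<1$, the pair $(X,B+\glct(X,B)N^{\ast})$ is lc but not klt. Writing $-(K+B)\equiv hH$ with $H$ primitive ample Cartier, every prime divisor $D$ satisfies $\mult_D B+\glct(X,B)\mult_D N^{\ast}\le 1$ by lc‑ness, and
\[
-(K+B+\glct(X,B)N^{\ast})\equiv(1-\glct(X,B))\bigl(-(K+B)\bigr)\equiv \ainv(X,B)\cdot H .
\]
So $(X,B+\glct(X,B)N^{\ast})$ is an lc log Fano variety of dimension $d$ whose Fano index with respect to the same primitive ample $H$ equals $\ainv(X,B)$. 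The plan is then to feed this auxiliary family into Corollary~\ref{acc_Fano_index}: one writes $B=\sum b_iE_i$ with $b_i\in\Gamma_b$ and $\glct(X,B)N^{\ast}=\sum(\glct(X,B)n_j^{\ast})D_j$ with $D_j$ prime and $n_j^{\ast}\in\Z^{\ge0}$ (clearing denominators of $N^{\ast}$ against $h$), and uses Addendum~\ref{acc_Fano_index_H_i}; provided the coefficients $\glct(X,B)n_j^{\ast}$ occurring across the family lie in a dcc set, the acc for $\ainv(X,B)$ follows. The bd‑pair case is handled identically via the bd‑versions of Corollaries~\ref{acc_Fano_index} and~\ref{acc_R-complt}.

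The main obstacle is precisely the structural control of the worst divisor $N^{\ast}$: a priori its multiplicities need not be bounded, so the auxiliary boundaries $B+\glct(X,B)N^{\ast}$ need not vary in a dcc family and Corollary~\ref{acc_Fano_index} cannot be applied off the shelf. I would resolve this by combining ingredients already present in the paper. The Ft structure (Propositions~\ref{bounded_rank},~\ref{finite_lin_repres}) bounds the number of prime components of $N^{\ast}$ and places $N^{\ast}$ in a fixed finitely generated monoid; the lc inequality $b_j+\glct(X,B)n_j^{\ast}\le 1$ bounds each $n_j^{\ast}$ as soon as $\glct(X,B)$ is bounded away from $0$; and to cover a putative bad sequence along which $\glct(X,B)\to 0$, one reduces to the exceptional case through the semiexceptional and klt filtrations of Sections~\ref{semiexcep_compl_:} and~\ref{klt_nongeneric} together with boundedness of exceptional pairs (Corollary~\ref{bound_ex_pairs}) and BBAB, so that the pairs $(X,B+\glct(X,B)N^{\ast})$ themselves vary in a bounded family with bounded polarization, whence their Fano indices form an acc set by Corollary~\ref{acc_Fano_index}. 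An alternative for this last step is to realize $(1-\glct(X,B))h$ as an $\R$‑complement threshold $\Rct$ of a bd‑pair built from a Zariski decomposition $-(K+B)=M+F$ and to invoke the bd‑form of Corollary~\ref{acc_R-complt}; either way the technical heart is the passage from the infimal (worst) anticanonical divisor to a bounded family.
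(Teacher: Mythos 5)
Your reduction of $\ainv$ to a Fano index of the auxiliary pair $(X,B+\glct(X,B)N^{\ast})$ is appealing, but the proposal has a genuine gap exactly at the point you flag as ``the main obstacle,'' and the patches you offer do not close it. To invoke Corollary~\ref{acc_Fano_index} (or Addendum~\ref{acc_Fano_index_H_i}) you need the multiplicities of $B+\glct(X,B)N^{\ast}$ to lie in a \emph{dcc} set uniformly over all lc log Fano pairs of dimension $d$ with $B\in\Gamma_b$. The lc inequality only gives an \emph{upper} bound $\le 1$, which says nothing about dcc; the worst divisor $N^{\ast}$ is an $\R$- or $\Q$-divisor with no integrality, so there are no integers $n_j^{\ast}$ to bound; and Propositions~\ref{bounded_rank} and~\ref{finite_lin_repres} control ranks and representative monoids only for \emph{bounded} families, whereas lc (not $\ep$-lc) log Fano pairs of dimension $d$ are not bounded, so ``finitely many components, fixed monoid'' is not available across the family. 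The escape route for $\glct\to 0$ via the semiexceptional/klt filtrations and BBAB is also not viable: exceptionality corresponds to \emph{large} $\glct$, not small, and a sequence with fixed index $h$ and $\glct$ strictly decreasing to $0$ would make $\ainv=(1-\glct)h$ strictly increase --- ruling this out is precisely the content of the corollary, so it cannot be assumed via boundedness. A second, smaller gap: you assume $\glct$ is attained at a rational $N^{\ast}$ in a finite-dimensional polytope; attainment is nontrivial (the infimum is over all effective divisors in all numerically trivial classes, not a fixed polytope) and in this paper it is proved \emph{after} and \emph{by means of} the acc argument (Corollary~\ref{a_attained}, whose boundary case $\glct=1$ needs BBAB), so you cannot use it as an input here.

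The paper's proof avoids carrying $N^{\ast}$ in the boundary altogether, and this is the idea your plan is missing. One replaces $H$ by the free divisor $F=NH$ with $N=N(d)$ (Corollary~\ref{free}), writes $t=\ainv/N$ as a supremum over configurations $K+B+rF+E\equiv 0$ with $(X,B+E)$ lc but not klt, chooses a \emph{general} member $M\sim F$ and extracts (by a crepant blowup, or takes it on $X$ itself) a single non-klt place $P$ of $(X,B+E)$ with coefficient $1$; then $r\le \Rct(Y,B+P;M)\le t$ (resp.\ $\Rct(X,B+pP;M)$), so $t$ is approximated, and by Theorem~\ref{invers_stability_R_complements} or Theorem~\ref{acc_R-complt} equals, an $\R$-complement threshold whose data has boundary coefficients in $\Gamma_b\cup\{1\}$ and a single divisor $M$ with coefficient $1$ --- a dcc set with no reference to the uncontrolled multiplicities of $E$ or $N^{\ast}$. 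The acc for $\Rct$ (Theorem~\ref{acc_R-complt} with Addendum~\ref{acc_R-complt_add}) then gives the acc for $t$, hence for $\ainv$; your closing one-line alternative via $\Rct$ gestures in this direction, but without the extraction of the reduced place $P$ and the substitution of the free $M$ for the moving part it does not yet constitute a proof.
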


\begin{add} \label{acc_a_inv_H}
Let $\Gamma_h$ be a dcc set of nonnegative real numbers.
Instead of $B\in\Gamma_b$ and ample $H$ in the definition of
$\ainv$-invariant, we can take
$B=\sum b_iE_i$ and $H=\sum h_i H_i$, where
$E_i$ are effective Weil $\Z$-divisors,
$H_i$ are nef Cartier divisors and
$b_i\in\Gamma_b,h_i\in\Gamma_h$.

\end{add}

\begin{add} \label{acc_a_inv_Q}
If additionally $\Gamma_b,\Gamma_h\subset \Q$ then
$H$ always exists and $\ainv(X,B), \glct(X,B)\in\Q$.
Moreover, if $\Gamma_b,\Gamma_h$ are closed $\Q$,
then the sets of $\ainv$-invariants is also closed in $\Q$ limits.

\end{add}

(Cf. other statements about  limits of thresholds in
Corollary~\ref{constrain} and its addenda.)

\begin{add} \label{acc_a_inv_bd}
The same holds for the lc Fano
bd-pairs $(X,B+\sP)$ of index $m$
with $-(K+B+\sP_X)\equiv hH$
and the set of corresponding $\ainv$-invariants $\ge 0$ depends also on $m$.

\end{add}

\begin{proof}
We can replace $H$ by free $F=NH$, where
$N$ is a positive integer
depending only on $d$.
It is enough to verify that $t=\ainv/N$ satisfies the acc.
Since $\ainv$ is bounded ($\le d+1$), we
can suppose that $t<1$ for appropriate $N$.

The acc is enough to verify for $\ainv>0$, equivalently, $t>0$.
In this situation $t$ is also a threshold:
\begin{align*}
t=&\sup\{0\le r\in\R\mid
K+B+rF+E\equiv 0
\text{ and }\\
&(X,B+E)
\text{ is lc but nonklt pair for some
effective $\R$-divisor } E \text{ on } X\}.
\end{align*}
This threshold can be converted into an $\Rct$ one.
In particular, it is attained, that is, the supremum can
be replaced by the maximum (cf. Corollary~\ref{a_attained} below).
For sufficiently general effective divisor $M\sim F$,
$M$ does not pass the prime components of $\Supp B,\Supp E$,
the lc centers of $(X,B+E)$ and
$(X,B+rM+E)$ is lc.
Take a prime b-divisor $P$ of $X$ such that
$\ld(P;X,B+rM+E)=\ld(P;X,B+E)= 0$,
log discrepancies at $P$.
Let $(Y,(B+rM+E)_Y)$ be a crepant
blowup of $P$.
The blowup is an isomorphism $X=Y$ if
$P$ is not exceptional on $X$.
For exceptional $P$, the crepant transform
$(B+rM+E)_P$ of
$B+rM+E$ on $Y$ is equal to
$B+rM+E+P$, where
$B+rM+E$ denotes also
its birational transform on $Y$.
Then by construction in the exceptional case
$$
r\le
\Rct(Y,B+P;M)
\le t.
$$
In $\Rct(Y,B+P;M)$, the divisor $M$ is the birational transform
of $M$ on $Y$ and is free again.
Note also that every $Y$ in the construction has Ft
\cite[Lemma-Definition~2.6, (iii)]{PSh08}.
Replacing $r$ by $\Rct(Y,B+P;M)$,
we can suppose that
$$
r=\Rct(Y,B+P;M).
$$
Now we can use Theorem~\ref{invers_stability_R_complements}
with assumption~(3) of Theorem~\ref{bndc} or
Theorem~\ref{acc_R-complt} with Addendum~\ref{acc_R-complt_add} and
get $t$ in terms of
an $\R$-complement threshold: for $r$ sufficiently close to $t$,
$$
t=r=\Rct(Y,B+P;M).
$$
Indeed, $M=1M$ and
$$
B+P,1\in\Gamma_b\cup\{1\}
$$
and it is a dcc set.
So, Theorem~\ref{acc_R-complt} with Addendum~\ref{acc_R-complt_add}
again implies the acc for
$t$ and $\ainv$.

Similarly, if $P$ is not exceptional then $Y=X$ and
$(B+rM+E)_Y=B+rM+E$.
By construction
$\mult_P(B+rM+E)=\mult_P(B+E)=1$.
In this case
$$
r\le
\Rct(X,B+pP;M)
\le t,
$$
where $p=\mult_PE$.
(So, $\mult_P(B+pE)=1,E'=E-pP\ge 0$ and
$B+pP+E'=B+E$.)
Replacing $r$ by $\Rct(X,B+pP;M)$
we can suppose that
$$
r=\Rct(X,B+pP;M).
$$
Again we can use Theorem~\ref{invers_stability_R_complements} or
Theorem~\ref{acc_R-complt} with Addendum~\ref{acc_R-complt_add} and
get $t$ in terms of
an $\R$-complement threshold: for $r$ sufficiently close to $t$,
$$
t=r=\Rct(X,B+pP;M).
$$
Indeed, $M=1M$,
$$
B+pP,1\in\Gamma_b\cup\{1\}
$$
and it is a dcc set.
As above Theorem~\ref{acc_R-complt} with Addendum~\ref{acc_R-complt_add}
implies the acc for $t$ and $a$.

We prove more: every $\ainv>0,\glct<1$ invariants are
attained. See explanations in Corollary~\ref{a_attained} below.
We also established that $\ainv\in\Q$ for $B\in\Q$ because
$\ainv=\Rct(Y,B+P;M)$ or $=\Rct(X,B+pP;P)$ is rational
by the Theorem~\ref{R_compl_polyhedral} and $B+pP\in\Q$ too.
This proves rationality in Addendum~\ref{acc_a_inv_Q}.
The closed rational property follows from
the similar result for $\Rct$ thresholds
(see Corollary~\ref{constrain} below).

Similarly we can treat $\ainv$-invariants with $H$ as in Addendum~\ref{acc_a_inv_H}
(cf. the proof of Corollary~\ref{a_attained})
and bd-pairs.

\end{proof}

\begin{cor} \label{a_attained}
Let $(X,B)$ be an lc log Fano variety with a boundary $B$.
Then every threshold $\glct(X,B)\le 1$ is
attained, that is, there exists an effective $\R$-divisor
$E$ such that $(X,B+E)$ is lc but not klt and
$E\equiv-\glct(X,B)(K+B)$.

The same holds for $\glct(X,B+\sP)\le 1$
of lc Fano bd-pairs $(X,B+\sP)$ of index $m$.

\end{cor}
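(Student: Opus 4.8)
The plan is to recycle the argument already carried out in the proof of Corollary~\ref{acc_a_inv}, where the attainment of the $\ainv$-invariant (equivalently, of $\glct<1$) appeared as a by-product, and to record separately the borderline case $\glct=1$. Write $\gamma=\glct(X,B)\in[0,1]$. Since $(X,B)$ is an lc log Fano variety, $X$ has Ft and, by the definition of $\glct$, $-(K+B)\equiv hH$ for a primitive ample Cartier divisor $H$ and some $h>0$; also $\gamma\ge 0$ because $E=0$ is admissible. If $\gamma=0$ then by definition $(X,B)$ is itself not klt, so $E=0$ is the required divisor; hence I may assume $\gamma>0$.

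First I would re-derive that $\gamma$ is, after a fixed rescaling, a complement threshold. By Corollary~\ref{free} there is $N=N(\dim X)$ with $F:=NH$ base point free, so $-(K+B)\equiv (h/N)F$, and $\gamma$ corresponds (via $r=(1-\gamma)h/N$) to the supremum of the $r\ge 0$ for which there exists an effective $\R$-divisor $E$ with $K+B+rF+E\equiv 0$ and $(X,B+E)$ lc but not klt. Fixing such an $r$ close to the critical value, a sufficiently general $M\sim F$ avoiding the components of $\Supp B$, $\Supp E$ and the lc centres of $(X,B+E)$, a prime b-divisor $P$ with $\ld(P;X,B+rM+E)=\ld(P;X,B+E)=0$, and a crepant blowup $\varphi\colon Y\to X$ extracting $P$ (an isomorphism when $P$ is already a divisor on $X$; by \cite[Lemma-Definition~2.6, (iii)]{PSh08} the model $Y$ has Ft), one gets $r\le\Rct(Y,B+P;M)\le (1-\gamma)h/N$, so after replacing $r$ one may take $r=\Rct(Y,B+P;M)$. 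Then Theorem~\ref{invers_stability_R_complements} (under assumption~(3) of Theorem~\ref{bndc}), or Theorem~\ref{acc_R-complt} with Addendum~\ref{acc_R-complt_add} applied to the dcc set consisting of $1$ together with the coefficients of $B$, shows that for $r$ close enough to the critical value this $\Rct$ is locally constant, hence equals $(1-\gamma)h/N$ exactly.

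Finally, since $Y$ has Ft, Theorem~\ref{R_compl_polyhedral} makes $\Rct(Y,B+P;M)$ a maximum: at the critical value $s=(1-\gamma)h/N$ the pair $(Y,B+P+sM)$ has an $\R$-complement $(Y,D^+)$ with $D^+=B+P+sM+E'$, $E'\ge 0$, $(Y,D^+)$ lc, and $K_Y+D^+\sim_\R 0$, so $(Y,D^+)$ is lc but not klt. Pushing down by $\varphi$ and discarding the free summand $sM$, put $E:=\varphi_*(D^+-sM)-B=\varphi_*P+\varphi_*E'$; using that $\varphi$ is crepant for $(X,B+E)$ (an $\R$-principal divisor exceptional over $X$ is zero) and that $sM\equiv (1-\gamma)\bigl(-(K+B)\bigr)$ on $X$, one checks $E\ge 0$, $E\equiv-\gamma(K+B)$, and $(X,B+E)$ is lc but not klt because $\varphi$ preserves discrepancies and $P$ is an lc place. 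This is the desired divisor. The case $\gamma=1$ is handled by a limiting argument using the closedness in Theorem~\ref{R_compl_polyhedral}: it asks precisely for a non-klt $\R$-complement of $(X,B)$, which $X$ being Ft provides once $\gamma=1$ is used to prevent every $\R$-complement from being klt. The bd-pair statement $\glct(X,B+\sP)\le 1$ for lc Fano bd-pairs of index $m$ is entirely analogous, via the bd-versions of Theorems~\ref{R_compl_polyhedral}, \ref{invers_stability_R_complements}, \ref{acc_R-complt} and Addendum~\ref{bd_R_compl_polyhedral}. The main obstacle is the middle step: identifying $\gamma$, defined through the variable $\R$-linear system $|{-}\gamma(K+B)|$, with an $\Rct$-threshold on a crepant model extracting the worst non-klt place, and checking that there is no jump at the critical value — this is exactly where the closedness and rational polyhedrality of the complement locus, and the stability results, are essential.
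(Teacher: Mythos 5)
Your treatment of the main case $\glct(X,B)<1$ follows essentially the paper's route: rewrite the threshold as an $\Rct$-threshold on a crepant model extracting an lc place of a nearby non-klt pair, then use Theorem~\ref{invers_stability_R_complements} (or Theorem~\ref{acc_R-complt} with Addendum~\ref{acc_R-complt_add}) to rule out a jump, and the closed rational polyhedrality of Theorem~\ref{R_compl_polyhedral} to attain the maximum. One technical slip there: you assert that ``by the definition of $\glct$'' one has $-(K+B)\equiv hH$ for a single primitive ample Cartier $H$. For an $\R$-boundary $B$ this need not hold ($-(K+B)$ is only an ample $\R$-class, and the Fano index is defined only for, e.g., $\Q$-boundaries, as the paper itself remarks); the paper avoids this by writing $-(K+B)\equiv\sum r_iH_i$ with very ample Cartier $H_i$ and perturbing by $\sum r_iF_i$, $F_i\sim H_i$ general. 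Your argument can be repaired the same way, so this is fixable.

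The genuine gap is the borderline case $\glct(X,B)=1$. You dispose of it by saying that closedness in Theorem~\ref{R_compl_polyhedral} provides a non-klt $\R$-complement ``once $\gamma=1$ is used to prevent every $\R$-complement from being klt'' --- but that every $\R$-complement being klt is impossible when $\glct=1$ is precisely the assertion to be proved, and it does not follow from the polyhedrality/closedness of the set of boundaries admitting $\R$-complements: here $B$ is fixed and the question is whether the infimum of (minimal) log discrepancies over the unbounded family of effective $E\equiv-(K+B)$ is actually attained at $0$, a priori it could be a non-attained positive infimum with non-lc pairs appearing only for $t>1$. The paper's proof handles exactly this: it normalizes $E'=E/s\equiv-(K+B)$, invokes the BBAB-type statement \cite[Theorem~1.1]{B16} in the easier fixed-$X$ form (if $(X,B+E')$ were uniformly $\ep$-lc then $\glct\ge 1+\delta(\ep)>1$, a contradiction), and then runs a limiting argument over a sequence $\ep_i\to 0$ with the boundaries perturbed to $(1-\ep_i)P_i$ (resp.\ $B+(p_i-\ep_i)P_i$), using Theorem~\ref{invers_stability_R_complements} under the klt assumption~(1) of Theorem~\ref{bndc}. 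Without this extra boundedness input and the perturbed-boundary limit, your argument does not establish attainment at $\glct=1$, and the same gap propagates to the bd-pair statement.
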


Notice that $\ainv(X,B),\ainv(X,B+\sP)\ge 0$ are also attained as it was established
in the proof of Corollary~\ref{acc_a_inv}.

\begin{proof}
We can suppose that $(X,B)$ is a klt log Fano variety and
has Ft.
Otherwise, $(X,B)$ is not klt and $\glct(X,B)=0$ and
$E=0$.
The cone of semiample divisors on $X$ is rational polyhedral.
In particular,
$$
-(K+B)\equiv \sum_{i=1}^l r_i H_i,
$$
where $r_i$ are positive real numbers and
$H_i$ are very ample (Cartier) divisors.
We can suppose also that every $r_i\le 1$.

We start from the case $t=\glct(X,B)<1$.
Then by definition there exists
a real number $r>0$ and effective $\R$-divisor $E$
such that $(X,B+E)$ is lc but not klt and
$$
K+B+E+r\sum_{i=1}^lr_iH_i\equiv 0.
$$
Those $r\le a<1$ and have a tendency to $a$,
where $a=1-t>0$ and
$$
E\equiv (1-r)(-K-B)\equiv (1-r)\sum_{i=1}^lr_iH_i.
$$
(Remark that this $a$ is not the $\ainv$-invariant
but it is its nonintegral and possibly irrational
but more anti log canonical version.)
For given $r,E$,
take sufficiently general effective divisors
$F_i$ on $X$ such that every $F_i\sim H_i$
does not pass
the prime component of $\Supp B,\Supp E,F_j,j\not=i$,
the lc centers of $(X,B+E)$ and
$(X,B+E+\sum_{i=1}^l F_l)$ is lc.
By construction $r<1$ and
$(X,B+E+r\sum_{i=1}^lr_iF_i)$ is an lc but not klt $0$-pair.
As in the proof of Corollary~\ref{acc_a_inv}
take a prime b-divisor $P$ such that
($P\not=F_1,\dots,F_l$ and)
$\ld(P;X,B+E+r\sum_{i=1}^lr_iF_i)=0$.
If $P$ is exceptional then
$$
r\le\Rct(Y,B+P;\sum_{i=1}^lr_iF_i)\le a
$$
where $Y\to X$ is blowup of $P$ and
$B,F_i$ are respectively birational transforms of $B,F_i$ on $Y$.
Actually, we can suppose that (for given $Y$ and $P$ but
possibly different $r,E$)
$$
r=\Rct(Y,B+P;\sum_{i=1}^lr_iF_i).
$$
We can take $r$ arbitrary close to $a$.
Hence by Theorem~\ref{invers_stability_R_complements}
with assumption~(3) of Theorem~\ref{bndc} or
Theorem~\ref{acc_R-complt} with Addendum~\ref{acc_R-complt_add}
we get $a$ in terms of
an $\R$-complement threshold: for $r$ sufficiently close to $a$,
$$
a=r=\Rct(Y,B+P;\sum_{i=1}^lr_iF_i).
$$
Indeed, there exists a finite subset $\Gamma$ in $[0,1]$ such that
$B\in\Gamma$ and
$$
B+P\text{ and every }r_i\in\Gamma\cup\{1,r_1,\dots,r_l\}
$$
and it is a dcc set.
So, $t$ and $a$ are attained because
the $\Rct$ thresholds are attained on Ft $X$
by the closed property in Theorem~\ref{R_compl_polyhedral}.

Similarly, if $P$ is not exceptional then $Y=X$ and
$\mult_P(B+E+r\sum_{i=1}^lr_iF_i)=1$, actually,
$\mult_P(B+E)=1.$
In this case
$$
r\le
\Rct(X,B+pP;\sum_{i=1}^lr_iF_i)
\le a,
$$
where $p=\mult_PE$.
(So, $\mult_P(B+pE)=1,E'=E-pP\ge 0$ and
$B+pP+E'=B+E$.)
Replacing $r$ by $\Rct(X,B+pP;\sum_{i=1}^lr_iF_i)$
we can suppose that
$$
r=\Rct(X,B+pP;\sum_{i=1}^lr_iF_i).
$$
Again we can use Theorem~\ref{invers_stability_R_complements}
with assumption~(3) of Theorem~\ref{bndc} or
Theorem~\ref{acc_R-complt} with Addendum~\ref{acc_R-complt_add}
and get $a$ in terms of
an $\R$-complement threshold: for $r$ sufficiently close to $a$,
$$
a=r=\Rct(X,B+pP;\sum_{i=1}^lr_iF_i).
$$
Indeed,
$$
B+pP\text{ and every }r_i\in\Gamma\cup\{1,r_1,\dots,r_l\}
$$
and it is a dcc set.
As above Theorem~\ref{acc_R-complt} with Addendum~\ref{acc_R-complt_add}
implies the attainment of
$t$ and $a$.

The case with $t=\glct(X,B)=1$ and $a(X,B)=0$ is
more delicate because in this case $r<a=0$ are
negative.
In this case, we use $r=0$,
effective $E'\equiv-(K+B)$ and a prime b-divisor $P$ with
the log discrepancy
$$
\ld(P;X,B+E')=\ep>0
$$
very close to $0$.
Such $E'$ can be constructed by normalization of $E$
in the definition: put
$$
E'=\frac1s E,
$$
where $(X,B+E)$ is lc but not klt and
$E\equiv -s(K+B)$.
By construction $s\ge 1$.
By \cite[Theorem~1.1]{B16}
if $s$ goes to $1$ then $\ep$ goes to $0$.
(In other words, if $(X,B)$ is $\ep$-lc
then $\glct(X,B)\ge\delta>0$.
This case is easier than general BBAB because here
$X$ is fixed! But $B$ is not fixed!)

Then we apply to a sequence of $\ep_i,E_i$ with
$\lim_{i\to\infty}\ep_i=0$
Theorem~\ref{acc_R-complt} with Addendum~\ref{acc_R-complt_add} or
Theorem~\ref{invers_stability_R_complements}
with assumption~(1) of Theorem~\ref{bndc} , that is,
for $\ep$ sufficiently close to $0$.
For exceptional prime b-divisors $P_i,P$,
in both cases we replace $P_i,P$ by $(1-\ep_i)P,(1-\ep)P$
respectively.
In the nonexceptional case we replace $B+p_iP_i,B+pP$ by
$B+(p_i-\ep_i)P,B+(p-\ep)P$ respectively.

Similarly we can treat bd-pairs.

\end{proof}

Remark: In general $\glct(X,B)$ behaves badly, e.g.,
does not satisfies the acc or dcc even if $\dim X=d$
is fixed and $B\in \Gamma_b$, a dcc set \cite{Sh06}.

However, $\glct(X,B)$ satisfies certain interesting
properties. Some of them were conjectured by G.~Tian.

\begin{cor}[glct gap]
Let $d$ be a nonnegative integer and $\Gamma_b$ be
a dcc set of nonnegative real numbers.
There exists a positive real number $g$
such that
if $(X,B)$ is an lc log Fano variety with $\dim X=d$,
$B\in\Gamma_b$ and $\glct(X,B)>1$
then $\glct(X,B)\ge 1+g$.

The same holds for $\glct(X,B+\sP)>1$
of lc Fano bd-pairs $(X,B+\sP)$ of index $m$
with $g$ also depending on $m$.

\end{cor}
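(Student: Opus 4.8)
The plan is to argue by contradiction via boundedness. Suppose no such $g$ works; then there is a sequence of lc log Fano varieties $(X_i,B_i)$ with $\dim X_i=d$, $B_i\in\Gamma_b$ and
\[
1<t_i:=\glct(X_i,B_i)<1+\tfrac1i .
\]
Since $\glct>0$, each $(X_i,B_i)$ is in fact a \emph{klt} log Fano and $X_i$ is of Fano type; write $-(K_{X_i}+B_i)\equiv h_iH_i$ with $H_i$ a primitive ample Cartier divisor and $h_i>0$ the Fano index. Note at the outset that the conclusion is genuinely stronger than the acc already proved in Corollary~\ref{acc_a_inv} and Theorem~\ref{acc_R-complt}: an acc set may perfectly well accumulate at the value $1$ from above, so the point is to extract a real rigidity at that value.

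First I would pass to the valuation computing $\glct$. By definition of $\glct$ as an infimum over effective $\R$-divisors $D\equiv-(K_{X_i}+B_i)$, choose such a $D_i$ with $\lct(X_i,B_i;D_i)<t_i+1/i$, and set $\tau_i:=\lct(X_i,B_i;D_i)\in(1,1+2/i)$, so $\tau_i\to1^{+}$. As a log canonical threshold $\tau_i$ is attained, hence there is a prime b-divisor $P_i$ over $X_i$ with $\ld(X_i,B_i+\tau_iD_i;P_i)=0$. Then $(X_i,B_i+D_i)$ is a klt (because $\tau_i>1$) log Calabi--Yau pair, and writing $a_i:=\ld(X_i,B_i;P_i)$ one gets $a_i=\tau_i\operatorname{ord}_{P_i}(D_i)$, so
\[
\ld(X_i,B_i+D_i;P_i)=(\tau_i-1)\operatorname{ord}_{P_i}(D_i)=:\delta_i .
\]
Extracting $P_i$ by a crepant $\Q$-factorial Fano type model (Lemma~\ref{wTt_vs_Ft}) produces a sequence of $d$-dimensional klt log Calabi--Yau pairs carrying a distinguished divisor of log discrepancy $\delta_i$, which tends to $0$ as soon as $\operatorname{ord}_{P_i}(D_i)$ stays bounded.

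Next I would reduce to $\rho(X_i)=1$: run the $(-(K_{X_i}+B_i))$-MMP (valid for Fano type), pass to a Mori fibre space and a general fibre, and use \cite{BMSZ} together with the $D$-MMP to keep the number of prime components of $B_i$ and $D_i$ bounded by a constant $c(d)$; this does not increase $\glct$ and keeps the relevant valuation controlled. On a $\rho=1$ model $\Cl(X_i)_\Q=\Q$, so $-(K_{X_i}+B_i)\equiv r_iA_i$ with $A_i$ the fundamental divisor and $r_i\in(0,d+1]$, and a direct computation gives $\glct(X_i,B_i)=\mu(X_i,B_i)/r_i$, where $\mu(X_i,B_i)=\min_v \ld(X_i,B_i;v)/\sigma_v$ and $\sigma_v=\sup_{0\le D\equiv A_i}\operatorname{ord}_v(D)$ depends only on $X_i$ and $v$; thus $t_i>1$ reads $\mu(X_i,B_i)>r_i$. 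It is exactly this $\rho=1$ reduction that ties the anticanonical index $h_i$ (equivalently $r_i$) to the data of $B_i$: a component of $B_i$ with coefficient near $1$ forces $r_i$ near $0$, and then $t_i>1$ bounds $\operatorname{ord}_{P_i}(D_i)$ accordingly, so that $\delta_i$ cannot run away. With that in hand the argument is finished by a boundedness input: either one realises $\tau_i$ as an $\R$-complement threshold of the blown-up pair $(Y_i,\Theta_i+P_i)$ with the fixed lc place $P_i$ of coefficient $1$ governing the transition at $\tau_i$ --- the transition being \emph{extremal}, since pseudo-effectivity of the anticanonical class is lost for coefficients past $1$ --- whose boundary coefficients lie in the dcc set $\Gamma_b\cup\{1\}\cup(\text{finite})$ after the $\rho=1$ reduction, so Theorem~\ref{acc_R-complt} applies; or one invokes BBAB \cite[Theorem~1.1]{B16} in the controlled-index regime to bound the $(X_i,B_i)$ and concludes by lower semicontinuity of $\glct$ in the resulting bounded family with dcc coefficients. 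The bd-pair case runs verbatim, carrying the b-part $\sP$ of index $m$ through the adjunction, complement and threshold machinery, with $g$ now also depending on $m$.

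The main obstacle I expect is precisely this last step. The acc statements already available do not preclude accumulation at the value $1$; one must show that $1$ is an \emph{isolated} value of the relevant threshold function, which requires identifying $1$ as the value at which a qualitative change occurs (loss of pseudo-effectivity of $-(K+B)$, equivalently the pair ceasing to be of Fano type in the $D_i$-direction) and showing such values are isolated. Concretely this amounts to a gap statement for minimal log discrepancies of $d$-dimensional log Calabi--Yau pairs with coefficients in a fixed dcc set, and it is there --- via BBAB and the reduction to $\rho=1$ --- that the real work lies; controlling $\operatorname{ord}_{P_i}(D_i)$ (equivalently the log discrepancy $a_i$ of the minimising valuation) uniformly is the technical crux.
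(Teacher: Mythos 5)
Your setup is the right one and in fact mirrors the paper's intended route: the paper proves this corollary by rerunning the $t=\glct=1$ case of the proof of Corollary~\ref{a_attained} (normalize $E'=E/s\equiv-(K+B)$, produce an lc place $P$ with $\ld(P;X,B+E')=\ep$ small, replace $P$ by $(1-\ep)P$, and feed the resulting boundaries, whose coefficients lie in a dcc set, into Theorem~\ref{acc_R-complt} with Addendum~\ref{acc_R-complt_add} or Theorem~\ref{invers_stability_R_complements} under (1) of Theorem~\ref{bndc}), the only new ingredient being that BBAB \cite[Theorem~1.1]{B16} must now be used at full strength because $X$ varies. But your text stops exactly where the proof has to start: you yourself label the uniform bound on $\operatorname{ord}_{P_i}(D_i)$ (equivalently on $\ld(P_i;X_i,B_i)$, i.e.\ the statement $\delta_i\to 0$) as ``the main obstacle'' and ``the technical crux'' and never establish it, and without it neither of your two closing alternatives goes through. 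With coefficient $1$ on $P_i$ the pair $(Y_i,\Theta_i+P_i)$ has no $\R$-complement at all in the regime $\glct>1$, so the proposed realization of $\tau_i$ as an $\R$-complement threshold needs precisely the $(1-\delta_i)P_i$ replacement, hence the missing bound; and the alternative via ``lower semicontinuity of $\glct$ in the resulting bounded family'' invokes a statement that is neither in the paper nor obvious (it would itself need an acc/stability argument of the kind you are trying to avoid), while uniform $\ep$-lc-ness of $(X_i,B_i)$, needed even to apply BBAB for boundedness, is not implied by $\glct>1$ when $\Gamma_b$ accumulates at $1$.

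Two further steps are asserted without justification and are doubtful as stated. The reduction to $\rho=1$: for a log Fano $(X,B)$ the divisor $-(K+B)$ is already ample, so there is no nontrivial $-(K+B)$-MMP to run, and passing to a Mori fibre space and a general fibre (as in Lemma~\ref{lct_act}, which is designed for local $\Rct$/act/lct thresholds) does not obviously preserve or control the \emph{global} threshold $\glct$; the subsequent formula $\glct=\mu/r$ with $\sigma_v=\sup_{0\le D\equiv A}\operatorname{ord}_v(D)$ needs attainment/finiteness statements you do not supply, and in any case it does not by itself yield the claimed bound on $\operatorname{ord}_{P_i}(D_i)$ (``a component of $B_i$ with coefficient near $1$ forces $r_i$ near $0$, and then $t_i>1$ bounds $\operatorname{ord}_{P_i}(D_i)$'' is exactly the unproved point). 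So the proposal is a correct identification of the ingredients and of where the difficulty sits, but the gap statement itself is not proved: the crucial uniform control coming from BBAB at full strength, and the concluding acc/stabilization step with the modified boundaries $B_i+(1-\delta_i)P_i$, remain to be carried out.
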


\begin{proof}
Similar to the proof of Corollary~\ref{a_attained}
in the case $t=\glct(X,B)=1$.
Actually, we need to prove that if $t\ge 1$ and
sufficiently close to $1$ then $t=1$.
However, in this situation we need BBAB of
the full strength.

\end{proof}

\begin{cor}
Let $(X,B)$ be an lc log Fano variety with
a rational boundary $B$.
Then $\ainv(X,B)\ge 0,\glct(X,B)\le 1$ are also rational.

The same holds for $\ainv(X,B+\sP)\ge 0,\glct(X,B+\sP)\le 1$
of lc Fano bd-pairs $(X,B+\sP)$ of index $m$.

\end{cor}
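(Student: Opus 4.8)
This corollary is the rationality part of Addendum~\ref{acc_a_inv_Q} extracted for a single pair, and the plan is simply to isolate the relevant step from the proofs of Corollaries~\ref{a_attained} and~\ref{acc_a_inv}. First I would reduce to the rationality of $\glct(X,B)$: since $B$ is rational and $(X,B)$ is a log Fano variety, the Fano index $h>0$ is well-defined and rational (Remark~\ref{acc_Fano_index_rem}, (2)), and $\ainv(X,B)=(1-\glct(X,B))h$, so the two rationality assertions are equivalent. If $(X,B)$ is klt then $X$ has Ft; in general one may pass to a $\Q$-factorial dlt crepant model, which is a weak log Fano, hence wFt, and has the same $\ainv$, $\glct$ and $h$, so I may assume $X$ has Ft as in Corollary~\ref{acc_a_inv}. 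Fixing a sufficiently divisible $m$ with $F=mH$ very ample Cartier, we then have $-(K+B)\equiv\tfrac{h}{m}F$ with $h/m\in\Q$.

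Next, the hypothesis $\ainv(X,B)\ge 0$ is exactly $\glct(X,B)\le 1$, so Corollary~\ref{a_attained} applies: $\glct(X,B)$ is attained by an effective $\R$-divisor $E$ with $(X,B+E)$ lc but not klt and $E\equiv-\glct(X,B)(K+B)$. Following the proof of Corollary~\ref{acc_a_inv} I would choose a prime b-divisor $P$ with $\ld(P;X,B+E)=0$ and pass to the one-step crepant blowup $\varphi\colon Y\to X$ of $P$ (with $Y=X$ when $P$ is not exceptional; $Y$ has Ft by \cite[Lemma-Definition~2.6,(iii)]{PSh08}). Then, in the normalization of that proof,
$$\ainv(X,B)=m\,\Rct(Y,B_Y+P;F_Y)\quad\text{or}\quad\ainv(X,B)=m\,\Rct(X,B+pP;F),$$
where $B_Y$ is the birational transform of $B$, $F_Y=\varphi^{*}F$, and in the non-exceptional case $p=\mult_P E$. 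The decisive point for rationality is that the divisor to which the threshold is applied is rational: the coefficient of $P$ in $B_Y+P$ is exactly $1$ and the rest of $B_Y$ is rational, while in the non-exceptional case $p=1-\mult_P B$ is rational because $B$ is rational and $\ld(P;X,B+E)=0$ forces $\mult_P(B+E)=1$; moreover $F$ and $F_Y$ are honest Cartier divisors.

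Finally I would invoke Theorem~\ref{R_compl_polyhedral}: choosing a finite collection of prime divisors on $Y$ containing the components of $B_Y$, $P$ and $F_Y$, the set $\Compd_\R\subseteq\fD_\R$ is a closed convex rational polyhedron, and $\Rct(Y,B_Y+P;F_Y)$ is the largest $s\ge 0$ with $B_Y+P+sF_Y\in\Compd_\R$ — i.e. the endpoint of the intersection of a rational polyhedron with the rational ray $\{B_Y+P+sF_Y\}_{s\ge 0}$ based at the rational point $B_Y+P$, hence a rational number (the non-exceptional case is identical with $B+pP$ in place of $B_Y+P$). Therefore $\ainv(X,B)$ is rational, and so is $\glct(X,B)=1-\ainv(X,B)/h$. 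The bd-pair statement follows by the same argument, carrying $\sP$ along and using the bd-versions of Corollaries~\ref{a_attained} and~\ref{acc_a_inv} together with Addendum~\ref{bd_R_compl_polyhedral}. The only real issue is not the rationality step itself but checking that $\ainv(X,B)$ genuinely equals an $\R$-complement threshold with a rational base point and rational direction; this is guaranteed by the attainment in Corollary~\ref{a_attained} and by the fact that the non-exceptional coefficient $p$ is pinned down rationally by the log-canonical-place condition — both already established in the proofs of Corollaries~\ref{a_attained} and~\ref{acc_a_inv}, so the proof is in practice short.
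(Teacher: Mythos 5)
Your argument is, in substance, the paper's own proof: the paper disposes of this corollary by citing Addendum~\ref{acc_a_inv_Q} (and its bd-version), and the proof of that addendum, embedded in the proofs of Corollaries~\ref{acc_a_inv} and~\ref{a_attained}, is exactly the chain you reproduce — attainment of the threshold, the identification $\ainv=N\,\Rct(Y,B+P;M)$ or $N\,\Rct(X,B+pP;M)$, and then rationality from the closed convex \emph{rational} polyhedron of Theorem~\ref{R_compl_polyhedral} applied to a rational base point and rational direction. Your remark that $p=\mult_PE=1-\mult_PB\in\Q$ because $\ld(P;X,B+E)=0$ is precisely the justification behind the paper's terse ``$B+pP\in\Q$ too''.

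The one step that does not work as written is your reduction of the non-klt case: a $\Q$-factorial dlt crepant model of an lc log Fano is a weak log Fano \emph{pair}, but the underlying variety need not be wFt, since wFt demands a klt $\R$-complement with big boundary, which a dlt pair with a nonzero reduced part does not supply; for the cone $X$ over a plane cubic curve, $(X,0)$ is an lc log Fano whose dlt model is a ruled surface over an elliptic curve, certainly not of (weak) Fano type, so neither Corollary~\ref{a_attained} nor Theorem~\ref{R_compl_polyhedral} would be available there (and the asserted invariance of $\glct$ under passing to that model would itself need an argument). The repair is the one the paper makes in the first lines of the proof of Corollary~\ref{a_attained}: if $(X,B)$ is lc but not klt, then $\glct(X,B)=0$ and $\ainv(X,B)=h$, and $h$ is rational for rational $B$ (as in Addendum~\ref{acc_a_inv_Q} and your own first paragraph), so this case is trivial and no model is needed; the machinery is only required when $(X,B)$ is klt, in which case $X$ has Ft by definition and your argument goes through verbatim, including the bd-pair version.
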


Examples of nonrational $\ainv(X,B)< 0,\glct(X,B)>1$
with a rational boundary $B$ are unknown.

\begin{proof}
Immediate by Addendum~\ref{acc_a_inv_Q}
and Addendum~\ref{acc_a_inv_bd}.

\end{proof}

But we have a more effective statement.

\begin{cor}[Effective attainment]
Let $\ainv\ge 0$ be a rational number,
$d$ be a nonnegative integer and $\Gamma_b$ be
a rational closed dcc set in $[0,1]$.
Then there exists a positive integer $n=n(d,\Gamma_b,\ainv)$
such that every $\ainv(X,B)=\ainv$, equivalently,
$\glct(X,B)$, with $\dim X=d$, is
attained by a divisor $E=D/n\equiv-\glct(X,B)(K+B)$, where
$$
D\in \linsys{-nK-nB-naH}
$$
and $H$ is a (primitive) ample divisor on $X$ such
that $-(K+B)\equiv hH$.

The same holds for $\ainv(X,B+\sP)=\ainv,\glct(X,B+\sP)$
of lc Fano bd-pairs $(X,B+\sP)$ of index $m$
with $E=D/n\equiv-\glct(X,B)(K+B+\sP_X)$ and
$-(K+B+\sP)\equiv hH$, where
$n$ depends also on $m$ and
$$
D\in \linsys{-nK-n\sP_X-nB-naH}.
$$.

\end{cor}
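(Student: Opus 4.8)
The plan is to combine the attainment result of Corollary~\ref{a_attained} with the closed rational polyhedral property of Theorem~\ref{R_compl_polyhedral} and the boundedness of $n$-complements of Theorem~\ref{bndc} (under assumption~(3)). First I would reduce, exactly as in the proof of Corollary~\ref{acc_a_inv}, to expressing $\ainv(X,B)=\ainv$ as an $\R$-complement threshold on a Ft variety: there is a crepant blowup $\varphi\colon Y\to X$ of a prime b-divisor $P$ with $\ld(P;X,B+E)=0$ for the extremal $E$ attaining $\glct(X,B)$, so that $\ainv/N=r=\Rct(Y,B+P;M)$, where $F=NH$ is base point free with $N=N(d)$ by Corollary~\ref{free}, and $M\sim F$ is sufficiently general. (In the nonexceptional case $Y=X$ and $P$ enters with a suitable coefficient; both cases are handled identically, as in Corollary~\ref{acc_a_inv}.) The boundary $B+P$ lies in the dcc set $\Gamma_b\cup\{1\}$, and since $\Gamma_b$ is rational and $\ainv$ is rational, the threshold $r$ is rational by Theorem~\ref{R_compl_polyhedral}. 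Thus $(Y,B+P+rM)$ is an lc $0$-pair, equivalently a monotonic $\R$-complement of $(Y,B+P)$ of klt/lc type, and it has bounded denominator.

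Next I would apply Theorem~\ref{bndc} under assumption~(3) to the pair $(Y,B+P+rM)$: since $Y$ has Ft, $B+P+rM$ takes multiplicities in a dcc set determined by $d$, $\Gamma_b$ and the rational number $r$ (hence by $d,\Gamma_b,\ainv$), there is a finite set $\sN=\sN(d,\Gamma_b,\ainv)$ of complementary indices so that $(Y,B+P+rM)$ has an $n$-complement $(Y,C^+)$ for some $n\in\sN$. Because $(Y,B+P+rM)$ is already an $\R$-complement of itself (it is a $0$-pair) and $rM\in\Z/n$ once $n$ is sufficiently divisible, Theorem~\ref{invers_stability_n_complements} (or directly the fact that $n$-linear triviality of $K+B+P+rM$ is what is asked) shows the $n$-complement can be taken to be $(Y,B+P+rM)$ itself, i.e. $n(K_Y+B+P+rM)\sim 0$. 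Pushing down by $\varphi$ — which preserves $n$-linear equivalence of $K+$(boundary), since $\varphi$ is crepant for $B$ and contracts only $P$ — gives $n(K_X+B+(\text{the image of }P+rM))\sim 0$, and the image of $rM$ is an effective divisor $D/n$ with $D\in\linsys{-nK-nB-naH}$ up to adjusting $n$ by the bounded Cartier index of $H$ (boundedness of the canonical/polarization index on the bounded family, as in Addendum~\ref{adden_Cartier_index_B+}). One then checks $E=D/n\equiv -\glct(X,B)(K+B)$ because $rN=\ainv$ and $-(K+B)\equiv hH$, so $naH=n\ainv H/N\cdot(N/n)\ldots$ — more precisely $aH=\ainv H/N\cdot h/h$; the arithmetic is routine once the indices are aligned.

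I would close by enlarging $\sN$ over the finitely many rational values of $r$ that can occur (there is really only one, namely $\ainv/N$, so $\sN$ depends only on $d,\Gamma_b,\ainv$) and taking $n$ to be a common multiple of all elements of $\sN$ and of the bounded Cartier index of $H$; this is the promised $n=n(d,\Gamma_b,\ainv)$. For the bd-pair version, the identical argument runs with $K$ replaced by $K+\sP_X$ throughout, using the bd-analogues: Corollary~\ref{a_attained} for bd-pairs, Theorem~\ref{bd_bndc}, Addendum~\ref{bd_R_compl_polyhedral}, and the boundedness of the bd-semicanonical index of Addendum~\ref{bd_exceptional_compl}, giving $n$ also depending on $m$. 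The main obstacle I anticipate is not any single step but the bookkeeping of denominators: I must ensure that a single $n$ simultaneously clears $rM$, realizes the $n$-complement from $\sN$, and absorbs the Cartier index of $H$ and (in the bd-case) of $\sP$; this requires that the dcc set fed to Theorem~\ref{bndc} genuinely depends only on $d,\Gamma_b,\ainv$ (and $m$), which in turn relies on $r=\ainv/N$ being a fixed rational number — so the rationality of $\ainv$, established via Theorem~\ref{R_compl_polyhedral}, is doing the essential work here.
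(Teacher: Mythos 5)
Your setup (reduce via Corollary~\ref{a_attained} and the blowup of an lc place, as in Corollary~\ref{acc_a_inv}, and then bring in bounded $n$-complements for a dcc set depending only on $d,\Gamma_b,\ainv$) is the same skeleton as the paper's proof, but the final mechanism is wrong. Your key claim — that the $n$-complement of $(Y,B+P+rM)$ "can be taken to be $(Y,B+P+rM)$ itself, i.e.\ $n(K_Y+B+P+rM)\sim 0$" — fails for two reasons. First, $(Y,B+P+rM)$ is \emph{not} a $0$-pair: by construction $-(K_Y+B+P+rM)$ is numerically equivalent to the (transform of the) extremal divisor $E$ attaining $\glct(X,B)$, and $E\equiv \glct(X,B)\,hH\not\equiv 0$ whenever $\glct(X,B)>0$; so $n(K_Y+B+P+rM)\sim 0$ is numerically impossible in exactly the cases of interest. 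Second, neither Theorem~\ref{bndc}(3) nor Theorem~\ref{invers_stability_n_complements} ever asserts that a complement equals the given pair; an $n$-complement $(Y,B^+)$ only satisfies $n(K_Y+B^+)\sim 0$ together with the inequality (1) of Definition~\ref{n_comp}, and since $\Gamma_b$ is an infinite dcc set the multiplicities of $B$ need not lie in $\Z/n$ for any bounded $n$. Consequently the pushdown step "$n(K_X+B+\dots)\sim 0$" and the concluding "routine arithmetic" have no basis, and you never actually produce an effective member of $\linsys{-nK-nB-naH}$.

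The missing idea is \emph{monotonicity} of the bounded complement, which is what the paper uses: for the rational closed dcc set $\Gamma_b\cup\{1,a/N\}$ one gets a \emph{monotonic} $n$-complement $(Y,B^+)$ of $(Y,B+(a/N)F+P)$ with $na/N\in\Z$; then $D'=nB^+-nB-\tfrac{na}{N}F-nP$ is effective precisely because $B^+\ge B+(a/N)F+P$, and it lies in $\linsys{-nK_Y-nB-\tfrac{na}{N}F-nP}$ because $n(K_Y+B^+)\sim 0$; its image on $X$ is the required $D\in\linsys{-nK-nB-naH}$, and $E=D/n$. (In the nonexceptional case one replaces $P$ by the reduced part $S$ of $B+E$ and uses $nB'+nS\ge nB$ to return to the stated linear system.) If you want to keep your formulation, you must either invoke this monotonic version of boundedness of complements for dcc multiplicities, or find another source for an effective divisor in the correct class; the existence of some $n$-complement plus $n$-divisibility of $rM$ does not suffice, and no appeal to Corollary~\ref{bounded_lc_index} can rescue the claim since the relevant pair is not a maximal lc $0$-pair.
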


\begin{proof}
By Corollary~\ref{a_attained} $a$ is attained.
Actually, $t\le 1$ is attained and $a=(1-t)h$.
Thus there exists an effective $\R$-divisor $E$ such that
$(X,B+E)$ is lc but not klt and
$$
K+B+E+aH\equiv 0.
$$
There exists $N$ depending only on $d$ such that
$NH\equiv F$, where $F$ is a free divisor on $X$.
In particular, we can suppose that
$(X,B+E+(a/N)F)$ is lc too, that is,
$F$ does not pass the prime components of $\Supp B,\Supp E$
and the center of any prime b-divisor $P$
with $\ld(P;X,B+E)=0$.
As in the proof of Corollary~\ref{a_attained},
in the exceptional case,
$$
(Y,B+\frac aNF+P)
$$
has an $\R$-complement.
More precisely, in this case we suppose that
every $P$ is exceptional.
The set of multiplicities $\Gamma_b\cup\{1,a/N\}$
is a rational closed dcc subset of $[0,1]$.
(We can suppose that $a/N\le 1$.)
Hence there exists a monotonic $n$-complement
$(Y,B^+)$ of $(Y,B+(a/N)F+P)$.
We suppose also that $na/N$ is integer.
Then by monotonicity the divisor
(in the linear system of $\Q$-divisors)
$$
D'=nB^+-nB-\frac {na}N F -nP
\in
\linsys{-nK_Y-nB-\frac {na}N F-nP}
$$
is effective.
Taking the image of $D'$ on $X$ we get
$$
D\in
\linsys{-nK-nB-naH}
$$
and $E=D/n$ is required effective.

Similarly, if there are nonexceptional $P$ then
$B+E=B'+E'+S$, where $S$ is reduced part of $B+E$,
that is, the sum of those $P$ (assuming that $(X,B)$ is klt) and $B'=B,E'=E\ge 0$
outside of $\Supp S$.
In this situation,
we have a monotonic $n$-complement $(X,B^+)$
of $(X,B'+(a/N)F+S)$.
Thus there exists an effective divisor
$$
D'=nB^+-nB'-\frac {na}N F -nS
\in
\linsys{-nK-nB'-naH-nS}.
$$
By construction $B'=B$ outside of $S$ and
$B'=0$ on $S$.
Hence
$$
nB'+nS\ge nB,
$$
because $B$ is a (sub)boundary and $S$ is reduced.
Hence again
$$
D=D'+nB'+nS-nB=
nB^+-nB-\frac {na}N F
\in
\linsys{-nK-nB-naH}
$$
is also effective and  $E=D/n$ is required effective.

Similarly we can treat bd-pairs.

\end{proof}

\paragraph{Bounded affine span and index of divisor.}
Let $V\subseteq\R^n$ be a class of $\R$-linear spaces $\R^l$ with a standard basis
and with their $\Q$-affine subspace $V$.
Such a subspace $V$ can be given by linear equations
(possibly nonhomogeneous) with integral coefficients
in the standard basis.
We say that $V$ in this class is {\em bounded\/} if,
for all $V$ of the class,
the integer coefficients of equations are bounded.
Equivalently, $V$ is bounded if, in every $V$, there exists
a finite set of rational generators $v_i,i=1,\dots,l$,
with coordinates in a finite set of rational numbers.
Vectors $v_i$ generate $V$ if
$\spn{v_1,\dots,v_l}=V$.

We apply the boundedness to affine $\Q$-spans $\spn{D}$
of certain $\R$-divisors $D$ on $X$.
Every such span is in the space $\WDiv_\R X$ of $\R$-divisors,
actually, in the space of divisors supported
on $\Supp D$.
The standard basis of $\WDiv_\R X$
consists the prime components of $\Supp D$ or
prime divisors of $X$.

\begin{exa}

(1)
If $D$ is rational divisor then $\spn{D}$ is
a rational divisor itself.
Those spans or divisors
are bounded if their multiplicities belong to
a finite set of rational numbers.
In general, $\spn{D}$ is bounded if there exists
a finite set of $\Q$-divisors $D_i,i=1,\dots,l$, in $\spn{D}$
with multiplicities in a finite set of rational numbers,
which generate $\spn{D}$.

(2) The spaces $x_1=x_2,\dots,x_{2l-1}=x_{2l}$ are bounded.
Every of those spaces has generators
$(1,1,0,\dots,0,0),(0,0,1,1,\dots,0,0),\dots,
(0,0,0,0,\dots,1,1)$
with $2l$ coordinates $0$ or $1$.

\end{exa}

Due to the rationality of intersection theory and
since the lc property is rational,
if $(X/Z\ni o,D)$ is a $0$-pair then
$(X/Z\ni o,D')$ is a possibly nonlc $0$-pair
for some $D'\in \spn{D}$ but a $0$-pair in
some neighborhood of $D$ in $\spn{D}$.
The last neighborhood depends on $(X/Z\ni o,D)$.
Notice also that the maximal lc property also
holds in some neighborhood of $D$ in $\spn{D}$
if it holds for $(X/Z\ni o,D)$.
Moreover, $a=\ld(P;X,D)=\ld(P;X,D')$ holds for
all $D'\in \spn{D}$ if $a$ is rational.

Let $I$ be a positive integer.
We say that $I$ is a {\em lc index\/} of a $0$-pair $(X/Z\ni o,D)$
if there are rational generators $D_i,i=1,\dots,l$, of $\spn{D}$ such that
every $I(K+D_i)\sim 0/Z\ni o$.
Note that if $(X\ni o,D)$ is a log pair with $D\in\Q$ then
$(X\ni o,D)$ is a $0$-pair over $X\ni o$ and
an lc index of $(X\ni o,D)$ is a Cartier index of
$K+D$.

The same applies to $0$-bd-pairs $(X/Z\ni o,B+\sP)$
(of index $m$)
with $I(K+D_i+\sP_X)\sim 0/Z\ni o$.

\begin{cor}[Boundedness of lc index] \label{bounded_lc_index}
Let $d$ be a nonnegative integer and
$\Gamma$ be a dcc subset in $[0,1]$.
Then there exists a finite subset
$\Gamma(d)\subseteq \Gamma$ and
positive integer $I=I(d,\Gamma)$ such that,
for every maximal lc $0$-pair $(X/Z\ni o,B)$
with wFt $X/Z\ni o$, $\dim X=d$ and $B\in \Gamma$,
\begin{description}

\item[\rm (1)\/]
$B\in\Gamma(d)$;

\item[\rm (2)\/]
$\spn{B}$ is bounded; and

\item[\rm (3)\/]
$I$ is an lc index of $(X/Z\ni o,B)$.

\end{description}

The same holds for maximal lc $0$-bd-pairs $(X/Z\ni o,B+\sP)$ of index $m$
with $\Gamma(d,m)$,$I(d,\Gamma,m)$ depending also on $m$.

\end{cor}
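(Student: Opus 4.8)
The plan is to deduce Corollary~\ref{bounded_lc_index} from the existence and boundedness of $n$-complements (Theorem~\ref{bndc}, or more precisely Theorem~\ref{b_n_comp_klt} and its lc-type counterpart Theorem~\ref{lc_compl}), together with the basic properties of maximal lc $0$-pairs recorded just before the statement. The starting observation is that a maximal lc $0$-pair $(X/Z\ni o,B)$ \emph{is its own unique $\R$-complement}; so if we feed it into a boundedness-of-complements theorem and obtain an $n$-complement $(X/Z\ni o,B^+)$, then $(X/Z\ni o,B^+)$ is in particular an $\R$-complement of $(X/Z\ni o,B)$, hence $B^+=B$ by maximality. By Definition~\ref{n_comp}, (3) this forces $n(K+B)\sim 0/Z\ni o$, which is most of (3) of the corollary (applied to the vertices $B_i$ of $\spn{B}$ once we know $B_i\in\Q$ and $nB_i\sim\text{integral}$).

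First I would establish (1), the finiteness of boundary multiplicities. We may assume $\Gamma$ is rational (irrational multiplicities of $\Gamma$ are irrelevant to the rational maximal lc multiplicity set $\Gamma_{\max}$ and can be handled afterwards by perturbation, since the maximal lc property and lc discrepancies are locally constant on $\spn{B}$). Apply Theorem~\ref{lc_compl} with this rational dcc $\Gamma$: it produces a finite set $\sN=\sN(d,I_0,\ep,v,e)$ of complementary indices such that every pair with $\dim X=d$, $B\in\Gamma$, wFt $X/Z\ni o$, connected $X_o$, and an $\R$-complement has an $n$-complement for some $n\in\sN$. (The connectedness of $X_o$ can be arranged by passing to connected components via Proposition~\ref{local_compl}, which preserves both the $\R$-complement and the $n$-complement; or we invoke Addendum~\ref{bounded_compon}.) For a maximal lc $0$-pair, such an $n$-complement equals $(X/Z\ni o,B)$ itself by the uniqueness just noted, so $nB$ is integral with $n\in\sN$; since $B\in[0,1]$ and $\sN$ is finite, the multiplicities of $B$ lie in the finite set $\Gamma(d):=\Gamma\cap\bigcup_{n\in\sN}\frac1n\Z\cap[0,1]$. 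This gives (1) and, simultaneously, $I:=\operatorname{lcm}(\sN)$ (or any sufficiently divisible integer, chosen to be the rational maximal lc index $I(d,\Gamma\cap\Q)$ as in Construction~\ref{lc_induction}) satisfies $I(K+B)\sim 0/Z\ni o$.

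Next I would establish (2), boundedness of $\spn{B}$. Since the multiplicities of $B$ now lie in the finite set $\Gamma(d)$, there are only finitely many possible ``multiplicity patterns'' up to relabelling prime components, and for each the affine span $\spn{B}\subseteq\WDiv_\R X$ is cut out, in the standard basis given by the prime components of $\Supp B$, by the smallest rational linear subspace through the point $B$ whose coordinates are among the finitely many numbers of $\Gamma(d)$; the defining equations have coefficients bounded in terms of the denominators of $\Gamma(d)$ alone. This is precisely the notion of a bounded affine span from the paragraph ``Bounded affine span and index of divisor'', so (2) follows, and in particular $\spn{B}$ admits rational generators $D_i$ with coordinates in a finite set, i.e.\ $B\in\Q$ is automatic. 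Then for the vertices/generators $D_i$ of $\spn{B}$ the pair $(X/Z\ni o,D_i)$ is still a maximal lc $0$-pair in a neighbourhood, and the same argument (or directly the rationality of intersection theory plus constancy of lc discrepancies on $\spn{B}$) yields $I(K+D_i)\sim 0/Z\ni o$ after enlarging $I$ to clear the bounded denominators — this is (3). The bd-pair version is the same argument run through Theorem~\ref{lc_compl}'s bd-addendum and Example~\ref{max_lc_index} for index $m$, with all constants acquiring a dependence on $m$ and $m\mid I$.

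The main obstacle is the circularity hidden in the phrase ``By Theorem~\ref{b_n_comp_klt} and its generalizations'': the boundedness-of-complements theorems are themselves proved by dimensional induction in which Corollary~\ref{bounded_lc_index} (through Theorem~\ref{adjunction_index}, the adjunction index) is used in lower dimensions. So the honest proof must be organized as a simultaneous induction on $d$, and in dimension $d$ one is only allowed the hyperstandard case of boundedness of complements (Birkar's \cite[Theorem~1.7]{B}, ingredient (1) of ``What do we use''), plus the already-established lower-dimensional instances. The hard part will be to run this induction cleanly: reduce to the case where either $\dim Z\ge 1$ (use dimensional induction on the base together with adjunction for the $0$-contraction, Theorem~\ref{b_nef} and Theorem~\ref{adjunction_index}, noting that the adjoint bd-pair has smaller dimension), or $\dim Z=0$ and $(X,B)$ is lc but not klt (descend to an lc centre, again lowering dimension as in \cite[4.13]{PSh08}), or $\dim Z=0$ and $(X,B)$ is klt — in which case $B=B\hor\in\Phi$ and the pair $(X,B)$ is exceptional (a maximal lc $0$-pair that is klt must be global and exceptional), so boundedness follows from Corollary~\ref{bound_ex_pairs}. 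For the general dcc $\Gamma$ one first does the hyperstandard case $\Gamma=\Phi(\fR)$ as above and then bootstraps to arbitrary dcc $\Gamma$ using that $n$-complements for arbitrary (even nonrational) boundary multiplicities now exist by Theorem~\ref{R-vs-n-complements}/Theorem~\ref{bndc}, exactly as sketched in the last paragraph of Remark~\ref{lc_compl_alternative}.
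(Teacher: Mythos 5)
Your core mechanism has a genuine gap. You claim that once a boundedness theorem produces an $n$-complement $(X/Z\ni o,B^+)$ of the maximal lc $0$-pair $(X/Z\ni o,B)$, then $B^+$ is ``in particular an $\R$-complement'', hence $B^+=B$ and $nB$ is integral. But an $n$-complement is an $\R$-complement only when it is monotonic, i.e.\ $B^+\ge B$ (Remark~\ref{remark_def_complements}, (2)), and monotonicity fails in general for dcc multiplicities: by Example~\ref{rddown_(n+1)_m_m}, (4) there are dcc sets where $\rddown{(n+1)b}/n<b$ for some $b\in\Gamma$ and \emph{every} $n$, so nothing forces $B^+\ge B$ and maximality gives no conclusion. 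Worse, your intermediate claims ($nB$ integral, ``$B\in\Q$ is automatic'', $I(K+B)\sim 0$) are actually false: a global $0$-pair such as $\bigl(\PP^1,\,bP_1+bP_2+(1-b)P_3+(1-b)P_4\bigr)$ with $b$ irrational is maximal lc (every global $0$-pair is), lies in a dcc $\Gamma$ containing $b,1-b$, and admits $n$-complements (e.g.\ $\tfrac12\sum P_i$ for $b\in(1/3,2/3)$) that differ from $B$; this is exactly why conclusion (3) of the corollary is phrased via rational generators of $\spn{B}$ rather than as $I(K+B)\sim 0$. Your preliminary reduction ``assume $\Gamma$ rational, handle irrational multiplicities by perturbation'' is also not worked out and does not recover statement (1), which concerns all (possibly irrational) multiplicities of $B$.

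The paper's proof of (1) uses a different idea you are missing: each multiplicity is itself a threshold, $b=\mult_PB=\Rct(X/Z\ni o,B-bP;P)$ by maximality, so the multiplicities lie in $\Gamma\cap\Rct(d,\Gamma)$, the intersection of a dcc set with the acc set of $\R$-complement thresholds (Theorem~\ref{acc_R-complt}), hence in a finite set $\Gamma(d)$ — no complements and no rationality are needed at this stage. Then (2) follows from Corollary~\ref{direct_stability} (under assumption (3) of Theorem~\ref{bndc}), which supplies rational boundaries $B_0,\dots,B_r$ in a neighbourhood of $B$ generating $\spn{B}$ with bounded data; and only at step (3) does the ``complement equals itself'' trick appear, legitimately: for these rational $B_i$ with bounded denominators one may take $n$ divisible by the denominators, so the $n$-complement is automatically monotonic (Example~\ref{rddown_(n+1)_m_m}, (2)), hence equals $B_i$ by maximality, giving $n(K+B_i)\sim 0/Z\ni o$ and $I=n$. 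Your remarks on the circularity and the hyperstandard-first induction are consistent with the paper's stated plan, but they do not repair the main argument; to fix the proposal you would need to replace your step (1) by the threshold argument (or an equivalent acc input) and restrict the monotonicity argument to the rational generators of the span.
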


In the proof below, we use $n$-complements for a finite set
of rational boundary multiplicities.
On the other hand, in construction of complements
we use the corollary in very special case $\Gamma=\Phi(\fR)$,
a hyperstandard set, and $(X,B)$ is a klt $0$-pair with wFt
(cf. Corollary~\ref{invar_adj} and
What do we use in the proof? in Introduction).

\begin{proof}
We suppose that $d\ge 1$.
(The case $d=0$ is trivial.)

(1) $\Gamma(d)=\Gamma\cap \Rct(d,\Gamma)$,
where $\Rct(d,\Gamma)$ denotes the set
of $\Rct(X/Z\ni o,B)$ for $\dim X=d,
B\in\Gamma$.
By Theorem~\ref{acc_R-complt}
$\Rct(d,\Gamma)$ is an acc set.
Thus $\Gamma(d)$ satisfies acc and dcc, and
is finite.

By definition, for every prime divisor $P$ on $X$ over $Z\ni o$,
$$
\Rct(X/Z\ni o,B-bP;P)=b,
$$
where $b=\mult_PB$.
By construction $B-bP,b\in\Gamma$ and
$b$ belongs $\Rct(d,\Gamma)$.
Hence $b,B\in\Gamma(d)$.

(2) Put $l$ to be the number of elements in $\Gamma(d)$.
Then for every $0$-pair $(X/Z\ni o,B)$ under
the assumptions of the corollary there exist
distinct reduced Weil divisors $D_1,\dots,D_l$ such that
$B=\sum_{i=1}^l b_iD_i$, where $b_i\in\Gamma(d)$.
(Some of multiplicities $b_i$ are $0$.)
This gives a $\Q$-linear map $A\colon \R^l\to\WDiv_\R X$,
which transforms the standard basis
$e_1=(1,0,\dots,0),\dots,e_l=(0,0,\dots,1)$
into the divisors $D_1,\dots,D_l$ respectively.
The hight of $A$ is $1$.
There exists a unique vector $v\in\R^l$ with $A(v)=B$.

By Corollary~\ref{direct_stability} under the assumption~(3) of Theorem~\ref{bndc},
there exist vectors $w_0,w_1,\dots,w_r\in\spn{v},r=\dim\spn{v}\le l$,
which generate $\spn{v}$ and such that
$B_0=A(w_0),B_1=A(w_1),\dots,B_r=A(w_r)$ are
rational boundaries.

Thus $\spn{B}=A(\spn{v})$ is generated by $B_0,B_1,\dots,B_r$ and
is bounded.

(3) By construction all multiplicities of boundaries $B_i$
belong to a finite set of rational numbers.
On the other hand, every pair $(X/Z\ni o,B_i)$ is a $0$-pair
and has a monotonic $n$-complement $(X/Z\ni 0,B^+)$ for some $n$, depending
only on the multiplicities of boundaries $B_i$ and on $d$.
We can take $I=n$.
Indeed, $n(K+B_i)\sim 0$ over $Z\ni o$ because $B^+=B_i$.

Similarly we can treat bd-pairs.

\end{proof}

\begin{cor}[Invariants of adjunction] \label{invar_adj}
Let $d$ be a nonnegative integer and
$\Gamma$ be a dcc set of rational numbers in $[0,1]$ .
Then there exists a finite subset
$\Gamma(d)\subseteq \Gamma$ and
positive integer $I=I(d,\Gamma)$ such that
every $0$-contraction $f\colon(X,D)\to Z$ as in Theorem~\ref{adjunction_index}
has the adjunction index $I$.
Moreover, $D\hor\in \Gamma(d)$ and
$Ir_P\in\Z$ for every adjunction constant $r_P$ as in~\ref{adjunction_mult}.

The same holds for every $0$-contraction $(X,D+\sP)\to Z$ as
in Addendum~\ref{adjunction_index_bd}.
In this situation $I=I(d,\Gamma,m),\Gamma(d,m)$ depend also on
the index $m$ of the bd-pair $(X,D+\sP)$.

\end{cor}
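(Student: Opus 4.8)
The plan is to deduce Corollary~\ref{invar_adj} directly from Corollary~\ref{bounded_lc_index} (boundedness of lc index) together with the reduction steps that already appear in the proof of Theorem~\ref{adjunction_index}. The key observation is that the definition of an adjunction index in~\ref{adjunction_index_of} is essentially a statement about a single $0$-pair: once we know that the relevant $0$-pair has a bounded lc index, assertions (1-4) of~\ref{adjunction_index_of} follow formally. So the whole corollary should reduce to producing the right $0$-pair and invoking Corollary~\ref{bounded_lc_index}.

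First I would run Step~1 of the proof of Theorem~\ref{adjunction_index}: replace $f\colon(X,D)\to Z$ by a generically crepant model $f'\colon(X',D')\to Z'$ together with a boundary $B$ on $X'$ such that $(X'/Z',B)$ is a tlc $0$-pair with $B\hor=D\hor$ and such that $\sD\md$ is stable over $Z'$; by~\cite[Remark~7.5.1]{PSh08} (or Proposition~\ref{adjunction_same_mod_etc}) the adjunction index is unchanged, so it suffices to treat $(X'/Z',B)$. By construction $(X'/Z',B)$ is maximal lc over a neighborhood of $\Supp B\dv$, and after adding $f'^*H$ for a general Cartier hyperplane $H$ through any point $z\in Z'\setminus\Supp B\dv$ we get a maximal lc $0$-pair $(X'/Z'\ni z,B+f'^*H)$ whose horizontal multiplicities are those of $D\hor\in\Gamma$ (and $B\dv\in\{0,1\}$, $f'^*H$ Cartier). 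Here $X'/Z'$ has wFt and $\dim X'=d$. Now Corollary~\ref{bounded_lc_index} applied to $\Gamma$ (and, in the bd-case, to $(X'/Z',B+f'^*H+\sP)$ of index $m$) gives a finite subset $\Gamma(d)\subseteq\Gamma$ containing all horizontal multiplicities, and a positive integer $I=I(d,\Gamma)$ which is an lc index of this $0$-pair locally over $Z'$; in particular $I(K_{X'}+B+f'^*H)\sim 0$ over a neighborhood of each such point, hence $I(K_{X'}+B)\sim_{I,Z'}0$. Taking $I$ further divisible by the Cartier index of $K_{Z'}+B\dv$ (which is $1$ here by tlc) we get assertion (1) of~\ref{adjunction_index_of}, and as noted in Step~2 of the proof of Theorem~\ref{adjunction_index} this implies (2-4): in particular $Ir_P\in\Z$ for every prime b-divisor $P$ over $\Supp B\dv$ because the pair is maximal lc there and $K_{X'}+B$ has Cartier index $I$, and adding general $f'^*H$ extends this to all vertical $P$. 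Finally $\sD\md$ is a b-$\Q$-divisor with $I\sD\md$ b-Cartier by Theorem~\ref{b_nef} and (3) of~\ref{adjunction_index_of}.

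The main obstacle here is precisely that Corollary~\ref{bounded_lc_index} is being invoked for a dcc set $\Gamma$ that is a priori only a finite set of \emph{rational} numbers in $[0,1]$ together with the values $0,1$ forced by tlc reduction — one must make sure the hypotheses of Corollary~\ref{bounded_lc_index} (dcc subset of $[0,1]$, wFt $X/Z\ni o$, $\dim X=d$, maximal lc $0$-pair) are genuinely met by $(X'/Z'\ni z,B+f'^*H)$ and by its bd-analogue of index $m$. This is exactly the ``hyperstandard case'' worked out in Step~2 of the proof of Theorem~\ref{adjunction_index}, where the required finiteness of $\Gamma(d)$ and existence of $I$ follow from boundedness of $n$-complements (\cite{HX}, \cite[Theorem~1.8]{B}, or Corollary~\ref{bound_ex_pairs} in the klt case $\dim Z=0$) via dimensional induction on $\dim Z$; I would reuse that argument verbatim. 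The remaining steps — that $D\hor\in\Gamma(d)$, that $Ir_P\in\Z$, and the bd-version with extra parameter $m$ and the divisibility $m\mid I$ — are then routine bookkeeping, following Addendum~\ref{adjunction_index_hor} and Addendum~\ref{adjunction_index_bd} respectively.
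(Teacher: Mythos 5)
Your proposal is correct and follows essentially the paper's own route: the paper declares Corollary~\ref{invar_adj} immediate from Theorem~\ref{adjunction_index}, whose general-case proof is exactly your argument — run the tlc reduction of Step~1 and then apply Corollary~\ref{bounded_lc_index} (now available for an arbitrary dcc $\Gamma$) to the maximal lc $0$-pair locally over $\Supp D\dv$, propagating over all of $Z$ with general $f^*H$ to obtain (1-4) of~\ref{adjunction_index_of}, with the bd-case handled the same way with the extra parameter $m$ and $m\mid I$. One caution about your closing paragraph: for a general dcc rational $\Gamma$ you must invoke the general-case Corollary~\ref{bounded_lc_index} (established in this section via the acc for $\Rct$-thresholds and $n$-complements for arbitrary multiplicities), not a verbatim reuse of the hyperstandard dimensional induction of Step~2, which only covers $\Gamma=\Phi(\fR)$.
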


However, first we establish the following.

\begin{proof}[Proof of Theorem~\ref{adjunction_index}]
(General case.)
We suppose that $1\in\Gamma$ and
take $\Gamma(d)$ and $I=I(d,\Gamma)$ as in Corollary~\ref{bounded_lc_index}.
We use the same proof as in the hyperstandard case
with one improvement.
According to Corollary~\ref{bounded_lc_index}
the maximal lc $0$-pair $(X/Z,D)$ locally over $\Supp D\dv$,
in Step~2 of the proof of Theorem~\ref{adjunction_index},
has index $I=I(d,\Gamma)$ and $D\hor=D\in\Gamma(d)$.
Recall, that the vertical multiplicities of $D$
are $0$ or $1$ locally over $\Supp D\dv$.

\end{proof}

\begin{proof}[Proof of Corollary~\ref{invar_adj}]
Immediate by Theorem~\ref{adjunction_index}.
\end{proof}

\paragraph{Accumulations of $\Rct$ thresholds.}
Denote by $\Rct(d,\Gamma_b,\Gamma_f)$
the thresholds of Theorem~\ref{acc_R-complt}.
Denote by $\act(d,\Gamma_b,\Gamma_f),\lct(d,\Gamma_b,\Gamma_f)$
corresponding ac and lc thresholds
(see Lemma~\ref{lct_act} and Corollaries~\ref{acc_lc_thresholds}, \ref{acc_ac_free};
cf. Remark~\ref{act_rem} and Example~\ref{act_exa}).
The thresholds satisfies the acc but not the dcc.
However, the accumulations have
rational constrains it terms of the closures
$\overline{\Gamma_b},\overline{\Gamma_f}$.
Notice that both closures
$\overline{\Gamma_b},\overline{\Gamma_f}$ are
also dcc (nonnegative) sets if so do $\Gamma_b,\Gamma_f$.
Thus for simplicity we can suppose that
$\Gamma_b,\Gamma_f$ are already closed.

The same applies to bd-pairs of index $m$.
In particular, we can consider
$\Rct(d,\Gamma_b,\Gamma_f,m),\act(d,\Gamma_b,\Gamma_f,m),\lct(d,\Gamma_b,\Gamma_f,m)$
and their accumulation points.

\begin{cor} \label{constrain}
Let $d$ be a nonnegative integer and
$\Gamma_b,\Gamma_f$ be two closed dcc sets of nonnegative real numbers.
Then every accumulation threshold $t$ in every set
$$
\Rct(d,\Gamma_b,\Gamma_f),
\act(d,\Gamma_b,\Gamma_f),
\lct(d,\Gamma_b,\Gamma_f)
$$
has a rational constrain between $B=(b_1,\dots,b_l),
0<tF=t(f_1,\dots,f_l)$, where all $b_i\in\Gamma_b,f_i\in\Gamma_f$,
that is,
$B+\R F\not\subseteq\spn{B+tF}$.

\end{cor}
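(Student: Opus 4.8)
The plan is to reduce the statement to the structure of the proof of Theorem~\ref{acc_R-complt} and to Lemma~\ref{lct_act}, and then to analyze where accumulation of thresholds can occur. Recall that an accumulation threshold $t$ is by definition the limit of a strictly increasing (since the sets satisfy acc but not dcc) sequence of thresholds $t_i$ coming from pairs $(X_i/Z_i\ni o,B_i)$ with effective divisors $F_i$, $\dim X_i=d$. As in Step~1 of the proof of Theorem~\ref{acc_R-complt}, after passing to a subsequence and using Lemma~\ref{lct_act}, we may assume all $\Supp B_i\cup\Supp F_i$ have a bounded number $l$ of prime components, so that $B_i=\sum b_{i,j}D_{i,j}$, $F_i=\sum f_{i,j}D_{i,j}$ with $b_{i,j}\in\Gamma_b$, $f_{i,j}\in\Gamma_f$, and (dcc plus closedness) $x_i=(b_{i,j})\to x=(b_j)\in\Gamma_b^l$, $y_i=(f_{i,j})\to y=(f_j)\in\Gamma_f^l$ monotonically. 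Here $B=(b_1,\dots,b_l)$ and $F=(f_1,\dots,f_l)$ are the limiting vectors, and $F>0$ componentwise since every $f_{i,j}\ge\mu_f>0$.

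\textbf{Key step.} The heart of the argument is to show the constrain $B+\R F\not\subseteq\spn{B+tF}$, equivalently that $F$ is \emph{not} a direction of the rational affine span $\spn{B+tF}$. Suppose for contradiction that $F$ is such a direction, i.e. $\spn{B+tF}+\R F=\spn{B+tF}$. Then first I would observe that $B_i+t_iF_i\to B+tF$ and, since (by the definition of $\R$-complements applied to each $(X_i/Z_i\ni o,B_i+t_iF_i)$) the limit point $B+tF$ lies in $\Compd_\R\cap\fD_\R^+$ for the relevant marked divisors, Theorem~\ref{invers_stability_R_complements} (its inverse-stability conclusion under whichever of (1)--(3) of Theorem~\ref{bndc} applies, via Theorem~\ref{bndc}/Theorem~\ref{bd_bndc}) yields a neighborhood $U$ of $B+tF$ in $\spn{B+tF}$ every point of which is a boundary having an $\R$-complement; more precisely $B+tF\in\simp{D_0,\dots,D_r}$ for rational points $D_j$ of $\spn{B+tF}$ at which $\R$-complements exist (as in Step~7 of that proof). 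Now the assumed $\R$-direction property of $F$ means $B+(t+\ep)F=(B+tF)+\ep F\in\spn{B+tF}$ for all small $\ep>0$; combined with the neighborhood $U$ this gives that $(X_i/Z_i\ni o,B_i+(t_i+\ep)F_i)$ has an $\R$-complement for $i\gg0$ and $\ep$ small, contradicting $t_i=\Rct(X_i/Z_i\ni o,B_i;F_i)$ being the supremum (which is attained by Theorem~\ref{R_compl_polyhedral} under wFt). Hence $F$ is not a direction of $\spn{B+tF}$, i.e. $B+\R F\not\subseteq\spn{B+tF}$, which is exactly the asserted rational constrain. The cases of $\act$ and $\lct$ follow via Lemma~\ref{lct_act}, which identifies these thresholds with $\Rct$ thresholds on $\Q$-factorial Ft varieties of dimension $\le d$ with the same multiplicities, so the same argument applies verbatim (for $\lct$ one additionally passes to a dlt resolution as in the proof of Corollary~\ref{acc_lc_thresholds}, which preserves multiplicities of $B$ and $F$).

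\textbf{Main obstacle.} The step I expect to be delicate is ensuring that the limiting vector $B+tF$ genuinely lies in the closed rational polyhedron $\Compd_\R$ for a \emph{single} fixed collection of marked prime divisors, so that Theorem~\ref{invers_stability_R_complements} and Theorem~\ref{R_compl_polyhedral} can be applied uniformly along the sequence --- i.e. the passage from the varying pairs $(X_i,B_i,F_i)$ to a common combinatorial model. This is precisely the point where the proof of Theorem~\ref{acc_R-complt} invokes the bounded number of prime components (via \cite[Theorem~18.22]{K}, \cite[Corollary~1.3]{BMSZ}) and the inverse stability theorem; I would carry over that reduction, noting that the constrain statement is a statement purely about the limiting vectors $B,F$ and $t$ in the common $\R^l$, so once the reduction to fixed $l$ and the $\Q$-affine maps $A_i\colon\R^l\to\WDiv_\R X_i$ of bounded height is in place, the direction/span bookkeeping is routine linear algebra. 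The bd-pair case is handled identically, carrying the index $m$ through as an extra parameter and using the bd-versions Theorem~\ref{bd_bndc} and the bd-part of Theorem~\ref{invers_stability_R_complements} and Lemma~\ref{lct_act}.
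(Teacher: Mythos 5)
Your overall route is the paper's: reduce via Lemma~\ref{lct_act} to thresholds with a bounded number $l$ of components, pass to a common $\R^l$ with the maps $A_i$, use inverse stability to place an $\R$-complement at the limiting divisor $A_i(x+ty)$ on $X_i$, then use direct stability (Corollary~\ref{direct_stability}) to get a neighborhood $U$ of $B+tF$ inside $\spn{B+tF}$ and contradict the threshold values if $F$ were a direction of that span. But the closing step as you wrote it has two concrete defects. First, the direction of accumulation is backwards: since $\Rct(d,\Gamma_b,\Gamma_f)$ satisfies the acc, it contains no strictly increasing infinite sequences, so an accumulation threshold $t$ is the limit of a strictly \emph{decreasing} sequence $t_i>t$ (the paper sets this up as $0<t<\dots<t_i<\dots<t_1$), not of an increasing one. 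Second, the conclusion you extract from $U$ --- that $(X_i,B_i+(t_i+\ep)F_i)$ has an $\R$-complement, ``contradicting $t_i$ being the supremum'' --- does not follow: the vector $x_i+(t_i+\ep)y_i$ is in general not a point of $\spn{x+ty}$, so $U$ says nothing about it, and since in fact $t_i>t$ you cannot reach level $t_i+\ep$ by monotonicity either; this step only works in the impossible increasing scenario. What $U$ does give (assuming $B+\R F\subseteq\spn{B+tF}$) is an $\R$-complement of $(X_i,A_i(x+(t+\ep)y))$ for $i\gg0$; you must then descend explicitly by monotonicity (Proposition~\ref{D_D'_complement} with Addendum~\ref{R_D_D'_complement}, using $B_i\le A_i(x)$, $F_i\le A_i(y)$) to an $\R$-complement of $(X_i,B_i+(t+\ep)F_i)$, i.e. $t_i\ge t+\ep$ for all $i\gg0$; the contradiction is then with $t_i\to t$ (eventually $t_i<t+\ep$), not with the supremum property at level $t_i$.

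With those repairs your argument closes, and it is in fact a mild shortcut compared with the paper, which instead sandwiches the threshold of the limiting data: it gets $\Rct(X_i,A(x);A(y))\ge t+\ep$ from the neighborhood and $\Rct(X_i,A(x);A(y))\le t_i$ from a numerical computation using $\rho(X_i)=1$ and the $0$-pair property of $(X_i,B_i+t_iF_i)$; your monotonicity descent replaces that $\rho=1$ step. Two smaller inaccuracies to fix: the bounded number $l$ of components comes from Steps~2--3 of the proof of Theorem~\ref{acc_R-complt} (via Lemma~\ref{lct_act}, \cite[Theorem~18.22]{K} and \cite[Corollary~1.3]{BMSZ}), and it needs the positive lower bound $t_i>t>0$ (the case $t=0$ being excluded by the hypothesis $0<tF$), not Step~1 alone; and when Lemma~\ref{lct_act} drops the dimension the paper invokes dimensional induction, which you should record instead of claiming the argument applies verbatim in dimension $\le d$.
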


\begin{add}[cf. {\cite[Corollary$^+$]{Sh94}}] \label{rational_accum}
If additionally $\Gamma_b,\Gamma_f\subset \Q$,
then
$$\overline{\Rct(d,\Gamma_b,\Gamma_f)},
\overline{\lct(d,\Gamma_b,\Gamma_f)},
\overline{\act(d,\Gamma_b,\Gamma_f)}\subset\Q.
$$
\end{add}

\begin{add}
The same holds for thresholds
$$
\Rct(d,\Gamma_b,\Gamma_f,m),
\act(d,\Gamma_b,\Gamma_f,m),
\lct(d,\Gamma_b,\Gamma_f,m)
$$
of bd-pairs of index $m$.

\end{add}

\begin{proof}
By Lemma~\ref{lct_act} and dimensional induction
it is enough to prove the result for
the accumulation points of ac and lc thresholds in the dimension $d$.
We consider only ac thresholds.
Lc thresholds can be treated similarly.

Consider now an accumulation point $t$ for
$\act(X_i,B_i;F_i)=\Rct(X_i,B_i;F_i),i=1,2,\dots,i,\dots$, where
\begin{description}

  \item[\rm (1)]
$X$ is $\Q$-factorial, Ft of dimension $d$ with $\rho(X)=1$;

  \item[\rm (2)]
$B_i\in\Gamma_b,F_i\in\Gamma_f$ and $F_i>0$;
and
  \item[\rm (3)]
$(X_i,B_i+t_iF_i)$ is a $0$-pair, in particular maximal lc,
where $t_i=\act(X_i,B_i;F_i)$.

\end{description}
Put $t=\lim_{i\to\infty}t_i$.
However, this time
$$
0<t< \dots <t_i<\dots<t_1
$$
because $\act(d,\Gamma_b,\Gamma_f)$ satisfies the acc
and $t$ is an accumulation point.
($t=0$ is rational and always an accumulation point in
dimensions $\ge 1$. Cf.
with our assumption $0\in\Phi$ in Hyperstandard sets is Section~\ref{technical}.)

By (1-2) and our assumptions,
every $\Supp B_i\cup\Supp F_i$ have at most $l$
prime divisors for some positive integer $l$.
We will use the same notation as in Step~1 of
the proof of Theorem~\ref{acc_R-complt}.
Since $\Gamma_b,\Gamma_f$ are closed,
$A(x)\in \Gamma_b,A(y)\in\Gamma_f$ hold.
As in Step~1 ibid $(X_i,A(x+ty))$ has an $\R$-complement
and $A(x+ty)=A(x)+tA(y)$ is a boundary for all $i\gg 0$.

By Corollary~\ref{direct_stability}
there exists a neighborhood $U$ of $x+ty$ in
$\spn{x+ty}$ such that, for every $v\in U$,
$(X_i,A(v))$ has an $\R$-complement.
Thus if $x+\R y\subseteq\spn{x+ty}$ then
there exists a real number $\ep>0$ such that
$(X_i,A(x+(t+\ep y))$ has an $\R$-complement,
or equivalently, $\Rct(X_i,A(x);A(y))\ge t+\ep$
for all $i\gg 0$.
On the other hand, by construction
every $x_i\le x,B_i=A(x_i)\le A(x)$ and $y_i\le y,F_i=A(y_i)\le A(y)>0$;
by~(1) $\rho(X)=1$.
Hence $\Rct(X_i,A(x);B(x))\le t_i$: for every curve $C$ on $X$ and $t'>t_i$,
$$
(C.A(x)+t'A(y))>(C.A(x)+t_iA(y))\ge
(C.B_i+t_iF_i)=0
$$
by (3).
So, $t_i\ge t+\ep$, a contradiction.
This proves that $B+\R F\not\subseteq \spn{B+tF}$,
where $B=x\in \Gamma_b,F=y\in\Gamma_f$.

For Addendum~\ref{rational_accum} notice that
if $B,0\not=F\in\Q$ and $B+\R F\not\subseteq \spn{B+tF}$
then $t$ is rational: $(B+\R F)\cap\spn{B+tF}=\{B+tF\}$.

Similarly we can treat bd-pairs.

\end{proof}

\section{Open problems}

It is expected that in most of our results and, in particular,
in most of our applications we can omit the assumption
to have wFt but bd-pairs should be of Alexeev type.

\begin{exa} \label{bd_elliptic}
Let $E$ be a complete nonsingular curve of genus $1$
and $P,Q\in E$ be two closed points on $E$.
Then $\sP=p-q$ is a numerically trivial Cartier (b-)divisor, in particular,
nef.
Thus $(E,\sP)$ is a bd-pair of index $1$ in
the Birkar-Zhang sense but in the Alexeev sense only
if $p=q$ and $\sP=0$.
The pair $(E,\sP)$ has an $\R$-complement if and only if
$\sP\sim_\Q 0$, that is, a torsion.
Since the torsions are not bounded,
$n$-complements of those pairs are not bounded too.

Notice also that $(E,\sP)$ is a maximal lc pair.
But the lc indices of those pairs are also not bounded.

\end{exa}

Thus general bd-pairs are a very good instrument
for wFt morphisms but for general morphisms
it is expected that a generalization of
our results works only for Alexeev bd-pairs $(X/Z,D+\sP)$ of index $m$.
Such a pair or, for short, {\em Alexeev pair of index\/} $m$ is
an Alexeev pair $(X/Z,D+\sP)$ with $\sP=\sum r_iL_i$ and $0\le mr_i\in\Z$
(cf. Conjecture~\ref{mod_part_b-semiample} and Corollary~\ref{Alexeev_index_m}).
Notice also that we collect all mobile linear systems in $\sP$ and
with one fixed prime divisor in $D$.

\begin{conj}[Existence and boundedness of $n$-complements] \label{conjecture_bndc}
It is expected that Theorems~\ref{R-vs-n-complements}, \ref{exist_n_compl},
\ref{bndc}, \ref{lc_compl} and \ref{lc_compl_A}
hold without the assumption that $X/Z\ni o$ has wFt.
Additionally we can relax the connectedness assumption on $X_o$ and
suppose instead that the number of connected components of $X_o$
is bounded (cf. Addendum~\ref{bounded_compon}).

The same expected for Alexeev pairs of index $m$
(cf. Conjecture~\ref{mod_part_b-semiample} below).

\end{conj}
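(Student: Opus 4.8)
The plan is to reduce the statement to the wFt case already proved, at the cost of importing two major conjectural inputs — the existence of good minimal models (abundance) and the b-semiampleness of the moduli part of adjunction. Fix a pair $(X/Z\ni o,B)$, or an Alexeev bd-pair $(X/Z\ni o,B+\sP)$ of index $m$, admitting an $\R$-complement $(X/Z\ni o,B^+)$; by definition $-(K+B)\sim_\R B^+-B\ge 0/Z\ni o$, so $-(K+B)$ is effective, hence pseudoeffective over $Z\ni o$. First I would pass to a $\Q$-factorial dlt crepant model of $(X/Z\ni o,B^+)$, so that by Remark~\ref{rem_def_b_n_compl}, (1) and Proposition~\ref{small_transform_compl} it is enough to produce a b-$n$-complement on that model. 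Then, in place of Construction~\ref{sharp_construction}, I would run the $-(K+B)$-MMP with scaling: for Ft (hence wFt) morphisms this terminates unconditionally, and in general reaching a semiample anticanonical model is precisely the content of the existence of minimal models together with abundance for the pairs in play. Granting this, one obtains a model $X^\sharp/Z$ on which $-(K_{X^\sharp}+B^\sharp_{X^\sharp})$ is nef and — by the analogue of Corollary~\ref{semiample} — semiample, hence a $0$-contraction $\psi\colon (X^\sharp,B^\sharp_{X^\sharp})\to Y/Z\ni o$ with $-(K_{X^\sharp}+B^\sharp_{X^\sharp})=\psi^*H$ for an ample $H$ on $Y$.

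From there the architecture of the paper carries over, the crucial point being that the lifting and extension machinery itself never uses wFt. If $\dim Y<\dim X$, adjunction for $0$-contractions (Section~\ref{adj}) yields an adjoint bd-pair $(Y,B^\sharp{}\dv+\sB^\sharp{}\md)$; I would verify it again has an $\R$-complement and, after a suitable crepant blowup of $Y$, satisfies the dimensional-induction hypotheses, then lift an $n$-complement from $Y$ to $X^\sharp$ via Theorem~\ref{invers_b_n_comp} and its addenda — these need only the $0$-contraction and its adjunction index, supplied by Theorem~\ref{adjunction_index}. If $\dim Y=\dim X$, i.e. $\psi$ is birational, the pair is of generic type, and via Lemma~\ref{exist_plt_model} and Corollary~\ref{plt_b_n_complement} one is reduced to extending lower-dimensional complements across the unique plt lc centre by Theorem~\ref{extension_n_complement}, whose nef-and-big hypothesis on $-(K+B)$ is exactly what the MMP above arranges. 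This keeps intact the whole reduction chain \ref{bndc}$\to$klt type$\to$generic$\to$semiexceptional$\to$exceptional, and the terminal (exceptional) case requires no new ingredient: an exceptional pair is automatically a klt log Fano, hence of Fano type, so Theorem~\ref{excep_comp} and Corollary~\ref{bound_ex_pairs} (resting on BBAB) apply verbatim. The relaxation to a bounded number of connected components of $X_o$ is handled exactly as in the proof of Addendum~\ref{bounded_compon}, running the induction on all components at once and counting only non-isomorphic \'etale branches; and Theorems~\ref{R-vs-n-complements}, \ref{exist_n_compl}, \ref{lc_compl}, \ref{lc_compl_A} then follow from the non-wFt form of Theorem~\ref{bndc} together with the closed property of $\Compd_\R$ (Theorem~\ref{R_compl_polyhedral}), once the latter is itself extended as below.

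The hard part will be twofold. First, Construction~\ref{sharp_construction} without wFt is the existence of the maximal (anticanonical) model, equivalent in general to the existence of good minimal models and abundance, which is open beyond low dimensions and special classes; this is the genuine obstacle. Second — and this is why the statement is phrased for \emph{Alexeev} bd-pairs of index $m$ rather than arbitrary Birkar--Zhang ones — the moduli part $\sB^\sharp{}\md$ produced by adjunction is a priori only b-nef (Theorem~\ref{b_nef}), whereas to keep the induction inside the category where boundedness and Corollary~\ref{bound_ex_pairs} survive one needs it to be b-semiample, i.e. Conjecture~\ref{mod_part_b-semiample}. Example~\ref{bd_elliptic} shows this is essential, not cosmetic: for a genuinely non-semiample b-nef $\sP$ both the lc index and the complementary indices become unbounded. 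A subsidiary obstacle is that several finiteness and polyhedrality statements of Section~\ref{constant_sheaves} — finite generation of $\Effd$, polyhedrality of $\Compd_\R$, constancy of the standard sheaves — were established using wFt; in the general setting they must be replaced by their MMP-conjectural analogues (finiteness of minimal models), which flow from the same circle of conjectures. Once these are in place, the remaining work is routine bookkeeping: checking that each reduction step of the wFt proof, and each addendum concerning $B^+\ge B_{n\_\Phi}{}^\sharp$ and b-$n$-complements, goes through with the MMP input substituted for the wFt hypothesis.
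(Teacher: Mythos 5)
The statement you are addressing is an open conjecture in the paper: the author gives no proof of Conjecture~\ref{conjecture_bndc}, only a discussion (in the final section) of which open ingredients a proof is expected to require. Your text is accordingly not a proof but a conditional reduction plan, and as such it does line up with part of the author's own sketch: replacing Construction~\ref{sharp_construction} by an LMMP-with-abundance input, invoking Conjecture~\ref{mod_part_b-semiample} so that the adjoint moduli part stays in the Alexeev category (Example~\ref{bd_elliptic} showing why Birkar--Zhang data is not enough), and noting that the lifting/extension machinery of Sections~\ref{adj} and~\ref{adjunction_on_divisor} does not itself use wFt.

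The concrete gap is your claim that, granted MMP/abundance and b-semiampleness, ``the terminal (exceptional) case requires no new ingredient'' and the rest is routine bookkeeping. That is true only inside the wFt world. Without wFt the induction does not bottom out solely in klt log Fano (hence Fano type) pairs: after running your $-(K+B)$-MMP on, say, a global $0$-pair with $B=0$ and $K\equiv 0$, the contraction $\psi$ is trivial, there is no big boundary, and the pair is neither generic nor of Fano type. The paper is explicit that the two extreme terminal cases are Ft pairs \emph{and} Calabi--Yau varieties (global $0$-pairs with $B=0$ and canonical singularities), and that for the latter the boundedness of the (lc) index is precisely the open Index conjecture, known only for $d\le 3$; reducing arbitrary lc or klt $0$-pairs to the canonical case in turn needs the Gap conjecture~\ref{gap} (a special case of acc for mld's). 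No BBAB-type boundedness is available there, so Corollary~\ref{bound_ex_pairs} and the boundedness arguments of Theorem~\ref{excep_comp} do not apply verbatim, and Corollary-Conjecture~\ref{conj_bounded_lc_index} — which your plan silently uses through the adjunction-index and maximal-lc-index steps (Theorem~\ref{adjunction_index}, Theorem~\ref{lc_compl}) — is itself conjectural outside wFt. In short, even as a conditional reduction your list of inputs is incomplete: you must add the Index and Gap conjectures (and a non-wFt substitute for the polyhedrality/finiteness results of Section~\ref{constant_sheaves}, which does not follow merely from finiteness of minimal models), and with them the statement remains, as in the paper, a conjecture rather than a theorem.
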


The conjecture holds in dimension $d\le 2$.
Methods of the paper allows to prove Theorem~\ref{bndc}
in dimension $d\le 2$ with the existence of an $\R$-complement
instead of a klt $\R$-complement in (1) of the theorem
(cf. Examples~\ref{unbounded_lc_compl}, (1-3) and
\cite[Inductive Theorem~2.3]{Sh95}).
In this situation assumptions (2-3) of Theorem~\ref{bndc} and
Theorems~\ref{lc_compl} and \ref{lc_compl_A} are redundant.
See $n$-complements for $d=1$, essentially, for $\PP^1$
in Examples~\ref{R_comp_dim 1}, \ref{n_comp_dim_1}.

Most of our applications follows directly from
results about $n$-complements.
The following result also follows
from the boundedness of $n$-complements.

\begin{cor-conj}[Boundedness of lc index] \label{conj_bounded_lc_index}
Let $d$ be a nonnegative integer and
$\Gamma\subset [0,1]$ be a dcc subset.
Then there exists a finite subset
$\Gamma(d)\subseteq \Gamma$ and
positive integer $I=I(d,\Gamma)$,
depending only on $d$ and $\Gamma$, such that,
for every maximal lc $0$-pair $(X/Z\ni o,B)$
with $\dim X=d$ and $B\in \Gamma$,
\begin{description}

\item[\rm (1)\/]
$B\in\Gamma(d)$;

\item[\rm (2)\/]
$\spn{B}$ is bounded; and

\item[\rm (3)\/]
$I$ is an lc index of $(X/Z\ni o,B)$.

\end{description}

The same is expected for maximal lc Alexeev $0$-pairs $(X/Z\ni o,B+\sP)$ of index $m$
with $\Gamma(d,m)$,$I(d,\Gamma,m)$ depending also on $m$.

\end{cor-conj}

\begin{proof}[Proof-derivation]
Indeed, we can use the proof of Corollary~\ref{bounded_lc_index}.
However, before we need to derive nonwFt versions of
Theorem~\ref{acc_R-complt} and of Corollary~\ref{direct_stability}.
\end{proof}

In particular, the last conjecture includes the global case
with a rational dcc subset $\Gamma\subset [0,1]$:
every $0$-pair $(X,B)$ with $B\in \Gamma$
of a {\em bounded\/} dimension has a {\em bounded\/} (global) lc index $I$,
that is,
$$
I(K+B)\sim 0.
$$
Moreover, for a proof of Conjecture~\ref{conjecture_bndc},
two extreme cases are important: Ft pairs and
global $0$-pairs with $B=0$ and canonical singularities, known as Calabi-Yau varieties.
So, the following very famous conjecture is indispensable
for the theory of complements and
possibly related to topology (cf. \cite[Theorema~6.1]{FM}).

\begin{conj}[Index conjecture]
Let $d$ be a nonnegative integer.
Then there exists
a positive integer $I=I(d)$ such that,
every complete variety or space $X$
with $\dim X=d$, only canonical singularities and $K\equiv 0$ has
index $I$:
$$
IK\sim 0.
$$

\end{conj}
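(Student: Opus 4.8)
The plan is to deduce the Index conjecture from the machinery of $n$-complements, exactly as the paper has set things up: the Index conjecture is precisely the global Calabi--Yau case of Corollary-Conjecture~\ref{conj_bounded_lc_index} with $\Gamma=\{0\}$, and our job is to reduce a complete canonical Calabi--Yau $X/k=\Spec k$ with $K\equiv 0$ to a setting where the $0$-pair $(X/k,0)$ admits a bounded $n$-complement. First I would observe that $(X,0)$ with $K_X\equiv 0$ and canonical singularities is automatically a global $0$-pair: by Example~\ref{1stexe}, (3) every complete $0$-pair has an $\R$-complement, and since $(X,0)$ is klt away from the strictly canonical locus while $K+0=K$ is $\R$-Cartier, one checks $(X,0)$ is lc; moreover for canonical $X$, $\equiv 0$ is equivalent to $\sim_\R 0$ in the relevant cases recorded in Remark~\ref{remark_def_complements}, (4) (weak Fano type is not available here, but the boundary-case statement \cite{A05}\cite{G} applies with $B^+=0$). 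So $(X/k,0)$ is a maximal lc global $0$-pair in the sense introduced before Corollary~\ref{bounded_lc_index}: globally, every $0$-pair is maximally lc.

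\textbf{Key steps.} The main step is to produce a \emph{bounded} complementary index. Since $\Gamma=\{0\}\subset\Q$ is trivially a dcc set, and we are in the global setting $Z=o=\pt$, I would invoke Corollary-Conjecture~\ref{conj_bounded_lc_index} itself in the form already proven unconditionally when wFt is available, namely Corollary~\ref{bounded_lc_index}; but $X$ here need not have weak Fano type, which is exactly the gap. Therefore the honest route is: (i) reduce to $X$ $\Q$-factorial and terminal by a crepant terminalization $Y\to X$ (the terminalization is crepant, so $K_Y\sim_\Q 0$ and $\dim Y=d$, and a bounded index for $K_Y$ gives one for $K_X$ by pushforward, since $K_X\sim 0$ iff $K_Y\sim 0$ modulo $\varphi$-exceptional divisors which carry no sections); (ii) run a minimal model program on $Y$ --- since $K_Y\equiv 0$, $Y$ is already a minimal model, and by abundance (known for $K\equiv 0$, i.e. the Beauville--Bogomolov type decomposition via \cite{A05}\cite{G}) the index of $K_Y$ is controlled by the indices of the factors of an étale-in-codimension-one cover: abelian ($IK=0$ with $I=1$), strict Calabi--Yau, and holomorphic-symplectic; (iii) on each factor one either has $K\sim 0$ outright or reduces to bounding torsion in $\Pic$, which for rationally connected or simply-connected-in-codimension-one varieties is controlled by Addendum~\ref{const_Cl} (sheaves $\shTor X/T$ constant on bounded families) \emph{once boundedness of the factors is known}. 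Thus the whole statement is pulled back to boundedness of canonical Calabi--Yau varieties of dimension $\le d$, which is the content that BBAB \cite[Theorem~1.1]{B16} supplies in the Fano direction but not here.

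\textbf{Main obstacle.} The hard part will be precisely the boundedness of canonical Calabi--Yau varieties in a fixed dimension --- this is genuinely open for $d\ge 3$ and is not a consequence of anything in the excerpt; without it, the reduction in step (iii) produces an a priori infinite family of torsion indices. What \emph{is} available from the paper is the conditional implication: if one feeds in boundedness of the relevant bounded families (the hypothesis silently used throughout Section~\ref{constant_sheaves}, see Proposition~\ref{bounded_rank} and its addenda, and Addendum~\ref{all_together}), then $\shTor$ is a constant sheaf and hence the Cartier index of $K$ is bounded, giving $IK\sim 0$ with $I=I(d)$. So the realistic form of the proof I would write is: reduce to terminal $\Q$-factorial $K\equiv 0$; apply abundance to get $K\sim_\Q 0$ so that $K$ is a torsion class in $\Cl X$; invoke Corollary-Conjecture~\ref{conj_bounded_lc_index} (equivalently, the boundedness input plus Addendum~\ref{const_Cl}) to bound that torsion; conclude $IK\sim 0$. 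The proof is therefore complete modulo the boundedness of Calabi--Yau varieties, and I would flag this dependence explicitly rather than attempt to circumvent it, since no tool in the excerpt removes it.
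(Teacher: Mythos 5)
There is a genuine gap, and it is unavoidable: the statement you were asked to prove is stated in the paper as an open conjecture, not a theorem. The paper itself records exactly how far one can get — $K\sim_\Q 0$ is known by \cite[Theorem~0.1,(1)]{A05} \cite{G}, the cases $d\le 3$ are known ($I(1)=1$, $I(2)=12$, $I(3)$ the Beauville number), and "the next indices $I(d)$, $d\ge 4$, and their existence is unknown." So there is no proof in the paper to compare against, and your proposal, which you honestly flag as "complete modulo the boundedness of Calabi--Yau varieties," does not close the distance: that boundedness (equivalently, a uniform bound on the torsion order of $K$ in $\Pic X$ over all canonical $X$ with $K\equiv 0$ in dimension $d$) is precisely the open content of the conjecture, not an auxiliary input one can defer.

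Two further points about the intermediate steps. First, your reduction via crepant terminalization and abundance is sound and is essentially what the paper already records; nothing is gained or lost there. Second, the appeal to Addendum~\ref{const_Cl} (constancy of $\shTor X/T$) to bound torsion is not legitimate even conditionally as you phrase it: that addendum, via Proposition~\ref{bounded_rank}, is proved for bounded families of \emph{rationally connected} spaces (finite generation of $\Pic X_t$ there comes from $H^0(X_t,\Omega^1_{X_t})=0$), whereas Calabi--Yau varieties are never rationally connected. Even granting boundedness of the family, one would need a separate argument (e.g.\ deformation invariance of the torsion of $\Pic$, or a Beauville--Bogomolov-type decomposition for klt/canonical singularities) to bound the torsion uniformly; and the decomposition route in turn requires controlling the order of the finite quasi-\'etale covers, which is again the same open boundedness problem in disguise. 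So the proposal should be read as a reduction sketch of an open problem, consistent with the paper's own discussion around Conjecture~\ref{conjecture_bndc} and Corollary-Conjecture~\ref{conj_bounded_lc_index}, rather than as a proof.
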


E.g, if we assume additionally that $X$ has only canonical singularities,
then $I(1)=1,I(2)=12$ (classical) and
$$
I(3)=2^5\cdot 3^3\cdot 5^2\cdot 7\cdot 11\cdot 13\cdot 17\cdot 19,
$$
the Bauville number.
The next indices $I(d),d\ge 4$, and their existence is unknown.
We know that, for every variety or space in Index conjecture, $K\sim_\Q 0$,
that is, there exists an index \cite[Theorem~0.1,(1)]{A05}.

A reduction to Ft varieties and varieties of Index conjecture
uses the LMMP and the following semiampleness.

\begin{conj}[Effective b-semiampleness; cf. {\cite[(7.13.3)]{PSh08}}]
\label{mod_part_b-semiample}
Let $d$ be a nonnegative integer and
$\Gamma\subset [0,1]$ be a rational dcc subset.
Then there exists a positive integer $I=I(d,\Gamma)$ such that,
for every $0$-contraction $(X,D)\to Z$
under assumptions~\ref{adjunction_0_contr} and
with $\dim X=d,D\hor\in\Gamma$,
$I\sD\md$ is b-free, that is,
$I\sD\md=\overline{M}$, where $M$ is a base point free
divisor on some model $Z'$ of $Z$.
In other words, $(Z,D\dv+\sD\md)$ is an Alexeev log pair
(not necessarily complete) of index $I$.

The same is expected for $0$-contractions $(X,B+\sP)\to Z$
of Alexeev pairs $(X,B+\sP)$ of index $m$
with $I(d,\Gamma,m)$ depending also on $m$.

\end{conj}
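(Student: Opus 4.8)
\textbf{Proof proposal for the Effective b-semiampleness conjecture (Conjecture~\ref{mod_part_b-semiample}).}
Although stated as a conjecture, the plan for an attack in the cases we actually need is essentially carved out of the machinery already assembled. The idea is to reduce b-semiampleness of the moduli part $\sD\md$ to the \emph{special} situation on which everything else in the paper rests: exceptional Ft pairs with bounded invariants. First I would pass to a model $f'\colon(X',D')\to Z'$ as in Step~1 of the proof of Theorem~\ref{adjunction_index}, so that $\sD\md$ is stabilized over $Z'$, $(X'/Z',B)$ has tlc singularities (weak semistable reduction plus relative LMMP), and the adjunction index $I=I(d,\Gamma)$ of Theorem~\ref{adjunction_index} (general case) applies; by Addendum~\ref{adjunction_index_adjoint} the adjoint bd-pair $(Z',D\dv{}_{,Z'}+\sD\md)$ is a log bd-pair of index $I$. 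Thus after rescaling we may assume $I\sD\md=\overline{M'}$ with $M'$ a nef Cartier divisor on $Z'$, and the task is to show that a bounded multiple of $M'$ is base point free.

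The next step is a dimensional induction on $\dim Z$, the standard device for the moduli part (cf.~\cite{Sh13}, \cite{A04}). When $\dim Z=0$ the statement is vacuous. When $\dim Z\ge 1$, one runs the $-(K_{Z'}+D\dv{}_{,Z'}+\sD\md)$-MMP (or rather works fibrewise over a contraction defined by $M'$) and reduces to the two extreme geometries: a Mori--Fano fibration of the base, where one descends further by adjunction on the base and invokes the inductive hypothesis together with Addenda~\ref{adjunction_index_bd} and \ref{b_nef_bd}; and the birational/$0$-pair case, where $M'$ is semiample (by Corollary~\ref{semiample}, since $Z'$ has wFt after the LMMP) and the point is \emph{effectivity} of the freeness. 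Here Corollary~\ref{free} gives a bound $N=N(\dim Z)$ making $NM'$ base free over the base of the contraction once $M'$ is nef and Cartier; combining this with the boundedness of the Cartier index of $K_{Z'}+D\dv{}_{,Z'}$ (which follows from Corollary~\ref{bounded_lc_index} applied to the maximal lc $0$-pair structure, exactly as in Step~2 of the proof of Theorem~\ref{adjunction_index}) yields a universal $I$ such that $I\sD\md$ is b-free. The passage $\Gamma\leadsto\Gamma\hor(d)$ of Addendum~\ref{adjunction_index_hor} guarantees that only finitely many horizontal multiplicities occur, so the bound depends on $d$ and $\Gamma$ alone, and on $m$ in the bd-case by Addendum~\ref{adjunction_index_bd}.

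The main obstacle is precisely the effectivity in the global, fibre-type base case: knowing $\sD\md$ is b-nef (Theorem~\ref{b_nef}) and that $(Z,D\dv+\sD\md)$ has bounded index does not by itself bound the multiple needed to make the linear system free, because the fibres of the new fibration of $Z'$ need not be Fano of bounded type without the klt $\R$-complement hypothesis. One would need a Koll\'ar-type effective base point freeness adapted to b-divisors on (possibly non-wFt) bases, i.e.\ an unconditional analogue of Corollary~\ref{free}; this is the genuinely open input and is the reason the statement is posed as a conjecture rather than a corollary. In the Fano-type regime, however --- which is all that the proofs of Theorems~\ref{R-vs-n-complements}--\ref{bndc} actually invoke --- Corollaries~\ref{semiample} and \ref{free} do the job, so the argument above is complete for wFt $X/Z$ and, by Addendum~\ref{b_nef_bd}, for bd-pairs of index $m$ over wFt bases; equivalently, one obtains Corollary~\ref{Alexeev_index_m} (the translation of the Birkar--Zhang data into Alexeev data) exactly in that range. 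I would therefore present the conjecture with a proof of the wFt case and flag the non-wFt effective base point freeness as the missing ingredient, consistent with how Corollary-Conjecture~\ref{conj_bounded_lc_index} and Conjecture~\ref{conjecture_bndc} are handled.
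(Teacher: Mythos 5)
This statement is one of the paper's open problems: it is stated as a conjecture (cf.\ \cite[(7.13.3)]{PSh08}) and the paper gives no proof of it in any case; indeed the whole bd-pair formalism (Birkar--Zhang data, only b-nef moduli parts via Theorem~\ref{b_nef}) is set up precisely so that the arguments of Theorems~\ref{R-vs-n-complements}--\ref{bndc} never need this conjecture, and the conversion to Alexeev data in Corollary~\ref{Alexeev_index_m} is only performed when the moduli part is already assumed free of index $m$. So there is no ``paper proof'' to match, and your proposal must be judged on its own merits.

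Judged that way, it contains a genuine gap, and the gap is not where you place it. You claim the wFt case is settled by Corollaries~\ref{semiample} and \ref{free}, with the non-wFt effective base point freeness as the only missing ingredient. But the divisor you need to make free is $M'$ on the model $Z'$ over which $I\sD\md$ stabilizes, and $Z'$ is a sufficiently high birational model of $Z$ (Step~1 of the proof of Theorem~\ref{adjunction_index}); such a model is in general not of (weak) Fano type even when $X/Z$ is wFt and $Z$ is Ft, so neither Corollary~\ref{semiample} (nef $\Rightarrow$ semiample needs wFt) nor Corollary~\ref{free} (effective freeness needs wFt) applies to $(Z',M')$. A nef Cartier divisor on an arbitrary projective variety need not be semiample at all, and one cannot descend $M'$ to $Z$ since the moduli part typically does not stabilize there. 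Thus even in the wFt regime the passage from ``$I\sD\md$ is b-nef and b-Cartier of bounded index'' (which Theorem~\ref{adjunction_index} and Theorem~\ref{b_nef} do give) to ``$I\sD\md$ is b-free'' is exactly the open content of the conjecture; the known cases (e.g.\ relative dimension one in \cite{PSh08}) are proved by completely different means (moduli of the fibers), not by base point freeness on Fano-type bases. The intermediate reduction you sketch --- running an MMP on the adjoint bd-pair of the base and inducting on $\dim Z$ --- is also not justified: the moduli b-divisor is not compatible with such base modifications in any way established in the paper, and contracting or fibering $Z'$ does not present $\sD\md$ as a moduli part of a lower-dimensional $0$-contraction. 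So the proposal should be regarded as a heuristic plan, with the key step (effective freeness of the nef Cartier trace of the moduli part on its stabilization model) unproved in all cases, including the wFt one.
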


Corollary-Conjecture~\ref{conj_bounded_lc_index} and
construction of $n$-complements for fibrations
can be reduced to the global case as for Ft fibrations.
In its turn, if a global $0$-pair $(X,B)$ has a nonzero
boundary then the lc index conjecture can be reduced to
a Ft variety $X$ or to a nontrivial Ft fibration $(X,B)\to Z$.
In the latter case we can use dimensional induction for
the base $Z$.
If $\dim Z\ge 1$, we get a bd-pair $(Z,B\dv+\sB\md)$ with
possibly nonFt $Z$.
As we already noticed the induction will not
work for bd-pairs of Birkar-Zhang pairs but
is expected to work for Alexeev pairs.
As for Ft varieties in the most critical for us
situation $B$ has hyperstandard multiplicities.
Then according to Conjecture~\ref{mod_part_b-semiample}
$(X,B\dv+\sB\md)$ is actually an Alexeev pair
of bounded index.

It is well-known \cite[Corollary~7.18]{PSh08} that
the Alexeev log pair $(X,B\dv+\sB\md)$ can be easily
converted into a log pair $(X,B\dv+B\md)$ for
a suitable boundary $B\md$ with a finite set of
rational multiplicities (depending only on the index of
the Alexeev pair).

\begin{cor} \label{Alexeev_index_m}
Let $(X,D+\sP)$ be an Alexeev (log, lc, klt) pair of index $m$.
Then there exists an effective divisor $P$ such
that $(X,D+P)$ is  also a (respectively, log, lc klt) pair,
$mP\in m\sP_X$ and $\Supp D,\Supp P$ are disjoint.
In particular, $P\in [0,1]\cap(\Z/m)$.

\end{cor}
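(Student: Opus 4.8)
The plan is to take $P$ to be the trace $\sP_X$ for a sufficiently general choice of the divisor representing $\sP$, and then observe that all four assertions are formal consequences of the definitions. First I would pass to a birational model $\psi\colon Y\to X$ on which every mobile linear system $L_i$ occurring in $\sP=\sum r_iL_i$ is base point free; such a model exists because being an Alexeev pair of index $m$ forces $\sP$ to be a well-defined b-$\R$-divisor, stabilized on some model, so we may resolve the base loci of the finitely many $L_i$. On $Y$ the integral b-divisor $m\sP$ is then the linear system $L=\sum_i mr_iM_i$, where $M_i$ is the free transform of $L_i$ and every $mr_i\in\Z^{\ge 0}$; since a sum of base point free linear systems is again base point free, $L$ is base point free. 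By Bertini's theorem in characteristic $0$ a general member $H\in L$ is reduced. Set $P:=\tfrac1m\psi_*H$. By the very definition of the trace in the Alexeev case, $\sP_X$ is the birational image on $X$ of a general member of $m\sP$ divided by $m$; hence $P=\sP_X$ for this general choice of representative.

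Next I would read off the required properties. Because $P=\sP_X$ and $(X,D+\sP)$ being an Alexeev (log, lc, klt) pair means precisely that $(X,D+\sP_X)$ is a (log, lc, klt) pair for the general trace — in particular $K+D+\sP_X$ is $\R$-Cartier and the singularity type is insensitive to the general choice — the pair $(X,D+P)$ is of the same type. The divisor $mP=\psi_*H$ is an effective integral Weil divisor in the linear equivalence class of $m\sP_X$, so $mP\in m\sP_X$ and $P\in\Z/m$. Since $H$ is reduced, its pushforward $\psi_*H$ (whose prime components are the non-$\psi$-exceptional components of $H$, each with multiplicity $1$) is reduced, so every multiplicity of $P$ lies in $\{0,1/m\}\subseteq[0,1]$; together this gives $P\in[0,1]\cap(\Z/m)$, and note that this bound is obtained from reducedness alone, so it holds already in the "log" case. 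Finally, each $L_i$ is mobile and hence has no fixed prime divisor, so a general member $H_i\in L_i$ — and therefore $\psi_*H_i$ — contains no component of $D$; consequently $\Supp P$ and $\Supp D$ have no common component, which is the disjointness meant in the statement.

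The argument is essentially formal, and I expect no real obstacle. The two points that deserve care are: (i) interpreting "$\Supp D,\Supp P$ disjoint" as "no common irreducible component", which is exactly what is used in the conclusion $P\in[0,1]\cap(\Z/m)$ — genuine set-theoretic disjointness would require base freeness of $m\sP_X$ along $\Supp D$, which is neither assumed nor available; and (ii) the reducedness of the general member of $\sum mr_iL_i$, which would fail for a genuinely fixed (zero-dimensional) linear system but is fine here, since by the convention that fixed prime components are absorbed into $D$ every $L_i$ in $\sP$ is honestly mobile and base point free systems have reduced general members in characteristic $0$. This is the formal counterpart of \cite[Corollary~7.18]{PSh08}, the bd-pair bookkeeping being supplied precisely by the index-$m$ hypothesis.
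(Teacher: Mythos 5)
Your construction of $P$ coincides with the paper's: on a model where every $L_i$ is base point free you take, in effect, $mr_i$ general members of each $L_i$ (your single general member of the sum system is the same divisor), push forward and divide by $m$; and the formal bookkeeping — effectivity, $mP\in m\sP_X$, reducedness of the pushforward, hence $P\in[0,1]\cap(\Z/m)$, and no common component with $D$ (which is indeed all that "disjoint" can mean here) — is fine. The gap is the step where you transfer the (lc, klt) property. You assert that being an Alexeev (lc, klt) pair "means precisely that $(X,D+\sP_X)$ is (lc, klt) for the general trace" and that "the singularity type is insensitive to the general choice". That is not the definition used in this paper: singularities of a bd-pair (Birkar--Zhang, or an Alexeev pair translated into one) are measured by the divisorial part $\D\dv=\D(X,D+\sP_X)-\sP$ alone, i.e.\ on a model where $\sP$ is free/stable they are the singularities of $(Y,D_Y)$, with $\sP$ contributing nothing. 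Replacing $\sP$ by an honest general divisor $P$ in its class concentrates the moduli positivity into a divisor with coefficients $1/m$, and this can strictly decrease log discrepancies over $\Supp P$ — exactly what Addendum~\ref{Alexeev_index_m_discrip} ($\D(X,D+P)\ge\D\dv$) records, and what the Warning in Construction~\ref{b_fib_lc} cautions about. So the singularity statement is the content of the corollary, not a definition, and it requires an argument: a Bertini/general-position computation showing that adding general members of free linear systems with coefficient $1/m\le 1$ (resp.\ $<1$) preserves lc (resp.\ klt) on the free model and hence for the crepant pair downstairs.

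This is not merely expositional: your argument gives a false conclusion in the klt case when $m=1$, since then the general member enters $P$ with coefficient $1$ and $(X,D+P)$ is lc but never klt along $\Supp P$; the paper has to patch precisely this point by replacing $m=1$ with an integer $\ge 2$ in the construction. For $m\ge 2$ your conclusion is correct, but the omitted Bertini step is where the distinction $1/m<1$ versus $1/m\le 1$ — i.e.\ the difference between the klt and lc assertions — actually enters, so it cannot be waved away as "insensitivity to the general choice".
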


\begin{add} \label{Alexeev_index_m_discrip}
$\D(X,D+P)\ge\D\dv=\D(X,D+\sP_X)-\sP$.
\end{add}

\begin{add}
Let $\Gamma$ be a set of real numbers.
If additionally $D\in\Gamma$ then
$$
B+P\in \Gamma\cup([0,1]\cap \frac \Z m).
$$
\end{add}

\begin{add}
If additionally $\Gamma$ satisfies the dcc (in $[0,1]$, in a hyperstandard set)
then
$$
\Gamma\cup([0,1]\cap \frac \Z m).
$$
satisfies the dcc (respectively, in $[0,1]$, in a hyperstandard set).

\end{add}

\begin{proof}(Cf. \cite[Corollay~7.18]{PSh08}.)
Take a crepant model $f\colon (Y, D_Y+\sP)\to (X,D+\sP)$
such that $\sP$ is {\em free over\/} $Y$, that is, $\sP=\sum r_i L_i$ and
every $L_i$ is a base point free linear system on $Y$.
Since $(X,D+\sP)$ is an Alexeev pair of index$m$,
$mr_i=m_i$ is a nonnegative integer.
By the Bertini theorem we can pick up $m_i$ rather general effective divisors
$E_{i,j},j=1,\dots,m_i$ in every linear system $L_i$ such
that
$$
P=\sum f(E_{i,j})/m
$$
satisfies  the required properties.
The inclusion $mP\in m\sP_X$ means  that
every summand $f(E_{i,j})$ of $mP$ belongs
to the corresponding linear system $L_i$ on $X$.
The klt property of $(X,D+P)$ follows from that of
$(X,D+\sP)$ if $m\ge 2$.
Otherwise we replace $m=1$ in our construction by
any positive integer $\ge 2$.

Notice that log discrepancies of $(X,D+P)$ are
strictly larger than that of $(X,D+\sP)$ only
over $\Supp \cup E_{i,j}$.
The other log discrepancies are the same.
This proves Addendum~\ref{Alexeev_index_m_discrip}.

Two other addenda are immediate by definition.

\end{proof}

The most important case of Index conjecture is
the case with a variety $X$ having only canonical and
even terminal singularities.
Otherwise, we can blow up a prime b-divisor
$P$ of $X$ with a log discrepancy $a=\ld(P;X,0)<1$.
A result is a crepant $0$-pair $(Y,(1-a)P)$ with $0<1-a\le 1$.
Every such case can be reduced as above to
Ft varieties or varieties of Index conjecture of
smaller dimension.
However, this works for all lc or klt $0$-pairs $X$ of given
dimension $d$ if $a$ belongs to an acc set.
This follows from the acc of mld's \cite[Problem~5]{Sh88}
for mld's $a<1$ because then $1-a$ form a dcc set in
the dimension $d$.
Actually, we can use much weaker and already known
results about $n$-complements when $a$ is sufficiently
close to $0$.
In this situation $a=0,1-a=1$ and Index conjecture again
can be done by dimensional induction.
The same works for $a$ which are not close to $1$.
But $a<1$ are actually not close to $1$ by the following
very special case of the acc for mld's conjecture.
If singularities are canonical we can blow up
all $P$ with $a(P;X,0)=1$; in this case $1-a=0$.

\begin{conj}[Gap conjecture] \label{gap}
Let $d$ be a nonnegative integer.
Then there exists a positive integer $a<1$
such that if $X\ni P$ with $\dim X=d$ is $a$-klt, that is,
the mld of $X$ at $P$ is $>a$, then $X\ni p$ is
canonical, that is the mld of $X$ at $P$ is $\ge 1$.

\end{conj}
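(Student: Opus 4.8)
\textbf{Proof proposal for the Gap Conjecture (Conjecture~\ref{gap}).}
The plan is to derive the Gap Conjecture from the acc for minimal log discrepancies in dimension $d$, or---more precisely---to reduce it to a statement that can be attacked via boundedness of complements in the spirit of the rest of the paper. First I would reformulate the desired statement in terms of the set
$$
\mathrm{Mld}(d)=\{\ld(P;X,0)\mid \dim X=d,\ X\ni P
\text{ with }P\text{ a prime b-divisor of }X,\ \ld(P;X,0)<1\}.
$$
The conjecture is equivalent to the assertion that $1$ is not an accumulation point of $\mathrm{Mld}(d)$ from below, i.e. $\sup\mathrm{Mld}(d)<1$. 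So the strategy splits into two halves: (i) show that every mld value $a=\ld(P;X,0)<1$ in dimension $d$ is ``caught'' by a bounded family of maximal lc $0$-pairs, via the crepant blowup construction; (ii) feed that into Corollary~\ref{bounded_lc_index} (or its nonwFt extension Corollary-Conjecture~\ref{conj_bounded_lc_index}) to force $1-a$ into a dcc set, whence $a$ into an acc set bounded away from $1$.

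For step (i), I would argue as in the reduction sketched after Gap conjecture in the text: given $X\ni P$ with $a=\ld(P;X,0)<1$, take a crepant extraction $(Y,(1-a)P)\to(X,0)$ that blows up $P$ (available by \cite[Theorem~3.1]{Sh96} after a suitable $\Q$-factorialization, since $0<1-a\le 1$). Then $(Y,(1-a)P)$ is a crepant $0$-pair with a boundary $(1-a)P$ whose only nonzero coefficient is $1-a$. Running $-(K_Y+(1-a)P)$-MMP---equivalently, applying Construction~\ref{sharp_construction}---produces either a maximal lc $0$-pair with an lc centre (if $Y$ is not already Fano type), to which Corollary~\ref{bounded_lc_index} applies directly with $\Gamma=\{1-a\}\cup\{0,1\}$, or a Fano-type contraction. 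In the Fano-type birational case one contracts down to a $\Q$-factorial Fano-type model where the coefficient $1-a$ lies in $\Gamma(d)\subseteq\Gamma$ by part~(1) of Corollary~\ref{bounded_lc_index}; in the fibration case one uses adjunction (\ref{adjunction_0_contr}) and Theorem~\ref{adjunction_index} to pass to a lower-dimensional base, applying induction on $d$. The key point is that throughout this MMP the divisorial coefficient $1-a$ is unchanged, so the output is a maximal lc $0$-pair with $B\in\{0,1-a,1\}$.

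For step (ii), Corollary~\ref{bounded_lc_index} (in the wFt case, which is what the above reduction lands in) tells us that for such maximal lc $0$-pairs $B\in\Gamma(d)$ where $\Gamma(d)\subseteq\Gamma$ is \emph{finite}---but $\Gamma=\{0,1-a,1\}$ depends on $a$, so I must instead run the argument uniformly: take $\Gamma$ to be the (as yet unknown, but to be bootstrapped) set of all values $1-a$ arising, note that it automatically satisfies the dcc because $a\le 1$, and then Corollary~\ref{bounded_lc_index}(1) applied with this $\Gamma$ shows the relevant coefficients lie in a finite $\Gamma(d)$, forcing finiteness of the attained values of $1-a$ near $0$, equivalently finiteness of attained $a$ near $1$. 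This circularity is resolved exactly as the acc-for-$\Rct$ argument in Theorem~\ref{acc_R-complt} is resolved: assume a strictly increasing sequence $a_i\to 1$, pull back to a sequence of maximal lc $0$-pairs via step~(i), invoke Theorem~\ref{acc_R-complt} together with the $\R$-complement threshold identity $\Rct(X_i/Z_i\ni o_i, 0; P_i)=a_i$ (here $P_i$ plays the role of $F$), and read off a contradiction with the acc of $\Rct$-thresholds.

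\textbf{The main obstacle.} The hard part will be the clean handling of the non-Fano-type, non-fibration case---namely a Calabi--Yau type $0$-pair $(Y,(1-a)P)$ with $(Y,0)$ canonical or terminal and $K_Y\equiv 0$ on a component---since there Corollary~\ref{bounded_lc_index} (which assumes wFt) does not literally apply, and one is thrown onto the Index Conjecture, which is open. In other words, the Gap Conjecture as stated is not fully provable by the techniques of this paper alone: it reduces, modulo the acc for mld's in lower dimension and boundedness of complements, to the index behaviour of Calabi--Yau varieties. I would therefore present the argument as a conditional reduction: Gap Conjecture in dimension $d$ follows from (acc for mld's in dimensions $<d$) $+$ (boundedness of $n$-complements, i.e. Corollary-Conjecture~\ref{conj_bounded_lc_index}), with the honest caveat that the Calabi--Yau stratum requires the Index Conjecture, and only note that in dimension $\le 3$ everything is known so the Gap Conjecture holds unconditionally there.
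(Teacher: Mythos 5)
You should first note that the paper does not prove this statement at all: Conjecture~\ref{gap} appears in the final section of open problems, is explicitly left as a conjecture, and the paper only remarks that it (together with the Index conjecture) is known up to dimension $d=3$ by the cited external work of Jiang and of Liu--Xiao. So there is no proof in the paper to compare against, and your own text honestly concedes that it delivers only a conditional reduction, not a proof. To that extent your conclusion is compatible with the paper's stance.

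However, as a reduction your argument has a genuine gap, and it is exactly at the point you try to wave away. The bootstrapping in step~(ii) is circular: you take $\Gamma$ to be the set of all values $1-a$ that occur and claim it ``automatically satisfies the dcc because $a\le 1$.'' That is false --- $a\le 1$ only gives that the set is bounded; if there were a strictly increasing sequence $a_i\to 1$ (the situation the Gap conjecture is meant to exclude), then $1-a_i$ would be a strictly decreasing sequence and $\Gamma$ would fail the dcc. The dcc of $\{1-a\}$ is equivalent to the acc (near $1$) of the mld values, i.e. to the very statement you are trying to prove, and both Corollary~\ref{bounded_lc_index} and Theorem~\ref{acc_R-complt} require a dcc set as an input hypothesis, so neither can break this circle. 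This is also the direction of implication the paper itself uses: the discussion preceding Conjecture~\ref{gap} invokes the acc of mld's (or the Gap conjecture) as an input to control $1-a$, not as an output of boundedness of lc indices. A second, smaller issue: for $a>0$ the crepant pair $(Y,(1-a)P)$ over $X\ni P$ is klt and in general is not a maximal lc $0$-pair (it admits other $\R$-complements locally over $X$, e.g. by adding effective principal divisors through $P$), so Corollary~\ref{bounded_lc_index}, which is stated for maximal lc $0$-pairs, does not apply to it directly as claimed in your step~(i); the threshold identity $\Rct(\,\cdot\,,0;P)=a$ you invoke later also needs justification, since $\Rct$ is a complement threshold, not an mld. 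The correct summary is that the Gap conjecture remains open for $d\ge 4$, is not a consequence of the results of this paper, and is known for $d\le 3$ only by the external references the paper cites.
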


Index and Gap conjectures are established up to
dimension $d=3$ \cite[Theorem~1.3 and Corollary~1.7]{J} \cite[Theorem~1.4]{LX}.

Finally, we state the following $\ep$-lc strengthening
of $n$-complements.
However, we use $a$ instead of $\ep$.

\begin{conj}[$a$-$n$-complements; cf. {\cite[Conjecture]{Sh04b}}] \label{a_n_compl}
Let $a$ be a nonnegative real number.
Assume additionally in Theorems~\ref{R-vs-n-complements}, \ref{exist_n_compl},
\ref{bndc}, \ref{lc_compl} and \ref{lc_compl_A} that
$(X/Z\ni o,B)$ has an $a$-lc over $o$
$\R$-complement then in all of these theorems it expected
the existence an $a$-$n$-complement $(X/Z,B^+)$ with $n\in\sN$, that is,
additionally $(X,B^+)$ is $a$-lc over $o$.
The set of complementary indices $\sN$ is this case depends also on $a$.
However, Restrictions on complementary indices are expected to hold
only for $I$ but not for approximations if $a$ is not rational.

The same expected for Alexeev pairs of index $m$.

\end{conj}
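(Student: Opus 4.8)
The plan is to reduce Conjecture~\ref{a_n_compl} to the already-established $n$-complement machinery by running the same inductive architecture used for Theorems~\ref{bndc} and~\ref{lc_compl}, but carrying the extra parameter $a$ through every step. The key point is that all $n$-complements constructed in the paper ultimately descend from the exceptional case (Theorem~\ref{excep_comp}) via lifting from fibrations (Section~\ref{adjunction_cor_multiplicities} and~\ref{adjunction_on_divisor}) and extension from lc centers. So I would first verify an $a$-version of the exceptional case: if $(X,B)$ has an $a$-lc $\R$-complement and $(X,B_\Phi)$ is exceptional, then the $n$-complement $(X,B^+)$ produced in Theorem~\ref{excep_comp} can be chosen $a$-lc for $n$ in a finite set depending also on $a$. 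Here the crucial input is boundedness: by BBAB (\cite[Theorem~1.1]{B16}) the $a$-lc exceptional pairs $(X,B_\Phi)$ of bounded dimension form a bounded family, so the $\Rct$-type and approximation arguments in Steps~4--5 of the proof of Theorem~\ref{excep_comp} go through verbatim, and one only needs to check that the divisor $E$ added in $B^+=\rddown{(n+1)B}/n + E/n$ can be taken so that $(X,B^+)$ stays $a$-lc. This last point should follow from Bertini-type generality of $E$ together with the fact that, for $n$ large, $\rddown{(n+1)b}/n$ stays close to $b$ (Lemma~\ref{approximation}), so discrepancies move by an arbitrarily small amount.

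Next I would propagate the $a$-lc condition through the semiexceptional lifting (Theorems~\ref{semiexcep_compl}, \ref{semiexcep_compl_with_filtration}) and the generic/klt-type lifting (Theorems~\ref{generic_complements}, \ref{b_n_comp_klt}). The inverse b-$n$-complement mechanism of Theorem~\ref{invers_b_n_comp} and the extension result Theorem~\ref{extension_n_complement} preserve log discrepancies in a controlled way: adjunction correspondence of multiplicities (\ref{adjunction_div}, (6)) relates lc-ness of $(X,\D')$ with that of $(Z,\D\dv')$, and one needs the finer statement that an $a$-lc complement downstairs lifts to an $a'$-lc complement upstairs for some $a'=a'(a,d)>0$; this is because the adjunction constants $r_P,l_P$ are bounded (Addendum~\ref{adjunction_index_hor}), so the affine correspondence $d_P' = r_P - l_P + l_P d_Q$ distorts the gap $1-d_Q$ by a bounded factor. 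Similarly, in the plt extension of Theorem~\ref{extension_n_complement}, inversion of adjunction controls discrepancies near $S$, while away from $S$ the lc connectedness argument needs to be upgraded to an $a$-lc statement — here I would invoke the $a$-lc version of connectedness, or more safely re-run the argument with the explicit divisor $E$ kept general. The dimensional induction then closes: at the bottom the $a$-lc version of Corollary~\ref{bounded_lc_index} is needed, which as the paper notes may require a slightly stricter hypothesis (hence the caveat in Addendum~\ref{strict_a_n_compl} referenced in Remark~\ref{lc_compl_alternative}).

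The main obstacle will be precisely the boundedness of the $a$-lc index for maximal $a$-lc $0$-pairs: the paper explicitly warns that Corollary~\ref{bounded_lc_index} ``does not hold in general for maximal $a$-lc $0$-pairs but may hold under slightly stricter assumption.'' Example~\ref{bd_elliptic} already shows lc indices are unbounded for general bd-pairs, and for $a$-lc pairs one can have similar pathologies when $a$ is irrational or when the $a$-lc locus sits on a fibration with unbounded moduli. So the honest statement of the $a$-version will need either (i) $a$ rational together with Index/Gap-type inputs, or (ii) the strict $a$-lc hypothesis (strictly $a$-lc in the sense of Definition~\ref{strict_delta_lc}) so that boundedness of the adjoint Fano bd-pairs holds. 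Under either such hypothesis the reduction sketched above produces a finite $\sN=\sN(d,I,\ep,v,e,a)$ with the desired $a$-$n$-complements, with Restrictions on complementary indices holding for $I$ and, when $a$ is rational, also for the approximations. For Alexeev pairs of index $m$ the same argument applies once one replaces $\sP$ by an auxiliary boundary $P$ via Corollary~\ref{Alexeev_index_m}, noting that $P$ can be taken general enough to preserve the $a$-lc bound.
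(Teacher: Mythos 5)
The statement you are trying to prove is stated in the paper as an open conjecture (Section on open problems), not a theorem: the paper itself only records that the case $a=0$ is the content of the established results, that the global Ft case with $a>0$ is easy by BBAB \emph{only when the positive multiplicities of $B$ are bounded from below}, that the case of small positive multiplicities ``is already difficult,'' and that the nonglobal case is ``much more difficult and related to other nontrivial conjectures'' \cite[Conjectures~1.1 and 1.2]{BC}; it even suggests that the right route is to prove Addendum~\ref{strict_a_n_compl} together with \cite[Conjecture~1.2]{BC} first, since these imply Conjecture~\ref{a_n_compl}. So there is no proof in the paper to match, and your sketch does not close the known hard points. Concretely: (i) in the exceptional step, BBAB does bound $a$-lc Fano pairs, but the construction of Theorem~\ref{excep_comp} handles arbitrary (in particular arbitrarily small) multiplicities by the homomorphism $\mu$ of Step~6, which replaces prime divisors by linearly equivalent effective combinations and therefore changes singularities; exceptionality guarantees that the resulting $B^+=\rddown{(n+1)B}/n+E/n$ is klt for \emph{any} effective $E$ in the linear system, but nothing in that argument gives the quantitative bound $a$, and genericity of $E$ does not help when the relevant linear systems have base loci or when discrepancies must be preserved at fixed exceptional b-divisors. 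Your appeal to Lemma~\ref{approximation} only says multiplicities move little; it does not control log discrepancies after adding $E/n$.

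(ii) In the lifting step your key quantitative claim is wrong: Addendum~\ref{adjunction_index_hor} bounds the constants $r_P$, not $l_P$, and $l_P$ (fiber multiplicities) is unbounded in general. Lifting $a$-lc from the base to the total space is harmless ($1-d_P=1-r_P+l_P(1-d_Q)$ only improves the gap), but the induction needs the \emph{descent}: from an $a$-lc complement upstairs one only gets $1-d_Q=(1-d_P+r_P-1)/l_P$, i.e.\ an $a/l_P$-type bound on the base, which degenerates as $l_P\to\infty$. This is precisely why the paper introduces the strict $\delta$-lc notion (Definition~\ref{strict_delta_lc}) and why the nonglobal case is tied to the open conjectures of \cite{BC}. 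Finally, (iii) your fallback hypotheses ($a$ rational, or strict $a$-lc) change the statement: at best they aim at Addendum~\ref{strict_a_n_compl}, which is itself conjectural and still rests on the same unresolved inputs (boundedness of $a$-lc indices for maximal $a$-lc $0$-pairs, which the paper explicitly says fails in general). So the proposal is a reasonable road map but not a proof, and the obstacles it waves at are exactly the ones that make this a conjecture.
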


To remove the wFt assumption in the conjecture we
need the following concept.

\begin{defn} \label{strict_delta_lc}
Let $(X/Z,D)$ be a log pair and $\delta$ be a nonnegative real number.
The pair is {\em strictly\/} $\delta$-lc if
for every b-$0$-contraction
$$
\begin{array}{ccc}
  (X,D) &\stackrel{\varphi}{\dashrightarrow} & (Y,D_Y)\\
  & &f\downarrow \\
  & &Z
\end{array}
$$
such that
\begin{description}
  \item[\rm (1)]
$\varphi$ is a crepant birational $1$-contraction;
  \item[\rm (2)]
$f$ is a $0$-contraction as in~\ref{adjunction_0_contr},

\end{description}
$(Z,\D_{Y,}{}\dv)$ is $\delta$-lc, that is,
every multiplicity of the b-$\R$-divisor $\D_{Y,}{}\dv$
is $\le 1-\delta$, or, equivalently, the bd-pair
$(Z,D_{Y,}{}\dv+\sD_{Y,}{}\md)$ is $\delta$-lc.

\end{defn}

Notice that the strict $\delta$-lc property implies the usual
$\delta$-lc property of $(X,D)$.
The converse holds for $\delta=0$ by (6) in~\ref{adjunction_div}.
The converse also is expected for wFt $X/Z$ but with
a different $\delta'\le \delta$ but not in general,
e.g., for fibrations of genus $1$ curves.
Of course, the strict $\delta$-lc property means exactly the
boundedness of adjunction constants (multiplicities) $l=l_P$ in~\ref{adjunction_mult}
for every vertical prime b-divisors $P$ of $Y$ with $r=r_P=1$
(cf. \cite[Theorem~1.6, (3)]{CH}).

\begin{add} \label{strict_a_n_compl}
In Conjecture~\ref{a_n_compl}
we can replace the
wFt assumption by the {\em strict\/} $a$-lc property over $o$:
the exists $\delta\ge 0$ such that
\begin{description}
  \item[]
$\delta >0$ if $a>0$; and

  \item[]
there exists a strict $\delta$-lc over $o$ $\R$-complement
of $(X/Z\ni o,B)$.

\end{description}
But still we keep the assumption that $(X/Z\ni o,B)$
has an $a$-lc over $o$ $\R$-complement.

The boundedness of lc indices expected for indices of maximal $a$-lc
$0$-pairs under additionally the strict $\delta$-lc property over $o$.
\end{add}

We already established the conjecture for $a=0$.
So, corresponding $n$-complements are better to
call lc $n$-complements.

Notice that the $a$-lc property over a point $o$
means that $a(P;X,B)\ge a$ for every b-divisor of $X$
over $o$.
In particular, if $(X/o,B)$ is global and $a>0$
then by BBAB projective Ft varieties $X$ are bounded in any fixed dimension $d$
and the conjecture can be easily established by approximations as
for lc $n$-complements in the exceptional case of Section~\ref{excep_n_comp}
if positive multiplicities of $B$ are bounded from below.
However, the case with small positive multiplicities of $B$
is already difficult.
This case is interesting because then $a$ can be very close to $1$
but not above $1$ in the global case.
The nonglobal case is much more difficult and related
to other nontrivial conjectures \cite[Conjectures~1.1 and 1.2]{BC}.

Notice also that if $a$ is not rational then
Restrictions on complementary indices related
to approximations can collide with $a$ because
for $a$-$n$-complements,
the $a$-lc property implies $m/n$-lc property
with upper approximation $m/n$ of $a$.

Conjecture~\ref{a_n_compl} does not hold for
nonFt morphisms $X/Z$.
For instance, minimal nonsingular fibrations
of genus $1$ curves over a curve have fibers with unbounded
multiplicities.
Thus lc indices of a canonical divisor near those fibers
are unbounded too.
This contradicts to related Index conjecture
for $1$-lc maximal $0$-pairs (for $a=1$).
However this does not give a contradiction if additionally
the pair is strictly $\delta$-lc
(see Addendum~\ref{strict_a_n_compl}).

Possibly, it is better to start from Addendum~\ref{strict_a_n_compl}
and \cite[Conjecture~1.2]{BC} because they
imply Conjecture~\ref{a_n_compl}.

Department of Mathematics, Johns Hopkins University, Baltimore, MD 21218, USA

shokurov@math.jhu.edu

Institute of Mathematics, Russian Academy of Sciences, Moscow, Russia

shokurov@mi-ras.ru

\end{document}